\newcommand{\mf}[1]{\mathfrak{#1}}
\newcommand{\mc}[1]{\mathcal{#1}}
\newcommand{\ms}[1]{\mathsf{#1}}
\newcommand{\pf}[1]{\pmb{\mathfrak{#1}}}
\newcommand{\pc}[1]{\pmb{\mathcal{#1}}}
\newcommand{\ps}[1]{\pmb{\mathsf{#1}}}
\newcommand{\s}[3]
{\ms{S}_{\ms{F}_{\ps{#1}_{#2}^{{#3}}}}^{G}}
\newcommand{\sm}[1]{\overset{\mf{#1}}{\simeq}}
\newcommand{\af}[1]{\overset{\mf{#1}}{\Bumpeq}}
\newcommand{\dm}[1]{\overset{\mf{#1}}{\eqsim}}
\newcommand{\tm}[1]{\overset{\mf{#1}}{\thickapprox}}
\newcommand{\ov}{\overline}
\newcommand{\lr}[2]{\langle\, #1,\,#2\,\rangle}
\newcommand{\fp}[5]{\lr{#1}{#2,#3,\{#4_{j},#5_{j}\}_{j\in\{m,w\}}}}
\newcommand{\up}{\upharpoonright}
\newcommand{\ep}{\upvarepsilon} 
\newcommand{\dv}{\divideontimes}
\newcommand{\vm}{\vardiamondsuit}
\newcommand{\vc}{\varclubsuit}
\newcommand{\ze}{\mathsf{0}} 
\newcommand{\un}{\mathsf{1}} 
\newcommand{\N}{\mathbb{N}}
\newcommand{\K}{\mathbb{K}}
\newcommand{\R}{\mathbb{R}}
\newcommand{\C}{\mathbb{C}}
\newcommand{\Z}{\mathbb{Z}}
\newtheorem{theorem}{Theorem}
\newtheorem{definition}[theorem]{Definition}
\newtheorem{convention}[theorem]{Convention}
\newtheorem{hypothesis}{Hypothesis}
\newtheorem{assumption}{Assumption}
\newtheorem{main}[theorem]{Main Theorem}
\newtheorem*{dyn}{Thermal nature}
\newtheorem*{equivar}{Equivariance}
\newtheorem*{thermal}{Phase transition via dynamical symmetry breaking}
\newtheorem*{integ}{Integrality}
\newtheorem*{stb}{Equivariant stability}
\theoremstyle{definition}
\newtheorem{lemma}[theorem]{Lemma}
\newtheorem{proposition}[theorem]{Proposition}
\newtheorem{corollary}[theorem]{Corollary}
\theoremstyle{remark}
\newtheorem{remark}[theorem]{Remark}
\numberwithin{assumption}{part}
\numberwithin{section}{part}
\numberwithin{theorem}{section}
\numberwithin{equation}{section}
\begin{document}
\title{Natural transformations associated with a locally compact group and 
universality of the global Terrell law}
\author{Benedetto Silvestri}


\date{\today}

\keywords{$C^{\ast}-$crossed products, Chern-Connes character, natural transformations, nuclear binary fission.}
\subjclass[2010]{Primary 46L55, 46M15, 81R15, 81R60; Secondary 82B10, 81V35}
\begin{abstract}
Via the construction of a functor from $\mathsf{C}_{u}(H)$ to an auxiliary category 
we associate, with any triplet $(G,F,\uprho)$,
two natural transformations,
$\mathfrak{m}_{\star}$ 
between functors from the opposite of $\mathsf{C}_{u}(H)$ to $\mathsf{Fct}(H,\mathsf{set})$ 
and 
$\mathfrak{v}_{\natural}$ between functors from the 
opposite of $\mathsf{C}_{u}^{0}(H)$ to $\mathsf{Fct}(H,\mathsf{set})$. 
$G$ and $F$ are locally compact groups, $\uprho:F\to Aut_{\ms{Gr}}(G)$ is a group morphism
such that the map $(g,f)\mapsto\uprho_{f}(g)$ is continuous, 
$H$ is the external topological semidirect product of $G$ and $F$ relative to $\uprho$, 
a groupoid when seen as a category, 
$\mathsf{C}_{u}^{0}(H)$ is a subcategory of $\mathsf{C}_{u}(H)$ 
a subcategory of the 
category of $C^{\ast}-$dynamical systems with symmetry group $H$
and equivariant morphisms, 
and $\mathsf{Fct}(H,\mathsf{set})$ is the category of functors from $H$ to $\mathsf{set}$ and natural transformations.
For any object $\mathfrak{A}$ of $\mathsf{C}_{u}^{0}(H)$ 
to assemble $\mathfrak{m}_{\star}^{\mathfrak{A}}$  
we exploit the Chern-Connes characters generated 
by JLO cocycles $\Upphi$ on the unitization of certain 
$C^{\ast}-$crossed products relative to $\mathfrak{A}$, 
while to construct $\mathfrak{v}_{\natural}^{\mathfrak{A}}$ we exert
the states of the $C^{\ast}-$algebra underlying $\mathfrak{A}$ 
associated in a convenient manner with the $0-$dimensional components of the 
$\Upphi$'s.
We interpret $\mathsf{C}_{u}(H)$ as the category of fissioning systems and their transformations,
we use $\mathfrak{m}_{\star}^{\mathfrak{A}}$ and $\mathfrak{v}_{\natural}^{\mathfrak{A}}$ 
to define the nucleon phases and the fragment states resulting 
next the fission processes of the fissioning system $\mathfrak{A}$ occur
and we formulate in a $C^{\ast}-$algebraic 
framework a new nucleon phase hypothesis originally advanced by Mouze and Ythier.
We apply the naturality of $\mathfrak{m}_{\star}$ and $\mathfrak{v}_{\natural}$ to
establish the universality of the global nucleon masses and the global Terrell law, 
stated as invariance of the light and heavy nucleon core masses
and invariance of the prompt-neutron yield 
under contravariant action of $\mathsf{C}_{u}^{0}(H)$ and under action of $H$
over the field of fission processes.
Then under the nucleon phase hypothesis 
we exhibit the stability of the nucleon core masses at values $82$ and $126$ 
and the invariance of the Terrell law, as a particular case of the global ones.
\end{abstract}
\maketitle

\flushbottom
\tableofcontents
\section{Introduction}
\label{introI}


Mouze and Ythier advanced
the nucleon phase hypothesis 
in order to explain, among other occurrences, 
the presence of two fixed values $82$ and $126$ 
in the following Terrell law 
\begin{equation}
\label{06091115}
\ov{\nu}=0.08\,(A_{L}-82)+0.1\,(A_{H}-126),
\end{equation}
describing 
the mean value of the prompt-neutron yield of two asymmetric fragments 
resulting next the neutron fission of 
$U^{233}$, $U^{235}$, $Pu^{239}$ 
and the spontaneous fission of $Cf^{252}$, 
in function of the mass numbers $A_{L}$ and $A_{H}$ of the light 
and heavy fragments respectively,
\cite{mhy1,mhy2,ric}.
Basically they assumed that under the particular circumstances occurring in the fission process,
two nucleon cores come into existence, 
one of mass number $82$ and the other of mass number $126$.
Thus by mean of the closure of the shells at these values, 
their presence in the above equality can be justified.
Since the evenience of closed shells in the presence 
of the nucleon phase in which the cores happen to appear,
it has been claimed by Ricci 
that 
``the organization of matter in closed shells should be a universal law''
see \cite[IIa]{ric}. 
\par
We contend instead that the nucleon phase itself is universal 
and we grant the tenet that universal in physics 
has to be understood natural in category theory.
The present is the first of a series of three parts where, 
by introducing the concept of the 
\emph{category $\mf{G}(G,F,\uprho)$ of nucleon systems} 
and the structure of \emph{nucleon-fragment doublet},
we state and resolve the equivariant and invariant forms 
of the \emph{universality claim}, later described,
for the positions $\mf{K}=\ms{C}_{u}(H)$ and $\mf{H}=\ms{C}_{u}^{0}(H)$.
Here $\ms{C}_{u}^{0}(H)$ is a strict subcategory of 
the category $\ms{C}_{u}(H)$ 
of fissioning systems and their transformations,
a subcategory
of the category of $C^{\ast}-$dynamical systems with symmetry group $H$
and equivariant morphisms.
In short the first asserts that 
the global fragment state is originated via
the global nucleon phase and both are
equivariant under contravariant action of 
$\ms{C}_{u}^{0}(H)$ and under action of $H$.
The second establishes
that the global nucleon and fragment masses, and the global Terrell law 
are invariant
under contravariant action of $\ms{C}_{u}^{0}(H)$ and under action of $H$
over the field of fission processes.
As a consequence we obtain 
the stability of the masses of the nucleon cores 
and the invariance of the prompt-neutron yield 
under essential contravariant action of 
$\ms{C}_{u}^{0}(H)$ and under action of $H$.
Then provided a new nucleon phase hypothesis
the stability of the nucleon masses at the values $82$ and $126$ 
and the invariance of the Terrell law 
under action of the above transformations,
follow.
The meaning of the term 'essential' will be clarified in course of the introduction.
We anticipate that the introduction of the 
global nucleon and fragment masses, and the global Terrell law 
- generalizing and extending the nucleon and fragment masses, and the Terrell law respectively -
as extensions serve 
to switch 'essential' into 'true' contravariance.
\par
By balancing the language 
in order to allocate the mathematical and physical descriptions,
we organize this introduction as follows.
Firstly we describe features of the category $\mf{G}(G,F,\uprho)$ and their physical interpretation, 
then we exert them to expose the forms of the universality claim.
Next we outline the concept of nucleon-fragment doublet $\mc{T}$ 
on an arbitrary category
and exhibit the $\mc{T}-$resolution of the forms of the universality claim. 
Then the resolution of the claim is established 
as $\mc{T}_{\bullet}-$resolution, 
where $\mc{T}_{\bullet}$ is the canonical nucleon-fragment doublet 
on $\ms{C}_{u}(H)$ constructued in the main theorem of the work.
\par
A nucleon-fragment doublet provides to nucleon phases and fragment states 
the following list of properties, next called initial list,  
whose meaning will be later clarified:
\begin{enumerate}
\item
thermal nature of the nucleon phases,
\label{07081617} 
\item
noncommutative geometric and thermal origination of fragment states,
\label{07081618}
\item
phase transition via symmetry breaking,
\label{07081619} 
\item
contravariance under action of the category $\Upxi(\pf{D},\mc{L})$
and covariance under action of the symmetry group $H$. 
\label{07062056}
\end{enumerate}
Let us start with some notation.
\par
For any set $A$ let $\mathscr{P}(A)$ be the power set of $A$.
If $\mc{A}$ is a $C^{\ast}-$algebra let $\ms{E}_{\mc{A}}$ be the set of states of $\mc{A}$,
$\mc{A}_{ob}$ be the set of selfadjoint elements of $\mc{A}$
and $\mc{A}_{+}\coloneqq\{a^{\ast}a\,\vert\, a\in\mc{A}\}$.
If $A$ is a category and $x,y$ are objects of $A$, 
then let $Mor_{A}(x,y)$ denote the set of morphisms of $A$
whose domain or source is $x$ and whose codomain or target is $y$, 
let $Mor_{A}$ denote the set of morphisms of $A$.
Let $Aut_{A}(x)$ be the group of invertible morphisms of $A$ whose source and target is $x$.
Let $^{op}$ mean opposite category, and for any morphism $T$ of a category  
let $d(T)$ and $c(T)$ denote the domain and codomain of $T$ respectively.
Let $\ms{set}$ be the category of sets and maps in a fixed universe,
moreover all the following categories are implicitly assumed to have the object set a 
subset of the fixed universe.
Let $\ms{gr}:Mor_{\ms{set}}\to Mor_{\ms{set}}$ be such that $d(\ms{gr}(f))=d(f)$ 
and so defined $f\mapsto(x \mapsto(x,f(x)))$;
by abuse of language let $\prod_{x\in X}C$ be $\prod_{x\in X}C_{x}$ where $C_{x}=C$ for all $x\in X$,
with $X$ and $C$ sets.
Let $\ms{Gr}$ be the category of groups and group morphisms with map composition, 
and $\ms{Ab}$ the subcategory of abelian groups.
If $G$ and $F$ are locally compact groups, and
$\uprho:F\to Aut_{\ms{Gr}}(G)$ is a group homomorphism
such that the map $(g,f)\mapsto\uprho_{f}(g)$
on $G\times F$ at values in $G$, is continuous,
then let $H=G\rtimes_{\uprho}F$ be the external topological semidirect product of $G$ and $F$ relative to $\uprho$.
Here $Aut_{\ms{Gr}}(G)$ is the group of group automorphisms of the group underlying $G$.
Let $j_{1}$ and $j_{2}$ be the canonical injections of $G$ and $F$ into $H$ respectively.
For any locally compact group $K$
let $\ms{C}(K)$ be the category of unital $C^{\ast}-$dynamical systems with symmetry group $K$
and surjective equivariant morphisms. 
For all categories $X,Y$ let $\ms{Fct}(X,Y)$ be the category of functors from $X$ to $Y$
and natural transformations as morphisms.
For any functor $F$ let $F_{o}$ and $F_{m}$ denote 
the object map and morphism map respectively.
In particular we consider the group $H$ as the category whose unique object is its unit $\un$,
so $\ms{Fct}(H,\ms{set})$ can be considered as the category of set representations of $H$
and intertwinings as morphisms.
Let a field contravariant (respectively covariant) under action of $X$ 
be a functor from $X^{op}$ (respectively $X$) to $\ms{set}$,
moreover we add via $L$ if $L$ is its morphism map.
If $Z$ is a field contravariant (respectively covariant) under action of $X$ 
then let a subfield $J$ of $Z$ be any field
contravariant (respectively covariant) under action of $X$ 
such that $J_{o}(a)\subseteq Z_{o}(a)$ and $J_{m}(t)=Z_{m}(t)\up J_{o}(c(t))$
(respectively $J_{m}(t)=Z_{m}(t)\up J_{o}(d(t))$), for any object $a$ and morphism $t$
of $X$.
Notice that for any field say $N$ covariant under action of the group $H$ via $R$,
$R$ is a representation of $H$ on the set $N(\un)$.
If $M$ is a field contravariant under action of $X$, 
we call $x$ a section of $M$ contravariant under action of $X$ via $f$ and $g$,
if $x$ is a natural transformation between functors from $X^{op}$ and $\ms{set}$
such that 
$f$ is the morphism map of the target functor of $x$,
$g$ is the morphism map of the source functor of $x$,
and $M$ is the target functor of $x$.
Similarly for covariant sections by replacing $X^{op}$ by $X$.
We call $x$ a section contravariant (covariant) under action of $X$ via $f$ and $g$,
a section $x$ of $M$ contravariant (covariant) under action of $X$ via $f$ and $g$,
for some field $M$ contravariant (covariant) under action of $X$.
We call $x$ a section invariant under contravariant (covariant) action of $X$ via $g$,
if $x$ is a section of $M$ contravariant (covariant) under action of $X$ via $f$ and $g$, 
the object map of $M$ is a constant with constant value equal some set $Z$
and $f$ is the constant map with constant value equal the identity map on $Z$.
The term equivariance will be used to mean covariance or contravariance.
Let $\mf{K}$ and $\mf{D}$ be categories, 
$\mc{F}\in\ms{Fct}(\mf{K},\mf{D})$ and $C$ be an object of $\mf{D}$.
Let $(\cdot)_{\dagger}^{\mc{F},C}$ be the map on $Mor_{\mf{K}}$ such that 
for any object $X,Y$ of $\mf{K}$ and $T\in Mor_{\mf{K}}(X,Y)$ 
we have
\begin{equation*}
\begin{aligned}
T_{\dagger}^{\mc{F},C}:Mor_{\mf{D}}(\mc{F}_{o}(Y),C)&\to Mor_{\mf{D}}(\mc{F}_{o}(X),C),
\\
g&\mapsto g\circ\mc{F}_{m}(T).
\end{aligned}
\end{equation*}
We call $(\cdot)_{\dagger}^{\mc{F},C}$ \emph{the conjugate of $\mc{F}_{m}$},
and call dual of the restriction of an action on the automorphism group of an object of a category 
the composition of conjugation with inversion. 
Notice that we do not refer to the object $C$
since we use this notation only in the following cases
from which it will be clear by the context which object $C$ is concerned.
(1) $\mf{D}=\ms{Ab}$ and $C$ the field $\R$ of real numbers;
(2) $\mf{K}=\ms{CA}^{\ast}$ the category of 
$C^{\ast}-$algebras and $\ast-$homomorphisms,
$\mf{D}=\ms{BS}$ the category
of complex Banach spaces and linear bounded maps,
$\mc{F}$ the forgetful functor from $\ms{CA}^{\ast}$ to $\ms{BS}$
and $C$ the field $\C$ of complex numbers.
Moreover in this case by abuse of language we shall speak of 
the conjugate of a $\ast-$homomorphism, by meaning its image 
via the conjugate of $\mc{F}_{m}$. 
\clearpage
The first step is the introduction of the category $\mf{G}(G,F,\uprho)$ of nucleon systems
whose data determine what follows.
\begin{equation}
\label{07101147}
\begin{aligned}
(\ms{A},\Pr_{1})
&\in\ms{Fct}(\mf{G}(G,F,\uprho),\ms{Ab}),
\\
(\pf{T},\Pr_{2})
&\in\ms{Fct}(\mf{G}(G,F,\uprho)^{op},\ms{set}),
\\
(\uppsi^{\mc{M}},\mf{b}^{\mc{M}}\circ\ms{inv})
&\in 
Mor_{\ms{Gr}}(H,Aut_{\mf{G}(G,F,\uprho)}(\mc{M})),\,
\forall\mc{M}\in\mf{G}(G,F,\uprho),
\\
\ms{I}_{\mc{M}}&:
\pf{T}_{\mc{M}}\to\ms{set},\,
\forall\mc{M}\in\mf{G}(G,F,\uprho),
\\
\upbeta_{c}
&\in
\prod_{\mc{M}\in\mf{G}(G,F,\uprho)}
\prod_{\mf{Q}\in\pf{T}_{\mc{M}}}
\ms{I}_{\mc{M}}^{\mf{Q}},
\\
\mf{a}&:
\prod_{\mc{M}\in\mf{G}(G,F,\uprho)}
\prod_{\mf{Q}\in\pf{T}_{\mc{M}}}
\prod_{\beta\in\ms{I}_{\mc{M}}^{\mf{Q}}}
\ms{C}(H),
\\
\mf{e}&:
\prod_{\mc{M}\in\mf{G}(G,F,\uprho)}
\prod_{\mf{Q}\in\pf{T}_{\mc{M}}}
\prod_{\beta\in\ms{I}_{\mc{M}}^{\mf{Q}}}
\ms{C}(\R),
\\
\ps{\upvarphi}
&\in
\prod_{\mc{M}\in\mf{G}(G,F,\uprho)}
\prod_{\mf{Q}\in\pf{T}_{\mc{M}}}
\prod_{\beta\in\ms{I}_{\mc{M}}^{\mf{Q}}}
\ms{E}_{\mc{A}(\mc{M})_{\beta}^{\mf{Q}}},
\\
\ms{V}
&\in
\prod_{\mc{M}\in\mf{G}(G,F,\uprho)}
\prod_{\mf{Q}\in\pf{T}_{\mc{M}}}
\prod_{\beta\in\ms{P}_{\mc{M}}^{\mf{Q}}}
\prod_{l\in H}
Mor_{\ms{CA}^{\ast}}(\mc{A}(\mc{M})_{\beta}^{\mf{Q}},
\mc{A}(\mc{M})_{\beta}^{\mf{b}^{\mc{M}}(l)\mf{Q}});
\end{aligned}
\end{equation}
where
$(\mf{a}_{\mc{M}})_{\alpha}^{\mf{T}}=
\lr{\mc{A}(\mc{M})_{\alpha}^{\mf{T}},H}{(\upeta^{\mc{M}})_{\alpha}^{\mf{T}}}$, 
$(\mf{e}_{\mc{M}})_{\alpha}^{\mf{T}}=
\lr{\mc{A}(\mc{M})_{\alpha}^{\mf{T}},\R}{(\ep^{\mc{M}})_{\alpha}^{\mf{T}}}$,
for any object $\mc{M}$ of $\mf{G}(G,F,\uprho)$,
$\mf{T}\in\pf{T}_{\mc{M}}$ and $\alpha\in\ms{I}_{\mc{M}}^{\mf{T}}$.
\par
Let $\mc{N}$ be an object of $\mf{G}(G,F,\uprho)$,
$\mf{T}\in\pf{T}_{\mc{N}}$ and $\alpha\in\ms{I}_{\mc{N}}^{\mf{T}}$.
Thus the following properties hold
\begin{enumerate}
\item
$(\ps{\upvarphi}^{\mc{N}})_{\alpha}^{\mf{T}}$ is invariant under action of $G$, i.e.
\begin{equation*}
(\ps{\upvarphi}^{\mc{N}})_{\alpha}^{\mf{T}}\circ(\upeta^{\mc{N}})_{\alpha}^{\mf{T}}(j_{1}(g))
=
(\ps{\upvarphi}^{\mc{N}})_{\alpha}^{\mf{T}},\forall g\in G,
\end{equation*}
\item
$(\ps{\upvarphi}^{\mc{N}})_{\alpha}^{\mf{T}}$ is an $(\ep^{\mc{N}})_{\alpha}^{\mf{T}}-$KMS state.
\end{enumerate}
Note that if $\alpha\in\R^{+}$ then 
$(\ps{\upvarphi}^{\mc{N}})_{\alpha}^{\mf{T}}$
is a $\alpha-$KMS state, i.e. a state of thermal equilibrium 
at the inverse temperature $\alpha$, 
with respect to the dynamics
$(\ep^{\mc{N}})_{\alpha}^{\mf{T}}(-\alpha^{-1}(\cdot))$. 
Next set
\begin{equation*}
\begin{aligned}
\ms{F}(\mc{N})_{\alpha}^{\mf{T}}
&\coloneqq
\{h\in F
\,\vert\,
(\ps{\upvarphi}^{\mc{N}})_{\alpha}^{\mf{T}}
\circ
(\upeta^{\mc{N}})_{\alpha}^{\mf{T}}
(j_{2}(h))
=
(\ps{\upvarphi}^{\mc{N}})_{\alpha}^{\mf{T}}
\},
\\
\ms{S}_{\alpha}^{\mf{T}}(\mc{N})
&\coloneqq
G\rtimes_{\uprho}
\ms{F}(\mc{N})_{\alpha}^{\mf{T}},
\\
\mc{B}_{\alpha}^{\mf{T}}(\mc{N})
&\coloneqq
\mc{A}(\mc{N})_{\alpha}^{\mf{T}}
\rtimes_{(\upeta^{\mc{N}})_{\alpha}^{\mf{T}}}
\ms{S}_{\alpha}^{\mf{T}}(\mc{N}),
\\
\ms{P}
&\in
\prod_{\mc{M}\in\mf{G}(G,F,\uprho)}
\prod_{\mf{Q}\in\pf{T}_{\mc{M}}}
\mathscr{P}(\ms{I}_{\mc{M}}^{\mf{Q}}),
\\
\ms{P}_{\mc{N}}^{\mf{T}}
&\coloneqq
\{\beta\in\ms{I}_{\mc{N}}^{\mf{T}}
\,\vert\,
\ms{F}(\mc{N})_{\beta}^{\mf{T}}
\supseteq
\ms{F}(\mc{N})_{(\upbeta_{c}^{\mc{N}})^{\mf{T}}}^{\mf{T}}\},
\\
\ms{A}_{\mc{N}}^{\ast}
&\coloneqq Mor_{\ms{Ab}}(\ms{A}_{\mc{N}},\R).
\end{aligned}
\end{equation*}
A fundamental set valued map is $Rep^{\mc{N}}:\ms{A}_{\mc{N}}^{\ast}\to\ms{set}$.
Let $\mf{c}\in\ms{A}_{\mc{N}}^{\ast}$ then
\footnote{Actually 
$\mc{B}_{\mf{r}}$
depends also by a Haar measure on
$\ms{S}_{\alpha_{\mf{r}}}^{\mf{T}_{\mf{r}}}(\mc{N})$
but here we do not need such a generality.}
\begin{equation}
\label{08121751}
\begin{aligned}
Rep^{\mc{N}}(\mf{c})
&\ni\mf{r}\mapsto
\mf{T}_{\mf{r}}
\in
\pf{T}_{\mc{N}},
\\
Rep^{\mc{N}}(\mf{c})
&\ni\mf{r}\mapsto
\alpha_{\mf{r}}
\in
\ms{P}_{\mc{N}}^{\mf{T}_{\mf{r}}},
\\
Rep^{\mc{N}}(\mf{c})
&\ni\mf{r}\mapsto
\mc{A}_{\mf{r}}
\coloneqq
\mc{A}(\mc{N})_{\alpha_{\mf{r}}}^{\mf{T}_{\mf{r}}},
\\
Rep^{\mc{N}}(\mf{c})
&\ni\mf{r}\mapsto
\ps{\upvarphi}_{\mf{r}}
\coloneqq
(\ps{\upvarphi}^{\mc{N}})_{\alpha_{\mf{r}}}^{\mf{T}_{\mf{r}}},
\\
Rep^{\mc{N}}(\mf{c})
&\ni\mf{r}\mapsto
\mc{B}_{\mf{r}}
\coloneqq
\mc{B}_{\alpha_{\mf{r}}}^{\mf{T}_{\mf{r}}}(\mc{N}),
\\
Rep^{\mc{N}}(\mf{c})
&\ni\mf{r}\mapsto
\ep_{\mf{r}}
\coloneqq
(\ep^{\mc{N}})_{\alpha_{\mf{r}}}^{\mf{T}_{\mf{r}}},
\\
Rep^{\mc{N}}(\mf{c})
&\ni\mf{r}
\mapsto
\ms{V}_{\mf{r}}
\coloneqq
\ms{V}(\mc{N})_{\alpha_{\mf{r}}}^{\mf{T}_{\mf{r}}},
\\
Rep^{\mc{N}}(\mf{c})
&\ni\mf{r}
\mapsto
\mf{u}_{\mf{r}}
\in Mor_{\ms{Ab}}(\ms{A}_{\mc{N}},\ms{K}_{0}(\mc{B}_{\mf{r}}^{+}));
\end{aligned}
\end{equation}
and the maps
\begin{equation}
\begin{aligned}
\label{08121752}
Rep^{\mc{N}}(\mf{c})
&\ni\mf{r}
\mapsto
\Upphi^{\mf{r}},
\\
Rep^{\mc{N}}(\mf{c})
&\ni\mf{r}\mapsto
\uprho_{\mf{r}}\in\ms{E}_{\mc{A}_{\mf{r}}};
\end{aligned}
\end{equation}
such that 
\emph
{$\Upphi^{\mf{r}}$ is an entire normalized even cocycle on the 
unitization $\mc{B}_{\mf{r}}^{+}$ of $\mc{B}_{\mf{r}}$,
$\mf{c}$ factorizes through the real part, via the standard
duality, of the character generated by $\Upphi^{\mf{r}}$ and the map $\mf{u}_{\mf{r}}$; 
while $\uprho_{\mf{r}}$ is a $\ps{\upvarphi}_{\mf{r}}-$normal state
associated in an appropriate way to the $0-$dimensional component
of $\Upphi^{\mf{r}}$},
where the standard duality is between the 
entire cyclic cohomology and the $K_{0}-$theory 
$\ms{K}_{0}(\mc{B}_{\mf{r}}^{+})$ of $\mc{B}_{\mf{r}}^{+}$.
Let us define
\begin{equation*}
\mf{N}^{\mc{N}}(\mf{c})
\coloneqq
\{
\uprho_{\mf{r}}
\,\vert\,
\mf{r}\in Rep^{\mc{N}}(\mf{c})
\}.
\end{equation*}
\par
Next we interpret the previous data in a physical context as follows.
\begin{enumerate}
\item
$\mc{N}$ is a physical system called nucleon system whose set of observables is $\ms{A}_{\mc{N}}$
and whose set of states called nucleon phases is $\ms{A}_{\mc{N}}^{\ast}$;
\item
$\mc{L}(a)$ is the nucleon system generated by the fissioning system $a$,
for any functor $\mc{L}$ from a category $\mf{H}$ to $\mf{G}(G,F,\uprho)$
and $a\in\mf{H}$;
\item
for any $\mf{T}\in\pf{T}_{\mc{N}}$ and $\alpha\in\ms{P}_{\mc{N}}^{\mf{T}}$
there exists a physical system 
$\mc{O}(\mc{N})_{\alpha}^{\mf{T}}$ called fragment system such that 
\begin{enumerate}
\item
$\mc{A}(\mc{N})_{\alpha}^{\mf{T}}$ is its observable algebra 
and $\ms{E}_{\mc{A}(\mc{N})_{\alpha}^{\mf{T}}}$ is its state space. 
If $\alpha>0$ then $\mc{O}(\mc{N})_{\alpha}^{\mf{T}}$
evolves in time through the dynamics
$(\ep^{\mc{N}})_{\alpha}^{\mf{T}}(-\alpha^{-1}(\cdot))$,
\item
$\mf{T}$ is an operation performable over the states of $\mc{O}(\mc{N})_{\alpha}^{\mf{T}}$;
\end{enumerate}
\item
for all $\mf{c}\in\ms{A}_{\mc{N}}^{\ast}$ 
and $\mf{r}\in Rep^{\mc{N}}(\mf{c})$ 
the following properties hold 
\begin{enumerate}
\item
if the operation $\mf{T}_{\mf{r}}$ is performed on 
$\mc{O}(\mc{N})_{\alpha_{\mf{r}}}^{\mf{T}_{\mf{r}}}$
when occurring in the state $\ps{\upvarphi}_{\mf{r}}$,
then $\mc{O}(\mc{N})_{\alpha_{\mf{r}}}^{\mf{T}_{\mf{r}}}$
will occur in the state $\uprho_{\mf{r}}$,
\label{07071636}
\item
if $\mc{O}(\mc{N})_{\alpha_{\mf{r}}}^{\mf{T}_{\mf{r}}}$
occurs in the state $\uprho_{\mf{r}}$, 
then $\mc{N}$ occurred in the phase $\mf{c}$;
\label{07071637}
\end{enumerate}
\item
the information about the operations in $\pf{T}_{\mc{N}}$
are sufficient to organize $\ms{A}_{\mc{N}}$ as a group but not as an algebra.
\end{enumerate}
This interpretation is consistent with the fact that $\upomega-$normal states 
are usually considered perturbations of the state $\upomega$.
Notice that
\emph
{$\uprho_{r}$ is a state of the fragment system $\mc{O}(\mc{N})_{\alpha_{\mf{r}}}^{\mf{T}_{\mf{r}}}$,
moreover if $\alpha>0$ then $\ps{\upvarphi}_{\alpha}$ is a state of thermal equilibrium at inverse
temperature $\alpha$ with respect to the dynamical system underlying $\mc{O}(\mc{N})_{\alpha}^{\mf{T}}$}.
\par
We refer to \eqref{07071637} by saying that
\emph
{the fragment state $\uprho_{\mf{r}}$ is \textbf{originated} via the nucleon phase $\mf{c}$},
in particular $\mf{N}^{\mc{N}}(\mf{c})$ is the set of all the fragment states originated via the 
nucleon phase $\mf{c}$. 
\par
Referring only to $\mf{c}$ the sequence (\ref{07071636},\ref{07071637}) is summarized by saying that 
\emph
{$\mf{c}$ is the phase of the nucleon system $\mc{N}$ occurring by performing the operation $\mf{T}_{\mf{r}}$
on the state of thermal equilibrium $\ps{\upvarphi}_{\mf{r}}$}.
Now the meaning of \eqref{07081617} and \eqref{07081618} 
in the initial list appear clear.
Note that in general $\mf{N}^{\mc{N}}(\mf{c})$ contains more than one element 
and its elements may belong to state spaces of different $C^{\ast}-$algebras.
\par
If $\mc{L}$ is a functor from a category $\mf{C}$ to $\mf{G}(G,F,\uprho)$,
then for any map $M$ whose domain is a subset of the object set of $\mf{G}(G,F,\uprho)$,
we let $M_{x}$ denote $M_{\mc{L}(x)}$ for any object $x$ of $\mf{C}$ such that $\mc{L}(x)\in Dom(M)$.
Now we can expose the two forms of the universality claim, 
then we exploit the rest of the introduction to sketch their solutions.
We shall describe the equivariant form of the claim firstly in mathematical (M) and then in physical (P) terms.
\par
Equivariant form of the universality claim (M).
\emph
{There exist a category $\mf{K}$, a subcategory $\mf{H}$ of $\mf{K}$,
a functor $\mc{L}$ from $\mf{K}$ to $\mf{G}(G,F,\uprho)$,
$\mf{m}$ and $\mc{W}$ such that 
\begin{equation*}
\begin{aligned}
\mf{m}
&\in Mor_{\ms{Fct}(\mf{K}^{op},\ms{set})},
\\
\ms{gr}\circ\mc{W}
&\in Mor_{\ms{Fct}(\mf{H}^{op},\ms{set})}.
\end{aligned}
\end{equation*}
Let $\pf{U}$ and $\pf{D}$
be the object maps of the source functors of 
$\mf{m}$ and $\ms{gr}\circ\mc{W}$
respectively
and let $a\in\mf{K}$ and $b\in\mf{H}$.
Thus
\begin{equation*}
\begin{aligned}
(\un\mapsto\mf{m}^{a})
&\in Mor_{\ms{Fct}(H,\ms{set})},
\\
(\un\mapsto\ms{gr}(\mc{W}^{b}))
&\in Mor_{\ms{Fct}(H,\ms{set})};
\end{aligned}
\end{equation*}
$\pf{U}_{a}\subseteq\pf{T}_{a}$
and
$\pf{D}_{b}\subseteq\pf{U}_{b}$.
Moreover 
for all 
$\mf{Q}\in\pf{U}_{a}$, $\beta\in\ms{P}_{a}^{\mf{Q}}$
\begin{equation*}
\mf{m}^{a}(\mf{Q},\beta)\in\ms{A}_{a}^{\ast},
\end{equation*}
and all 
$\mf{T}\in\pf{D}_{b}$, $\alpha\in\ms{P}_{b}^{\mf{T}}$
\begin{equation}
\label{07121138ante}
\begin{aligned}
\exists\,\mf{r}&\in Rep^{b}(\mf{m}^{b}(\mf{T},\alpha))
\\
\mc{W}^{b}(\mf{T},\alpha)&=\uprho_{\mf{r}},\,
\mf{T}_{\mf{r}}=\mf{T},
\alpha_{\mf{r}}=\alpha.
\end{aligned}
\end{equation}}
We say that
$\mc{L}$, $\mf{m}$ and $\mc{W}$ 
satisfy
the equivariant form of the universality claim (M)
w.r.t. $\mf{K}$ and $\mf{H}$.
Let $\mf{m}$ and $\mc{W}$ be called the global nucleon phase and global fragment state 
w.r.t. $\mc{L}$ respectively.
\par
Equivariant form of the universality claim (P).
\emph
{There exist a category $\mf{K}$ of fissioning systems and their transformations, 
a subcategory $\mf{H}$ of $\mf{K}$,
a functor $\mc{L}$ from $\mf{K}$ to $\mf{G}(G,F,\uprho)$,
a section $\mf{m}$ contravariant under action of $\mf{K}$,
and a map $\mc{W}$ such that $\ms{gr}\circ\mc{W}$ is a section 
contravariant under action of $\mf{H}$.
Let $\pf{U}$ and $\pf{D}$ be the object maps of the source functors of 
$\mf{m}$ and $\ms{gr}\circ\mc{W}$ respectively
and let $a$ in $\mf{K}$ and $b$ in $\mf{H}$.
Thus
$\un\mapsto\mf{m}^{a}$ and $\un\mapsto\ms{gr}(\mc{W}^{b})$ are covariant under action of $H$,
$\pf{U}_{a}\subseteq\pf{T}_{a}$
and
$\pf{D}_{b}\subseteq\pf{U}_{b}$.
Moreover 
for all $\mf{Q}\in\pf{U}_{a}$, $\beta\in\ms{P}_{a}^{\mf{Q}}$,
and for all $\mf{T}\in\pf{D}_{b}$, $\alpha\in\ms{P}_{b}^{\mf{T}}$
\begin{enumerate}
\item
$\mf{m}^{a}(\mf{Q},\beta)$ 
is a phase of the nucleon system $\mc{L}(a)$;
\item
$\mc{W}^{b}(\mf{T},\alpha)$ is a fragment state originated via the nucleon phase $\mf{m}^{b}(\mf{T},\alpha)$
through the relation \eqref{07121138ante};
\item
$\mc{W}^{b}(\mf{T},\alpha)$ is a state of the fragment system 
$\mc{O}(b)_{\alpha}^{\mf{T}}$
whose operator algebra is $\mc{A}(b)_{\alpha}^{\mf{T}}$
and if $\alpha>0$ then the system
evolves in time through $(\ep^{b})_{\alpha}^{\mf{T}}(-\alpha^{-1}(\cdot))$.
$\mf{m}^{b}(\mf{T},\alpha)$ is the phase of the nucleon system $\mc{L}(b)$, 
occurring by performing the operation $\mf{T}$ 
on the state of thermal equilibrium $(\ps{\upvarphi}^{b})_{\alpha}^{\mf{T}}$ 
at inverse temperature $\alpha$
with respect to the dynamical system underlying $\mc{O}(b)_{\alpha}^{\mf{T}}$.
\end{enumerate}}
Invariant form of the universality claim.
\emph
{There exist a category $\mf{K}$, a subcategory $\mf{H}$ of $\mf{K}$,
$\mc{L}$, $\mf{m}$ and $\mc{W}$ 
satisfying
the equivariant form of the universality claim (M)
w.r.t. $\mf{K}$ and $\mf{H}$;
moreover there exist 
$\ms{N}_{as}$, $\mc{P}$, $\ms{R}$, 
$\{\muup_{j},\uplambda_{j}\}_{j\in\{m,w\}}$ and $\uptheta$
with the following properties.
$\ms{N}_{as}$ is a set valued map defined on the object set of $\mf{H}$,
$\mc{P},\ms{R}\in\ms{Fct}(\mf{H}^{op},\ms{set})$,
$\mc{P}(b)$ equals the power set of 
$\ms{N}_{as}(b)$ the set of fission processes whose underlying fissioning system is $b$,
for all $b\in\mf{H}$.
$\ms{R}$ is the unique object of $\ms{Fct}(\mf{H}^{op},\ms{set})$
whose object map 
is the constant map with constant value equal to the power set of $\R$,
and whose morphism map is 
the constant map with constant value equal to the identity map on the power set of $\R$.
For all $j\in\{m,w\}$
\begin{equation*}
\muup_{j},\,
\uplambda_{j},\,
\uptheta
\in
Mor_{\ms{Fct}(\mf{H}^{op},\ms{set})}(\mc{P},\ms{R}).
\end{equation*}
Let $b\in\mf{H}$.
Thus
\begin{equation*}
(\un\mapsto(\muup_{j})_{b}),\,
(\un\mapsto(\uplambda_{j})_{b}),\,
(\un\mapsto\uptheta_{b})
\in
Mor_{\ms{Fct}(H,\ms{set})},
\end{equation*}
and there exist maps 
\begin{enumerate}
\item 
$\ms{N}_{as}(b)\ni y\mapsto\mf{T}_{y}\in\pf{D}_{b}$,
\item
$\ms{N}_{as}(b)\ni y\mapsto\alpha_{y}\in\ms{P}_{b}^{\mf{T}_{y}}$,
\item
$\ms{N}_{as}(b)\ni y\mapsto\ms{f}^{y}\in\prod_{j\in\{m,w\}}\ms{A}_{b}$,
\item
$\ms{N}_{as}(b)\ni y\mapsto N^{y}\in\prod_{j\in\{m,w\}}(\mc{A}(b)_{\alpha_{y}}^{\mf{T}_{y}})_{+}$,
\end{enumerate}
such that for any $Y\in\mc{P}(b)$ and $j\in\{m,w\}$ we have
\begin{equation*}
\begin{aligned}
(\muup_{j})_{b}(Y) 
&=\{
\mf{m}^{b}(\mf{T}_{y},\alpha_{y})(\ms{f}_{j}^{y})\, 
\vert\, y\in Y\},
\\
(\uplambda_{j})_{b}(Y) 
&=\{
\mc{W}^{b}(\mf{T}_{y},\alpha_{y})(N_{j}^{y})\,
\vert\, y\in Y\},
\\
\uptheta_{b}(Y) 
&=
\bigl\{
0.08
(\mc{W}^{b}(\mf{T}_{y},\alpha_{y})(N_{m}^{y})
-
\mf{m}^{b}(\mf{T}_{y},\alpha_{y})(\ms{f}_{m}^{y}))+
\\
&0.1
(\mc{W}^{b}(\mf{T}_{y},\alpha_{y})(N_{w}^{y})
-
\mf{m}^{b}(\mf{T}_{y},\alpha_{y})(\ms{f}_{w}^{y}))\,
\vert\, y\in Y\bigr\}.
\end{aligned}
\end{equation*}}
Here we let 
$\pf{D}$
be the object map of the source functor of 
$\ms{gr}\circ\mc{W}$.
Let 
$\muup_{m}$ and $\muup_{w}$
be called global light and global heavy nucleon masses
w.r.t. $\mc{L}$ respectively,
let 
$\uplambda_{m}$ and $\uplambda_{w}$
be called
global light and global heavy fragment masses
w.r.t. $\mc{L}$ respectively,
and 
$\uptheta$
be called global Terrell law w.r.t. $\mc{L}$. 
\par
Next we introduce the crucial structure of the entire work
and explain how it resolves the diverse forms of the universality claim,
but let us start with some convention.
$\mf{G}(G,F,\uprho)$ acts covariantly on the field $\ms{A}$ of nucleon observables 
via $\Pr_{1}$
and contravariantly on the field $\ms{A}^{\ast}$ of nucleon phases 
via the conjugate of $\Pr_{1}$;
while $\mf{G}(G,F,\uprho)$ acts contravariantly on the field $\pf{T}$ of operations 
via $\Pr_{2}$.
Therefore for any object $\mc{N}$ of $\mf{G}(G,F,\uprho)$, 
the group $H$ acts on $\ms{A}_{\mc{N}}$ via $\uppsi^{\mc{N}}$ 
and on $\ms{A}_{\mc{N}}^{\ast}$ via duality, 
while $H$ acts on $\pf{T}_{\mc{N}}$ via $\mf{b}^{\mc{N}}$.
Let $l\in H$, $\ms{f}\in\ms{A}_{\mc{N}}$, $\mf{c}\in\ms{A}_{\mc{N}}^{\ast}$
$\mf{T}\in\pf{T}_{\mc{N}}$, $\alpha\in\ms{P}_{\mc{N}}^{\mf{T}}$,
$a\in\mc{A}(\mc{N})_{\alpha}^{\mf{T}}$ 
and $\upomega$ continuous functional on $\mc{A}(\mc{N})_{\alpha}^{\mf{T}}$.
We let 
$\ms{f}^{l}=\uppsi^{\mc{N}}(l)(\ms{f})$, 
$\mf{c}^{l}=\mf{c}\circ\uppsi^{\mc{N}}(l^{-1})$,
$\mf{T}^{l}=\mf{b}^{\mc{N}}(l)(\mf{T})$, 
$a^{l}=\ms{V}(\mc{N})_{\alpha}^{\mf{T}}(l)(a)$, 
and 
$\upomega^{l}=\upomega\circ\ms{V}(\mc{N})_{\alpha}^{\mf{T}^{l}}(l^{-1})$.
Notice that the last two
are an element and a continuous functional on 
$\mc{A}(\mc{N})_{\alpha}^{\mf{T}^{l}}$
respectively.
For any functor $\mc{L}$ from a category $\mc{K}$ to $\mf{G}(G,F,\uprho)$,
let $\mc{L}_{i}=\Pr_{i}\circ\mc{L}_{m}$ for any $i\in\{1,2\}$.
Hence $(\ms{A}\circ\mc{L}_{o},\mc{L}_{1})$ is a functor from $\mc{K}$ to $\ms{Ab}$,
while $(\pf{T}\circ\mc{L}_{o},\mc{L}_{2})$ is a contravariant field 
under action of $\mc{K}$.
For any $T\in Mor_{\mc{K}}$, 
$\ms{h}\in\ms{A}_{d(T)}$, 
$\mf{h}\in\ms{A}_{c(T)}^{\ast}$ 
and $\mf{Q}\in\pf{T}_{c(T)}$
we let $\ms{h}^{T}=\mc{L}_{1}(T)(\ms{h})$, $\mf{h}^{T}=\mf{h}\circ\mc{L}_{1}(T)$, 
and $\mf{Q}^{T}=\mc{L}_{2}(T)(\mf{Q})$. 
\par
A \textbf{nucleon-fragment doublet $\mc{T}$} on a category $\mf{C}$
is a tuple
$\lr{S,J,\mc{Z},\mc{S}}{\mc{L},\mf{m},\mc{W},R,\pf{D},\pf{U}}$ 
satisfying the following properties
\footnote{
the complete and detailed definition is given in 
Def. \ref{06161650},
here for simplicity we assume that $R$ is the constant map with constant value equal to $H$.}.
\begin{enumerate}
\item
$\pf{U}$ and $\pf{D}$ are maps defined on subsets 
$Dom(\pf{U})$
and 
$Dom(\pf{D})$
of the object set of $\mf{G}(G,F,\uprho)$
respectively,
such that 
$Dom(\pf{D})\subseteq Dom(\pf{U})$,
$\pf{U}_{\mc{N}}\subseteq\pf{T}_{\mc{N}}$
and
$\pf{D}_{\mc{M}}\subseteq\pf{U}_{\mc{M}}$
for all $\mc{N}\in Dom(\pf{U})$
and $\mc{M}\in Dom(\pf{D})$;
\item
$(\pf{U},\Pr_{2})$ and $(\pf{D},\Pr_{2})$ 
are fields contravariant 
under action of $Dom(\pf{U})$ and $Dom(\pf{D})^{0}$ 
respectively;
\item
the third item in \eqref{07101147} 
holds true by replacing in all the occurrances 
$\mf{G}(G,F,\uprho)$ by $Dom(\pf{U})$
and by $Dom(\pf{D})^{0}$;
\item
$\mc{L}$ is a functor from the category $\mf{C}$ to $\mf{G}(G,F,\uprho)$; 
\label{07080602st1}
\item
$S$ is a field of nucleon phases contravariant under action of $\Uptheta(\pf{U},\mc{L})$
via the conjugate of $\mc{L}_{1}$;
\label{07080602st2}
\item
each fiber of $S$ is covariant under action of $H$ via the dual of $\uppsi\circ\mc{L}_{o}$;
\label{07080602st3}
\item
$\mf{m}$ is a section of maps valued in $S-$valued maps,
\emph{contravariant} 
under action of $\Uptheta(\pf{U},\mc{L})$ via $S_{m}$ and $\mc{L}_{2}$,
whose values induce covariant sections under action of $H$ via the dual of 
$\uppsi\circ\mc{L}_{o}$ and via $\mf{b}\circ\mc{L}_{o}$;
\label{07080602st4}
\item
$\mc{Z}$ is a field contravariant under action of $\Upxi(\pf{D},\mc{L})$;
\item
$\mc{S}$ is a map defined on the morphism set of $\Upxi(\pf{D},\mc{L})$
satisfying properties consistent with $\mc{Z}$
and such that 
$\mc{S}(T,\mf{T},\alpha)$ is a $\ast-$homomorphism 
from $\mc{A}(b)_{\alpha}^{\mf{T}^{T}}$ to $\mc{A}(a)_{\alpha}^{\mf{T}}$,
for any
$a,b\in\Upxi(\pf{D},\mc{L})$, 
$T\in Mor_{\Upxi(\pf{D},\mc{L})}(b,a)$,
$\mf{T}\in\pf{D}_{a}$
and
$\alpha\in\ms{P}_{a}^{\mf{T}}$;
\item
$J$, a subfield of $\mc{Z}$,
is a field of disjoint union over operations of set of maps 
-
with values fragment states originated via the nucleon phases 
determined by $\mf{m}$
- 
contravariant under action of $\Upxi(\pf{D},\mc{L})$
via $J_{m}$ where $J_{1}=\mc{L}_{2}$ and $J_{2}$ is a map induced by the conjugate of 
the evaluation of $\mc{S}$;
\label{07080602st5}
\item
each fiber of $J$ is covariant under action of $H$
via a map induced by the dual of $\ms{V}\circ\mc{L}_{o}$;
\label{07080602st6}
\item
$\mc{W}$ is a map such that
$\ms{gr}\circ\mc{W}$ is a section of $J$,
\emph{contravariant} under action of $\Upxi(\pf{D},\mc{L})$ via $J_{m}$ and $\mc{L}_{2}$,
whose values
induce sections covariant under action of $H$ 
via the dual of $\ms{V}\circ\mc{L}_{o}$ and via $\mf{b}\circ\mc{L}_{o}$.
\label{07080602st7}
\end{enumerate}
Here 
$\Uptheta(\pf{U},\mc{L})$ 
is the subcategory of $\mf{C}$ inverse images via $\mc{L}$
of $Dom(\pf{U})$ the full subcategy of $\mf{G}(G,F,\uprho)$ whose object set is the domain of $\pf{U}$;
while 
$\Upxi(\pf{D},\mc{L})$ 
is the subcategory of $\mf{C}$ inverse images via $\mc{L}$
of $Dom(\pf{D})^{0}$ a strict subcategy of $\mf{G}(G,F,\uprho)$ whose object set is the domain of $\pf{D}$.
In particular $\Upxi(\pf{D},\mc{L})$ is a strict subcategory of $\Uptheta(\pf{U},\mc{L})$. 
$J_{1}$ is the component of $J_{m}$ acting on operations,
and $J_{2}$ is the component of $J_{m}$ acting on maps of fragment states.
\par
The symmetries above described in \eqref{07080602st4} and \eqref{07080602st7}
imply what we said in \eqref{07062056} in the initial list, 
we shall return later to these properties.
For any object $a$ of $\Upxi(\pf{D},\mc{L})$ and any $\mf{T}\in\pf{D}_{a}$,
$\mf{m}^{a}(\mf{T})$ and $\mc{W}^{a}(\mf{T})$ are maps defined on 
$\ms{P}_{a}^{\mf{T}}$ 
such that for all $\alpha\in\ms{P}_{a}^{\mf{T}}$
we have that 
\begin{enumerate}
\item
$\mf{m}^{a}(\mf{T},\alpha)$ 
is a nucleon phase of the nucleon system $\mc{L}(a)$
generated by the fissioning system $a$;
\item
\begin{equation}
\label{07121138}
\begin{aligned}
\exists\,\mf{r}&\in Rep^{a}(\mf{m}^{a}(\mf{T},\alpha))
\\
\mc{W}^{a}(\mf{T},\alpha)&=\uprho_{\mf{r}},\,
\mf{T}_{\mf{r}}=\mf{T},
\alpha_{\mf{r}}=\alpha;
\end{aligned}
\end{equation}
in particular 
\begin{enumerate}
\item
$\mc{W}^{a}(\mf{T},\alpha)\in\mf{N}^{a}(\mf{m}^{a}(\mf{T},\alpha))$
namely
$\mc{W}^{a}(\mf{T},\alpha)$ is a fragment state originated via $\mf{m}^{a}(\mf{T},\alpha)$,
\item
$\mc{W}^{a}(\mf{T},\alpha)$ is a state of the fragment system 
$\mc{O}(a)_{\alpha}^{\mc{T}}$
whose operator algebra is $\mc{A}(a)_{\alpha}^{\mf{T}}$
and if $\alpha>0$ then the system
evolves in time through $(\ep^{a})_{\alpha}^{\mf{T}}(-\alpha^{-1}(\cdot))$.
$\mf{m}^{a}(\mf{T},\alpha)$ is the phase of the nucleon system $\mc{L}(a)$, 
occurring by performing the operation $\mf{T}$ 
on the state of thermal equilibrium $(\ps{\upvarphi}^{a})_{\alpha}^{\mf{T}}$ at inverse temperature $\alpha$
with respect to the dynamical system underlying $\mc{O}(a)_{\alpha}^{\mf{T}}$.
\end{enumerate}
\end{enumerate}
Since the definition of the map $\ms{P}$ we have that
$\mf{m}(\mf{T},\alpha)$ and $\mc{W}(\mf{T},\alpha)$ 
occur by performing the operation $\mf{T}$
on $(\ps{\upvarphi}^{a})_{\alpha}^{\mf{T}}$
only for those 
$\alpha\in\ms{I}_{a}^{\mf{T}}$ 
such that 
\emph
{the symmetry group of the state of thermal equilibrium 
$(\ps{\upvarphi}^{a})_{\alpha}^{\mf{T}}$
is larger than
the symmetry group of the state of thermal equilibrium 
$(\ps{\upvarphi}^{a})_{(\beta_{c})_{a}^{\mf{T}}}^{\mf{T}}$},
justifying \eqref{07081619} in the initial list. 
\par
Let us call $\mf{m}$ and $\mc{W}$ 
the $\mc{T}-$nucleon phase and $\mc{T}-$fragment state respectively.
For any
$a,b\in\Upxi(\pf{D},\mc{L})$, $\mf{T}\in\pf{D}_{a}$, 
$\alpha\in\ms{P}_{a}^{\mf{T}}$,
$T\in Mor_{\Upxi(\pf{D},\mc{L})}(b,a)$
continuous functional $\upomega$ on $\mc{A}(a)_{\alpha}^{\mf{T}}$ 
and element $B$ of $\mc{A}(b)_{\alpha}^{\mf{T}^{T}}$
we let 
$\upomega^{T}=\upomega\circ\mc{S}(T,\mf{T},\alpha)$ and $B^{T}=\mc{S}(T,\mf{T},\alpha)B$.
\par
The characteristics core of $\mf{m}$ and $\mc{W}$
consists of \eqref{07121138} 
relating the two natural transformations, 
and the properties of equivariance
described in \eqref{07080602st4} and \eqref{07080602st7},
namely for any 
$d,e\in\Uptheta(\pf{U},\mc{L})$, 
$\mf{Q}\in\pf{U}_{d}$, $\beta\in\ms{P}_{d}^{\mf{Q}}$,
$Q\in Mor_{\Uptheta(\pf{U},\mc{L})}(e,d)$,
and 
$a,b\in\Upxi(\pf{D},\mc{L})$, 
$\mf{T}\in\pf{D}_{a}$, $\alpha\in\ms{P}_{a}^{\mf{T}}$,
$T\in Mor_{\Upxi(\pf{D},\mc{L})}(b,a)$,
$l\in H$,
we have
\begin{equation}
\label{07081634}
\begin{aligned}
\mf{m}^{e}(\mf{Q}^{Q},\beta)&=\mf{m}^{d}(\mf{Q},\beta)^{Q},
\\
\mf{m}^{d}(\mf{Q}^{l},\beta)&=\mf{m}^{d}(\mf{Q},\beta)^{l},
\\
\mc{W}^{b}(\mf{T}^{T},\alpha)&=\mc{W}^{a}(\mf{T},\alpha)^{T},
\\
\mc{W}^{a}(\mf{T}^{l},\alpha)&=\mc{W}^{a}(\mf{T},\alpha)^{l}.
\end{aligned}
\end{equation}
From the above equalities we deduce \eqref{07062056} in the initial list. 
More in general it is clear by the very definition that
nucleon-fragment doublets generate resolutions of the claim,
more specifically \eqref{07121138} and \eqref{07081634}
determine the $\mc{T}-$resolution of the 
equivariant form of the universality claim (M).
It is worthwhile emphasizing that the concept of nucleon-fragment doublet
is flexible enough to model diverse situations with symmetries $H$ and $\Upxi(\pf{D},\mc{L})$, 
in which an origination mechanism of the type above described occurs.
Thus the terms nucleon phase, fragment state, fissioning system
and later prompt-neutron yield,
have to be understood in a broad sense. 
\par
A nucleon-fragment doublet on $\mf{C}$ can be related with 
what we call an extended $\mf{C}-$equivariant stability, 
and the first main result of the entire work is the construction
of the \textbf{canonical extended $\ms{C}_{u}(H)-$equivariant stability $\mc{E}_{\bullet}$}.
\emph
{The main difficulty is to construct $\mf{m}$ and $\mc{W}$ satisfying 
\eqref{07121138} and \eqref{07081634}}.
Let $\mc{T}_{\bullet}$ denote the nucleon-fragment doublet on $\ms{C}_{u}(H)$
related to $\mc{E}_{\bullet}$, in such a case 
$\Uptheta(\pf{U},\mc{L})=\ms{C}_{u}(H)$
and
$\Upxi(\pf{D},\mc{L})=\ms{C}_{u}^{0}(H)$.
\par
Let us call global nucleon phase and global fragment state 
the $\mc{T}_{\bullet}-$nucleon phase and $\mc{T}_{\bullet}-$fragment state 
respectively,
then 
\textbf{\eqref{07121138} and \eqref{07081634}
applied to $\mc{T}_{\bullet}$ resolve the 
equivariant form of the universality claim (M).}
\par
In the second main result of this work
- 
under a suitable hypothesis
ensuring the functoriality of the source and target functors of the second of the below
morphisms 
-
we encode \eqref{07081634} in the following result 
\begin{equation}
\label{07121303}
\begin{aligned}
\mf{m}_{\star}
\text{ morphism of }
&\ms{Fct}(\ms{C}_{u}(H)^{op},\ms{Fct}(H,\ms{set})),
\\
\mf{v}_{\natural}
\text{ morphism of }
&\ms{Fct}(\ms{C}_{u}^{0}(H)^{op},\ms{Fct}(H,\ms{set})),
\end{aligned}
\end{equation}
related each other, modulo quotient, by the relation in \eqref{07121138},
establishing in this way the \textbf{compact equivariant form of the universality claim}.
\par
Finally let us examine how the concept of nucleon-fragment doublet 
enables also to resolve the invariant form of the universality claim.
To this end let us define a Terrell-type law corresponding to $\mc{T}$,
exhibiting invariances as a result of the above equivariances.
For any $a\in\Upxi(\pf{D},\mc{L})$
let the set of fission processes occurring to the fissioning system $a$
be the set $\ms{N}_{as}^{\mc{T}}(a)$ of the $4-$tuples
\begin{equation*}
\mf{n}=\lr{a}{\mf{T},\alpha,\{\ms{f}_{j},N_{j}\}_{j\in\{m,w\}}},
\end{equation*}
satisfying properties related to the binary fission phenomenon
and such that $\mf{T},\alpha$ are as above, 
$\ms{f}_{m}$
and 
$\ms{f}_{w}$ are observables of the nucleon system $\mc{L}(a)$,
while 
$N_{m}$ and $N_{w}$ are positive observables, 
namely elements of $(\mc{A}(a)_{\alpha}^{\mf{T}})_{+}$.
Let the set $\ms{N}_{as}^{\mc{T}}$ of fission processes 
be the union of the family $\{\ms{N}_{as}^{\mc{T}}(a)\}_{a\in\Upxi(\pf{D},\mc{L})}$. 
Since the semantics outlined we have that
\begin{enumerate}
\item
$a$ is the fissioning system for which the fission process $\mf{n}$ occurs;
\item
$\mc{L}(a)$ is the nucleon system generated by the fissioning system $a$;
\item
$\mc{O}(a)_{\alpha}^{\mf{T}}$ 
is the fragment system 
whose observable algebra 
is $\mc{A}(a)_{\alpha}^{\mf{T}}$ and 
if $\alpha>0$ evolving in time through
$(\ep^{a})_{\alpha}^{\mf{T}}(-\alpha^{-1}(\cdot))$;
\item
$(\ps{\upvarphi}^{a})_{\alpha}^{\mf{T}}$ is a state of $\mc{O}(a)_{\alpha}^{\mf{T}}$ 
and if $\alpha>0$ then
it is a state of thermal equilibrium at inverse temperature $\alpha$
with respect to the underlying dynamical system of 
$\mc{O}(a)_{\alpha}^{\mf{T}}$;
\item 
$\mf{m}^{a}(\mf{T},\alpha)$ 
is the phase of $\mc{L}(a)$, 
occurring by performing $\mf{T}$ on $(\ps{\upvarphi}^{a})_{\alpha}^{\mf{T}}$;
\item 
$\mc{W}^{a}(\mf{T},\alpha)$ 
is the state of $\mc{O}(a)_{\alpha}^{\mf{T}}$ 
\emph{originated} via $\mf{m}^{a}(\mf{T},\alpha)$;
\item
for $j\in\{m,w\}$
\begin{enumerate}
\item
$\mf{m}^{a}(\mf{T},\alpha)(\ms{f}_{j})$ is the mean value in $\mf{m}^{a}(\mf{T},\alpha)$ 
of $\ms{f}_{j}$, 
\item
$\mc{W}^{a}(\mf{T},\alpha)(N_{j})$ is the mean value in $\mc{W}^{a}(\mf{T},\alpha)$ 
of $N_{j}$.
\end{enumerate}
\end{enumerate}
In order to decode the binary fission let us 
give the following interpretation of the data of $\mf{n}$.
Let $\#_{m}=$light and $\#_{w}=$heavy, then for all $j\in\{m,w\}$
\begin{enumerate}
\item
$\mf{T}$ is the operation realizing the fission process $\mf{n}$ whenever performed on 
$(\ps{\upvarphi}^{a})_{\alpha}^{\mf{T}}$;
\item
$N_{j}$ is the observable of $\mc{O}(a)_{\alpha}^{\mf{T}}$ relative to
the mass of the $\#_{j}$ fragment;
\item
$\ms{f}_{j}$ is the observable of $\mc{L}(a)$ relative to
the mass of the $\#_{j}$ nucleon core.
\end{enumerate}
Next to $\ms{N}_{as}^{\mc{T}}$ the action of $H$ naturally extends, since for all $l\in H$ we can set 
\begin{equation*}
\mf{k}^{\mc{T}}(l)(\mf{n})=
\lr{a}{\mf{T}^{l},\alpha,\{\ms{f}_{j}^{l},N_{j}^{l}\}_{j\in\{m,w\}}},
\end{equation*}
later simply denoted by $\mf{n}^{l}$,
as well ``essentially'' extends the contravariant action of $\Upxi(\pf{D},\mc{L})$,
meaning that for any object $b$ of $\Upxi(\pf{D},\mc{L})$
and any morphism $T$ of $\Upxi(\pf{D},\mc{L})$ from $b$ to $a$
we have
\begin{equation*}
\mf{n}^{(T,\mf{x})}
=\lr{b}{\mf{T}^{T},\alpha,\{\ms{f}_{j}',N_{j}'\}_{j\in\{m,w\}}},
\end{equation*}
where $\mf{x}=\{\ms{f}_{j}',N_{j}'\}_{j\in\{m,w\}}$ such that
$(\ms{f}_{j}')^{T}=\ms{f}_{j}$ and $(N_{j}')^{T}=N_{j}$.
Later we shall see how to modify this map in order to have a ``true'' contravariant action.
\par
For all $j\in\{m,w\}$ define 
$\upkappa_{j}^{\mc{T}}$
and
$\upzeta_{j}^{\mc{T}}$
the functions
on 
$\ms{N}_{as}^{\mc{T}}$
mapping any $\mf{n}$
into 
\begin{equation*}
\begin{aligned}
\upkappa_{j}^{\mc{T}}(\mf{n})
&\coloneqq
\mc{W}^{a}(\mf{T},\alpha)(N_{j}),
\\
\upzeta_{j}^{\mc{T}}(\mf{n})
&\coloneqq
\mf{m}^{a}(\mf{T},\alpha)(\ms{f}_{j});
\end{aligned}
\end{equation*}
called restricted $\mc{T}-\#_{j}$ fragment mass 
and restricted $\mc{T}-\#_{j}$ nucleon mass, 
collectively
called restricted 
$\mc{T}-$fragment masses
and
$\mc{T}-$nucleon masses
respectively.
Next the restricted $\mc{T}-$Terrell law is the function 
$\nuup^{\mc{T}}$ 
on 
$\ms{N}_{as}^{\mc{T}}$
mapping any $\mf{n}$
into the mean value $\nuup^{\mc{T}}(\mf{n})$ 
of the prompt-neutron yield in $\mc{W}^{a}(\mf{T},\alpha)$, 
said also the 
\emph{prompt-neutron yield of the fission process $\mf{n}$},
where
\begin{equation*}
\nuup^{\mc{T}}
\coloneqq
0.08(\upkappa_{m}^{\mc{T}}-\upzeta_{m}^{\mc{T}})
+
0.1(\upkappa_{w}^{\mc{T}}-\upzeta_{w}^{\mc{T}}).
\end{equation*}
As a result of \eqref{07081634} we obtain 
under essential contravariant action of $\Upxi(\pf{D},\mc{L})$ and action of $H$ 
the invariance of the restricted $\mc{T}-$fragment and nucleon masses,
i.e. for all $j\in\{m,w\}$ 
\begin{equation}
\label{06082144}
\begin{aligned}
\upkappa_{j}^{\mc{T}}(\mf{n}^{(T,\mf{x})})
&=
\upkappa_{j}^{\mc{T}}(\mf{n}),
\\
\upkappa_{j}^{\mc{T}}(\mf{n}^{l})
&=
\upkappa_{j}^{\mc{T}}(\mf{n}),
\\
\upzeta_{j}^{\mc{T}}(\mf{n}^{(T,\mf{x})})
&=
\upzeta_{j}^{\mc{T}}(\mf{n}),
\\
\upzeta_{j}^{\mc{T}}(\mf{n}^{l})
&=
\upzeta_{j}^{\mc{T}}(\mf{n});
\end{aligned}
\end{equation}
therefore the invariance of $\nuup^{\mc{T}}$ follows
\begin{equation}
\label{06082142}
\begin{aligned}
\nuup^{\mc{T}}(\mf{n}^{(T,\mf{x})})
&=
\nuup^{\mc{T}}(\mf{n}),
\\
\nuup^{\mc{T}}(\mf{n}^{l})
&=
\nuup^{\mc{T}}(\mf{n}).
\end{aligned}
\end{equation}
\eqref{06082144} and \eqref{06082142}
constitute 
\emph
{the invariance of the restricted $\mc{T}-$fragment and nucleon masses, 
and the restricted $\mc{T}-$Terrell law
respectively.
The properties of invariance of the restricted global fragment and nucleon masses, 
and restricted global Terrell law follow for $\mc{T}=\mc{T}_{\bullet}$.}
\par
Now in order to have a true contravariant action and then to resolve the invariant form of the 
universality claim let us procede as follows.
Let $\mc{P}^{\mc{T}}=(\mc{P}_{o}^{\mc{T}},\mc{P}_{m}^{\mc{T}})$
be the couple of maps 
defined on the object set and morphism set of $\Upxi(\pf{D},\mc{L})$ respectively,
such that 
$\mc{P}_{o}^{\mc{T}}(a)$ is the power set of $\ms{N}_{as}^{\mc{T}}(a)$,
while $\mc{P}_{m}^{\mc{T}}(T)$ is the map on $\mc{P}_{o}^{\mc{T}}(a)$ 
preserving the union and
mapping any $\{\mf{n}\}$ into the set of the $\mf{n}^{(T,\mf{x})}$ 
where $\mf{x}=\{\ms{f}_{j}',N_{j}'\}_{j\in\{m,w\}}$ such that
$(\ms{f}_{j}')^{T}=\ms{f}_{j}$ and $(N_{j}')^{T}=N_{j}$.
\par
We call 
\emph
{$\mc{T}-\#_{j}$ nucleon mass and $\mc{T}-\#_{j}$ fragment mass} 
the maps
$\muup_{j}^{\mc{T}}$
and
$\uplambda_{j}^{\mc{T}}$
defined on the object set of $\Upxi(\pf{D},\mc{L})$ such that 
$(\muup_{j}^{\mc{T}})_{a}$ 
and
$(\uplambda_{j}^{\mc{T}})_{a}$ 
are the $\mathscr{P}(\R)-$valued extensions to $\mc{P}_{o}^{\mc{T}}(a)$ of the 
restrictions of 
$\upzeta_{j}^{\mc{T}}$ 
and
$\upkappa_{j}^{\mc{T}}$ 
respectively
to $\ms{N}_{as}^{\mc{T}}(a)$,
with $j\in\{m,w\}$.
Moreover we call 
\emph{$\mc{T}$-Terrell law} the map $\uptheta^{\mc{T}}$ 
defined on the object set of $\Upxi(\pf{D},\mc{L})$ such that 
$\uptheta_{a}^{\mc{T}}$ 
is the $\mathscr{P}(\R)-$valued extension to $\mc{P}_{o}^{\mc{T}}(a)$ of the 
restriction of $\upnu^{\mc{T}}$ 
to $\ms{N}_{as}^{\mc{T}}(a)$.
\par
Now $\mc{P}^{\mc{T}}$ results to be a functor from $\Upxi(\pf{D},\mc{L})^{op}$ to $\ms{set}$,
since \eqref{07081634}. 
Let $\ms{R}^{\mc{T}}$ denote the unique functor from $\Upxi(\pf{D},\mc{L})^{op}$ to $\ms{set}$
such that its object map is the constant map with constant value equal the power set $\mathscr{P}(\R)$ of $\R$ 
and its morphism map is the constant map with constant value equal $Id_{\mathscr{P}(\R)}$.
Let $\ms{R}^{H}$ denote the unique functor from $H$ to $\ms{set}$ such that 
$\ms{R}_{o}^{H}=(\un\mapsto\mathscr{P}(\R))$ and $\ms{R}_{m}^{H}$ is the constant map 
with constant value equal to $Id_{\mathscr{P}(\R)}$,
Finally let $\uptau_{a}^{\mc{T}}$ the map on $H$ 
such that $\uptau_{a}^{\mc{T}}(l)$ is the extension of
$\mf{k}^{\mc{T}}(l)$
to $\mc{P}_{o}^{\mc{T}}(a)$ for all $l\in H$,
and set $\ms{Q}_{a}^{\mc{T}}=(\un\mapsto\mc{P}_{o}^{\mc{T}}(a),\uptau_{a}^{\mc{T}})$.
\emph
{Thus as a result of \eqref{07081634} the 
universality of the $\mc{T}-$nucleon masses and $\mc{T}-$Terrell law
follows for all $j\in\{m,w\}$ in terms of 
\begin{enumerate}
\item
Invariance of the $\mc{T}-$nucleon and fragment masses and $\mc{T}-$Terrell law 
under contravariant action of $\Upxi(\pf{D},\mc{L})$
\begin{equation}
\label{07311318}
\muup_{j}^{\mc{T}},\,
\uplambda_{j}^{\mc{T}},\,
\uptheta^{\mc{T}}
\in
Mor_{\ms{Fct}(\Upxi(\pf{D},\mc{L})^{op},\ms{set})}(\mc{P}^{\mc{T}},\ms{R}^{\mc{T}}).
\end{equation}
\item
Invariance of the $\mc{T}-$nucleon and fragment masses and $\mc{T}-$Terrell law 
under action of $H$.
For all $\ms{a}\in\Upxi(\pf{D},\mc{L})$ 
\begin{equation}
\label{07311319}
(\un\mapsto(\muup_{j})_{\ms{a}}^{\mc{T}}),\,
(\un\mapsto(\uplambda_{j})_{\ms{a}}^{\mc{T}}),\,
(\un\mapsto\uptheta_{\ms{a}}^{\mc{T}})
\in
Mor_{\ms{Fct}(H,\ms{set})}(\ms{Q}_{\ms{a}}^{\mc{T}},\ms{R}^{H}).
\end{equation}
\end{enumerate}}
In other words 
\begin{equation*}
\begin{aligned}
(\muup_{j}^{\mc{T}})_{d(T)}
\circ
\mc{P}_{m}^{\mc{T}}(T)
&=
(\muup_{j}^{\mc{T}})_{c(T)},
\\
(\muup_{j}^{\mc{T}})_{\ms{a}}
\circ
\uptau_{\ms{a}}^{\mc{T}}(l)
&=
(\muup_{j}^{\mc{T}})_{\ms{a}},\,
\forall l\in H 
\end{aligned}
\end{equation*}
similarly for $\uplambda_{j}^{\mc{T}}$,
and
\begin{equation*}
\begin{aligned}
\uptheta_{d(T)}^{\mc{T}}
\circ
\mc{P}_{m}^{\mc{T}}(T)
&=
\uptheta_{c(T)}^{\mc{T}},
\\
\uptheta_{\ms{a}}^{\mc{T}}
\circ
\uptau_{\ms{a}}^{\mc{T}}(l)
&=
\uptheta_{\ms{a}}^{\mc{T}},\,
\forall l\in H. 
\end{aligned}
\end{equation*}
Here the contravariance of $\mc{P}^{\mc{T}}$ under action of $\Upxi(\pf{D},\mc{L})$ 
means that
for all $T,S\in Mor_{\Upxi(\pf{D},\mc{L})}$ such that $d(T)=c(S)$ 
\begin{equation*}
\begin{aligned}
\mc{P}_{m}^{\mc{T}}(T)
\mc{P}_{o}^{\mc{T}}(c(T))
&\subseteq
\mc{P}_{o}^{\mc{T}}(d(T)),
\\
\mc{P}_{m}^{\mc{T}}(T\circ S)
&=
\mc{P}_{m}^{\mc{T}}(S)
\circ
\mc{P}_{m}^{\mc{T}}(T).
\end{aligned}
\end{equation*}
Thus
\eqref{07311318} and \eqref{07311319}
jointly \eqref{07121138} and \eqref{07081634}
represent the $\mc{T}-$resolution of the 
invariant form of the universality claim.
\par
If we call
global $\#_{j}$ nucleon mass,
global $\#_{j}$ fragment mass
and 
global Terrell law
the maps
$\muup_{j}^{\mc{T}_{\bullet}}$,
$\uplambda_{j}^{\mc{T}_{\bullet}}$
and 
$\uptheta^{\mc{T}_{\bullet}}$
respectively, 
then
\textbf{the universality of the global nucleon and fragment masses
and the universality of the global Terrell law
follow since 
\eqref{07311318} and \eqref{07311319}
for $\mc{T}=\mc{T}_{\bullet}$,
resolving jointly \eqref{07121138} and \eqref{07081634}
the invariant form of the universality claim}.
This is the third main result of this work.
Incidentally the invariant form of the universality claim 
results as a consequence of its equivariant form.
\par
Finally let the $\#_{j}$ nucleon and fragment masses, and Terrell law 
denote the restrictions of 
$\upzeta_{j}^{\mc{T}_{\bullet}}$,
$\upkappa_{j}^{\mc{T}_{\bullet}}$
and
$\nuup^{\mc{T}_{\bullet}}$
respectively
to the set of all the fission processes $\mf{n}$ satisfying the 
revised nucleon phase hypothesis
requiring 
$\upzeta_{m}^{\mc{T}_{\bullet}}(\mf{n})=82$,
$\upzeta_{w}^{\mc{T}_{\bullet}}(\mf{n})=126$
and that $H$ would contain as a subgroup 
the direct product of the universal covering group of the Poincar\'{e} 
group with the gauge group of the standard model.
Thus under the revised nucleon phase hypothesis 
as a result of \eqref{06082144} and \eqref{06082142} we can state what follows.
The light and heavy nucleon masses are invariant with constant values $82$ and $126$
and the prompt-neutron yield \eqref{06091115} is invariant 
under essential contravariant action 
over the field of fission processes
of suitable perturbations of \emph{fissioning systems},
and under action over the field of fission processes of relativistic
transformations of \emph{reference frames}.
\par
In conclusion it is worthwhile remarking that 
the universality of the global nucleon masses and the universality of the global Terrell law
- 
in the special form of the stability of the values $82$ and $126$ occurring in the Terrell law
and the invariance of the Terrell law itself under the above transformations
mainly the relativistic ones
-
is an experimentally testable property that can be provided 
in order to furnish an indirect empirical evidence of the existence and universality of the 
global nucleon phase $\mf{m}$.
\par
Let us outline the main content of the three parts.
\par
In part \ref{07301106}
we introduce the category $\mf{G}(G,F,\uprho)$ of nucleon systems,
set its physical interpretation,
define the concepts of extended $\mf{C}-$equivariant stability,
nucleon-fragment doublet $\mc{T}$ on a category $\mf{C}$,
define the $\mc{T}-$nucleon phase, the $\mc{T}-$fragment state
and the $\mc{T}-$resolution 
of the equivariant form of the universality claim.
Then in the first main result of this work 
we construct, and in this way establish the existence of, 
the canonical extended $\ms{C}_{u}(H)-$equivariant stability
and the canonical nucleon-fragment doublet $\mc{T}_{\bullet}$ on $\ms{C}_{u}(H)$.
As a result we resolve the equivariant form of the universality claim,
by applying its $\mc{T}-$resolution to $\mc{T}_{\bullet}$. 
\par
In part \ref{07301107} 
we establish in a more coincise and elegant form  
the equivariant form of the universality claim.
Namely in the second main result of this work 
we encode the equivariances \eqref{07081634} 
applied for $\mc{T}=\mc{T}_{\bullet}$
in a unique fashion into the existence of 
natural transformations $\mf{m}_{\star}$ and $\mf{v}_{\natural}$ satisfying \eqref{07121303}.
Then we define the $\mc{T}-$nucleon and $\mc{T}-$fragment light and heavy masses, 
and the $\mc{T}-$Terrell law, establish 
the $\mc{T}-$resolution of the invariant form of the universality claim,
and apply it to $\mc{T}_{\bullet}$ in order to establish 
in the third main result of this work 
the invariant form of the universality claim,
in particular the universality of the global Terrell law.
\section{Terminology and preliminaries}
\label{not1}
\subsection{Sets and topologies}
In all three parts 
we consider the Zermelo-Fraenkel theory together the axiom of 
universes stating that for any set there exists a universe 
containing it as an element \cite[p. $10$]{ks}.
We consider fixed a universe $\mc{U}$ and a universe $\mc{U}_{0}$ such that $\mc{U}\in\mc{U}_{0}$.
See \cite[Expose I Appendice]{sga4} for the definition and properties of universes, 
see also \cite[p. $22$]{mcl} and \cite[$\S1.1$]{bor},
in particular if $V$ is a universe, then $y\in V$ implies $y\subset V$,
hence $\mc{U}\subset\mc{U}_{0}$.
Let $\ms{set}$ be the category of sets belonging to the universe $\mc{U}$, 
functions as morphisms with map composition.
Whenever we refer to a set unless the contrary is stated,
we mean an element of $\mc{U}$, moreover
\textbf{for any structure $S$} (e.g. the structure of topological space, topological algebra, etc.)
\textbf{whenever we refer to ``the set of the $S$'s'', we always mean the subset of 
those elements of $\mc{U}$ satisfying the axioms of $S$}.
Let $\ms{Gr}$ be the category of groups and group morphisms with map composition
and let $\ms{Ab}$ be the full subcategory of $\ms{Gr}$ of abelian groups.
\par
If $A$, $B$ and $C$ are categories,
then we 
let $Obj(A)$ denote the set of objects of $A$, we let $a\in A$ denote $a\in Obj(A)$. 
For any $x,y\in A$ let $Mor_{A}(x,y)$ be the set of morphisms of $A$ from $x$ to $y$, 
let $\un_{x}$ be the identity morphism of $x$, 
while $Inv_{A}(x,y)=\{f\in Mor_{A}(x,y)\,\vert\,(\exists g\in Mor_{A}(y,x))(f\circ g=\un_{y},g\circ f=\un_{x}))\}$ 
denotes the, possibly empty if $x\neq y$,
set of invertible morphisms from $x$ to $y$;
set $Aut_{A}(y)=Inv_{A}(y,y)$, for any $y\in A$.
\par
Given two sets $A,B$ let $\mathscr{P}(A)$ and $\mathscr{P}_{\omega}(A)$  
denote the set of subsets and finite subsets of $A$ respectively.
For any map $f:A\to B$ let $f^{-1}:\mathscr{P}(B)\to\mathscr{P}(A)$ 
such that $f^{-1}(Y)\coloneqq\{a\in A\,\vert\,f(a)\in Y\}$ for any $Y\subseteq B$.
Given any set $x$, often and only if it will not cause confusion, 
we use the convention to denote $\{x\}$ by $x$,
so for example if $f:A\to B$ and $b\in B$, then
$f^{-1}(b)$ stands for $f^{-1}(\{b\})$. 
Let $\ms{ev}_{(\cdot)}$ denote the evaluation map, i.e. if $F:A\to B$ is any map and $a\in A$, 
then $\ms{ev}_{a}(F)\coloneqq F(a)$.
If $\ms{x}\in\prod_{a\in A}Mor_{\ms{set}}(B_{a},C_{a})$, 
with $A$ object of $\ms{set}$ and $B_{a},C_{a}$ objects of $\ms{set}$ for all $a\in A$, 
then we let $\ms{x}(a,b)$ denote $\ms{x}(a)(b)$ for any $b\in B_{a}$.
If $Y(a,b)=Y\in\mc{U}_{0}$ for all $a\in A$ and $b\in B_{a}$, then we let 
$\prod_{a\in A}\prod_{b\in B_{a}}Y$
denote
$\prod_{a\in A}\prod_{b\in B_{a}}Y(a,b)$.
If $f:X\to A$ and $g:X\to B$ then by abuse of the standard language 
we denote by $f\times g$ the map on $X$ with values in 
$A\times B$ such that $(f\times g)(x)\coloneqq(f(x),g(x))$ for all $x\in X$.
\par
Set
$\N_{0}\coloneqq\N-\{0\}$ and $\R_{0}\coloneqq\R-\{0\}$, 
while $\widetilde{\R}\coloneqq\R\cup\{\infty\}$
provided by the topology of one-point compactification.
If $A$ is any set then $Id_{A}$ is the identity map on $A$
we often use the convention to remove the index $A$ if it is 
clear the set involved.
If $S$ is a topological space, then $Cl(S)$, $Op(S)$ and $Comp(S)$ denote the sets of closed, 
open and compact subsets of $S$ respectively, while $\mc{B}(S)$ denotes the $\sigma-$field of Borel subsets of $S$.
If $T$ is a locally compact space and $E$ is a Hausdorff locally convex
space, let $\mc{C}_{c}(T,E)$ denote the linear space of continuous $E-$valued 
maps $f$ on $T$ with compact support $supp(f)$, 
where $supp(f)\coloneqq\ov{f^{-1}(E-\{\ze\})}$, set $\mc{C}_{c}(T)\coloneqq\mc{C}_{c}(T,\C)$
provided by the inductive limit topology
of uniform convergence over compact subsets of $T$.
\par
Let $X$ and $T$ be a locally compact group and locally compact space respectively, 
then 
$\mc{H}(X)$ is the set of Haar measures on $X$
and
$\mc{M}(T)$ is the set of Radon measures on $T$.
Let $\mu\in\mc{M}(T)$
and $S$ be a locally compact subspace of $T$, set
$\mu_{S}:\mc{C}_{c}(S)\to\C$ such that
$\mu_{S}(f)\coloneqq
\mu(\widetilde{f})$,
where $\widetilde{f}$ is the $\ze-$extension on $T$ of 
$f\in\mc{C}_{c}(S)$, thus $\mu_{S}\in\mc{M}(S)$.
Let $T,S$ be two locally compact spaces, $\mu\in\mc{M}(T)$
and $\ep:T\to S$ be $\mu-$proper, thus $\ep(\mu)$ denotes
the image of $\mu$ under $\ep$ as defined in
\cite[Ch. $5$, $\S 6$, $n^{\circ}1$, Def. $1$]{int1}.
By construction 
$\ep(\mu)\in\mc{M}(S)$
such that for all $f\in\mc{C}_{c}(S)$
\begin{equation}
\label{09191048}
\int f\,d\ep(\mu)
=
\int f\circ\ep\,d\mu.
\end{equation}
\par
Let $X$ be a locally compact group and $s\in X$,
set
$L_{s},R_{s}:X\to X$, 
such that
$L_{s}(x)\coloneqq s\cdot x$
and
$R_{s}(x)\coloneqq x\cdot s$,
while $L_{s}^{\ast},R_{s}^{\ast}:\C^{X}\to\C^{X}$
such that
$L_{s}^{\ast}(h)\coloneqq h\circ L_{s^{-1}}$
and
$R_{s}^{\ast}(h)\coloneqq h\circ R_{s^{-1}}$
respectively.
Whenever we will deal with different groups, it will be clear by the 
context to which group the maps $R$ and $L$ are referring to.
By definition $\mc{H}(X)$ is the set of left-invariant $\mu\in\mc{M}(X)$,
i.e. $\mu\in\mc{M}(X)$ such that $\mu\circ L_{s}^{\ast}\up\mc{C}_{c}(X)=\mu$, for all $s\in X$.
\par
If $X$ and $Y$ are two topological linear spaces over 
$\K\in\{\R,\C\}$, 
$\mc{L}(X,Y)$ denotes the
linear space of continuous linear maps from $X$ to $Y$,
set $\mc{L}(X)\coloneqq\mc{L}(X,X)$ and $X^{\ast}\coloneqq\mc{L}(X,\K)$.
$\mc{L}_{s}(X,Y)$ is the topological linear space
whose underlying linear space is $\mc{L}(X,Y)$ provided by the topology of pointwise convergence,
while
$\mc{L}_{w}(X,Y)$ is the locally convex linear space
whose underlying linear space is $\mc{L}(X,Y)$ provided
by the topology generated by the following set
of seminorms
$\{q_{(\upphi,x)}\mid(\upphi,x)\in Y^{\ast}\times X\}$,
where $q_{(\upphi,x)}(A)\doteq|\upphi(Ax)|$.
All the normed spaces in this work are assumed to be over the complex field.
In case $X$ is a normed space we assume $\mc{L}(X)$ to be provided by the topology generated by the usual $\sup-$norm.
If $\ms{X}$ is any structure including as a substructure the one of normed space say $\ms{X}_{0}$, 
for example the normed space underlying any normed algebra,
we let $\mc{L}(\ms{X})$ denote the normed space $\mc{L}(\ms{X}_{0})$
whenever it does not cause conflict of notations.
Therefore we never shall use this convention
in case $\ms{X}$ is an Hilbert $C^{\ast}-$module, 
where $\mc{L}(\ms{X})$ always denotes the set of all adjointable operators on $\ms{X}$, as we shall see in the second part.
If $A$ and $B$ are two linear operators in $X$, we set $[A,B]\coloneqq AB-BA$, where the composition and sum are to be 
understood in the context of possibly unbounded operators, i.e. defined on the intersection of the corresponding domains. 
If $X,Y$ are Hilbert spaces and $U\in\mc{L}(X,Y)$ is unitary then $\ms{ad}(U)\in\mc{L}(\mc{L}(X),\mc{L}(Y))$ 
denotes the isometry defined by $\ms{ad}(U)(a)\coloneqq UaU^{-1}$, for all $a\in\mc{L}(X)$.
Let $HS$ denote the set of Hilbert spaces. 
Finally unless the contrary is not stated any convention established in one part has to be understood valid for the
remaining of that part and for the following parts. 
\subsection{$C^{\ast}-$algebras, $C^{\ast}-$dynamical systems and their crossed products}
For any normed algebra $\mc{D}$ let
$R_{(\cdot)}^{\mc{D}}:\mc{D}\to\mc{L}(\mc{D})$ 
and
$L_{(\cdot)}^{\mc{D}}:\mc{D}\to\mc{L}(\mc{D})$ 
denote the 
right and left 
multiplication map on $\mc{D}$ respectively, i.e. 
$R_{a}(b)=ba$ and $L_{a}(b)=ab$ 
for any $a,b\in\mc{D}$, often we remove the index $\mc{D}$.
Let $\mc{A}$ be a $C^{\ast}-$algebra,
let $\mc{A}_{ob}\coloneqq\{a\in\mc{A}\,\vert\, a=a^{\ast}\}$
and $\mc{A}_{+}\coloneqq\{a^{\ast}a\,\vert\, a\in\mc{A}\}$.
$\ms{E}_{\mc{A}}$ denotes the set of states of $\mc{A}$,
if $\mc{A}$ is a von Neumann algebra $\ms{N}_{\mc{A}}$ denotes the set of its normal states.
If $\mc{{B}}$ is a $C^{\ast}-$algebra  
$Hom^{\ast}(\mc{A},\mc{B})$ is the set of $\ast-$homomorphisms
defined on $\mc{A}$ and at values in $\mc{B}$,
$Isom^{\ast}(\mc{A},\mc{B})$ is the subset of bijective elements of $Hom^{\ast}(\mc{A},\mc{B})$
and $Aut^{\ast}(\mc{A})=Isom^{\ast}(\mc{A},\mc{A})$.
Let $\ms{CA}^{\ast}$ denote the category of $C^{\ast}-$algebras and $\ast-$homomorphisms
with map composition as law of morphism composition,
easily we deduce that 
$Isom^{\ast}(\mc{A},\mc{B})=Inv_{\ms{CA}^{\ast}}(\mc{A},\mc{B})$,
so
$Aut^{\ast}(\mc{A})=Aut_{\ms{CA}^{\ast}}(\mc{A})$.
If in addition $\mc{A}$ and $\mc{B}$ are unital, then 
$Hom_{\un}^{\ast}(\mc{A},\mc{B})$ is the set of unit preserving
$\ast-$homomorphisms defined on $\mc{A}$ and at values in $\mc{B}$.
Let $\mc{A}^{+}$ denote the $\ast-$algebra whose underlying linear space
is $\mc{A}\times\C$, the involution and product are 
defined by 
$(a,\lambda)^{\ast}=(a^{\ast},\ov{\lambda})$, 
and
$(a,\lambda)\cdot(b,\mu)\coloneqq
(a\cdot b+\lambda b+\mu a,\lambda\mu)$, 
for all
$(a,\lambda),(b,\mu)\in\mc{A}\times\C$.
(\cite[Ch. $2$, $\S 7$, $n^{\circ}2$, $I$ and 
$\S 10$, $n^{\circ}1$, $III$]{nai}).
$(\ze,1)$ is the unity of $\mc{A}^{+}$, while 
the map $\upphi:a\mapsto(a,0)$ is an injective $\ast-$isomorphism
of $\mc{A}$ into $\mc{A}^{+}$, we often shall identify $\mc{A}$
with its image in $\mc{A}^{+}$ under $\upphi$. 
Under this identification 
$\mc{A}$ is a two side ideal of $\mc{A}^{+}$, 
so we have that
$L_{x},R_{x}\up\mc{A}$
are linear endomorphisms of the vector space underlying $\mc{A}$
for any $x\in\mc{A}^{+}$.
$\mc{A}^{+}$ is a $C^{\ast}-$algebra
if provided by
the following norm extending the one on $\mc{A}$, 
$\|x\|\coloneqq\|L_{x}\up\mc{A}\|$, well-set 
since $L_{(a,\lambda)}\up\mc{A}=L_{a}+\lambda\cdot\in\mc{L}(\mc{A})$
(\cite[Ch. $3$, $\S 16$, $n^{\circ}1$, $III$]{nai}).
Set $\tilde{A}$ the smallest unital subalgebra of $\mc{A}^{+}$
containing $\mc{A}$, so 
$\tilde{\mc{A}}=\upphi(\mc{A})$, if $\mc{A}$ has the identity,
$\tilde{\mc{A}}=\mc{A}^{+}$ otherwise.
Let $\mc{B}$ a $\ast-$algebra and 
$\upalpha\in Mor_{\ms{CA}^{\ast}}(\mc{A},\mc{B})$, 
set $\upalpha^{+}:\mc{A}^{+}\ni(a,\lambda)\mapsto(\upalpha(a),\lambda)\in\mc{B}^{+}$, thus
\begin{equation}
\label{10291116}
\upalpha^{+}\in Hom_{\un}^{\ast}(\mc{A}^{+},\mc{B}^{+}),
\end{equation}
while in case $\mc{B}$ has the identity, we set
\begin{equation}
\label{10281544}
\begin{cases}
\tilde{\upalpha}:\mc{A}^{+}\to\mc{B},
\\
(a,\lambda)\mapsto\upalpha(a)+\un_{\mc{B}}\lambda,
\end{cases}
\end{equation}
thus $\tilde{\upalpha}\in Hom_{\un}^{\ast}(\mc{A}^{+},\mc{B})$,
called the $\ast-$homomorphism of $\mc{A}^{+}$ induced by $\upalpha$
($\ast-$representation in case 
$\mc{B}$ equals $\mc{L}(\mf{H})$ for some Hilbert space $\mf{H}$).
\par
Let $Aut_{s}^{\ast}(\mc{A})$ denote the topological group of
$\ast-$automorphisms of $\mc{A}$
provided by the topology of pointwise convergence.
$Rep(\mc{A})$ denotes the set of $\ast-$representations of $\mc{A}$,
while $Rep_{c}(\mc{A})$ 
denotes the set of cyclic $\ast-$representations of $\mc{A}$.
If 
$\pf{H}=\lr{\mf{H},\uppi}{\Upomega}$
and
$\pf{K}=\lr{\mf{K},\upxi}{\Uppsi}$
are in $Rep_{c}(\mc{A})$ 
then we call them unitarily equivalent if 
their underlying representations of $\mc{A}$ are unitarily equivalent,
say through the unitary operator $U:\mf{H}\to\mf{K}$, and
$U\Upomega=\Uppsi$.
If $\uppi$ is a nondegenerate representation of $\mc{A}$ on $\mf{H}$, 
then we denote by $\ms{N}_{\uppi}$ and call $\uppi-$normal states its elements, the set of the $\upphi\circ\uppi$ where 
$\upphi\in\ms{N}_{\mc{L}(\mf{H})}$. Since $\uppi$ is nondegenerate $\ms{N}_{\uppi}\subset\ms{E}_{\mc{A}}$. 
If $\uppsi$ is a state of $\mc{A}$ then $\uppsi-$normal means $\uppi-$normal, where $\lr{\mf{H},\uppi}{\Omega}$ 
is a cyclic representation associated with $\uppsi$. 
If $T\in Mor_{\ms{CA}^{\ast}}(\mc{A},\mc{B})$, set 
$T_{\dagger}:\mc{B}^{\ast}\to\mc{A}^{\ast}$
such that 
$T_{\dagger}(\upomega)=\omega\circ T$,
if
$\uplambda\in Inv_{\ms{CA}^{\ast}}(\mc{A},\mc{B})$,
then set $\uplambda^{\ast}=(\uplambda^{-1})_{\dagger}$.
For any map 
$\ms{f}:\ms{D}\subseteq\mc{A}\to\mc{A}$
define the map
$\ms{ad}(\uplambda)(\ms{f}):
\uplambda(\ms{D})\to\mc{A}$
such that for all $b\in\uplambda(\ms{D})$
\begin{equation}
\label{09081203}
\ms{ad}(\uplambda)(\ms{f})(b)
\coloneqq
(\uplambda\circ\ms{f}\circ\uplambda^{-1})(b).
\end{equation}
Let $U$ be any set and
$\upgamma:U\to Aut_{\ms{CA}^{\ast}}(\mc{A})$,
define
\begin{equation*}
\ms{E}_{\mc{A}}^{U}(\upgamma)
\coloneqq\{
\uppsi\in\ms{E}_{\mc{A}}
\mid
(\forall u\in U)(\uppsi\circ\upgamma(u)=\uppsi)\}.
\end{equation*}
Let
$\mf{A}=\lr{\mc{A}}{H,\upsigma}$ 
and
$\mf{B}=\lr{\mc{B}}{H,\uptheta}$ 
be
$C^{\ast}-$dynamical systems,
here called simply dynamical systems
or dynamical systems with group symmetry $H$.
$T$ 
is an
$(\mf{A},\mf{B})-$equivariant morphism,
or equivalently,
$(\upsigma,\uptheta)-$equivariant morphism
if $T\in Mor_{\ms{CA}^{\ast}}(\mc{A},\mc{B})$ and
$T\circ\upsigma(h)=\uptheta(h)\circ T$ for all $h\in H$.
$\lr{\mf{H},\uppi}{W}$ 
is a (nondegenerate) covariant representation of $\mf{A}$
if $\lr{\mf{H}}{\uppi}$ is a (nondegenerate) 
$\ast-$representation
of $\mc{A}$, $W$ is a strongly continuous unitary representation
of $H$ on $\mf{H}$ such that for all $h\in H$
\begin{equation*} 
\uppi\circ\upsigma(h)
=
\ms{ad}(W(h))\circ\uppi.
\end{equation*}
We denote by $Cov(\mf{A})$ the set of nondegenerate covariant representations of $\mf{A}$.
$\lr{\pf{H}}{W}$ is a cyclic covariant representation of $\mf{A}$ if $\pf{H}=\lr{\mf{H},\uppi}{\Upomega}$ 
is a cyclic representation of $\mc{A}$, $\lr{\mf{H},\uppi}{W}$ is a covariant representation of $\mf{A}$ and 
$W(H)\{\Upomega\}=\{\Upomega\}$.
\par
Let
$\upvarphi\in\ms{E}_{\mc{A}}^{H}(\upsigma)$
and 
$\pf{H}=\lr{\mf{H},\uppi}{\Upomega}$
be a cyclic representation of $\mc{A}$ associated
with 
$\upvarphi$.
Set
$\ms{W}_{\pf{H}}^{\upsigma}:
H\to\mc{L}(\mf{H})$ 
such that for all $h\in H$ and $a\in\mc{A}$
\begin{equation}
\label{09101627}
\ms{W}_{\pf{H}}^{\upsigma}(h)
\uppi(a)\Upomega
=\uppi(\upsigma(h)a)\Upomega.
\end{equation}
Then 
$\lr{\pf{H}}{\ms{W}_{\pf{H}}^{\upsigma}}$
is a cyclic covariant representation of $\mf{A}$
called the cyclic covariant representation of $\mf{A}$ induced by $\pf{H}$.
We convein to remove the index
$\upsigma$ whenever it does not cause confusion.
\par
Set
$\mc{U}(\mc{A})\coloneqq
\{U\in\mc{A}\mid U^{\ast}=U^{-1}\}$
provided by the group structure inherited
by the product on $\mc{A}$,
and $\mc{U}(\mf{H})\coloneqq\mc{U}(\mc{L}(\mf{H}))$
for any Hilbert space $\mf{H}$.
We say that 
$\upsigma$ is 
inner 
if there exists a group morphism 
$\ms{v}:H\to\mc{U}(\mc{A})$ such that 
$\upsigma=\ms{ad}\circ \ms{v}$,
in such a case we say that 
$\upsigma$ is inner implemented by 
$\ms{v}$,
or that
$\ms{v}$ implements unitarily
$\upsigma$.
$\mf{A}$ is said inner implemented by $\ms{v}$
if $\upsigma$ is so. 
If $\mc{A}$ is a von Neumann
algebra, it can be always considered
in standard form since \cite[$III.2.2.26$]{bla2}
and \cite[Def. $9.1.18$]{tak2}, 
called its canonical standard form,
then
by \cite[Thm $9.1.15$]{tak2} 
we deduce that
$\upsigma$ is inner, 
moreover
said $\mc{U}_{st}(\mc{A})$ the subgroup of $\mc{U}(\mc{A})$
whose elements $u$ satisfy
$uJu^{\ast}=J$ and 
$u L^{2}(\mc{A})_{+} =L^{2}(\mc{A})_{+}$, see 
\cite[Def. $9.1.18$]{tak2} for the notations,
there exists a 
unique group action
$V:H\to\mc{U}_{st}(\mc{A})$ 
inner implementing $\upsigma$.
\par 
Let $\mu\in\mc{H}(H)$ 
and let $\mc{C}_{c}^{\mu}(H,\mc{A})$ denote the $\ast-$algebra whose underlying linear space is 
$\mc{C}_{c}(H,\mc{A})$, while the product and involution are respectively $\ast^{\mu}$ and $\ast$ such that for all 
$f,g\in\mc{C}_{c}(H,\mc{A})$ and $s\in H$
(\cite[eqs.$2.16-2.17$]{will})
\begin{equation}
\label{09191116}
\begin{aligned}
(f\ast^{\mu}g)(s)
&\coloneqq
\int f(r)\upsigma(r)\bigl(g(r^{-1}s)\bigr)\,d\mu(r),
\\
f^{\ast}(s)
&\coloneqq
\Delta_{H}(s^{-1})\upsigma(s)(f(s^{-1})^{\ast}),
\end{aligned}
\end{equation}
where the integration is w.r.t. to the norm topology on $\mc{A}$.
Denote by $\mc{A}\rtimes_{\upsigma}^{\mu}H$
the $C^{\ast}-$crossed product of $\mc{A}$ by $H$ associated 
to $\mu$, see \cite[Lemma $2.27$]{will}.
It is defined as the $C^{\ast}-$algebra completion 
of the normed $\ast-$algebra
$
\pc{C}_{c}^{\mu}(H,\mc{A}) 
\coloneqq
\lr{\mc{C}_{c}^{\mu}(H,\mc{A})}
{\|\cdot\|^{\mu}}$,
where
$\|\cdot\|^{\mu}$ is the $\mu-$universal norm 
such that for any $f\in\mc{C}_{c}(H,\mc{A})$
(\cite[eq. $2.22$ and Lm. $2.31$]{will})
\begin{equation*}
\|f\|^{\mu}
=
\sup\bigl\{
\|(\uppi\rtimes^{\mu}\ms{u})(f)\|_{\mc{L}(\mf{H})}
\mid
\lr{\mf{H},\uppi}{\ms{u}}
\in Cov(\mf{A})
\bigr\},
\end{equation*}
where (\cite[Prp. $2.23$]{will})
\begin{equation}
\label{05171307}
(\uppi\rtimes^{\mu}\ms{u})(f)
\coloneqq
\int\uppi(f(s))\ms{u}(s)\,d\mu(s).
\end{equation}
Here the integral is valued in the locally convex space $\mc{L}_{s}(\mf{H})$, 
i.e. it is an element of $\mc{L}(\mf{H})$ and the integration is w.r.t. the strong operator topology on $\mc{L}(\mf{H})$.
Its existence is ensured by Rmk. \ref{09171300} and by the fact that 
the product is continuous as a map on
$\mc{L}_{s}(\mf{H})\times \mc{L}_{s}(\mf{H})_{1}$
with values in $\mc{L}_{s}(\mf{H})$,
where
$\mc{L}_{s}(\mf{H})_{1}$
is the topological subspace of
$\mc{L}_{s}(\mf{H})$ 
of its elements with norm less or equal to $1$.
We call the following
$L_{\mu}^{1}-$norm 
\begin{equation*}
\mc{C}_{c}(H,\mc{A})
\ni 
f\mapsto
\int\|f(s)\|\,d\mu(s).
\end{equation*}
\par
It is worthwhile a remark about notations.
It is well-known that the Haar measure is unique up to a constant
factor \cite[Th. $1$ $\S1$ $n^{\circ}2$]{int2}, 
nevertheless in this work, 
at difference with the standard usage, 
we prefer to mention expressly which Haar measure we use in (\ref{09191116},\ref{05171307})
and a fortiori in $\mc{A}\rtimes_{\upsigma}^{\mu}H$.
\par
Since \cite[Prp. $2.39.$]{will}
if
$\lr{\mf{H},\uppi}{W}$ is a 
covariant representation 
of $\mf{A}$, 
then
$\uppi\rtimes^{\mu}W$
extends uniquely by continuity
to a 
$\ast-$representation
of $\mc{A}\rtimes_{\upsigma}^{\mu}H$
which is nondegenerate
if it is so $\lr{\mf{H},\uppi}{W}$.
\par
Let $G$ and $F$ be two topological groups, 
$\uprho:F\to Aut_{\ms{Gr}}(G)$ a group homomorphism
such that the map $(g,f)\mapsto\uprho_{f}(g)$
on $G\times F$ at values in $G$, is continuous, 
where $Aut_{\ms{Gr}}(G)$ is the group of automorphisms of the group underlying $G$.
Thus we let
$G\rtimes_{\uprho}F$ 
denote
the 
external topological semi-direct product of 
$G$ and $F$ relative to $\uprho$, see
\cite[$III.19$]{top1}.
By definition for all
$(g_{1},h_{1}),(g_{2},h_{2})\in G\rtimes_{\uprho}F$
\begin{equation*}
(g_{1},h_{1})\cdot_{\uprho}(g_{2},h_{2}) 
\coloneqq
(g_{1}\cdot\uprho_{h_{1}}(g_{2}),h_{1}\cdot h_{2}).
\end{equation*}
Moreover $j_{1}:G\to G\rtimes_{\uprho}F$ and 
$j_{2}:F\to G\rtimes_{\uprho}F$ will be the 
(continuous) canonical injections.
\par
Since \cite[Prp. $14$, $I.66$]{top1}
$G\rtimes_{\uprho}F$ 
is locally compact if and only if 
$G$ and $F$ are locally compact,
so in this case we can 
consider a dynamical system 
$\mf{A}=\lr{\mc{A}}{G\rtimes_{\uprho}F,\upsigma}$.
In addition let 
$F_{0}$ be a topological subgroup of $F$, 
$\upxi:\R\to G$ be a continuous group
homomorphism
and $l\in G\rtimes_{\uprho}F$,
set 
\begin{equation*}
\begin{aligned}
\uptau_{\upsigma}
&\coloneqq\upsigma\circ j_{1}
\\
\upgamma_{\upsigma}
&\coloneqq\upsigma\circ j_{2}
\\
\ms{S}_{F_{0}}^{G}
&\coloneqq
G\rtimes_{\uprho}F_{0}
\\
\upsigma_{F_{0}}
&\coloneqq
\upsigma\up
\ms{S}_{F_{0}}^{G}
\\
\uptau_{\upsigma}^{(l,\upxi)}
&\coloneqq
\uptau_{\upsigma}
\circ
\ms{ad}(l)
\circ
j_{1}
\circ
\upxi.
\end{aligned}
\end{equation*}
Here and thereafter we use the convention to denote $\uprho\up F_{0}$ simply by $\uprho$ anytime it is clear by the context 
w.r.t. which subset $F_{0}$ of $F$ the restriction has been performed.
$\uptau_{\upsigma}^{(l,\upxi)}$ is well-set,
since $j_{1}(G)$ is a normal subgroup of
$G\rtimes_{\uprho}F$,
moreover we have for all $h\in G\rtimes_{\uprho}F$ 
\begin{equation}
\label{09071855}
\uptau_{\upsigma}^{(l\cdot_{\uprho}h,\upxi)}
=
\ms{ad}(\upsigma(l))
\circ
\uptau_{\upsigma}^{(h,\upxi)}.
\end{equation}
Whenever it is clear by the context which 
dynamical system is involved we convein
to remove the index $\upsigma$.
If $F_{0}$ is locally compact,
for instance closed in $F$ 
by \cite[Prp. $13$, $I.66$]{top1},
then
$\ms{S}_{F_{0}}^{G}$
is locally compact
since \cite[Prp. $14$, $I.66$]{top1}.
Therefore
$\lr{\mc{A}}{\ms{S}_{F_{0}}^{G},\upsigma_{F_{0}}}$
is a dynamical system,
and the crossed product
$\mc{A}\rtimes_{\upsigma}^{\mu}
\ms{S}_{F_{0}}^{G}$
is well-set
for any $\mu\in\mc{H}(\ms{S}_{F_{0}}^{G})$.
For any domain $D$ in $\C$, i.e. open simply connected subset
of $\C$, 
denote by $H(D)$ the set of analytic maps on $D$ 
with values in $\C$, while by 
$H_{w}(D,\mc{A})$ and $H_{s}(D,\mc{A})$
the set of weak and strong analytic maps
on $D$ with values in $\mc{A}$.
Let $f:D\to\mc{A}$, then
by definition 
$f\in H_{w}(D,\mc{A})$ 
if 
$\upphi\circ f\in H(D)$ 
for all $\upphi\in\mc{A}^{\ast}$,
while
$f\in H_{s}(D,\mc{A})$ 
if $f$ is $\C-$derivable, thus differentiable.
Thus $H_{s}(D,\mc{A})\subseteq H_{w}(D,\mc{A})$.
Let
$\lr{\mc{A}}{\R,\upeta}$
be a dynamical system. 
Set $\updelta_{\upeta}$ to be the infinitesimal generator 
of the strongly continuous semigroup $\upeta\up\R^{+}$ acting 
on $\mc{A}$, i.e. $a\in Dom(\updelta_{\upeta})$ iff
the following limit exists in the norm topology of $\mc{A}$
\begin{equation}
\label{09081154}
\updelta_{\upeta}(a)
\coloneqq
\lim_{t\to 0,\,t\neq 0}
\frac{\upeta(t)(a)-a}{t}.
\end{equation}
$\mc{A}_{\upeta}$ is the $\ast-$subalgebra 
of $\mc{A}$ of the entire analytic elements
of $\upeta$, see \cite[Def. $2.5.20.$]{br1},
and
$\ov{\upeta}:\C\to\mc{A}^{\mc{A}_{\upeta}}$
denote the unique entire analytic extension of $\upeta$,
i.e. 
for any $a\in\mc{A}_{\upeta}$
the map
$\C\ni z\mapsto\ov{\upeta}(z)(a)\in\mc{A}$
belongs to
$H_{w}(\C,\mc{A})$,
so to $H_{s}(\C,\mc{A})$ since \cite[Prp. $2.5.21.$]{br1},
and it is the necessarily unique analytic extension of 
$\R\ni t\mapsto\upeta(t)(a)\in\mc{A}$.
The uniqueness follows by \cite[Def. $2.5.20.$]{br1}
and by
the uniqueness of entire analytic extension of numerical maps,
see \cite[Cor. of Thm. $10.18$]{rud1}.
We use the definition of $\beta-KMS$ given in
\cite[Def. $5.3.1$]{br2}, namely
for any $\beta\in\R$
the set $\ms{K}_{\beta}^{\upeta}$ 
of the $\beta-KMS$ states with respect to the dynamical system $\lr{\mc{A},\R}{\upeta}$ 
is the set of the states $\upomega\in\ms{E}_{\mc{A}}$
such that 
there exists an $\upeta-$invariant, norm dense 
$\ast-$subalgebra 
$\ms{D}$ of $\mc{A}$ 
such that 
$\upomega(a\ov{\upeta}(i\beta)b)=\upomega(ba)$
for all $a,b\in\ms{D}$.
If we replace $\ms{D}$ by $\mc{A}$ we obtain 
an equivalent statement, see \cite[Prp. $8.12.3$]{ped}.
Let the set of $\upeta-KMS$ states denote $\ms{K}_{-1}^{\upeta}$,
finally
$\ms{K}_{\infty}^{\upeta}$ is defined in \cite[Def. $5.3.18$]{br2}.
\subsection{Borelian functional calculus of possibly unbounded scalar type spectral operators in Banach spaces}
\label{07231648}
Let $S$ be a set, 
denote by $B(S)$ the Banach space 
of bounded complex maps on $S$ with
the norm $\|f\|=\sup_{s}|f(s)|$.
Let $\mc{B}$ be a field of subsets of $S$, 
a complex map defined on $S$ is $\mc{B}-$measurable if 
$f^{-1}(A)\in\mc{B}$, for any Borel set $A$ of $\C$.
Denote by
$TM(\mc{B})$ the closure in $B(S)$ of the linear subspace 
$\mc{I}(\mc{B})$
generated
by the set $\{\chi_{\delta}\mid\delta\in\mc{B}\}$, 
where $\chi_{\delta}$ is the characteristic map of the set $\delta$.
$TM(\mc{B})$ as a normed subspace of $B(S)$ is a Banach space, 
moreover the space of all bounded $\mc{B}-$measurable maps 
on $S$ is contained in $TM(\mc{B})$.
Let $X$ be a Banach space, 
a map $F:\mc{B}\to X$ 
is defined to be additive if 
$F(\varnothing)=\ze$ and
$F(\cup_{i=1}^{n}\sigma_{i})=\sum_{i=1}^{n}F(\sigma_{i})$,
for any $n\in\N$ and any family
$\{\sigma_{i}\}_{i=1}^{n}\subset\mc{B}$ of disjoint sets,
while $F$ is said to be bounded if 
$\sup_{\delta\in\mc{B}}\|F(\delta)\|<\infty$.
Let $F:\mc{B}\to X$ be a bounded additive map, 
thus we can define 
$\ms{I}^{F}:\mc{I}(\mc{B})\to X$ 
such that
\begin{equation}
\ms{I}^{F}
(\sum_{i=1}^{n}\lambda_{i}\chi_{\sigma_{i}})
\coloneqq
\sum_{i=1}^{n}\lambda_{i}F(\sigma_{i}),
\end{equation}
for any $n\in\N$, any family
$\{\sigma_{i}\}_{i=1}^{n}\subset\mc{B}$ 
of disjoint sets
and $\{\lambda_{i}\}_{i=1}^{n}\subset\C$. 
$\ms{I}^{F}$ is a well defined linear bounded operator, so
admits the linear extension by continuity to the space
$TM(\mc{B})$, we shall denote this extension again by 
$\ms{I}^{F}$ (\cite[$10.1$]{ds2}).
If $Y$ is a Banach space and $\Psi\in\mc{L}(X,Y)$, then 
\begin{equation}
\label{11211647}
\Psi\circ\ms{I}^{F}=\ms{I}^{\Psi\circ F}.
\end{equation}
\par
Assume now in addition that $\mc{B}$ is a $\sigma-$field, 
let $G$ be a complex Banach space, 
$\Pr(G)$ the subset of $P\in\mc{L}(G)$ such that $PP=P$ and $\un_{G}$ and $\ze_{G}$
be the identity and the zero operator on $G$ respectively,
we convein to remove the index $G$ whenever it is clear by the 
context which space is involved.
$E$ is defined to be 
a countably additive spectral measure in $G$ on $\mc{B}$
if for all $\{\alpha_{n}\}_{n\in\N}\subset\mc{B}$ 
disjoint sets
and
$\delta_{1},\delta_{2}\in\mc{B}$
we have
\begin{enumerate}
\item
$E(\mc{B})\subseteq\Pr(G)$,
\item
$E(\delta_{1}\cap\delta_{2})=E(\delta_{1}) E(\delta_{2})$,
\item
$E(\delta_{1}\cup\delta_{2})
=
E(\delta_{1})+E(\delta_{2})-E(\delta_{1})E(\delta_{2})$,
\item
$E(S)=\un$,
\item
$E(\varnothing)=\ze$,
\item
$E(\bigcup_{n\in\N}\alpha_{n})=\sum_{n=1}^{\infty}E(\alpha_{n})$
w.r.t. the weak operator topology on $\mc{L}(G)$.
\label{weak}
\end{enumerate}
It results that the convergence in \eqref{weak} 
holds w.r.t. the strong operator topology on $G$ 
and $E$ is bounded (\cite[Cor $15.2.4$]{ds3}),
hence $\ms{I}^{E}:\mc{B}\to\mc{L}(G)$ 
is well defined.
In case $S$ is a topological space and $\mc{B}$ is generated as $\sigma-$field by a basis of the topology on $S$ containing $S$, 
then $\mc{B}(S)\subseteq\mc{B}$ so we can set (\cite[Ch. $2$, pg. $126$]{ber})
\begin{equation*}
supp(E)
\coloneqq
\bigcap_{\{\delta\in\mc{B}\mid 
E(\delta)=\un,\,\delta\in Cl(S)\}}
\delta,
\end{equation*}
in particular $supp(E)$ is closed. 
By using the same argument to prove \cite[eq. $(1.5)$]{sil2} we obtain
\begin{equation}
\label{supp1}
E(supp(E))=\un.
\end{equation}
We are now able to define the functional calculus 
(shortly $f.c.$)
associated
to a countably additive spectral measure.
Let 
\begin{itemize}
\item
$S$ be a set and $\mc{B}$ a $\sigma-$field of subsets of $S$;
\item
$E$ be a countably additive spectral
measure in $G$ on $\mc{B}$;
\item
$f$ a $\mc{B}-$measurable map;
\item
$f_{\sigma}\coloneqq f\chi_{\sigma}$, for all $\sigma\in\mc{B}$;
\item
$\delta_{n}\coloneqq[-n,n]$ 
and 
$f_{n}\coloneqq f_{|f|^{-1}(\delta_{n})}$, for all $n\in\N$.
\end{itemize}
Thus $f_{n}\in TM(\mc{B})$ being $\mc{B}-$measurable and
bounded, hence we are able to set 
(\cite[Def. $18.2.10$]{ds3} and \cite[Def. $1.3.$]{sil2})
\begin{equation}
\label{fc}
\begin{cases}
Dom(f(E))
\coloneqq
\{x\in G\mid
\lim_{n\in\N}\ms{I}^{E}(f_{n})x
\text{ exists}\},
\\
f(E)x
\coloneqq
\lim_{n\in\N}\ms{I}^{E}(f_{n})x,
\forall x\in Dom(f(E)).
\end{cases}
\end{equation}
The map $f\mapsto f(E)$ is called the functional calculus
of the spectral measure $E$ and $f(E)$ is a densely defined
closed linear operator in $G$ (\cite[Thm. $17.2.11$]{ds3}).
It is easy to show that $f(E)E(\sigma)=(f\chi_{\sigma})(E)$,
for all $\sigma\in\mc{B}$.
In case $S$ is a topological space and
$\mc{B}$ is generated as $\sigma-$field by a basis of the topology
on $S$ containing $S$, then since \eqref{supp1} 
we obtain for any $f,g:S\to\C$ $\mc{B}-$measurable maps
\begin{equation}
\label{11271207}
f\up supp(E)
=
g\up supp(E)
\Rightarrow
f(E)=g(E).
\end{equation}
\par
A linear possibly unbounded operator $R$ in $G$ 
is called a scalar type spectral operator in $G$ if there exists
a countably additive spectral measure $F$ in $G$ on 
$\mc{B}(\C)$, 
such that $R=\imath(F)$, where $\imath$ is the identity map
on $\C$. We call $F$ a resolution of the identity (shortly $r.o.i$) of $R$.
There exists a unique $r.o.i$ of $R$ (\cite[Cor. $18.2.14$]{ds3}) denoted by $\ms{E}_{R}$, 
moreover (\cite[Lemmas $18.2.13$ and $18.2.25$]{ds3})
\begin{equation}
\label{18225}
sp(R)=supp(\ms{E}_{R}).
\end{equation}
Denote by $Bor(\C)$ the set of the Borelian maps, i.e.
$\mc{B}(\C)-$ measurable maps, thus for any $g\in Bor(\C)$
we convein to denote the operator $g(\ms{E}_{R})$
by $g(R)$ and call
the map $Bor(\C)\ni g\mapsto g(R)$ the Borelian functional calculus of $R$ (\cite[Def. $18.2.15$]{ds3}).
\par
If $E$ is a countable additive spectral measure in $G$ on a $\sigma-$field $\mc{B}$ of subsets of a set $S$,
then for any $\mc{B}-$measurable map $f$ the operator $f(E)$ is a scalar type spectral operator in $G$ whose
resolution of the identity is (\cite[Thm. $18.2.17$]{ds3})
\begin{equation}
\label{18217}
\ms{E}_{f(E)}
=
E\circ f^{-1}.
\end{equation}
\subsection{Joint RI constructed from $\{E_{x}\}_{x\in X}$}
\label{jri}
The material, and with some adaptation the notations, of the present section \ref{jri} 
arises from \cite[Ch. $2$, $\S 1.3$]{ber}.
Let $S$ be a set, $\mc{B}$ a $\sigma-$field of subsets of $S$,
$\mf{H}$ be a Hilbert space, then $E$ is defined to be a RI in $\mf{H}$ on $\mc{B}$
if it is a countably additive spectral measure in $\mf{H}$ on $\mc{B}$ 
such that $E(\sigma)$ is selfadjoint, for all $\delta\in\mc{B}$.
If $S$ is a topological space, a RI in $\mf{H}$ on $\mc{B}(S)$ is called a Borel RI in $\mf{H}$ on $S$.
Let $X$ be a set, $\mathscr{P}_{\omega}(X)$ the set of its finite parts,
$\ms{R}\doteq\{R_{x}\}_{x\in X}$,
where $R_{x}$ is a complete separable metric space for all $x\in X$.
$\{E_{x}\}_{x\in X}$ is defined to be a 
family of commuting Borel RI's in $\mf{H}$ on $\ms{R}$,
if $E_{x}$ is a Borel RI in $\mf{H}$ on $R_{x}$, for all $x\in X$,
and
$[E_{y}(\sigma_{y}),E_{z}(\sigma_{z})]=\ze$,
for all $y,z\in X$ and 
$\sigma_{q}\in\mc{B}(R_{q})$, $q\in\{y,z\}$.
We convein to avoid mentioning $\ms{R}$, whenever $R_{x}=\C$,
for all $x\in X$.
For any $Y\subseteq X$ let $R_{Y}$ denote $\prod_{x\in Y}R_{x}$ and if $Q=\{x_{1},\dots,x_{p}\}\subseteq X$, 
$R_{x_{1},\dots,x_{p}}$ denotes $R_{Q}$.
Set
\begin{equation*} 
\begin{cases}
\mc{C}(R_{X})
\coloneqq
\bigcup
\{\mc{C}(Q,\delta)
\mid
Q\in\mathscr{P}_{\omega}(X),
\delta\in\mc{B}(R_{Q})
\},
\\
\mc{C}(Q,\delta)
\coloneqq
\{
\uplambda\in R_{X}
\mid
\uplambda\up Q\in\delta
\},\,
\forall 
Q\in
\mathscr{P}_{\omega}(X),
\delta\in
\mc{B}(R_{Q}).
\end{cases}
\end{equation*}
$\mc{C}(R_{X})$ is a field of subsets of $R_{X}$,
called the field of cylindrical sets, 
$\mc{C}_{\sigma}(R_{X})$ denotes the $\sigma-$field generated
by $\mc{C}(R_{X})$.
Let $A\in\mathscr{P}(R_{X})$,
we recall that $\mc{B}(R_{A})$ is the $\sigma-$field
generated
by the set $\Psi(\prod_{x\in A}\mc{B}(R_{x}))$, 
where 
$\Psi:\prod_{x\in A}\mc{B}(R_{x})\to\mathscr{P}(R_{A})$
is such that 
$\Psi(\ps{\delta})\coloneqq\prod_{x\in A}\ps{\delta}_{x}$,
for all $\ps{\delta}\in\prod_{x\in A}\mc{B}(R_{x})$.
Let 
$\mc{E}\doteq
\{E_{x}\}_{x\in X}$ be a family of commuting Borel RI's in $\mf{H}$ on $\ms{R}$, 
then there exists a unique 
RI
in $\mf{H}$ on $\mc{C}_{\sigma}(R_{X})$, 
denoted by 
$\varprod_{x\in X}E_{x}$
and called the joint RI constructed from $\mc{E}$,
such that for all $Q\in\mathscr{P}_{\omega}(X)$ and 
$\ps{\delta}\in\prod_{y\in Q}\mc{B}(R_{y})$ 
we have (\cite[Ch. $2$, $\S 1.3$, Thm. $1.3$]{ber})
\begin{equation} 
\label{prod}
(\varprod_{x\in X}E_{x})
(\mc{C}(Q,
\prod_{y\in Q}\ps{\delta}_{y}))
=
\prod_{y\in Q}E_{y}(\ps{\delta}_{y}),
\end{equation}
where 
$\prod_{y\in Q}E_{x}(\ps{\delta}_{y})
\coloneqq
E_{x_{1}}(\ps{\delta}_{x_{1}})
\circ\dots\circ
E_{x_{p}}(\ps{\delta}_{x_{p}})$, if 
$Q=\{x_{1},\dots, x_{p}\}$, well-defined since the commutativity
property.
Let $f:R_{X}\to S$ be a 
$(\mc{C}_{\sigma}(R_{X}),\mc{B})$ measurable map, 
then
$\ms{E}_{\mc{E}}^{f}$ 
denotes
$(\varprod_{x\in X}E_{x})\circ f^{-1}$ defined on $\mc{B}$,
and 
$\ms{E}_{\mc{E}}$
denotes $\ms{E}_{\mc{E}}^{Id}$ in case $S=R_{X}$ and $\mc{B}=\mc{C}_{\sigma}(R_{X})$. 
\subsection{Categories}
We recall that $\mc{U}$ and $\mc{U}_{0}$
are fixed universes such that $\mc{U}\in\mc{U}_{0}$,
$\ms{set}$ is the category of sets belonging to the universe $\mc{U}$, 
functions as morphisms with map composition.
Whenever we refer to a set unless the contrary is stated,
we mean an element of $\mc{U}$, moreover
for any structure $S$
whenever we refer to ``the set of the $S$'s'', we always mean the subset of 
those elements of $\mc{U}$ satisfying the axioms of $S$.
We recall that we let $\ms{Gr}$ be the category of groups and group morphisms with map composition, 
and $\ms{Ab}$ be the full subcategory of $\ms{Gr}$ of abelian groups.
For any set $X$ and any group $G$ by abuse of language 
we let $Mor_{\ms{set}}(X,G)$ denote also the object of $\ms{Gr}$ 
with pointwise composition, inversion and identity.
If the contrary is not stated, any diagram involving maps between sets of $\mc{U}$ has to be understood in $\ms{set}$.
Let $A$, $B$ and $C$ be categories. 
Let $Obj(A)$ denote the set of objects of $A$, we let $a\in A$ denote $a\in Obj(A)$. 
For any $x,y\in A$ let $Mor_{A}(x,y)$ be the set of morphisms of $A$ from $x$ to $y$, 
let $\un_{x}$ be the identity morphism of $x$, 
while $Inv_{A}(x,y)=\{f\in Mor_{A}(x,y)\,\vert\,(\exists g\in Mor_{A}(y,x))(f\circ g=\un_{y},g\circ f=\un_{x}))\}$ 
denotes the set of invertible morphisms from $x$ to $y$.
Set $Mor_{A}=\bigcup\{Mor_{A}(x,y)\,\vert\, x,y\in A\}$ and 
$Inv_{A}=\bigcup\{Inv_{A}(x,y)\,\vert\, x,y\in A\}$, while
$Aut_{A}(y)=Inv_{A}(y,y)$, for $y\in A$, and $Aut_{A}=\bigcup\{Aut_{A}(x)\,\vert\, x\in A\}$.
We shall consider $Aut_{A}(y)$ with its natural group structure.
For any $T\in Mor_{A}(x,y)$ we set $d(T)=x$ called domain or source of $T$
and $c(T)=y$ called codomain or target of $T$,
the composition on $Mor_{A}$ is always denoted by $\circ$.
Let $A^{op}$ denote the opposite category of $A$, \cite[p. $33$]{mcl}.
Let $\ms{Fct}(A,B)$ denote the category of functors from $A$ to $B$ and natural transformations provided by pointwise 
composition,
see \cite[$1.3$]{ks}, \cite[p.$40$]{mcl}, \cite[p. $10$]{bor}.
Let us identify any $F\in\ms{Fct}(A,B)$ with the couple $(F_{o},F_{m})$, where $F_{o}:Obj(A)\to Obj(B)$ 
called the object map of $F$ 
while $F_{m}:Mor_{A}\to Mor_{B}$, called the morphism map of $F$ is such that 
$F_{m}^{x,y}:Mor_{A}(x,y)\to Mor_{B}(F_{o}(x),F_{o}(y))$ 
where $F_{m}^{x,y}=F_{m}\up Mor_{A}(x,y)$ for all $x,y\in A$.
For any object $x$ and morphism $t$ of $A$
we always let $F(x)$ and $F(t)$ denote $F_{o}(x)$ and $F_{m}(t)$ respectively,
while we shall never use this convention for the maps $F_{o}$ and $F_{m}$.
Let $\circ:\ms{Fct}(B,C)\times\ms{Fct}(A,B)\to\ms{Fct}(A,C)$ denote the standard composition of functors 
\cite[Def. $1.2.10$]{ks},
\cite[p. $14$]{mcl}, for any $\upsigma\in\ms{Fct}(A,B)$, the identity morphism 
$\un_{\upsigma}$ of $\upsigma$ in the category 
$\ms{Fct}(A,B)$ 
is such that $\un_{\upsigma}(M)=\un_{\upsigma_{o}(M)}$, for all $M\in A$.
Let $\beta\ast\alpha\in Mor_{\ms{Fct}(A,C)}(H\circ F,K\circ G)$ be the Godement product between the 
natural transformations $\beta$ and $\alpha$, where $H,K\in\ms{Fct}(B,C)$ and $F,G\in\ms{Fct}(A,B)$ while 
$\beta\in Mor_{\ms{Fct}(B,C)}(H,K)$ and $\alpha\in Mor_{\ms{Fct}(A,B)}(F,G)$, see \cite[Prp. $1.3.4$]{bor} 
(or \cite[p. $42$]{mcl} where it is used the symbol $\circ$ instead of $\ast$). The product $\ast$ is associative, in addition
we have the following rule for all $\gamma\in Mor_{\ms{Fct}(A,B)}(H,L)$, $\alpha\in Mor_{\ms{Fct}(A,B)}(F,H)$, 
and $\delta\in Mor_{\ms{Fct}(B,C)}(K,M)$, $\beta\in Mor_{\ms{Fct}(B,C)}(G,K)$, see \cite[Prp. $1.3.5$]{bor}
\begin{equation}
\label{bor135}
(\delta\ast\gamma)\circ(\beta\ast\alpha)=(\delta\circ\beta)\ast(\gamma\circ\alpha).
\end{equation}
We have 
\begin{equation}
\label{20061403}
\beta\ast\un_{F}=\beta\circ F_{o},
\end{equation}
moreover $\un_{G}\ast\un_{F}=\un_{G\circ F}$, and $\un_{F}\circ\un_{F}=\un_{F}$.
For any two categories $\mf{D},\mf{F}$, any $\ms{a},\ms{b},\ms{c}\in\ms{Fct}(\mf{D},\mf{F})$, any
$\ms{T}\in\prod_{O\in\mf{D}}Mor_{\mf{F}}(\ms{a}(O),\ms{b}(O))$ and 
$\ms{S}\in\prod_{O\in\mf{D}}Mor_{\mf{F}}(\ms{b}(O),\ms{c}(O))$ 
we set $S\circ T\in\prod_{O\in\mf{D}}Mor_{\mf{F}}(\ms{a}(O),\ms{c}(O))$ such that $(S\circ T)(M)\coloneqq S(M)\circ T(M)$ 
for all
$M\in\mf{D}$.
If $S$ is a nonempty subset of $Obj(A)$ by abuse of language we indicate with 
$S$ also the full subcategory of $A$ whose object set is $S$.
Let $A$ be a subcategory of $B$ 
let $\ms{I}_{A\to B}\in\ms{Fct}(A,B)$ be the identity functor defined in the obvious way,
set $Id_{A}=\ms{I}_{A\to A}$. 
Let $M,P$ be categories, $\mc{F}$ be a functor from $M$ to $P$ and $N$ be a subcategory of $P$,
thus we let $\mc{F}^{-1}(N)$ be the possibly empty subcategory of $M$ such that 
\begin{equation}
\label{06131929}
\begin{aligned}
Obj(\mc{F}^{-1}(N))&\coloneqq\mc{F}_{o}^{-1}(Obj(N))
\\
Mor_{\mc{F}^{-1}(N)}&\coloneqq\mc{F}_{m}^{-1}(Mor_{N}).
\end{aligned}
\end{equation}
Clearly 
$\mc{F}\circ\ms{I}_{\mc{F}^{-1}(N)\to M}$ is a functor from $\mc{F}^{-1}(N)$ to $P$, which by abuse 
of language we also consider a functor from $\mc{F}^{-1}(N)$ to $N$.
We have the following equivalence of categories see \cite[(1.3.5)]{ks}
\begin{equation}
\label{06231849}
\ms{Fct}(\mf{K},\mf{D})^{op}
\simeq
\ms{Fct}(\mf{K}^{op},\mf{D}^{op});
\end{equation}
where the morphism map of the equivalence is the identity map while
the object map is 
$F\mapsto op^{\mf{D}}\circ F\circ op^{\mf{K}^{op}}$,
with $op^{X}$ is the contravariant functor from any category $X$ to $X^{op}$
whose object and morphism maps are the identity maps.
By abuse of language in what follows we shall not mention in the above map the functors
$op^{\mf{D}}$ and $op^{\mf{K}^{op}}$,
so as a matter of fact considering the equivalence \eqref{06231849} as an identity.
Finally 
let $\ms{BS}$ be the category of complex Banach spaces, linear bounded maps and map composition,
and let us recall that
$\ms{CA}^{\ast}$
is the category of $C^{\ast}-$algebras, $\ast-$homomorphisms and map composition.
\subsection{Multiplier algebra and the category of dynamical systems}
Let $Sd(\mf{H})$ be the set of the possibly unbounded selfadjoint 
operators in a Hilbert space $\mf{H}$, while let $U(\mf{H},\mf{K})$
be the set of the unitary operators from $\mf{H}$ to the Hilbert space $\mf{K}$.
Let $\mc{A}$ be a $C^{\ast}-$algebra, $\ms{X}$ be a Hilbert $\mc{A}-$module, \cite[Def. $2.8$]{rw}, 
$\mc{L}_{\mc{A}}(\ms{X})$ or simply $\mc{L}(\ms{X})$ the $C^{\ast}-$algebra of adjointable 
maps on $\ms{X}$, \cite[Def. $2.17$ and Prp. $2.21$]{rw},
and let $\mc{K}(\ms{X})$ be the algebra of compact operators on $\ms{X}$,
\cite[Def. $2.24$]{rw}.
Let $\mc{A}_{\mc{A}}$ denote the Hilbert $\mc{A}-$module 
associated with $\mc{A}$, \cite[Exm. $2.10$]{rw},
let $\mc{K}(\mc{A})$ denote $\mc{K}(\mc{A}_{\mc{A}})$,
while let
$\ms{M}(\mc{A})\coloneqq\mc{L}(\mc{A}_{\mc{A}})$
and
$\mf{i}^{\mc{A}}$ 
denote the multiplier algebra of $\mc{A}$
and 
the canonical
embedding of $\mc{A}$ into $\ms{M}(\mc{A})$
respectively, where
$\mf{i}^{\mc{A}}(a)(b)=ab$ for all $a,b\in\mc{A}$,
\cite[Def. $2.48$ and Exm. $2.43$]{rw}.
Let $\mc{B}$ be a $C^{\ast}-$algebra 
and $\upalpha:\mc{B}\to\mc{L}(\ms{X})$ be 
a $\ast-$homomorphism, 
$\upalpha$
is said nondegenerate if
$span\{\upalpha(b)x\mid b\in\mc{B},x\in\ms{X}\}$
is dense in $\ms{X}$, \cite[Def. $2.49$]{rw}.
$\mf{i}^{\mc{B}}$ is nondegenerate and injective
since any $C^{\ast}-$algebra admits an approximate
identity.
As a consequence of \cite[Prp. $2.50$]{rw} we have that if $\upalpha$ is nondegenerate then there exists a unique
$\ast-$homomorphism $\upalpha^{-}:\ms{M}(\mc{B})\to\mc{L}(\ms{X})$ such that 
\begin{equation}
\label{01081715}
\upalpha^{-}\circ\mf{i}^\mc{B}=\upalpha.
\end{equation}
$\upalpha^{-}$ is nondegenerate since it is so $\upalpha$,
thus $\upalpha^{-}(\un)=\un$. 
Since the double 
conjugate Hilbert space of any Hilbert space $\mf{H}$ equals $\mf{H}$,
we deduce by \cite[Exm. $2.27$]{rw} 
that $\mf{H}$ is a 
$\mc{K}(\ov{\mf{H}})-$Hilbert module with the same norm,
where $\ov{\mf{H}}$ is the
conjugate Hilbert space of $\mf{H}$, and $\mc{K}(\mf{K})$ is the
$C^{\ast}-$algebra of the compact operators on $\mf{K}$, for any
Hilbert space $\mf{K}$.
Therefore since \eqref{01081715} it follows that
if $\mc{R}$ is a nondegenerate representation of $\mc{B}$,
then $\mc{R}^{-}$ is
the unique extension of $\mc{R}$ to a representation of
$\ms{M}(\mc{B})$ such that 
\begin{equation}
\label{01081712}
\mc{R}^{-}\circ\mf{i}^{\mc{B}}=\mc{R}.
\end{equation}
Let $\pf{H}=\lr{\mf{H},\mc{R}}{\Upomega}$ be a cyclic 
representation of $\mc{B}$, 
thus
for all $c\in\ms{M}(\mc{B})$ and $b\in\mc{B}$
we deduce by \cite[proof of Prp. $2.50$]{rw} 
\begin{equation}
\label{12181442}
\mc{R}^{-}(c)\,\mc{R}(b)\Upomega=
\mc{R}((\mf{i}^{\mc{B}})^{-1}(c\, \mf{i}^{\mc{B}}(b)))\,\Upomega.
\end{equation}
$\pf{H}^{-}\doteq\lr{\mf{H},\mc{R}^{-}}{\Upomega}$
is a cyclic representation of $\ms{M}(\mc{B})$ since 
\eqref{01081712}
and since $\pf{H}$ is cyclic.
By the uniqueness of the extension to $\ms{M}(\mc{B})$
it follows that
if
$\lr{\mf{K},\mc{S}}{\Upomega}$
is a cyclic representation 
of $\ms{M}(\mc{B})$, 
then
$\lr{\mf{K},\mc{S}\circ \mf{i}^{\mc{B}}}{\Upomega}$
is a cyclic one of $\mc{B}$ such that 
\begin{equation}
\label{12181437}
(\mc{S}\circ \mf{i}^{\mc{B}})^{-}=\mc{S}.
\end{equation}
Let $\mf{A}=\lr{\mc{A}}{H,\upsigma}$ be a 
$C^{\ast}-$dynamical system, $\mu\in\mc{H}(H)$, 
and
$\mc{R}$ is 
a nondegenerate
$\ast-$representation
of 
$\mc{A}\rtimes_{\upsigma}^{\mu}H$
then there exists 
a nondegenerate
covariant representation 
$\lr{\mf{H},\uppi}{W}$ 
of $\mf{A}$
such that 
$\mc{R}=\uppi\rtimes^{\mu}W$.
In particular 
said 
$\mc{B}=\mc{A}\rtimes_{\upsigma}^{\mu}H$
\begin{equation}
\label{12191452}
\mc{R}^{-}\circ\mf{j}_{\mc{A}}^{\mc{B}}=\uppi,
\end{equation}
where $\mf{j}_{\mc{A}}^{\mc{B}}$ 
is the canonical embedding of $\mc{A}$
into $\ms{M}(\mc{B})$
such that 
$\mf{j}_{\mc{A}}^{\mc{B}}(a)(f)(l)=af(l)$,
for all $a\in\mc{A}$, $f\in\mc{C}_{c}(H,\mc{A})$ and $l\in H$.
$\mf{j}_{\mc{A}}^{\mc{B}}$ is nondegenerate since
$\mc{C}_{c}(H,\mc{A})$ provided by the $\sup-$norm equals $\mc{A}\hat{\otimes}\mc{C}_{c}(H)$
and since $\mf{i}^{\mc{A}}$ is nondegenerate.
We call 
$\lr{\mf{H},\uppi}{W}$ 
the covariant representation
of $\mf{A}$
associated with $\mc{R}$.
\par
The set of dynamical systems with symmetry group $H$, equivariant morphisms and map composition is a category denoted by 
$\ms{C}_{0}(H)$, see \cite[pg.$26$]{ght}.
For any $(\upsigma,\uptheta)-$equivariant morphism $T$ the map $f\mapsto T\circ f$
defined on $\pc{C}_{c}^{\mu}(H,\mc{A})$
extends uniquely by continuity
to a $\ast-$homomorphism $\ms{c}_{\upmu}(T)$
from
$\mc{A}\rtimes_{\upsigma}^{\upmu}H$
to
$\mc{B}\rtimes_{\uptheta}^{\upmu}H$,
moreover
$\ms{c}_{\upmu}(S\circ T)=
\ms{c}_{\upmu}(S)\circ\ms{c}_{\upmu}(T)$
for any dynamical system $\mf{C}$ and $(\mf{B},\mf{C})-$equivariant
morphism $S$.
Hence the functions
$\lr{\mc{A},H}{\upsigma}\mapsto\mc{A}\rtimes_{\upsigma}^{\upmu}H$
and 
$T\mapsto\ms{c}_{\upmu}(T)$
determine a functor from $\ms{C}_{0}(H)$ to the category of 
$C^{\ast}-$algebras and $\ast-$homomorphisms,
see \cite[pg.$26$]{ght} or \cite[Cor. $2.48$]{will}.
If $H_{0}$ is a locally compact subgroup of $H$ then any
$(\upsigma,\uptheta)-$equivariant morphism
is $(\upsigma\up H_{0},\uptheta\up H_{0})-$equivariant,
hence the map 
$\lr{\mc{A},H}{\upsigma}\mapsto\lr{\mc{A},H_{0}}{\upsigma\up H_{0}}$ and the identity map on $ Mor_{\ms{C}_{0}(H)}$
determine a functor from 
$\ms{C}_{0}(H)$ to $\ms{C}_{0}(H_{0})$.
In particular for any $\upnu\in\mc{H}(H_{0})$
\begin{equation}
\label{12010348}
\ms{c}_{\upnu}(T)\in Mor_{\ms{CA}^{\ast}}
(\mc{A}\rtimes_{\upsigma}^{\upnu}H_{0},
\mc{B}\rtimes_{\uptheta}^{\upnu}H_{0}).
\end{equation}
\subsection{$K_{0}-$theory for $C^{\ast}-$algebras}
In this section $\mc{A}$ denotes a $C^{\ast}-$algebra.
\paragraph{The $\ast-$algebra $\mathbb{M}_{\infty}(\mc{A})$}
Let $n\in\N_{0}$,
set $a^{n}\coloneqq\overbrace{a\cdot...\cdot a}^{n-\text{times}}$
and
$\mathbb{P}(\mc{A})\coloneqq\{p\in\mc{A}\mid p=p^{\ast}=p^{2}\}$.
Denote by $\mathbb{M}_{n}(\mc{A})$ the set
of the square matrices of order $n$ at elements
in $\mc{A}$, which is a $\ast-$algebra 
providing it by the operations
such that for all $i,j\in\{1,...,n\}$ and $t\in\C$
\begin{enumerate}
\item
$(M+N)_{ij}
\coloneqq
M_{ij}+N_{ij}$,
\item
$(t\cdot M)_{ij}
\coloneqq
t\cdot M_{ij}$,
\item
$(M\cdot N)_{ij}
\coloneqq
\sum_{s=1}^{n}
M_{is}\cdot N_{sj}$.
\item
$(M^{\ast})_{ij}=M_{ji}^{\ast}$.
\end{enumerate}
Let $\mc{H}$ the Hilbert space in which 
$\mc{A}$ acts faithfully, e.g. through the universal
representation, see \cite[Rmk. $4.5.8$]{kr}, so 
we can identify $\mc{A}$ as $C^{\ast}-$algebra
with its faithful image acting on $\mc{H}$
provided with the standard operator norm.
Next let $\mc{H}_{k}=\mc{H}$ for any $k=1,\dots,n$,
set
\begin{equation*}
\begin{cases}
\ms{o}:
\mathbb{M}_{n}(\mc{A})
\to
\mc{L}(\bigoplus_{s=1}^{n}\mc{H}_{s}),
\\
\ms{o}(M)(v)_{k}
\coloneqq
\sum_{j=1}^{n}M_{kj}v_{j},
\forall
M\in\mathbb{M}_{n}(\mc{A}),
v\in\bigoplus_{s=1}^{n}\mc{H}_{s},
k=1,\dots,n,
\end{cases}
\end{equation*}
it is possible to show 
(\cite[pg. $147$]{kr})
that
$\ms{o}$ is a $\ast-$isomorphism.
Hence
the following map 
define a norm 
\begin{equation*}
\|\cdot\|_{\mathbb{M}_{n}(\mc{A})}:
\mathbb{M}_{n}(\mc{A})
\ni
M\mapsto
\|\ms{o}(M)\|,
\end{equation*}
on $\mathbb{M}_{n}(\mc{A})$ for which 
it is a $C^{\ast}-$algebra since
isometric to 
$\mc{L}(\bigoplus_{s=1}^{n}\mc{H}_{s})$.
\par
Let $\mc{A}\odot\mathbb{M}_{n}(\C)$
be the algebraic tensor product of
the $\ast-$algebras $\mc{A}$ and
$\mathbb{M}_{n}(\C)$, which is a $\ast-$algebra
providing the product to be defined by 
$(a\otimes b)\cdot(c\otimes d)
\coloneqq
(a\cdot c)\otimes(b\cdot d)$.
For any $i,j\in\{1,\dots,n\}$, 
set
$e_{ij}\in\mathbb{M}_{n}(\C)$
such that 
$(e_{ij})_{rs}\coloneqq\delta_{ir}\delta_{js}$,
for all 
$r,s\in\{1,\dots,n\}$.
It is possible to show (\cite[Prp. $T.5.20$]{weg}) that
the following map
\begin{equation*}
\begin{cases}
\mc{A}\odot\mathbb{M}_{n}(\C)
\to
\mathbb{M}_{n}(\mc{A}),
\\
\sum_{i,j=1}^{n}a_{ij}\otimes e_{ij}
\mapsto
A,
\\
A_{rs}=a_{rs},\,\forall r,s\in\{1,...,n\},
\end{cases}
\end{equation*}
is a well-defined $\ast-$isomorphism, 
called 
the standard isomorphism 
between 
$\mc{A}\odot\mathbb{M}_{n}(\C)$
and
$\mathbb{M}_{n}(\mc{A})$.
Moreover since 
\cite[Ex. $11.1.5$]{kr}, the fact that 
$\ms{o}$ is an isometry of $\mathbb{M}_{n}(\mc{A})$
onto its image through $\ms{o}$
and that the universal representation of $\mc{A}$
is faithful, 
we can state that the previous map 
is an isometry
between 
$\mathbb{M}_{n}(\mc{A})$,
and
the spatial tensor product 
(see \cite[pg. $847$]{kr})
$\mc{A}\otimes\mathbb{M}_{n}(\C)$
of the $C^{\ast}$ algebras
$\mc{A}$ and $\mathbb{M}_{n}(\C)$.
\par
Set
$\Phi_{nm}^{\mc{A}}:\mathbb{M}_{n}(\mc{A})\to\mathbb{M}_{m}(\mc{A})$
such that
for any $m\in\N_{0}$, $m\geq n$
and $A\in\mathbb{M}_{n}(\mc{A})$
\begin{equation*}
\begin{cases}
\Phi_{nm}^{\mc{A}}(A)_{ij}
\coloneqq A_{ij},\, i,j\in\{1,...,n\},
\\
\Phi_{nm}^{\mc{A}}(A)_{ij}
\coloneqq\ze,\,i,j\in\{n+1,...,m\},
\end{cases}
\end{equation*}
clearly we have
$\Phi_{mk}
\circ
\Phi_{nm}
=
\Phi_{nk}$,
for all $k,m,n\in\N_{0}$
such that
$k\geq m\geq n$,
moreover 
$\Phi_{nm}$ is a $\ast-$isometry into its image.
We call 
\begin{equation*}
\mathbb{M}_{\infty}(\mc{A})
\coloneqq
\varinjlim(\mathbb{M}_{s}(\mc{A}),\Phi_{rs}^{\mc{A}}),
\end{equation*}
the \emph
{normed inductive limit} 
of the system 
$\{(\mathbb{M}_{n},\Phi_{nm})\}_{n,m\in\N_{0},m\geq n}$
and it is the normed $\ast-$algebra constructed as follows.
Let
$U\doteq\bigcup_{n\in\N_{0}}\mathbb{M}_{n}(\mc{A})$
and $\lambda:U\to\N_{0}$ such that
$a\in\mathbb{M}_{\lambda_{a}}(\mc{A})$,
for all $a\in U$.
Define $\simeq$ on $U$ such that
for any $a,b\in U$
\begin{equation*}
a\simeq b
\Leftrightarrow
(\exists p\in\N_{0})
(p\geq\lambda_{a},
p\geq\lambda_{b},
\Phi_{\lambda_{a}p}^{\mc{A}}(a)=\Phi_{\lambda_{b}p}^{\mc{A}}(b)),
\end{equation*}
denote by $\langle a\rangle$
the equivalence set of $a\mod\simeq$.
Next the following
\begin{equation*}
\|\langle a\rangle\|_{\infty}
\coloneqq\|a\|_{\mathbb{M}_{\lambda(a)}(\mc{A})}
\end{equation*}
is a well-dedifed norm on $U\slash\simeq$,
indeed let $a,b\in U$ such that $a\simeq b$, 
then there exists a $p\in\N_{0}$ such that
$
\Phi_{\lambda_{a}p}^{\mc{A}}
(a)
=
\Phi_{\lambda_{b}p}^{\mc{A}}(b))
$,
so
$\|a\|_{\mathbb{M}_{\lambda(a)}(\mc{A})}=
\|b\|_{\mathbb{M}_{\lambda(b)}(\mc{A})}$ since 
$\Phi_{\lambda_{a}p}^{\mc{A}}$ and 
$\Phi_{\lambda_{b}p}^{\mc{A}}$
are isometries.
Set by abuse of language 
\begin{equation*}
\begin{cases}
\mathbb{M}_{\infty}(\mc{A})
\coloneqq
\bigcup_{n\in\N_{0}}\mathbb{M}_{n}(\mc{A})\slash\simeq,
\\
f_{n}^{\mc{A}}:\mathbb{M}_{n}(\mc{A})\to
\mathbb{M}_{\infty}(\mc{A}),
a\mapsto\langle a\rangle,\,n\in\N_{0}.
\end{cases}
\end{equation*}
We obtain
\begin{equation}
\label{10251307}
\begin{cases}
\mathbb{M}_{\infty}(\mc{A})
=
\bigcup_{n\in\N_{0}}
f_{n}(\mathbb{M}_{n}(\mc{A})),
\\
f_{m}\circ\Phi_{nm}=f_{n},m\geq n.
\end{cases}
\end{equation}
Set $\ze_{\infty}^{\mc{A}}\coloneqq f_{n}(\ze_{n}^{\mc{A}})$ 
for some $n\in\N_{0}$,
where $\ze_{n}^{\mc{A}}$ is the matrix of order $n$ at elements
in $\mc{A}$ all equal to $\ze$,
well-defined by the second equality in \eqref{10251307}.
Let $a,b\in\mathbb{M}_{\infty}(\mc{A})$, 
then since the second equality in \eqref{10251307}
there exist $p\in\N_{0}$ and $a,b\in\mathbb{M}_{p}(\mc{A})$
such that 
$x=f_{p}(a)$ and $y=f_{p}(b)$,
the following
\begin{equation*}
\begin{aligned}
x+y
&
\coloneqq f_{p}(a+b),
\\
x\cdot y
&\coloneqq
f_{p}(a\cdot b),
\\
t\cdot x
&\coloneqq
f_{p}(t\cdot a),\,t\in\C
\\
x^{\ast}
&\coloneqq
f_{p}(a^{\ast}),
\end{aligned}
\end{equation*}
are well-defined operations making
$\lr{\mathbb{M}_{\infty}(\mc{A}),+,\cdot,\ze_{\infty}^{\mc{A}}}
{\cdot,\ast,\|\cdot\|_{\infty}}$
a (non unital) normed $\ast-$algebra, 
denoted again by $\mathbb{M}_{\infty}(\mc{A})$.
$\|\cdot\|_{\infty}$ will be called the standard
norm in $\mathbb{M}_{\infty}(\mc{A})$.
For any $n\in\N_{0}$, since $\mc{A}$ is identifiable
with a two side ideal of $\mc{A}^{+}$, 
we deduce that
$\mathbb{M}_{n}(\mc{A})$ and $\mathbb{M}_{\infty}(\mc{A})$ 
is identifiable with a two side ideal of 
$\mathbb{M}_{n}(\mc{A}^{+})$
and
$\mathbb{M}_{\infty}(\mc{A}^{+})$
respectively, 
thus we can consider 
$a\equiv b\mod\mathbb{M}_{\infty}(\mc{A})$,
for any  $a,b\in\mathbb{M}_{\infty}(\mc{A}^{+})$.
Moreover for any $m\in\N_{0}$, $m\geq n$
\begin{equation}
\label{10271904}
\begin{cases}
f_{n}^{\mc{A}}=f_{n}^{\mc{A}^{+}}\up\mathbb{M}_{n}(\mc{A}),
\\
\Phi_{nm}^{\mc{A}}=\Phi_{nm}^{\mc{A}^{+}}\up\mathbb{M}_{n}(\mc{A}).
\end{cases}
\end{equation}
\paragraph{The group $K(\mc{A})$}
Let $n\in\N_{0}$, define 
$diag_{n}^{\mc{A}}:
\mathbb{M}_{n}(\mc{A})
\times
\mathbb{M}_{n}(\mc{A})
\to
\mathbb{M}_{2n}(\mc{A})$
such that for any $a,b\in\mathbb{M}_{n}(\mc{A})$
\begin{equation*}
diag_{n}(a,b)
\coloneqq
\begin{pmatrix}
a&\ze_{n}\\
\ze_{n}&b.
\end{pmatrix}
\end{equation*}
Define in $\mathbb{P}(\mc{A})$ the following equivalences
(\cite[Def. $4.2.1$, Prp. $4.6.3$]{bla}).
Let $p,q\in\mathbb{P}(\mc{A})$ thus
\begin{description}
\item[algebraic equivalence]
$p\sim q$ iff there exists $x,y\in\mc{A}$ such that 
$xy=p$ and $yx=q$,
\item[similarity]
$p\sim_{s}q$ iff there exists an invertible element $z$ 
in $\tilde{A}$ such that $zpz^{-1}=q$,
\item[homotopy]
$p\sim_{h}q$ iff there exists a continuous path 
of projections in $\mc{A}$ from $p$ to $q$,
i.e. a norm-continuous map $f:[0,1]\to\mc{A}$ such that 
$f(t)\in\mathbb{P}(\mc{A})$, for all $t\in[0,1]$
and
$f(0)=p$ and $f(1)=q$.
\end{description}
Denote by $[e]$ the equivalence class
of $e\mod\sim$.
For any 
$e,f\in\mathbb{P}(\mathbb{M}_{\infty}(\mc{A}))$
it follows (\cite[Ch. $II$, Sez. $4$]{bla})
\begin{equation}
\label{10271922}
e\sim f
\Leftrightarrow
e\sim_{s}f
\Leftrightarrow
e\sim_{h}f.
\end{equation}
Set
\begin{equation}
\label{10271921}
\begin{cases}
V(\mc{A})
\coloneqq
\lr{\mathbb{P}(\mathbb{M}_{\infty}(\mc{A}))\slash\sim}{+},
\\
[e]+[f]\coloneqq[e'+f'],\,
\forall e,f\in\mathbb{P}(\mathbb{M}_{\infty}(\mc{A})),
\\
e'\in[e], f'\in[f], e'\perp f',
\end{cases}
\end{equation}
(\cite[Def. $5.1.2$ and comments following it]{bla}),
where $a\perp b$ iff $a\cdot b=\ze$ for any 
$a,b\in\mathbb{P}(\mc{A})$.
The operation $+$ is indipendent by the choice of $e'$ and $f'$
which there exist as showed below.
Below we convein to denote 
$\ze_{\infty}^{\mc{A}^{+}}$ by $\ze_{\infty}$
$f_{n}^{\mc{A}^{+}}$ by $f_{n}$,
$\Phi_{np}^{\mc{A}^{+}}$ 
by 
$\Phi_{np}$
and $diag_{n}^{\mc{A}^{+}}$ by $diag_{n}$,
for any $n,p\in\N_{0}$, $p\geq n$. 
Let
$e,f\in\mathbb{P}(\mathbb{M}_{\infty}(\mc{A}))$
then there exist $m\in\N_{0}$, $a,b\in\mathbb{M}_{m}(\mc{A})$ 
such that $e=f_{m}(a)$ and $f=f_{m}(b)$.
Note that $\Phi_{m(2m)}(c)=diag_{m}(c,\ze_{m})$, $c\in\{a,b\}$,
thus 
$e=f_{2m}(diag_{m}(a,\ze_{m}))$
and
$f=f_{2m}(diag_{m}(b,\ze_{m}))$,
since (\ref{10271904}-\ref{10251307}).
Define $g\in\mathbb{M}_{2m}(\mc{A}^{+})$ as
\begin{equation*}
\begin{cases}
g\coloneqq
\begin{pmatrix}
\ze_{m}&\un_{m}
\\
\un_{m}&\ze_{m}
\end{pmatrix},
\\
z\coloneqq f_{2m}(g),
\\
f'\coloneqq f_{2m}\bigl(diag_{m}(\ze_{m},b)\bigr).
\end{cases}
\end{equation*}
Thus $z=z^{\ast}=z^{-1}$, since $g=g^{\ast}=g^{-1}$, and
\begin{equation*}
\begin{aligned}
zfz^{-1}&=zfz
\\
&=f_{2m}\bigl(g\, diag_{m}(b,\ze_{m})\, g\bigr)=f',
\end{aligned}
\end{equation*}
moreover $f'\in\mathbb{P}(\mathbb{M}_{2m}(\mc{A}^{+}))$
thus
$f'\in[f]$ since \eqref{10271922}.
Finally
$f'\cdot e=f_{2m}\bigl(
diag_{m}(\ze_{m},b)\cdot diag_{m}(a,\ze_{m})\bigr)
=\ze_{\infty}$
and the third sentence of \eqref{10271921}
follows.
Define the relation $\thickapprox$ on 
$V(\mc{A})\times V(\mc{A})$
such that for all 
$p,q,r,s\in\mathbb{P}(\mathbb{M}_{\infty}(\mc{A}))$
\begin{equation*}
([p],[q])\thickapprox([r],[s])
\Leftrightarrow
(\exists z\in\mathbb{P}(\mathbb{M}_{\infty}(\mc{A})))
([p]+[s]+[z]
=
[q]+[r]+[z]),
\end{equation*}
$\thickapprox$ is an equivalence relation,
let us denote by $[([p],[q])]$ the equivalence class
of $([p],[q])\mod\thickapprox$. 
So we can define
for any 
$([p],[q]),([a],[b])\in V(\mc{A})\times V(\mc{A})$
and
for some $([r],[r])\in V(\mc{A})\times V(\mc{A})$
\begin{equation*}
\begin{cases}
K_{00}(\mc{A})
\coloneqq
(V(\mc{A})\times V(\mc{A}))\slash\thickapprox,
\\
[([p],[q])]+[([a],[b])]
\coloneqq
[([p]+[a],[q]+[b])],
\\
\ze
\coloneqq
[([r],[r])],
\\
\ms{K}_{00}(\mc{A})
\coloneqq
\lr{K_{00}(\mc{A})}{+,\ze}.
\end{cases}
\end{equation*}
It is possible to show that 
$\ms{K}_{00}(\mc{A})$ is a well-defined
commutative group where if we denote by $-[([p],[q])]$
the inverse of $[([p],[q])]$, we have 
$-[([p],[q])]=[([q],[p])]$
(\cite[Appendix $G$]{weg}).
Finally set 
\begin{equation*}
\begin{aligned}
K_{0}(\mc{A})
&\coloneqq
\bigl\{[([p],[q])]\in
K_{00}(\mc{A}^{+})
\mid
p\equiv q\mod\mathbb{M}_{\infty}(\mc{A})
\bigr\},
\\
\ms{K}_{0}(\mc{A})
&\coloneqq
\lr{K_{0}(\mc{A})}
{+\up(K_{0}(\mc{A})\times K_{0}(\mc{A})),\ze},
\end{aligned}
\end{equation*}
then $\ms{K}_{0}(\mc{A})$ is a subgroup of 
$\ms{K}_{00}(\mc{A}^{+})$
(\cite[Ch. $III$, Sec. $5.5$]{bla}).
$V$ is the object part of a functor from the category of $\ast-$algebras and $\ast-$homomorphisms
to the category $Sg$ of commutative semigroups and semigroup morphisms,
while $\ms{K}_{00}$ and $\ms{K}_{0}$ are the object part of functors 
from the category of $\ast-$algebras and $\ast-$homomorphisms
to the category $\ms{Ab}$.
More exactly for any couple of $\ast-$algebras $\mc{A}$ and $\mc{B}$ and $\ast-$homomorphism $\upalpha:\mc{A}\to\mc{B}$, 
we have that
\begin{equation}
\label{10291120}
\begin{cases}
\upalpha_{\#}\in Mor_{Sg}(V(\mc{A}),V(\mc{B})),
\\
\upalpha_{\ast}\in 
Mor_{\ms{Ab}}(\ms{K}_{00}(\mc{A}),\ms{K}_{00}(\mc{B})),
\\
\upalpha_{\star}\in 
Mor_{\ms{Ab}}(\ms{K}_{0}(\mc{A}),\ms{K}_{0}(\mc{B})),
\end{cases}
\end{equation}
and
$(\upbeta\circ\upalpha)_{\bullet}
=\upbeta_{\bullet}\circ\upalpha_{\bullet}$,
for any 
$\ast-$algebra $\mc{C}$,
$\ast-$homomorphism $\beta:\mc{B}\to\mc{C}$
and
$\bullet\in\{\#,\ast,\star\}$.
\par
Here for any 
$a\in\bigcup_{n\in\N_{0}}\mathbb{M}_{n}(\mc{A})$
such that 
$\langle a\rangle\in\mathbb{P}(\mathbb{M}_{\infty}(\mc{A}))$,
and
$r,s\in\mathbb{P}(\mathbb{M}_{\infty}(\mc{A}^{+}))$
such that 
$r\equiv s\mod\mathbb{M}_{\infty}(\mc{A})$
\begin{equation*}
\begin{cases}
\upalpha_{\#}([\langle a\rangle])
=
[\langle\upalpha\circ a\rangle],
\\
\upalpha_{\star}
\bigl([([r],[s])]\bigr)
=
\Bigl[\bigl((\upalpha^{+})_{\#}([r]),
(\upalpha^{+})_{\#}([s])\bigr)\Bigr],
\end{cases}
\end{equation*}
while
for any $p,q\in\mathbb{P}(\mathbb{M}_{\infty}(\mc{A}))$
\begin{equation}
\label{10281552}
\upalpha_{\ast}
\bigl([([p],[q])]\bigr)
=
\Bigl[\bigl(\upalpha_{\#}([p]),
\upalpha_{\#}([q])\bigr)\Bigr],
\end{equation}
(\cite[Prp. $6.1.3$ and Prp. $6.2.4$]{weg}).
If $\mc{A}$ is
unital then $\ms{K}_{00}(\mc{A})$ is isomorphic as a group
to $\ms{K}_{0}(\mc{A})$, so we shall use the 
convention to identify $\ms{K}_{0}(\mc{A})$ 
with
$\ms{K}_{00}(\mc{A})$ whenever $\mc{A}$ is unital,
called here as in \cite{bla} 
the standard picture of $\ms{K}_{0}(\mc{A})$
(\cite[Prp. $5.5.5$]{bla}).
Let $H$ be a locally compact group, $\upmu\in\mc{H}(H)$,
$\mf{A}=\lr{\mc{A},H}{\upsigma}$ and 
$\mf{B}=\lr{\mc{B},H}{\uptheta}$ be two objects of $\ms{C}_{0}(H)$
and $T$ be a $(\mf{A},\mf{B})-$equivariant morphism.
Set
\begin{equation*}
\ms{k}_{\upmu}(T)\coloneqq(\ms{c}_{\upmu}(T)^{+})_{\ast},
\end{equation*}
thus 
$\ms{k}_{\upmu}(T):
\ms{K}_{0}((\mc{A}\rtimes_{\upeta}^{\upmu}H)^{+})
\to
\ms{K}_{0}((\mc{B}\rtimes_{\uptheta}^{\upmu}H)^{+})$
is a group morphism,
in addition the functions 
$\lr{\mc{A},H}{\upeta}\mapsto
\ms{K}_{0}((\mc{A}\rtimes_{\upeta}^{\upmu}H)^{+})$
and
$T\mapsto\ms{k}_{\upmu}(T)$ define
a functor from $\ms{C}_{0}(H)$ to $\ms{Ab}$.
In particular 
for any 
locally compact subgroup $H_{0}$ of $H$ and $\upnu\in\mc{H}(H_{0})$
since the discussion prior \eqref{12010348} we have
\begin{equation}
\label{11291802}
\ms{k}_{\upnu}(T):
\ms{K}_{0}((\mc{A}\rtimes_{\upeta}^{\upnu}H_{0})^{+})
\to
\ms{K}_{0}((\mc{B}\rtimes_{\uptheta}^{\upnu}H_{0})^{+}).
\end{equation}
\subsection{The Chern-Connes character}
Let $\mc{A}$ and $\mc{B}$ be unital $\C^{\ast}-$algebras.
$\lr{\cdot}{\cdot}_{\mc{A}}:
\ms{K}_{0}(\mc{A})\times H_{\ep}^{ev}(\mc{A})\to\C$
denote
the pairing as defined in 
\cite[$IV.7.\delta$, Thm. $21$]{connes}.
If
$T:\mc{A}\to\mc{B}$ is a unit preserving
$\ast-$homomorphism, 
$\upphi=\{\upphi_{2n}\}_{n\in\N}$ 
is an entire even normalized cocycle on $\mc{B}$
and $e$ an idempotent in
$\mathbb{M}_{\infty}(\mc{A})$,
then since \cite[$IV.7.\delta$, Thm. $21$, Lemma $20$]{connes},
we obtain 
\begin{equation}
\label{12011443}
\lr{(T)_{\ast}([e])}{[\upphi]}_{\mc{B}}
=
\lr{[e]}{T_{\dagger}([\upphi])}_{\mc{A}}.
\end{equation}
Here 
$T_{\dagger}:H_{\ep}^{ev}(\mc{B})\to H_{\ep}^{ev}(\mc{A})$ 
denotes the map
$[\{\upphi_{2n}\}_{n\in\N}]
\mapsto
[\{\upphi_{2n}\circ T^{[2n]}\}_{n\in\N}]$, $[\upphi]$ 
is the class in $H_{\ep}^{ev}(\mc{B})$ corresponding to $\upphi$, and 
$T^{[N]}:\mc{A}^{N}\to\mc{B}^{N}$ such that 
$\Pr_{i}\circ T^{[N]}=T\circ\Pr_{i}$ for all $i\in\{1,\dots N\}$.
Let $\ms{ch}(\mc{A},\uppi,\ms{D},\Upgamma)\in H_{\ep}^{ev}(\mc{A})$, 
also denoted by 
$\ms{ch}\lr{\mc{R}}{\ms{D},\Upgamma}$
where $\mc{R}=\lr{\mc{A}}{\uppi}$, 
denote the Chern-Connes character, 
\cite[$IV.8.\delta$, Def. $17$]{connes}, of the even $\theta-$summable 
$K-$cycle $(\mc{A},\uppi,\ms{D},\Upgamma)$,
\cite[$IV.2.\gamma$, Def. $11$ and $IV.8$, Def. $1$]{connes}.
It is well-known that 
the class corresponding to
the JLO cocycle \cite{jlo}, 
associated with any even $\theta-$summable $K-$cycle $(\mc{A},\uppi,\ms{D},\Upgamma)$
via \cite[$IV.8.\epsilon$, Thm. $21$]{connes},
belongs to $\ms{ch}(\mc{A},\uppi,\ms{D},\Upgamma)$,
\cite[$IV.8.\epsilon$, Thm. $22$]{connes}.
Hence one can use the JLO cocycle for computing
the pairing
$\lr{\ms{x}}{\ms{ch}(\mc{A},\uppi,\ms{D},\Upgamma)}_{\mc{A}}$.
An important result concerning the Connes character is that
$\lr{\ms{x}}{\ms{ch}(\mc{A},\uppi,\ms{D},\Upgamma)}_{\mc{A}}\in\Z$
for all $\ms{x}\in\ms{K}_{0}(\mc{A})$,
\cite[$IV.8.\delta$, Thm. $19$ and Prp. $18$]{connes}.
In this work whenever we refer to an even $\theta-$summable $K-$cycle $(\mc{A},\uppi,\ms{D},\Upgamma)$, 
we assume that 
$\ms{D}\neq\ze$ which is automatically true in case $\dim\mf{H}=\infty$, with $\mf{H}$ the Hilbert space 
where $\uppi$ acts.
\part{Preliminaries in Equivariance}
\label{07301047}
In the present part we fix more specific terminology and collect properties of equivariance we employ 
in the paper. In our knowledge all the results in this part are \textbf{new}, however they are relatively 
not difficult to obtain by applying general well-known facts. 
Because of this we use again the term Preliminaries to title this part.
\par
In section \ref{equivfc} we establish that the Borelian functional calculus 
in a Banach space is equivariant under an isometric
action. 
Then we apply this result to ensure the equivariance
in case the spectral measure involved is the 
joint resolution of identity constructed by a family of 
commuting Borel resolutions of the identity in a Hilbert space.
Then we provide a sufficient condition to ensure selfadjointness.
In section \ref{kmseq} we display the equivariance of the $KMS-$states
under the conjugate of surjective equivariant morphisms.
Section \ref{05061338} is devoted to state
the equivariance under action of $H$ of representations 
of $C^{\ast}-$crossed products for different symmetry groups.
In section \ref{multiplier}
we prove for any state of a $C^{\ast}-$algebra the existence of a canonical extention to the multiplier algebra,
then for any unital dynamical system, with $\mc{A}$ its underlying algebra,  
the result is used to associate a state of $\mc{A}$ with any state of the crossed 
product $\mc{A}\rtimes_{\upsigma}H$.
\section{Equivariance}
\label{equiv}
\subsection{Equivariance of the functional calculus in Banach spaces}
\label{equivfc}
We prove in Thm. \ref{11251210}  
that the Borelian functional calculus 
in a Banach space is equivariant under an isometric
action. We apply this result to ensure 
in Thm. \ref{11252000}
the equivariance
when the spectral measure $\mc{E}$ involved is 
the joint $RI$ constructed by a family of 
commuting Borel $RI$'s in a Hilbert space.
In Thm. \ref{11241732} we provide a sufficient condition
concerning the Borelian map $f$ to ensure the selfadjointness of the 
operator $f(\mc{E})$.
In this section we assume fixed a topological space $S$,
a Banach space $G$
and a Hilbert space $\mf{H}$.
\begin{lemma}
\label{01251721}
Let $\ms{E}$ be a countably
spectral measure in $G$ on a $\sigma-$field
$\mc{R}$ of subsets of a set $Z$, and $\sigma,\delta\in\mc{R}$
such that $\delta\supseteq\sigma$ and $\ms{E}(\sigma)=\un$.
Then $\ms{E}(\delta)=\un$.
\end{lemma}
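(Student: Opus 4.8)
The plan is to exploit the second defining property of a countably additive spectral measure recalled in the preliminaries, namely the multiplicativity on intersections
\[
\ms{E}(\delta_{1}\cap\delta_{2})=\ms{E}(\delta_{1})\,\ms{E}(\delta_{2}),
\]
valid for all $\delta_{1},\delta_{2}\in\mc{R}$. No other axiom of a spectral measure is needed; in particular countable additivity, boundedness and the projection property enter only insofar as they are already packaged into this identity and into the normalization $\ms{E}(Z)=\un$.

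First I would rephrase the inclusion hypothesis $\delta\supseteq\sigma$ set-theoretically as $\sigma\cap\delta=\sigma$. Applying the multiplicativity property with $\delta_{1}=\sigma$ and $\delta_{2}=\delta$ then gives
\[
\ms{E}(\sigma)\,\ms{E}(\delta)=\ms{E}(\sigma\cap\delta)=\ms{E}(\sigma).
\]
Substituting the hypothesis $\ms{E}(\sigma)=\un$ into the left-hand side, the factor $\ms{E}(\sigma)$ becomes $\un$, so the product collapses to $\ms{E}(\delta)$, while the right-hand side is $\un$; hence $\ms{E}(\delta)=\un$, which is the desired conclusion.

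I expect no genuine obstacle here, since the statement is an immediate algebraic consequence of intersection-multiplicativity. The only point to watch is to invoke exactly that axiom and not the inclusion–exclusion formula for unions (property~(3)), which would lead through a longer and unnecessary computation. It is also worth noting that this lemma is the abstract counterpart, at the level of a bare $\sigma$-field rather than the Borel $\sigma$-field of a topological space, of the monotonicity used to obtain \eqref{supp1}, so the same one-line argument underlies both.
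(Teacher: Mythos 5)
Your proof is correct. It differs mildly from the paper's own argument: the paper first decomposes $Z=\sigma\cup\complement\sigma$ to deduce $\ms{E}(\complement\sigma)=\ze$ from additivity, then writes $\delta=\sigma\cup(\delta\cap\complement\sigma)$ and uses multiplicativity only to annihilate the term $\ms{E}(\delta)\ms{E}(\complement\sigma)$, whereas you bypass the complement entirely and apply intersection-multiplicativity once to $\sigma\cap\delta=\sigma$, getting $\ms{E}(\delta)=\un\cdot\ms{E}(\delta)=\ms{E}(\sigma\cap\delta)=\un$. Your route is the more economical of the two, using a single axiom beyond the hypothesis; the paper's route has the small side benefit of isolating the fact $\ms{E}(\complement\sigma)=\ze$, but that fact is not reused elsewhere, so nothing is lost by your shortcut. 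Both arguments are complete and correct.
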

\begin{proof}
$Z=\sigma\cup\complement\sigma$ implies 
$\un=\ms{E}(\sigma)+\ms{E}(\complement\sigma)$,
so $\ms{E}(\complement\sigma)=\ze$.
Moreover
$\delta=
\delta\cap(\sigma\cup\complement\sigma)
=\sigma\cup(\delta\cap\complement\sigma)$
hence
$\ms{E}(\delta)=
\ms{E}(\sigma)+
\ms{E}(\delta)
\ms{E}(\complement\sigma)
=\un$.
\end{proof}
\begin{proposition}
\label{23112100}
Let $\ms{E}$ be a countably additive spectral measure in $G$ on a $\sigma-$field $\mc{R}$ of subsets of $S$ such that 
$\mc{R}$ is generated as $\sigma-$field by a basis of the topology on $S$ containing $S$. 
Thus
\begin{equation*}
supp(\ms{E})
=
\bigcap_{\{\sigma\in\mc{R}\mid\,\ms{E}(\sigma)=\un\}}
\ov{\sigma}.
\end{equation*}
\end{proposition}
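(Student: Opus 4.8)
The plan is to establish the claimed identity by proving the two inclusions separately. Write $A\coloneqq supp(\ms{E})$ for the left-hand side, which by the definition in the preliminaries is $A=\bigcap\{\delta\mid\delta\in Cl(S),\,\ms{E}(\delta)=\un\}$, and let $B$ denote the right-hand side, the intersection of the closures $\ov{\sigma}$ taken over all $\sigma\in\mc{R}$ with $\ms{E}(\sigma)=\un$. The guiding idea is that taking closures on the right, via Lemma \ref{01251721}, turns the right-hand family into a family of closed sets of full measure, which is exactly the family defining the support.

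The inclusion $B\subseteq A$ is purely set-theoretic. Every closed $\delta$ with $\ms{E}(\delta)=\un$ satisfies $\ov{\delta}=\delta$, so $\delta$ occurs as one of the closures $\ov{\sigma}$ appearing in $B$. Hence the family of sets intersected to form $B$ contains the family intersected to form $A$, and enlarging an intersecting family can only shrink the intersection; this yields $B\subseteq A$. For the reverse inclusion $A\subseteq B$ I would fix an arbitrary $\sigma\in\mc{R}$ with $\ms{E}(\sigma)=\un$ and show $A\subseteq\ov{\sigma}$, after which intersecting over all such $\sigma$ gives $A\subseteq B$. Here the hypothesis on $\mc{R}$, together with the inclusion $\mc{B}(S)\subseteq\mc{R}$ recorded in the preliminaries under precisely this hypothesis, guarantees $\ov{\sigma}\in\mc{R}$, since $\ov{\sigma}$ is closed and hence Borel. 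Because $\sigma\subseteq\ov{\sigma}$ and $\ms{E}(\sigma)=\un$, Lemma \ref{01251721} upgrades this to $\ms{E}(\ov{\sigma})=\un$. Thus $\ov{\sigma}$ is a closed set of full measure and therefore occurs among the indices defining $A$, whence $A\subseteq\ov{\sigma}$.

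The only genuine subtlety will be the membership $\ov{\sigma}\in\mc{R}$: it is not automatic that the $\sigma$-field generated by a basis contains all closed sets, but this is exactly what the standing hypothesis on $\mc{R}$ secures through $\mc{B}(S)\subseteq\mc{R}$. Once that membership is in place, the single application of Lemma \ref{01251721} together with the elementary monotonicity of intersections closes both inclusions, giving $A=B$ as desired.
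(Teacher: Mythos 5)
Your proof is correct and follows essentially the same route as the paper's: the nontrivial inclusion $supp(\ms{E})\subseteq\bigcap\ov{\sigma}$ is obtained exactly as in the paper, by noting $\ov{\sigma}\in\mc{R}$ (via $\mc{B}(S)\subseteq\mc{R}$) and upgrading $\ms{E}(\sigma)=\un$ to $\ms{E}(\ov{\sigma})=\un$ with Lemma \ref{01251721}. The only cosmetic difference is in the easy inclusion, where you use monotonicity of intersections over a larger family while the paper invokes \eqref{supp1} together with the closedness of $supp(\ms{E})$; both are immediate and equivalent in substance.
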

\begin{proof}
The inclusion $\supseteq$ follows since \eqref{supp1} and since $supp(\ms{E})$ is closed.
Let $\sigma\in\mc{R}$ such that $\ms{E}(\sigma)=\un$,
then $\ms{E}(\ov{\sigma})=\un$ since Lemma \ref{01251721}
and $\ov{\sigma}\in\mc{R}$, thus
$\{\delta\in\mc{R}\mid\ms{E}(\delta)=1,\,\delta\in Cl(S)\}
\supseteq
\{\ov{\sigma}\mid\sigma\in\mc{R},\,\ms{E}(\sigma)=1\}$
and the inclusion $\subseteq$ follows.
\end{proof}
\begin{proposition}
\label{11232010}
Let $Z$ be a set, 
$\mc{R}$ be a $\sigma-$field of subsets of $Z$,
$\ms{E}$ a countably additive spectral measure on $\mc{R}$
and $f:Z\to S$ be a $(\mc{R},\mc{B}(S))-$measurable map.
Thus
\begin{enumerate}
\item
$supp(\ms{E}\circ f^{-1})\subseteq
\bigcap_{\{\sigma\in\mc{R}\mid\ms{E}(\sigma)=\un\}}
\ov{f(\sigma)}$;
\label{st1}
\item
if $Z$ is a topological space and $\mc{R}$ is generated as $\sigma-$field by a basis of the topology on $Z$ containing $Z$, 
then $supp(\ms{E}\circ f^{-1})\subseteq\ov{f(supp(\ms{E}))}$;
\label{st2}
\item
if in addition to the conditions in st. \eqref{st2} $f$ is continuous, then $supp(\ms{E}\circ f^{-1})=\ov{f(supp(\ms{E}))}$.
\label{st3}
\end{enumerate}
\end{proposition}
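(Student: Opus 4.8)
The plan is to handle the three inclusions in order, obtaining \eqref{st2} as a one-line specialization of \eqref{st1} and concentrating the real work on the reverse inclusion in \eqref{st3}.

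For \eqref{st1} I would argue directly from the definition of the support, applied to the image measure $\ms{E}\circ f^{-1}$, which is again a countably additive spectral measure, now on $\mc{B}(S)$; since the open sets of $S$ form a basis of its topology containing $S$, this support is the intersection of all closed $\tau$ with $(\ms{E}\circ f^{-1})(\tau)=\un$. Fix $\sigma\in\mc{R}$ with $\ms{E}(\sigma)=\un$ and put $\tau\coloneqq\ov{f(\sigma)}$, a closed subset of $S$, so that $f^{-1}(\tau)\in\mc{R}$ by measurability of $f$. From $\sigma\subseteq f^{-1}(f(\sigma))\subseteq f^{-1}(\tau)$ together with Lemma \ref{01251721} (applied with $\delta=f^{-1}(\tau)$) I obtain $\ms{E}(f^{-1}(\tau))=\un$, that is $(\ms{E}\circ f^{-1})(\tau)=\un$. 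Hence $\ov{f(\sigma)}$ is one of the closed full-measure sets whose intersection defines $supp(\ms{E}\circ f^{-1})$, so $supp(\ms{E}\circ f^{-1})\subseteq\ov{f(\sigma)}$, and intersecting over all admissible $\sigma$ gives \eqref{st1}.

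For \eqref{st2} I would feed a single well-chosen $\sigma$ into \eqref{st1}. Under the hypothesis of \eqref{st2} one has $\mc{B}(Z)\subseteq\mc{R}$, so the closed set $supp(\ms{E})$ belongs to $\mc{R}$, and \eqref{supp1} yields $\ms{E}(supp(\ms{E}))=\un$. Thus $supp(\ms{E})$ is itself one of the sets $\sigma$ in the intersection of \eqref{st1}, and that inclusion collapses to $supp(\ms{E}\circ f^{-1})\subseteq\ov{f(supp(\ms{E}))}$.

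The reverse inclusion in \eqref{st3} is the only place continuity enters, and is where I expect the genuine obstacle to lie. Since $supp(\ms{E}\circ f^{-1})$ is closed it suffices to prove $f(supp(\ms{E}))\subseteq supp(\ms{E}\circ f^{-1})$, and for this I would pass to the open-neighborhood reformulation of the support: applying \eqref{supp1} to each of $\ms{E}$ and $\ms{E}\circ f^{-1}$ shows that the complement of the support is the largest open set of measure $\ze$, hence a point belongs to the support precisely when every open neighborhood of it has nonzero measure. Now take $z\in supp(\ms{E})$ and let $V$ be an arbitrary open neighborhood of $f(z)$; continuity of $f$ makes $f^{-1}(V)$ an open neighborhood of $z$, whence $(\ms{E}\circ f^{-1})(V)=\ms{E}(f^{-1}(V))\neq\ze$ because $z\in supp(\ms{E})$. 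As $V$ was arbitrary this places $f(z)$ in $supp(\ms{E}\circ f^{-1})$, and combining with \eqref{st2} yields the asserted equality.
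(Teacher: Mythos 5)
Your argument is correct. For statements \eqref{st1} and \eqref{st2} you follow essentially the paper's route: you exhibit $\ov{f(\sigma)}$ as a closed set of full measure for $\ms{E}\circ f^{-1}$ via $\sigma\subseteq f^{-1}(\ov{f(\sigma)})$ and Lemma \ref{01251721}, and then specialize to $\sigma=supp(\ms{E})$ using \eqref{supp1} (the paper routes the first step through Prp.~\ref{23112100}, which is the same characterization of the support, so the difference is cosmetic). For statement \eqref{st3} you genuinely diverge. The paper proves $f(supp(\ms{E}))\subseteq supp(\ms{E}\circ f^{-1})$ by a chain of set inclusions: it introduces the families $\mc{B}_{f}$ and $\mc{R}_{f}=f^{-1}(\mc{B}_{f})$, uses continuity only through $f(\ov{\sigma})\subseteq\ov{f(\sigma)}$, and compares the resulting intersections with the one furnished by Prp.~\ref{23112100}. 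You instead pass to the pointwise characterization of the support: by \eqref{supp1} the complement of the support is the largest open null set, so $x\in supp(\ms{E})$ iff every open neighborhood of $x$ has nonzero measure, and then continuity of $f$ transports this property from $z$ to $f(z)$ via $f^{-1}(V)$. Both arguments are sound; yours is the standard pushforward-of-measures argument and is shorter and more transparent, at the small cost of having to justify the neighborhood characterization (which you do correctly from \eqref{supp1}, noting that open sets lie in $\mc{R}$ under the hypothesis of \eqref{st2}), whereas the paper's version stays entirely at the level of the intersection formulas it has already established.
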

\begin{proof}
By Prp. \ref{23112100} we have
\begin{equation}
\label{01251743}
supp(\ms{E}\circ f^{-1})
=
\bigcap_{\{\delta\in\mc{B}(S)\mid\ms{E}(f^{-1}(\delta))=\un\}}
\ov{\delta}.
\end{equation}
Let $\sigma\in\mc{R}$ thus $\ov{f(\sigma)}\in\mc{B}(S)$ and   $f^{-1}(\ov{f(\sigma)})\supseteq\sigma$,
so $\ms{E}(\sigma)=\un$ implies $\ms{E}(f^{-1}(\ov{f(\sigma)})=\un$ since Lemma \ref{01251721}, and st.\eqref{st1} follows by 
\eqref{01251743}.
St.\eqref{st2} follows by st.\eqref{st1} and \eqref{supp1}. 
Let $f$ be continuous, set $\mc{B}_{f}\doteq
\{\delta\in\mc{B}(S)\mid\ms{E}(f^{-1}(\delta))=\un\}$,
$\mc{R}_{f}\doteq f^{-1}(\mc{B}_{f})$,
and
$A_{\sigma}\doteq\ov{f(\sigma)}$
and
$B_{\delta}\doteq\ov{\delta}$, 
for all $\sigma\in\mc{R}_{f}$ and $\delta\in\mc{B}_{f}$.
Thus
$A_{f^{-1}(\delta)}\subseteq B_{\delta}$, 
since 
$f(f^{-1}(\delta))\subseteq\delta$, for all $\delta\in\mc{B}_{f}$,
therefore
$\bigcap_{\{\delta\in\mc{B}_{f}\}}
A_{f^{-1}(\delta)}
\subseteq
\bigcap_{\{\delta\in\mc{B}_{f}\}}
B_{\delta}
$, i.e.
\begin{equation}
\label{star1}
\bigcap_{\{\sigma\in\mc{R}_{f}\}}\ov{f(\sigma)}
\subseteq
\bigcap_{\{\delta\in\mc{B}_{f}\}}
\ov{\delta}.
\end{equation}
Moreover
\begin{equation}
\label{star2}
\begin{aligned}
f(supp(\ms{E}))
&
=  
f\bigl(
\bigcap_{\{\sigma\in\mc{R}\mid\ms{E}(\sigma)=\un,
\sigma\in Cl(Z)\}}
\sigma
\bigr)
\\
&\subseteq
\bigcap_{\{\sigma\in\mc{R}\mid\ms{E}(\sigma)=\un,
\sigma\in Cl(Z)\}}
f(\sigma)
\\
&\subseteq
\bigcap_{\{\sigma\in\mc{R}\mid\ms{E}(\sigma)=\un
\}}
f(\ov{\sigma})
\\
&\subseteq
\bigcap_{\{\sigma\in\mc{R}_{f}\}}
f(\ov{\sigma})
\\
&\subseteq
\bigcap_{\{\sigma\in\mc{R}_{f}\}}
\ov{f(\sigma)}.
\end{aligned}
\end{equation}
Here the second inclusion follows by Lemma \ref{01251721},
the forth by the continuity of the map $f$.
Finally \eqref{star2}, \eqref{star1} 
and \eqref{01251743}
imply 
$f(supp(\ms{E}))\subseteq supp(\ms{E}\circ f^{-1})$
and st.\eqref{st3} follows by st.\eqref{st2}.
\end{proof}
\begin{corollary}
\label{11241219}
Let $\ms{R}\doteq\{R_{x}\}_{x\in X}$ 
be a family of complete separable metric
spaces, 
$\mc{E}\doteq\{E_{x}\}_{x\in X}$ a family of 
commuting Borel RI's in $\mf{H}$ on $\ms{R}$,
and $f:R_{X}\to S$ be 
$(\mc{C}_{\sigma}(R_{X}),\mc{B}(S))-$measurable.
Then for any $Q\in\mathscr{P}_{\omega}(X)$
\begin{equation*}
supp(\ms{E}_{\mc{E}}^{f})
\subseteq
\ov{f(\mc{C}(Q,\prod_{x\in Q}supp(E_{x})))}.
\end{equation*}
\end{corollary}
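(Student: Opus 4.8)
The plan is to deduce the inclusion from Proposition \ref{11232010}\eqref{st1} by applying it to the joint resolution of the identity. Concretely, I would set $\ms{E}\coloneqq\varprod_{x\in X}E_{x}$, which is a countably additive spectral measure (indeed a $RI$) in $\mf{H}$ on the $\sigma$-field $\mc{C}_{\sigma}(R_{X})$ of subsets of $R_{X}$, and apply Proposition \ref{11232010}\eqref{st1} with $Z=R_{X}$, $\mc{R}=\mc{C}_{\sigma}(R_{X})$ and the given map $f$. Since by definition $\ms{E}_{\mc{E}}^{f}=\ms{E}\circ f^{-1}$, that proposition yields
\begin{equation*}
supp(\ms{E}_{\mc{E}}^{f})
\subseteq
\bigcap_{\{\sigma\in\mc{C}_{\sigma}(R_{X})\mid\ms{E}(\sigma)=\un\}}
\ov{f(\sigma)}.
\end{equation*}
As this right-hand side is an intersection over all admissible $\sigma$, it is contained in $\ov{f(\sigma_{0})}$ for any single $\sigma_{0}\in\mc{C}_{\sigma}(R_{X})$ with $\ms{E}(\sigma_{0})=\un$. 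Hence it suffices to exhibit one such $\sigma_{0}$ producing the target set, and the natural candidate is the cylinder $\sigma_{0}\coloneqq\mf{C}(Q,\prod_{x\in Q}supp(E_{x}))$.

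Next I would verify the two properties of $\sigma_{0}$. For measurability, each $supp(E_{x})$ is closed, hence Borel in $R_{x}$, so $\prod_{x\in Q}supp(E_{x})$ is the $\Psi$-image of an element of $\prod_{x\in Q}\mc{B}(R_{x})$ and therefore lies in $\mc{B}(R_{Q})$; consequently $\sigma_{0}\in\mc{C}(R_{X})\subseteq\mc{C}_{\sigma}(R_{X})$. For full measure, the defining relation \eqref{prod} of the joint $RI$ gives
\begin{equation*}
\ms{E}(\sigma_{0})
=
(\varprod_{x\in X}E_{x})\bigl(\mf{C}(Q,\textstyle\prod_{x\in Q}supp(E_{x}))\bigr)
=
\prod_{x\in Q}E_{x}(supp(E_{x})),
\end{equation*}
and each factor equals $\un$ by \eqref{supp1}, whence $\ms{E}(\sigma_{0})=\un$ (a composition of identity operators). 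Substituting the choice $\sigma_{0}$ into the first display then gives
\begin{equation*}
supp(\ms{E}_{\mc{E}}^{f})
\subseteq
\ov{f(\sigma_{0})}
=
\ov{f(\mf{C}(Q,\textstyle\prod_{x\in Q}supp(E_{x})))},
\end{equation*}
which is the asserted inclusion.

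I do not expect a genuine obstacle here: the argument is a direct specialization of Proposition \ref{11232010}\eqref{st1}, and the only points requiring care are the membership $\prod_{x\in Q}supp(E_{x})\in\mc{B}(R_{Q})$ (which rests on the closedness of the supports recorded after \eqref{supp1}) and the evaluation $\ms{E}(\sigma_{0})=\un$ via \eqref{prod} and \eqref{supp1}. I would emphasize that continuity of $f$ plays no role, so that only the weaker one-sided bound of part \eqref{st1} is invoked, rather than the equality of part \eqref{st3}.
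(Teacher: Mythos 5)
Your proposal is correct and follows the same route as the paper, whose entire proof is the citation of Proposition \ref{11232010}\eqref{st1} together with \eqref{prod}; you have simply made explicit the choice of the cylinder set $\mf{C}(Q,\prod_{x\in Q}supp(E_{x}))$, its membership in $\mc{C}_{\sigma}(R_{X})$, and the evaluation of its measure via \eqref{prod} and \eqref{supp1}. No gaps.
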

\begin{proof}
By Prp. \ref{11232010}\eqref{st1} and \eqref{prod}.
\end{proof}
\begin{remark}
\label{11241723}
Let $\mc{B}$ be a $\sigma-$field of subsets of a set $Z$
and $E$ a RI in $\mf{H}$ on $\mc{B}$.
By following the argument in the proof of \cite[Thm. $12.2.6.(d)$]{ds2}
with the help of \cite[Thm. $18.2.11.(i)$]{ds3},
we obtain $h(E)^{\ast}=h^{\ast}(E)$,
for all $\mc{B}-$measurable map $h$,
where $h^{\ast}$ is the complex conjugate map of $h$. 
\end{remark}
\begin{lemma}
\label{star}
Let $\mc{B}$ be a $\sigma-$field of subsets of a set $Z$,
$E$ a countably additive spectral measure in $G$ on $\mc{B}$
and $\sigma\in\mc{B}$
such that $E(\sigma)=\un$.
Let $f$ and $g$ be two $\mc{B}-$measurable maps,
thus $f\up\sigma=g\up\sigma$
implies
$f(E)=g(E)$.
\end{lemma}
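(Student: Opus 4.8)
The plan is to derive the statement directly from the operator identity $f(E)E(\sigma)=(f\chi_{\sigma})(E)$ recorded just after the definition \eqref{fc} of the functional calculus, specialised to the given $\sigma$, together with the hypothesis $E(\sigma)=\un$.

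First I would upgrade the pointwise hypothesis $f\up\sigma=g\up\sigma$ to a global equality of the truncated maps. Recall that $f_{\sigma}=f\chi_{\sigma}$ and $g_{\sigma}=g\chi_{\sigma}$. For $z\in\sigma$ one has $\chi_{\sigma}(z)=1$ and $f(z)=g(z)$, whereas for $z\notin\sigma$ both $f\chi_{\sigma}$ and $g\chi_{\sigma}$ vanish; hence $f_{\sigma}=g_{\sigma}$ as $\mc{B}-$measurable maps on $Z$. Since the two arguments of the functional calculus then coincide, so do the resulting operators, that is $(f\chi_{\sigma})(E)=(g\chi_{\sigma})(E)$, domains included.

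Next I would invoke $f(E)E(\sigma)=(f\chi_{\sigma})(E)$ and its analogue for $g$. Because $E(\sigma)=\un$ is the identity operator on $G$, the left-hand side is simply $f(E)$, so the identity reads $f(E)=(f\chi_{\sigma})(E)=f_{\sigma}(E)$, and likewise $g(E)=g_{\sigma}(E)$. Combining this with the previous step,
\begin{equation*}
f(E)=f_{\sigma}(E)=g_{\sigma}(E)=g(E),
\end{equation*}
which is the assertion, with equality of domains automatic because the three middle operators are literally identical.

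I expect no genuine obstacle here: the only point requiring attention is that $f(E)E(\sigma)=(f\chi_{\sigma})(E)$ be read as an equality of (possibly unbounded, densely defined, closed) operators \emph{including their domains}, but this is precisely the content of the identity cited after \eqref{fc}, so nothing further must be proved. As a bonus, this lemma subsumes the earlier implication \eqref{11271207} without invoking its topological hypotheses in the proof: in the setting where $S$ is a topological space and $supp(E)$ is a closed set lying in $\mc{B}$, one recovers \eqref{11271207} simply by taking $\sigma=supp(E)$, for which $E(\sigma)=\un$ holds by \eqref{supp1}.
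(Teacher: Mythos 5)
Your proof is correct and follows essentially the same route as the paper's: both rest on the identity $(f\chi_{\sigma})(E)=f(E)E(\sigma)$ (which the paper attributes to Dunford--Schwartz, Thm.\ $18.2.11(f)$, and which is also recorded after \eqref{fc}), combined with the observation that $f\up\sigma=g\up\sigma$ forces $f\chi_{\sigma}=g\chi_{\sigma}$ and that $E(\sigma)=\un$ collapses the identity to $f(E)=(f\chi_{\sigma})(E)$. Your added remarks on equality of domains and on recovering \eqref{11271207} are harmless elaborations of the same argument.
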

\begin{proof}
Let $\delta\in\mc{B}$, 
then
$(f\chi_{\delta})(E)=f(E)E(\delta)$
since 
\cite[Thm. $18.2.11(f)$]{ds3} and the fact that
$\chi_{\delta}(E)=E(\delta)$.
Moreover 
$f\up\delta=g\up\delta$ 
implies
$f\chi_{\delta}=g\chi_{\delta}$
and the statement follows. 
\end{proof}
The following result yields sufficient conditions to ensure
the selfadjointness of the operator $f(\ms{E}_{\mc{E}})$.
\begin{remark}
\label{32st1}
Let $\ms{R}\doteq\{R_{x}\}_{x\in X}$ be a family of complete separable metric spaces, $\mc{E}\doteq\{E_{x}\}_{x\in X}$ 
a family of commuting Borel RI's in $\mf{H}$ on $\ms{R}$, and $f:R_{X}\to\C$ be $\mc{C}_{\sigma}(R_{X})-$measurable, then
since \eqref{18217} we obtain that
$\ms{E}_{\mc{E}}^{f}=\ms{E}_{f(\ms{E}_{\mc{E}})}$
i.e.
$\ms{E}_{\mc{E}}^{f}$ 
is the resolution of the identity of 
$f(\ms{E}_{\mc{E}})$.
\end{remark}
\begin{theorem}
[Selfadjointness]
\label{11241732}
Let $\ms{R}\doteq\{R_{x}\}_{x\in X}$ be a family of complete separable metric spaces, $\mc{E}\doteq\{E_{x}\}_{x\in X}$ a family of 
commuting Borel RI's in $\mf{H}$ on $\ms{R}$, and $f:R_{X}\to\C$ be $\mc{C}_{\sigma}(R_{X})-$measurable. Then
\begin{enumerate}
\item
for all $Q\in\mathscr{P}_{\omega}(X)$ we have
\begin{equation*}
sp(f(\ms{E}_{\mc{E}}))
\subseteq 
\ov{f(\mc{C}(Q,\prod_{x\in Q}supp(E_{x})))},
\end{equation*}
\label{32st2}
\item
if there exists an 
$A\in\mathscr{P}_{\omega}(X)$
such that 
$\ov{f(\mc{C}(A,\prod_{x\in A}supp(E_{x})))}\subseteq\R$
then
$f(\ms{E}_{\mc{E}})$ is selfadjoint.
\label{32st3}
\end{enumerate}
\end{theorem}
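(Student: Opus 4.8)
The plan is to assemble both statements directly from the preceding results, the pivotal observation being Rmk.~\ref{32st1}, which identifies $\ms{E}_{f(\ms{E}_{\mc{E}})}=\ms{E}_{\mc{E}}^{f}$; in particular $f(\ms{E}_{\mc{E}})$ is a scalar type spectral operator in $\mf{H}$ whose resolution of the identity is exactly $\ms{E}_{\mc{E}}^{f}$. With this in hand neither item requires fresh analysis, only the correct combination of earlier facts.

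For item~\eqref{32st2}, I would combine the identification $sp(R)=supp(\ms{E}_{R})$ from \eqref{18225}, applied to $R=f(\ms{E}_{\mc{E}})$, with Rmk.~\ref{32st1}: this gives $sp(f(\ms{E}_{\mc{E}}))=supp(\ms{E}_{f(\ms{E}_{\mc{E}})})=supp(\ms{E}_{\mc{E}}^{f})$. It then remains only to invoke Cor.~\ref{11241219}, whose conclusion is precisely $supp(\ms{E}_{\mc{E}}^{f})\subseteq\ov{f(\mc{C}(Q,\prod_{x\in Q}supp(E_{x})))}$ for every $Q\in\mathscr{P}_{\omega}(X)$, and item~\eqref{32st2} follows at once.

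For item~\eqref{32st3}, the strategy is to establish $f(\ms{E}_{\mc{E}})^{\ast}=f(\ms{E}_{\mc{E}})$. Writing $\ms{E}_{\mc{E}}=\varprod_{x\in X}E_{x}$, Rmk.~\ref{11241723} applied to this RI yields $f(\ms{E}_{\mc{E}})^{\ast}=f^{\ast}(\ms{E}_{\mc{E}})$, where $f^{\ast}$ is the complex conjugate of $f$. To see $f^{\ast}(\ms{E}_{\mc{E}})=f(\ms{E}_{\mc{E}})$ I would apply Lemma~\ref{star} with the cylinder $\sigma\doteq\mc{C}(A,\prod_{x\in A}supp(E_{x}))\in\mc{C}_{\sigma}(R_{X})$. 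On one hand \eqref{prod} gives $\ms{E}_{\mc{E}}(\sigma)=\prod_{x\in A}E_{x}(supp(E_{x}))$, and since each $E_{x}(supp(E_{x}))=\un$ by \eqref{supp1}, we obtain $\ms{E}_{\mc{E}}(\sigma)=\un$. On the other hand the hypothesis $\ov{f(\mc{C}(A,\prod_{x\in A}supp(E_{x})))}\subseteq\R$ forces $f$ to be real-valued on $\sigma$, hence $f^{\ast}\up\sigma=f\up\sigma$. Lemma~\ref{star} then delivers $f^{\ast}(\ms{E}_{\mc{E}})=f(\ms{E}_{\mc{E}})$, so $f(\ms{E}_{\mc{E}})^{\ast}=f(\ms{E}_{\mc{E}})$; as $f(\ms{E}_{\mc{E}})$ is densely defined by the general functional calculus, it is selfadjoint.

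I do not expect a genuine obstacle, since the hard work resides in the supporting results; the only care needed is bookkeeping. The one point to verify carefully is that $\sigma$ indeed lies in $\mc{C}_{\sigma}(R_{X})$ — which holds because each $supp(E_{x})$ is closed, hence Borel, so $\prod_{x\in A}supp(E_{x})\in\mc{B}(R_{A})$ and $\mc{C}(A,\prod_{x\in A}supp(E_{x}))$ is a cylindrical set over the finite index set $A$ — and that the finiteness of $A$ is what legitimizes reducing $\ms{E}_{\mc{E}}(\sigma)$ to the finite product of the $E_{x}(supp(E_{x}))$ through \eqref{prod}.
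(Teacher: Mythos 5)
Your proof of item \eqref{32st2} is exactly the paper's: identify $sp(f(\ms{E}_{\mc{E}}))$ with $supp(\ms{E}_{\mc{E}}^{f})$ via Rmk.~\ref{32st1} and \eqref{18225}, then quote Cor.~\ref{11241219}.

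For item \eqref{32st3} your argument is correct but follows a slightly different, and in fact more direct, route. The paper first rewrites $f(\ms{E}_{\mc{E}})=\imath(\ms{E}_{\mc{E}}^{f})$, notes $\ms{E}_{\mc{E}}^{f}(sp(f(\ms{E}_{\mc{E}})))=\un$, and applies Lemma~\ref{star} \emph{to the pushforward measure} $\ms{E}_{\mc{E}}^{f}$ on $\C$ to replace $\imath$ by $h$, the $\ze$-extension of $\imath\up sp(f(\ms{E}_{\mc{E}}))$; since the spectrum is real by item \eqref{32st2}, $h=h^{\ast}$ and Rmk.~\ref{11241723} applied to $\ms{E}_{\mc{E}}^{f}$ closes the argument. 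You instead apply Rmk.~\ref{11241723} and Lemma~\ref{star} \emph{directly to the joint RI} $\ms{E}_{\mc{E}}$ on $R_{X}$, taking $\sigma=\mc{C}(A,\prod_{x\in A}supp(E_{x}))$ as the full-measure set: $\ms{E}_{\mc{E}}(\sigma)=\un$ by \eqref{prod} and \eqref{supp1}, and $f^{\ast}\up\sigma=f\up\sigma$ since $f(\sigma)\subseteq\R$ by hypothesis, whence $f(\ms{E}_{\mc{E}})^{\ast}=f^{\ast}(\ms{E}_{\mc{E}})=f(\ms{E}_{\mc{E}})$. This bypasses the auxiliary function $h$ and does not even need item \eqref{32st2} as an input to item \eqref{32st3}; it relies on the same supporting lemmas, only applied to a different spectral measure. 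Your two checks at the end (that $\sigma$ is a cylindrical set because each $supp(E_{x})$ is closed, and that finiteness of $A$ legitimizes \eqref{prod}) are exactly the points that needed verification, and they go through.
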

\begin{proof}
Since Rmk. \ref{32st1} and \eqref{18225}
\begin{equation}
\label{01261450}
sp(f(\ms{E}_{\mc{E}}))=supp(\ms{E}_{\mc{E}}^{f}),
\end{equation}
then st.\eqref{32st2} follows since Cor. \ref{11241219}.
Since Rmk. \ref{32st1} we have
\begin{equation}
\label{01260848}
f(\ms{E}_{\mc{E}})=\imath(\ms{E}_{\mc{E}}^{f}),
\end{equation}
moreover since \eqref{01261450} and \eqref{supp1}
\begin{equation}
\label{01261453}
\ms{E}_{\mc{E}}^{f}(sp(f(\ms{E}_{\mc{E}})))=\un.
\end{equation}
Let $h$ be the $0-$extension to $\C$ of $\imath\up sp(f(\ms{E}_{\mc{E}}))$, then since
\eqref{01261453}, Lemma \ref{star} and \eqref{01260848} 
we obtain
\begin{equation}
\label{01260850}
f(\ms{E}_{\mc{E}})=h(\ms{E}_{\mc{E}}^{f}).
\end{equation}
If there exists an $A\in\mathscr{P}_{\omega}(X)$
such that 
$\ov{f(\mc{C}(A,\prod_{x\in A}supp(E_{x})))}\subseteq\R$
then
$h=h^{\ast}$ since st.\eqref{32st2},
thus st.\eqref{32st3} follows since Rmk. \ref{11241723}
and \eqref{01260850}.
\end{proof}
\begin{proposition}
\label{11270642}
Let $U_{r}:\R^{+}\to\mc{L}(\mf{H})$
be a strongly continuous semigroup of unitary operators
on $\mf{H}$, for $r\in\{1,2\}$, such that 
$[U_{1}(t),U_{2}(s)]=\ze$, for all $s,t\in\R^{+}$.
Denote by $A_{r}$ the infinitesimal generator of $U_{r}$,
then 
$[\ms{E}_{-iA_{1}}(\sigma),\ms{E}_{-iA_{2}}(\delta)]=\ze$, for all
$\sigma,\delta\in\mc{B}(\C)$.
\end{proposition}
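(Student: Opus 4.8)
The plan is to reduce the statement to a commutation result for the spectral measures of selfadjoint generators. Each $U_{r}$ is a strongly continuous semigroup of unitaries, so setting $U_{r}(-t)\coloneqq U_{r}(t)^{\ast}$ for $t>0$ produces a strongly continuous one-parameter unitary group on $\R$ whose restriction to $\R^{+}$ recovers $U_{r}$; by Stone's theorem the operator $-iA_{r}$ is selfadjoint and $U_{r}(t)=\exp(it(-iA_{r}))$ for all $t\in\R$. In particular $-iA_{r}$ is a scalar type spectral operator whose resolution of the identity $\ms{E}_{-iA_{r}}$ is a Borel RI in $\mf{H}$ supported in $\R\subseteq\C$, so the objects in the statement are well set. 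Taking adjoints in the hypothesis $[U_{1}(t),U_{2}(s)]=\ze$ (given for $t,s\in\R^{+}$) and using $U_{r}(-t)=U_{r}(t)^{\ast}$, I would first upgrade it to $[U_{1}(t),U_{2}(s)]=\ze$ for all $t,s\in\R$.

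The heart of the argument is then to pass from commutation of the groups to commutation of the spectral projections $\ms{E}_{-iA_{r}}(\cdot)$, which I would carry out via the bicommutant theorem. Let $\mc{M}_{r}$ be the von Neumann algebra generated by $\{U_{r}(t)\mid t\in\R\}$; being closed under adjoints, $\mc{M}_{r}=\{U_{r}(t)\mid t\geq 0\}''$. By the spectral theorem, anything commuting with all $U_{r}(t)=\exp(it(-iA_{r}))$ commutes with $-iA_{r}$ and hence with each $\ms{E}_{-iA_{r}}(\sigma)$, so $\ms{E}_{-iA_{r}}(\sigma)\in\mc{M}_{r}$ for every $\sigma\in\mc{B}(\C)$. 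The upgraded hypothesis says each $U_{2}(s)$ lies in the commutant $\{U_{1}(t)\mid t\in\R\}'=\mc{M}_{1}'$, itself a von Neumann algebra, whence $\mc{M}_{2}\subseteq\mc{M}_{1}'$. Consequently $\ms{E}_{-iA_{2}}(\delta)\in\mc{M}_{1}'$, while $\ms{E}_{-iA_{1}}(\sigma)\in\mc{M}_{1}=(\mc{M}_{1}')'$ commutes with every element of $\mc{M}_{1}'$; applied to the element $\ms{E}_{-iA_{2}}(\delta)$ this gives $[\ms{E}_{-iA_{1}}(\sigma),\ms{E}_{-iA_{2}}(\delta)]=\ze$.

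Alternatively, staying closer to the functional-calculus framework developed above, I could realize bounded functions of the generators as strong integrals of the groups: for $f$ in the Schwartz class one has $f(-iA_{r})=\tfrac{1}{2\pi}\int_{\R}\hat{f}(t)\,U_{r}(t)\,dt$ as a strong integral, so that $f(-iA_{1})$ and $g(-iA_{2})$ commute by Fubini together with the commutation of the groups. Approximating $\chi_{\sigma}$ and $\chi_{\delta}$ pointwise by uniformly bounded Schwartz functions and invoking bounded strong convergence of the spectral integral (Rmk. \ref{11241723} and the construction in \eqref{fc}), one would then pass to the limit in each of the two factors separately to reach the same conclusion.

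The only genuine difficulty I expect is the bridge between the Dunford--Schwartz resolution of the identity $\ms{E}_{-iA_{r}}$ employed in this paper and the group $U_{r}$, namely the assertion that the spectral projections of $-iA_{r}$ belong to $\mc{M}_{r}$ (equivalently, the legitimacy of recovering $\ms{E}_{-iA_{r}}$ from $U_{r}$ by Stone's formula or by the strong-integral representation), together with the verification that $-iA_{r}$ is indeed selfadjoint. In the approximation route the delicate point is the double strong limit: since operator multiplication is only separately strongly continuous, I would exploit that the approximants are uniformly norm-bounded (by $1$ in the case of $\chi_{\sigma}$) and take the two limits one factor at a time.
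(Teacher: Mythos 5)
Your argument is correct, but it takes a genuinely different route from the paper's. The paper stays inside the Dunford--Schwartz spectral-operator framework of this subsection: by Stone's theorem it writes $U_{r}(t)=f_{t}(-iA_{r})$ with $f_{t}(\lambda)=e^{it\lambda}$, passes to the single bounded unitary $U_{r}(1)$, whose resolution of the identity is $\ms{E}_{-iA_{r}}\circ f_{1}^{-1}$ by \eqref{18217}, recovers $\ms{E}_{-iA_{r}}$ from $\ms{E}_{U_{r}(1)}$ via the logarithm $g_{1}$, and then applies the commutation theorem for spectral operators (\cite[Lemma $18.2.13$ and Cor. $18.2.4$]{ds3}) twice, first to get $[U_{1}(1),\ms{E}_{U_{2}(1)}(\delta)]=\ze$ and then $[\ms{E}_{U_{1}(1)}(\sigma),\ms{E}_{U_{2}(1)}(\delta)]=\ze$. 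You instead exploit the Hilbert space structure directly through the bicommutant theorem, placing each $\ms{E}_{-iA_{r}}(\cdot)$ in the von Neumann algebra $\mc{M}_{r}$ generated by $\{U_{r}(t)\}$ and observing $\mc{M}_{2}\subseteq\mc{M}_{1}'$. Your route is shorter and avoids the delicate inversion step in the paper (the map $f_{1}$ is $2\pi$-periodic on $\R\supseteq sp(-iA_{r})$, so recovering $\ms{E}_{-iA_{r}}$ by composing with $g_{1}^{-1}$ requires care), at the cost of stepping outside the Banach-space functional-calculus machinery the subsection is built on; the bridge you correctly flag, namely that a bounded operator commuting with all $U_{r}(t)$ commutes with every $\ms{E}_{-iA_{r}}(\sigma)$ and that the Dunford--Schwartz r.o.i.\ coincides with the usual spectral measure of the selfadjoint $-iA_{r}$, is exactly what the paper extracts from \cite[Lemma $18.2.13$ and Cor. $18.2.4$]{ds3} together with the uniqueness of the r.o.i.\ (\cite[Cor. $18.2.14$]{ds3}), so it is available. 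Your alternative Fourier-integral route would also work but carries the heaviest technical burden (the double strong limit you mention) and is the furthest from what the paper actually does.
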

\begin{proof}
In this proof set $E_{r}\doteq\ms{E}_{-iA_{r}}$, for $r\in\{1,2\}$. 
Since the Stone Thm., see for example 
\cite[Thm. $12.6.1.$]{ds2} and its proof,
we have $U_{r}(t)=f_{t}(B_{r})$, where 
$B_{r}\doteq -iA_{r}$ and 
$f_{t}:\C\ni\lambda\mapsto\exp(it\lambda)$,
for all $t\in\R^{+}$.
Hence, since \eqref{18217},
$U_{r}(1)$ is a scalar type spectral operator
whose $r.o.i.$ is 
$\ms{E}_{U_{r}(1)}=E_{r}\circ f_{1}^{-1}$.
Let $g_{t}:\C-\{0\}\ni\lambda\mapsto\frac{-i}{t}\ln\lambda$,
for any $t\in\R^{+}-\{0\}$, then $g_{t}\circ f_{t}=Id$ so
\begin{equation}
\label{01261559}
E_{r}=\ms{E}_{U_{r}(1)}\circ g_{1}^{-1}.
\end{equation}
Moreover since \cite[Lemma $18.2.13$ and Cor. $18.2.4$]{ds3} we deduce for all $\delta\in\mc{B}(\C)$ that 
$[U_{1}(1),\ms{E}_{U_{2}(1)}(\delta)]=\ze$, so by applying again \cite[Lemma $18.2.13$ and Cor. $18.2.4$]{ds3}
we obtain $[\ms{E}_{U_{1}(1)}(\sigma),\ms{E}_{U_{2}(1)}(\delta)]=\ze$, for all $\sigma,\delta\in\mc{B}(\C)$
and the statement follows by \eqref{01261559}.
\end{proof}
\begin{proposition}
\label{11251148}
Let $E$ be a countably additive spectral measure in $G$ on 
a $\sigma-$field of subsets of a set $Z$, $F$ a Banach space and 
$\upalpha
\in
\mc{L}(\mc{L}_{w}(G),\mc{L}_{w}(F))$ 
morphism of
unital algebras.
Then
\begin{enumerate}
\item
$\upalpha\circ E$ is a countably additive spectral measure in $F$
on $\mc{B}$;
\label{11251148st1} 
\item
if $Z$ is a topological space and $\mc{B}$ is generated as a $\sigma-$field by a basis for the topology of $Z$ containing $Z$,
then $supp(\upalpha\circ E)\subseteq supp(E)$, in addition $supp(\upalpha\circ E)= supp(E)$ if $\upalpha^{-1}(\{\un\})=\{\un\}$.
\label{11251148st2} 
\end{enumerate}
\end{proposition}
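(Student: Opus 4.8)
The plan is to verify part (1) directly against the six defining properties of a countably additive spectral measure recalled in the preliminaries, and then to deduce part (2) from the support formula of Prp. \ref{23112100}. Writing $(\upalpha\circ E)(\delta)=\upalpha(E(\delta))$ for $\delta\in\mc{B}$, I would read off the algebraic properties from $\upalpha$ being a linear, multiplicative, unit-preserving map: from $E(\delta)E(\delta)=E(\delta)$ and multiplicativity one gets $\upalpha(E(\delta))\upalpha(E(\delta))=\upalpha(E(\delta))$, so $\upalpha(E(\delta))\in\Pr(F)$ (property 1); multiplicativity gives $(\upalpha\circ E)(\delta_1\cap\delta_2)=(\upalpha\circ E)(\delta_1)(\upalpha\circ E)(\delta_2)$ (property 2) and, together with linearity, the inclusion--exclusion identity of property 3; unitality gives $(\upalpha\circ E)(Z)=\upalpha(\un)=\un$ (property 4), and linearity gives $(\upalpha\circ E)(\varnothing)=\upalpha(\ze)=\ze$ (property 5).

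The only property using more than the algebra structure is countable additivity, and this is the single place demanding care. The essential point is that $\upalpha\in\mc{L}(\mc{L}_{w}(G),\mc{L}_{w}(F))$ means exactly that $\upalpha$ is continuous for the weak operator topologies carried by $\mc{L}_{w}(G)$ and $\mc{L}_{w}(F)$. Since the partial sums $\sum_{n=1}^{N}E(\alpha_{n})$ converge to $E(\bigcup_{n}\alpha_{n})$ in $\mc{L}_{w}(G)$, continuity together with linearity of $\upalpha$ yields $\sum_{n=1}^{N}\upalpha(E(\alpha_{n}))\to\upalpha\bigl(E(\bigcup_{n}\alpha_{n})\bigr)$ in $\mc{L}_{w}(F)$, that is $(\upalpha\circ E)(\bigcup_{n}\alpha_{n})=\sum_{n}(\upalpha\circ E)(\alpha_{n})$ in the weak operator topology. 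This completes the verification that $\upalpha\circ E$ is a countably additive spectral measure in $F$ on $\mc{B}$.

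For part (2), once (1) is available both $E$ and $\upalpha\circ E$ satisfy the hypotheses of Prp. \ref{23112100} under the topological assumption on $\mc{B}$, so I would write their supports as the intersections $\bigcap\ov{\sigma}$ taken over $\{\sigma\in\mc{B}\mid E(\sigma)=\un\}$ and over $\{\sigma\in\mc{B}\mid(\upalpha\circ E)(\sigma)=\un\}$ respectively. The observation that $E(\sigma)=\un$ forces $(\upalpha\circ E)(\sigma)=\upalpha(\un)=\un$ shows the first index family is contained in the second; intersecting over the larger family then gives the smaller set, i.e. $supp(\upalpha\circ E)\subseteq supp(E)$. If in addition $\upalpha^{-1}(\{\un\})=\{\un\}$, then conversely $(\upalpha\circ E)(\sigma)=\un$ forces $E(\sigma)=\un$, so the two index families coincide and therefore $supp(\upalpha\circ E)=supp(E)$.
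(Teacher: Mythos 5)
Your proof is correct and is exactly the routine verification the paper has in mind: the paper's own proof of Prp. \ref{11251148} consists of the single word ``Trivial'', and your write-up supplies the intended details (algebraic properties of $\upalpha\circ E$ from $\upalpha$ being a unital algebra morphism, countable additivity from weak-operator continuity, and the support comparison from Prp. \ref{23112100}). Nothing further is needed.
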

\begin{proof}
Trivial.
\end{proof}
Now we can state the first main result of 
this part 
namely the equivariance of the general functional calculus, result that shall be 
applied to ensure in Thm. \ref{11252000} the covariance of the f.c. for of a commuting set of resolutions of the identity.
\begin{theorem}
[Equivariance of the functional calculus]
\label{11251210}
Let $E$ be a countably additive spectral measure in $G$ on 
a $\sigma-$field $\mc{B}$ of subsets of a set $Z$, 
$F$ a Banach space and $U:G\to F$ be a linear isometry.
Then
\begin{enumerate}
\item
$ad(U)\circ E$ is a countably additive spectral measure
in $F$ on $\mc{B}$;
\label{1210st1}
\item
if $Z$ is a topological space and $\mc{B}$ is generated as a 
$\sigma-$field by a basis for the topology of $Z$ containing $Z$,
then 
$supp(E)=supp(ad(U)\circ E)$;
\label{1210st2}
\item
for any $\mc{B}-$measurable map $f$ we have
\begin{equation}
\label{11251436}
\begin{cases}
Uf(E)U^{-1}
=
f(ad(U)\circ E),
\\
sp(Uf(E)U^{-1})=sp(f(E)),
\\
\ms{E}_{f(ad(U)\circ E)}=ad(U)\circ E\circ f^{-1}.
\end{cases}
\end{equation}
\end{enumerate}
\end{theorem}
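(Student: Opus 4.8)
The plan is to deduce statements \eqref{1210st1} and \eqref{1210st2} from Prp. \ref{11251148} by recognizing $\ms{ad}(U)$ as an admissible algebra morphism, and to prove the first identity in \eqref{11251436} — the substantive one — by transporting the construction \eqref{fc} of the functional calculus through $U$ with the aid of \eqref{11211647}; the remaining two identities in \eqref{11251436} then follow with little effort. Throughout, since $\ms{ad}(U)$ is formed, $U$ is an invertible isometry and $U^{-1}$ is again an isometry, so $\ms{ad}(U)(a)=UaU^{-1}$ defines a unital algebra morphism $\mc{L}(G)\to\mc{L}(F)$ that is isometric for the operator norms, whence $\ms{ad}(U)\in\mc{L}(\mc{L}(G),\mc{L}(F))$.

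First I would record that $\ms{ad}(U)$ is also continuous as a map $\mc{L}_{w}(G)\to\mc{L}_{w}(F)$: for $\upphi\in F^{\ast}$ and $x\in F$ one has $q_{(\upphi,x)}(\ms{ad}(U)(A))=|\upphi(UAU^{-1}x)|=q_{(\upphi\circ U,\,U^{-1}x)}(A)$, so each generating seminorm on $\mc{L}_{w}(F)$ pulls back to a generating seminorm on $\mc{L}_{w}(G)$. Thus $\ms{ad}(U)$ satisfies the hypotheses of Prp. \ref{11251148}, and \eqref{1210st1} is exactly Prp. \ref{11251148}\eqref{11251148st1}. For \eqref{1210st2}, note that $UaU^{-1}=\un_{F}$ forces $a=\un_{G}$, i.e. $\ms{ad}(U)^{-1}(\{\un\})=\{\un\}$, so $supp(\ms{ad}(U)\circ E)=supp(E)$ by Prp. \ref{11251148}\eqref{11251148st2}.

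For the first identity in \eqref{11251436}, I would apply \eqref{11211647} with $\Psi=\ms{ad}(U)$ (legitimate since $\ms{ad}(U)\in\mc{L}(\mc{L}(G),\mc{L}(F))$) to the bounded additive map $E$, obtaining $\ms{I}^{\ms{ad}(U)\circ E}(g)=U\,\ms{I}^{E}(g)\,U^{-1}$ for every $g\in TM(\mc{B})$; in particular this holds for each truncation $f_{n}$ of $f$, which depends only on $f$ and not on the spectral measure. Fix $y\in F$ and set $x=U^{-1}y$. Then $\ms{I}^{\ms{ad}(U)\circ E}(f_{n})\,y=U\,\ms{I}^{E}(f_{n})\,x$, and since $U$ is a homeomorphism the limit $\lim_{n}\ms{I}^{\ms{ad}(U)\circ E}(f_{n})\,y$ exists in $F$ if and only if $\lim_{n}\ms{I}^{E}(f_{n})\,x$ exists in $G$; that is, $y\in Dom(f(\ms{ad}(U)\circ E))$ iff $U^{-1}y\in Dom(f(E))$, so the two domains correspond under $U$, and on them $f(\ms{ad}(U)\circ E)\,y=U\,f(E)\,U^{-1}y$ by \eqref{fc}. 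This yields $Uf(E)U^{-1}=f(\ms{ad}(U)\circ E)$. The careful bookkeeping of these domains through the limiting process is the one delicate point; everything else is formal.

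Finally, the second identity is the standard invariance of the spectrum under conjugation by an invertible isometry: $\lambda\un-Uf(E)U^{-1}=U(\lambda\un-f(E))U^{-1}$ is boundedly invertible, with domains matching under $U$, exactly when $\lambda\un-f(E)$ is, so the resolvent sets, hence the spectra, coincide. The third identity is immediate from \eqref{18217}: since $\ms{ad}(U)\circ E$ is a countably additive spectral measure by \eqref{1210st1}, applying \eqref{18217} to it gives $\ms{E}_{f(\ms{ad}(U)\circ E)}=(\ms{ad}(U)\circ E)\circ f^{-1}=\ms{ad}(U)\circ E\circ f^{-1}$.
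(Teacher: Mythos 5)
Your proposal is correct, and for the substantive part --- statements \eqref{1210st1}, \eqref{1210st2} and the first identity in \eqref{11251436} --- it coincides with the paper's proof: both invoke Prp. \ref{11251148} after observing that $\ms{ad}(U)$ is a unital algebra morphism in $\mc{L}(\mc{L}_{w}(G),\mc{L}_{w}(F))$, and both transport the construction \eqref{fc} through $U$ via \eqref{11211647}, matching $Dom(f(\ms{ad}(U)\circ E))=U\,Dom(f(E))$ and passing the limit through the isometry. You diverge only on the last two identities. For $sp(Uf(E)U^{-1})=sp(f(E))$ you give the direct similarity argument on resolvents, whereas the paper first proves that $UTU^{-1}$ is a scalar type spectral operator with $\ms{E}_{UTU^{-1}}=\ms{ad}(U)\circ\ms{E}_{T}$ (its \eqref{01262044}) and then deduces the spectral equality from $sp(R)=supp(\ms{E}_{R})$ (\eqref{18225}) together with the invariance of supports from Prp. \ref{11251148}\eqref{11251148st2}. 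For the third identity you apply \eqref{18217} directly to the spectral measure $\ms{ad}(U)\circ E$, which is marginally more economical than the paper's detour through \eqref{01262044} and the first identity. Both routes are sound; the paper's pays for the extra step by producing \eqref{01262044} explicitly, which it then reuses immediately in Cor. \ref{10301830}, while yours is self-contained and slightly shorter. One presentational point worth keeping as you have it: the hypothesis only says $U$ is a linear isometry, so the invertibility of $U$ (needed even to form $\ms{ad}(U)$ and used by the paper in the guise of $\ms{ad}(U)^{-1}=\ms{ad}(U^{-1})$) is an implicit assumption that you are right to make explicit.
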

\begin{proof}
St. \eqref{1210st1}\,\&\,\eqref{1210st2} follow since Prp. \ref{11251148}\eqref{11251148st1}\,\&\,\eqref{11251148st2}   
and since $ad(U)\in\mc{L}(\mc{L}_{w}(G),\mc{L}_{w}(F))$ and it is a morphism of unital algebras.
Let $f$ be $\mc{B}-$measurable and set
$E^{U}\doteq ad(U)\circ E$. Thus
\begin{equation*}
\begin{cases}
Dom(f(E^{U}))
=
\{y\in G\mid\exists\,\lim_{n\in\N}\ms{I}^{E^{U}}(f_{n})y\},
\\
f(E^{U})y=\lim_{n\in\N}\ms{I}^{E^{U}}(f_{n})y,\,
y\in Dom(f(E^{U})).
\end{cases}
\end{equation*}
Moreover $\ms{I}^{E^{U}}=ad(U)\circ\ms{I}$, since \eqref{11211647},
thus since $ad(U)^{-1}=ad(U^{-1})$
\begin{equation}
\label{10302053}
Dom(f(E^{U}))
=
U Dom(f(E)),
\end{equation}
in particular $Dom(f(E^{U})U)=Dom(f(E))$, and for any $y\in Dom(f(E))$
\begin{equation*}
f(E^{U})Uy=\lim_{n\in\N}U\,\ms{I}^{E}(f_{n})y=U\lim_{n\in\N}\ms{I}^{E}(f_{n})y=Uf(E)y,
\end{equation*}
so $f(E^{U})U=Uf(E)$ and the first equality in \eqref{11251436} follows.
Let $T$ be a scalar type spectral operator in $G$, since Prp. \ref{11251148}\eqref{11251148st2} we have
\begin{equation*}
supp(\ms{E}_{T}^{U})=supp(\ms{E}_{T}).
\end{equation*}
Moreover by the first equality in \eqref{11251436}
\begin{equation*}
UTU^{-1}=\imath(\ms{E}_{T}^{U}),
\end{equation*}
i.e. $UTU^{-1}$ is a scalar type spectral operator in $F$
such that
\begin{equation}
\label{01262044}
\ms{E}_{T}^{U}=\ms{E}_{UTU^{-1}},
\end{equation} 
hence
\begin{equation*}
supp(\ms{E}_{UTU^{-1}})=supp(\ms{E}_{T}),
\end{equation*}
therefore by \eqref{18225} we obtain
\begin{equation}
\label{01262119}
sp(UTU^{-1})=sp(T).
\end{equation}
Hence the second equality in \eqref{11251436} 
follows with the position $T=f(E)$, well-set since $f(E)$ is a scalar type spectral operator by \cite[Lemma $18.2.17$]{ds3}.
With this position by \eqref{01262044} and the first equlity in \eqref{11251436} follows $\ms{E}_{f(E)}^{U}=\ms{E}_{f(ad(U)\circ E)}$,
then the third equality in \eqref{11251436} follows by \eqref{18217}.
\end{proof}
\begin{corollary}
\label{10301830}
Let $F$ be a Banach space $U:G\to F$ a linear isometry,
$T$ a scalar type spectral operator in $G$ and $f$ a Borelian
map. Thus
\begin{enumerate}
\item
$UTU^{-1}$ is a scalar type spectral operator in $F$ such that
$\ms{E}_{UTU^{-1}}=ad(U)\circ\ms{E}_{T}$;
\label{10301830st1}
\item
$sp(UTU^{-1})=sp(T)$;
\label{10301830st2}
\item
$f(UTU^{-1})=Uf(T)U^{-1}$;
\label{10301830st3}
\item
$\ms{E}_{f(UTU^{-1})}=ad(U)\circ\ms{E}_{T}\circ f^{-1}$.
\label{10301830st4}
\end{enumerate}
\end{corollary}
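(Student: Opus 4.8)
The plan is to read the whole statement off Thm.~\ref{11251210} specialized to the countably additive spectral measure $E=\ms{E}_{T}$ in $G$ on $\mc{B}(\C)$, the resolution of the identity of the scalar type operator $T$, so that $Z=\C$ and $\mc{B}=\mc{B}(\C)$. The two facts to keep in mind throughout are that $T=\imath(\ms{E}_{T})$, with $\imath$ the identity map on $\C$, and that the Borelian functional calculus is by convention $g(R)=g(\ms{E}_{R})$ for a scalar type operator $R$ and $g\in Bor(\C)$.

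For st.~\eqref{10301830st1} I would take $f=\imath$ in the first equality of \eqref{11251436}, which reads $U\,\imath(\ms{E}_{T})\,U^{-1}=\imath(ad(U)\circ\ms{E}_{T})$, i.e. $UTU^{-1}=\imath(ad(U)\circ\ms{E}_{T})$. Since $ad(U)\circ\ms{E}_{T}$ is a countably additive spectral measure in $F$ on $\mc{B}(\C)$ by Thm.~\ref{11251210}\eqref{1210st1}, this exhibits $UTU^{-1}$ as a scalar type spectral operator in $F$; invoking the uniqueness of the resolution of the identity (\cite[Cor.~$18.2.14$]{ds3}) then identifies $\ms{E}_{UTU^{-1}}=ad(U)\circ\ms{E}_{T}$. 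St.~\eqref{10301830st2} is then nothing but the second equality in \eqref{11251436} for $f=\imath$, since $\imath(\ms{E}_{T})=T$ and $\imath(ad(U)\circ\ms{E}_{T})=UTU^{-1}$.

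For st.~\eqref{10301830st3} and st.~\eqref{10301830st4} I would first unwind the definitions: by st.~\eqref{10301830st1} the operator $UTU^{-1}$ is scalar type, so $f(UTU^{-1})=f(\ms{E}_{UTU^{-1}})=f(ad(U)\circ\ms{E}_{T})$, while $f(T)=f(\ms{E}_{T})$. The first equality in \eqref{11251436} gives $f(ad(U)\circ\ms{E}_{T})=U\,f(\ms{E}_{T})\,U^{-1}$, which is st.~\eqref{10301830st3}; the third equality in \eqref{11251436} gives $\ms{E}_{f(ad(U)\circ\ms{E}_{T})}=ad(U)\circ\ms{E}_{T}\circ f^{-1}$, which is st.~\eqref{10301830st4}. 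The argument is essentially notational bookkeeping, and the only genuine step is the appeal to the uniqueness of the resolution of the identity in st.~\eqref{10301830st1}; I expect no real obstacle beyond keeping the two functional-calculus notations $f(E)$ and $f(T)$ straight.
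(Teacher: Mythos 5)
Your proposal is correct and follows essentially the same route as the paper: the paper's proof of this corollary simply cites the identities $\ms{E}_{T}^{U}=\ms{E}_{UTU^{-1}}$ and $sp(UTU^{-1})=sp(T)$ (which are established inside the proof of Thm.~\ref{11251210} exactly by your $f=\imath$ specialization together with uniqueness of the resolution of the identity), and then derives statements (3) and (4) from statement (1) plus the first and third equalities of \eqref{11251436} with $E=\ms{E}_{T}$, precisely as you do. Your write-up just makes explicit the notational bookkeeping that the paper leaves implicit.
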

\begin{proof}
St.\eqref{10301830st1}\,\&\,\eqref{10301830st2} follows since \eqref{01262044}\,\&\,\eqref{01262119}, st.\eqref{10301830st3}\,\&\,\eqref{10301830st4}
follow by st.\eqref{10301830st1} and since the first and third equality in \eqref{11251436} applied for the position $E=\ms{E}_{T}$.
\end{proof}
\begin{definition}
\label{01262145}
Let $X$ be a set and $\ms{T}\doteq\{T_{x}\}_{x\in X}$ such that $T_{x}$ is a scalar type spectral operator in $G$ for all $x\in X$,
set $\mc{E}_{\ms{T}}\coloneqq\{\ms{E}_{T_{x}}\}_{x\in X}$.
\end{definition}
Now we can state the main result of this section 
\begin{theorem}
[Equivariance of the $f.c.$ associated with $\mc{E}_{\ms{T}}$]
\label{11252000}
Let $X$ be a set, $\mf{K}$ a Hilbert space, $U:\mf{H}\to\mf{K}$ a unitary operator, and $f$ a $\mc{C}_{\sigma}(\C^{X})-$measurable 
map. Moreover let $\ms{T}\doteq\{T_{x}\}_{x\in X}$ satisfy the following two properties:
$T_{x}$ is a scalar type spectral operator in $\mf{H}$ for all $x\in X$,
and
$\mc{E}_{\ms{T}}$ is a family of commuting Borel $RI$'s in $\mf{H}$.
Set $\ms{T}(U)\doteq\{UT_{x}U^{-1}\}_{x\in X}$, then
\begin{enumerate}
\item
$\ms{E}_{\mc{E}_{\ms{T}(U)}}
=
ad(U)\circ
\ms{E}_{\mc{E}_{\ms{T}}}
$;
\label{11252000st1}
\item
$f(\ms{E}_{\mc{E}_{\ms{T}(U)}})
=
Uf(\ms{E}_{\mc{E}_{\ms{T}}})U^{-1}$;
\label{11252000st2}
\item
$\ms{E}_{f(\ms{E}_{\mc{E}_{\ms{T}(U)}})}
=
ad(U)\circ\ms{E}_{\mc{E}_{\ms{T}}}\circ f^{-1}$.
\label{11252000st3}
\end{enumerate}
\end{theorem}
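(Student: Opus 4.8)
The plan is to reduce everything to the single-operator result Cor.~\ref{10301830} and the general equivariance Thm.~\ref{11251210}, the bridge between them being the uniqueness of the joint $RI$ recalled in Section~\ref{jri}. First I would verify that $\ms{E}_{\mc{E}_{\ms{T}(U)}}$ is even defined, i.e. that the family $\mc{E}_{\ms{T}(U)}=\{\ms{E}_{UT_{x}U^{-1}}\}_{x\in X}$ (in the sense of Def.~\ref{01262145} applied to $\ms{T}(U)$) is a family of commuting Borel $RI$'s in $\mf{K}$. By Cor.~\ref{10301830}\eqref{10301830st1} each $UT_{x}U^{-1}$ is a scalar type spectral operator in $\mf{K}$ with $\ms{E}_{UT_{x}U^{-1}}=ad(U)\circ\ms{E}_{T_{x}}$; each of these is a countably additive spectral measure by Thm.~\ref{11251210}\eqref{1210st1} applied to the linear isometry $U$, and it takes selfadjoint values since $U$ is unitary so $ad(U)$ preserves selfadjointness, hence it is a Borel $RI$. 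Commutativity of the family is inherited from that of $\mc{E}_{\ms{T}}$: since $ad(U)$ is a morphism of unital algebras, $[ad(U)(a),ad(U)(b)]=ad(U)([a,b])$, so the vanishing of $[\ms{E}_{T_{y}}(\sigma),\ms{E}_{T_{z}}(\delta)]$ forces the vanishing of $[\ms{E}_{UT_{y}U^{-1}}(\sigma),\ms{E}_{UT_{z}U^{-1}}(\delta)]$.

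The core step is st.~\eqref{11252000st1}, which I would establish by the uniqueness clause of \cite[Ch.~$2$, $\S 1.3$, Thm.~$1.3$]{ber}. The candidate is $ad(U)\circ\ms{E}_{\mc{E}_{\ms{T}}}$: it is an $RI$ in $\mf{K}$ on $\mc{C}_{\sigma}(\C^{X})$ by Thm.~\ref{11251210}\eqref{1210st1} together with the preservation of selfadjointness, so it only remains to check that it satisfies the defining relation \eqref{prod} for the family $\mc{E}_{\ms{T}(U)}$. For $Q\in\mathscr{P}_{\omega}(X)$ and $\ps{\delta}\in\prod_{y\in Q}\mc{B}(\C)$ I would compute, using in turn the relation \eqref{prod} for $\mc{E}_{\ms{T}}$, the multiplicativity of $ad(U)$, and Cor.~\ref{10301830}\eqref{10301830st1} termwise,
\[
(ad(U)\circ\ms{E}_{\mc{E}_{\ms{T}}})\left(\mf{C}\left(Q,\prod_{y\in Q}\ps{\delta}_{y}\right)\right)
=ad(U)\left(\prod_{y\in Q}\ms{E}_{T_{y}}(\ps{\delta}_{y})\right)
=\prod_{y\in Q}\ms{E}_{UT_{y}U^{-1}}(\ps{\delta}_{y}),
\]
which is exactly \eqref{prod} for $\mc{E}_{\ms{T}(U)}$. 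By the cited uniqueness this candidate coincides with $\ms{E}_{\mc{E}_{\ms{T}(U)}}$, giving st.~\eqref{11252000st1}.

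Statements \eqref{11252000st2} and \eqref{11252000st3} then fall out of Thm.~\ref{11251210} applied to the joint $RI$ $E=\ms{E}_{\mc{E}_{\ms{T}}}$, which is a countably additive spectral measure on $\mc{C}_{\sigma}(\C^{X})$, with the given $\mc{C}_{\sigma}(\C^{X})$-measurable map $f$. Indeed, substituting st.~\eqref{11252000st1} into the first equality of \eqref{11251436} yields $f(\ms{E}_{\mc{E}_{\ms{T}(U)}})=f(ad(U)\circ\ms{E}_{\mc{E}_{\ms{T}}})=Uf(\ms{E}_{\mc{E}_{\ms{T}}})U^{-1}$, which is st.~\eqref{11252000st2}; and substituting it into the third equality of \eqref{11251436} gives $\ms{E}_{f(\ms{E}_{\mc{E}_{\ms{T}(U)}})}=\ms{E}_{f(ad(U)\circ\ms{E}_{\mc{E}_{\ms{T}}})}=ad(U)\circ\ms{E}_{\mc{E}_{\ms{T}}}\circ f^{-1}$, which is st.~\eqref{11252000st3}.

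The only step carrying real content is thus st.~\eqref{11252000st1}: the verification that the cylinder relation \eqref{prod} defining the joint $RI$ survives conjugation by $U$, so that uniqueness identifies $ad(U)\circ\ms{E}_{\mc{E}_{\ms{T}}}$ with $\ms{E}_{\mc{E}_{\ms{T}(U)}}$. Once that is in place, parts \eqref{11252000st2} and \eqref{11252000st3} are a mechanical transcription of the first and third equalities of \eqref{11251436}, and the preliminary check that $\mc{E}_{\ms{T}(U)}$ is a commuting family of Borel $RI$'s is only needed to make $\ms{E}_{\mc{E}_{\ms{T}(U)}}$ meaningful in the first place.
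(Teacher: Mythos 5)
Your proposal is correct and follows essentially the same route as the paper: statement \eqref{11252000st1} via Cor.~\ref{10301830}\eqref{10301830st1} applied to each $T_{x}$ together with the uniqueness clause of the joint $RI$ construction, and statements \eqref{11252000st2}--\eqref{11252000st3} by substituting this into the first and third equalities of \eqref{11251436}. You merely spell out the cylinder-set verification and the preliminary check that $\mc{E}_{\ms{T}(U)}$ is a commuting family of Borel $RI$'s, which the paper leaves implicit.
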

\begin{proof}
St.\eqref{11252000st1}
follows since Cor. \ref{10301830}\eqref{10301830st1}  
applied to any $T_{x}$, $x\in X$
and by the uniqueness in 
\cite[Thm. $1.3$ pg. $122$]{ber}.
St.\eqref{11252000st2} follows
since
st.\eqref{11252000st1}
and the first equality in \eqref{11251436}, 
while 
st.\eqref{11252000st3} follows
since the third equality in \eqref{11251436}
and 
st.\eqref{11252000st1}.
\end{proof}
\subsection{Equivariance of KMS-states}
\label{kmseq}
In Thm. \ref{11141627} and Cor. \ref{09081138} we prove the equivariance of the $KMS-$states 
under the conjugate of the action of surjective equivariant morphisms.
In what follows with approximate identity of a Banach $\ast-$algebra 
we mean a bounded order preserving approximate identity 
of positive elements bounded by $1$.
\begin{lemma}
\label{11191814pre}
Let $\mc{A}$ be a Banach $\ast-$algebra, $\mc{B}$ be $C^{\ast}-$algebra and $T$ be a
$\ast-$homomorphism 
from $\mc{A}$ to $\mc{B}$ such that $T(\mc{A})$ is norm dense.
If
$\{e_{\alpha}\}_{\alpha\in D}$ 
is an approximate identity of $\mc{A}$
then
$\{T(e_{\alpha})\}_{\alpha\in D}$
is an approximate identity of $\mc{B}$.
\end{lemma}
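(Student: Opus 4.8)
The plan is to verify, one by one, the defining properties of an approximate identity in the paper's sense (positivity, bound by $1$, order preservation, and the two-sided approximation property) for the net $\{T(e_{\alpha})\}_{\alpha\in D}$, transporting each from $\mc{A}$ to $\mc{B}$ through $T$. Two classical facts about a $\ast$-homomorphism $T$ from a Banach $\ast$-algebra into a $C^{\ast}$-algebra carry the structural weight. First, $T$ is positive: it sends a positive element $a^{\ast}a$ to $T(a)^{\ast}T(a)\geq\ze$, and more generally it cannot enlarge the spectrum of a self-adjoint element beyond $\{0\}$, so a self-adjoint element with non-negative spectrum has non-negative image. Second, $T$ is norm-decreasing: for self-adjoint $x\in\mc{A}$ one has $\|T(x)\|=r_{\mc{B}}(T(x))\leq r_{\mc{A}}(x)\leq\|x\|$, whence by the $C^{\ast}$-identity $\|T(a)\|^{2}=\|T(a^{\ast}a)\|\leq\|a^{\ast}a\|\leq\|a\|^{2}$ for every $a$, the involution of a Banach $\ast$-algebra being isometric. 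In particular $T$ is continuous.

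First I would settle the three algebraic properties. Each $e_{\alpha}$ is positive and self-adjoint, so positivity of $T$ gives $T(e_{\alpha})\geq\ze$; and since $\|e_{\alpha}\|\leq 1$ the contractivity of $T$ yields $\|T(e_{\alpha})\|\leq 1$, so the net is bounded by $1$ (hence bounded). Order preservation transfers immediately: if $\alpha\leq\beta$ then $e_{\beta}-e_{\alpha}\geq\ze$, so $T(e_{\beta})-T(e_{\alpha})=T(e_{\beta}-e_{\alpha})\geq\ze$, i.e. $T(e_{\alpha})\leq T(e_{\beta})$.

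The remaining and principal point is the two-sided approximation property, which I would prove first on the dense subalgebra $T(\mc{A})$ and then extend by density. For $a\in\mc{A}$, since $\{e_{\alpha}\}$ is an approximate identity of $\mc{A}$ we have $e_{\alpha}a\to a$, and continuity of $T$ gives
\[
\|T(e_{\alpha})T(a)-T(a)\|=\|T(e_{\alpha}a-a)\|\leq\|e_{\alpha}a-a\|\longrightarrow 0,
\]
and symmetrically $T(a)T(e_{\alpha})\to T(a)$. Now fix $b\in\mc{B}$ and $\varepsilon>0$; by density of $T(\mc{A})$ choose $a\in\mc{A}$ with $\|b-T(a)\|<\varepsilon/3$. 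Using $\|T(e_{\alpha})\|\leq 1$,
\[
\|T(e_{\alpha})b-b\|\leq\|T(e_{\alpha})\|\,\|b-T(a)\|+\|T(e_{\alpha})T(a)-T(a)\|+\|T(a)-b\|,
\]
whose middle term is $<\varepsilon/3$ for $\alpha$ large by the previous step, so $\|T(e_{\alpha})b-b\|<\varepsilon$ eventually; the estimate for $bT(e_{\alpha})$ is identical.

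I expect no genuine obstacle here: the only non-elementary ingredients are the contractivity and positivity of $T$, both classical for $\ast$-homomorphisms of a Banach $\ast$-algebra into a $C^{\ast}$-algebra, while the core of the argument is the routine $\varepsilon/3$ density estimate, whose sole quantitative input is the uniform bound $\|T(e_{\alpha})\|\leq 1$ supplied by contractivity together with $\|e_{\alpha}\|\leq 1$.
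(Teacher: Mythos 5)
Your proposal is correct and follows essentially the same route as the paper's proof: positivity, the bound by $1$, and order preservation are transported through $T$ using the classical positivity and contractivity of a $\ast$-homomorphism into a $C^{\ast}$-algebra (the paper simply cites \cite[Prp. $2.3.1$]{br1} for this), and the approximation property is obtained first on the dense subalgebra $T(\mc{A})$ and then extended by the same density estimate. The only cosmetic differences are that you spell out the spectral-radius argument for contractivity and use an $\varepsilon/3$ split where the paper uses $\varepsilon/2$ with a limsup/liminf argument.
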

\begin{proof}
Let $\{e_{\alpha}\}_{\alpha\in D}$ 
be an approximate identity of $\mc{A}$.
Then 
$\|T(e_{\alpha})\|\leq 1$
and
$\alpha\leq\beta\Rightarrow T(e_{\alpha})\leq T(e_{\beta})$
for all $\alpha,\beta\in D$
since the positivity and the continuity of $T$ with $\|T\|\leq 1$, see \cite[Prp. $2.3.1$]{br1}.
Next
\begin{equation*}
\|T(e_{\alpha})T(a)-T(a)\|
=
\|T(e_{\alpha}a-a)\|
\leq
\|e_{\alpha}a-a\|,
\end{equation*}
for all $a\in\mc{A}$ and $\alpha\in D$,
so for all $b\in T(\mc{A})$
\begin{equation}
\label{11191838}
\lim_{\alpha\in D}
\|T(e_{\alpha})b-b\|=0.
\end{equation}
Let $\ep>0$ and $B\in\mc{B}$ thus there exists $b\in T(\mc{A})$
such that $\|b-B\|\leq\frac{\ep}{2}$,
and for all $\alpha\in D$
\begin{equation*}
\begin{aligned}
\|T(e_{\alpha})B-B\|
&\leq
\|T(e_{\alpha})(B-b)\|
+
\|T(e_{\alpha})b-b\|
+
\|b-B\|
\\
&\leq
2\|b-B\|+\|T(e_{\alpha})b-b\|.
\end{aligned}
\end{equation*}
Hence
$\limsup_{\alpha\in D}\|T(e_{\alpha})B-B\|\leq\ep$
and
$\liminf_{\alpha\in D}\|T(e_{\alpha})B-B\|\leq\ep$
for all $\ep>0$
since \cite[$IV.24(14)$, $IV.27(33)$, $IV.23(13)$]{top1} and \eqref{11191838}, 
then $\lim_{\alpha\in D}\|T(e_{\alpha})B-B\|=0$ since \cite[$IV.23$, Cor.$1$]{top1}
and the statement follows.
\end{proof}
\begin{remark}
\label{01201940}
Let us assume the hypotheses of Lemma \ref{11191814pre}.
If $\mc{A}$ is a $C^{\ast}-$algebra $T(\mc{A})$ is norm closed \cite[Thm. $9.5.12.(d)$]{pal}, hence in this case the norm 
density of $T(\mc{A})$ is equivalent to the surjectivity of $T$.
While if $\mc{A}$ and $\mc{B}$ are unital algebras then $T$ is unit preserving since its continuity.
\end{remark}
\begin{lemma}
\label{11191814}
Let $\mc{A}$, $\mc{B}$ be $C^{\ast}-$algebras and $T$ be a surjective $\ast-$homomorphism from $\mc{A}$ to $\mc{B}$, 
then
$T_{\dagger}(\ms{E}_{\mc{B}})
\subseteq
\ms{E}_{\mc{A}}$.
\end{lemma}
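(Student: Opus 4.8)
The plan is to show that for every state $\upomega$ of $\mc{B}$ the functional $T^{\dagger}(\upomega)=\upomega\circ T$ is a state of $\mc{A}$, i.e. a positive linear functional of norm one; this is exactly the assertion $T^{\dagger}(\ms{E}_{\mc{B}})\subseteq\ms{E}_{\mc{A}}$. Linearity is immediate. First I would verify positivity: since $T$ is a $\ast-$homomorphism we have $T(a^{\ast}a)=T(a)^{\ast}T(a)$ for every $a\in\mc{A}$, so $(\upomega\circ T)(a^{\ast}a)=\upomega(T(a)^{\ast}T(a))\geq 0$ because $\upomega$ is positive. Hence $\upomega\circ T$ is a positive linear functional, and as every $\ast-$homomorphism between $C^{\ast}-$algebras is norm decreasing we obtain the easy bound $\|\upomega\circ T\|\leq\|\upomega\|\,\|T\|\leq 1$.

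The substantive point is the reverse inequality $\|\upomega\circ T\|\geq 1$. For this I would invoke the standard characterization of the norm of a positive linear functional $\upphi$ on a $C^{\ast}-$algebra as $\|\upphi\|=\lim_{\alpha}\upphi(e_{\alpha})$ for any approximate identity $\{e_{\alpha}\}_{\alpha\in D}$, see \cite[Prp. $2.3.11$]{br1}. Fix an approximate identity $\{e_{\alpha}\}_{\alpha\in D}$ of $\mc{A}$. By Rmk. \ref{01201940} the surjectivity of $T$ gives norm density of $T(\mc{A})$, so Lemma \ref{11191814pre} ensures that $\{T(e_{\alpha})\}_{\alpha\in D}$ is an approximate identity of $\mc{B}$. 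Applying the norm formula twice, once to the positive functional $\upomega\circ T$ on $\mc{A}$ and once to $\upomega$ on $\mc{B}$, yields
\begin{equation*}
\|\upomega\circ T\|=\lim_{\alpha\in D}(\upomega\circ T)(e_{\alpha})=\lim_{\alpha\in D}\upomega(T(e_{\alpha}))=\|\upomega\|=1,
\end{equation*}
where the first equality uses positivity of $\upomega\circ T$ and the third uses positivity of $\upomega$ together with the fact that $\{T(e_{\alpha})\}_{\alpha\in D}$ is an approximate identity of $\mc{B}$. Therefore $\upomega\circ T\in\ms{E}_{\mc{A}}$, which is the claim.

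The main obstacle is precisely securing that the norm \emph{equals} one rather than merely being bounded by one: positivity of $\upomega\circ T$ is routine, but the norm one condition is where the surjectivity hypothesis becomes essential. It is exactly here that Lemma \ref{11191814pre} does the real work, transporting an approximate identity of $\mc{A}$ to an approximate identity of $\mc{B}$ so that the limit formula for the norm of a positive functional can be evaluated consistently on both algebras. Without surjectivity (equivalently, without norm density of $T(\mc{A})$) one could not conclude that $\{T(e_{\alpha})\}$ approximates the identity of $\mc{B}$, and the state property could genuinely fail.
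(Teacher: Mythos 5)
Your proof is correct and follows essentially the same route as the paper, which likewise notes positivity and then cites Lemma \ref{11191814pre} together with \cite[Prp. $2.3.11$]{br1} (the approximate-identity formula for the norm of a positive functional). You have merely written out in full the two applications of that formula which the paper leaves implicit.
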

\begin{proof}
Since $T_{\dagger}(\upomega)$ is positive for all $\upomega\in\ms{E}_{\mc{B}}$
the statement follows by Lemma \ref{11191814pre} and \cite[Prp. $2.3.11$]{br1}.
\end{proof}
\begin{lemma}
\label{11141536}
Let 
$\lr{\mc{A},H}{\upeta}$ and $\lr{\mc{B},H}{\uptheta}$
be dynamical systems 
and $T$ be a 
$(\upeta,\uptheta)-$equivariant 
morphism.
Then
\begin{enumerate}
\item
$T(\mc{A}_{\upeta})\subseteq\mc{B}_{\uptheta}$
and the for all $z\in\C$ the following diagram is commutative 
\begin{equation}
\label{11141536d}
\xymatrix{
\mc{B}_{\uptheta}
\ar[rr]^{\ov{\uptheta}(z)}
& &
\mc{B}
\\
& &
\\
\mc{A}_{\upeta}
\ar[uu]^{T}
\ar[rr]_{\ov{\upeta}(z)}
& &
\mc{A}
\ar[uu]_{T}
}
\end{equation}
\item
$T\circ\updelta_{\upeta}\subseteq\updelta_{\uptheta}\circ T$.
\end{enumerate}
\end{lemma}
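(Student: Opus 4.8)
The plan is to deduce both statements from the continuity and linearity of $T$ together with the equivariance relation $T\circ\upeta(t)=\uptheta(t)\circ T$ for all $t\in\R$. Recall that $T$, being a $\ast$-homomorphism between $C^{\ast}$-algebras, is norm-contractive, hence continuous.

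First I would prove the first statement. Fix $a\in\mc{A}_{\upeta}$ and set $g\colon\C\to\mc{B}$, $g(z)\coloneqq T(\ov{\upeta}(z)(a))$. Since the map $z\mapsto\ov{\upeta}(z)(a)$ lies in $H_{s}(\C,\mc{A})\subseteq H_{w}(\C,\mc{A})$ and $T$ is continuous and linear, for every $\upphi\in\mc{B}^{\ast}$ we have $\upphi\circ g=(\upphi\circ T)\circ(\ov{\upeta}(\cdot)(a))$ with $\upphi\circ T\in\mc{A}^{\ast}$, so $\upphi\circ g\in H(\C)$; therefore $g\in H_{w}(\C,\mc{B})$, i.e.\ $g$ is an entire analytic $\mc{B}$-valued map. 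On the real axis, using $\ov{\upeta}(t)(a)=\upeta(t)(a)$ and the equivariance, $g(t)=T(\upeta(t)(a))=\uptheta(t)(T(a))$ for all $t\in\R$. Thus $t\mapsto\uptheta(t)(T(a))$ admits the entire analytic extension $g$, which by the very definition of entire analytic element shows $T(a)\in\mc{B}_{\uptheta}$; hence $T(\mc{A}_{\upeta})\subseteq\mc{B}_{\uptheta}$. Furthermore, by uniqueness of the entire analytic extension we get $\ov{\uptheta}(z)(T(a))=g(z)=T(\ov{\upeta}(z)(a))$ for all $z\in\C$, which is exactly the commutativity of the diagram \eqref{11141536d}.

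Next I would prove the second statement. Take $a\in Dom(\updelta_{\upeta})$, so that $\updelta_{\upeta}(a)=\lim_{t\to0,\,t\neq0}t^{-1}(\upeta(t)(a)-a)$ exists in norm. Applying the continuous linear map $T$ and invoking the equivariance gives
\begin{equation*}
T(\updelta_{\upeta}(a))=\lim_{t\to0,\,t\neq0}\frac{T(\upeta(t)(a))-T(a)}{t}=\lim_{t\to0,\,t\neq0}\frac{\uptheta(t)(T(a))-T(a)}{t}.
\end{equation*}
The existence of the last limit means, by definition, that $T(a)\in Dom(\updelta_{\uptheta})$ and $\updelta_{\uptheta}(T(a))=T(\updelta_{\upeta}(a))$. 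Since $Dom(T\circ\updelta_{\upeta})=Dom(\updelta_{\upeta})$, this is precisely the operator inclusion $T\circ\updelta_{\upeta}\subseteq\updelta_{\uptheta}\circ T$.

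The hard part, such as it is, lies entirely in the first statement: one must be careful that $g$ is genuinely the unique entire extension of $t\mapsto\uptheta(t)(T(a))$, so that $T(a)$ qualifies as an entire analytic element in the precise sense used to define $\mc{B}_{\uptheta}$ and $\ov{\uptheta}$. This is where the passage from weak to strong analyticity and the uniqueness of analytic extension of numerical maps---both already recorded in the preliminaries in the construction of $\ov{\upeta}$---enter. The second statement needs only the continuity of $T$ and no further analytic input.
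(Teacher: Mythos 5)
Your proof is correct and follows essentially the same route as the paper: both establish that $z\mapsto T(\ov{\upeta}(z)(a))$ is an entire analytic extension of $t\mapsto\uptheta(t)(Ta)$ and conclude by the definition of entire analytic element and uniqueness of analytic extensions, and both handle the generator statement by continuity of $T$ plus equivariance. Your verification of analyticity via composition with functionals in $\mc{B}^{\ast}$ merely spells out what the paper compresses into ``composition of two analytic maps.''
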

\begin{proof}
Let $a\in\mc{A}_{\upeta}$ and $\ov{\upeta}^{a}$ denote the map $\C\ni z\mapsto\ov{\upeta}(z)(a)\in\mc{A}$, 
thus $\uptheta(t)(Ta)=T\upeta(t)(a)=(T\circ\ov{\upeta}^{a})(t)$ for all $t\in\R$.
Next $T\circ\ov{\upeta}^{a}$ is analytic being composition of two analytic maps, then $T(a)\in\mc{B}_{\uptheta}$
and by the uniqueness of the analytic extension, the commutativity of the diagram follows.
Let $b\in Dom(\updelta_{\upeta})$, so since the continuity of $T$
\begin{equation*}
T(\updelta_{\upeta}b)
=\lim_{t\to 0,t\neq 0}
\frac{T\upeta(t)b-Tb}{t}
=\lim_{t\to 0,t\neq 0}
\frac{\uptheta(t)Tb-Tb}{t}
=
\updelta_{\uptheta}(Tb).
\end{equation*}
\end{proof}
The next result states the equivariance of the $KMS-$states
under the conjugate of the action of surjective equivariant maps.
\begin{theorem}
\label{11141627}
Let 
$\lr{\mc{A},H}{\upeta}$
and
$\lr{\mc{B},H}{\uptheta}$
be dynamical systems 
and $T$ an $(\upeta,\uptheta)-$equivariant 
morphism such that $T$ is surjective or unit preserving in case $\mc{A}$ and $\mc{B}$ are unital. 
Then $T_{\dagger}(\ms{K}_{\beta}^{\uptheta})\subseteq\ms{K}_{\beta}^{\upeta}$ for all $\beta\in\widetilde{\R}$.
\end{theorem}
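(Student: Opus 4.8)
The plan is to show that, for $\upomega\in\ms{K}_{\beta}^{\uptheta}$, the pulled-back functional $T^{\dagger}(\upomega)=\upomega\circ T$ belongs to $\ms{K}_{\beta}^{\upeta}$. First I would dispose of stateness: a KMS state is in particular a state, so I must check $\upomega\circ T\in\ms{E}_{\mc{A}}$. In the surjective case this is exactly Lemma \ref{11191814}; if instead $\mc{A},\mc{B}$ are unital and $T$ is unit preserving, then $\upomega\circ T$ is positive as a composition of positive maps and satisfies $(\upomega\circ T)(\un_{\mc{A}})=\upomega(\un_{\mc{B}})=1$, hence is a state. This is the only place where the hypothesis ``surjective or unit preserving'' is used. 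Throughout I would work with the equivalent form of the KMS condition in which the witnessing dense $\ast$-subalgebra $\ms{D}$ is taken to be the full algebra of analytic elements, legitimate by \cite[Prp. $8.12.3$]{ped}; recall also that $\mc{A}_{\upeta}$ is an $\upeta$-invariant norm dense $\ast$-subalgebra of $\mc{A}$ by \cite[Prp. $2.5.22$]{br1}.

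For finite $\beta\in\R$ the candidate witness is $\ms{D}=\mc{A}_{\upeta}$, so the task reduces to verifying the KMS identity $(\upomega\circ T)(a\,\ov{\upeta}(i\beta)b)=(\upomega\circ T)(ba)$ for all $a,b\in\mc{A}_{\upeta}$. The engine is Lemma \ref{11141536}, which supplies both $T(\mc{A}_{\upeta})\subseteq\mc{B}_{\uptheta}$ and the commuting diagram \eqref{11141536d}, i.e. $T\circ\ov{\upeta}(z)=\ov{\uptheta}(z)\circ T$ on $\mc{A}_{\upeta}$. Using that $T$ is a $\ast$-homomorphism and then this intertwining with $z=i\beta$, I would compute
\begin{equation*}
(\upomega\circ T)(a\,\ov{\upeta}(i\beta)b)
=\upomega\bigl(T(a)\,T(\ov{\upeta}(i\beta)b)\bigr)
=\upomega\bigl(T(a)\,\ov{\uptheta}(i\beta)(T(b))\bigr).
\end{equation*}
Since $T(a),T(b)\in\mc{B}_{\uptheta}$ and $\upomega\in\ms{K}_{\beta}^{\uptheta}$, applying the KMS identity for $\upomega$ to the pair $(T(a),T(b))$ gives $\upomega(T(a)\,\ov{\uptheta}(i\beta)(T(b)))=\upomega(T(b)T(a))=\upomega(T(ba))=(\upomega\circ T)(ba)$, which is exactly the desired identity. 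Hence $T^{\dagger}(\upomega)\in\ms{K}_{\beta}^{\upeta}$ for every $\beta\in\R$.

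The remaining case $\beta=\infty$ I expect to be the main obstacle, since $\ov{\upeta}(i\infty)$ has no literal meaning and membership in $\ms{K}_{\infty}^{\upeta}$ must be read off the specific definition \cite[Def. $5.3.18$]{br2} of ground and ceiling states. The strategy, however, is unchanged: whatever equivalent description one adopts, its defining relation transfers from $\upomega$ to $\upomega\circ T$ through the compatibility data already in hand, namely the equivariance $T\circ\upeta(h)=\uptheta(h)\circ T$, the inclusion $T(\mc{A}_{\upeta})\subseteq\mc{B}_{\uptheta}$ of Lemma \ref{11141536}, and the generator intertwining $T\circ\updelta_{\upeta}\subseteq\updelta_{\uptheta}\circ T$ from the same lemma. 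If the $\infty$-condition is phrased spectrally, one pulls back the relevant vanishing via $\int(\upomega\circ T)(a\,\upeta(t)b)\,f(t)\,dt=\int\upomega(T(a)\,\uptheta(t)(T(b)))\,f(t)\,dt$ using equivariance; if it is phrased via positivity of $-i\,\upomega(a^{\ast}\updelta(a))$ on analytic elements, one uses the generator intertwining together with $T(a^{\ast})=T(a)^{\ast}$. In either reading the only delicate point is the bookkeeping needed to match the present definition of $\ms{K}_{\infty}$ to the form into which these intertwining relations insert cleanly; once that is settled, $T^{\dagger}(\ms{K}_{\infty}^{\uptheta})\subseteq\ms{K}_{\infty}^{\upeta}$ follows just as in the finite case, completing the proof for all $\beta\in\widetilde{\R}$.
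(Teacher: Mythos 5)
Your proposal is correct and follows essentially the same route as the paper: for finite $\beta$ the paper likewise combines Lemma \ref{11141536} (the inclusion $T(\mc{A}_{\upeta})\subseteq\mc{B}_{\uptheta}$ and the intertwining \eqref{11141536d}) with the KMS identity for $\upomega$ applied to $T(a),T(b)$ and with Lemma \ref{11191814} for stateness. For $\beta=\infty$ the paper closes the "bookkeeping" you left open by citing the equivariance of $T$ together with \cite[Prp. $5.3.19(3)$]{br2}, i.e. precisely the positivity characterization $-i\,\upomega(a^{\ast}\updelta(a))\geq 0$ that you anticipated as your second reading, transferred via $T\circ\updelta_{\upeta}\subseteq\updelta_{\uptheta}\circ T$.
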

\begin{proof}
If $\beta\in\R$, $\upomega\in\ms{K}_{\beta}^{\uptheta}$
and $x,y\in\mc{A}_{\upeta}$ then
$\upomega(T(x)\ov{\uptheta}(i\beta)T(y))
=\upomega(T(y)T(x))$
since Lemma \ref{11141536}$(1)$,
thus the statement follows by \eqref{11141536d} and 
Lemma \ref{11191814}.
If $\beta=\infty$ the statement follows by the equivariance of
$T$, by \cite[Prp. $5.3.19(3)$]{br2} and Lemma \ref{11191814}.
\end{proof}
An alternative proof of Thm. \ref{11141627} for $\beta\in\R$
follows by Lemma \ref{11191814}, by the equivariance of $T$ and
by \cite[Prp. $5.3.7.(2)$]{br2}.
\begin{lemma}
\label{09061626}
Let $\lr{\mc{A}}{\R,\upeta}$ be a dynamical system
and
$\uplambda\in Aut_{\ms{CA}^{\ast}}(\mc{A})$.
Then
\begin{enumerate}
\item
$\uplambda(\mc{A}_{\upeta})
=
\mc{A}_{\ms{ad}(\uplambda)\circ\upeta}$;
\label{09061715a}
\item
$\ov{\ms{ad}(\uplambda)\circ\upeta} 
=
\ms{ad}(\uplambda)\circ\ov{\upeta}$;
\label{09061715b}
\item
$
\ms{ad}(\uplambda)(\updelta_{\upeta})
=
\updelta_{\ms{ad}(\uplambda)\circ\upeta}
$.
\label{09061715c}
\end{enumerate}
\end{lemma}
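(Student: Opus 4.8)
The plan is to reduce all three identities to the single commutation relation $(\ms{ad}(\uplambda)\circ\upeta)(t)=\uplambda\circ\upeta(t)\circ\uplambda^{-1}$, which holds by the definition of $\ms{ad}$ in \eqref{09081203}, together with the fact that the $\ast$-automorphisms $\uplambda$ and $\uplambda^{-1}$ are norm-continuous (indeed isometric). Set $\upeta_{\uplambda}\coloneqq\ms{ad}(\uplambda)\circ\upeta$; since $\upeta_{\uplambda}(t)\in Aut^{\ast}(\mc{A})$ and $t\mapsto\upeta_{\uplambda}(t)(b)=\uplambda(\upeta(t)(\uplambda^{-1}(b)))$ is strongly continuous, $\lr{\mc{A}}{\R,\upeta_{\uplambda}}$ is a dynamical system, so all objects occurring in the statement are well-set.

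I would prove (1) and (2) together. Let $a\in\mc{A}_{\upeta}$ and write $\ov{\upeta}^{a}$ for the entire map $z\mapsto\ov{\upeta}(z)(a)$. Since $\uplambda$ is bounded linear, $z\mapsto\uplambda(\ov{\upeta}^{a}(z))$ is weakly analytic (for $\upphi\in\mc{A}^{\ast}$ one has $\upphi\circ\uplambda\in\mc{A}^{\ast}$), hence entire, and for $z=t\in\R$ it equals $\uplambda(\upeta(t)(a))=\upeta_{\uplambda}(t)(\uplambda(a))$. Thus it is an entire analytic extension of $t\mapsto\upeta_{\uplambda}(t)(\uplambda(a))$, whence $\uplambda(a)\in\mc{A}_{\upeta_{\uplambda}}$ and, by the uniqueness of analytic extensions (\cite[Cor. of Thm. $10.18$]{rud1}, cf. \cite[Prp. $2.5.21.$]{br1}), $\ov{\upeta_{\uplambda}}(z)(\uplambda(a))=\uplambda(\ov{\upeta}(z)(a))$ for all $z\in\C$. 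This gives $\uplambda(\mc{A}_{\upeta})\subseteq\mc{A}_{\upeta_{\uplambda}}$; applying the same argument with $\uplambda^{-1},\upeta_{\uplambda}$ in place of $\uplambda,\upeta$ and using $\ms{ad}(\uplambda^{-1})\circ\upeta_{\uplambda}=\upeta$ yields the reverse inclusion, so $\uplambda(\mc{A}_{\upeta})=\mc{A}_{\upeta_{\uplambda}}$, which is (1). Substituting $b=\uplambda(a)\in\mc{A}_{\upeta_{\uplambda}}$ in the displayed identity and recalling that $(\ms{ad}(\uplambda)\circ\ov{\upeta})(z)=\uplambda\circ\ov{\upeta}(z)\circ\uplambda^{-1}$ on $\uplambda(\mc{A}_{\upeta})$ by \eqref{09081203}, we read off $\ov{\upeta_{\uplambda}}(z)(b)=(\ms{ad}(\uplambda)\circ\ov{\upeta})(z)(b)$, which is (2).

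Finally I would obtain (3) directly from the difference quotient \eqref{09081154}: for $b\in\mc{A}$ and $t\neq 0$,
\begin{equation*}
\frac{\upeta_{\uplambda}(t)(\uplambda(b))-\uplambda(b)}{t}
=\uplambda\!\left(\frac{\upeta(t)(b)-b}{t}\right),
\end{equation*}
and since $\uplambda,\uplambda^{-1}$ are norm-continuous the norm limit as $t\to 0$ of the left-hand side exists if and only if that of $\frac{\upeta(t)(b)-b}{t}$ exists; hence $\uplambda(Dom(\updelta_{\upeta}))=Dom(\updelta_{\upeta_{\uplambda}})$ and $\updelta_{\upeta_{\uplambda}}(\uplambda(b))=\uplambda(\updelta_{\upeta}(b))$ on this common domain. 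By \eqref{09081203} the operator $\ms{ad}(\uplambda)(\updelta_{\upeta})$ has domain $\uplambda(Dom(\updelta_{\upeta}))$ and acts as $\uplambda\circ\updelta_{\upeta}\circ\uplambda^{-1}$, so it coincides with $\updelta_{\upeta_{\uplambda}}$, giving (3). I do not expect a genuine obstacle here; the only point demanding care is the bookkeeping of domains, i.e. checking that the $\ms{ad}$-conjugation of the partially defined maps in \eqref{09081203} produces exactly the domains $\mc{A}_{\upeta_{\uplambda}}$ and $Dom(\updelta_{\upeta_{\uplambda}})$ generated intrinsically by $\upeta_{\uplambda}$, which the commutation relation and the continuity of $\uplambda^{\pm 1}$ settle.
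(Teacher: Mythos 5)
Your proof is correct and follows essentially the same route as the paper's: analyticity of $z\mapsto\uplambda(\ov{\upeta}(z)(a))$ plus uniqueness of entire extensions for (1) and (2), the symmetry trick with $\uplambda^{-1}$ for the reverse inclusions, and the difference quotient conjugated by $\uplambda$ for (3). The only cosmetic differences are that you verify analyticity weakly (via functionals) where the paper invokes the chain rule for strong analyticity, and that you obtain the domain equality in (3) in one "if and only if" step where the paper proves the two inclusions separately.
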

\begin{proof}
In this proof let $\upeta^{\uplambda}$ denote $\ms{ad}(\uplambda)\circ\upeta$,
which is clearly a strongly continuous one-parameter group acting on $\mc{A}$ by $\ast-$automorphisms.
$\uplambda$ is $\C-$differentiable since it is $\C-$linear and norm continuous, 
therefore by the chain rule of differentiable
maps and by \cite[Prp. $2.5.21.$]{br1}
we deduce that
$(z\mapsto\uplambda\circ\ov{\upeta}(z)(a))\in H_{s}(\C,\mc{A})$, 
for all $a\in\mc{A}_{\upeta}$.
Hence for all $c\in\uplambda(\mc{A}_{\upeta})$ 
\begin{equation}
\label{09071701}
\begin{aligned}
(z\mapsto\ms{ad}(\uplambda)\circ\ov{\upeta}(z)(c))
&\in 
H_{st}(\C,\mc{A}),
\\
\ms{ad}(\uplambda)\circ\ov{\upeta}\up\R
&=
\upeta^{\uplambda},
\end{aligned}
\end{equation}
and the inclusion $\subset$ of st.\eqref{09061715a} follows.
If we apply this result to the system 
$\lr{\mc{A}}{\R,\ms{ad}(\uplambda)\circ\upeta}$ 
and to the $\ast-$automorphism $\uplambda^{-1}$,
we deduce the remaining inclusion and st.\eqref{09061715a} 
follows.
St.\eqref{09061715b} 
follows by \eqref{09071701}, st.\eqref{09061715a}
and the uniqueness of the entire analytic extension of 
$\upeta^{\uplambda}$.
Let $b\in\uplambda(Dom(\updelta_{\upeta}))$ 
thus by \eqref{09081203}, \eqref{09081154}
and the norm continuity of $\uplambda$ we deduce that
\begin{equation*}
\ms{ad}(\uplambda)(\updelta_{\upeta})(b)
=
\lim_{t\up 0,\,t\neq 0}
\frac{\upeta^{\uplambda}(t)(b)-b}{t},
\end{equation*}
thus
\begin{equation}
\label{09081155}
\ms{ad}(\uplambda)
(\updelta_{\upeta})
\subset
\updelta_{\upeta^{\uplambda}}.
\end{equation}
By applying \eqref{09081155} we obtain
$\ms{ad}(\uplambda^{-1})
(\updelta_{\upeta^{\uplambda}})
\subset
\updelta_{\upeta}$, 
which implies 
\begin{equation*}
\updelta_{\upeta^{\uplambda}}
\subset
\ms{ad}(\uplambda)
(\updelta_{\upeta}),
\end{equation*}
and the statement follows.
\end{proof}
If
$\lr{\mc{A}}{\R,\upeta}$ is a dynamical system
and
$\uplambda\in Aut_{\ms{CA}^{\ast}}(\mc{A})$
then $\uplambda^{-1}$ is 
$(\ms{ad}(\uplambda)\circ\upeta,\upeta)-$equivariant, hence by Thm. \ref{11141627} we obtain the following result,
however we prefer to show it as a consequence of Lemma \ref{09061626}.
Cor. \ref{09081138} will be used via Cor. \ref{09081449}
to prove 
Thm. \ref{11260828} 
a step toward the construction
of the object part of the functor from $\ms{C}_{u}(H)$ and
$\mf{G}(G,F,\uprho)$.
\begin{corollary}
\label{09081138}
Let $\lr{\mc{A}}{\R,\upeta}$ be a dynamical system,
$\uplambda\in Aut_{\ms{CA}^{\ast}}(\mc{A})$ an $\beta\in\widetilde{\R}$,
then
$\uplambda^{\ast}
\bigl(\ms{K}_{\beta}^{\upeta}\bigr)
=
\ms{K}_{\beta}^{\ms{ad}(\uplambda)\circ\upeta}$.
\end{corollary}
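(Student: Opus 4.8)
The plan is to deduce the equality from Lemma \ref{09061626}, proving a single inclusion for an arbitrary system and automorphism and then obtaining the reverse one by symmetry. Throughout write $\upeta^{\uplambda}\coloneqq\ms{ad}(\uplambda)\circ\upeta$ and recall $\uplambda^{\ast}=(\uplambda^{-1})^{\dagger}$, so that $\uplambda^{\ast}(\upomega)=\upomega\circ\uplambda^{-1}$. Since $\uplambda^{-1}\in Aut^{\ast}(\mc{A})$, the map $\uplambda^{\ast}$ is a bijection of $\ms{E}_{\mc{A}}$ whose inverse is $(\uplambda^{-1})^{\ast}$; moreover $\ms{ad}(\uplambda^{-1})\circ\upeta^{\uplambda}=\upeta$, a fact I would record at the outset because it is what lets me invoke the inclusion twice.

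First I would establish $\uplambda^{\ast}(\ms{K}_{\beta}^{\upeta})\subseteq\ms{K}_{\beta}^{\upeta^{\uplambda}}$ for every dynamical system $\lr{\mc{A}}{\R,\upeta}$ and every $\uplambda$. For $\beta\in\R$, take $\upomega\in\ms{K}_{\beta}^{\upeta}$, which by \cite[Prp. $8.12.3$]{ped} satisfies $\upomega(a\,\ov{\upeta}(i\beta)\,b)=\upomega(ba)$ for all $a,b\in\mc{A}_{\upeta}$. By Lemma \ref{09061626}\eqref{09061715a} every element of $\mc{A}_{\upeta^{\uplambda}}$ has the form $\uplambda(a)$ with $a\in\mc{A}_{\upeta}$; using Lemma \ref{09061626}\eqref{09061715b} in the form $\ov{\upeta^{\uplambda}}(i\beta)=\ms{ad}(\uplambda)\circ\ov{\upeta}(i\beta)$ together with the fact that $\uplambda$ is a $\ast$-homomorphism, I would compute $\uplambda(a)\,\ov{\upeta^{\uplambda}}(i\beta)\,\uplambda(b)=\uplambda\bigl(a\,\ov{\upeta}(i\beta)\,b\bigr)$. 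Applying $\uplambda^{\ast}(\upomega)$ then gives $\uplambda^{\ast}(\upomega)\bigl(\uplambda(a)\,\ov{\upeta^{\uplambda}}(i\beta)\,\uplambda(b)\bigr)=\upomega(a\,\ov{\upeta}(i\beta)\,b)=\upomega(ba)=\uplambda^{\ast}(\upomega)\bigl(\uplambda(b)\uplambda(a)\bigr)$. Verifying this identity on the full algebra $\mc{A}_{\upeta^{\uplambda}}$ suffices again by \cite[Prp. $8.12.3$]{ped}, so $\uplambda^{\ast}(\upomega)\in\ms{K}_{\beta}^{\upeta^{\uplambda}}$.

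For $\beta=\infty$ I would run the same argument through the generator. Using the characterization of $\ms{K}_{\infty}^{\upeta}$ in \cite[Def. $5.3.18$]{br2} and Lemma \ref{09061626}\eqref{09061715c}, which yields $Dom(\updelta_{\upeta^{\uplambda}})=\uplambda(Dom(\updelta_{\upeta}))$ and $\uplambda(a)^{\ast}\updelta_{\upeta^{\uplambda}}(\uplambda(a))=\uplambda\bigl(a^{\ast}\updelta_{\upeta}(a)\bigr)$ for $a\in Dom(\updelta_{\upeta})$, I would transport the defining positivity condition through $\uplambda^{\ast}$ exactly as above, obtaining $\uplambda^{\ast}(\upomega)\in\ms{K}_{\infty}^{\upeta^{\uplambda}}$. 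I expect this to be the only delicate point: one must match the precise definition of a ground state rather than an algebraic KMS identity, but once Lemma \ref{09061626}\eqref{09061715c} is available the verification reduces to pushing the condition through the $\ast$-isomorphism $\uplambda$.

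Finally I would deduce the reverse inclusion with no further computation. Applying the inclusion just proved to the system $\lr{\mc{A}}{\R,\upeta^{\uplambda}}$ and the automorphism $\uplambda^{-1}$, and using $\ms{ad}(\uplambda^{-1})\circ\upeta^{\uplambda}=\upeta$, gives $(\uplambda^{-1})^{\ast}(\ms{K}_{\beta}^{\upeta^{\uplambda}})\subseteq\ms{K}_{\beta}^{\upeta}$. Applying the bijection $\uplambda^{\ast}$, which inverts $(\uplambda^{-1})^{\ast}$, to both sides yields $\ms{K}_{\beta}^{\upeta^{\uplambda}}\subseteq\uplambda^{\ast}(\ms{K}_{\beta}^{\upeta})$, and combining the two inclusions gives the asserted equality $\uplambda^{\ast}(\ms{K}_{\beta}^{\upeta})=\ms{K}_{\beta}^{\ms{ad}(\uplambda)\circ\upeta}$ for all $\beta\in\widetilde{\R}$.
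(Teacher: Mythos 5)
Your proposal is correct and follows essentially the same route as the paper: prove the single inclusion $\uplambda^{\ast}(\ms{K}_{\beta}^{\upeta})\subseteq\ms{K}_{\beta}^{\ms{ad}(\uplambda)\circ\upeta}$ by transporting the KMS (resp. ground-state) condition through $\uplambda$ via Lemma \ref{09061626}, then obtain the reverse inclusion by applying the same inclusion to the system $\lr{\mc{A}}{\R,\ms{ad}(\uplambda)\circ\upeta}$ and the automorphism $\uplambda^{-1}$. The only cosmetic difference is that you verify the KMS identity on all of $\mc{A}_{\upeta^{\uplambda}}$ via \cite[Prp. $8.12.3$]{ped}, whereas the paper works with an arbitrary norm-dense $\upeta$-invariant $\ast$-subalgebra $\mc{D}_{\upeta}$ and its image $\uplambda(\mc{D}_{\upeta})$ — an equivalence the paper itself records in the preliminaries.
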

\begin{proof}
In this proof we denote 
$\ms{ad}(\uplambda)\circ\upeta$
by
$\upeta^{\uplambda}$.
Let $\beta\in\R$,  
$\upomega\in\ms{K}_{\beta}^{\upeta}$
and
$\mc{D}_{\upeta}$ 
be 
a norm dense, $\upeta-$invariant, 
$\ast-$subalgebra of $\mc{A}_{\upeta}$ 
such that 
$
\upomega(a\,\ov{\upeta}(i\beta)(b))
=
\upomega(b\,a)$,
for all
$a,b\in\mc{D}_{\upeta}$.
Hence for all $c,d\in\uplambda(\mc{D}_{\upeta})$
\begin{equation*}
\upomega\bigl(\uplambda^{-1}(c)\,
\ov{\upeta}(i\beta)(\uplambda^{-1}(d))\bigr)
=
\uplambda^{\ast}(\upomega)(d\,c),
\end{equation*} 
thus
\begin{equation}
\label{09071112}
\uplambda^{\ast}(\upomega)
\bigl(c\,(\ms{ad}(\uplambda)\circ\ov{\upeta})(i\beta)(d)\bigr)
=
\uplambda^{\ast}(\upomega)(d\,c).
\end{equation} 
Moreover
$\uplambda(\mc{D}_{\upeta})$
is a norm dense, 
since $\uplambda$ is surjective and norm continuous,
$\upeta^{\uplambda}-$invariant, 
$\ast-$subalgebra of $\uplambda(\mc{A}_{\upeta})$,
hence the inclusion $\subset$ of the statement 
follows since 
\eqref{09071112} and
Lemma \ref{09061626}\eqref{09061715a}\,\&\,\eqref{09061715b}.
The remaining inclusion follows by the previous one applied
to the system 
$\lr{\mc{A}}{\R,\ms{ad}(\uplambda)\circ\upeta}$ 
and to the $\ast-$automorphism $\uplambda^{-1}$.
Let $\uppsi\in\ms{K}_{\infty}^{\upeta}$ 
and $b\in\uplambda(Dom(\updelta_{\upeta}))$. 
By definition
$
i\uppsi(a\updelta_{\upeta}(a))\geq 0
$, for all 
$a\in Dom(\updelta_{\upeta})$,
then
\begin{equation*}
i\uplambda^{\ast}(\uppsi)
\bigl(b\,\ms{ad}(\uplambda)(\updelta_{\upeta})(b)\bigr)
=
i\uppsi\bigl(\uplambda^{-1}(b)
\updelta_{\upeta}(\uplambda^{-1}b)
\bigr)\geq 0,
\end{equation*}
hence the inclusion $\subset$ of the statement follows since Lemma \ref{09061626}\eqref{09061715c}.
By applying this inclusion to to the system $\lr{\mc{A}}{\R,\ms{ad}(\uplambda)\circ\upeta}$ 
and to $\uplambda^{-1}$ we obtain 
$(\uplambda^{-1})^{\ast}\bigl(\ms{K}_{\beta}^{\ms{ad}(\uplambda)\circ\upeta}\bigr)\subset\ms{K}_{\beta}^{\upeta}$
which is the remaining inclusion and the statement follows.
\end{proof}
\subsection{Equivariance of representations of $C^{\ast}-$crossed products}
\label{05061338}
The main result of this section is Thm. \ref{09191747}
where we show the equivariance under action of $H$
of representations of $C^{\ast}-$crossed products for different symmetry groups.
In this section we assume fixed 
two locally compact topological groups $G$ and $F$,
a group homomorphism $\uprho:F\to Aut_{\ms{Gr}}(G)$ 
such that the map $(g,f)\mapsto\uprho_{f}(g)$
on $G\times F$ at values in $G$, is continuous,
moreover let $H$ denote $G\rtimes_{\uprho}F$.
\begin{definition}
\label{08191928}
Let $\mf{A}=\lr{\mc{A},H}{\upsigma}$ be a dynamical system
and
$\upomega\in\mc{A}^{\ast}$ set
\begin{equation*}
\ms{F}_{\upomega}(\mf{A})
\coloneqq
\{
h\in F\mid
\upomega\circ\upgamma_{\upsigma}(h)
=
\upomega
\},
\end{equation*}
and
\begin{equation*}
\mf{F}_{\upomega}(\mf{A})
\coloneqq
\{
F_{0}\text{ closed subgroup of }F
\mid
\upomega\in\ms{E}_{\mc{A}}^{F_{0}}(\upgamma_{\upsigma}\up F_{0})
\}.
\end{equation*}
\end{definition}
We convein to remove $(\mf{A})$ 
from $\ms{F}_{\upomega}(\mf{A})$ and $\mf{F}_{\upomega}(\mf{A})$
whenever it is clear 
which dynamical system is involved.
\begin{lemma}
\label{08191843}
Let $\lr{\mc{A},H}{\upsigma}$ be a dynamical system
and
$\upomega\in\mc{A}^{\ast}$, 
thus
$\ms{F}_{\upomega}=\max\mf{F}_{\upomega}$,
in particular 
$\ms{F}_{\upomega}=\bigcup_{F_{0}\in\mf{F}_{\upomega}}F_{0}$.
\end{lemma}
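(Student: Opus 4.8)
The plan is to prove the set equality $\ms{F}_{\upomega}=\max\mf{F}_{\upomega}$ by establishing that $\ms{F}_{\upomega}$ is itself a closed subgroup of $F$ lying in $\mf{F}_{\upomega}$, that it dominates every member of $\mf{F}_{\upomega}$ under inclusion, and then reading off the ``in particular'' clause as a formal consequence. The whole argument is group-theoretic and topological, the representation-theoretic input entering only through the definition of $\upgamma_{\upsigma}$ and the strong continuity of $\upsigma$.

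First I would verify that $\ms{F}_{\upomega}$ is a subgroup. Since $j_{2}:F\to H$ is a continuous group morphism and $\upsigma:H\to Aut^{\ast}(\mc{A})$ is a group morphism, the composite $\upgamma_{\upsigma}=\upsigma\circ j_{2}$ is a group morphism from $F$ into $Aut^{\ast}(\mc{A})$, so the contragredient stabilizer $\{h\in F\mid\upomega\circ\upgamma_{\upsigma}(h)=\upomega\}$ contains the unit (where $\upgamma_{\upsigma}(e)=\un$) and is closed under products and inverses; this set is exactly $\ms{F}_{\upomega}$. For closedness I would write $\ms{F}_{\upomega}=\bigcap_{a\in\mc{A}}\{h\in F\mid\upomega(\upgamma_{\upsigma}(h)a)=\upomega(a)\}$ and note that each map $h\mapsto\upomega(\upgamma_{\upsigma}(h)a)$ is continuous into $\C$, by continuity of $j_{2}$, strong continuity of $\upsigma$, and continuity of $\upomega$; hence each set in the intersection is the preimage of a point and is closed, and so is $\ms{F}_{\upomega}$.

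With $\ms{F}_{\upomega}$ known to be a closed subgroup I would check the two inclusions. For $\ms{F}_{\upomega}\in\mf{F}_{\upomega}$: by the very definition of $\ms{F}_{\upomega}$ one has $\upomega\circ\upgamma_{\upsigma}(h)=\upomega$ for all $h\in\ms{F}_{\upomega}$, which together with $\upomega\in\ms{E}_{\mc{A}}$ yields $\upomega\in\ms{E}_{\mc{A}}^{\ms{F}_{\upomega}}(\upgamma_{\upsigma}\up\ms{F}_{\upomega})$, so $\ms{F}_{\upomega}\in\mf{F}_{\upomega}$. For maximality: any $F_{0}\in\mf{F}_{\upomega}$ satisfies $\upomega\in\ms{E}_{\mc{A}}^{F_{0}}(\upgamma_{\upsigma}\up F_{0})$, that is $\upomega\circ\upgamma_{\upsigma}(h)=\upomega$ for every $h\in F_{0}$, whence $F_{0}\subseteq\ms{F}_{\upomega}$ by the definition of the latter. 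Thus $\ms{F}_{\upomega}$ is the largest element of $\mf{F}_{\upomega}$ under inclusion, i.e. $\ms{F}_{\upomega}=\max\mf{F}_{\upomega}$.

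The ``in particular'' then follows formally: maximality gives $\bigcup_{F_{0}\in\mf{F}_{\upomega}}F_{0}\subseteq\ms{F}_{\upomega}$, while $\ms{F}_{\upomega}$ being itself a member of $\mf{F}_{\upomega}$ gives the reverse inclusion. I do not expect a serious obstacle; the only point that needs care is the closedness of $\ms{F}_{\upomega}$, where one must invoke the strong continuity of $\upsigma$ so that $h\mapsto\upgamma_{\upsigma}(h)a$ is norm-continuous for each fixed $a$, rather than presuming continuity of $h\mapsto\upgamma_{\upsigma}(h)$ into $Aut^{\ast}(\mc{A})$ in a stronger topology. (The standing membership $\upomega\in\ms{E}_{\mc{A}}$ is what makes $\mf{F}_{\upomega}$ nonempty and is exactly the ingredient used in the step $\ms{F}_{\upomega}\in\mf{F}_{\upomega}$.)
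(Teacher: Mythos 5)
Your proof is correct and follows essentially the same route as the paper: the inclusion $F_{0}\subseteq\ms{F}_{\upomega}$ for every $F_{0}\in\mf{F}_{\upomega}$ is read off the definitions, and the only substantive point is that $\ms{F}_{\upomega}$ is a closed subgroup, which you establish via continuity of $h\mapsto\upomega(\upgamma_{\upsigma}(h)a)$ exactly as the paper does (it phrases the closedness as a net argument using the $\sigma(\mc{A},\mc{A}^{\ast})$-continuity of $\upsigma$, while you take preimages of points under norm-continuous maps — the same fact). Your explicit verification of the subgroup property, which the paper omits as trivial, is a harmless addition.
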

\begin{proof}
By construction 
$\ms{F}_{\upomega}\supseteq F_{0}$ for any 
$F_{0}\in\mf{F}_{\upomega}$, 
thus it is sufficient to show that 
$\ms{F}_{\upomega}\in\mf{F}_{\upomega}$.
Let $h\in F$ such that there exists a net
$\{h_{\alpha}\}_{\alpha\in D}$ in $F$ 
for which
$h=\lim_{\alpha\in D}h_{\alpha}$
and $\upomega\circ\upsigma(j_{2}(h_{\alpha}))=\upomega$
for all $\alpha\in D$.
Since the $\sigma(\mc{A},\mc{A}^{\ast})-$continuity
of $\upsigma$ and the continuity of $j_{2}$,
we have for all $A\in\mc{A}$
\begin{equation*}
\begin{aligned}
\upomega(\upgamma(h)A)
&=
\upomega(\upsigma(j_{2}(h))A)
\\
&=
\lim_{\alpha\in D}
\upomega(\upsigma(j_{2}(h_{\alpha}))A)
=
\upomega(A),
\end{aligned}
\end{equation*}
so $\ms{F}_{\upomega}$ is closed.
\end{proof}
\begin{remark}
\label{06091956}
Since Lemma \ref{08191843} 
the group 
$\ms{S}_{\ms{F}_{\upomega}}^{G}$
is locally compact, hence 
for any 
dynamical system
$\lr{\mc{A},H}{\upsigma}$
it is so also
$\lr{\mc{A},\ms{S}_{\ms{F}_{\upomega}}^{G}}
{\upsigma}$.
In particular it makes sense to consider the $C^{\ast}-$crossed
product
$\mc{A}\rtimes_{\upsigma}^{\upmu}\ms{S}_{\ms{F}_{\upomega}}^{G}$
for any 
$\upmu\in\mc{H}(\ms{S}_{\ms{F}_{\upomega}}^{G})$.
\end{remark}
\begin{lemma}
\label{11131734}
Let 
$\lr{\mc{A},H}{\upeta}$ and $\lr{\mc{B},H}{\uptheta}$
be dynamical systems, 
$\upomega\in\mc{B}^{\ast}$
and $T$ a $(\upeta,\uptheta)-$equivariant morphism
such that $T(\mc{A})$ is $\sigma(\mc{B},\mc{B}^{\ast})-$dense.
Then 
$\ms{F}_{\upomega}
=
\ms{F}_{T_{\dagger}(\upomega)}$
and
$\ms{S}_{\ms{F}_{\upomega}}^{G}
=
\ms{S}_{\ms{F}_{T_{\dagger}(\upomega)}}^{G}$.
\end{lemma}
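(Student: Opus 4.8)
The plan is to establish the set equality $\ms{F}_{\upomega}=\ms{F}_{T^{\dagger}(\upomega)}$, where by the convention of Def. \ref{08191928} the left-hand side is computed in $\mf{B}=\lr{\mc{B},H}{\uptheta}$ (since $\upomega\in\mc{B}^{\ast}$) and the right-hand side in $\mf{A}=\lr{\mc{A},H}{\upeta}$ (since $T^{\dagger}(\upomega)\in\mc{A}^{\ast}$). Once this is proved, the second assertion $\ms{S}_{\ms{F}_{\upomega}}^{G}=\ms{S}_{\ms{F}_{T^{\dagger}(\upomega)}}^{G}$ is immediate from the definition $\ms{S}_{F_{0}}^{G}=G\rtimes_{\uprho}F_{0}$, as the two subgroups of $F$ coincide.

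The computational core is the following identity, valid for every $h\in F$. Using the $(\upeta,\uptheta)-$equivariance of $T$ applied to $j_{2}(h)\in H$, together with $\upgamma_{\upeta}=\upeta\circ j_{2}$, $\upgamma_{\uptheta}=\uptheta\circ j_{2}$ and $T^{\dagger}(\cdot)=\cdot\circ T$, I compute
\begin{equation*}
T^{\dagger}(\upomega)\circ\upgamma_{\upeta}(h)
=
\upomega\circ T\circ\upeta(j_{2}(h))
=
\upomega\circ\uptheta(j_{2}(h))\circ T
=
T^{\dagger}\bigl(\upomega\circ\upgamma_{\uptheta}(h)\bigr).
\end{equation*}
Hence $h\in\ms{F}_{T^{\dagger}(\upomega)}$ exactly when $T^{\dagger}\bigl(\upomega\circ\upgamma_{\uptheta}(h)\bigr)=T^{\dagger}(\upomega)$, whereas $h\in\ms{F}_{\upomega}$ exactly when $\upomega\circ\upgamma_{\uptheta}(h)=\upomega$.

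From this the inclusion $\ms{F}_{\upomega}\subseteq\ms{F}_{T^{\dagger}(\upomega)}$ is immediate: applying $T^{\dagger}$ to $\upomega\circ\upgamma_{\uptheta}(h)=\upomega$ yields the defining condition for $\ms{F}_{T^{\dagger}(\upomega)}$. For the reverse inclusion I would first record that the $\sigma(\mc{B},\mc{B}^{\ast})-$density of $T(\mc{A})$ forces $T^{\dagger}$ to be injective: any $\psi\in\mc{B}^{\ast}$ with $\psi\circ T=0$ vanishes on $T(\mc{A})$, hence on its $\sigma(\mc{B},\mc{B}^{\ast})-$closure $\mc{B}$, since $\ker\psi$ is $\sigma(\mc{B},\mc{B}^{\ast})-$closed; thus $\psi=0$. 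Applying this to $\psi=\upomega\circ\upgamma_{\uptheta}(h)-\upomega$ converts $T^{\dagger}\bigl(\upomega\circ\upgamma_{\uptheta}(h)\bigr)=T^{\dagger}(\upomega)$ into $\upomega\circ\upgamma_{\uptheta}(h)=\upomega$, giving $\ms{F}_{T^{\dagger}(\upomega)}\subseteq\ms{F}_{\upomega}$.

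The only genuinely non-formal point is this reverse inclusion, which rests entirely on the injectivity of $T^{\dagger}$ extracted from the weak density hypothesis; all remaining steps are bookkeeping with the equivariance relation and the definitions of $\upgamma$ and $\ms{S}_{F_{0}}^{G}$.
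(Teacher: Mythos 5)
Your proof is correct and follows essentially the same route as the paper: the equivariance identity $T\circ\upeta(j_{2}(h))=\uptheta(j_{2}(h))\circ T$ gives the easy inclusion, and the $\sigma(\mc{B},\mc{B}^{\ast})-$density of $T(\mc{A})$ gives the reverse one, since two functionals in $\mc{B}^{\ast}$ agreeing on a weakly dense set coincide. Your phrasing of the density step as injectivity of $T^{\dagger}$ is just a spelled-out version of the paper's argument.
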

\begin{proof}
\begin{equation*}
\begin{aligned}
\ms{F}_{T_{\dagger}(\upomega)}
&=
\{h\in F\mid
\upomega\circ T\circ\upeta(j_{2}(h)) 
=
\upomega\circ T\}
\\
&=
\{h\in F\mid
\upomega\circ\uptheta(j_{2}(h))\circ T
=
\upomega\circ T\}
\\
&\supseteq
\{h\in F\mid
\upomega\circ\uptheta(j_{2}(h))
=
\upomega
\}
=\ms{F}_{\upomega}.
\end{aligned}
\end{equation*}
The inclusion $\subseteq$ 
follows since the density hypothesis
and $\upomega\circ\uptheta(j_{2}(h)),\upomega\in\mc{B}^{\ast}$.
The second equality follows by the first one.
\end{proof}
For the remaining of the present section we assume fixed a $C^{\ast}-$dynamical system $\mf{A}=\lr{\mc{A},H}{\upsigma}$.
\begin{lemma}
\label{08191854}
If
$\upomega\in\ms{E}_{\mc{A}}^{G}(\uptau)$ 
and
$F_{0}\in\mf{F}_{\upomega}$, 
then
$\upomega\in
\ms{E}_{\mc{A}}^{\ms{S}_{F_{0}}^{G}} 
(\upsigma_{F_{0}})$.
\end{lemma}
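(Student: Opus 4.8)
The plan is to reduce the statement to the two hypotheses by factoring an arbitrary element of $\ms{S}_{F_{0}}^{G}=G\rtimes_{\uprho}F_{0}$ through the canonical injections $j_{1}$ and $j_{2}$. First I would fix an arbitrary $s\in\ms{S}_{F_{0}}^{G}$ and write $s=(g,h)$ with $g\in G$ and $h\in F_{0}$. Using the multiplication law defining $G\rtimes_{\uprho}F$, together with $\uprho(e_{F})=Id$ and $\uprho(h)(e_{G})=e_{G}$ (where $e_{F},e_{G}$ denote the group units), one checks at once the factorization $s=j_{1}(g)\cdot_{\uprho}j_{2}(h)$, since $(g,e_{F})\cdot_{\uprho}(e_{G},h)=(g\cdot\uprho(e_{F})(e_{G}),e_{F}\cdot h)=(g,h)$. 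This is the only structural input required.

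Next, since $\upsigma$ is a group homomorphism into $Aut^{\ast}(\mc{A})$ and $\upsigma_{F_{0}}=\upsigma\up\ms{S}_{F_{0}}^{G}$, applying $\upsigma$ to the factorization and invoking the definitions $\uptau=\upsigma\circ j_{1}$ and $\upgamma=\upsigma\circ j_{2}$ yields $\upsigma_{F_{0}}(s)=\upsigma(j_{1}(g))\circ\upsigma(j_{2}(h))=\uptau(g)\circ\upgamma(h)$. Composing $\upomega$ on the left then gives $\upomega\circ\upsigma_{F_{0}}(s)=\bigl(\upomega\circ\uptau(g)\bigr)\circ\upgamma(h)$. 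Now I would apply the two hypotheses in turn: because $\upomega\in\ms{E}_{\mc{A}}^{G}(\uptau)$ we have $\upomega\circ\uptau(g)=\upomega$, whence $\upomega\circ\upsigma_{F_{0}}(s)=\upomega\circ\upgamma(h)$; and because $F_{0}\in\mf{F}_{\upomega}$, Def. \ref{08191928} gives $\upomega\in\ms{E}_{\mc{A}}^{F_{0}}(\upgamma\up F_{0})$, so $\upomega\circ\upgamma(h)=\upomega$ as $h\in F_{0}$. Therefore $\upomega\circ\upsigma_{F_{0}}(s)=\upomega$, and since $s$ was arbitrary this is exactly the assertion $\upomega\in\ms{E}_{\mc{A}}^{\ms{S}_{F_{0}}^{G}}(\upsigma_{F_{0}})$.

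There is no genuine obstacle: the argument is purely formal once the factorization $s=j_{1}(g)\cdot_{\uprho}j_{2}(h)$ is established, and the proof does not even use local compactness, the closedness of $F_{0}$, or any continuity. The only point deserving care is to respect the composition order, applying the $G$-invariance to the factor $\uptau(g)$, which is the automorphism composed immediately before $\upomega$, and only afterwards the $F_{0}$-invariance to $\upgamma(h)$. One may also remark that $j_{1}(g)\in\ms{S}_{F_{0}}^{G}$ for every $g\in G$, so that $\upsigma(j_{1}(g))=\upsigma_{F_{0}}(j_{1}(g))$ and the restriction $\upsigma_{F_{0}}$ introduces no ambiguity in the computation.
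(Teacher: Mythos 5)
Your proof is correct and is exactly the paper's argument: the paper's own proof is the one-line remark that the claim "follows since $\upsigma$ is a group action and since $(g,f)=(g,\un)\cdot_{\uprho}(\un,f)$," which is precisely the factorization $s=j_{1}(g)\cdot_{\uprho}j_{2}(h)$ you spell out before applying the two invariance hypotheses in turn. Nothing further is needed.
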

\begin{proof}
The statement follows since $\upsigma$ is a group action
and since
$(g,f)=(g,\un)\cdot_{\rho}(\un,f)$,
for all $(g,f)\in G\times F$.
\end{proof}
Since Lemma \ref{08191843}\,\&\,\ref{08191854} we can state the following
\begin{corollary}
\label{08192221}
$\upomega\in\ms{E}_{\mc{A}}^{G}(\uptau)
\Rightarrow
\upomega\in
\ms{E}_{\mc{A}}^{\ms{S}_{\ms{F}_{\upomega}}^{G}} 
(\upsigma_{\ms{F}_{\upomega}})$.
\end{corollary}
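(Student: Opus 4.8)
The plan is to obtain the corollary as an immediate assembly of the two preceding lemmas, so no genuinely new argument is required. First I would fix $\upomega\in\ms{E}_{\mc{A}}^{G}(\uptau)$ and invoke Lemma \ref{08191843}, which asserts $\ms{F}_{\upomega}=\max\mf{F}_{\upomega}$. The crucial point to extract from this is not the union formula but the fact that the maximum is attained: $\ms{F}_{\upomega}$ is itself a member of $\mf{F}_{\upomega}$, i.e. $\ms{F}_{\upomega}$ is a closed subgroup of $F$ for which $\upomega\in\ms{E}_{\mc{A}}^{\ms{F}_{\upomega}}(\upgamma_{\upsigma}\up\ms{F}_{\upomega})$. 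This membership is exactly what will allow $\ms{F}_{\upomega}$ to be fed as a legitimate value of $F_{0}$ into the next lemma.

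Then I would apply Lemma \ref{08191854} with the specific choice $F_{0}=\ms{F}_{\upomega}$. Its two hypotheses are now in hand: the assumed $G$-invariance $\upomega\in\ms{E}_{\mc{A}}^{G}(\uptau)$, and the membership $\ms{F}_{\upomega}\in\mf{F}_{\upomega}$ just verified. The conclusion of that lemma then reads $\upomega\in\ms{E}_{\mc{A}}^{\ms{S}_{\ms{F}_{\upomega}}^{G}}(\upsigma_{\ms{F}_{\upomega}})$, which is precisely the assertion of the corollary. I expect no real obstacle at this stage, since the substance has already been handled upstream: Lemma \ref{08191854} is what upgrades joint $G$- and $F_{0}$-invariance to invariance under the full semidirect product $\ms{S}_{F_{0}}^{G}=G\rtimes_{\uprho}F_{0}$, using only that $\upsigma$ is a group action together with the factorization $(g,f)=(g,\un)\cdot_{\uprho}(\un,f)$, while Lemma \ref{08191843} certifies that the stabilizer $\ms{F}_{\upomega}$ is an admissible choice for $F_{0}$. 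The only thing to be careful about is to route the argument through $\ms{F}_{\upomega}$ as a single closed subgroup rather than through the family $\mf{F}_{\upomega}$ as a whole, which is exactly what the attainment of the maximum in Lemma \ref{08191843} provides.
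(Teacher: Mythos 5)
Your proposal is correct and matches the paper exactly: the corollary is stated there as an immediate consequence of Lemma \ref{08191843} (which gives $\ms{F}_{\upomega}\in\mf{F}_{\upomega}$, i.e.\ the maximum is attained) and Lemma \ref{08191854} applied with $F_{0}=\ms{F}_{\upomega}$. Nothing further is needed.
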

\begin{corollary}
\label{09081449}
Let $l,h\in H$, 
$\beta\in\widetilde{\R}$
and $\upxi:\R\to G$ be a continuous group morphism.
Then
$\upsigma^{\ast}(l)
\bigl(\ms{K}_{\beta}^{\uptau^{(h,\upxi)}}\bigr)
=
\ms{K}_{\beta}^{\uptau^{(l\cdot_{\uprho}h,\upxi)}}$.
\end{corollary}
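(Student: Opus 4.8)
The plan is to read the statement as a direct instance of Cor.~\ref{09081138}, applied to the appropriate one-parameter group and the appropriate $\ast$-automorphism. First I would check that $\lr{\mc{A}}{\R,\uptau^{(h,\upxi)}}$ is genuinely a dynamical system, so that Cor.~\ref{09081138} is applicable: since $\uptau_{\upsigma}^{(h,\upxi)}=\upsigma\circ\ms{ad}(h)\circ j_{1}\circ\upxi$ is a composition of the continuous group homomorphisms $\upxi$, $j_{1}$, $\ms{ad}(h)$ and $\upsigma$ (with $\ms{ad}(h)\circ j_{1}$ landing in the normal subgroup $j_{1}(G)$, which is the source of the well-definedness recorded just before \eqref{09071855}), the map $\uptau^{(h,\upxi)}\colon\R\to Aut_{s}^{\ast}(\mc{A})$ is a strongly continuous one-parameter group of $\ast$-automorphisms of $\mc{A}$.

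Next I would invoke Cor.~\ref{09081138} with $\upeta=\uptau^{(h,\upxi)}$ and $\uplambda=\upsigma(l)\in Aut^{\ast}(\mc{A})$, obtaining for every $\beta\in\widetilde{\R}$ the equality $\upsigma^{\ast}(l)\bigl(\ms{K}_{\beta}^{\uptau^{(h,\upxi)}}\bigr)=\ms{K}_{\beta}^{\ms{ad}(\upsigma(l))\circ\uptau^{(h,\upxi)}}$, where I read $\upsigma^{\ast}(l)$ as the dual $(\upsigma(l))^{\ast}=(\upsigma(l)^{-1})^{\dagger}$. The final step is purely formal: the identity \eqref{09071855}, namely $\uptau_{\upsigma}^{(l\cdot_{\uprho}h,\upxi)}=\ms{ad}(\upsigma(l))\circ\uptau_{\upsigma}^{(h,\upxi)}$, rewrites the exponent on the right-hand side and yields $\ms{K}_{\beta}^{\ms{ad}(\upsigma(l))\circ\uptau^{(h,\upxi)}}=\ms{K}_{\beta}^{\uptau^{(l\cdot_{\uprho}h,\upxi)}}$, which is exactly the assertion.

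The argument therefore reduces to two already-available facts, and I expect no real obstacle: Cor.~\ref{09081138} supplies the transformation law of the KMS-state sets under $\ms{ad}(\uplambda)$-twisting, uniformly in $\beta\in\widetilde{\R}$ (including the case $\beta=\infty$, which that corollary already treats), while \eqref{09071855} identifies the twisted generator. The only point that deserves an explicit line of justification is the applicability of Cor.~\ref{09081138}, i.e.\ that $\uptau^{(h,\upxi)}$ is an $\R$-action on $\mc{A}$; once this is granted the chain of equalities above closes the proof.
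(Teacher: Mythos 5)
Your proposal is correct and coincides with the paper's own argument, which proves the corollary precisely "by Cor.~\ref{09081138} and \eqref{09071855}": you apply Cor.~\ref{09081138} with $\uplambda=\upsigma(l)$ to get $\upsigma^{\ast}(l)\bigl(\ms{K}_{\beta}^{\uptau^{(h,\upxi)}}\bigr)=\ms{K}_{\beta}^{\ms{ad}(\upsigma(l))\circ\uptau^{(h,\upxi)}}$ and then identify the twisted dynamics via \eqref{09071855}. The extra check that $\uptau^{(h,\upxi)}$ is a genuine $\R$-dynamical system is a reasonable added justification that the paper leaves implicit.
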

\begin{proof}
By Cor. \ref{09081138} and \eqref{09071855}.
\end{proof}
\begin{lemma}
\label{09102032}
Let $\uppsi\in\ms{E}_{\mc{A}}^{G}(\uptau)$ and 
$l\in H$,
then
\begin{enumerate}
\item
$\ms{F}_{\upsigma^{\ast}(l)(\uppsi)}
=
\ms{ad}(\Pr_{2}(l))(\ms{F}_{\uppsi})
$;
\label{09101724pre}
\item
$\ms{S}_{\ms{F}_{\upsigma^{\ast}(l)(\uppsi)}}^{G}
=
\ms{ad}(l)(\ms{S}_{\ms{F}_{\uppsi}}^{G})$.
\label{09101945pre}
\end{enumerate}
\end{lemma}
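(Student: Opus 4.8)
The plan is to establish (\ref{09101724pre}) by a direct computation inside $H=G\rtimes_{\uprho}F$ and then to deduce (\ref{09101945pre}) from it together with the observation that $\ms{S}_{F_{0}}^{G}$ is nothing but the preimage of $F_{0}$ under the canonical projection $\Pr_{2}:H\to F$. Throughout I write $b\coloneqq\Pr_{2}(l)$ and recall that $\upsigma^{\ast}(l)(\uppsi)=\uppsi\circ\upsigma(l^{-1})$, that $\upsigma$ is a group action, and that $\Pr_{2}$ is a continuous group morphism with kernel $j_{1}(G)$; by Lemma \ref{08191843} the sets $\ms{F}_{\uppsi}$ and $\ms{F}_{\upsigma^{\ast}(l)(\uppsi)}$ are closed subgroups of $F$, so all the groups $\ms{S}_{\bullet}^{G}$ occurring in the statement are well-set.

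For (\ref{09101724pre}) I would start from the defining condition: $h\in\ms{F}_{\upsigma^{\ast}(l)(\uppsi)}$ iff $\uppsi\circ\upsigma(l^{-1})\circ\upgamma(h)=\uppsi\circ\upsigma(l^{-1})$, i.e. iff $\uppsi\circ\upsigma(l^{-1}\cdot_{\uprho}j_{2}(h))=\uppsi\circ\upsigma(l^{-1})$, since $\upgamma=\upsigma\circ j_{2}$ and $\upsigma$ is a morphism. Because $\upsigma(l)$ is an automorphism, I may compose both functionals on the right with $\upsigma(l)$ without altering the equality, obtaining the equivalent condition $\uppsi\circ\upsigma(l^{-1}\cdot_{\uprho}j_{2}(h)\cdot_{\uprho}l)=\uppsi$. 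Now comes the decisive step: since $\Pr_{2}$ is a morphism, $\Pr_{2}(l^{-1}\cdot_{\uprho}j_{2}(h)\cdot_{\uprho}l)=b^{-1}hb$, so there is a $g_{0}\in G$ with $l^{-1}\cdot_{\uprho}j_{2}(h)\cdot_{\uprho}l=j_{1}(g_{0})\cdot_{\uprho}j_{2}(b^{-1}hb)$, whence $\upsigma(l^{-1}\cdot_{\uprho}j_{2}(h)\cdot_{\uprho}l)=\uptau(g_{0})\circ\upgamma(b^{-1}hb)$. The hypothesis $\uppsi\in\ms{E}_{\mc{A}}^{G}(\uptau)$ gives $\uppsi\circ\uptau(g_{0})=\uppsi$, so the $G$-component $g_{0}$ is absorbed and the condition collapses to $\uppsi\circ\upgamma(b^{-1}hb)=\uppsi$, i.e. $b^{-1}hb\in\ms{F}_{\uppsi}$, i.e. $h\in b\,\ms{F}_{\uppsi}\,b^{-1}=\ms{ad}(b)(\ms{F}_{\uppsi})$. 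This is exactly (\ref{09101724pre}). I expect the one genuinely nonroutine point to be precisely this absorption of the $G$-component via $G$-invariance of $\uppsi$; everything else is bookkeeping in the semidirect product.

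For (\ref{09101945pre}) I would argue that $\ms{S}_{F_{0}}^{G}=\Pr_{2}^{-1}(F_{0})$ for every subgroup $F_{0}\subseteq F$, and that $\ms{ad}(l)$ is an automorphism of $H$ satisfying $\Pr_{2}\circ\ms{ad}(l)=\ms{ad}(b)\circ\Pr_{2}$, the latter because $\Pr_{2}$ is a morphism and $\Pr_{2}(l)=b$. Hence, $\ms{ad}(l)$ and $\ms{ad}(b)$ being bijections, $\ms{ad}(l)\bigl(\Pr_{2}^{-1}(F_{0})\bigr)=\Pr_{2}^{-1}\bigl(\ms{ad}(b)(F_{0})\bigr)$, that is $\ms{ad}(l)(\ms{S}_{F_{0}}^{G})=\ms{S}_{\ms{ad}(b)(F_{0})}^{G}$. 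Taking $F_{0}=\ms{F}_{\uppsi}$ and inserting (\ref{09101724pre}) yields $\ms{ad}(l)(\ms{S}_{\ms{F}_{\uppsi}}^{G})=\ms{S}_{\ms{ad}(b)(\ms{F}_{\uppsi})}^{G}=\ms{S}_{\ms{F}_{\upsigma^{\ast}(l)(\uppsi)}}^{G}$, which is (\ref{09101945pre}).
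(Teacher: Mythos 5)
Your proof is correct. The central computation is the same one the paper relies on: decompose an element of $H$ as $j_{1}(g_{0})\cdot_{\uprho}j_{2}(\cdot)$ and use $\uppsi\in\ms{E}_{\mc{A}}^{G}(\uptau)$ to absorb the $G$-component, so that only conjugation by $\Pr_{2}(l)$ in $F$ survives. Where you diverge is in the logical packaging. The paper first isolates the identity $\upsigma^{\ast}(l)(\uppsi)=\upgamma^{\ast}(\Pr_{2}(l))(\uppsi)$ (its eq. \eqref{09101804}), deduces from it only the inclusion $\ms{ad}(\Pr_{2}(l))(\ms{F}_{\uppsi})\subset\ms{F}_{\upsigma^{\ast}(l)(\uppsi)}$ by exhibiting the left-hand side as a member of $\mf{F}_{\upsigma^{\ast}(l)(\uppsi)}$ and invoking the maximality characterization $\ms{F}_{\upomega}=\max\mf{F}_{\upomega}$ of Lemma \ref{08191843}, and then obtains the reverse inclusion by applying the same inclusion to $\upsigma^{\ast}(l)(\uppsi)$ and $l^{-1}$. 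Your biconditional chain produces the set equality in a single pass and does not need Lemma \ref{08191843} at all for the equality itself (closedness of $\ms{F}_{\uppsi}$ is only used, as you note, to make the $\ms{S}^{G}_{\bullet}$ well-set, which is not part of the assertion). Similarly, for statement \eqref{09101945pre} the paper argues via the decomposition $(g,h)=j_{1}(g)\cdot_{\uprho}j_{2}(h)$ and normality of $j_{1}(G)$, whereas your identities $\ms{S}_{F_{0}}^{G}=\Pr_{2}^{-1}(F_{0})$ and $\Pr_{2}\circ\ms{ad}(l)=\ms{ad}(\Pr_{2}(l))\circ\Pr_{2}$ make the deduction a one-line exercise in preimages. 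The trade-off is minor: the paper's route reuses machinery it has already set up and will reuse elsewhere, while yours is more self-contained and arguably more transparent.
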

\begin{proof}
Since 
$(g,h)=(\un,h)\cdot_{\uprho}(\uprho(h^{-1})g,\un)$
for all $g\in G$ and $h\in F$, we have
\begin{equation*}
l=j_{2}(\Pr_{2}(l))
\cdot_{\uprho}
j_{1}\left(\uprho(\Pr_{2}(l^{-1}))\Pr_{1}(l)\right),
\end{equation*}
next 
$\uppsi\in\ms{E}_{\mc{A}}^{G}(\uptau)$
by construction, thus
\begin{equation}
\label{09101804}
\upsigma^{\ast}(l)(\uppsi) 
=
\upgamma^{\ast}(\Pr_{2}(l))(\uppsi).
\end{equation}
Hence
$\ms{ad}(\Pr_{2}(l))(\ms{F}_{\uppsi})\in
\mf{F}_{\upsigma^{\ast}(l)(\uppsi)}$,
so by Lemma \ref{08191843}
\begin{equation}
\label{09101855}
\ms{ad}(\Pr_{2}(l))(\ms{F}_{\uppsi})
\subset
\ms{F}_{\upsigma^{\ast}(l)(\uppsi)}.
\end{equation}
Now 
$\upsigma^{\ast}(l)(\uppsi)\in\ms{E}_{\mc{A}}^{G}(\uptau)$,
since $j_{1}(G)$ is a normal subgroup
of $H$,
hence \eqref{09101855} holds if we replace 
$\uppsi$ by $\upsigma^{\ast}(l)(\uppsi)$ 
and $l$ by $l^{-1}$
and obtain
\begin{equation*}
\ms{ad}(\Pr_{2}(l^{-1}))
(\ms{F}_{\upsigma^{\ast}(l)(\uppsi)})
\subset
\ms{F}_{\uppsi},
\end{equation*}
hence
$\ms{F}_{\upsigma^{\ast}(l)(\uppsi)}
\subset
\ms{ad}(\Pr_{2}(l))(\ms{F}_{\uppsi})$
and st.\eqref{09101724pre} follows by \eqref{09101855}.
st.\eqref{09101945pre} follows 
by
$(g,h)=j_{1}(g)\cdot_{\uprho}j_{2}(h)$
for all $g\in G$ and $h\in F$,
by st.\eqref{09101724pre}
and since
$j_{1}(G)$ is a normal subgroup
of $H$.
\end{proof}
\begin{proposition}
\label{09131203}
Let $V\in\mc{U}(\mc{A})$,
$\uppsi\in\ms{E}_{\mc{A}}$ and 
$\lr{\mf{H},\uppi}{\Upomega}$
be 
a cyclic representation of $\mc{A}$ associated
with 
$\uppsi$.
Then
$\lr{\mf{H},\uppi}{\uppi(V)\Upomega}$
is a cyclic representation of $\mc{A}$ 
associated with $\ms{ad}(V)^{\ast}(\uppsi)$.
\end{proposition}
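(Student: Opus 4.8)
The plan is to unwind the definition of the pulled-back state $\ms{ad}(V)^{\ast}(\uppsi)$ and then to verify directly the two defining conditions of a cyclic representation associated with it: cyclicity of the proposed vector $\uppi(V)\Upomega$ and the agreement of the vector state it induces. Since $V\in\mc{U}(\mc{A})$, the map $\ms{ad}(V)\colon a\mapsto VaV^{-1}$ is an inner $\ast$-automorphism of $\mc{A}$, bijective with inverse $\ms{ad}(V)^{-1}=\ms{ad}(V^{-1})=\ms{ad}(V^{\ast})$. Hence, using the conventions $\uplambda^{\ast}=(\uplambda^{-1})^{\dagger}$ for bijective $\uplambda$ and $T^{\dagger}(\upomega)=\upomega\circ T$, we obtain $\ms{ad}(V)^{\ast}(\uppsi)=\uppsi\circ\ms{ad}(V^{\ast})$, that is $\ms{ad}(V)^{\ast}(\uppsi)(a)=\uppsi(V^{\ast}aV)$ for all $a\in\mc{A}$.

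For cyclicity I would note that $V$ is invertible in $\mc{A}$, so right multiplication by $V$ is a bijection of $\mc{A}$ onto itself, whence $\uppi(\mc{A})\,\uppi(V)\Upomega=\uppi(\mc{A}V)\Upomega=\uppi(\mc{A})\Upomega$, which is dense in $\mf{H}$ by the hypothesis that $\lr{\mf{H},\uppi}{\Upomega}$ is cyclic. Thus $\uppi(V)\Upomega$ is a cyclic vector for $\uppi$, and no argument beyond the given density is required.

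For the induced state I would compute the vector state of $\uppi(V)\Upomega$. Because $\mc{U}(\mc{A})$ is nonempty, $\mc{A}$ is unital, and the cyclic (hence nondegenerate) representation $\uppi$ satisfies $\uppi(\un)=\un_{\mf{H}}$, so $\uppi(V)$ is a unitary of $\mf{H}$ with $\uppi(V)^{\ast}=\uppi(V^{\ast})$. Using the defining relation $\uppsi(a)=\langle\Upomega,\uppi(a)\Upomega\rangle$ of the GNS datum, for every $a\in\mc{A}$
\begin{equation*}
\langle\uppi(V)\Upomega,\uppi(a)\uppi(V)\Upomega\rangle
=\langle\Upomega,\uppi(V^{\ast}aV)\Upomega\rangle
=\uppsi(V^{\ast}aV)
=\ms{ad}(V)^{\ast}(\uppsi)(a),
\end{equation*}
the last equality by the first paragraph. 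Evaluating at $a=\un$ also gives $\|\uppi(V)\Upomega\|=1$, so $\uppi(V)\Upomega$ induces precisely the state $\ms{ad}(V)^{\ast}(\uppsi)$, which completes the verification.

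I do not anticipate a genuine obstacle, since the whole argument is a direct unwinding of the definitions of $\ms{ad}$, of $(\cdot)^{\ast}$ on bijective $\ast$-automorphisms, and of the GNS data. The only point deserving a word of care is that $\uppi(V)$ is unitary on $\mf{H}$ (so that $\uppi(V)^{\ast}=\uppi(V^{-1})$): this rests on the nondegeneracy of cyclic representations together with the unitality of $\mc{A}$ forced by $V\in\mc{U}(\mc{A})$.
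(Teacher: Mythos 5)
Your proposal is correct and follows essentially the same route as the paper's proof: the identification $\ms{ad}(V)^{\ast}(\uppsi)=\uppsi\circ\ms{ad}(V^{-1})=\upomega_{\uppi(V)\Upomega}\circ\uppi$ for the state, together with $\mc{A}V=\mc{A}$ for cyclicity of $\uppi(V)\Upomega$. The additional care you take in verifying that $\uppi(V)$ is unitary on $\mf{H}$ is harmless but not needed beyond what the paper records.
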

\begin{proof}
$\uppsi\circ\ms{ad}(V^{-1})
=
\upomega_{\uppi(V)\Upomega}
\circ\uppi$.
Since 
$\mc{A}V\subset\mc{A}$
and
$\mc{A}V^{-1}\subset\mc{A}$
we have
$\mc{A}=\mc{A}V^{-1}V\subset\mc{A}V\subset\mc{A}$,
so
$\mc{A}V=\mc{A}$.
Thus $\uppi(V)\Upomega$
is cyclic for the set $\uppi(\mc{A})$.
\end{proof}
\begin{definition}
\label{09141932}
Let 
$\pf{H}
\coloneqq
\lr{\mf{H},\uppi}{\Upomega}$
be a cyclic representation of $\mc{A}$,
$\ms{v}$
a group morphism of 
$H$
into 
$\mc{U}(\mc{A})$,
and 
$l\in H$.
Set
$\pf{H}^{(\ms{v},l)}
\coloneqq
\lr{\mf{H},\uppi}
{\uppi(\ms{v}(l))\Upomega}$.
\end{definition}
\begin{remark}
\label{09151131}
Let $\uppsi\in\ms{E}_{\mc{A}}$
and $\mf{H}$
be 
a cyclic representation of $\mc{A}$ 
associated
with 
$\uppsi$.
Since Prp. \ref{09131203},
if $\mf{A}$ is inner implemented by $\ms{v}$,
then
$\pf{H}^{(\ms{v},l)}$ is a cyclic
representation of $\mc{A}$ associated with 
$\upsigma^{\ast}(l)(\uppsi)$.
\end{remark}

\begin{definition}
\label{08192224pre}
Let $A$ be a nonempty set,
$\ps{\upomega}:A\to\ms{E}_{\mc{A}}^{G}(\uptau)$
and
$\pf{H}:A\to Rep_{c}(\mc{A})$ 
be such that 
$\pf{H}_{\alpha}=
\lr{\mf{H}_{\alpha},\uppi_{\alpha}}{\Upomega_{\alpha}}$ 
is a cyclic representation of $\mc{A}$
associated with $\ps{\upomega}_{\alpha}$, for all $\alpha\in A$.
Thus 
we can
set for all $\alpha\in A$
\begin{equation*}
\ms{U}_{\pf{H},\alpha}^{\upsigma}
\coloneqq
\ms{W}_{\pf{H}}^{\upsigma_{\ms{F}_{\ps{\upomega}_{\alpha}}}}.
\end{equation*}
Whenever it is clear the dynamical system involved,
we convein to remove the index $\upsigma$,
moreover we remove the index $\alpha$ anytime
$A$ is a singleton.
\end{definition}
\begin{remark}

\label{11241735}
Since Lemma \ref{08191843}
$\ms{F}_{\ps{\upomega}_{\alpha}}$ is closed in $F$
for any $\alpha\in A$,
so
$\lr{\mc{A}}{\ms{S}_{\ms{F}_{\ps{\upomega}_{\alpha}}}^{G},
\upsigma_{\ms{F}_{\ps{\upomega}_{\alpha}}}}$
is a dynamical system, 
hence Def. \ref{08192224pre} is well-set by Cor. \ref{08192221}.
By construction 
$\ms{U}_{\pf{H},\alpha}^{\upsigma}:
\ms{S}_{\ms{F}_{\ps{\upomega}_{\alpha}}}^{G}
\to
\mc{L}(\mf{H}_{\alpha})$
such that 
for all $l\in\ms{S}_{\ms{F}_{\ps{\upomega}_{\alpha}}}^{G}$ 
and $a\in\mc{A}$
we have
\begin{equation*}
\ms{U}_{\pf{H},\alpha}^{\upsigma}(l)\,
\uppi_{\alpha}(a)\Upomega_{\alpha}
=
\uppi_{\alpha}(\upsigma(l)a)\Upomega_{\alpha}.
\end{equation*}
\end{remark}
\begin{corollary}
\label{09131225}
Let 
$\upvarphi\in\ms{E}_{\mc{A}}^{G}(\uptau)$
and $l\in H$,
thus
$\upsigma^{\ast}(l)(\upvarphi)\in\ms{E}_{\mc{A}}^{G}(\uptau)$.
Moreover let
$\pf{H}=\lr{\mf{H},\uppi}{\Upomega}$
be 
a cyclic representation of $\mc{A}$ 
associated
with 
$\upvarphi$.
If $\mf{A}$ is inner implemented by $\ms{v}$,
then
$\pf{H}^{(\ms{v},l)}$
is a cyclic representation of $\mc{A}$ associated
with 
$\upsigma^{\ast}(l)(\upvarphi)$
and
\begin{equation}
\label{09151818}
\ms{U}_{\pf{H}^{(\ms{v},l)}}
=
(\ms{ad}\circ\uppi_{\upvarphi}\circ \ms{v})(l)
\circ
\ms{U}_{\pf{H}}
\circ
\ms{ad}(l^{-1})\up
\ms{S}_{\ms{F}_{\upsigma^{\ast}(l)(\upvarphi)}}^{G}.
\end{equation}
\end{corollary}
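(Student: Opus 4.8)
The first two assertions are largely on record, so I would dispose of them quickly. That $\upsigma^{\ast}(l)(\upvarphi)$ again belongs to $\ms{E}_{\mc{A}}^{G}(\uptau)$ follows as in the proof of Lemma \ref{09102032}, from the normality of $j_{1}(G)$ in $H$: conjugating any $G$-translation by $l$ lands back in $j_{1}(G)$, and $\upvarphi$ is $\uptau$-invariant. This is exactly what makes $\ms{U}_{\pf{H}^{(\ms{v},l)}}$ meaningful in the sense of Def. \ref{08192224pre}. For the cyclicity, since $\mf{A}$ is inner implemented by $\ms{v}$ we have $\upsigma(l)=\ms{ad}(\ms{v}(l))$ with $\ms{v}(l)\in\mc{U}(\mc{A})$; applying Prp. \ref{09131203} with $V=\ms{v}(l)$ (equivalently Rmk. \ref{09151131}) shows that $\pf{H}^{(\ms{v},l)}=\lr{\mf{H},\uppi}{\uppi(\ms{v}(l))\Upomega}$ is a cyclic representation of $\mc{A}$ associated with $\ms{ad}(\ms{v}(l))^{\ast}(\upvarphi)=\upsigma^{\ast}(l)(\upvarphi)$.

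The real content is the intertwining identity \eqref{09151818}, which I would verify on a dense set. First I would settle well-definedness of the right-hand side on the correct domain: by Lemma \ref{09102032}\eqref{09101945pre} one has $\ms{S}_{\ms{F}_{\upsigma^{\ast}(l)(\upvarphi)}}^{G}=\ms{ad}(l)(\ms{S}_{\ms{F}_{\upvarphi}}^{G})$, so $\ms{ad}(l^{-1})$ carries the domain $\ms{S}_{\ms{F}_{\upsigma^{\ast}(l)(\upvarphi)}}^{G}$ of the left-hand side bijectively onto the domain $\ms{S}_{\ms{F}_{\upvarphi}}^{G}$ of $\ms{U}_{\pf{H}}$; after $\ms{U}_{\pf{H}}$ one may apply the isometry $\ms{ad}(\uppi(\ms{v}(l)))=(\ms{ad}\circ\uppi\circ\ms{v})(l)$. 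Thus, for each fixed $m$ in the domain, both sides are bounded operators on $\mf{H}$, and it is enough to compare them on the dense set $\uppi(\mc{A})\uppi(\ms{v}(l))\Upomega$, since $\uppi(\ms{v}(l))\Upomega$ is cyclic.

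By the defining relation of Rmk. \ref{11241735} for the cyclic covariant representation $\ms{U}_{\pf{H}^{(\ms{v},l)}}$, the left-hand side sends $\uppi(a)\uppi(\ms{v}(l))\Upomega$ to $\uppi\bigl(\upsigma(m)(a)\,\ms{v}(l)\bigr)\Upomega$. For the right-hand side I would move $\uppi(\ms{v}(l))^{-1}$ across $\uppi(a)$ using $\upsigma=\ms{ad}\circ\ms{v}$, giving $\uppi(\upsigma(l^{-1})(a))\Upomega$; then invoke the defining relation of $\ms{U}_{\pf{H}}$ at $\ms{ad}(l^{-1})(m)=l^{-1}ml\in\ms{S}_{\ms{F}_{\upvarphi}}^{G}$; and finally apply $\uppi(\ms{v}(l))$, obtaining $\uppi\bigl(\ms{v}(l)\,\upsigma(l^{-1}m)(a)\bigr)\Upomega$. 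The two outputs agree because inner implementation yields $\upsigma(l^{-1}m)=\ms{ad}(\ms{v}(l)^{-1})\circ\upsigma(m)$, whence $\ms{v}(l)\,\upsigma(l^{-1}m)(a)=\upsigma(m)(a)\,\ms{v}(l)$; equality on the dense set then gives \eqref{09151818}.

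I expect the only delicate point to be the bookkeeping in the last paragraph: transporting $m$ through $\ms{ad}(l^{-1})$ into the smaller group $\ms{S}_{\ms{F}_{\upvarphi}}^{G}$ so that the defining relation for $\ms{U}_{\pf{H}}$ is legitimately available, together with the correct use of the inner-implementation identity to reconcile the two sides. There is no analytic obstruction; the content is entirely in keeping the group elements and the restricted domains straight.
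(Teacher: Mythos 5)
Your proposal is correct and follows essentially the same route as the paper: normality of $j_{1}(G)$ for the invariance of $\upsigma^{\ast}(l)(\upvarphi)$, Prp. \ref{09131203}/Rmk. \ref{09151131} for cyclicity, Lemma \ref{09102032}\eqref{09101945pre} to match the domains, and a verification of \eqref{09151818} on the dense set $\uppi(\mc{A})\uppi(\ms{v}(l))\Upomega$ using the defining relation of the induced unitaries and the identity $\ms{v}(l)\,\upsigma(l^{-1}m)(a)=\upsigma(m)(a)\,\ms{v}(l)$. The only cosmetic difference is that the paper parametrizes the group element as $\ms{ad}(l)h$ with $h\in\ms{S}_{\ms{F}_{\upvarphi}}^{G}$ and expands the left-hand side, whereas you evaluate the right-hand side at $m$ and pull it back via $\ms{ad}(l^{-1})$; the computations coincide.
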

\begin{proof}
The first sentence of the statement follows
since 
$l^{-1}\cdot_{\uprho}j_{1}(g)=
\ms{ad}(l^{-1})(j_{1}(g))\cdot_{\uprho}l^{-1}$,
for all $g\in G$
and since $j_{1}(G)$ is a normal subgroup of
$H$.
If $\mf{A}$ is inner, then the second
sentence of the statement follows by Rmk. \ref{09151131}.
$\ms{U}_{\pf{H}^{(\ms{v},l)}}$ is well-set,
since the first two sentences of the statement 
and Def. \ref{08192224pre}.
Moreover since Lemma \ref{09102032}\eqref{09101945pre} 
\begin{equation}
\label{09151941}
\ms{ad}(l)(\ms{S}_{\ms{F}_{\upvarphi}}^{G})
=
\ms{S}_{\ms{F}_{\upsigma^{\ast}(l)(\upvarphi)}}^{G}.
\end{equation}
If $\pf{H}=\lr{\mf{H},\uppi}{\Upomega}$
thus for all $h\in\ms{S}_{\ms{F}_{\upvarphi}}^{G}$ 
and $a\in\mc{A}$
we have
\begin{equation*}
\begin{aligned}
\ms{U}_{\pf{H}^{(\ms{v},l)}}
\bigl(\ms{ad}(l)h\bigr)
\,\uppi(a)\,\uppi(\ms{v}(l))\,\Upomega
&=
\uppi\bigl(\upsigma(\ms{ad}(l)h)a\bigr)\,
\uppi(\ms{v}(l))\,\Upomega
\\
&=
\uppi\bigl(\ms{v}(\ms{ad}(l)h)\,
a\,\ms{v}(\ms{ad}(l)h^{-1})
\ms{v}(l)\bigr)\,\Upomega
\\
&=
\uppi\bigl(\ms{v}(l)\,
\ms{v}(h)\,
\ms{v}(l^{-1})\,
a\,
\ms{v}(l)\,
\ms{v}(h^{-1})\bigr)\,
\Upomega
\\
&=
\uppi(\ms{v}(l))\,
\uppi\bigl(\upsigma(h)
\bigl(\ms{ad}(\ms{v}(l^{-1}))a\bigr)
\bigr)\,\Upomega
\\
&=
\uppi(\ms{v}(l))\,
\ms{U}_{\pf{H}}(h)\,
\uppi\bigl(\ms{ad}(\ms{v}(l^{-1}))a\bigr)\,\Upomega
\\
&=
\uppi(\ms{v}(l))\,
\ms{U}_{\pf{H}}(h)\,
\uppi(\ms{v}(l^{-1}))\,
\uppi(a)\,
\uppi(\ms{v}(l))\,\Upomega.
\end{aligned}
\end{equation*}
Moreover 
$\uppi(\ms{v}(l^{-1}))
=
\uppi(\ms{v}(l))^{-1}$,
hence \eqref{09151818}
follows by \eqref{09151941}
and the cyclicity of the 
representation $\pf{H}^{(\ms{v},l)}$.
\end{proof}
\begin{remark}
\label{09151937}
Under the hypotheses of Cor. \ref{09131225}
we have
for all $s\in\ms{S}_{\ms{F}_{\upsigma^{\ast}(l)(\upvarphi)}}^{G}$
\begin{equation*}
\ms{U}_{\pf{H}^{(\ms{v},l)}}(s)
=
\uppi(\ms{v}(l))\,
\ms{U}_{\pf{H}}\bigl(\ms{ad}(l^{-1})s\bigr)\,
\uppi(\ms{v}(l^{-1})).
\end{equation*}
\end{remark}
\begin{remark}
\label{09171300}
Let $T$ be a locally compact space $\mu\in\mc{M}(T)$, 
$\mf{H}$ a Hilbert space. Thus for any map $f:T\to \mc{L}_{s}(\mf{H})$
scalarly essentially $\mu-$integrable we have 
$\int f\,d\mu\in\mc{L}(\mf{H})$, since 
\cite[Ch. $6$, $\S 1$, $n^{\circ} 4$, Cor. $2$]{int1}.
\end{remark}
\begin{lemma}
\label{09171301}
Let $G_{1}, G_{2}$ be two locally compact groups, 
$\eta:G_{1}\to G_{2}$ be an isomorphism of topological
groups and $\mu\in\mc{H}(G_{1})$. Then $\eta(\mu)$
is well-set and $\eta(\mu)\in\mc{H}(G_{2})$.
\end{lemma}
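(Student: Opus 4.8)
The plan is to first establish that $\eta(\mu)$ is well-set and belongs to $\mc{M}(G_{2})$, and then to verify its left-invariance directly from the transport formula \eqref{09191048}, since by definition $\mc{H}(G_{2})$ is precisely the set of left-invariant elements of $\mc{M}(G_{2})$. For the first point I would observe that an isomorphism of topological groups is in particular a homeomorphism, hence a proper map: the preimage under $\eta$ of any compact subset of $G_{2}$ is its image under the continuous map $\eta^{-1}$, thus compact. In particular $\eta$ is $\mu-$proper for the given $\mu\in\mc{M}(G_{1})$, so the image measure $\eta(\mu)$ is well-set and lies in $\mc{M}(G_{2})$ by the construction cited from \cite{int1}, and \eqref{09191048} applies, namely $\int f\,d\eta(\mu)=\int f\circ\eta\,d\mu$ for all $f\in\mc{C}_{c}(G_{2})$. (Positivity and nontriviality, if desired, transfer through the image-measure construction.)

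It remains to check that $\eta(\mu)\circ L_{s}^{\ast}\up\mc{C}_{c}(G_{2})=\eta(\mu)$ for every $s\in G_{2}$. The key step is a translation-intertwining identity. Fixing $s\in G_{2}$ and writing $t\coloneqq\eta^{-1}(s)\in G_{1}$, I would compute, for $f\in\mc{C}_{c}(G_{2})$ and $x\in G_{1}$, using $L_{s}^{\ast}(f)=f\circ L_{s^{-1}}$ and the fact that $\eta$ preserves inversion and multiplication,
\[
(L_{s}^{\ast}(f)\circ\eta)(x)=f(s^{-1}\eta(x))=f(\eta(t^{-1})\eta(x))=f(\eta(t^{-1}x))=(f\circ\eta)(L_{t^{-1}}(x)),
\]
so that $L_{s}^{\ast}(f)\circ\eta=L_{t}^{\ast}(f\circ\eta)$, where on the right $L_{t}^{\ast}$ and $L_{t^{-1}}$ refer to the group $G_{1}$, in accordance with the convention that the ambient group for $L$ is read off from the context.

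Combining this identity with \eqref{09191048} and the left-invariance of $\mu$, I would conclude with the chain
\[
\eta(\mu)(L_{s}^{\ast}(f))=\mu(L_{s}^{\ast}(f)\circ\eta)=\mu(L_{t}^{\ast}(f\circ\eta))=\mu(f\circ\eta)=\eta(\mu)(f),
\]
where the third equality uses that $f\circ\eta\in\mc{C}_{c}(G_{1})$ (its support is $\eta^{-1}(supp(f))$, compact since $\eta$ is a homeomorphism) together with $\mu\in\mc{H}(G_{1})$, and the last equality is again \eqref{09191048}. Since $s\in G_{2}$ and $f\in\mc{C}_{c}(G_{2})$ are arbitrary, this gives $\eta(\mu)\in\mc{H}(G_{2})$.

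The properness of $\eta$ and the compactness of $supp(f\circ\eta)$ are routine consequences of $\eta$ being a homeomorphism, and the only point requiring genuine care is the translation-intertwining identity, where one must correctly track the inverses introduced by the convention $L_{s}^{\ast}(h)=h\circ L_{s^{-1}}$ and use that $\eta$ is a group homomorphism. I do not anticipate a real obstacle beyond this bookkeeping.
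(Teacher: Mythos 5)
Your proof is correct and follows essentially the same route as the paper's: both establish that $f\circ\eta\in\mc{C}_{c}(G_{1})$ (so $\eta(\mu)$ is well-set), derive the intertwining identity $L_{s}^{\ast}(f)\circ\eta=L_{\eta^{-1}(s)}^{\ast}(f\circ\eta)$ from $L_{s}\circ\eta=\eta\circ L_{\eta^{-1}(s)}$, and conclude by the same chain of equalities using \eqref{09191048} and the left-invariance of $\mu$. The only cosmetic difference is that the paper verifies the compactness identity $supp(g\circ\eta)=\eta^{-1}(supp(g))$ in detail with a net argument, whereas you dismiss it as a routine consequence of $\eta$ being a homeomorphism, which is acceptable.
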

\begin{proof}
$\eta$ is $\mu-$misurable since it is continuous. Let 
$E$ be a Hausdorff locally convex space 
and 
$g\in\mc{C}_{c}(G_{2},E)$, thus 
\begin{equation}
\label{09191032}
\eta^{-1}(supp(g))
=
supp(g\circ\eta)
\in Comp(G_{1})
\end{equation}
Indeed $\eta^{-1}(supp(g))$ is compact 
$\eta^{-1}$ being continuous, moreover
$supp(g\circ\eta)
\supseteq
\eta^{-1}(supp(g))$.
Let $x\in supp(g\circ\eta)$
thus there exists a net $\{x_{\alpha}\}_{\alpha\in D}$
in $G_{1}$ such that $\lim_{\alpha\in D}x_{\alpha}=x$
and $g(\eta(x_{\alpha}))\neq\ze$, for all $\alpha\in D$.
But $\lim_{\alpha\in D}\eta(x_{\alpha})=\eta(x)$
so $\eta(x)\in supp(g)$, i.e. $x\in\eta^{-1}(supp(g))$
and \eqref{09191032} follows.
Let $f\in\mc{C}_{c}(G_{2})$, 
then by \eqref{09191032}
$f\circ\eta\in\mc{C}_{c}(G_{1})$ and $\eta(\mu)$ is well-defined.
Let $s\in G_{2}$ 
thus $L_{s}\circ\eta=\eta\circ L_{\eta^{-1}(s)}$ so
\begin{equation*}
L_{s}^{\ast}(f)\circ\eta
=
L_{\eta^{-1}(s)}^{\ast}(f\circ\eta),
\end{equation*}
then
\begin{equation*}
\begin{aligned}
\int L_{s}^{\ast}(f)\,d\eta(\mu)
&=
\int L_{s}^{\ast}(f)\circ\eta\,d\mu
\\
&=
\int
L_{\eta^{-1}(s)}^{\ast}(f\circ\eta)\,
d\mu
\\
&=
\int f\circ\eta\, d\mu
=
\int f\, d\eta(\mu),
\end{aligned}
\end{equation*}
where the third equality follows by the left invariance of 
$\mu$, while the first and forth ones follow by \eqref{09191048}.
\end{proof}
\begin{remark}
\label{09171400}
Let $E$ be a Hausdorff locally convex space, 
$T,S$ be two locally compact spaces, $\mu\in\mc{M}(T)$,
and $\ep:T\to S$ be $\mu-$proper.
Since 
\cite[Ch. $6$, $\S 1$, $n^{\circ}1$, pg. $4$
and
Ch. $5$, $\S 6$, $n^{\circ}2$, Thm. $1$]{int1}
we have for any scalarly essentially 
$\ep(\mu)-$integrable map $f:S\to E$ 
that
$f\circ\ep$ is scalarly essentially $\mu-$integrable and
\begin{equation*}
\int f\circ\ep\,d\mu
=
\int f\,d\ep(\mu).
\end{equation*}
Moreover if $E=\mc{L}_{s}(\mf{H})$ for some Hilbert space $\mf{H}$,
then by Rmk. \ref{09171300}
\begin{equation*}
\int f\circ\ep\,d\mu
\in
\mc{L}(\mf{H}).
\end{equation*}
\end{remark}
\begin{lemma}
\label{10041115}
Let $X$ be a locally compact group, $\mu\in\mc{H}(X)$
and $Y$ be a locally compact subgroup of $X$.
Then $\mu_{Y}\in\mc{H}(Y)$ and $\Delta_{Y}=\Delta_{X}\up Y$.
\end{lemma}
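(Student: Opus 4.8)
The plan is to reduce both assertions to a single observation: for $s\in Y$ the $\ze$-extension operation $f\mapsto\widetilde{f}$, viewed as a linear embedding $\mc{C}_{c}(Y)\hookrightarrow\mc{C}_{c}(X)$, intertwines the translations of $Y$ with those of $X$. Once this is in hand, the defining relations of $\mc{H}(X)$ and of $\Delta_{X}$ transport directly along the embedding. First I would note that $\mu_{Y}\in\mc{M}(Y)$ is already granted by the general construction preceding the statement, its positivity being inherited from that of $\mu$ since $f\geq\ze$ forces $\widetilde{f}\geq\ze$; the only structural input still to be exploited is that $Y$ is a subgroup.

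The central lemma to establish is that, for every $s\in Y$ and every $f\in\mc{C}_{c}(Y)$,
\begin{equation*}
\widetilde{L_{s}^{\ast}(f)}=L_{s}^{\ast}(\widetilde{f}),
\qquad
\widetilde{R_{s}^{\ast}(f)}=R_{s}^{\ast}(\widetilde{f}),
\end{equation*}
where on the left the translations are those of $Y$ and on the right those of $X$. Both are verified pointwise on $X$. For $x\in Y$ one has $s^{-1}x,\,xs^{-1}\in Y$, so the two sides coincide with $f(s^{-1}x)$, resp. $f(xs^{-1})$, by the very definition of $L_{s}^{\ast}$ and $R_{s}^{\ast}$ on $Y$ and on $X$. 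For $x\notin Y$ both sides vanish, precisely because $Y$ is a subgroup containing $s$, whence $s^{-1}x\in Y\Leftrightarrow x\in Y$ and $xs^{-1}\in Y\Leftrightarrow x\in Y$. This case analysis on the complement $X\setminus Y$ is exactly where the subgroup hypothesis enters, and it is the step I expect to require the most care.

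Granting these identities, left invariance of $\mu_{Y}$ is immediate: for $s\in Y$ and $f\in\mc{C}_{c}(Y)$,
\begin{equation*}
\mu_{Y}(L_{s}^{\ast}(f))
=\mu(\widetilde{L_{s}^{\ast}(f)})
=\mu(L_{s}^{\ast}(\widetilde{f}))
=\mu(\widetilde{f})
=\mu_{Y}(f),
\end{equation*}
the third equality being the left invariance of $\mu\in\mc{H}(X)$. Hence $\mu_{Y}\circ L_{s}^{\ast}\up\mc{C}_{c}(Y)=\mu_{Y}$ for all $s\in Y$, i.e. $\mu_{Y}\in\mc{H}(Y)$.

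For the modular functions I would run the same computation with $R_{s}^{\ast}$ in place of $L_{s}^{\ast}$, using the characterizing relation $\mu\circ R_{s}^{\ast}=\Delta_{X}(s)\,\mu$ on $\mc{C}_{c}(X)$, to obtain
\begin{equation*}
\mu_{Y}(R_{s}^{\ast}(f))
=\mu(R_{s}^{\ast}(\widetilde{f}))
=\Delta_{X}(s)\,\mu(\widetilde{f})
=\Delta_{X}(s)\,\mu_{Y}(f)
\end{equation*}
for every $s\in Y$. Comparing with the defining relation $\mu_{Y}\circ R_{s}^{\ast}=\Delta_{Y}(s)\,\mu_{Y}$ for the modular function of the left Haar measure $\mu_{Y}$ on $Y$, and choosing $f$ with $\mu_{Y}(f)\neq 0$, gives $\Delta_{Y}(s)=\Delta_{X}(s)$ for all $s\in Y$, that is $\Delta_{Y}=\Delta_{X}\up Y$. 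The one point needing separate attention is that $\mu_{Y}\neq\ze$, so that $\Delta_{Y}$ is genuinely defined; this follows from the injectivity of $f\mapsto\widetilde{f}$ together with the fact that a nonzero left Haar measure $\mu$ has full support, so that $\mu(\widetilde{f})>0$ for any nonzero $f\geq\ze$ in $\mc{C}_{c}(Y)$.
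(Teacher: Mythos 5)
Your argument is essentially the paper's own proof: the same intertwining identities $\widetilde{L_{s}^{\ast}(f)}=L_{s}^{\ast}(\widetilde{f})$ and $\widetilde{R_{s}^{\ast}(f)}=R_{s}^{\ast}(\widetilde{f})$, obtained from the observation that $s\in Y$ and $x\in\complement Y$ force $s^{-1}x,\,xs^{-1}\in\complement Y$, followed by the same two displayed computations and the uniqueness of the modular function.

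The one place you go beyond the paper is in trying to justify $\mu_{Y}\neq\ze$, and that justification does not work as stated: $\widetilde{f}$ is only Borel, not continuous, and the set $\{\widetilde{f}>0\}=\{f>0\}$ is open in $Y$ but in general not open in $X$, so full support of $\mu$ gives nothing (e.g.\ $X=\R$, $Y=\Z$, $\mu$ Lebesgue yields $\mu_{Y}=\ze$). The paper silently ignores this point as well; it is harmless for the first assertion, since the paper's definition of $\mc{H}(Y)$ does not exclude the zero measure, but the deduction of $\Delta_{Y}=\Delta_{X}\up Y$ from $\mu_{Y}$ genuinely requires $\mu_{Y}\neq\ze$, which in turn needs an additional hypothesis such as $Y$ being open or, more generally, non-$\mu$-negligible in $X$.
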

\begin{proof}
Let $e\in\complement Y$ and $s\in Y$ then
$s^{-1}\cdot e, e\cdot s^{-1}\in\complement Y$, 
indeed if by absurdum
$s^{-1}\cdot e, e\cdot s^{-1}\in Y$, then 
$e=s\cdot (s^{-1}\cdot e)=(e\cdot s^{-1})\cdot s\in Y$.
Let $f\in\mc{C}_{c}(Y)$ then
\begin{equation*}
\widetilde{L_{s}^{\ast}(f)}
=
L_{s}^{\ast}(\widetilde{f})
\text{ and }
\widetilde{R_{s}^{\ast}(f)}
=
R_{s}^{\ast}(\widetilde{f}),
\end{equation*}
so
\begin{equation*}
\begin{aligned}
\mu_{Y}(L_{s}^{\ast}(f))
&=
\mu(\widetilde{L_{s}^{\ast}(f)})
\\
&=
\mu(L_{s}^{\ast}(\widetilde{f}))
\\
&=
\mu(\widetilde{f})
=
\mu_{Y}(f).
\end{aligned}
\end{equation*}
Thus $\mu_{Y}\in\mc{H}(Y)$.
Moreover
\begin{equation*}
\begin{aligned}
\Delta_{X}(s)\mu_{Y}\bigl(R_{s^{-1}}^{\ast}(f)\bigr)
&=
\Delta_{X}(s)\mu\bigl(\widetilde{R_{s^{-1}}^{\ast}(f)}\bigr)
\\
&=
\Delta_{X}(s)\mu\bigl(R_{s^{-1}}^{\ast}(\widetilde{f})\bigr)
\\
&=\mu(\widetilde{f})
=\mu_{Y}(f),
\end{aligned}
\end{equation*}
then since $\mu_{Y}\in\mc{H}(Y)$ and
the independence of the modular function 
by the Haar measure (\cite[Lemma $1.61$]{will}), 
we deduce that $\Delta_{Y}=\Delta_{X}\up Y$.
\end{proof}
Lemma \ref{09102032}\eqref{09101945pre} and Lemma \ref{09171301} allow to set the following
\begin{definition}
[Haar systems]
\label{09181000}
Let 
$A$ be a nonempty set and 
$\ps{\upomega}:A\to\ms{E}_{\mc{A}}^{G}(\uptau)$.
We define the set of 
Haar systems associated with $\ps{\upomega}$ and $\mf{A}$, 
denoted by $\mc{H}(\ps{\upomega},\mf{A})$,
the subset of the 
\begin{equation*}
\ps{\upmu}\in
\prod_{(\alpha,l)\in 
A\times H}
\mc{H}(
\ms{S}_{\ms{F}_{\upsigma^{\ast}(l)(\ps{\upomega}_{\alpha})}}^{G}
),
\end{equation*}
such that
for all $(\alpha,l)\in A\times H$
\begin{equation}
\label{10041104}
\ps{\upmu}_{(\alpha,l)}
= 
\ms{ad}(l)
(\ps{\upmu}_{(\alpha,\un)}).
\end{equation}
\end{definition}
\begin{definition}
\label{09191140}
Let $\upnu\in\mc{H}(H)$
and 
$\ps{\upomega}:A\to\ms{E}_{\mc{A}}^{G}(\uptau)$,
define for all 
$(\alpha,l)\in A\times H$
\begin{equation*}
\ps{\upnu}_{(\alpha,l)}
\coloneqq 
\ms{ad}(l)(\upnu_{\ms{S}_{\ms{F}_{\ps{\upomega}_{\alpha}}}^{G}}).
\end{equation*}
$\ps{\upnu}$
will be called
the 
Haar system generated by $\upnu$ and $\ps{\upomega}$.
\end{definition}
$\mc{H}(\ps{\upomega},\mf{A})$ is nonempty, indeed
\begin{proposition}
Let $\upnu\in\mc{H}(H)$
and 
$\ps{\upomega}:A\to\ms{E}_{\mc{A}}^{G}(\uptau)$,
then the Haar system generated by $\upnu$ and $\ps{\upomega}$
belongs to $\mc{H}(\ps{\upomega},\mf{A})$.
\end{proposition}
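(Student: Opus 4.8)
The plan is to verify directly that $\ps{\upnu}$, the Haar system generated by $\upnu$ and $\ps{\upomega}$ in the sense of Def.~\ref{09191140}, satisfies the two defining requirements of a member of $\mc{H}(\ps{\upomega},\mf{A})$ in Def.~\ref{09181000}: first, that for every $(\alpha,l)\in A\times H$ the measure $\ps{\upnu}_{(\alpha,l)}$ is a Haar measure on the correct group $\ms{S}_{\ms{F}_{\upsigma^{\ast}(l)(\ps{\upomega}_{\alpha})}}^{G}$, so that $\ps{\upnu}$ lies in the indicated product; and second, that the covariance relation \eqref{10041104} holds.

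For the first requirement I would proceed in two steps. Since $\ps{\upomega}_{\alpha}\in\ms{E}_{\mc{A}}^{G}(\uptau)$, Lemma~\ref{08191843} shows that $\ms{F}_{\ps{\upomega}_{\alpha}}$ is a closed, hence locally compact, subgroup of $F$, so by \cite[Prp.~$14$, $I.66$]{top1} the group $\ms{S}_{\ms{F}_{\ps{\upomega}_{\alpha}}}^{G}=G\rtimes_{\uprho}\ms{F}_{\ps{\upomega}_{\alpha}}$ is a locally compact subgroup of $H$. Applying Lemma~\ref{10041115} with $X=H$, $\mu=\upnu$ and $Y=\ms{S}_{\ms{F}_{\ps{\upomega}_{\alpha}}}^{G}$ then yields $\upnu_{\ms{S}_{\ms{F}_{\ps{\upomega}_{\alpha}}}^{G}}\in\mc{H}(\ms{S}_{\ms{F}_{\ps{\upomega}_{\alpha}}}^{G})$. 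Next, $\ms{ad}(l)$ is a topological automorphism of $H$ with inverse $\ms{ad}(l^{-1})$, and by Lemma~\ref{09102032}\eqref{09101945pre} its restriction carries $\ms{S}_{\ms{F}_{\ps{\upomega}_{\alpha}}}^{G}$ isomorphically onto $\ms{S}_{\ms{F}_{\upsigma^{\ast}(l)(\ps{\upomega}_{\alpha})}}^{G}$; here I use the first sentence of Cor.~\ref{09131225}, namely $\upsigma^{\ast}(l)(\ps{\upomega}_{\alpha})\in\ms{E}_{\mc{A}}^{G}(\uptau)$, to guarantee that the target group is well-defined. Lemma~\ref{09171301}, applied to this topological isomorphism together with the Haar measure $\upnu_{\ms{S}_{\ms{F}_{\ps{\upomega}_{\alpha}}}^{G}}$, finally gives $\ps{\upnu}_{(\alpha,l)}=\ms{ad}(l)(\upnu_{\ms{S}_{\ms{F}_{\ps{\upomega}_{\alpha}}}^{G}})\in\mc{H}(\ms{S}_{\ms{F}_{\upsigma^{\ast}(l)(\ps{\upomega}_{\alpha})}}^{G})$, which is exactly the required membership.

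For the second requirement, I would simply unwind the definitions. Since $\ms{ad}(\un)=Id_{H}$, the image measure $\ms{ad}(\un)(\upnu_{\ms{S}_{\ms{F}_{\ps{\upomega}_{\alpha}}}^{G}})$ equals $\upnu_{\ms{S}_{\ms{F}_{\ps{\upomega}_{\alpha}}}^{G}}$, so by Def.~\ref{09191140} we have $\ps{\upnu}_{(\alpha,\un)}=\upnu_{\ms{S}_{\ms{F}_{\ps{\upomega}_{\alpha}}}^{G}}$. Substituting this into the definition of $\ps{\upnu}_{(\alpha,l)}$ yields $\ps{\upnu}_{(\alpha,l)}=\ms{ad}(l)(\ps{\upnu}_{(\alpha,\un)})$, which is precisely \eqref{10041104}; this step is purely formal once the definitions are expanded.

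The only genuinely delicate point, and the step I expect to be the main obstacle, is the correct identification of the codomain: one must ensure that pushing $\upnu_{\ms{S}_{\ms{F}_{\ps{\upomega}_{\alpha}}}^{G}}$ forward by $\ms{ad}(l)$ lands in $\mc{H}$ of precisely $\ms{S}_{\ms{F}_{\upsigma^{\ast}(l)(\ps{\upomega}_{\alpha})}}^{G}$ rather than some other conjugate subgroup, which is exactly what Lemma~\ref{09102032}\eqref{09101945pre} supplies and is therefore indispensable. Everything else reduces to combining Lemmas~\ref{10041115} and \ref{09171301} with the observation that $\ms{ad}(l)\up\ms{S}_{\ms{F}_{\ps{\upomega}_{\alpha}}}^{G}$ is a topological group isomorphism onto its image.
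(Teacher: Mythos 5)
Your proof is correct and follows exactly the paper's route: the paper's own proof is the one-line citation of Lemma \ref{10041115}, Lemma \ref{09102032}\eqref{09101945pre} and Lemma \ref{09171301}, which are precisely the three ingredients you assemble. Your write-up is simply a careful expansion of that argument, including the correct observation that the covariance relation \eqref{10041104} is immediate from $\ms{ad}(\un)=Id_{H}$ once the definitions are unwound.
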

\begin{proof}
Since Lemma \ref{10041115}, Lemma \ref{09102032}\eqref{09101945pre} and Lemma \ref{09171301}.
\end{proof}
\begin{definition}
\label{11211142}
Let 
$A$ be a nonempty set,
$\ps{\upomega}:A\to\ms{E}_{\mc{A}}^{G}(\uptau)$
and
$\ps{\upmu}\in\mc{H}(\ps{\upomega},\mf{A})$.
For any $l\in H$ and $\alpha\in A$
set
\begin{equation*}
\begin{aligned}
\ms{B}_{\ps{\upmu}}^{\ps{\upomega},\alpha,l}(\mf{A})
&\coloneqq
\mc{A}
\rtimes_{\upsigma}^{\ps{\upmu}_{(\alpha,l)}}
\ms{S}_{\ms{F}_{\upsigma^{\ast}(l)(\ps{\upomega}_{\alpha})}}^{G}
\\
\ms{B}_{\ps{\upmu}}^{\ps{\upomega},\alpha,l,+}(\mf{A})
&\coloneqq
(\ms{B}_{\ps{\upmu}}^{\ps{\upomega},\alpha,l}(\mf{A}))^{+}.
\end{aligned}
\end{equation*}
We convein to remove $l$ when it equals the identity and 
to remove $\alpha$ when $A$ is a singleton.
Moreover 
whenever it is clear which dynamical system 
is involved, we convein to 
remove $\mf{A}$ and
to denote the map
$\upsigma^{\ast}(l)\circ\ps{\upomega}$
by 
$\ps{\upomega}^{l}$
for any $l\in H$. 
Let
$\pf{H}:A\to Rep_{c}(\mc{A})$ 
be such that 
$\pf{H}_{\alpha}=
\lr{\mf{H}_{\alpha},\uppi_{\alpha}}{\Upomega_{\alpha}}$ 
is a cyclic representation of $\mc{A}$
associated with $\ps{\upomega}_{\alpha}$, for all $\alpha\in A$.
Set for all $\alpha\in A$
\begin{enumerate}
\item
$\mf{R}_{\pf{H},\alpha}^{\ps{\upmu}}(\mf{A})
\coloneqq
\uppi_{\alpha}\rtimes^{\ps{\upmu}_{(\alpha,\un)}}
\ms{U}_{\pf{H}_{\alpha}}^{\upsigma}$
\item
$\pf{R}_{\pf{H},\alpha}^{\ps{\upmu}}(\mf{A})
\coloneqq
(\ms{B}_{\ps{\upmu}}^{\ps{\upomega},\alpha}(\mf{A}),
\mf{R}_{\pf{H},\alpha}^{\ps{\upmu}}(\mf{A}))$
\item
$\tilde{\pf{R}}_{\pf{H},\alpha}^{\ps{\upmu}}(\mf{A})
\coloneqq(\ms{B}_{\ps{\upmu}}^{\ps{\upomega},\alpha,+}(\mf{A}),
\tilde{\mf{R}}_{\pf{H},\alpha}^{\ps{\upmu}}(\mf{A}))$.
\end{enumerate}
If $\mf{A}$ is inner implemented 
by $\ms{v}$,
we can define 
$\pf{H}^{(\ms{v},l)}:I\to Rep_{c}(\mc{A})$,
such that
$\beta\mapsto\pf{H}_{\alpha}^{(\ms{v},l)}
\coloneqq
(\pf{H}_{\alpha})^{(\ms{v},l)}$
and
for all $\alpha\in A$
\begin{enumerate}
\item
$\ms{U}_{\pf{H},\ms{v},\alpha,l}^{\upsigma}
\coloneqq
\ms{U}_{\pf{H}_{\alpha}^{(\ms{v},l)}}^{\upsigma}$
\item
$\mf{R}_{\pf{H},\ms{v},\alpha,l}^{\ps{\upmu}}(\mf{A})
\coloneqq
\uppi_{\alpha}\rtimes^{\ps{\upmu}_{(\alpha,l)}}
\ms{U}_{\pf{H},\ms{v},\alpha,l}^{\upsigma}$
\item
$\pf{R}_{\pf{H},\ms{v},\alpha,l}^{\ps{\upmu}}(\mf{A})
\coloneqq
(\ms{B}_{\ps{\upmu}}^{\ps{\upomega},\alpha,l}(\mf{A}),
\mf{R}_{\pf{H},\ms{v},\alpha,l}^{\ps{\upmu}}(\mf{A}))$
\item
$\tilde{\pf{R}}_{\pf{H},\ms{v},\alpha,l}^{\ps{\upmu}}(\mf{A})
\coloneqq(\ms{B}_{\ps{\upmu}}^{\ps{\upomega},\alpha,l,+}(\mf{A}),
\tilde{\mf{R}}_{\pf{H},\ms{v},\alpha,l}^{\ps{\upmu}}(\mf{A}))$.
\end{enumerate}
We convein 
to remove $\mf{A}$ and the index $\upsigma$ 
whenever 
it is clear the dynamical system involved,
to remove the index $l$ 
whenever it equals the identity,
and the index $\ms{v}$ whenever 
it is uniquely determined by $\upsigma$,
and to remove $\alpha$ if it is a singleton.
\end{definition}
\begin{remark}
\label{10111026r}
Let $\alpha\in A$ and $l\in H$, and $\ps{\upomega}$ and $\ps{\upmu}$ as in Def. \ref{11211142}, then
$\ms{B}_{\ps{\upmu}}^{\ps{\upomega},\alpha,l}$ is well-set since Lemma \ref{08191843},
and $\ms{B}_{\ps{\upmu}}^{\ps{\upomega},\alpha,l,+}$ is the $C^{\ast}-$algebra obtained by adding the unit to
$\ms{B}_{\ps{\upmu}}^{\ps{\upomega},\alpha,l}$.
Moreover
\begin{itemize}
\item
$\mf{R}_{\pf{H},\ms{v},\alpha,l}^{\ps{\upmu}}$
extends uniquely to a $\ast-$representation of 
$\ms{B}_{\ps{\upmu}}^{\ps{\upomega},\alpha,l}$
in $\mf{H}_{\alpha}$;
\label{10121949b}
\item
$\tilde{\mf{R}}_{\pf{H},\ms{v},\alpha,l}^{\ps{\upmu}}$ is 
the $\ast-$representation of 
$\ms{B}_{\ps{\upmu}}^{\ps{\upomega},\alpha,l,+}$
in $\mf{H}_{\alpha}$
induced by 
$\mf{R}_{\pf{H},\ms{v},\alpha,l}^{\ps{\upmu}}$
according to \eqref{10281544}.
\end{itemize}
\end{remark}
Lemma \ref{09102032}\eqref{09101945pre} and \eqref{09191032} permit to set the following
\begin{definition}
\label{10041228}
Define 
\begin{equation*}
\ps{\upsigma}\in
\prod_{
(\uppsi,l)\in\ms{E}_{\mc{A}}^{G}(\uptau)\times H}
Mor_{\ms{set}}
\bigl(
\mc{C}_{c}(\ms{S}_{\ms{F}_{\uppsi}}^{G},\mc{A}),
\mc{C}_{c}(\ms{S}_{\ms{F}_{\upsigma^{\ast}(l)(\uppsi)}}^{G},
\mc{A})
\bigr),
\end{equation*}
such that
\begin{equation*}
\ps{\upsigma}^{(\uppsi,l)}(f)
\coloneqq
\upsigma(l)\circ f\circ\ms{ad}(l^{-1})
\up
\ms{S}_{\ms{F}_{\upsigma^{\ast}(l)(\uppsi)}}^{G},
\end{equation*}
for all
$(\uppsi,l)\in\ms{E}_{\mc{A}}^{G}
(\uptau)\times H$.
\end{definition}
\begin{proposition}
\label{12261620}
$supp(\ps{\upsigma}^{(\uppsi,l)}(f))
=
\ms{ad}(l)(supp(f))$,
for any 
$f\in\mc{C}_{c}(\ms{S}_{\ms{F}_{\uppsi}}^{G},\mc{A})$
and
$(\uppsi,l)\in\ms{E}_{\mc{A}}^{G}(\uptau)\times H$,
in particular
$\ps{\upsigma}^{(\uppsi,l)}$ is well-defined. 
\end{proposition}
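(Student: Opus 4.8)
The plan is to verify two properties of $\ps{\upsigma}^{(\uppsi,l)}(f)$, namely continuity and the identity $supp(\ps{\upsigma}^{(\uppsi,l)}(f))=\ms{ad}(l)(supp(f))$; together these place $\ps{\upsigma}^{(\uppsi,l)}(f)$ in $\mc{C}_{c}(\ms{S}_{\ms{F}_{\upsigma^{\ast}(l)(\uppsi)}}^{G},\mc{A})$ and thereby establish the asserted well-definedness. Throughout I would use that $\upsigma^{\ast}(l)(\uppsi)\in\ms{E}_{\mc{A}}^{G}(\uptau)$ by the first sentence of Cor. \ref{09131225}, so that both groups $\ms{S}_{\ms{F}_{\uppsi}}^{G}$ and $\ms{S}_{\ms{F}_{\upsigma^{\ast}(l)(\uppsi)}}^{G}$ are locally compact by Rmk. \ref{06091956}.

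First I would pin down the domains. By Lemma \ref{09102032}\eqref{09101945pre} the inner automorphism $\ms{ad}(l^{-1})$ of $H$ carries $\ms{S}_{\ms{F}_{\upsigma^{\ast}(l)(\uppsi)}}^{G}$ onto $\ms{S}_{\ms{F}_{\uppsi}}^{G}$; since $\ms{ad}(l^{-1})$ is a homeomorphism of $H$ with inverse $\ms{ad}(l)$, its restriction is an isomorphism of topological groups between these two subgroups. Hence $f\circ(\ms{ad}(l^{-1})\up\ms{S}_{\ms{F}_{\upsigma^{\ast}(l)(\uppsi)}}^{G})$ is a well-defined continuous $\mc{A}$-valued map, and postcomposing with the $\ast$-automorphism $\upsigma(l)$, a linear homeomorphism of $\mc{A}$, keeps the composite continuous, so $\ps{\upsigma}^{(\uppsi,l)}(f)$ is continuous.

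The support identity is the heart of the argument, and I would draw it from \eqref{09191032}, established inside Lemma \ref{09171301}: for an isomorphism $\eta$ of topological groups and a $g\in\mc{C}_{c}$ on the codomain of $\eta$, one has $supp(g\circ\eta)=\eta^{-1}(supp(g))$, a compact set. Applying this with $\eta=\ms{ad}(l^{-1})\up\ms{S}_{\ms{F}_{\upsigma^{\ast}(l)(\uppsi)}}^{G}$ and $g=f$, and using that $\eta^{-1}$ equals $\ms{ad}(l)$ as a set map, yields $supp(f\circ\ms{ad}(l^{-1}))=\ms{ad}(l)(supp(f))$, a compact subset of $\ms{S}_{\ms{F}_{\upsigma^{\ast}(l)(\uppsi)}}^{G}$. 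It then remains only to note that postcomposition by $\upsigma(l)$ leaves the support unchanged: since $\upsigma(l)$ is injective and sends $\ze$ to $\ze$, one has $(\upsigma(l)\circ h)^{-1}(\mc{A}-\{\ze\})=h^{-1}(\mc{A}-\{\ze\})$ for every map $h$, so the two maps share the same support. Combining gives $supp(\ps{\upsigma}^{(\uppsi,l)}(f))=\ms{ad}(l)(supp(f))$, which is compact, and with continuity this is precisely the claimed well-definedness.

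I do not expect a genuine obstacle. The one point demanding care is the bookkeeping of domains, that is, confirming that $\ms{ad}(l^{-1})$ really maps $\ms{S}_{\ms{F}_{\upsigma^{\ast}(l)(\uppsi)}}^{G}$ into $\ms{S}_{\ms{F}_{\uppsi}}^{G}$ (so that the precomposition with $f$ is legitimate) and, correspondingly, that the support transforms by $\ms{ad}(l)$ rather than its inverse; both are supplied directly by Lemma \ref{09102032}\eqref{09101945pre}, after which the remaining steps are routine.
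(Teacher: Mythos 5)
Your proof is correct, and its skeleton matches the paper's: continuity of $\ps{\upsigma}^{(\uppsi,l)}(f)$ as a composition of continuous maps, the domain bookkeeping via Lemma \ref{09102032}\eqref{09101945pre}, and the identification of $supp(f\circ\ms{ad}(l^{-1}))$ with $\ms{ad}(l)(supp(f))$. The one genuine divergence is in how the final equality of supports is obtained. You observe that $\upsigma(l)$ is injective with $\upsigma(l)(\ze)=\ze$, so post-composition with it preserves the non-vanishing set exactly, and the equality $supp(\ps{\upsigma}^{(\uppsi,l)}(f))=\ms{ad}(l)(supp(f))$ drops out in one step. The paper instead uses only linearity of $\upsigma(l)$ to get the inclusion $supp(\ps{\upsigma}^{(\uppsi,l)}(f))\subseteq\ms{ad}(l)(supp(f))$, and then recovers the reverse inclusion by applying that same inclusion to the data $(\uppsi^{l},l^{-1})$ and the function $\ps{\upsigma}^{(\uppsi,l)}(f)$ (legitimate by the first sentence of Cor. \ref{09131225}). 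Your route is shorter and more elementary at that step; the paper's symmetry argument has the mild advantage of never needing injectivity of $\upsigma(l)$, only that it is linear, but since $\upsigma(l)$ is a $\ast$-automorphism this costs nothing here. A second cosmetic difference: you cite \eqref{09191032} for $supp(g\circ\eta)=\eta^{-1}(supp(g))$, while the paper re-derives that identity in place via complements and closures; both are sound and non-circular, as Lemma \ref{09171301} precedes Def. \ref{10041228}.
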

\begin{proof}
Let $(\uppsi,l)\in\ms{E}_{\mc{A}}^{G}(\uptau)\times H$,
and $\phi^{l}$ and $H_{\phi}$ 
denote
$\upsigma^{\ast}(l)(\phi)$ and $\ms{S}_{\ms{F}_{\phi}}^{G}$ 
respectively for all $\phi\in\ms{E}_{\mc{A}}$.
If
$f\in\mc{C}_{c}(H_{\uppsi},\mc{A})$
then
$\ps{\upsigma}^{(\uppsi,l)}(f)$ is continuous since composition
of continuous maps, moreover $\upsigma(l)$ is linear thus
\begin{equation*}
supp(\ps{\upsigma}^{(\uppsi,l)}(f))
\subseteq
supp(f\circ ad(l^{-1})\up H_{\uppsi^{l}}).
\end{equation*}
Clearly
$ad(l)f^{-1}(\ze)=(f\circ ad(l^{-1})\up H_{\uppsi^{l}})^{-1}(\ze)$,
then since $ad(l)$ is injective 
it follows
$ad(l)\complement f^{-1}(\ze)
=
\complement(f\circ ad(l^{-1})\up H_{\uppsi^{l}})^{-1}(\ze)$,
moreover 
$\ov{ad(l)\complement f^{-1}(\ze)}=ad(l)supp(f)$
since $ad(l)$ is a homeomorphism,
thus
$supp(f\circ ad(l^{-1})\up H_{\uppsi^{l}})
=
\ms{ad}(l)supp(f)$.
Hence 
\begin{equation*}
supp(\ps{\upsigma}^{(\uppsi,l)}(f))\subseteq\ms{ad}(l)supp(f),
\end{equation*}
which applied to the position $\uppsi^{l}$, $l^{-1}$ and $\ps{\upsigma}^{(\uppsi,l)}(f)$, possible according to the first sentence 
in Cor. \ref{09131225}, yields
\begin{equation*}
supp\bigl(\ps{\upsigma}^{(\uppsi^{l},l^{-1})}
(\ps{\upsigma}^{(\uppsi,l)}(f))
\bigr)
\subseteq
\ms{ad}(l^{-1})
supp(\ps{\upsigma}^{(\uppsi,l)}(f)),
\end{equation*}
i.e
$\ms{ad}(l)supp(f)
\subseteq
supp(\ps{\upsigma}^{(\uppsi,l)}(f))$,
and the statement follows.
\end{proof}
\begin{lemma}
\label{09171541}
Let $\ps{\upomega}:A\to\ms{E}_{\mc{A}}^{G}(\uptau)$
and $\ps{\upmu}$ be a Haar system associated with
$\ps{\upomega}$ and $\mf{A}$.
Then for any $(\alpha,l)\in A\times H$
and $h\in H$
we have 
\begin{enumerate}
\item
$\ps{\upsigma}^{(\ps{\upomega}_{\alpha},l)}$ is a $\ast-$isomorphism of $\ast-$algebras 
from 
$\mc{C}_{c}^{\ps{\upmu}_{(\alpha,\un)}}
(\ms{S}_{\ms{F}_{\ps{\upomega}_{\alpha}}}^{G},\mc{A})$
onto
$\mc{C}_{c}^{\ps{\upmu}_{(\alpha,l)}}
(\ms{S}_{\ms{F}_{\upsigma^{\ast}(l)(\ps{\upomega}_{\alpha})}}^{G},\mc{A})$
continuous w.r.t. the inductive limit topologies;
\label{10041833a}
\item
$\ps{\upmu}^{h}$ is a Haar system associated with
$\ps{\upomega}^{h}$ and $\mf{A}$,
and
$\ps{\upsigma}^{(\ps{\upomega}_{\alpha}^{l},l^{-1})} 
=
(\ps{\upsigma}^{(\ps{\upomega}_{\alpha},l)})^{-1}$;
\label{10041833c}
\item
$\ps{\upsigma}^{(\ps{\upomega}_{\alpha},l)}$
is an isometry 
of 
$\pc{C}_{c}^{\ps{\upmu}_{(\alpha,\un)}}
(\ms{S}_{\ms{F}_{\ps{\upomega}_{\alpha}}}^{G},\mc{A})$
onto
$\pc{C}_{c}^{\ps{\upmu}_{(\alpha,l)}}
(\ms{S}_{\ms{F}_{\upsigma^{\ast}(l)(\ps{\upomega}_{\alpha})}}^{G},
\mc{A})$.
\label{10041833b}
\end{enumerate}
Here
$\ps{\upmu}^{h}_{(\beta,u)}
\coloneqq
\ps{\upmu}_{(\beta,u\cdot h)}$,
for all $(\beta,u)\in A\times H$
and recall that 
$\ps{\upomega}^{h}\coloneqq\upsigma^{\ast}(h)\circ\ps{\upomega}$.
\end{lemma}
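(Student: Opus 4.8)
Throughout write $K_{\alpha}\coloneqq\ms{S}_{\ms{F}_{\ps{\upomega}_{\alpha}}}^{G}$ and $K_{\alpha}^{l}\coloneqq\ms{S}_{\ms{F}_{\ps{\upomega}_{\alpha}^{l}}}^{G}$, and set $\Phi\coloneqq\ps{\upsigma}^{(\ps{\upomega}_{\alpha},l)}$. The plan is to reduce all three statements to the single fact that $\ms{ad}(l)$ restricts to a topological group isomorphism of $K_{\alpha}$ onto $K_{\alpha}^{l}$ which is intertwined with the twisted structures by the $\ast$-automorphism $\upsigma(l)$. As structural inputs I would first record: by Lemma \ref{09102032}\eqref{09101945pre}, $K_{\alpha}^{l}=\ms{ad}(l)(K_{\alpha})$, so $\ms{ad}(l)\up K_{\alpha}\colon K_{\alpha}\to K_{\alpha}^{l}$ is a topological group isomorphism with inverse $\ms{ad}(l^{-1})\up K_{\alpha}^{l}$; the defining condition \eqref{10041104} of a Haar system gives $\ps{\upmu}_{(\alpha,l)}=\ms{ad}(l)(\ps{\upmu}_{(\alpha,\un)})$, which is a Haar measure on $K_{\alpha}^{l}$ by Lemma \ref{09171301}; and since the modular function is intertwined by any topological group isomorphism (obtainable from Lemma \ref{09171301} as in the proof of Lemma \ref{10041115}), one has $\Delta_{K_{\alpha}^{l}}\circ\ms{ad}(l)=\Delta_{K_{\alpha}}$.

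For statement \eqref{10041833c} I would first note that $\upsigma^{\ast}$ is a left action, so $\upsigma^{\ast}(u)(\ps{\upomega}_{\beta}^{h})=\upsigma^{\ast}(uh)(\ps{\upomega}_{\beta})$ for all $(\beta,u)$; hence $\ps{\upmu}_{(\beta,u)}^{h}=\ps{\upmu}_{(\beta,uh)}\in\mc{H}(\ms{S}_{\ms{F}_{\upsigma^{\ast}(u)(\ps{\upomega}_{\beta}^{h})}}^{G})$, so $\ps{\upmu}^{h}$ has the correct type to be a Haar system associated with $\ps{\upomega}^{h}$. Its condition \eqref{10041104} then follows from $\ms{ad}(u)\circ\ms{ad}(h)=\ms{ad}(uh)$ applied to \eqref{10041104} for $\ps{\upmu}$, since $\ms{ad}(u)(\ps{\upmu}_{(\beta,\un)}^{h})=\ms{ad}(uh)(\ps{\upmu}_{(\beta,\un)})=\ps{\upmu}_{(\beta,uh)}=\ps{\upmu}_{(\beta,u)}^{h}$. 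The inverse identity $\ps{\upsigma}^{(\ps{\upomega}_{\alpha}^{l},l^{-1})}=\Phi^{-1}$ holds because $(\ps{\upomega}_{\alpha}^{l})^{l^{-1}}=\ps{\upomega}_{\alpha}$, again since $\upsigma^{\ast}$ is an action, and because $\upsigma(l^{-1})\circ\upsigma(l)=\un$ together with $\ms{ad}(l^{-1})\circ\ms{ad}(l)=Id$ give $\ps{\upsigma}^{(\ps{\upomega}_{\alpha}^{l},l^{-1})}\circ\Phi=Id$ and, symmetrically, the reverse composition.

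For statement \eqref{10041833a} I would verify that $\Phi$ preserves both the twisted product and the involution of \eqref{09191116}. Each verification is a change of variables $s=\ms{ad}(l)(r)$ in the defining integrals, executed with the image-measure identity \eqref{09191048} for $\ps{\upmu}_{(\alpha,l)}=\ms{ad}(l)(\ps{\upmu}_{(\alpha,\un)})$. For the product one uses that $\ms{ad}(l)$ is a homomorphism, that $\upsigma$ is an action so $\upsigma(\ms{ad}(l)(r))=\upsigma(l)\circ\upsigma(r)\circ\upsigma(l^{-1})$, and that $\upsigma(l)$, being a continuous homomorphism, passes through the integral; the spurious factors $\upsigma(l^{-1})\circ\upsigma(l)$ cancel, leaving $\Phi(f\ast^{\ps{\upmu}_{(\alpha,\un)}}g)=\Phi(f)\ast^{\ps{\upmu}_{(\alpha,l)}}\Phi(g)$. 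For the involution one uses in addition $\Delta_{K_{\alpha}^{l}}\circ\ms{ad}(l)=\Delta_{K_{\alpha}}$ and that $\upsigma(l)$ is a $\ast$-automorphism. Bijectivity is the inverse identity of \eqref{10041833c}; continuity for the inductive-limit topologies is immediate from $\Phi(f)=\upsigma(l)\circ f\circ\ms{ad}(l^{-1})\up K_{\alpha}^{l}$, with $\upsigma(l)$ an isometry and $\ms{ad}(l^{-1})$ a homeomorphism, together with the support identity $supp(\Phi(f))=\ms{ad}(l)(supp(f))$ of Prp. \ref{12261620}; in fact $\Phi$ is then a homeomorphism.

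The main work, and the step I expect to be the obstacle, is the isometry \eqref{10041833b}. The plan is to show that $\Phi$ intertwines the integrated forms \eqref{05171307} and induces a bijection of covariant representations. Given $\lr{\mf{H},\uppi}{\ms{u}}\in Cov(\lr{\mc{A}}{K_{\alpha}^{l},\upsigma})$, I would set $\uppi'\coloneqq\uppi\circ\upsigma(l)$ and $\ms{u}'\coloneqq\ms{u}\circ(\ms{ad}(l)\up K_{\alpha})$; a change of variables in \eqref{05171307}, using $\ps{\upmu}_{(\alpha,l)}=\ms{ad}(l)(\ps{\upmu}_{(\alpha,\un)})$ and \eqref{09191048}, gives $(\uppi\rtimes^{\ps{\upmu}_{(\alpha,l)}}\ms{u})(\Phi(f))=(\uppi'\rtimes^{\ps{\upmu}_{(\alpha,\un)}}\ms{u}')(f)$. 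One checks $\lr{\mf{H},\uppi'}{\ms{u}'}\in Cov(\lr{\mc{A}}{K_{\alpha},\upsigma})$ — covariance reduces, after conjugating by $\upsigma(l)$, to the covariance of $\lr{\mf{H},\uppi}{\ms{u}}$ evaluated at $\ms{ad}(l)(r)\in K_{\alpha}^{l}$, while strong continuity and nondegeneracy are inherited through the isomorphism $\ms{ad}(l)$ and the $\ast$-automorphism $\upsigma(l)$ — and that $\lr{\mf{H},\uppi}{\ms{u}}\mapsto\lr{\mf{H},\uppi'}{\ms{u}'}$ is a bijection onto $Cov(\lr{\mc{A}}{K_{\alpha},\upsigma})$ with inverse $\uppi\mapsto\uppi\circ\upsigma(l^{-1})$, $\ms{u}\mapsto\ms{u}\circ\ms{ad}(l^{-1})$. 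Taking suprema over these matched families in the definition of the universal norms then yields $\|\Phi(f)\|^{\ps{\upmu}_{(\alpha,l)}}=\|f\|^{\ps{\upmu}_{(\alpha,\un)}}$, which is \eqref{10041833b}. The delicate point is confirming that this correspondence exhausts all covariant representations on both sides, so that nothing is lost when passing to the suprema.
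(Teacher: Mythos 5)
Your proof is correct, and for statements \eqref{10041833a} and \eqref{10041833c} it follows essentially the same computations as the paper (change of variables via \eqref{09191048} with $\ps{\upmu}_{(\alpha,l)}=\ms{ad}(l)(\ps{\upmu}_{(\alpha,\un)})$, the modular-function identity from Lemma \ref{10041115}, and the action property of $\upsigma^{\ast}$). Where you genuinely diverge is statement \eqref{10041833b}. The paper disposes of the isometry in one line: it is an abstract consequence of statement \eqref{10041833a} together with \cite[Cor. $2.47$]{will}, i.e.\ the general fact that a $\ast$-homomorphism of the convolution algebras that is continuous for the inductive limit topologies is automatically contractive for the universal norms, applied to $\ps{\upsigma}^{(\ps{\upomega}_{\alpha},l)}$ and to its inverse. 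You instead prove the isometry by hand, matching covariant representations across the two systems and taking suprema. Interestingly, the paper itself records a direct argument of exactly this flavour in Rmk.\ \ref{12261805}, but there the transported representation is $\ms{U}^{l^{-1}}=\ms{ad}(\uppi(\ms{v}(l^{-1})))\circ\ms{U}\circ\ms{ad}(l)$ with $\uppi$ unchanged, which forces the hypothesis that $\mf{A}$ is inner implemented by $\ms{v}$. Your choice $\uppi'=\uppi\circ\upsigma(l)$, $\ms{u}'=\ms{u}\circ\ms{ad}(l)$ is a different (and equally valid) bijection of $Cov$-sets that intertwines the integrated forms — the covariance check reduces, via $l\cdot r=\ms{ad}(l)(r)\cdot l$, to the covariance of $\lr{\mf{H},\uppi}{\ms{u}}$ at $\ms{ad}(l)(r)$ — and it requires no inner implementation. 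So your route buys a self-contained, hypothesis-free direct proof of the isometry at the cost of redoing work that the citation of \cite[Cor. $2.47$]{will} makes unnecessary; the "delicate point" you flag is in fact harmless, since your correspondence has an explicit inverse and therefore exhausts both sets of nondegenerate covariant representations.
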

\begin{proof}
Let $(\alpha,l)\in A\times H$
and
$f,f_{1},f_{2}\in
\mc{C}_{c}(\ms{S}_{\ms{F}_{\ps{\upomega}_{\alpha}}}^{G},\mc{A})$,
then for all 
$s\in
\ms{S}_{\ms{F}_{\upsigma^{\ast}(l)(\ps{\upomega}_{\alpha})}}^{G}$
\begin{equation*}
\begin{aligned}
\ps{\upsigma}^{(\ps{\upomega}_{\alpha},l)}
(f_{1}\ast^{\ps{\upmu}_{(\alpha,\un)}} f_{2})(s)
&=
\upsigma(l)
\bigl(
\int
f_{1}(r)\upsigma(r)\bigl(
f_{2}(r^{-1}\ms{ad}(l^{-1})(s))
\bigr)\,
d\ps{\upmu}_{(\alpha,\un)}(r)
\bigr)
\\
&=
\int
\upsigma(l)(f_{1}(r))
\upsigma(lr)\bigl(
f_{2}(r^{-1}\ms{ad}(l^{-1})(s))
\bigr)\,
d\ps{\upmu}_{(\alpha,\un)}(r)
\\
&=
\int
\upsigma(l)(f_{1}(\ms{ad}(l^{-1})r))
\upsigma(rl)\bigl(
f_{2}(\ms{ad}(l^{-1})(r^{-1}s))
\bigr)\,
d\ps{\upmu}_{(\alpha,l)}(r)
\\
&=
\int
\ps{\upsigma}^{(\ps{\upomega}_{\alpha},l)}(f_{1})(r)
\upsigma(r)
\ps{\upsigma}^{(\ps{\upomega}_{\alpha},l)}(f_{2})(r^{-1}s)
d\ps{\upmu}_{(\alpha,l)}(r)
\\
&=
\bigl(\ps{\upsigma}^{(\ps{\upomega}_{\alpha},l)}(f_{1})
\ast^{\ps{\upmu}_{(\alpha,l)}}
\ps{\upsigma}^{(\ps{\upomega}_{\alpha},l)}(f_{2})
\bigr)(s).
\end{aligned}
\end{equation*}
Here the second equality follows by the continuity of $\upsigma(l)$
in $\|\cdot\|_{\mc{A}}-$topology and by the fact the integration
is w.r.t. the same topology.
The third equality follows by the fact that 
$\ps{\upmu}_{(\alpha,\un)}
=
\ms{ad}(l^{-1})(\ps{\upmu}_{(\alpha,l)})$
and by Rmk. \ref{09171400}.
\begin{equation*}
\begin{aligned}
\ps{\upsigma}^{(\ps{\upomega}_{\alpha},l)}(f)^{\ast}(s)
&=
\Delta(s^{-1})
\upsigma(s)(\ps{\upsigma}^{
(\ps{\upomega}_{\alpha},l)}(f)(s^{-1})^{\ast})
\\
&=
\Delta(s^{-1})
\upsigma(sl)(
f(\ms{ad}(l^{-1})s^{-1})^{\ast})
\\
&=
\upsigma(l)
\Delta(\ms{ad}(l^{-1})s^{-1})
\upsigma(\ms{ad}(l^{-1})s)(
f(\ms{ad}(l^{-1})s^{-1})^{\ast})
\\
&=
\ps{\upsigma}^{(\ps{\upomega}_{\alpha},l)}(f^{\ast})(s).
\end{aligned}
\end{equation*}
Here $\Delta=\Delta_{H}$.
The first and last equality follow
by $\Delta_{Y}=\Delta\up Y$ for 
$Y\in\{
\ms{S}_{\ms{F}_{\ps{\upomega}_{\alpha}}}^{G},  
\ms{S}_{\ms{F}_{\upsigma^{\ast}(l)(\ps{\upomega}_{\alpha})}}^{G}  
\}$,
since Lemma \ref{10041115},
while the third one follows since
$\Delta$ is a group morphism.
Moreover since 
Lemma \ref{09102032}\eqref{09101945pre} 
it is easy to see that
$(\ps{\upsigma}^{(\ps{\upomega}_{\alpha},l)})^{-1}
=
\ps{\upsigma}^{(\upsigma^{\ast}(l)
(\ps{\upomega}_{\alpha}),l^{-1})}$,
and the first part of st.\eqref{10041833a} follows.
In the following we let
$H_{\alpha}=\s{\upomega}{\alpha}{}$
and
$H_{\alpha,l}=\ms{S}_{\ms{F}_{\upsigma^{\ast}(l)(\ps{\upomega}_{\alpha})}}^{G}$.
For any compact subset $K$ of $H_{\alpha}$ 
let
$\mc{C}(H_{\alpha};K,\mc{A})$
be the space of the $f\in\mc{C}_{c}(H_{\alpha},\mc{A})$
such that $supp(f)\subseteq K$ 
and $\imath_{K}$ be the identity map
embedding $\mc{C}(H_{\alpha};K,\mc{A})$ into 
$\mc{C}_{c}(H_{\alpha},\mc{A})$.
If we prove that for any compact $K$ of $H$ the map
$\ps{\upsigma}^{(\ps{\upomega}_{\alpha},l)}\circ\imath_{K}$
is continuous w.r.t. 
the topology of uniform convergence
on $\mc{C}(H;K,\mc{A})$ and the inductive limit topology on
$\mc{C}_{c}(H_{\alpha,l},\mc{A})$,
then the second part of st.\eqref{10041833a} will follows
since \cite[$II.27$ Prp. $5(ii)$]{tvs}.
Next since Prp. \ref{12261620} 
\begin{equation}
\label{12261646}
supp(\ps{\upsigma}^{(\ps{\upomega}_{\alpha},l)}(\imath_{K}(f)))
\subseteq ad(l)(K),\,\forall f\in\mc{C}(H_{\alpha};K,\mc{A})
\end{equation}
hence since \cite[Rmk $1.86$]{will} it remains only to show that
$\ps{\upsigma}^{(\ps{\upomega}_{\alpha},l)}(\imath_{K}(f_{\beta}))$
converges to $\ze$ uniformly on $H_{\alpha,l}$ for any net
$\{f_{\beta}\}_{\beta}$ in $\mc{C}(H_{\alpha};K,\mc{A})$
converging uniformly on $H_{\alpha}$ to $\ze$. But this follows since
\eqref{12261646} and since $\upsigma(l)$ is an isometry.
st.\eqref{10041833c} is easy to show.
st.\eqref{10041833b}
follows since st.\eqref{10041833a} and \cite[Cor. $2.47$]{will}.
\end{proof}
\begin{lemma}
\label{09191416}
Let $\mf{A}$ be inner implemented by $\ms{v}$,
$\uppsi\in\ms{E}_{\mc{A}}^{G}(\uptau)$ and
$\lr{\mf{H},\uppi}{\ms{U}}$ be a covariant representation
of 
$\lr{\mc{A}}{\ms{S}_{\ms{F}_{\uppsi}}^{G},
\upsigma_{\ms{F}_{\uppsi}}}$.
Then
$\lr{\mf{H},\uppi}{\ms{U}^{l}}$
is a covariant representation 
of $\lr{\mc{A}}{\ms{S}_{\ms{F}_{\upsigma^{\ast}(l)(\uppsi)}}^{G},
\upsigma_{\ms{F}_{\upsigma^{\ast}(l)(\uppsi)}}}$,
where 
$\ms{U}^{l}\doteq
\ms{ad}(\uppi(\ms{v}(l)))
\circ
\ms{U}
\circ
\ms{ad}(l^{-1})
\up
\ms{S}_{\ms{F}_{\upsigma^{\ast}(l)(\uppsi)}}^{G}$.
\end{lemma}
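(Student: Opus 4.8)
The plan is to verify directly the three defining clauses of a covariant representation for $\lr{\mf{H},\uppi}{\ms{U}^{l}}$ relative to the system $\lr{\mc{A}}{\ms{S}_{\ms{F}_{\upsigma^{\ast}(l)(\uppsi)}}^{G},\upsigma_{\ms{F}_{\upsigma^{\ast}(l)(\uppsi)}}}$, the $\ast-$representation $\uppi$ of $\mc{A}$ being already in hand and nondegenerate. First I would settle well-definedness of $\ms{U}^{l}$. By the first sentence of Cor. \ref{09131225} the state $\upsigma^{\ast}(l)(\uppsi)$ again lies in $\ms{E}_{\mc{A}}^{G}(\uptau)$, so the target system is of the admissible form, and by Lemma \ref{09102032}\eqref{09101945pre} we have $\ms{S}_{\ms{F}_{\upsigma^{\ast}(l)(\uppsi)}}^{G}=\ms{ad}(l)(\ms{S}_{\ms{F}_{\uppsi}}^{G})$. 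Hence $\ms{ad}(l^{-1})$ carries $\ms{S}_{\ms{F}_{\upsigma^{\ast}(l)(\uppsi)}}^{G}$ onto the domain $\ms{S}_{\ms{F}_{\uppsi}}^{G}$ of $\ms{U}$; applying $\ms{U}$ and then conjugating by the unitary $\uppi(\ms{v}(l))$ keeps the values in $\mc{U}(\mf{H})$, so $\ms{U}^{l}$ is a well-defined map into the unitaries.

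Next I would check that $\ms{U}^{l}$ is a strongly continuous unitary representation. The restriction of $\ms{ad}(l^{-1})$ is a topological group isomorphism of $\ms{S}_{\ms{F}_{\upsigma^{\ast}(l)(\uppsi)}}^{G}$ onto $\ms{S}_{\ms{F}_{\uppsi}}^{G}$, $\ms{U}$ is a group homomorphism, and $\ms{ad}(\uppi(\ms{v}(l)))$ is an inner automorphism of $\mc{U}(\mf{H})$; since $\ms{U}^{l}=\ms{ad}(\uppi(\ms{v}(l)))\circ\ms{U}\circ\ms{ad}(l^{-1})$ is a composite of three homomorphisms it is itself a homomorphism. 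Strong continuity follows from the continuity of conjugation in $H$, the strong continuity of $\ms{U}$, and the fact that conjugation by the fixed unitary $\uppi(\ms{v}(l))$ is strongly continuous.

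The essential step, and the only place where inner implementation enters, is the covariance identity; it is the abstract counterpart of the computation carried out in the proof of Cor. \ref{09131225} and recorded in Rmk. \ref{09151937}, but now for an arbitrary covariant $\ms{U}$ rather than an induced one. Fix $s\in\ms{S}_{\ms{F}_{\upsigma^{\ast}(l)(\uppsi)}}^{G}$, set $h\coloneqq\ms{ad}(l^{-1})s\in\ms{S}_{\ms{F}_{\uppsi}}^{G}$ so that $s=\ms{ad}(l)h$, and let $a\in\mc{A}$. Unwinding $\ms{U}^{l}(s)\,\uppi(a)\,\ms{U}^{l}(s)^{-1}=\uppi(\ms{v}(l))\,\ms{U}(h)\,\uppi(\ms{v}(l))^{-1}\uppi(a)\uppi(\ms{v}(l))\,\ms{U}(h)^{-1}\,\uppi(\ms{v}(l))^{-1}$, I would first use $\upsigma=\ms{ad}\circ\ms{v}$ to rewrite the inner block as $\uppi(\ms{v}(l))^{-1}\uppi(a)\uppi(\ms{v}(l))=\uppi(\upsigma(l^{-1})a)$, then apply the covariance of $\ms{U}$ over $\ms{S}_{\ms{F}_{\uppsi}}^{G}$ to get $\ms{U}(h)\,\uppi(\upsigma(l^{-1})a)\,\ms{U}(h)^{-1}=\uppi(\upsigma(h)\upsigma(l^{-1})a)=\uppi(\upsigma(hl^{-1})a)$, and finally use $\upsigma=\ms{ad}\circ\ms{v}$ once more to obtain $\uppi(\ms{v}(l))\,\uppi(\upsigma(hl^{-1})a)\,\uppi(\ms{v}(l))^{-1}=\uppi(\upsigma(l)\upsigma(hl^{-1})a)=\uppi(\upsigma(lhl^{-1})a)=\uppi(\upsigma(s)a)$, which is exactly $\ms{ad}(\ms{U}^{l}(s))\circ\uppi$ agreeing with $\uppi\circ\upsigma_{\ms{F}_{\upsigma^{\ast}(l)(\uppsi)}}(s)$.

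I expect this covariance computation to be the only real step; everything else is formal. The single point requiring care is the bookkeeping of the three nested conjugations, ensuring that each intermediate element sits where it is used — namely $h\in\ms{S}_{\ms{F}_{\uppsi}}^{G}$ so that the covariance of $\ms{U}$ legitimately applies, and $s=lhl^{-1}\in\ms{S}_{\ms{F}_{\upsigma^{\ast}(l)(\uppsi)}}^{G}$ so that $\upsigma_{\ms{F}_{\upsigma^{\ast}(l)(\uppsi)}}(s)=\upsigma(s)$ — both of which are guaranteed by Lemma \ref{09102032}\eqref{09101945pre}. No analytic difficulty arises: the content is entirely the algebraic rearrangement together with the relation $\upsigma=\ms{ad}\circ\ms{v}$ furnished by the inner-implementation hypothesis.
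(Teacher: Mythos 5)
Your proposal is correct and follows essentially the same route as the paper: well-definedness via Lemma \ref{09102032}\eqref{09101945pre}, the group-homomorphism and strong-continuity observations, and then the same three-stage conjugation computation using $\upsigma=\ms{ad}\circ\ms{v}$ twice around the covariance of $\ms{U}$ on $\ms{S}_{\ms{F}_{\uppsi}}^{G}$. The only cosmetic difference is that you parametrize by $s=\ms{ad}(l)h$ and cite Cor. \ref{09131225} for $\upsigma^{\ast}(l)(\uppsi)\in\ms{E}_{\mc{A}}^{G}(\uptau)$, whereas the paper works directly with $\ms{ad}(l^{-1})h$ and leaves that point implicit.
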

\begin{proof}
Since Lemma \ref{09102032}\eqref{09101945pre} $\ms{U}^{l}$ is well-defined, moreover it is a strongly continuous
unitary group action since it is so $\ms{U}$, since $\ms{ad}(V)$ is 
strongly continuous for any unitary operator $V$ on $\mf{H}$,
and since $\ms{ad}(\cdot)$ on $H$ is an isomorphism of topological groups. 
Let $l\in H$,
$h\in\ms{S}_{\ms{F}_{\upsigma^{\ast}(l)(\uppsi)}}^{G}$
and $a\in\mc{A}$
then 
\begin{equation*}
\begin{aligned}
\ms{U}^{l}(h)\uppi(a)\ms{U}^{l}(h^{-1})
&=
\\
\uppi(\ms{v}(l))
\ms{U}(\ms{ad}(l^{-1})h)
\uppi(\ms{v}(l^{-1}))
\uppi(a)
\uppi(\ms{v}(l))
\ms{U}(\ms{ad}(l^{-1})h^{-1})
\uppi(\ms{v}(l^{-1}))
&=
\\
\uppi(\ms{v}(l))
\ms{U}(\ms{ad}(l^{-1})h)
\uppi(\upsigma(l^{-1})a)
\ms{U}(\ms{ad}(l^{-1})h^{-1})
\uppi(\ms{v}(l^{-1}))
&=
\\
\uppi(\ms{v}(l))
\uppi(\upsigma(\ms{ad}(l^{-1})(h)l^{-1})a)
\uppi(\ms{v}(l^{-1}))
&=
\\
\uppi(\upsigma(l\ms{ad}(l^{-1})(h)l^{-1})a)
&=
\uppi(\upsigma(h)a).
\end{aligned}
\end{equation*}
\end{proof}
\begin{remark}
\label{12261805}
Let us prove directly
Lemma \ref{09171541}\eqref{10041833b}
in case $\mf{A}$ is inner implemented by
$\ms{v}$.
Let $\lr{\mf{H},\uppi}{\ms{U}}$ be a nondegenerate
covariant representation of 
$\lr{\mc{A},
\ms{S}_{\ms{F}_{\upsigma^{\ast}(l)(\ps{\upomega}_{\alpha})}}^{G}}
{\upsigma_{\ms{F}_{\upsigma^{\ast}(l)(\ps{\upomega}_{\alpha})}}}$
set
\begin{equation*}
\ms{U}^{l^{-1}}
\doteq
\ms{ad}(\uppi(\ms{v}(l^{-1})))\circ\ms{U}\circ\ms{ad}(l)
\up\ms{S}_{\ms{F}_{\ps{\upomega}_{\alpha}}}^{G},
\end{equation*}
then for any 
$f\in
\mc{C}_{c}(\ms{S}_{\ms{F}_{\ps{\upomega}_{\alpha}}}^{G},\mc{A})$
\begin{equation}
\label{10071234}
\begin{aligned}
\bigl\|
(\uppi\rtimes^{\ps{\upmu}_{\alpha,l}}\ms{U})
(\ps{\upsigma}^{(\ps{\upomega}_{\alpha},l)}(f))
\bigr\|
&=
\\
\bigl\|
\int\uppi
(\ps{\upsigma}^{(\ps{\upomega}_{\alpha},l)}(f)(s))
\ms{U}(s)\,
d\ps{\upmu}_{(\alpha,l)}(s)
\bigr\|
&=
\\
\bigl\|
\int\uppi\bigl(
(\upsigma(l)\circ f\circ\ms{ad}(l^{-1}))(s)
\bigr)
\ms{U}(s)\,
d\ps{\upmu}_{(\alpha,l)}(s)
\bigr\|
&=
\\
\bigl\|
\int\uppi\bigl(
(\upsigma(l)\circ f)(s)
\bigr)
\ms{U}(\ms{ad}(l)s)\,
d\ps{\upmu}_{(\alpha,\un)}(s)
\bigr\|
&=
\\
\bigl\|
\int
\uppi(\ms{v}(l))
\uppi(f(s))
\uppi(\ms{v}(l^{-1}))
\ms{U}(\ms{ad}(l)s)
\uppi(\ms{v}(l))
\uppi(\ms{v}(l^{-1}))
\,
d\ps{\upmu}_{(\alpha,\un)}(s)
\bigr\|
&=
\\
\bigl\|
\int
\ms{ad}(\uppi(\ms{v}(l)))
\bigl(
\uppi(f(s))
\ms{U}^{l^{-1}}(s)
\bigr)
\,
d\ps{\upmu}_{(\alpha,\un)}(s)
\bigr\|
&=
\\
\bigl\|
\ms{ad}(\uppi(\ms{v}(l)))
\bigl(
\int
\uppi(f(s))
\ms{U}^{l^{-1}}(s)
\,
d\ps{\upmu}_{(\alpha,\un)}(s)
\bigr)
\bigr\|
&=
\\
\bigl\|
(\uppi\rtimes^{\ps{\upmu}_{\alpha,\un}}
\ms{U}^{l^{-1}})(f)
\bigr\|
&\leq
\|f\|^{\ps{\upmu}_{\alpha,\un}}.
\end{aligned}
\end{equation}
Here the third equality follows by Rmk. \ref{09171400},
the sixth one since $\ms{ad}(V)\in\mc{L}(\mc{L}_{s}(\mf{H}))$,
for any $V\in\mc{U}(\mf{H})$ and the integration is 
w.r.t. the strong operator topology,
the seventh one follows 
by the fact that 
$\ms{ad}(V)$ is an isometry
since
$\ms{ad}(V)\in Aut_{\ms{CA}^{\ast}}(\mc{L}(\mf{H}))$
and the well-known fact that 
any $\ast-$homomorphism between $C^{\ast}-$algebras
is automatically continuous with norm less or equal 
to $1$.
Finally
the inequality follows since an application of
Lemma \ref{09191416} 
stating that 
$\lr{\mf{H},\uppi}{\ms{U}^{l^{-1}}}$ 
is a
nondegenerate
covariant representation of 
$\lr{\mc{A},\ms{S}_{\ms{F}_{\ps{\upomega}_{\alpha}}}^{G}}
{\upsigma_{\ms{F}_{\ps{\upomega}_{\alpha}}}}$.
Therefore
\begin{equation}
\label{10071148}
\|\ps{\upsigma}^{(\ps{\upomega}_{\alpha},l)}(f)\|
^{\ps{\upmu}_{(\alpha,l)}}
\leq
\|f\|^{\ps{\upmu}_{(\alpha,\un)}}.
\end{equation}
Let $h,s\in H$ and 
$\mf{g}\in
\mc{C}_{c}(
\ms{S}_{\ms{F}_{\ps{\upomega}_{\alpha}^{h}}}^{G},\mc{A})$,
since Lemma \ref{09171541}\eqref{10041833c} 
we can apply \eqref{10071148}
to $\ps{\upomega}^{h}$ and $\ps{\upmu}^{h}$ 
to obtain
\begin{equation*}
\|\ps{\upsigma}^{(\ps{\upomega}_{\alpha}^{h},s)}(\mf{g})\|
^{\ps{\upmu}_{(\alpha,s)}^{h}}
\leq
\|\mf{g}\|^{\ps{\upmu}_{(\alpha,\un)}^{h}}.
\end{equation*}
Next 
$\ps{\upmu}_{(\alpha,l^{-1})}^{l}=\ps{\upmu}_{(\alpha,\un)}$
and
$\ps{\upmu}_{(\alpha,\un)}^{l}=\ps{\upmu}_{(\alpha,l)}$
thus for any 
$g\in
\mc{C}_{c}(
\ms{S}_{\ms{F}_{\upsigma^{\ast}(l)
(\ps{\upomega}_{\alpha})}}^{G},\mc{A})$
\begin{equation*}
\|\ps{\upsigma}^{(\ps{\upomega}_{\alpha}^{l},l^{-1})}(g)\|
^{\ps{\upmu}_{(\alpha,\un)}}
\leq
\|g\|^{\ps{\upmu}_{(\alpha,l)}},
\end{equation*}
moreover $\ps{\upsigma}^{(\ps{\upomega}_{\alpha}^{l},l^{-1})}=(\ps{\upsigma}^{(\ps{\upomega}_{\alpha},l)})^{-1}$
since Lemma \ref{09171541}\eqref{10041833c}, therefore
\begin{equation}
\label{10071148b}
\|f\|^{\ps{\upmu}_{(\alpha,\un)}}\leq
\|\ps{\upsigma}^{(\ps{\upomega}_{\alpha},l)}(f)\|
^{\ps{\upmu}_{(\alpha,l)}}.
\end{equation}
Lemma \ref{09171541}\eqref{10041833b} follows by \eqref{10071148}\,\&\,\eqref{10071148b}.
\end{remark}
\begin{remark}
Let 
$\lr{\mf{H},\uppi}{\ms{V}^{l^{-1}}}$ 
be a
nondegenerate
covariant representation of 
$\lr{\mc{A},\ms{S}_{\ms{F}_{\ps{\upomega}_{\alpha}}}^{G}}
{\upsigma_{\ms{F}_{\ps{\upomega}_{\alpha}}}}$,
and
set
$\ms{V}
\coloneqq
\ms{ad}(\uppi(\ms{v}(l)))\circ\ms{V}^{l^{-1}}
\circ\ms{ad}(l^{-1})
\up\ms{S}_{\upsigma^{\ast}(l)(
\ms{F}_{\ps{\upomega}_{\alpha}})}^{G}$,
then
since Lemma \ref{09191416} 
$\lr{\mf{H},\uppi}{\ms{V}}$ is a nondegenerate
covariant representation of 
$\lr{\mc{A},
\ms{S}_{\ms{F}_{\upsigma^{\ast}(l)(\ps{\upomega}_{\alpha})}}^{G}}
{\upsigma_{\ms{F}_{\upsigma^{\ast}(l)(\ps{\upomega}_{\alpha})}}}$.
Hence we can reload the chain of equalities \eqref{10071234}
in the opposite sense by replacing $\ms{U}$ by $\ms{V}$ and 
$\ms{U}^{l^{-1}}$ by $\ms{V}^{l^{-1}}$, for obtaining
\eqref{10071148b}.
\end{remark}
\begin{corollary}
\label{09191329}
Let 
$\ps{\upomega}:A\to\ms{E}_{\mc{A}}^{G}(\uptau)$
and $\ps{\upmu}$ be a Haar system associated with
$\ps{\upomega}$ and $\mf{A}$.
Then for any 
$(\alpha,l)\in A\times H$
there exists a unique extension 
by continuity
of 
$\ps{\upsigma}^{(\alpha,l)}$
to $\ms{B}_{\ps{\upmu}}^{\ps{\upomega},\alpha}$,
denoted by the same symbol, such that
$\ps{\upsigma}^{(\ps{\upomega}_{\alpha},l)}
\in
Inv_{\ms{CA}^{\ast}}
(\ms{B}_{\ps{\upmu}}^{\ps{\upomega},\alpha},
\ms{B}_{\ps{\upmu}}^{\ps{\upomega},\alpha,l})$.
\end{corollary}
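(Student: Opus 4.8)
The plan is to deduce the statement from Lemma~\ref{09171541} together with the universal property of the $C^{\ast}-$completion. Recall that by Def.~\ref{11211142} the algebras $\ms{B}_{\ps{\upmu}}^{\ps{\upomega},\alpha}$ and $\ms{B}_{\ps{\upmu}}^{\ps{\upomega},\alpha,l}$ are, by the very construction of the crossed product, the $C^{\ast}-$completions of the normed $\ast-$algebras $\pc{C}_{c}^{\ps{\upmu}_{(\alpha,\un)}}(\ms{S}_{\ms{F}_{\ps{\upomega}_{\alpha}}}^{G},\mc{A})$ and $\pc{C}_{c}^{\ps{\upmu}_{(\alpha,l)}}(\ms{S}_{\ms{F}_{\upsigma^{\ast}(l)(\ps{\upomega}_{\alpha})}}^{G},\mc{A})$ respectively, each embedded as a norm-dense $\ast-$subalgebra of its completion.

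First I would invoke Lemma~\ref{09171541}\eqref{10041833a}, which already provides that $\ps{\upsigma}^{(\ps{\upomega}_{\alpha},l)}$ is a $\ast-$isomorphism of $\ast-$algebras between these two dense subalgebras, and Lemma~\ref{09171541}\eqref{10041833b}, which guarantees that it is isometric for the universal norms $\|\cdot\|^{\ps{\upmu}_{(\alpha,\un)}}$ and $\|\cdot\|^{\ps{\upmu}_{(\alpha,l)}}$. Being an isometry into a complete space, $\ps{\upsigma}^{(\ps{\upomega}_{\alpha},l)}$ admits a unique extension by uniform continuity to a map, denoted again $\ps{\upsigma}^{(\ps{\upomega}_{\alpha},l)}$, on all of $\ms{B}_{\ps{\upmu}}^{\ps{\upomega},\alpha}$; this extension is again an isometry since the identity $\|\ps{\upsigma}^{(\ps{\upomega}_{\alpha},l)}(x)\|=\|x\|$ passes to norm limits, and uniqueness is forced by the density of the subalgebra.

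It then remains to check that the extension is a $\ast-$isomorphism onto $\ms{B}_{\ps{\upmu}}^{\ps{\upomega},\alpha,l}$. The multiplicativity, involution-invariance and $\C-$linearity hold on the dense subalgebra and extend to the whole of $\ms{B}_{\ps{\upmu}}^{\ps{\upomega},\alpha}$ by the joint continuity of product, involution and the linear operations on a $C^{\ast}-$algebra; injectivity is immediate from the isometry property, while surjectivity follows because the image is complete, hence closed, and already contains the dense subalgebra $\pc{C}_{c}^{\ps{\upmu}_{(\alpha,l)}}(\ms{S}_{\ms{F}_{\upsigma^{\ast}(l)(\ps{\upomega}_{\alpha})}}^{G},\mc{A})$. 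Hence $\ps{\upsigma}^{(\ps{\upomega}_{\alpha},l)}\in Isom^{\ast}(\ms{B}_{\ps{\upmu}}^{\ps{\upomega},\alpha},\ms{B}_{\ps{\upmu}}^{\ps{\upomega},\alpha,l})$. Since the two cited parts of Lemma~\ref{09171541} carry all the analytic weight, there is no substantial obstacle here; the only point requiring a little care is confirming that the algebraic relations survive the passage to the completion, which is exactly where the continuity of the $C^{\ast}-$operations is used.
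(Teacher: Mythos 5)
Your proposal is correct and follows essentially the same route as the paper: both extend the map by uniform continuity using the isometry of Lemma \ref{09171541}\eqref{10041833b} and then transport the $\ast-$algebraic identities from the dense subalgebra via the norm continuity of the operations, relying on Lemma \ref{09171541}\eqref{10041833a}. The only divergence is in how surjectivity is obtained: you argue that an isometric extension has complete, hence closed, image containing the dense subalgebra $\pc{C}_{c}^{\ps{\upmu}_{(\alpha,l)}}(\ms{S}_{\ms{F}_{\upsigma^{\ast}(l)(\ps{\upomega}_{\alpha})}}^{G},\mc{A})$, whereas the paper instead extends the inverse map $\ps{\upsigma}^{(\ps{\upomega}_{\alpha}^{l},l^{-1})}$ by the same mechanism and uses Lemma \ref{09171541}\eqref{10041833c} to see that the two extensions are mutually inverse — both arguments are valid and of comparable weight.
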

\begin{proof}
Since
Lemma \ref{09171541}\eqref{10041833b} 
there exists a unique extension by continuity 
of
$\ps{\upsigma}^{(\ps{\upomega}_{\alpha},l)}$
from
$\ms{B}_{\ps{\upmu}}^{\ps{\upomega},\alpha}$
to
$\ms{B}_{\ps{\upmu}}^{\ps{\upomega},\alpha,l}$,
while since the first sentence of 
Lemma \ref{09171541}\eqref{10041833c} 
and 
Lemma \ref{09171541}\eqref{10041833b} 
applied to $\ps{\upomega}^{l}$ and to $\ps{\upmu}^{l}$
there exists a unique extension by continuity 
of 
$\ps{\upsigma}^{(\ps{\upomega}_{\alpha}^{l},l^{-1})}$
from 
$\ms{B}_{\ps{\upmu}}^{\ps{\upomega},\alpha,l}$
to
$\ms{B}_{\ps{\upmu}}^{\ps{\upomega},\alpha}$.
Since the second sentence of 
Lemma \ref{09171541}\eqref{10041833c} 
such two extensions are one the inverse of the other.
Finally the extension 
$\ps{\upsigma}^{(\ps{\upomega}_{\alpha},l)}$
is a $\ast-$homomorphism since 
Lemma \ref{09171541}\eqref{10041833a} 
and the norm continuity of the product and 
involution on 
$\ms{B}_{\ps{\upmu}}^{\ps{\upomega},\alpha}$.
\end{proof}
\begin{corollary}
\label{01031207}
Let $l,h\in H$ and $\alpha\in A$, then 
$\ps{\upsigma}^{(\upsigma^{\ast}(l)(\ps{\upomega}_{\alpha}),h)}
\circ
\ps{\upsigma}^{(\ps{\upomega}_{\alpha},l)}
=
\ps{\upsigma}^{(\ps{\upomega}_{\alpha},h\cdot l)}$.
\end{corollary}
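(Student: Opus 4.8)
The plan is to prove the identity first on the dense $\ast$-subalgebras $\mc{C}_{c}(\cdots,\mc{A})$, where all three maps are given by the explicit formula of Def. \ref{10041228}, and then to transport it to the completions by uniqueness of the continuous extension. Throughout write $\uppsi\coloneqq\ps{\upomega}_{\alpha}$. Two bookkeeping facts must be secured before any computation. First, since $\upsigma$ is a group action and $\upsigma^{\ast}(l)(\upomega)=\upomega\circ\upsigma(l^{-1})$, the assignment $\upsigma^{\ast}$ is a left action of $H$, so $\upsigma^{\ast}(h)\circ\upsigma^{\ast}(l)=\upsigma^{\ast}(h\cdot l)$; consequently $\ms{F}_{\upsigma^{\ast}(h)(\upsigma^{\ast}(l)(\uppsi))}=\ms{F}_{\upsigma^{\ast}(h\cdot l)(\uppsi)}$ and both sides of the asserted equality share the codomain $\mc{C}_{c}(\ms{S}_{\ms{F}_{\upsigma^{\ast}(h\cdot l)(\uppsi)}}^{G},\mc{A})$. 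Second, by Lemma \ref{09102032}\eqref{09101945pre} applied to $\upsigma^{\ast}(l)(\uppsi)$ one has $\ms{S}_{\ms{F}_{\upsigma^{\ast}(h\cdot l)(\uppsi)}}^{G}=\ms{ad}(h)(\ms{S}_{\ms{F}_{\upsigma^{\ast}(l)(\uppsi)}}^{G})$, so for $s$ in the former group $\ms{ad}(h^{-1})(s)$ lies in the domain of $\ps{\upsigma}^{(\uppsi,l)}(f)$ and the nested evaluation below is well-posed.

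Next I would simply unfold Def. \ref{10041228}. For $f\in\mc{C}_{c}(\ms{S}_{\ms{F}_{\uppsi}}^{G},\mc{A})$ put $g\coloneqq\ps{\upsigma}^{(\uppsi,l)}(f)$ and evaluate at $s\in\ms{S}_{\ms{F}_{\upsigma^{\ast}(h\cdot l)(\uppsi)}}^{G}$:
\begin{equation*}
\ps{\upsigma}^{(\upsigma^{\ast}(l)(\uppsi),h)}(g)(s)
=\upsigma(h)\bigl(g(\ms{ad}(h^{-1})(s))\bigr)
=\upsigma(h)\upsigma(l)\bigl(f(\ms{ad}(l^{-1})\ms{ad}(h^{-1})(s))\bigr).
\end{equation*}
Applying the two group-homomorphism identities $\ms{ad}(l^{-1})\circ\ms{ad}(h^{-1})=\ms{ad}((h\cdot l)^{-1})$ and $\upsigma(h)\circ\upsigma(l)=\upsigma(h\cdot l)$ turns the right-hand side into $\upsigma(h\cdot l)\bigl(f(\ms{ad}((h\cdot l)^{-1})(s))\bigr)$, which is precisely $\ps{\upsigma}^{(\uppsi,h\cdot l)}(f)(s)$. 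Hence the two maps coincide on $\mc{C}_{c}(\ms{S}_{\ms{F}_{\uppsi}}^{G},\mc{A})$.

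Finally I would pass to the completions. By Cor. \ref{09191329} each of $\ps{\upsigma}^{(\uppsi,l)}$, $\ps{\upsigma}^{(\upsigma^{\ast}(l)(\uppsi),h)}$ and $\ps{\upsigma}^{(\uppsi,h\cdot l)}$ extends uniquely by continuity to an isometric $\ast$-isomorphism of the corresponding $\ms{B}$-algebras; since the composite $\ps{\upsigma}^{(\upsigma^{\ast}(l)(\uppsi),h)}\circ\ps{\upsigma}^{(\uppsi,l)}$ is continuous and agrees with $\ps{\upsigma}^{(\uppsi,h\cdot l)}$ on the dense subalgebra, the two coincide on $\ms{B}_{\ps{\upmu}}^{\ps{\upomega},\alpha}$ by uniqueness of the continuous extension. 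The computation itself is entirely routine; the only point that demands care, and the step I expect to be the real obstacle, is the bookkeeping of the three distinct integration groups $\ms{S}_{\ms{F}_{\uppsi}}^{G}$, $\ms{S}_{\ms{F}_{\upsigma^{\ast}(l)(\uppsi)}}^{G}$ and $\ms{S}_{\ms{F}_{\upsigma^{\ast}(h\cdot l)(\uppsi)}}^{G}$ together with their $\ms{ad}$-conjugacy, which is exactly what the two preliminary facts settle so that the nested restrictions compose consistently.
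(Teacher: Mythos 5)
Your proposal is correct and follows essentially the same route as the paper: the paper's proof simply asserts that the equality holds on $\mc{C}_{c}(\ms{S}_{\ms{F}_{\ps{\upomega}_{\alpha}}}^{G},\mc{A})$ and then invokes Cor. \ref{09191329} to pass to the completions, which is exactly your two-step argument with the dense-subalgebra computation written out explicitly. The preliminary bookkeeping you include (that $\upsigma^{\ast}$ is a left action and the $\ms{ad}$-conjugacy of the three integration groups via Lemma \ref{09102032}) is a welcome elaboration of details the paper leaves implicit.
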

\begin{proof}
The equality holds if restricted to
$\mc{C}_{c}(\s{\upomega}{\alpha}{},\mc{A})$
hence the statement follows by
Cor. \ref{09191329}.
\end{proof}
We are now in the position to state the second main result of 
this part namely 
\begin{theorem}
[Equivariance of representations]
\label{09191747}
Let $\mf{A}$ be inner implemented by 
$\ms{v}$,
$\ps{\upomega}:A\to\ms{E}_{\mc{A}}^{G}(\uptau)$,
$\ps{\upmu}$ be a Haar system associated with $\ps{\upomega}$ and $\mf{A}$, while $\pf{H}:A\to Rep_{c}(\mc{A})$ such that 
$\pf{H}_{\alpha}
\coloneqq
\lr{\mf{H}_{\alpha},\uppi_{\alpha}}{\Upomega_{\alpha}}$ 
is a cyclic representation of $\mc{A}$ associated with
$\ps{\upomega}_{\alpha}$, for all $\alpha\in A$.
Then for all $\alpha\in A$ and $l\in H$
the following diagram is commutative
\begin{equation*}
\xymatrix{
\ms{B}_{\ps{\upmu}}^{\ps{\upomega},\alpha,l}
\ar[rr]^{\mf{R}_{\pf{H},\ms{v},\alpha,l}^{\ps{\upmu}}}
& &
\mc{L}(\mf{H}_{\alpha})
\\
& &
\\
\ms{B}_{\ps{\upmu}}^{\ps{\upomega},\alpha}
\ar[uu]^{\ps{\upsigma}^{(\ps{\upomega}_{\alpha},l)}}
\ar[rr]_{\mf{R}_{\pf{H},\alpha}^{\ps{\upmu}}}
& &
\mc{L}(\mf{H}_{\alpha})
\ar[uu]_{\ms{ad}(\uppi_{\alpha}(\ms{v}(l)))}}
\end{equation*}
\end{theorem}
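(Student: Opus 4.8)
The plan is to prove the stated operator identity
$\mf{R}_{\pf{H},\ms{v},\alpha,l}^{\ps{\upmu}}\circ\ps{\upsigma}^{(\ps{\upomega}_{\alpha},l)}
=
\ms{ad}(\uppi_{\alpha}(\ms{v}(l)))\circ\mf{R}_{\pf{H},\alpha}^{\ps{\upmu}}$
first on the dense $\ast-$subalgebra
$\pc{C}_{c}^{\ps{\upmu}_{(\alpha,\un)}}(\ms{S}_{\ms{F}_{\ps{\upomega}_{\alpha}}}^{G},\mc{A})$
of $\ms{B}_{\ps{\upmu}}^{\ps{\upomega},\alpha}$,
and then to propagate it by continuity.
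First I would record that both composites are continuous $\ast-$homomorphisms from $\ms{B}_{\ps{\upmu}}^{\ps{\upomega},\alpha}$ to $\mc{L}(\mf{H}_{\alpha})$: on the left $\ps{\upsigma}^{(\ps{\upomega}_{\alpha},l)}$ is an isometric $\ast-$isomorphism onto $\ms{B}_{\ps{\upmu}}^{\ps{\upomega},\alpha,l}$ by Cor. \ref{09191329} and $\mf{R}_{\pf{H},\ms{v},\alpha,l}^{\ps{\upmu}}$ is a $\ast-$representation by Rmk. \ref{10111026r}, while on the right $\mf{R}_{\pf{H},\alpha}^{\ps{\upmu}}$ is a $\ast-$representation (the case $l=\un$ of the same remark) and $\ms{ad}(\uppi_{\alpha}(\ms{v}(l)))\in Aut^{\ast}(\mc{L}(\mf{H}_{\alpha}))$. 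Since $\mc{C}_{c}(\ms{S}_{\ms{F}_{\ps{\upomega}_{\alpha}}}^{G},\mc{A})$ is dense in $\ms{B}_{\ps{\upmu}}^{\ps{\upomega},\alpha}$, it suffices to verify the identity on such functions $f$.

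The decisive step is to apply the chain of equalities \eqref{10071234} of Rmk. \ref{12261805} to the covariant representation $\lr{\mf{H}_{\alpha},\uppi_{\alpha}}{\ms{U}}$ with $\ms{U}\doteq\ms{U}_{\pf{H},\ms{v},\alpha,l}^{\upsigma}$. This is legitimate because $\ms{U}_{\pf{H},\ms{v},\alpha,l}^{\upsigma}=\ms{U}_{\pf{H}_{\alpha}^{(\ms{v},l)}}^{\upsigma}$ is, by Cor. \ref{09131225}, the canonical nondegenerate covariant representation induced by the cyclic representation $\pf{H}_{\alpha}^{(\ms{v},l)}$, which is associated with $\upsigma^{\ast}(l)(\ps{\upomega}_{\alpha})$. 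The crucial observation is that the auxiliary representation $\ms{U}^{l^{-1}}\doteq\ms{ad}(\uppi_{\alpha}(\ms{v}(l^{-1})))\circ\ms{U}\circ\ms{ad}(l)\up\ms{S}_{\ms{F}_{\ps{\upomega}_{\alpha}}}^{G}$ attached to this $\ms{U}$ in Rmk. \ref{12261805} is exactly $\ms{U}_{\pf{H}_{\alpha}}^{\upsigma}$: substituting the explicit form $\ms{U}_{\pf{H},\ms{v},\alpha,l}^{\upsigma}(s)=\uppi_{\alpha}(\ms{v}(l))\,\ms{U}_{\pf{H}_{\alpha}}^{\upsigma}(\ms{ad}(l^{-1})s)\,\uppi_{\alpha}(\ms{v}(l^{-1}))$ of Rmk. \ref{09151937} and cancelling $\uppi_{\alpha}(\ms{v}(l^{-1}))\uppi_{\alpha}(\ms{v}(l))=\un$ together with $\ms{ad}(l^{-1})\circ\ms{ad}(l)=Id$ gives $\ms{U}^{l^{-1}}(r)=\ms{U}_{\pf{H}_{\alpha}}^{\upsigma}(r)$ for all $r\in\ms{S}_{\ms{F}_{\ps{\upomega}_{\alpha}}}^{G}$.

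With this identification, reading the chain \eqref{10071234} as equalities of operators rather than of their norms (only its final step invokes the isometry of $\ms{ad}(\uppi_{\alpha}(\ms{v}(l)))$) yields, for every $f\in\mc{C}_{c}(\ms{S}_{\ms{F}_{\ps{\upomega}_{\alpha}}}^{G},\mc{A})$,
\begin{equation*}
(\uppi_{\alpha}\rtimes^{\ps{\upmu}_{(\alpha,l)}}\ms{U}_{\pf{H},\ms{v},\alpha,l}^{\upsigma})(\ps{\upsigma}^{(\ps{\upomega}_{\alpha},l)}(f))
=
\ms{ad}(\uppi_{\alpha}(\ms{v}(l)))\bigl((\uppi_{\alpha}\rtimes^{\ps{\upmu}_{(\alpha,\un)}}\ms{U}_{\pf{H}_{\alpha}}^{\upsigma})(f)\bigr),
\end{equation*}
whose two sides are precisely $\mf{R}_{\pf{H},\ms{v},\alpha,l}^{\ps{\upmu}}(\ps{\upsigma}^{(\ps{\upomega}_{\alpha},l)}(f))$ and $\ms{ad}(\uppi_{\alpha}(\ms{v}(l)))(\mf{R}_{\pf{H},\alpha}^{\ps{\upmu}}(f))$. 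This is the commutativity on $\mc{C}_{c}$, and the reduction of the first paragraph extends it to all of $\ms{B}_{\ps{\upmu}}^{\ps{\upomega},\alpha}$.

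I expect the only genuine work to lie in the two manipulations internal to \eqref{10071234}, which I would simply cite: the change of variable $s\mapsto\ms{ad}(l^{-1})s$ under the integral, justified by the Haar system identity $\ps{\upmu}_{(\alpha,l)}=\ms{ad}(l)(\ps{\upmu}_{(\alpha,\un)})$ of \eqref{10041104} and the image-measure formula of Rmk. \ref{09171400}, and the extraction of $\ms{ad}(\uppi_{\alpha}(\ms{v}(l)))$ from the strong-operator integral, justified by $\ms{ad}(V)\in\mc{L}(\mc{L}_{s}(\mf{H}_{\alpha}))$ for unitary $V$. The main obstacle is therefore purely organizational: one must keep track of the two distinct Haar measures $\ps{\upmu}_{(\alpha,\un)},\ps{\upmu}_{(\alpha,l)}$ and of the two groups $\ms{S}_{\ms{F}_{\ps{\upomega}_{\alpha}}}^{G}$ and $\ms{S}_{\ms{F}_{\upsigma^{\ast}(l)(\ps{\upomega}_{\alpha})}}^{G}$ on which the various representations are defined, everything being controlled by Lemma \ref{09102032}\eqref{09101945pre} and Cor. \ref{09131225}; once $\ms{U}^{l^{-1}}=\ms{U}_{\pf{H}_{\alpha}}^{\upsigma}$ is in place there is no analytic difficulty left.
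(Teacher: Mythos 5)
Your proposal is correct and follows essentially the same route as the paper: the paper's proof of Thm. \ref{09191747} reduces to $\mc{C}_{c}(\ms{S}_{\ms{F}_{\ps{\upomega}_{\alpha}}}^{G},\mc{A})$ by density and continuity and then carries out exactly the computation of \eqref{10071234} (change of variable via Rmk. \ref{09171400} and \eqref{10041104}, substitution of Cor. \ref{09131225}, and extraction of $\ms{ad}(\uppi_{\alpha}(\ms{v}(l)))$ from the strong-operator integral), which you instead cite directly after the correct observation that the auxiliary representation $\ms{U}^{l^{-1}}$ of Rmk. \ref{12261805} reduces to $\ms{U}_{\pf{H}_{\alpha}}^{\upsigma}$ for your choice of $\ms{U}$.
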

\begin{proof}
By construction 
$\mc{C}_{c}(\ms{S}_{\ms{F}_{\ps{\upomega}_{\alpha}}}^{G},\mc{A})$
is $\|\cdot\|^{\ps{\upmu}_{(\alpha,\un)}}-$dense
in 
$\ms{B}_{\ps{\upmu}}^{\ps{\upomega},\alpha}$,
moreover 
all the maps involved
in the statement are 
norm continuous and linear,
since Cor. \ref{09191329}
and the well-known fact that any $\ast-$homomorphism
between $C^{\ast}-$algebras is automatically continuous.
Thus it is sufficient to show the statement for the 
maps
$\ps{\upsigma}^{(\ps{\upomega}_{\alpha},l)}$
and
$\mf{R}_{\pf{H},\alpha}^{\ps{\upmu}}$
restricted on
$\mc{C}_{c}(\ms{S}_{\ms{F}_{\ps{\upomega}_{\alpha}}}^{G},
\mc{A})$.
Let 
$f\in\mc{C}_{c}(\ms{S}_{\ms{F}_{\ps{\upomega}_{\alpha}}}^{G},
\mc{A})$,
then
\begin{equation*}
\begin{aligned}
\bigl((\uppi_{\alpha}
\rtimes^{\ps{\upmu}_{(\alpha,l)}}
\ms{U}_{\pf{H}_{\alpha}^{(\ms{v},l)}})
\circ
\ps{\upsigma}^{(\ps{\upomega}_{\alpha},l)}
\bigr)(f)
&=
\\
\int
\uppi_{\alpha}(\ps{\upsigma}^{(\ps{\upomega}_{\alpha},l)}(f)(s))
\ms{U}_{\pf{H}_{\alpha}^{(\ms{v},l)}}(s)\,
d\,\ps{\upmu}_{(\alpha,l)}(s)
&=
\\
\int
\uppi_{\alpha}
(\ps{\upsigma}^{(\ps{\upomega}_{\alpha},l)}(f)(\ms{ad}(l)s))
\ms{U}_{\pf{H}_{\alpha}^{(\ms{v},l)}}(\ms{ad}(l)s)\,
d\,\ps{\upmu}_{(\alpha,\un)}(s)
&=
\\
\int
\uppi_{\alpha}\bigl((\upsigma(l)\circ f)(s)\bigr)
\uppi_{\alpha}(\ms{v}(l))
\ms{U}_{\pf{H}_{\alpha}}(s)
\uppi_{\alpha}(\ms{v}(l^{-1}))
d\,\ps{\upmu}_{(\alpha,\un)}(s)
&=
\\
\int
\uppi_{\alpha}(\ms{v}(l))
\uppi_{\alpha}(f(s))
\ms{U}_{\pf{H}_{\alpha}}(s)
\uppi_{\alpha}(\ms{v}(l^{-1}))
d\,\ps{\upmu}_{(\alpha,\un)}(s)
&=
\\
\ms{ad}(\uppi_{\alpha}(\ms{v}(l)))
\bigl(\int
\uppi_{\alpha}(f(s))
\ms{U}_{\pf{H}_{\alpha}}(s)
d\,\ps{\upmu}_{(\alpha,\un)}(s)
\bigr)
&=
\\
\big(\ms{ad}(\uppi_{\alpha}(\ms{v}(l)))
\circ
(\uppi_{\alpha}\rtimes^{\ps{\upmu}_{(\alpha,\un)}}
\ms{U}_{\pf{H}_{\alpha}})\bigr)(f).
\end{aligned}
\end{equation*}
Here the second equality follows since Rmk. \ref{09171400},
the third by Cor. \ref{09131225}, the fifth by 
$\ms{ad}(V)\in\mc{L}(\mc{L}_{s}(\mf{H}_{\alpha}))$, for
any $V\in\mc{U}(\mf{H}_{\alpha})$ and since the integration
is w.r.t. the strong operator topology.
\end{proof}
\section{Extensions on the multiplier algebra}
\label{multiplier}
We prove in Lemma \ref{12171928} that there is a unique canonical manner to extend a state of a 
$C^{\ast}-$algebra 
to a state of its multiplier algebra. 
Then we apply this result in Cor. \ref{12181715} to associate a state of $\mc{A}$ with a state of 
the crossed product $\mc{A}\rtimes_{\upsigma}^{\upmu}H$, where $\lr{\mc{A},H}{\upsigma}$ is a dynamical system such that 
$\mc{A}$ is unital and $\upmu\in\mc{H}(H)$. Cor. \ref{12181715} is used via Lemma \ref{12201040} 
in constructing an equivariant stability in Thm. \ref{01151104}.
Lemma \ref{01081112} and Lemma \ref{01091430} 
are used to obtain Thm. \ref{01141808} one of the auxiliary results needed 
to prove in Cor. \ref{11271221} the existence of the 
object part of a functor from $\ms{C}_{u}(H)$ to $\mf{G}(G,F,\uprho)$.
We prove in a functional analytic setting the convergence formula \eqref{12201544}, and the 
extension results Lemma \ref{01091430}, Lemma \ref{01081112}, 
Cor. \ref{12202039} and from Lemma \ref{01081736} to Cor. \ref{01101930}. 
Finally we prove in a different way Lemma \ref{12201745} and the convergence in \eqref{01111919}.
\begin{convention}
In the present section 
we fix a $C^{\ast}-$algebra $\mc{B}$
and 
for any nonzero positive trace class operator $\rho$ on 
a Hilbert space $\mf{H}$,
we use the convention of denoting
with $\omega_{\rho}$ the state
on $\mc{L}(\mf{H})$ defined by $\omega_{\rho}(a)\coloneqq\frac{Tr(\rho\,a)}{Tr(\rho)}$, 
for all $a\in\mc{L}(\mf{H})$.
Moreover by abuse of language 
for any unit vector $\Upomega\in\mf{H}$
let $\omega_{\Upomega}$ denote $\omega_{P_{\Upomega}}$,
where $P_{\Upomega}$ is the projector associated with the
closed subspace generated by $\Omega$.
Let $\psi\in\ms{E}_{\mc{B}}$ then 
$\pf{H}=\lr{\mf{H},\mc{R}}{\rho}$
is called a representation of $\mc{B}$ relative to $\psi$,
if $\lr{\mf{H}}{\mc{R}}$ is a nondegenerate representation
of $\mc{B}$ and 
$\rho$
is a positive trace class operator on $\mf{H}$
such that 
$\psi=\omega_{\rho}\circ\mc{R}$,
define
$\pf{H}^{-}\coloneqq\lr{\mf{H},\mc{R}^{-}}{\rho}$.
\end{convention}
\begin{lemma}
\label{12171928}
Let $\psi\in\ms{E}_{\mc{B}}$, then
\begin{enumerate}
\item
$(\exists\,!\psi^{-}\in\ms{E}_{\ms{M}(\mc{B})})
(\psi^{-}\circ\mf{i}^{\mc{B}}=\psi)$,
\label{12171928st1}
\item
if
$\pf{H}$ 
is a
representation 
of $\mc{B}$
relative to $\psi$
then
$\pf{H}^{-}$
is a representation of $\ms{M}(\mc{B})$
relative to $\psi^{-}$,
\label{12171928st3}
\end{enumerate}
\end{lemma}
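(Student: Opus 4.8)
The plan is to manufacture $\psi^{-}$ by transporting $\psi$ through a representation and extending to the multiplier algebra, and then to pin it down uniquely by a Cauchy--Schwarz estimate against an approximate identity; statement \eqref{12171928st3} will then fall out as a byproduct.

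First I would dispose of existence together with the representation picture. By the GNS construction $\psi$ admits a cyclic representation $\lr{\mf{H},\mc{R}}{\Upomega}$ with $\psi=\omega_{\Upomega}\circ\mc{R}$, so a representation of $\mc{B}$ relative to $\psi$ exists (take $\rho=P_{\Upomega}$). For \emph{any} representation $\pf{H}=\lr{\mf{H},\mc{R}}{\rho}$ relative to $\psi$ I set $\psi^{-}\coloneqq\omega_{\rho}\circ\mc{R}^{-}$. Since $\mc{R}^{-}$ is nondegenerate, hence unital with $\mc{R}^{-}(\un)=\un$, and $\omega_{\rho}$ is a state of $\mc{L}(\mf{H})$, the functional $\psi^{-}$ is a state of $\ms{M}(\mc{B})$; moreover $\psi^{-}\circ\mf{i}^{\mc{B}}=\omega_{\rho}\circ\mc{R}^{-}\circ\mf{i}^{\mc{B}}=\omega_{\rho}\circ\mc{R}=\psi$ by \eqref{01081712}. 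This shows existence in \eqref{12171928st1} and, at the same time, that $\pf{H}^{-}=\lr{\mf{H},\mc{R}^{-}}{\rho}$ is a representation of $\ms{M}(\mc{B})$ relative to $\omega_{\rho}\circ\mc{R}^{-}$; identifying this state with the canonical $\psi^{-}$ for an arbitrary $\pf{H}$ is precisely what uniqueness will supply.

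The main work is uniqueness. Let $\chi\in\ms{E}_{\ms{M}(\mc{B})}$ satisfy $\chi\circ\mf{i}^{\mc{B}}=\psi$ and fix an approximate identity $\{e_{\alpha}\}_{\alpha\in D}$ of $\mc{B}$. Because $\mf{i}^{\mc{B}}$ is an isometric $\ast$-morphism and $0\le e_{\alpha}$ with $\|e_{\alpha}\|\le 1$, one has $0\le\mf{i}^{\mc{B}}(e_{\alpha})\le\un$ in $\ms{M}(\mc{B})$, so $(\un-\mf{i}^{\mc{B}}(e_{\alpha}))^{2}\le\un-\mf{i}^{\mc{B}}(e_{\alpha})$. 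For $c\in\ms{M}(\mc{B})$, applying Cauchy--Schwarz to the state $\chi$ with the selfadjoint element $\un-\mf{i}^{\mc{B}}(e_{\alpha})$ gives
\begin{equation*}
|\chi(c)-\chi(\mf{i}^{\mc{B}}(e_{\alpha})c)|^{2}
\le
\chi\bigl((\un-\mf{i}^{\mc{B}}(e_{\alpha}))^{2}\bigr)\,\chi(c^{\ast}c)
\le
(1-\psi(e_{\alpha}))\,\|c\|^{2}.
\end{equation*}
The standard monotone convergence $\psi(e_{\alpha})\to\|\psi\|=1$ then forces $\chi(\mf{i}^{\mc{B}}(e_{\alpha})c)\to\chi(c)$. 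Since $\mf{i}^{\mc{B}}(\mc{B})$ is a two-sided ideal of $\ms{M}(\mc{B})$, we have $\mf{i}^{\mc{B}}(e_{\alpha})c=\mf{i}^{\mc{B}}(d_{\alpha})$ with $d_{\alpha}=(\mf{i}^{\mc{B}})^{-1}(\mf{i}^{\mc{B}}(e_{\alpha})c)\in\mc{B}$, whence $\chi(\mf{i}^{\mc{B}}(e_{\alpha})c)=\psi(d_{\alpha})$ depends only on $\psi$, $c$ and the net. Thus $\chi(c)=\lim_{\alpha}\psi(d_{\alpha})$ is determined by $\psi$ alone, giving $\chi=\psi^{-}$; this proves uniqueness in \eqref{12171928st1} and, combined with the second paragraph applied to an arbitrary $\pf{H}$, yields \eqref{12171928st3} in full.

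The hard part will be exactly this uniqueness estimate, and within it the delicate points are the passage from the abstract multiplier $c$ to an element of $\mc{B}$ through the ideal property and the convergence $\psi(e_{\alpha})\to 1$; the rest (positivity of $\mf{i}^{\mc{B}}$, $\mc{R}^{-}(\un)=\un$, and \eqref{01081712}) is already available in the preliminaries. One could bypass GNS for existence by reading $\psi^{-}$ off directly from \eqref{12181442}, but I would keep the representation route since it delivers \eqref{12171928st3} in the same stroke.
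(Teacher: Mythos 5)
Your proposal is correct. The existence half (and the derivation of statement \eqref{12171928st3} from it) is the same as the paper's: set $\psi^{-}=\omega_{\rho}\circ\mc{R}^{-}$ for a representation $\lr{\mf{H},\mc{R}}{\rho}$ relative to $\psi$, note $\mc{R}^{-}(\un)=\un$, and use \eqref{01081712}. Where you genuinely diverge is in the uniqueness argument. The paper proves uniqueness representation-theoretically: given a state $\chi$ of $\ms{M}(\mc{B})$ extending $\psi$, it takes a cyclic representation of $\chi$, cuts down to the closed subspace $\ov{\mc{S}(\mf{i}^{\mc{B}}(\mc{B}))\Upomega}$, checks that the cyclic vector survives the cut, recognizes the restriction as a GNS representation of $\psi$, and then uses the uniqueness of the GNS construction together with \eqref{12181442} and \eqref{12181437} to transport the resulting unitary intertwiner up to $\ms{M}(\mc{B})$, forcing any two extensions to coincide. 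Your route is the direct quantitative one: the Cauchy--Schwarz estimate
\begin{equation*}
|\chi(c)-\chi(\mf{i}^{\mc{B}}(e_{\alpha})c)|^{2}
\le
\chi\bigl((\un-\mf{i}^{\mc{B}}(e_{\alpha}))^{2}\bigr)\,\chi(c^{\ast}c)
\le
(1-\psi(e_{\alpha}))\,\|c\|^{2},
\end{equation*}
combined with $\psi(e_{\alpha})\to\|\psi\|=1$ and the ideal property of $\mf{i}^{\mc{B}}(\mc{B})$, pins down $\chi(c)$ as $\lim_{\alpha}\psi\bigl((\mf{i}^{\mc{B}})^{-1}(\mf{i}^{\mc{B}}(e_{\alpha})c)\bigr)$, which depends only on $\psi$. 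All the ingredients you invoke ($0\le\mf{i}^{\mc{B}}(e_{\alpha})\le\un$, $(\un-x)^{2}\le\un-x$ for $0\le x\le\un$, convergence of $\psi(e_{\alpha})$ to the norm of the positive functional) are standard and available. Your argument is shorter and more elementary — it avoids GNS uniqueness entirely and is the classical proof that a state whose restriction to an essential ideal has full norm is determined by that restriction — while the paper's GNS route has the minor side benefit of exhibiting explicitly how the two extended representations are unitarily conjugate, a fact it reuses in spirit in nearby results. Either proof is acceptable.
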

\begin{remark}
Since Lemma \ref{12171928}\eqref{12171928st3}
if
$\pf{H}$ 
is a
cyclic representation 
of $\mc{B}$
associated with $\psi$
then
$\pf{H}^{-}$
is a 
cyclic representation of $\ms{M}(\mc{B})$
associated with $\psi^{-}$.
\end{remark}
\begin{proof}
[Proof of Lemma \ref{12171928}]
In this proof let $\mf{i}$ denote $\mf{i}^{\mc{B}}$.
Let $\pf{H}=\lr{\mf{H},\mc{R}}{\rho}$ 
be a representation 
relative to $\psi$,
set $\phi=\omega_{\rho}\circ\mc{R}^{-}$,
thus $\phi$ is a state of $\ms{M}(\mc{B})$
since $\mc{R^{-}}$ is a representation
of $\ms{M}(\mc{B})$ such that 
$\mc{R}^{-}(\un)=\un$.
Moreover by construction $\pf{H}^{-}$ 
is a representation associated with $\phi$,
while
$\phi\circ\mf{i}=\psi$, hence 
the existence part of st.\eqref{12171928st1}
and st.\eqref{12171928st3} follow.
Let us prove the uniqueness part of st.\eqref{12171928st1}.
Let $\psi^{-}\in\ms{E}_{\ms{M}(\mc{B})}$ such that 
$\psi^{-}\circ\mf{i}=\psi$, and 
$\lr{\mf{K},\mc{S}}{\Upomega}$ 
be a cyclic associated with $\psi^{-}$,
set
\begin{equation*}
\begin{cases}
\mf{K}_{0}=
\ov{\mc{S}(\mf{i}(\mc{B}))\Upomega},
\\
\mc{S}_{\up}:\ms{M}(\mc{B})\ni c\mapsto
\mc{S}(c)\up\mf{K}_{0},
\end{cases}
\end{equation*}
where the closure is w.r.t. the norm topology.
So $\mc{S}_{\up}$ is a representation of $\ms{M}(\mc{B})$
on $\mf{K}_{0}$, since $\mc{S}(c)$ is norm continuous
for any $c\in\ms{M}(\mc{B})$
and
$\mf{i}(\mc{B})$ is an ideal of $\ms{M}(\mc{B})$.
Next $\Upomega\in\mf{K}_{0}$ since 
a standard argument, \cite[p. $56$]{br1}, applied
to the state 
$\psi=\omega_{\Upomega}\circ\mc{S}\circ\mf{i}$
and the representation $\mc{S}\circ\mf{i}$. 
Hence 
$\pf{K}_{\up}\doteq\lr{\mf{K}_{0},\mc{S}_{\up}}{\Upomega}$
is a cyclic representation of 
$\ms{M}(\mc{B})$
such that 
$\omega_{\Upomega}\circ\mc{S}_{\up}
=\psi^{-}$, i.e.
\begin{equation}
\label{12181417}
\pf{K}_{\up} 
\text{ is a cyclic associated with }
\psi^{-}.
\end{equation}
Next let 
\begin{equation*}
\pf{K}_{0}
\doteq
\lr{\mf{K}_{0},\mc{S}_{\up}\circ\mf{i}}{\Upomega},
\end{equation*}
then 
$\ov{(\mc{S}_{\up}\circ\mf{i})(\mc{B})\Upomega}
=\ov{\mc{S}_{\up}(\mf{i}(\mc{B}))\Upomega}
=\mf{K}_{0}$,
hence $\pf{K}_{0}$ is a cyclic representation
of $\mc{B}$ since $\mc{S}_{\up}$ is a representation
of $\ms{M}(\mc{B})$.
Moreover 
$\omega_{\Upomega}\circ\mc{S}_{\up}\circ\mf{i}
=\psi^{-}\circ\mf{i}=\psi$ the first equality
coming since \eqref{12181417},
thus
\begin{equation}
\label{12181425}
\pf{K}_{0} 
\text{ is a cyclic associated with }
\psi.
\end{equation}
Next let 
$\psi^{j}$ be a state of $\ms{M}(\mc{B})$
such that $\psi^{j}\circ\mf{i}=\psi$
and
$\pf{K}^{j}=\lr{\mf{K}^{j},\mc{S}^{j}}{\Upomega^{j}}$
be a cyclic associated with $\psi^{j}$, 
for any $j\in\{1,2\}$.
Thus
since \eqref{12181425}  
and
the uniqueness modulo unitary equivalence
of the $GNS$ construction for states on a $C^{\ast}-$algebra
there exists a unique unitary operator 
$U:\mf{K}_{0}^{1}\to\mf{K}_{0}^{2}$ such that 
\begin{equation}
\label{12181510}
\begin{cases}
U\Upomega^{1}=\Upomega^{2}
\\
\mc{S}_{\up}^{2}\circ\mf{i}=
\ms{ad}(U)\circ(\mc{S}_{\up}^{1}\circ\mf{i}).
\end{cases}
\end{equation}
Moreover 
$(\mc{S}_{\up}^{j}\circ\mf{i})^{-}=\mc{S}_{\up}^{j}$
since \eqref{12181437} applied to the cyclic $\pf{K}_{\up}^{j}$, 
therefore since \eqref{12181510} and \eqref{12181442}
applied to the cyclic $\pf{K}_{0}^{j}$ for any $j\in\{1,2\}$,
we obtain for all $c\in\ms{M}(\mc{B})$ and $b\in\mc{B}$
\begin{equation*}
 \begin{aligned}
\mc{S}_{\up}^{2}(c)
(\mc{S}_{\up}^{2}\circ\mf{i})(b)\Upomega^{2}
&=
(\mc{S}_{\up}^{2}\circ\mf{i})^{-}(c)
(\mc{S}_{\up}^{2}\circ\mf{i})(b)\Upomega^{2}
\\
&=
(\mc{S}_{\up}^{2}\circ\mf{i})(d)\Upomega^{2}
\\
&=
U(\mc{S}_{\up}^{1}\circ\mf{i})(d)\Upomega^{1}
\\
&=
U\mc{S}_{\up}^{1}(c)
(\mc{S}_{\up}^{1}\circ\mf{i})(b)\Upomega^{1}
\\
&=
U\mc{S}_{\up}^{1}(c)U^{-1}
(\mc{S}_{\up}^{2}\circ\mf{i})(b)\Upomega^{2},
\end{aligned}
\end{equation*}
where $d=\mf{i}^{-1}(c\mf{i}(b))$.
Next $\pf{K}_{0}^{2}$ is cyclic so
$\mc{S}_{\up}^{2}=\ms{ad}(U)\circ\mc{S}_{\up}^{1}$,
therefore
\begin{equation*}
\omega_{\Upomega^{2}}\circ\mc{S}_{\up}^{2}
=
\omega_{\Upomega^{1}}\circ\mc{S}_{\up}^{1},
\end{equation*}
thus 
$\psi_{1}^{-}=\psi_{2}^{-}$
since \eqref{12181417}.
\end{proof}
\begin{definition}
\label{30051132}
Let $\psi\in\ms{E}_{\mc{B}}$, then
we call the canonical extension of $\psi$ to $\ms{M}(\mc{B})$
the unique state $\psi^{-}$ 
such that
$\psi^{-}\circ\mf{i}^{\mc{B}}=\psi$.
\end{definition}
\begin{corollary}
\label{12181715}
Let $\mf{A}=\lr{\mc{A},H}{\upsigma}$ be a dynamical system
such that $\mc{A}$ is unital, $\upmu\in\mc{H}(H)$ 
and $\psi$ a state of
$\mc{B}=\mc{A}\rtimes_{\upsigma}^{\upmu}H$.
Thus
$\psi^{-}\circ\mf{j}_{\mc{A}}^{\mc{B}}\in\ms{E}_{\mc{A}}$
and
\begin{enumerate} 
\item
if 
$\lr{\mf{H},\mc{R}}{\rho}$ is
a representation relative to $\psi$
then
$\lr{\mf{H},\uppi}{\rho}$
is a representation relative to 
$\psi^{-}\circ\mf{j}_{\mc{A}}^{\mc{B}}$,
\label{12181715st1}
\item
if $\lr{\mf{H},\mc{R}}{\Upomega}$ is
a cyclic representation associated with $\psi$
then
$\lr{\mf{H}_{\Upomega}^{\uppi},\uppi^{\up}}{\Upomega}$
is a cyclic representation associated with
$\psi^{-}\circ\mf{j}_{\mc{A}}^{\mc{B}}$.
\label{12181715st2}
\end{enumerate}
Here
$\lr{\mf{H},\uppi}{U}$ 
is the covariant representation
of $\mf{A}$ associated with $\mc{R}$,
$\mf{H}_{\Upomega}^{\uppi}
\coloneqq\ov{\uppi(\mc{A})\Upomega}$
and
$\uppi^{\up}:\mc{A}\ni 
a\mapsto\uppi(a)\up\mf{H}_{\Upomega}^{\uppi}$.
\end{corollary}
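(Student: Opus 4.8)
The plan is to reduce everything to the extension lemma just proved, Lemma \ref{12171928}, together with the identity \eqref{12191452} relating the covariant representation associated to $\mc{R}$ with the canonical embedding $\mf{j}_{\mc{A}}^{\mc{B}}$; throughout I would abbreviate $\mf{j}=\mf{j}_{\mc{A}}^{\mc{B}}$. First I would record that $\mf{j}$ is a \emph{unital} $\ast-$morphism: it is nondegenerate into $\ms{M}(\mc{B})=\mc{L}(\mc{B}_{\mc{B}})$ (as observed after \eqref{12191452}), and since $\mc{A}$ is unital, $\mf{j}(\un_{\mc{A}})$ acts as the identity on the dense set $\mathrm{span}\{\mf{j}(a)b\mid a\in\mc{A},b\in\mc{B}\}$, whence $\mf{j}(\un_{\mc{A}})=\un$. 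Consequently $\psi^{-}\circ\mf{j}$ is a positive linear functional with $(\psi^{-}\circ\mf{j})(\un_{\mc{A}})=\psi^{-}(\un)=1$, so $\psi^{-}\circ\mf{j}\in\ms{E}_{\mc{A}}$, which is the first assertion.

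For item \eqref{12181715st1}, I would start from a representation $\lr{\mf{H},\mc{R}}{\rho}$ relative to $\psi$, so that $\mc{R}$ is nondegenerate and $\psi=\omega_{\rho}\circ\mc{R}$, and let $\lr{\mf{H},\uppi}{U}$ be the covariant representation of $\mf{A}$ associated to $\mc{R}$, with $\mc{R}=\uppi\rtimes^{\upmu}U$ and $\uppi$ nondegenerate. Lemma \ref{12171928}\eqref{12171928st3} then gives that $\pf{H}^{-}=\lr{\mf{H},\mc{R}^{-}}{\rho}$ is a representation of $\ms{M}(\mc{B})$ relative to $\psi^{-}$, i.e. $\psi^{-}=\omega_{\rho}\circ\mc{R}^{-}$. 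Composing with $\mf{j}$ and invoking \eqref{12191452}, namely $\mc{R}^{-}\circ\mf{j}=\uppi$, I obtain $\psi^{-}\circ\mf{j}=\omega_{\rho}\circ\mc{R}^{-}\circ\mf{j}=\omega_{\rho}\circ\uppi$; since $\uppi$ is nondegenerate, this says precisely that $\lr{\mf{H},\uppi}{\rho}$ is a representation relative to $\psi^{-}\circ\mf{j}$.

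For item \eqref{12181715st2}, the idea is to apply item \eqref{12181715st1} with $\rho=P_{\Upomega}$: a cyclic representation $\lr{\mf{H},\mc{R}}{\Upomega}$ associated to $\psi$ is in particular a representation relative to $\psi$, so \eqref{12181715st1} yields $\psi^{-}\circ\mf{j}=\omega_{\Upomega}\circ\uppi$. The remaining point, which is the one requiring the most care, is cyclicity: $\Upomega$ is cyclic for $\mc{R}(\mc{B})$ but in general not for $\uppi(\mc{A})$, and this is exactly why one passes to $\mf{H}_{\Upomega}^{\uppi}=\ov{\uppi(\mc{A})\Upomega}$. I would verify that $\mf{H}_{\Upomega}^{\uppi}$ is $\uppi-$reducing, being invariant under both $\uppi(a)$ and $\uppi(a)^{\ast}=\uppi(a^{\ast})$, so that $\uppi^{\up}$ is a genuine $\ast-$representation of $\mc{A}$ for which $\Upomega$ is cyclic by construction. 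Using that $\mc{A}$ is unital and $\uppi$ nondegenerate, $\uppi(\un)=\un$ forces $\Upomega\in\mf{H}_{\Upomega}^{\uppi}$ and $\uppi^{\up}(a)\Upomega=\uppi(a)\Upomega$ for all $a\in\mc{A}$; hence $\omega_{\Upomega}\circ\uppi^{\up}=\omega_{\Upomega}\circ\uppi=\psi^{-}\circ\mf{j}$, and $\lr{\mf{H}_{\Upomega}^{\uppi},\uppi^{\up}}{\Upomega}$ is the desired cyclic representation associated to $\psi^{-}\circ\mf{j}$. The only genuine obstacle is this reduction to the invariant subspace $\mf{H}_{\Upomega}^{\uppi}$; the rest is a direct composition of the cited results.
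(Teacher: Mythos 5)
Your proof is correct and follows essentially the same route as the paper's: unitality of $\mf{j}_{\mc{A}}^{\mc{B}}$ via nondegeneracy for the opening claim, Lemma \ref{12171928}\eqref{12171928st3} combined with \eqref{12191452} to get $\psi^{-}\circ\mf{j}_{\mc{A}}^{\mc{B}}=\omega_{\rho}\circ\uppi$ for item \eqref{12181715st1}, and the resulting identity $\psi^{-}\circ\mf{j}_{\mc{A}}^{\mc{B}}=\omega_{\Upomega}\circ\uppi$ for item \eqref{12181715st2}. The only (harmless) divergence is at the very end: where the paper invokes the standard argument of \cite[p. $56$]{br1} to place $\Upomega$ in $\mf{H}_{\Upomega}^{\uppi}$, you use $\uppi(\un)=\un$ directly, a legitimate shortcut since $\mc{A}$ is unital and $\uppi$ nondegenerate.
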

\begin{proof}
Let 
$\mc{B}=\mc{A}\rtimes_{\upsigma}^{\upmu}H$. 
$\psi^{-}$ is a state of $\ms{M}(\mc{B})$
since Lemma \ref{12171928}\eqref{12171928st1},
while $\mf{j}_{\mc{A}}^{\mc{B}}(\un)=\un$ 
since $\mf{j}_{\mc{A}}^{\mc{B}}$ is nondegenerate,
thus
$(\psi^{-}\circ\mf{j}_{\mc{A}}^{\mc{B}})(\un)=\un$ 
moreover $\psi^{-}\circ\mf{j}_{\mc{A}}^{\mc{B}}$ is positive
hence it is a state of $\mc{A}$.
Let
$\lr{\mf{H},\mc{R}}{\rho}$ be
a representation relative to $\psi$
which there exists since $\psi$ is a state,
thus
$\uppi$ is nondegenerate since $\mc{R}$ it is so, 
moreover
\begin{equation}
\label{12201219}
\begin{aligned}
\psi^{-}\circ\mf{j}_{\mc{A}}^{\mc{B}}
&=
\omega_{\rho}\circ\mc{R}^{-}
\circ\mf{j}_{\mc{A}}^{\mc{B}}
\\
&=
\omega_{\rho}\circ\uppi,
\end{aligned}
\end{equation}
where the first equality follows 
since Lemma \ref{12171928}\eqref{12171928st3}
and the second one since \eqref{12191452}, so st.\eqref{12181715st1} follows.
Next if $\lr{\mf{H},\mc{R}}{\Upomega}$
is a cyclic representation associated
to $\psi$, then
$\psi^{-}\circ\mf{j}_{\mc{A}}^{\mc{B}}=\omega_{\Upomega}\circ\uppi$
since \eqref{12201219}
therefore
$\Omega\in\mf{H}_{\Upomega}^{\uppi}$
since the argument in \cite[p.56]{br1}
applied to the state 
$\psi^{-}\circ\mf{j}_{\mc{A}}^{\mc{B}}$
and the representation $\uppi$,
so st.\eqref{12181715st2} follows.
\end{proof}
The next Lemma \ref{01081112} together Lemma \ref{01091430} are 
important in showing Thm. \ref{01141808} one of the auxiliary 
results used in the proof of Cor. \ref{11271221} were we construct the object part of a functor 
from $\ms{C}_{u}(H)$ to $\mf{G}(G,F,\uprho)$.
\begin{lemma}
\label{01081112}
Let $\lr{\mc{A},H}{\upsigma}$ be a dynamical system, 
$\upmu\in\mc{H}(H)$, $\mc{B}=\mc{A}\rtimes_{\upsigma}^{\upmu}H$.
Then for any $a\in\mc{A}$ the following is a commutative diagram
\begin{equation}
\label{01081112I}
\xymatrix{
\ms{M}(\mc{B})
\ar[rr]^{\ms{ev}_{a}(\mf{i}^{\ms{M}(\mc{B})}\circ
\mf{j}_{\mc{A}}^{\mc{B}})}
& &
\ms{M}(\mc{B})
\\
& &
\\
\mc{B}
\ar[uu]^{\mf{i}^{\mc{B}}}
\ar[rr]_{\ms{ev}_{a}(\mf{j}_{\mc{A}}^{\mc{B}})}
& &
\mc{B}
\ar[uu]_{\mf{i}^{\mc{B}}}}
\end{equation}
in particular 
for all $f\in\mc{C}_{c}(H,\mc{A})$
the following diagram is commutative 
\begin{equation}
\label{01081112II}
\xymatrix{
\mc{B}
\ar[rr]^{\mf{j}_{\mc{A}}^{\mc{B}}(a)}
& &
\mc{B}
\\
& &
\\
\mc{B}
\ar[uu]^{\mf{i}^{\mc{B}}(f)}
\ar[uurr]_{\mf{i}^{\mc{B}}(\mf{j}_{\mc{A}}^{\mc{B}}(a)(f))}
& &}
\end{equation}
\end{lemma}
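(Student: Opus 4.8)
The plan is to reduce both commutativity assertions to a single algebraic fact: every element of $\ms{M}(\mc{B})=\mc{L}(\mc{B}_{\mc{B}})$ is a right $\mc{B}-$module map. Recall from the preliminaries that $\ms{M}(\mc{B})$ is the algebra of adjointable operators on the Hilbert $\mc{B}-$module $\mc{B}_{\mc{B}}$, that $\mf{i}^{\mc{B}}(b)$ is left multiplication by $b$, and that $\mf{j}_{\mc{A}}^{\mc{B}}(a)\in\ms{M}(\mc{B})$. Since adjointable operators are $\mc{B}-$linear for the right action (see \cite{rw}), writing $m$ for $\mf{j}_{\mc{A}}^{\mc{B}}(a)$ I have the identity $m(bc)=m(b)c$ for all $b,c\in\mc{B}$. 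This is the only nontrivial ingredient; everything else is unwinding the evaluation maps, so no approximation or continuity argument on $\mc{C}_{c}(H,\mc{A})$ will be needed.

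First I would unwind the square \eqref{01081112I}. Its left and right legs are both $\mf{i}^{\mc{B}}$; the bottom leg is $\ms{ev}_{a}(\mf{j}_{\mc{A}}^{\mc{B}})=m$, regarded as a self-map of $\mc{B}$; and the top leg is $\ms{ev}_{a}(\mf{i}^{\ms{M}(\mc{B})}\circ\mf{j}_{\mc{A}}^{\mc{B}})=\mf{i}^{\ms{M}(\mc{B})}(m)$, which acts on $\ms{M}(\mc{B})$ by left multiplication $x\mapsto m\cdot x$, the product in $\ms{M}(\mc{B})$ being composition of operators on $\mc{B}_{\mc{B}}$. Chasing $b\in\mc{B}$ around the two paths, commutativity reduces to the operator identity $m\circ\mf{i}^{\mc{B}}(b)=\mf{i}^{\mc{B}}(m(b))$; evaluating both sides at an arbitrary $c\in\mc{B}$ turns this into $m(bc)=m(b)c$, which holds by the module-map property. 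Hence \eqref{01081112I} commutes.

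The triangle \eqref{01081112II} then follows immediately by specialising the identity just obtained to $b=f\in\mc{C}_{c}(H,\mc{A})\subseteq\mc{B}$. Indeed, reading the product in $\ms{M}(\mc{B})$ as composition, the equality $m\cdot\mf{i}^{\mc{B}}(f)=\mf{i}^{\mc{B}}(m(f))$ is exactly $\mf{j}_{\mc{A}}^{\mc{B}}(a)\circ\mf{i}^{\mc{B}}(f)=\mf{i}^{\mc{B}}(\mf{j}_{\mc{A}}^{\mc{B}}(a)(f))$, which is the asserted commutativity; if one prefers a hands-on verification on the dense subalgebra, the element $\mf{j}_{\mc{A}}^{\mc{B}}(a)(f)$ is computed explicitly by $\mf{j}_{\mc{A}}^{\mc{B}}(a)(f)(l)=af(l)$, and a direct check against the convolution product \eqref{09191116} recovers $m(f\ast^{\upmu}g)=(m(f))\ast^{\upmu}g$.

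The only point to watch carefully, and what I would flag as the main (minor) obstacle, is the bookkeeping of the several canonical embeddings, in particular identifying $\ms{ev}_{a}(\mf{i}^{\ms{M}(\mc{B})}\circ\mf{j}_{\mc{A}}^{\mc{B}})$ with left multiplication by $m$ in $\ms{M}(\mc{B})$ and remembering that this product is operator composition on $\mc{B}_{\mc{B}}$. Once that identification is made, the whole lemma is a formal consequence of the right $\mc{B}-$linearity of adjointable operators, holding verbatim on all of $\mc{B}$ with no analytic input.
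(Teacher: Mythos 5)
Your proof is correct, but it runs in the opposite direction from the paper's and rests on a different key fact. The paper first establishes the triangle \eqref{01081112II} by a hands-on computation on the dense subalgebra: for $f,g\in\mc{C}_{c}(H,\mc{A})$ it expands $(\mf{j}(a)\circ\mf{i}(f))(g)(s)=a\int f(r)\upsigma(r)(g(r^{-1}s))\,d\mu(r)$, pulls $a$ inside the integral, and recognizes $(\mf{j}(a)(f)\ast^{\upmu}g)(s)$; it then deduces the square \eqref{01081112I} from the triangle by norm continuity of all the maps involved together with the density of $\mc{C}_{c}(H,\mc{A})$ in $\mc{B}$. You instead prove the square directly on all of $\mc{B}$ from the single abstract fact that $m=\mf{j}_{\mc{A}}^{\mc{B}}(a)$, being an element of $\ms{M}(\mc{B})=\mc{L}(\mc{B}_{\mc{B}})$, is automatically a right $\mc{B}$-module map, so $m(bc)=m(b)c$; evaluating $m\circ\mf{i}^{\mc{B}}(b)$ and $\mf{i}^{\mc{B}}(m(b))$ at $c$ reduces commutativity to exactly this identity, and the triangle falls out as the special case $b=f$. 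Your route buys brevity and dispenses with both the integral manipulation and the density argument; the paper's route makes the action of $\mf{j}_{\mc{A}}^{\mc{B}}(a)$ on convolution products explicit, which is arguably more informative at the level of the concrete crossed-product algebra. Both arguments take for granted, as the paper's preliminaries do, that $\mf{j}_{\mc{A}}^{\mc{B}}(a)$ is a well-defined element of $\ms{M}(\mc{B})$; your proof leans on this more heavily, since the adjointability (hence $\mc{B}$-linearity) of that element is the entire content of your argument, so it is worth stating that dependence explicitly.
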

\begin{proof}
\eqref{01081112I}
trivially implies
\eqref{01081112II},
moreover all the maps involved in \eqref{01081112I} are norm continuous 
and $\mc{C}_{c}(H,\mc{A})$ is norm dense in $\mc{B}$
hence 
\eqref{01081112II}
implies
\eqref{01081112I},
thus let us proof
\eqref{01081112II}.
Let $f\in\mc{C}_{c}(H,\mc{A})$ 
and  
let here
$\mf{i}$ and $\mf{j}$
denote 
$\mf{i}^{\mc{B}}$ and $\mf{j}_{\mc{A}}^{\mc{B}}$
respectively,
thus for all $g\in\mc{C}_{c}(H,\mc{A})$ and $s\in H$
\begin{equation*}
\begin{aligned}
(\mf{j}(a)\circ\mf{i}(f))(g)(s)
&=
\mf{j}(a)(f\ast^{\upmu}g)(s)
\\
&=
a \int f(r)\upsigma(r)(g(r^{-1}s))d\,\upmu(r)
\\
&=
\int a f(r)\upsigma(r)(g(r^{-1}s))d\,\upmu(r)
\\
&=
(\mf{j}(a)(f)\ast^{\upmu}g)(s)
=
\mf{i}(\mf{j}(a)(f))(g)(s).
\end{aligned}
\end{equation*}
\end{proof}
\begin{lemma}
\label{12201417}
Let $\lr{\mc{A},H}{\upsigma}$ 
be a dynamical system, 
$\upmu\in\mc{H}(H)$ and 
$\{E_{\beta}\}_{\beta\in C}$
be an approximate identity of $\mc{A}$. 
Let $\mc{B}$ denote $\mc{A}\rtimes_{\upsigma}^{\upmu}H$
then $\lim_{\beta}\mf{i}^{\ms{M}(\mc{B})}(\mf{j}(E_{\beta}))=\un$
w.r.t. the topology on 
$\ms{M}(\ms{M}(\mc{B}))$ of simple convergence in $\mf{i}^{\mc{B}}(\mc{B})$, i.e. for all $m\in\mf{i}^{\mc{B}}(\mc{B})$
\begin{equation}
\label{12201544}
\lim_{\beta\in C}
\|\mf{j}_{\mc{A}}^{\mc{B}}(E_{\beta})m-m\|_{\ms{M}(\mc{B})}=0.
\end{equation}
\end{lemma}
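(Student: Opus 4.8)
The plan is to reduce the assertion to elements of the dense subalgebra $\mc{C}_{c}(H,\mc{A})$ and then to transport the product $\mf{j}_{\mc{A}}^{\mc{B}}(E_{\beta})\,m$ back into $\mf{i}^{\mc{B}}(\mc{B})$ by means of Lemma \ref{01081112}. Write $m=\mf{i}^{\mc{B}}(b)$ with $b\in\mc{B}$; since $\mf{i}^{\mc{B}}(\mc{B})$ is an ideal of $\ms{M}(\mc{B})$ the element $\mf{j}_{\mc{A}}^{\mc{B}}(E_{\beta})\,m$ again lies in $\mf{i}^{\mc{B}}(\mc{B})$, and as $\mf{i}^{\mc{B}}$ is isometric the norm in \eqref{12201544} equals the $\mc{B}$-norm of the corresponding preimage. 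Because $\mf{j}_{\mc{A}}^{\mc{B}}$ is a $\ast$-homomorphism into $\ms{M}(\mc{B})$ and $\|E_{\beta}\|\leq 1$, each $\mf{j}_{\mc{A}}^{\mc{B}}(E_{\beta})$ is a contraction; together with the density of $\mc{C}_{c}(H,\mc{A})$ in $\mc{B}$ and the isometry of $\mf{i}^{\mc{B}}$, a standard three-$\ep$ estimate reduces the proof to the case $b=f\in\mc{C}_{c}(H,\mc{A})$.

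For such an $f$, Lemma \ref{01081112}\eqref{01081112II} applied with $a=E_{\beta}$ gives $\mf{j}_{\mc{A}}^{\mc{B}}(E_{\beta})\,\mf{i}^{\mc{B}}(f)=\mf{i}^{\mc{B}}\bigl(\mf{j}_{\mc{A}}^{\mc{B}}(E_{\beta})(f)\bigr)$, where $\mf{j}_{\mc{A}}^{\mc{B}}(E_{\beta})(f)\in\mc{C}_{c}(H,\mc{A})$ is the function $l\mapsto E_{\beta}f(l)$. Writing $g_{\beta}\in\mc{C}_{c}(H,\mc{A})$ for the function $g_{\beta}(l)=E_{\beta}f(l)-f(l)$, the quantity to be estimated is therefore $\|g_{\beta}\|^{\upmu}$, and since the crossed-product norm $\|\cdot\|^{\upmu}$ is dominated by the $L_{\upmu}^{1}$-norm (immediate from \eqref{05171307} and $\|\ms{u}(s)\|\leq 1$) it suffices to show $\int\|g_{\beta}(l)\|\,d\upmu(l)\to 0$.

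Finally I would establish this convergence. Each $g_{\beta}$ is supported in the compact set $K=\mathrm{supp}(f)$, on which $\upmu$ is finite, so $\int\|g_{\beta}(l)\|\,d\upmu(l)\leq\upmu(K)\,\sup_{l\in K}\|g_{\beta}(l)\|$. The image $f(K)$ is compact in $\mc{A}$, and an approximate identity tends to the identity uniformly on compact subsets: covering $f(K)$ by finitely many balls of radius $\ep/3$ centred at $a_{1},\dots,a_{n}$ and using $\|E_{\beta}\|\leq 1$, one obtains $\sup_{a\in f(K)}\|E_{\beta}a-a\|<\ep$ for all $\beta$ past a common index. Hence $\sup_{l\in K}\|g_{\beta}(l)\|\to 0$ and the integral tends to $0$. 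The only substantive step is the passage from the $\ms{M}(\mc{B})$-product to an element of $\mc{C}_{c}(H,\mc{A})$ through Lemma \ref{01081112}, which converts the multiplier norm into a crossed-product norm; what remains is the routine uniform behaviour of approximate identities on compacta together with the local finiteness of the Haar measure.
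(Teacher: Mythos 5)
Your proposal is correct and follows essentially the same route as the paper: reduce to $f\in\mc{C}_{c}(H,\mc{A})$ by density, transport the product into $\mc{B}$ via Lemma \ref{01081112}\eqref{01081112II}, dominate the universal norm by the $L_{\upmu}^{1}$-norm, and conclude from uniform convergence of $\{E_{\beta}\}$ on the compact set $f(supp(f))$. The only difference is cosmetic: where you use an explicit $\ep/3$-covering argument and a three-$\ep$ density estimate, the paper invokes the equicontinuity of the unit ball together with \cite[$III.17$ Prp. $5$]{tvs} for both uniformity steps.
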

\begin{proof}
Let 
$\mf{A}=\lr{\mc{A},H}{\upsigma}$, 
$\mf{j}=\mf{j}_{\mc{A}}^{\mc{B}}$,
$\mf{i}=\mf{i}^{\mc{B}}$
and 
$\|\cdot\|_{\mc{B}}$ be the universal norm on $\mc{B}$.
We have
$supp(\mf{i}^{\mc{A}}(E_{\beta})\circ f)
\subseteq supp(f)$
for any $f\in\mc{C}_{c}(H,\mc{A})$ and $\beta\in C$
since $E_{\beta}$ is linear
and $supp(f)\doteq\ov{\complement f^{-1}(\ze)}$, 
hence
\begin{equation}
\label{12261235}
\begin{aligned}
\sup_{l\in supp(f)}\|(E_{\beta}f-f)(l)\|
&=
\sup
\{\|(E_{\beta}f-f)(l)\|\,
\mid\,
l\in supp(\mf{i}^{\mc{A}}(E_{\beta})\circ f)\cup supp(f)
\}
\\
&=
\sup_{l\in H}\|(E_{\beta}f-f)(l)\|.
\end{aligned}
\end{equation}
Since the definition of the approximate identity 
we deduce that
$\lim_{\beta}\mf{i}^{\mc{A}}(E_{\beta})=\un$ 
w.r.t. the topology on $\ms{M}(\mc{A})$
of simple convergence in $\mc{A}$,
moreover 
$\|\mf{i}^{\mc{A}}(E_{\beta})\|_{\ms{M}(\mc{A})}
\leq 1$
since $\mf{i}^{\mc{A}}$ is an isometry into its range,
next the unit ball of $\ms{M}(\mc{A})$ 
is clearly a bounded subset of $\ms{M}(\mc{A})$ 
w.r.t. the topology
of simple convergence in $\mc{A}$,
hence it is equicontinuous 
according to \cite[$III.25$ Thm. $1$]{tvs}.
Therefore we can apply
\cite[$III.17$ Prp. $5(2,3)$]{tvs}
and deduce that
$\lim_{\beta}\mf{i}^{\mc{A}}(E_{\beta})=\un$ 
w.r.t. the topology on $\ms{M}(\mc{A})$
of uniform convergence in
compact subsets of $\mc{A}$,
i.e.
$\lim_{\beta}\sup_{a\in K}\|E_{\beta}a-a\|_{\mc{A}}=0$,
for any compact subset $K$ of $\mc{A}$.
Next
$f(Q)$ is a compact subset of $\mc{A}$
for any $f\in\mc{C}_{c}(H,\mc{A})$
and any compact subset $Q$ of $H$,
therefore
\begin{equation*}
\lim_{\beta}\sup_{l\in Q}
\left\|\bigl(\mf{j}(E_{\beta})(f)-f\bigr)(l)\right\|_{\mc{A}}
=
\lim_{\beta}\sup_{l\in Q}
\|E_{\beta}f(l)-f(l)\|_{\mc{A}}
=0.
\end{equation*}
In particular 
$\lim_{\beta}\sup_{l\in H}\left\|\bigl(\mf{j}(E_{\beta})(f)-f\bigr)(l)\right\|_{\mc{A}}=0$ 
since \eqref{12261235} and $supp(f)$ is compact, 
so $\lim_{\beta}\mf{j}(E_{\beta})(f)=f$ w.r.t. the $L_{\upmu}^{1}-$norm topology.
Next $\|\cdot\|_{\mc{B}}$ is majorized by the $L_{\upmu}^{1}-$norm, see \cite[Lemma 2.27]{will}, therefore 
\begin{equation}
\label{01111757}
\lim_{\beta}\|\mf{j}(E_{\beta})(f)-f\|_{\mc{B}}=0\qquad
\forall f\in
\mc{C}_{c}(H,\mc{A}).
\end{equation}
Next $\mf{i}$ is an isometry onto its range thus by \eqref{01111757}\,\&\,\eqref{01081112I} 
we obtain
\begin{equation}
\label{12202126}
\lim_{\beta}
\mf{i}^{\ms{M}(\mc{B})}(\mf{j}(E_{\beta}))
=\un,
\end{equation}
w.r.t. the topology on 
$\ms{M}(\ms{M}(\mc{B}))$
of simple convergence in 
$\mf{i}(\mc{C}_{c}(H,\mc{A}))$.
Next by construction, 
see \cite[Lemma 2.27]{will}, 
$\mc{C}_{c}(H,\mc{A})$ is dense in $\mc{B}$ 
w.r.t. the $\|\cdot\|_{\mc{B}}-$topology
hence 
$\mf{i}(\mc{C}_{c}(H,\mc{A}))$ 
is dense in 
$\mf{i}(\mc{B})$,
moreover 
$\|\mf{i}^{\ms{M}(\mc{B})}
(\mf{j}(E_{\beta}))\|_{\ms{M}(\ms{M}(\mc{B}))}
\leq 1$
for any $\beta\in C$ since 
$\mf{i}^{\ms{M}(\mc{B})}\circ\mf{j}$
is an isometry.
Next the unit ball of
$\ms{M}(\ms{M}(\mc{B}))$
is clearly a bounded subset 
w.r.t. the topology on 
$\ms{M}(\ms{M}(\mc{B}))$
of simple convergence in 
$\ms{M}(\mc{B})$
hence it is equicontinuous
according to \cite[$III.25$ Thm. $1$]{tvs}.
Therefore since \eqref{12202126} 
we can apply
\cite[$III.17$ Prp. $5(1,2)$]{tvs}
and the statement follows.
\end{proof}
\begin{lemma}
\label{12201745}
Let $\mc{A}$ and $\mc{B}$
be $C^{\ast}-$algebras, $\ms{X}$ be a Hilbert
$\mc{B}-$module and $\uppi:\mc{A}\to\mc{L}(\ms{X})$ 
a $\ast-$homomorphism.
Then $\uppi$ maps approximate identities of $\mc{A}$
into approximate identities of $\uppi(\mc{A})$,
if in addition $\uppi$ is nondegenerate
and $\{E_{\beta}\}_{\beta\in C}$
is an approximate identity of $\mc{A}$,
then
$\lim_{\beta}\uppi(E_{\alpha})=\un$
w.r.t. the topology on $\mc{L}(\ms{X})$ of simple convergence.
\end{lemma}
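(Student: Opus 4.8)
The plan is to obtain the first assertion as an immediate consequence of Lemma \ref{11191814pre}, and to establish the second by a direct approximation argument based on the nondegeneracy of $\uppi$. Throughout I would use that $\mc{L}(\ms{X})$ is a $C^{\ast}-$algebra and that any $\ast-$morphism between $C^{\ast}-$algebras is automatically continuous with norm $\leq 1$; in particular $\uppi$ is a contraction, so $\|\uppi(E_{\beta})\|\leq 1$ for any approximate identity $\{E_{\beta}\}_{\beta\in C}$ of $\mc{A}$.

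For the first part, since $\mc{A}$ is a $C^{\ast}-$algebra, by Rmk. \ref{01201940} the range $\uppi(\mc{A})$ is norm closed, hence a $C^{\ast}-$subalgebra of $\mc{L}(\ms{X})$. Viewing $\uppi$ as a $\ast-$homomorphism from $\mc{A}$ onto $\uppi(\mc{A})$, its image is trivially norm dense in $\uppi(\mc{A})$, so Lemma \ref{11191814pre} applies with $\mc{B}=\uppi(\mc{A})$ and yields that $\{\uppi(E_{\beta})\}_{\beta\in C}$ is an approximate identity of $\uppi(\mc{A})$.

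For the second part, assume $\uppi$ nondegenerate. First I would verify the convergence on vectors of elementary form $\uppi(a)y$ with $a\in\mc{A}$ and $y\in\ms{X}$, where
\begin{equation*}
\|\uppi(E_{\beta})\uppi(a)y-\uppi(a)y\|
=
\|\uppi(E_{\beta}a-a)y\|
\leq
\|E_{\beta}a-a\|\,\|y\|,
\end{equation*}
and the right-hand side tends to $0$ since $\lim_{\beta}\|E_{\beta}a-a\|=0$. By linearity this gives $\lim_{\beta}\uppi(E_{\beta})z=z$ for every $z$ in the linear span $\ms{D}\doteq span\{\uppi(a)y\mid a\in\mc{A},\,y\in\ms{X}\}$, which is dense in $\ms{X}$ precisely by the nondegeneracy of $\uppi$.

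The remaining step is to promote this to an arbitrary $x\in\ms{X}$ by the standard $\ep/3$ estimate, the one genuinely essential ingredient being the uniform bound $\|\uppi(E_{\beta})\|\leq 1$. Given $\ep>0$, choosing $z\in\ms{D}$ with $\|x-z\|\leq\ep/3$ one has
\begin{equation*}
\|\uppi(E_{\beta})x-x\|
\leq
\|\uppi(E_{\beta})(x-z)\|
+
\|\uppi(E_{\beta})z-z\|
+
\|z-x\|
\leq
2\|x-z\|+\|\uppi(E_{\beta})z-z\|,
\end{equation*}
so that $\limsup_{\beta}\|\uppi(E_{\beta})x-x\|\leq 2\ep/3$ for all $\ep>0$, whence $\lim_{\beta}\uppi(E_{\beta})x=x$. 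As this holds for every $x\in\ms{X}$, it is exactly the statement that $\lim_{\beta}\uppi(E_{\beta})=\un$ w.r.t. the topology of simple convergence on $\mc{L}(\ms{X})$. The argument is routine; the only points that need care are the closedness of $\uppi(\mc{A})$ ensuring the applicability of Lemma \ref{11191814pre}, and the uniform norm bound that makes the density passage legitimate.
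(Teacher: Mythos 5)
Your proof is correct and follows essentially the same route as the paper's: contractivity and order preservation of $\uppi$ for the first assertion, then convergence of $\uppi(E_{\beta})$ on the dense span $\{\uppi(a)y\mid a\in\mc{A},y\in\ms{X}\}$ promoted to all of $\ms{X}$ via the uniform bound $\|\uppi(E_{\beta})\|\leq 1$. The only differences are presentational: the paper argues the first part directly rather than routing it through Lemma \ref{11191814pre} and Rmk. \ref{01201940}, and for the density passage it cites Bourbaki's equicontinuity of the unit ball where you write out the equivalent $\ep/3$ estimate explicitly.
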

\begin{proof}
The first sentence of the statement follows since $\uppi$
is norm continuous with norm less or equal to $1$
and since it is order preserving.
If $\uppi$ is nondegenerate
then
$\ms{X}=\ov{span}\{\uppi(a)x\mid a\in\mc{A},x\in\ms{X}\}$
thus
$\lim\uppi(E_{\beta})=\un$ 
w.r.t. the topology on $\mc{L}(\ms{X})$ of simple convergence in a total subset of $\ms{X}$,
since the first sentence of the statement.
Next for any $\beta\in C$ the 
$\uppi(E_{\beta})$ lies in the unit ball of $\mc{L}(\ms{X})$
which is a bounded set 
w.r.t. the topology of simple convergence in $\ms{X}$
hence equicontinuous 
by \cite[$III.25$ Thm. $1$]{tvs}.
Therefore we can apply
\cite[$III.17$ Prp. $5(1,2)$]{tvs}
and deduce that
$\lim\uppi(E_{\beta})=\un$ 
w.r.t. the topology on $\mc{L}(\ms{X})$ of simple convergence in 
$\ms{X}$.
\end{proof}
\begin{remark}
\label{12201417bis}
We can deduce for any $a\in\mc{B}$
\begin{equation}
\label{01111919}
\lim_{\beta\in C}\|
\mf{j}_{\mc{A}}^{\mc{B}}(E_{\beta})a-a\|_{\mc{B}}=0,
\end{equation}
as an application of Lemma \ref{12201745}
to the Hilbert $\mc{B}-$module 
$\ms{X}=\mc{B}$ and to the nondegenerate
homomorphism $\uppi=\mf{j}_{\mc{A}}^{\mc{B}}$.
Thus since Lemma \ref{01081112} 
and since $\mf{i}^{\mc{B}}$
is an isometry into its range we obtain 
\eqref{12201544}. 
Viceversa we can use Lemma \ref{01081112} 
and \eqref{12201544} to obtain 
\eqref{01111919}.
\end{remark}
\begin{corollary}
\label{12202039}
Let $\mc{A}$ be a $C^{\ast}-$algebra, $(\mf{H},\mc{R})$
a nondegenerate representation of $\mc{A}$
and $\rho$ a nonzero positive trace class operator on $\mf{H}$.
Then $\omega_{\rho}\circ\mc{R}\in\ms{E}_{\mc{L}(\mf{H})}$.
\end{corollary}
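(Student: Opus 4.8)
The plan is to verify directly the three defining requirements for $\omega_{\rho}\circ\mc{R}$ to be a state of $\mc{A}$: linearity, positivity, and unit norm. Linearity and positivity are immediate. Indeed $\omega_{\rho}$ is a state of $\mc{L}(\mf{H})$ by the convention fixed at the beginning of the section (the hypotheses that $\rho$ is nonzero and positive trace class guarantee $\ms{Tr}(\rho)\neq 0$ and $\omega_{\rho}\geq\ze$), while $\mc{R}\in Hom^{\ast}(\mc{A},\mc{L}(\mf{H}))$ is linear and positive; hence the composition $\omega_{\rho}\circ\mc{R}$ is a positive linear functional on $\mc{A}$. The whole content of the statement therefore reduces to showing $\|\omega_{\rho}\circ\mc{R}\|=1$.

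First I would fix an approximate identity $\{E_{\beta}\}_{\beta\in C}$ of $\mc{A}$ and invoke the standard fact that for a positive functional $\upphi$ on a $C^{\ast}-$algebra one has $\|\upphi\|=\lim_{\beta}\upphi(E_{\beta})$ (see \cite[Prp. $2.3.11$]{br1}), so that
\begin{equation*}
\|\omega_{\rho}\circ\mc{R}\|=\lim_{\beta\in C}\omega_{\rho}\bigl(\mc{R}(E_{\beta})\bigr).
\end{equation*}
Since $\mc{R}$ is nondegenerate, Lemma \ref{12201745} yields $\lim_{\beta}\mc{R}(E_{\beta})=\un$ with respect to the topology of simple convergence on $\mc{L}(\mf{H})$, and moreover $\|\mc{R}(E_{\beta})\|\leq 1$ for all $\beta$. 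It then remains only to pass this limit through $\omega_{\rho}$.

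The main obstacle is precisely this interchange of limit and $\omega_{\rho}$: the convergence furnished by Lemma \ref{12201745} is only in the strong operator topology, for which $\omega_{\rho}$ is not globally continuous. The point I would exploit is that $\omega_{\rho}$ is a \emph{normal} state, $\rho$ being trace class, hence continuous on norm-bounded nets for the strong operator topology. Concretely, writing the spectral decomposition $\rho=\sum_{n}\lambda_{n}P_{e_{n}}$ with $\lambda_{n}\geq 0$ and $\sum_{n}\lambda_{n}=\ms{Tr}(\rho)<\infty$, one has $\ms{Tr}(\rho\,\mc{R}(E_{\beta}))=\sum_{n}\lambda_{n}\langle e_{n},\mc{R}(E_{\beta})e_{n}\rangle$; each summand converges to $\langle e_{n},e_{n}\rangle=1$ by the strong convergence $\mc{R}(E_{\beta})\to\un$, while $|\lambda_{n}\langle e_{n},\mc{R}(E_{\beta})e_{n}\rangle|\leq\lambda_{n}$ with $\sum_{n}\lambda_{n}<\infty$, so dominated convergence gives $\ms{Tr}(\rho\,\mc{R}(E_{\beta}))\to\ms{Tr}(\rho)$. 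Dividing by $\ms{Tr}(\rho)$ yields $\omega_{\rho}(\mc{R}(E_{\beta}))\to\omega_{\rho}(\un)=1$, whence $\|\omega_{\rho}\circ\mc{R}\|=1$ and $\omega_{\rho}\circ\mc{R}$ is a state of $\mc{A}$, as required.
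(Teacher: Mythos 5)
Your proof is correct and follows essentially the same route as the paper: both reduce the claim via \cite[Prp. $2.3.11$]{br1} to showing $\lim_{\beta}\omega_{\rho}(\mc{R}(E_{\beta}))=1$ and obtain the strong convergence $\mc{R}(E_{\beta})\to\un$ from Lemma \ref{12201745}. The only difference is that where the paper cites \cite[Prp. $2.4.2$ and Thm. $2.4.21$]{br1} to pass from weak to $\sigma$-weak convergence on the bounded net and then uses the $\sigma$-weak continuity of $\omega_{\rho}$, you prove that continuity step by hand via the spectral decomposition of $\rho$ and dominated convergence, which is a perfectly valid unpacking of the same argument.
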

\begin{proof}
Let $\{E_{\beta}\}_{\beta\in C}$ be an approximate identity 
of $\mc{A}$ 
then
$\|\mc{R}(E_{\beta})\|\leq 1$ for all $\beta\in C$
and
$\lim_{\beta}\mc{R}(E_{\beta})=\un$ weakly,
since Lemma \ref{12201745} and the strong operator 
topology is stronger than the weak operator one. 
Therefore
$\lim_{\beta}\mc{R}(E_{\beta})=\un$ $\sigma-$weakly
since \cite[Prp. $2.4.2$]{br1}, so
\begin{equation}
\label{12202054}
\lim_{\beta}\omega_{\rho}(\mc{R}(E_{\beta}))
=\omega_{\rho}(\un)=1,
\end{equation}
since $\omega_{\rho}$ is $\sigma-$weakly continuous,
see for example \cite[Thm. $2.4.21$]{br1}.
Next $\omega_{\rho}\circ\mc{R}$ is positive hence the statement
follows since \eqref{12202054} and \cite[Prp. $2.3.11$]{br1}.
\end{proof}
\begin{remark}
\label{12211131}
Under the hypotheses of Cor. \ref{12202039}
and
since 
$Tr(\rho)\,\omega_{\rho}=Tr\circ L_{\rho}$
we have that
$\|Tr\circ L_{\rho}\circ\mc{R}\|=Tr(\rho)$.
\end{remark}
\begin{lemma}
\label{01091430}
Let $\mc{A}$ be a $C^{\ast}-$algebra then the map
$\ms{M}(\mc{A})\ni 
u\mapsto\mf{i}^{\ms{M}(\mc{A})}(u)\up\mc{K}(\mc{A})$
is a $\ast-$isomorphism of $\ms{M}(\mc{A})$ onto
$\ms{M}(\mc{K}(\mc{A}))$.
\end{lemma}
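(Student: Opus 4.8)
The plan is to recognise this statement as the special case $\ms{X}=\mc{A}_{\mc{A}}$ of the canonical identification $\mc{L}(\ms{X})\cong\ms{M}(\mc{K}(\ms{X}))$ for a Hilbert module $\ms{X}$. First I would fix the identifications. By definition $\ms{M}(\mc{A})=\mc{L}(\mc{A}_{\mc{A}})$, while $\mc{K}(\mc{A})$ denotes the generalized compacts $\mc{K}(\mc{A}_{\mc{A}})$, a closed two-sided ideal of $\mc{L}(\mc{A}_{\mc{A}})$ whose action on $\mc{A}_{\mc{A}}$ is nondegenerate: indeed $\mc{K}(\mc{A})\,\mc{A}_{\mc{A}}$ is dense in $\mc{A}_{\mc{A}}$, as one sees from the rank-one operators and an approximate identity of $\mc{A}$. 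Viewing $\mc{K}(\mc{A})$ as a Hilbert module over itself with $\langle k,l\rangle=k^{\ast}l$, we have $\ms{M}(\mc{K}(\mc{A}))=\mc{L}\bigl((\mc{K}(\mc{A}))_{\mc{K}(\mc{A})}\bigr)$; and since $\mc{K}(\mc{A})$ is an ideal of $\ms{M}(\mc{A})$, the operator $\mf{i}^{\ms{M}(\mc{A})}(u)\up\mc{K}(\mc{A})$ is exactly the left multiplication $\Psi(u)\colon k\mapsto uk$ of $\mc{K}(\mc{A})$ into itself.

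Next I would check that $\Psi$ lands in $\ms{M}(\mc{K}(\mc{A}))$ and is an injective $\ast-$homomorphism. Adjointability is a one-line inner-product computation: for $k,l\in\mc{K}(\mc{A})$ one has $\langle uk,l\rangle=(uk)^{\ast}l=k^{\ast}u^{\ast}l=\langle k,u^{\ast}l\rangle$, so $\Psi(u)$ is adjointable with $\Psi(u)^{\ast}=\Psi(u^{\ast})$; linearity and $\Psi(uv)=\Psi(u)\Psi(v)$ are immediate, left multiplication by $uv$ being the composite of the two left multiplications. Injectivity follows from the nondegeneracy noted above: if $uk=0$ for every $k\in\mc{K}(\mc{A})$, then $u$ annihilates the dense set $\mc{K}(\mc{A})\,\mc{A}_{\mc{A}}$, whence $u=0$ in $\mc{L}(\mc{A}_{\mc{A}})$.

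The substantive point, and the step I expect to be the main obstacle, is surjectivity; I would handle it with \cite[Prp. $2.50$]{rw}, exactly as in the earlier construction of $\upalpha^{-}$. The inclusion $\upalpha\colon\mc{K}(\mc{A})\hookrightarrow\mc{L}(\mc{A}_{\mc{A}})=\ms{M}(\mc{A})$ is a nondegenerate $\ast-$homomorphism, so by \eqref{01081715} it extends uniquely to a $\ast-$homomorphism $\upalpha^{-}\colon\ms{M}(\mc{K}(\mc{A}))\to\ms{M}(\mc{A})$ with $\upalpha^{-}\circ\mf{i}^{\mc{K}(\mc{A})}=\upalpha$, and I would verify that $\upalpha^{-}$ is a two-sided inverse of $\Psi$. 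For $\upalpha^{-}\circ\Psi=Id$: for $u\in\ms{M}(\mc{A})$ and $k\in\mc{K}(\mc{A})$ one has $\Psi(u)\,\mf{i}^{\mc{K}(\mc{A})}(k)=\mf{i}^{\mc{K}(\mc{A})}(uk)$, hence $\upalpha^{-}(\Psi(u))\,k=\upalpha^{-}\bigl(\Psi(u)\,\mf{i}^{\mc{K}(\mc{A})}(k)\bigr)=\upalpha(uk)=uk$, and nondegeneracy of the ideal forces $\upalpha^{-}(\Psi(u))=u$. For $\Psi\circ\upalpha^{-}=Id$: given $m\in\ms{M}(\mc{K}(\mc{A}))$, the defining identity for $\upalpha^{-}$ (cf. \eqref{12181442} and \cite[proof of Prp. $2.50$]{rw}) gives $\upalpha^{-}(m)\,k=m\cdot k$ for all $k$, where $m\cdot k\coloneqq(\mf{i}^{\mc{K}(\mc{A})})^{-1}\bigl(m\,\mf{i}^{\mc{K}(\mc{A})}(k)\bigr)$ is the module action; thus $\Psi(\upalpha^{-}(m))$ and $m$ agree on the essential ideal $\mf{i}^{\mc{K}(\mc{A})}(\mc{K}(\mc{A}))$ and therefore coincide.

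Combining these, $\Psi$ is a $\ast-$isomorphism of $\ms{M}(\mc{A})$ onto $\ms{M}(\mc{K}(\mc{A}))$ with inverse $\upalpha^{-}$. The only delicate points are the two ideal-density arguments that promote agreement on $\mc{K}(\mc{A})$ (respectively on its canonical image) to equality of multipliers; both rest on $\mc{K}(\mc{A})$ being an essential ideal, which is precisely where the nondegeneracy established at the outset does the real work.
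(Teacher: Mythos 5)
Your proof is correct, but it takes a genuinely different route from the paper's. The paper disposes of the lemma in two lines by citing \cite[Cor. $2.54$]{rw}: $(\ms{M}(\mc{A}),Id\up\mc{K}(\mc{A}))$ is a maximal unitization of $\mc{K}(\mc{A})$, so the uniqueness clause of \cite[Thm. $2.47$]{rw} yields a $\ast-$isomorphism $\Phi:\ms{M}(\mc{A})\to\ms{M}(\mc{K}(\mc{A}))$ with $\Phi\circ Id\up\mc{K}(\mc{A})=\mf{i}^{\mc{K}(\mc{A})}$; since the map in the statement also restricts on $\mc{K}(\mc{A})$ to $\mf{i}^{\mc{K}(\mc{A})}$, uniqueness identifies it with $\Phi$ and no inverse is ever exhibited. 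You instead verify by hand that $\Psi(u)=\mf{i}^{\ms{M}(\mc{A})}(u)\up\mc{K}(\mc{A})$ is an injective adjointable $\ast-$homomorphism and manufacture its inverse as the extension $\upalpha^{-}$ of the nondegenerate inclusion $\upalpha:\mc{K}(\mc{A})\to\mc{L}(\mc{A}_{\mc{A}})$ provided by \eqref{01081715}. Both arguments hinge on the same underlying fact, namely that $\mc{K}(\mc{A})\,\mc{A}$ is dense in $\mc{A}$ so that $\mc{K}(\mc{A})$ is an essential ideal of $\ms{M}(\mc{A})$, but they lean on different parts of \cite{rw}: the paper on the maximal-unitization characterization, you on the extension-of-nondegenerate-morphisms machinery that the paper had already installed for other purposes. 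Your version is longer but more self-contained and produces an explicit formula for the inverse; the paper's is shorter at the price of quoting a stronger black box. The one point worth tightening is the step $\upalpha^{-}(\Psi(u))\,k=uk$, where you silently identify the element $k$ with the operator $\upalpha(k)\in\mc{L}(\mc{A}_{\mc{A}})$; writing it as $\upalpha^{-}(\Psi(u))(kc)=ukc$ for all $c\in\mc{A}$ before invoking density makes the passage to $\upalpha^{-}(\Psi(u))=u$ airtight.
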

\begin{proof}
$(\ms{M}(\mc{A}),Id\up\mc{K}(\mc{A}))$ is a maximal unitization
of $\mc{K}(\mc{A})$ since \cite[Cor. $2.54$]{rw}, 
hence by \cite[proof. of Thm. $2.47$]{rw} 
there exists a unique $\ast-$homomorphism 
$\Phi:\ms{M}(\mc{A})\to\ms{M}(\mc{K}(\mc{A}))$ such that
$\Phi\circ Id\up\mc{K}(\mc{A})=\mf{i}^{\mc{K}(\mc{A})}$,
moreover $\Phi$ is a $\ast-$isomorphism.
Next 
$\mf{i}^{\ms{M}(\mc{A})}(u)\up\mc{K}(\mc{A})
=\mf{i}^{\mc{K}(\mc{A})}(u)$ for any $u\in\mc{K}(\mc{A})$
therefore the statement follows since the above uniqueness.
\end{proof}
Let us end this section by proving useful results concerning the extension of suitable morphisms to multiplier algebras.
\begin{lemma}
\label{01081736}
Let $\mc{A}$, $\mc{B}$ and $\mc{C}$
be $C^{\ast}-$algebras, 
$\ms{X}$ and $\ms{Y}$ be a 
Hilbert $\mc{B}-$module 
and 
Hilbert $\mc{C}-$module 
respectively,
$\upbeta:\mc{L}(\ms{X})\to\mc{L}(\ms{Y})$
and
$\upalpha:\mc{A}\to\mc{L}(\ms{X})$
be $\ast-$homomorphisms such that 
$\upbeta$ is nondegenerate.
If
$\upalpha$ is surjective,
or 
$\upalpha(\mc{A})$ is strictly dense in $\mc{L}(\ms{X})$
and 
$\upbeta(\mc{K}(\ms{X}))\supseteq\mc{K}(\ms{Y})$,
then $\upbeta\circ\upalpha$ is nondegenerate
and 
$(\upbeta\circ\upalpha)^{-}
=\upbeta\circ\upalpha^{-}$.
\end{lemma}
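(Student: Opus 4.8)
The plan is to reduce the whole statement to the nondegeneracy of $\upbeta\circ\upalpha$, after first observing that under either hypothesis $\upalpha$ is itself nondegenerate, so that $\upalpha^{-}$ is defined by \eqref{01081715}. Granting nondegeneracy of $\upbeta\circ\upalpha$, the asserted equality is forced by the uniqueness clause of \eqref{01081715}: the composite $\upbeta\circ\upalpha^{-}$ is a $\ast$-homomorphism $\ms{M}(\mc{A})\to\mc{L}(\ms{Y})$, and since $\upalpha^{-}\circ\mf{i}^{\mc{A}}=\upalpha$ it satisfies $(\upbeta\circ\upalpha^{-})\circ\mf{i}^{\mc{A}}=\upbeta\circ\upalpha$; as $(\upbeta\circ\upalpha)^{-}$ is by definition the unique $\ast$-morphism on $\ms{M}(\mc{A})$ with restriction $\upbeta\circ\upalpha$ along $\mf{i}^{\mc{A}}$, the two coincide. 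Thus only the nondegeneracy statements require work.

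When $\upalpha$ is surjective the argument is immediate. Then $\upalpha(\mc{A})=\mc{L}(\ms{X})$ contains $\un$, so $\ov{\upalpha(\mc{A})\ms{X}}\supseteq\un\ms{X}=\ms{X}$ and $\upalpha$ is nondegenerate; moreover $\upbeta$ is a nondegenerate $\ast$-morphism out of the unital algebra $\mc{L}(\ms{X})$, hence $\upbeta(\un)=\un$ (as for $\upalpha^{-}$ in the discussion preceding \eqref{01081715}). Choosing $a_{0}$ with $\upalpha(a_{0})=\un$ gives $\un=\upbeta(\upalpha(a_{0}))\in\upbeta(\upalpha(\mc{A}))$, so $\ov{\upbeta(\upalpha(\mc{A}))\ms{Y}}=\ms{Y}$ and $\upbeta\circ\upalpha$ is nondegenerate.

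The substantive case is the second, where the device is to route everything through the compact operators. Since $\upalpha(\mc{A})$ is strictly dense in $\mc{L}(\ms{X})=\ms{M}(\mc{K}(\ms{X}))$, I would select a net $\{a_{i}\}$ in $\mc{A}$ with $\upalpha(a_{i})\to\un$ in the strict topology; by the definition of that topology this yields $\upalpha(a_{i})k\to k$ in norm for every $k\in\mc{K}(\ms{X})$, and applying the contractive $\ast$-homomorphism $\upbeta$ gives $\upbeta(\upalpha(a_{i}))\,\upbeta(k)\to\upbeta(k)$ in norm. Given $\kappa\in\mc{K}(\ms{Y})$ and $y\in\ms{Y}$, the containment $\upbeta(\mc{K}(\ms{X}))\supseteq\mc{K}(\ms{Y})$ lets me write $\kappa=\upbeta(k)$ with $k\in\mc{K}(\ms{X})$, whence $\upbeta(\upalpha(a_{i}))\,\kappa y\to\kappa y$ and so $\kappa y\in\ov{\upbeta(\upalpha(\mc{A}))\ms{Y}}$. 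Because $\mc{K}(\ms{Y})$ acts nondegenerately on the Hilbert module $\ms{Y}$, the vectors $\kappa y$ span a dense subspace, giving $\ov{\mathrm{span}}\,\upbeta(\upalpha(\mc{A}))\ms{Y}=\ms{Y}$, i.e. the nondegeneracy of $\upbeta\circ\upalpha$. The very same net, with $\upbeta$ replaced by the identity and $\ms{Y}$ by $\ms{X}$, shows $\upalpha$ nondegenerate, justifying the use of $\upalpha^{-}$.

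I expect the main obstacle to be exactly this second case: strict density provides no unit inside $\upalpha(\mc{A})$, so nondegeneracy cannot be read off directly and must be transported to $\ms{Y}$ through a strictly convergent approximate unit together with the compact-operator containment. The delicate points are verifying that strict convergence $\upalpha(a_{i})\to\un$ passes to the norm convergence $\upbeta(\upalpha(a_{i}))\kappa\to\kappa$ on $\mc{K}(\ms{Y})$, and invoking nondegeneracy of the $\mc{K}(\ms{Y})$-action on $\ms{Y}$. Once these are in place, the identity $(\upbeta\circ\upalpha)^{-}=\upbeta\circ\upalpha^{-}$ follows in one line from uniqueness, as explained above.
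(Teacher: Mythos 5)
Your proposal is correct and follows essentially the same route as the paper's proof: the same case split, the same reduction of the equality $(\upbeta\circ\upalpha)^{-}=\upbeta\circ\upalpha^{-}$ to the uniqueness clause of \eqref{01081715}, and in the strict-density case the same mechanism of transporting an approximate unit for $\upalpha(\mc{A})$ through the containment $\upbeta(\mc{K}(\ms{X}))\supseteq\mc{K}(\ms{Y})$. The only cosmetic differences are that the paper phrases your explicit net argument as strict-to-strong continuity of $\upbeta$ and closes the density argument via nondegeneracy of $\upbeta$, whereas you close it via the nondegenerate action of $\mc{K}(\ms{Y})$ on $\ms{Y}$; both are sound.
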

\begin{proof}
Let $\updelta=\upbeta\circ\upalpha$ and $y\in\ms{Y}$.
Case $\upalpha$ surjective.
Of course
$\upbeta(\mc{L}(\ms{X}))=\updelta(\mc{A})$, so
\begin{equation}
\label{01211955}
\upbeta(\mc{L}(\ms{X}))\ms{Y}
\subseteq
\ov{span}\{\updelta(a)z\mid a\in\mc{A},z\in\ms{Y}\},
\end{equation}
and the first sentence of the statement follows since $\upbeta$ is 
nondegenerate.
Remaining case.
$\upbeta$ is strictly 
continuous since it is norm continuous and 
$\upbeta(\mc{K}(\ms{X}))\supseteq\mc{K}(\ms{Y})$.
Moreover 
$\upbeta$ is continuous w.r.t. the strict topology 
on $\mc{L}(\ms{X})$ 
and the strong operator topology on $\mc{L}(\ms{Y})$
since the strict topology on $\mc{L}(\ms{Y})$ is stronger than the $\ast-$strong topology (\cite[Prp. $C.7$]{rw})
so stronger than the strong operator topology.
Then since the hypothesis it follows that $\upbeta(\mc{L}(\ms{X}))\subseteq\ov{\updelta(\mc{A})}$ 
closure w.r.t. the strong operator topology on $\mc{L}(\ms{Y})$, so \eqref{01211955} follows. 
Then $\beta\circ\alpha$ is nondegenerate and $(\upbeta\circ\upalpha)^{-}\circ\mf{i}^{\mc{A}}=\upbeta\circ\upalpha$ since \eqref{01081715}.
Moreover $\upalpha$ is nondegenerate 
in both cases, indeed 
$\un\in\mc{L}(\ms{X})$, while the strict topology on $\mc{L}(\ms{X})$ 
is stronger than the strong operator topology. 
So 
$\upalpha^{-}\circ\mf{i}^{\mc{A}}=\upalpha$,
then the equality follows 
since the uniqueness of which in \eqref{01081715}.
\end{proof}
More in general we can state
\begin{proposition}
\label{01101756}
Let $\mc{A}$, $\mc{B}$ and $\mc{C}$
be $C^{\ast}-$algebras, 
$\ms{X}$ and $\ms{Y}$ be a 
Hilbert $\mc{B}-$module 
and 
Hilbert $\mc{C}-$module 
respectively,
$\upbeta:\mc{L}(\ms{X})\to\mc{L}(\ms{Y})$
and
$\upalpha:\mc{A}\to\mc{L}(\ms{X})$
be $\ast-$homomorphisms.
If $\un_{\ms{Y}}$ belongs to the strong operator 
closure of the set $(\upbeta\circ\upalpha)(\mc{A})$
and $\upalpha$ is nondegenerate, 
then $\upbeta\circ\upalpha$ is nondegenerate
and 
$(\upbeta\circ\upalpha)^{-}
=\upbeta\circ\upalpha^{-}$.
\end{proposition}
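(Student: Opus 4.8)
The plan is to obtain the nondegeneracy of $\upbeta\circ\upalpha$ directly from the hypothesis on $\un_{\ms{Y}}$, which is a weaker requirement than either of the two conditions imposed in Lemma \ref{01081736}, and then to deduce the factorization $(\upbeta\circ\upalpha)^{-}=\upbeta\circ\upalpha^{-}$ from the existence and uniqueness of the canonical extension recorded in \eqref{01081715}, exactly along the lines of the proof of Lemma \ref{01081736}. Thus the whole statement becomes a streamlined version of that lemma, in which the membership of $\un_{\ms{Y}}$ in the strong operator closure of $(\upbeta\circ\upalpha)(\mc{A})$ replaces the work needed there to verify \eqref{01211955}.

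First I would set $\updelta\coloneqq\upbeta\circ\upalpha$ and establish nondegeneracy. Since $\un_{\ms{Y}}$ lies in the strong operator closure of $\updelta(\mc{A})$, there is a net $\{a_{\lambda}\}_{\lambda}$ in $\mc{A}$ with $\updelta(a_{\lambda})\to\un_{\ms{Y}}$ in the strong operator topology of $\mc{L}(\ms{Y})$, hence $\updelta(a_{\lambda})y\to y$ in norm for every $y\in\ms{Y}$. Therefore each $y$ belongs to $\ov{span}\{\updelta(a)z\mid a\in\mc{A},\,z\in\ms{Y}\}$, so $\updelta$ is nondegenerate. For the equality I would next observe that $\upalpha$ is nondegenerate by hypothesis, so \eqref{01081715} yields the canonical extension $\upalpha^{-}\colon\ms{M}(\mc{A})\to\mc{L}(\ms{X})$ with $\upalpha^{-}\circ\mf{i}^{\mc{A}}=\upalpha$. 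Consequently $\upbeta\circ\upalpha^{-}\colon\ms{M}(\mc{A})\to\mc{L}(\ms{Y})$ is a $\ast-$morphism satisfying $(\upbeta\circ\upalpha^{-})\circ\mf{i}^{\mc{A}}=\upbeta\circ(\upalpha^{-}\circ\mf{i}^{\mc{A}})=\upbeta\circ\upalpha=\updelta$. Since $\updelta$ is nondegenerate, \eqref{01081715} applied to $\updelta$ provides the canonical extension $\updelta^{-}$ together with its uniqueness among $\ast-$morphisms on $\ms{M}(\mc{A})$ that restrict to $\updelta$ through $\mf{i}^{\mc{A}}$; as $\upbeta\circ\upalpha^{-}$ is precisely such a morphism, I conclude $(\upbeta\circ\upalpha)^{-}=\upbeta\circ\upalpha^{-}$.

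I do not expect a genuine obstacle, since all the substantive content — the construction of the extension $(\cdot)^{-}$ and its uniqueness — is already encapsulated in \eqref{01081715}, and the hypothesis is tailored so that nondegeneracy of $\updelta$ is immediate. The only point deserving a word of care is the passage from membership of $\un_{\ms{Y}}$ in the strong operator closure of $\updelta(\mc{A})$ to the existence of a net in $\updelta(\mc{A})$ converging to $\un_{\ms{Y}}$; this is legitimate because $\mc{L}(\ms{Y})$ equipped with the strong operator topology is a topological vector space, whose closures are described by convergent nets.
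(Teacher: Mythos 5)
Your proposal is correct and coincides with the paper's own argument: the paper likewise extracts a net $\{a_{i}\}$ from the hypothesis so that $y=\lim_{i}(\upbeta\circ\upalpha)(a_{i})y$ for all $y\in\ms{Y}$, concludes nondegeneracy of $\upbeta\circ\upalpha$, and then identifies $(\upbeta\circ\upalpha)^{-}$ with $\upbeta\circ\upalpha^{-}$ via the uniqueness clause in \eqref{01081715}. No gap.
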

\begin{proof}
$\upalpha^{-}\circ\mf{i}^{\mc{A}}=\upalpha$
since
\eqref{01081715}, while
since the hypothesis there exists a net $\{a_{i}\}$ in $\mc{A}$
such that 
$y=\lim_{i}(\upbeta\circ\upalpha)(a_{i})y$
for all $y\in\ms{Y}$, thus $\upbeta\circ\upalpha$ is nondegenerate.
Therefore
$(\upbeta\circ\upalpha)^{-}\circ\mf{i}^{\mc{A}}
=
\upbeta\circ\upalpha$ 
and the equality in the statement follows
since the uniqueness of which in
\eqref{01081715}.
\end{proof}
\begin{remark}
\label{01101755}
Under the notations of Prp. \ref{01101756}
we obtain the same statement if $\upalpha$ is surjective
and $\upbeta$ is nondegenerate, indeed in such a case 
$\upbeta\circ\upalpha$ is nondegenerate.
Clearly we obtain the same statement if we only require
$\upalpha$ and $\upbeta\circ\upalpha$ to be nondegenerate.
\end{remark}
\begin{corollary}
\label{01101820}
Let $\mc{A}$, $\mc{B}$ and $\mc{C}$
be $C^{\ast}-$algebras, 
$\ms{Y}$ be a 
Hilbert $\mc{C}-$module 
$\upbeta:\mc{B}\to\mc{L}(\ms{Y})$
and
$\upalpha:\mc{A}\to\mc{B}$
be $\ast-$homomorphisms.
If $\un_{\ms{Y}}$ belongs to the strong operator 
closure of the set $(\upbeta\circ\upalpha)(\mc{A})$
and $\upalpha$ is surjective,
then $\upbeta\circ\upalpha$ is nondegenerate
and 
$(\upbeta\circ\upalpha)^{-}
=\upbeta^{-}\circ(\mf{i}^{\mc{B}}\circ\upalpha)^{-}$.
\end{corollary}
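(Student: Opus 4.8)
The plan is to deduce the statement from Proposition \ref{01101756} by realizing $\mc{B}$ inside the adjointable operators on a Hilbert module. Concretely, I would take $\ms{X}\coloneqq\mc{B}_{\mc{B}}$, the Hilbert $\mc{B}-$module associated to $\mc{B}$, so that $\mc{L}(\ms{X})=\ms{M}(\mc{B})$, and factor $\upbeta\circ\upalpha$ through $\ms{M}(\mc{B})$ by means of $\mf{i}^{\mc{B}}$. The whole point is that the corollary differs from Proposition \ref{01101756} only in that $\upalpha$ now lands in a bare $C^{\ast}-$algebra $\mc{B}$ rather than in some $\mc{L}(\ms{X})$, and this gap is bridged exactly by the canonical embedding into the multiplier algebra.

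First I would verify that $\upbeta$ is nondegenerate. Since $\upalpha$ is surjective one has $(\upbeta\circ\upalpha)(\mc{A})=\upbeta(\mc{B})$, so the hypothesis says that $\un_{\ms{Y}}$ lies in the strong operator closure of $\upbeta(\mc{B})$; hence for every $y\in\ms{Y}$ there is a net $\{b_{i}\}$ in $\mc{B}$ with $\upbeta(b_{i})y\to\un_{\ms{Y}}y=y$, which places $y$ in the closure of $\{\upbeta(b)y\mid b\in\mc{B}\}$ and shows $\upbeta$ is nondegenerate. Consequently the extension $\upbeta^{-}:\ms{M}(\mc{B})\to\mc{L}(\ms{Y})$ exists and satisfies $\upbeta^{-}\circ\mf{i}^{\mc{B}}=\upbeta$ by \eqref{01081715}.

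Then I would apply Proposition \ref{01101756} to the pair $\mf{i}^{\mc{B}}\circ\upalpha:\mc{A}\to\mc{L}(\ms{X})$ and $\upbeta^{-}:\mc{L}(\ms{X})\to\mc{L}(\ms{Y})$. Its two hypotheses hold: the composite satisfies $\upbeta^{-}\circ(\mf{i}^{\mc{B}}\circ\upalpha)=\upbeta\circ\upalpha$, so $\un_{\ms{Y}}$ lies in the strong operator closure of its range by assumption; and $\mf{i}^{\mc{B}}\circ\upalpha$ is nondegenerate, since surjectivity of $\upalpha$ gives $(\mf{i}^{\mc{B}}\circ\upalpha)(\mc{A})=\mf{i}^{\mc{B}}(\mc{B})$ while $\mf{i}^{\mc{B}}$ is itself nondegenerate. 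The proposition then yields at once that $\upbeta\circ\upalpha$ is nondegenerate and that $(\upbeta\circ\upalpha)^{-}=\upbeta^{-}\circ(\mf{i}^{\mc{B}}\circ\upalpha)^{-}$, which is precisely the assertion.

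I do not expect a genuine obstacle: the analytic content is already carried by Proposition \ref{01101756}, and the only points requiring care are the passage through $\ms{M}(\mc{B})$ and the bookkeeping of the two nondegeneracy checks, namely that of $\upbeta$ and that of $\mf{i}^{\mc{B}}\circ\upalpha$. Both reduce to the surjectivity of $\upalpha$ combined with the nondegeneracy of $\mf{i}^{\mc{B}}$, so the argument is short once the correct factorization $\upbeta\circ\upalpha=\upbeta^{-}\circ\mf{i}^{\mc{B}}\circ\upalpha$ is set up.
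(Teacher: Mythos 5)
Your proposal is correct and follows essentially the same route as the paper: factor $\upbeta\circ\upalpha$ as $\upbeta^{-}\circ(\mf{i}^{\mc{B}}\circ\upalpha)$, note that $\mf{i}^{\mc{B}}\circ\upalpha$ is nondegenerate because $\mf{i}^{\mc{B}}$ is nondegenerate and $\upalpha$ is surjective, and apply Prp.~\ref{01101756} to the pair $(\upbeta^{-},\mf{i}^{\mc{B}}\circ\upalpha)$. Your extra explicit check that $\upbeta$ itself is nondegenerate (so that $\upbeta^{-}$ exists) is a detail the paper leaves implicit, but it does not change the argument.
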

\begin{proof}
Since $\mf{i}^{\mc{B}}$
is nondegenerate 
and $\upalpha$ is surjective
then 
$\mf{i}^{\mc{B}}\circ\upalpha$
is nondegenerate,
moreover 
$\upbeta^{-}\circ\mf{i}^{\mc{B}}\circ\upalpha
=
\upbeta\circ\upalpha$.
Hence we can apply Prp. \ref{01101756} to the maps
$\upbeta^{-}$ and $\mf{i}^{\mc{B}}\circ\upalpha$,
and the statement follows.
\end{proof}
\begin{corollary}
\label{01101930}
Let $\mc{A}$, $\mc{B}$ and $\mc{C}$
be $C^{\ast}-$algebras, 
$\ms{Y}$ be a 
Hilbert $\mc{C}-$module 
$\upbeta:\mc{B}\to\mc{L}(\ms{Y})$
and
$\upalpha:\mc{A}\to\mc{B}$
be $\ast-$homomorphisms.
If $\upbeta\circ\upalpha$ is nondegenerate
and $\upalpha$ is surjective
then 
and 
$(\upbeta\circ\upalpha)^{-}
=\upbeta^{-}\circ(\mf{i}^{\mc{B}}\circ\upalpha)^{-}$.
\end{corollary}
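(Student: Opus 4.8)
The plan is to reduce this statement directly to Cor. \ref{01101820}, whose only extra hypothesis, compared with the present one, is that $\un_{\ms{Y}}$ lies in the strong operator closure of the set $(\upbeta\circ\upalpha)(\mc{A})$. So the single thing I need to establish is that the nondegeneracy of $\upbeta\circ\upalpha$ already forces this closure condition; everything else is then inherited.

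First I would fix an approximate identity $\{E_{\beta}\}_{\beta\in C}$ of $\mc{A}$, which exists since every $C^{\ast}-$algebra admits one. The map $\upbeta\circ\upalpha$ is a $\ast-$homomorphism from $\mc{A}$ into $\mc{L}(\ms{Y})$, and by hypothesis it is nondegenerate, so Lemma \ref{12201745} applies with $\uppi=\upbeta\circ\upalpha$ and the Hilbert module $\ms{Y}$ in the role of $\ms{X}$. It yields $\lim_{\beta}(\upbeta\circ\upalpha)(E_{\beta})=\un_{\ms{Y}}$ in the strong operator topology on $\mc{L}(\ms{Y})$. Since each $(\upbeta\circ\upalpha)(E_{\beta})$ belongs to $(\upbeta\circ\upalpha)(\mc{A})$, this exhibits $\un_{\ms{Y}}$ as a strong operator limit of a net in $(\upbeta\circ\upalpha)(\mc{A})$, i.e. $\un_{\ms{Y}}$ lies in the strong operator closure of that set, which is exactly the missing hypothesis.

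Having verified this, all the hypotheses of Cor. \ref{01101820} are now in force: $\un_{\ms{Y}}$ is in the strong operator closure of $(\upbeta\circ\upalpha)(\mc{A})$ as just shown, and $\upalpha$ is surjective by assumption. Its conclusion $(\upbeta\circ\upalpha)^{-}=\upbeta^{-}\circ(\mf{i}^{\mc{B}}\circ\upalpha)^{-}$ then holds verbatim and delivers the claim. I expect essentially no obstacle in this argument: the entire content is the elementary fact that a nondegenerate representation carries an approximate identity to a net converging strongly to the identity operator, which is packaged precisely as Lemma \ref{12201745}. The only point worth a second look is that that lemma is stated for a $\ast-$homomorphism into $\mc{L}(\ms{X})$ of a Hilbert module, and here the relevant map $\upbeta\circ\upalpha\colon\mc{A}\to\mc{L}(\ms{Y})$ is of exactly this form, so no adaptation is needed.
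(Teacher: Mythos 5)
Your proof is correct. The only point that needs checking is the one you flag yourself: that nondegeneracy of $\upbeta\circ\upalpha$ yields the strong-operator-closure hypothesis of Cor. \ref{01101820}, and Lemma \ref{12201745} applied to the nondegenerate $\ast-$homomorphism $\upbeta\circ\upalpha:\mc{A}\to\mc{L}(\ms{Y})$ does exactly that, since convergence in the topology of simple convergence on $\mc{L}(\ms{Y})$ is convergence in the strong operator topology, so $\un_{\ms{Y}}=\lim_{\beta}(\upbeta\circ\upalpha)(E_{\beta})$ places $\un_{\ms{Y}}$ in the required closure. The paper takes a shorter route: it observes that $\mf{i}^{\mc{B}}\circ\upalpha$ is nondegenerate (as $\mf{i}^{\mc{B}}$ is nondegenerate and $\upalpha$ is surjective), that $\upbeta^{-}\circ\mf{i}^{\mc{B}}\circ\upalpha=\upbeta\circ\upalpha$, and then invokes Rmk. \ref{01101755}, whose last sentence already records the variant of Prp. \ref{01101756} in which one only assumes that $\upalpha$ and $\upbeta\circ\upalpha$ are nondegenerate, with no closure condition at all. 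Both arguments ultimately rest on the same uniqueness statement in \eqref{01081715} applied to the pair $\upbeta^{-}$ and $\mf{i}^{\mc{B}}\circ\upalpha$; your detour through the approximate identity is sound but strictly unnecessary, while it has the mild virtue of making explicit why nondegeneracy subsumes the closure hypothesis of Cor. \ref{01101820}.
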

\begin{proof}
Since $\mf{i}^{\mc{B}}$
is nondegenerate 
and 
$\upalpha$ is surjective
then 
$\mf{i}^{\mc{B}}\circ\upalpha$
is nondegenerate, 
moreover 
$\upbeta^{-}\circ\mf{i}^{\mc{B}}\circ\upalpha
=
\upbeta\circ\upalpha$.
Thus the statement follows since Rmk. \ref{01101755}.
\end{proof}
\part{Stability}
\label{07301106}
\section{Introduction}
\label{introII}
In this part we construct the category $\mf{G}(G,F,\uprho)$ of nucleon systems,
introduce its physical interpretation,
define nucleon-fragment doublets on a category $\mf{C}$ and extended $\mf{C}-$equivariant stabilities,
construct the canonical extended $\ms{C}_{u}(H)-$equivariant stability 
and the canonical nucleon-fragment doublet on $\ms{C}_{u}(H)$.
The part is organized as follows.
In section \ref{cat} we introduce the category $\mf{G}(G,F,\uprho)$, 
describe in section \ref{06171340} the concept of a state \emph{originated} via a phase
and introduce the physical interpretation of these data in section \ref{06171244}.
In section \ref{06241544} we define the main structure of the work namely 
the \emph{nucleon-fragment doublet on a category $\mf{C}$},
and introduce the auxiliary structures of equivariant stability and 
extended $\mf{C}-$equivariant stability.
Extended $\mf{C}-$equivariant stabilities are important because of 
with each of them easily we can associate a nucleon-fragment doublet 
on $\mf{C}$. 
Our tenet is to translate the physical concept of ``universality'' into the concept of 
``natural transformation'' in category theory.
Therefore our goal of resolving the universality claim 
justifies the introduction of nucleon-fragment doublets.
Indeed in this section we describe the symmetry properties of a doublet
and define the expanded equivariances of the 
$\mc{T}-$nucleon phase and $\mc{T}-$fragment state,
representing 
the $\mc{T}-$resolution of the equivariant form of the universality claim.
Then we describe the properties of invariance and the corresponding physical interpretation.
Section \ref{07101559} is dedicated to establish the existence of an equivariant stability.
In section \ref{func1} we construct the object part of a functor 
from $\ms{C}_{u}(H)$ to $\mf{G}(G,F,\uprho)$, where the former is a subcategory of $C^{\ast}-$dynamical systems
and equivariant morphisms, and in section \ref{func2} we complete the construction of the functor.
It is in section \ref{func2} that we state in our main theorem the existence of the 
canonical extended $\ms{C}_{u}(H)-$equivariant stability 
and the canonical nucleon-fragment doublet $\mc{T}_{\bullet}$ on $\ms{C}_{u}(H)$.
As a result we resolve the equivariant form of the universality claim
stated as $\mc{T}_{\bullet}-$resolution. 
In part \ref{07301107} 
we shall provide the compact equivariant form and the invariant form
of the universality claim, and as a result we will establish the universality of the global Terrell law.
\section{The category of nucleon systems}
\label{thermalinv}
Section \ref{cat} is dedicated to the definition of the category of nucleon systems,
whose physical interpretation is given in section \ref{06171244} by mean of the 
cardinal concept of origination of fragment states via a nucleon phase
developed in section \ref{06171340}.
In section \ref{06241544} we introduce the concept of
nucleon-fragment doublet on a category, the main structure of this work,
and state its symmetries. 
In the same section we define equivariant stabilities and 
extended $\mf{C}-$equivariant stabilities auxiliary objects to construct nucleon-fragment doublets.
In section \ref{07101559} we construct an equivariant stability. 
In the present section \ref{thermalinv} we assume fixed 
two locally compact topological groups $G$ and $F$,
a group homomorphism $\uprho:F\to Aut_{\ms{Gr}}(G)$ 
such that the map $(g,f)\mapsto\uprho_{f}(g)$
on $G\times F$ at values in $G$, is continuous,
moreover let $H$ denote $G\rtimes_{\uprho}F$.
\subsection{The category $\mf{G}(G,F,\uprho)$}
\label{cat}
\begin{definition}
[The category of dynamical systems]
\label{11211655}
For any locally compact group $V$
we define the category 
$\ms{C}(V)$ whose object set
is the set of the dynamical systems 
$\lr{\mc{A},V}{\upeta}$
such that $\mc{A}$ is unital,
while for any 
$\mf{A},\mf{B}\in Obj(\ms{C}(V))$
we define $Mor_{\ms{C}(V)}(\mf{A},\mf{B})$
the set of the surjective $(\mf{A},\mf{B})-$equivariant
morphisms with law of composition the map composition
and if $\mf{A}=\mf{B}$ the identity map as the identity morphism. 
Let $\ms{C}_{u}(V)$ denote the full subcategory of $\ms{C}(V)$ 
whose object set is the set of the dynamical systems $\mf{A}=\lr{\mc{A},V}{\upeta}$ such that $\mc{A}$ 
is a von Neumann algebra in its canonical standard form.
Let $\ms{v}^{\mf{A}}$ or $\ms{v}^{\upeta}$ denote the unique group morphism of $H$ into $\mc{U}(\mc{A})$ 
unitarily implementing 
and associated with $\upeta$ and to the canonical standard form of $\mc{A}$ according to \cite[Thm $9.1.15$]{tak2}. 
\end{definition}
\begin{definition}
[The category of nucleon systems]
\label{05301823}
Define 
$Obj(\mf{G}(G,F,\uprho))$ 
the set of the 
$\mc{G}=
\lr{\pf{T},\ms{I},\upbeta_{c},\ms{P},\mf{a}}
{\mf{e},\ps{\upvarphi},\ms{A},\uppsi,\mf{b},\ms{m},\mf{E}}$
such that
\begin{enumerate}
\item 
$\pf{T}$ is a set;
\item
$\ms{I}:\pf{T}\to\ms{set}$;
\item
$\upbeta_{c}\in\prod_{\mf{Q}\in\pf{T}}\ms{I}^{\mf{Q}}$;
\item
$\ms{P}\in\prod_{\mf{Q}\in\pf{T}}\mathscr{P}(\ms{I}^{\mf{Q}})$;
\item
$\mf{a}\in
\prod_{\mf{Q}\in\pf{T}}
\prod_{\alpha\in\ms{I}^{\mf{Q}}}
\ms{C}(H)
$;
\item
$\mf{e}\in\prod_{\mf{Q}\in\pf{T}}
\prod_{\alpha\in\ms{I}^{\mf{Q}}}
\ms{C}(\R)$;
\item
$\ps{\upvarphi}\in
\prod_{\mf{Q}\in\pf{T}}
\prod_{\alpha\in\ms{I}^{\mf{Q}}}
\ms{E}_{\mc{A}_{\alpha}^{\mf{Q}}}$;
\item
$\ms{A}\in\ms{Ab}$
\item
$\uppsi\in Mor_{\ms{Gr}}(H,Aut_{\ms{Ab}}(\ms{A}))$;
\label{08261212}
\item
$\mf{b}\in Mor_{\ms{Gr}}(H,Aut_{\ms{set}}(\pf{T}))$;
\label{08261213}
\item
$\ms{m}\in
Mor_{\ms{Ab}}(\ms{A},\prod_{\mf{Q}\in\pf{T}}
Mor_{\ms{set}}(\ms{P}^{\mf{Q}},\R))$.
\end{enumerate}
Here 
$\mf{a}_{\alpha}^{\mf{T}}=\lr{\mc{A}_{\alpha}^{\mf{T}},H}{\upeta_{\alpha}^{\mf{T}}}$, 
in addition let 
$\ms{F}_{\ps{\upvarphi}_{\alpha}^{\mf{T}}}$ denote $\ms{F}_{\ps{\upvarphi}_{\alpha}^{\mf{T}}}(\mf{a}_{\alpha}^{\mf{T}})$
for all $\mf{T}\in\pf{T}$ and $\alpha\in\ms{I}^{\mf{T}}$, and let $\mf{T}^{l}$ denote $\mf{b}(l)(\mf{T})$ for all $l\in H$.
Then we require for all $\mf{T}\in\pf{T}$ and $\alpha\in\mf{T}$
\begin{equation*}
\ps{\upvarphi}_{\alpha}^{\mf{T}}\in
\ms{E}_{\mc{A}_{\alpha}^{\mf{T}}}^{G}
(\uptau_{\upeta_{\alpha}^{\mf{T}}}),
\end{equation*}
that
$\ms{I}^{\mf{T}^{l}}=\ms{I}^{\mf{T}}$
and
\begin{equation}
\label{12051548}
\beta_{c}^{\mf{T}^{l}}=\beta_{c}^{\mf{T}},
\quad
\ms{ad}(\Pr_{2}(l))
(\ms{F}_{\ps{\upvarphi}_{\alpha}^{\mf{T}}})
=
\ms{F}_{\ps{\upvarphi}_{\alpha}^{\mf{T}^{l}}}.
\end{equation}
Moreover
\begin{thermal}
for all $\mf{T}\in\pf{T}$
\begin{equation}
\label{eqtherm}
\ms{P}^{\mf{T}}
=\{
\alpha\in\ms{I}^{\mf{T}}
\mid
\ms{F}_{\ps{\upvarphi}_{\alpha}^{\mf{T}}}
\supseteq
\ms{F}_{\ps{\upvarphi}_{\upbeta_{c}^{\mf{T}}}^{\mf{T}}}
\}.
\end{equation}
\end{thermal}
\begin{equivar}
For any $l\in H$ and 
$\ms{f}\in\ms{A}$
\footnote{
\eqref{12051444} is well-set since Rmk. \ref{01081803}}
\begin{equation}
\label{12051444}
\ms{ev}_{\ms{f}}
(\ms{m}\circ\uppsi(l))
=
\ms{ev}_{\ms{f}}(\ms{m})
\circ\mf{b}(l^{-1}).
\end{equation}
\end{equivar}
\begin{dyn}
For any $\mf{T}\in\pf{T}$ and $\alpha\in\ms{I}^{\mf{T}}$
we have
$\mf{e}_{\alpha}^{\mf{T}}=\lr{\mc{A}_{\alpha}^{\mf{T}},\R}{\ep_{\alpha}^{\mf{T}}}$
and
\begin{equation}
\label{eqdyn}
\begin{aligned}
\ps{\upvarphi}_{\alpha}^{\mf{T}}
\in
\ep_{\alpha}^{\mf{T}}-KMS,
\\
\ep_{\alpha}^{\mf{T}^{l}}
=
\ms{ad}(\eta_{\alpha}^{\mf{T}}(l))\circ\ep_{\alpha}^{\mf{T}},
\forall l\in H;
\end{aligned}
\end{equation}
\end{dyn}
\begin{integ}
$\ms{m}(\ms{f})(\mf{T})$
is a $\Z-$valued map,
for all 
$(\ms{f},\mf{T})\in\ms{A}\times\pf{T}$.
\end{integ}
\begin{stb}
$\mf{E}=\lr{\upmu}{\mf{u},\pf{H},\ms{D},\Upgamma,
\mf{v},\mf{w},\mf{z}}$
such that 
\begin{enumerate}
\item
$\upmu
\in
\prod_{\mf{Q}\in\pf{T}}\prod_{\beta\in\ms{P}^{\mf{Q}}}
\mc{H}(\ms{S}_{\beta}^{\mf{Q}}(\mc{G}))$,
\item
$\mf{u}
\in
\prod_{\mf{Q}\in\pf{T}}\prod_{\beta\in\ms{P}^{\mf{Q}}}
\in 
Mor_{\ms{Ab}}(\ms{A},\ms{K}_{0}(\mc{B}_{\beta}^{\mf{Q}}(\mc{G})^{+}))$,
\item
$\pf{H}
\in
\prod_{\mf{Q}\in\pf{T}}\prod_{\beta\in\ms{P}^{\mf{Q}}}
Rep_{c}(\mc{A}_{\beta}^{\mf{Q}})$
\item
$\pf{H}_{\alpha}^{\mf{T}}=
\lr{\mf{H}_{\alpha}^{\mf{T}}}{\uppi_{\alpha}^{\mf{T}},\Upomega_{\alpha}^{\mf{T}}}$ 
is a cyclic representation of $\mc{A}_{\alpha}^{\mf{T}}$
associated with $\ps{\upvarphi}_{\alpha}^{\mf{T}}$,
for all $\mf{T}\in\pf{T}$, $\alpha\in\ms{P}^{\mf{T}}$;
\item
$\ms{D},\Upgamma
\in
\prod_{\mf{Q}\in\pf{T}}\prod_{\beta\in\ms{P}^{\mf{Q}}}
Sd(\mf{H}_{\beta}^{\mf{Q}})$;
\item
$\mf{v}
\in 
\prod_{\mf{Q}\in\pf{T}}\prod_{\beta\in\ms{P}^{\mf{Q}}}
\prod_{l\in H}
U(\mf{H}_{\beta}^{\mf{Q}},\mf{H}_{\beta}^{\mf{Q}^{l}})$;
\item
$\mf{w}
\in 
\prod_{\mf{Q}\in\pf{T}}\prod_{\beta\in\ms{P}^{\mf{Q}}}
\prod_{l\in H}
Mor_{\ms{CA}^{\ast}}(\mc{B}_{\beta}^{\mf{Q}}(\mc{G}),\mc{B}_{\beta}^{\mf{Q}^{l}}(\mc{G}))$,
\item
$\mf{z}
\in 
\prod_{\mf{Q}\in\pf{T}}\prod_{\beta\in\ms{P}^{\mf{Q}}}
\prod_{l\in H}
Mor_{\ms{CA}^{\ast}}(\mc{A}_{\beta}^{\mf{Q}},\mc{A}_{\beta}^{\mf{Q}^{l}})$,
\item
for all $\mf{T}\in\pf{T}$, $\alpha\in\ms{P}^{\mf{T}}$,
\begin{enumerate}
\item
$\lr{\tilde{\pf{R}}_{\alpha}^{\mf{T}}(\mc{G})}
{\ms{D}_{\alpha}^{\mf{T}},\Upgamma_{\alpha}^{\mf{T}}}$
is an even $\theta-$summable $K-$cycle;
\item
for all $l,h\in H$ and 
$\mf{g}\in\{\mf{v},\mf{w},\mf{z}\}$ 
\begin{equation}
\label{01031206}
\begin{aligned}
\upeta_{\alpha}^{\mf{T}^{l}}(h)
\circ
\mf{z}_{\alpha}^{\mf{T}}(l)
&=
\mf{z}_{\alpha}^{\mf{T}}(l)
\circ
\upeta_{\alpha}^{\mf{T}}(h),
\\
\mf{g}_{\alpha}^{\mf{T}}(h\cdot l)
&=
\mf{g}_{\alpha}^{\mf{T}^{l}}(h)
\circ
\mf{g}_{\alpha}^{\mf{T}}(l)
\\
\mf{g}_{\alpha}^{\mf{T}}(\un)
&=
Id.
\end{aligned}
\end{equation}
\item
we have
\begin{equation}
\label{01031145d}
\begin{aligned}
\ms{D}_{\alpha}^{\mf{T}^{l}}
=
\mf{v}_{\alpha}^{\mf{T}}(l)\,
\ms{D}_{\alpha}^{\mf{T}}\,
\mf{v}_{\alpha}^{\mf{T}}(l)^{-1},
\\
\Upgamma_{\alpha}^{\mf{T}^{l}}
=
\mf{v}_{\alpha}^{\mf{T}}(l)\,
\Upgamma_{\alpha}^{\mf{T}}\,
\mf{v}_{\alpha}^{\mf{T}}(l)^{-1},
\end{aligned}
\end{equation}
while 
the following 
(\ref{01031145},\ref{01031145b},\ref{01031145c})
are commutative diagrams
\begin{equation}
\label{01031145}
\xymatrix{
\mc{B}_{\alpha}^{\mf{T}^{l}}(\mc{G})
\ar[rr]^{\mf{R}_{\alpha}^{\mf{T}^{l}}(\mc{G})}
& &
\mc{L}(\mf{H}_{\alpha}^{\mf{T}^{l}})
\\
& &
\\
\mc{B}_{\alpha}^{\mf{T}}(\mc{G})
\ar[uu]^{\mf{w}_{\alpha}^{\mf{T}}(l)}
\ar[rr]_{\mf{R}_{\alpha}^{\mf{T}}(\mc{G})}
& &
\mc{L}(\mf{H}_{\alpha}^{\mf{T}})
\ar[uu]_{\ms{ad}(\mf{v}_{\alpha}^{\mf{T}}(l))}}
\end{equation}
\begin{equation}
\label{01031145b}
\xymatrix{
\ms{A}
\ar[rr]^{\mf{u}_{\alpha}^{\mf{T}^{l}}}
& &
\ms{K}_{0}(\mc{B}_{\alpha}^{\mf{T}^{l}}(\mc{G})^{+})
\\
& &
\\
\ms{A}
\ar[uu]^{\uppsi(l)}
\ar[rr]_{\mf{u}_{\alpha}^{\mf{T}}}
& &
\ms{K}_{0}(\mc{B}_{\alpha}^{\mf{T}}(\mc{G})^{+})
\ar[uu]_{(\mf{w}_{\alpha}^{\mf{T}}(l)^{+})_{\ast}}}
\end{equation}
\begin{equation}
\label{01031145c}
\xymatrix{
&
\ms{M}(\mc{B}_{\alpha}^{\mf{T}^{l}}(\mc{G}))
& &
\mc{A}_{\alpha}^{\mf{T}^{l}}
\ar[ll]_{\mf{j}_{\alpha}^{\mf{T}^{l}}}
\\
& & &
\\
\ms{M}(\mc{B}_{\alpha}^{\mf{T}}(\mc{G}))
\ar[uur]^{
(\mf{i}_{\alpha}^{\mf{T}^{l}}
\circ
\mf{w}_{\alpha}^{\mf{T}}(l))^{-}}
& & &
\\
& & &
\\
& 
\mc{A}_{\alpha}^{\mf{T}}
\ar[uul]^{\mf{j}_{\alpha}^{\mf{T}}}
\ar[rr]_{\mf{z}_{\alpha}^{\mf{T}}(l)}
& & 
\mc{A}_{\alpha}^{\mf{T}^{l}}
\ar[uuuu]_{\upeta_{\alpha}^{\mf{T}^{l}}(l)}
} 
\end{equation}
\item
for all 
$\ms{f}\in\ms{A}$ 
we have
\begin{equation}
\label{12220022}
\ms{m}(\ms{f})(\mf{T},\alpha)
=
\lr{\mf{u}_{\alpha}^{\mf{T}}
(\ms{f})}
{\ms{ch}
\lr{\tilde{\pf{R}}_{\alpha}^{\mf{T}}(\mc{G})}
{\ms{D}_{\alpha}^{\mf{T}},\Upgamma_{\alpha}^{\mf{T}}}}_{(\mc{G},\mf{T},\alpha)}.
\end{equation}
\end{enumerate}
\end{enumerate}
\end{stb}
Here for any $\mf{T}\in\pf{T}$ and $\alpha\in\ms{P}^{\mf{T}}$
we set
\begin{equation}
\label{08040736}
\begin{aligned}
\ms{S}_{\alpha}^{\mf{T}}(\mc{G})
&\coloneqq
\s{\ps{\upvarphi}}{\alpha}{\mf{T}},
\\
\mc{B}_{\alpha}^{\mf{T}}(\mc{G})
&\coloneqq
\ms{B}_{\upmu_{\alpha}^{\mf{T}}}^{\ps{\upvarphi}_{\alpha}^{\mf{T}}}
(\mf{a}_{\alpha}^{\mf{T}}),
\\
\mf{i}_{\alpha}^{\mf{T}}
&\coloneqq
\mf{i}^{\mc{B}_{\alpha}^{\mf{T}}(\mc{G})},
\\
\mf{j}_{\alpha}^{\mf{T}}
&\coloneqq
\mf{j}_{\mc{A}_{\alpha}^{\mf{T}}}^
{\mc{B}_{\alpha}^{\mf{T}}(\mc{G})},
\\
\mf{R}_{\alpha}^{\mf{T}}(\mc{G})
&\coloneqq
\mf{R}_{\pf{H}_{\alpha}^{\mf{T}}}^{\upmu_{\alpha}^{\mf{T}}}
(\mf{a}_{\alpha}^{\mf{T}}),
\\
\pf{R}_{\alpha}^{\mf{T}}(\mc{G})
&\coloneqq
\pf{R}_{\pf{H}_{\alpha}^{\mf{T}}}^{\upmu_{\alpha}^{\mf{T}}}
(\mf{a}_{\alpha}^{\mf{T}}),
\\
\lr{\cdot}{\cdot}_{(\mc{G},\mf{T},\alpha)}
&\coloneqq
\lr{\cdot}
{\cdot}_{(\mc{B}_{\alpha}^{\mf{T}}(\mc{G}))^{+}}.
\end{aligned}
\end{equation}
We call 
any object $\mc{G}$ of $\mf{G}(H)$
a nucleon system with symmetry group $H$,
or simply nucleon system,
moreover we call
$\pf{T}$ and $\ms{m}$ 
the set of 
operations 
and
the mean value map associated with $\mc{G}$
respectively.
\end{definition}
In the following definition and remark let
$\mc{G}=\lr{\pf{T},\ms{I},\upbeta_{c},\ms{P},\mf{a}}
{\mf{e},\ps{\upvarphi},\ms{A},\uppsi,\mf{b},\ms{m},\mf{E}}$
be an object of $\mf{G}(G,F,\uprho)$,
$l\in H$, $\mf{T}\in\pf{T}$ and $\alpha\in\ms{P}^{\mf{T}}$,
where
$\mf{E}=\lr{\upmu}{\mf{u},\pf{H},\ms{D},\Upgamma,
\mf{v},\mf{w},\mf{z}}$.
\begin{definition}
\label{09051150}
Define
$\ms{V}(\mc{G})_{\alpha}^{\mf{T}}(l)
\in 
Mor_{\ms{CA}^{\ast}}(\mc{A}_{\alpha}^{\mf{T}},\mc{A}_{\alpha}^{\mf{T}^{l}})$
such that 
$\ms{V}(\mc{G})_{\alpha}^{\mf{T}}(l)
\coloneqq
\mf{z}_{\alpha}^{\mf{T}}(l)
\circ
\upeta_{\alpha}^{\mf{T}}(l)$.
\end{definition}
\begin{remark}
\label{01081803}
$\ms{P}^{\mf{T}}=\ms{P}^{\mf{T}^{l}}$
since \eqref{12051548}.
Let $\mf{g}\in\{\mf{v},\mf{w},\mf{z}\}$ then
by \eqref{01031206} 
we deduce that 
$\mf{g}_{\alpha}^{\mf{T}}(l)$ 
is bijective
and 
\begin{equation}
\label{09051254}
(\mf{g}_{\alpha}^{\mf{T}}(l)
)^{-1}=
\mf{g}_{\alpha}^{\mf{T}^{l}}(l^{-1}), 
\end{equation}
so
$\ms{ad}(\mf{z}_{\alpha}^{\mf{T}}(l))
\circ
\upeta_{\alpha}^{\mf{T}}
=
\upeta_{\alpha}^{\mf{T}^{l}}$,
then
\begin{equation}
\label{09051209}
\ms{V}(\mc{G})_{\alpha}^{\mf{T}}(h\cdot l)
=
\ms{V}(\mc{G})_{\alpha}^{\mf{T}^{l}}(h)
\circ
\ms{V}(\mc{G})_{\alpha}^{\mf{T}}(l),
\end{equation}
moreover 
$\ms{V}(\mc{G})_{\alpha}^{\mf{T}}(\un)
=
Id$
thus
\begin{equation}
\label{09051254b}
(\ms{V}(\mc{G})_{\alpha}^{\mf{T}}(l))^{-1}=
\ms{V}(\mc{G})_{\alpha}^{\mf{T}^{l}}(l^{-1}).
\end{equation}
Next 
$\mf{w}_{\alpha}^{\mf{T}}(l)$ 
is surjective being bijective
and
$\mf{i}_{\alpha}^{\mf{T}^{l}}$ is nondegenerate,
thus 
$\mf{i}_{\alpha}^{\mf{T}^{l}}
\circ
\mf{w}_{\alpha}^{\mf{T}}(l)$
is nondegenerate since Lemma \ref{01081736}. 
Therefore 
$(\mf{i}_{\alpha}^{\mf{T}^{l}}
\circ
\mf{w}_{\alpha}^{\mf{T}}(l))^{-}$
in \eqref{01031145c}
is well set and since \eqref{01081715}
satisfies
$
(\mf{i}_{\alpha}^{\mf{T}^{l}}
\circ
\mf{w}_{\alpha}^{\mf{T}}(l))^{-}
\circ
\mf{i}_{\alpha}^{\mf{T}}
=
\mf{i}_{\alpha}^{\mf{T}^{l}}
\circ
\mf{w}_{\alpha}^{\mf{T}}(l)$.
Finally
by an application of \eqref{01031145c}
we obtain
\begin{equation}
\label{01231919}
(\mf{i}_{\alpha}^{\mf{T}}
\circ
\mf{w}_{\alpha}^{\mf{T}^{l}}(l^{-1}))^{-}
\circ
\mf{j}_{\alpha}^{\mf{T}^{l}}
=
\mf{j}_{\alpha}^{\mf{T}}
\circ
\upeta_{\alpha}^{\mf{T}}(l^{-1})
\circ
\mf{z}_{\alpha}^{\mf{T}^{l}}(l^{-1}).
\end{equation}
\end{remark}
\begin{definition}
\label{11251056}
For any $i\in\{1,2,3\}$
let 
$\mc{G}^{i}=\lr{\pf{T}_{i},\ms{I}_{i},\upbeta_{c}^{i},
\ms{P}_{i},\mf{a}^{i}}
{\mf{e}^{i},\ps{\upvarphi}^{i},\ms{A}^{i},\uppsi^{i},\mf{b}^{i},
\ms{m}^{i},\mf{E}^{i}}
\in Obj(\mf{G}(G,F,\uprho))$.
Define
$Mor_{\mf{G}(G,F,\uprho)}(\mc{G}^{1},\mc{G}^{2})$
to be the set of the 
\begin{equation*}
(\mf{g},\mf{d})
\in
Mor_{\ms{Ab}}(\ms{A}^{1},\ms{A}^{2})
\times
Mor_{\ms{set}}(\pf{T}_{2},\pf{T}_{1})
\end{equation*}
such that 
$\ms{P}_{1}^{\mf{d}(\mf{T})}
=
\ms{P}_{2}^{\mf{T}}$
for all 
$\mf{T}\in\pf{T}_{2}$, 
and 
for all
$\ms{f}\in\ms{A}^{1}$
we have
\begin{equation*}
\ms{ev}_{\ms{f}}
(\ms{m}^{2}\circ\mf{g})
=
\ms{ev}_{\ms{f}}(\ms{m}^{1})
\circ\mf{d}.
\end{equation*}
Moreover for any 
$(\mf{g},\mf{d})\in 
Mor_{\mf{G}(G,F,\uprho)}(\mc{G}^{1},\mc{G}^{2})$
and
$(\mf{h},\mf{s})\in Mor_{\mf{G}(G,F,\uprho)}(\mc{G}^{2},\mc{G}^{3})$
define
\begin{equation}
\label{22052156}
(\mf{h},\mf{s})\circ(\mf{g},\mf{d})
\coloneqq
(\mf{h}\circ\mf{g},\mf{d}\circ\mf{s}).
\end{equation}
\end{definition}
Thus we have the following
\begin{proposition}
\label{25110856}
There exists a unique category $\mf{G}(G,F,\uprho)$
whose set of objects is $Obj(\mf{G}(G,F,\uprho))$
and for all objects $\mc{G}^{1}$ and $\mc{G}^{2}$
the set of the morphisms from $\mc{G}^{1}$ to $\mc{G}^{2}$
is
$Mor_{\mf{G}(G,F,\uprho)}(\mc{G}^{1},\mc{G}^{2})$
provided by the law of composition as defined in 
Def. \ref{11251056}.
\end{proposition}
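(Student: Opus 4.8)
The plan is to verify directly that the data of Def. \ref{11251056} satisfies the axioms of a category, uniqueness being immediate since the object set, the morphism sets and the composition law \eqref{22052156} are all prescribed in advance.

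First I would check that composition is internal, i.e. that for $(\mf{g},\mf{d})\in Mor_{\mf{G}(G,F,\uprho)}(\mc{G}^{1},\mc{G}^{2})$ and $(\mf{h},\mf{s})\in Mor_{\mf{G}(G,F,\uprho)}(\mc{G}^{2},\mc{G}^{3})$ the pair $(\mf{h}\circ\mf{g},\mf{d}\circ\mf{s})$ lies in $Mor_{\mf{G}(G,F,\uprho)}(\mc{G}^{1},\mc{G}^{3})$. The first component is a morphism in $\ms{Ab}$, and $\mf{d}\circ\mf{s}\in\mc{F}(\pf{T}_{3},\pf{T}_{1})$ since $\mf{s}:\pf{T}_{3}\to\pf{T}_{2}$ and $\mf{d}:\pf{T}_{2}\to\pf{T}_{1}$. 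The matching of the $\ms{P}$-fields follows by chaining: for $\mf{T}\in\pf{T}_{3}$ one has $\ms{P}_{1}^{(\mf{d}\circ\mf{s})(\mf{T})}=\ms{P}_{2}^{\mf{s}(\mf{T})}=\ms{P}_{3}^{\mf{T}}$, the first equality coming from the defining property of $(\mf{g},\mf{d})$ applied to $\mf{s}(\mf{T})\in\pf{T}_{2}$, the second from that of $(\mf{h},\mf{s})$ applied to $\mf{T}\in\pf{T}_{3}$. The substantive point is the mean value identity: for $\ms{f}\in\ms{A}^{1}$, $\mf{T}\in\pf{T}_{3}$ and $\alpha\in\ms{P}_{3}^{\mf{T}}$ I would evaluate $\ms{m}^{3}((\mf{h}\circ\mf{g})(\ms{f}))(\mf{T},\alpha)$ and, noting that $\alpha\in\ms{P}_{3}^{\mf{T}}=\ms{P}_{2}^{\mf{s}(\mf{T})}$, apply successively the defining equation for $(\mf{h},\mf{s})$ with argument $\mf{g}(\ms{f})\in\ms{A}^{2}$ and then the one for $(\mf{g},\mf{d})$ with the thermal preparation $\mf{s}(\mf{T})\in\pf{T}_{2}$, arriving at $\ms{m}^{1}(\ms{f})((\mf{d}\circ\mf{s})(\mf{T}),\alpha)$; this is precisely the identity required of $(\mf{h}\circ\mf{g},\mf{d}\circ\mf{s})$.

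Next I would treat the remaining axioms, which are purely formal. Associativity reduces, componentwise, to the associativity of composition in $\ms{Ab}$ on the first entry and to the associativity of map composition in $\ms{set}$ on the second entry, the latter performed in the reversed order dictated by \eqref{22052156}. For the units I would take $\un_{\mc{G}}\coloneqq(\un_{\ms{A}},Id_{\pf{T}})$, with $\un_{\ms{A}}$ the identity of $\ms{A}$ in $\ms{Ab}$ and $Id_{\pf{T}}$ the identity map of $\pf{T}$; this pair trivially lies in $Mor_{\mf{G}(G,F,\uprho)}(\mc{G},\mc{G})$, since $\ms{P}^{Id_{\pf{T}}(\mf{T})}=\ms{P}^{\mf{T}}$ and $\ms{ev}_{\ms{f}}(\ms{m}\circ\un_{\ms{A}})=\ms{ev}_{\ms{f}}(\ms{m})=\ms{ev}_{\ms{f}}(\ms{m})\circ Id_{\pf{T}}$, while the unit laws read off from $(\mf{g},\mf{d})\circ(\un_{\ms{A}},Id_{\pf{T}})=(\mf{g},\mf{d})=(\un_{\ms{A}},Id_{\pf{T}})\circ(\mf{g},\mf{d})$ through \eqref{22052156}.

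Finally, that each $Mor_{\mf{G}(G,F,\uprho)}(\mc{G}^{1},\mc{G}^{2})$ is a set is clear, being a subset of $Mor_{\ms{Ab}}(\ms{A}^{1},\ms{A}^{2})\times\mc{F}(\pf{T}_{2},\pf{T}_{1})$. The only delicate step is the well-definedness of composition, and within it the mean value identity, whose verification is entirely a matter of bookkeeping: one must keep track of the contravariance of the second component and invoke the $\ms{P}$-matching condition to guarantee that the evaluations at the index $\alpha$ are legitimate at every link of the chain. I expect no genuine obstacle beyond this careful index tracking.
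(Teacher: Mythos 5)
Your proposal is correct and follows essentially the same route as the paper's proof: verify that composition is internal by chaining the mean-value identities (the paper does exactly the computation $\ms{ev}_{\ms{f}}(\ms{m}^{3}\circ\mf{h}\circ\mf{g})=\ms{ev}_{\mf{g}(\ms{f})}(\ms{m}^{2})\circ\mf{s}=\ms{ev}_{\ms{f}}(\ms{m}^{1})\circ\mf{d}\circ\mf{s}$), take the pair of identity maps as unit, and observe that associativity is componentwise. You are in fact slightly more complete than the paper, which leaves the $\ms{P}$-matching chain $\ms{P}_{1}^{(\mf{d}\circ\mf{s})(\mf{T})}=\ms{P}_{2}^{\mf{s}(\mf{T})}=\ms{P}_{3}^{\mf{T}}$ implicit.
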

\begin{proof}
For any $i\in\{1,2,3\}$ let $\mc{G}^{i}\in\mf{G}(G,F,\uprho)$,
moreover
$(\mf{g},\mf{d})
\in Mor_{\mf{G}(G,F,\uprho)}(\mc{G}^{1},\mc{G}^{2})$
and
$(\mf{h},\mf{s})
\in Mor_{\mf{G}(G,F,\uprho)}(\mc{G}^{2},\mc{G}^{3})$.
The identity morphism in 
$Mor_{\mf{G}(G,F,\uprho)}(\mc{G}^{1},\mc{G}^{1})$
is the couple composed by the identity maps on $\ms{A}^{1}$ and on $\pf{T}_{1}$. 
Let $\ms{f}\in\ms{A}^{1}$, then
\begin{equation*}
\begin{aligned}
\ms{ev}_{\ms{f}}
(\ms{m}^{3}\circ\mf{h}\circ\mf{g}
)
&=
\ms{ev}_{\mf{g}(\ms{f})}(\ms{m}^{3}
\circ\mf{h})
=
\ms{ev}_{\mf{g}(\ms{f})}(\ms{m}^{2})\circ\mf{s}
\\
&=
\ms{ev}_{\ms{f}}(\ms{m}^{2}\circ\mf{g})\circ\mf{s}
=
\ms{ev}_{\ms{f}}(\ms{m}^{1})\circ\mf{d}\circ\mf{s}.
\end{aligned}
\end{equation*}
Hence 
$(\mf{h},\mf{s})\circ(\mf{g},\mf{d})
\in Mor_{\mf{G}(G,F,\uprho)}(\mc{G}^{1},\mc{G}^{3})$,
it is easy to see that the composition is associative
hence the statement follows.
\end{proof}
\begin{convention}
\label{05061835}
Let $\mc{G}$ be an object of $\mf{G}(G,F,\uprho)$,
if the contrary is not stated, then we shall use the following notation
\begin{equation*}
\begin{aligned}
\mc{G}
&=
\lr{\pf{T}_{\mc{G}},
\ms{I}_{\mc{G}},\upbeta_{c}^{\mc{G}},
\ms{P}_{\mc{G}},\mf{a}_{\mc{G}}}
{\mf{e}_{\mc{G}},
\ps{\upvarphi}^{\mc{G}},\ms{A}_{\mc{G}},
\uppsi^{\mc{G}},\mf{b}^{\mc{G}},\ms{m}^{\mc{G}},
\mf{E}^{\mc{G}}},
\\
\mf{E}^{\mc{G}}
&=
\lr{\upmu^{\mc{G}},\mf{u}^{\mc{G}},\pf{H}^{\mc{G}},\ms{D}^{\mc{G}}}
{\Upgamma^{\mc{G}},\mf{v}^{\mc{G}},\mf{w}^{\mc{G}},\mf{z}^{\mc{G}}};
\end{aligned}
\end{equation*}
and for any $\mf{Q}\in\pf{T}_{\mc{G}}$ and $\beta\in\ms{P}_{\mc{G}}^{\mf{Q}}$.
\begin{equation*}
\begin{aligned}
(\mf{a}_{\mc{G}})_{\beta}^{\mf{Q}}
&=
\lr{\mc{A}(\mc{G})_{\beta}^{\mf{Q}},H}
{(\upeta^{\mc{G}})_{\beta}^{\mf{Q}}},
\\
(\mf{e}_{\mc{G}})_{\beta}^{\mf{Q}}
&=
\lr{\mc{A}(\mc{G})_{\beta}^{\mf{Q}},\R}{(\ep^{\mc{G}})_{\beta}^{\mf{Q}}},
\\
(\pf{H}^{\mc{G}})_{\beta}^{\mf{Q}}
&=
\lr{(\mf{H}^{\mc{G}})_{\beta}^{\mf{Q}}}
{(\uppi^{\mc{G}})_{\beta}^{\mf{Q}},(\Upomega^{\mc{G}})_{\beta}^{\mf{Q}}}.
\end{aligned}
\end{equation*}
\end{convention}
\begin{remark}
\label{06132143}
$H\ni l\mapsto(\uppsi^{\mc{N}}(l),\mf{b}^{\mc{N}}(l^{-1}))\in Aut_{\mf{G}(G,F,\uprho)}(\mc{N})$
is a morphism of groups for any object $\mc{N}$ of $\mf{G}(G,F,\uprho)$
since \eqref{22052156}, 
since $\uppsi^{\mc{N}}$ and $\mf{b}^{\mc{N}}$ are morphisms of groups 
and by \eqref{12051444}.
\end{remark}
\subsection{States originated via a phase}
\label{06171340}
The set $\mf{N}^{\mc{N}}(\mf{c})$ of states originated via a given phase $\mf{c}\in\ms{A}_{\mc{N}}^{\ast}$ 
is a primary concept needed 
in order to provide a reasonable physical interpretation of the structural data of any object 
$\mc{N}$ of $\mf{G}(G,F,\uprho)$.
Let us briefly describe and physically interpret this set, whose precise definition is given in 
Def. \ref{12192100} through Def.\ref{12161311}.
Let $Rep^{\mc{N}}(\mf{c})$ be the set of the tuples 
$\mf{r}=\lr{\mf{T},\alpha,\upnu}{\mf{q},\pf{K},\Upphi}$, 
called representations of $\mf{c}$, where $\mf{T}\in\pf{T}_{\mc{N}}$, $\alpha\in\ms{P}_{\mc{N}}^{\mf{T}}$,
$\upnu$ is a Haar measure on $\ms{S}_{\alpha}^{\mf{T}}(\mc{N})$,
$\pf{K}=\lr{\mf{K},\uptheta}{\Upomega}$ is a $GNS-$represetation associated with the
state $(\ps{\upvarphi}^{\mc{N}})_{\alpha}^{\mf{T}}$, $\mf{q}$ is a group morphism from $\ms{A}_{\mc{N}}$
to the $K_{0}-$theory of $\mc{B}_{\mf{r}}^{+}$, where 
$\mc{B}_{\mf{r}}=
\ms{B}_{\upnu}^{(\ps{\upvarphi}^{\mc{N}})_{\alpha}^{\mf{T}}}
((\mf{a}_{\mc{N}})_{\alpha}^{\mf{T}})$
and $\Upphi$ is an entire normalized even cocycle 
on $\mc{B}_{\mf{r}}^{+}$ such that 
$\mf{c}$ 
factorizes through 
the real part, via the standard duality, of the character generated by $\Upphi$
and the map $\mf{q}$, i.e.
$\mf{c}=\mf{v}_{\mf{r}}$ with
\begin{equation*}
\mf{v}_{\mf{r}}
\coloneqq
\Re \lr{\mf{q}(\cdot)}{[\Upphi]}.
\end{equation*}
Here $[\Upphi]$ is the entire cyclic cohomology class generated by $\Upphi$, 
$\lr{\cdot}{\cdot}$ is 
the standard duality between the entire cyclic cohomology and the $K_{0}-$theory of $\mc{B}_{\mf{r}}^{+}$, 
and $\Re$ is the real part.
We let $\uppi_{\mf{r}}$, $\Upphi^{\mf{r}}$, $\mc{A}_{\mf{r}}$, 
$\mf{R}_{\mf{r}}$, $\mf{T}_{\mf{r}}$, $\alpha_{\mf{r}}$,
$\ep_{\mf{r}}$ and $\ps{\upvarphi}_{\mf{r}}$
denote 
$(\uppi^{\mc{N}})_{\alpha}^{\mf{T}}$, 
$\Upphi$, $\mc{A}(\mc{N})_{\alpha}^{\mf{T}}$, 
$\uptheta\rtimes\ms{W}$, $\mf{T}$, $\alpha$, 
$(\ep^{\mc{N}})_{\alpha}^{\mf{T}}$
and
$(\ps{\upvarphi}^{\mc{N}})_{\alpha}^{\mf{T}}$
respectively, 
where $\ms{W}(h)\uptheta(a)\Upomega=\uptheta((\upeta^{\mc{N}})_{\alpha}^{\mf{T}}(h)(a))\Upomega$, 
for all $h\in\ms{S}_{\alpha}^{\mf{T}}(\mc{N})$ and $a\in\mc{A}(\mc{N})_{\alpha}^{\mf{T}}$.
\par
With any representation $\mf{r}$ of $\mf{c}$ we associate a state $\varrhoup_{\mf{r}}$
of $\mc{A}_{\mf{r}}$ in the following way. 
Firstly we get the state $(\Upphi_{0}^{\mf{r}})^{\natural}$ 
associated with the $0-$dimensional component of $\Upphi^{\mf{r}}$,
to do this we use Def. \ref{12241101} where we construct the state 
associated with any functional $\phi$ on a unital $C^{\ast}-$algebra.
Secondly we get the canonical extension
$((\Upphi_{0}^{\mf{r}})^{\natural}\up\mc{B}_{\mf{r}})^{-}$ 
of $(\Upphi_{0}^{\mf{r}})^{\natural}\up\mc{B}_{\mf{r}}$ 
to the multiplier algebra $\ms{M}(\mc{B}_{\mf{r}})$, 
according to the construction provided in
Lemma \ref{12171928}.
Finally we compose the extension so obtained with the 
canonical injection 
$\mf{j}_{\mf{r}}$
of 
$\mc{A}_{\mf{r}}$ into $\ms{M}(\mc{B}_{\mf{r}})$, 
by obtaining a state $\varrhoup_{\mf{r}}$ of $\mc{A}_{\mf{r}}$,
which is required to be $\uppi_{\mf{r}}-$normal by definition.
We are forced to use the multiplier algebra
because in general $\mc{A}_{\mf{r}}$ could not be 
injectively mapped into $\mc{B}_{\mf{r}}$.
Now the $\uppi_{\mf{r}}-$normality 
is required in order to interpret $\varrhoup_{\mf{r}}$
as a state obtained by performing 
an operation on $\ps{\upvarphi}_{\mf{r}}$.
Note that if $\alpha_{\mf{r}}\in\R_{0}^{+}$,
then $\ps{\upvarphi}_{\mf{r}}$ is an
$\alpha_{\mf{r}}-KMS$ state w.r.t. the dynamics
$\ep_{\mf{r}}(-\alpha_{\mf{r}}^{-1}(\cdot))$, 
therefore $\varrhoup_{\mf{r}}$
is a state obtained by perturbing a state of thermal equilibrium 
at the inverse temperature $\alpha_{\mf{r}}$ 
of the physical system evolving in time via the
dynamics $\ep_{\mf{r}}(-\alpha_{\mf{r}}^{-1}(\cdot))$.
By definition $\mf{N}^{\mc{N}}(\mf{c})$  
is the set of the states $\varrhoup_{\mf{s}}$ by ranging over all representations $\mf{s}$ of $\mf{c}$,
so summing up what said we have 
\begin{equation}
\label{03061620}
\begin{aligned}
\varrhoup_{\mf{r}}&\coloneqq((\Upphi_{0}^{\mf{r}})^{\natural}\up\mc{B}_{\mf{r}})^{-}\circ\mf{j}_{\mf{r}},
\\
\varrhoup_{\mf{r}}&\in\ms{N}_{\uppi_{\mf{r}}},
\\
\mf{N}^{\mc{N}}(\mf{c})&\coloneqq\left\{\varrhoup_{\mf{s}}\mid\mf{s}\in Rep^{\mc{N}}(\mf{c})\right\}.
\end{aligned}
\end{equation}
It is worthwhile noting
that the presence of cohomology classes in the definition 
of $Rep^{\mc{N}}(\mf{c})$ 
it is at the basis of the possible degeneration of
$\mf{N}^{\mc{N}}(\mf{c})$.
Indeed for any representation 
$\mf{r}=\lr{\mf{T},\alpha,\upnu}{\mf{q},\pf{K},\Upphi}$
of $\mf{c}$
it is so also 
$\tilde{\mf{r}}=
\lr{\mf{T},\alpha,\upnu}{\mf{q},\pf{K},\tilde{\Upphi}}$ 
whenever 
$\varrhoup_{\tilde{\mf{r}}}$ is $\uppi_{\tilde{\mf{r}}}-$normal
and
the cocycles $\Upphi$ and $\tilde{\Upphi}$ on $\mc{B}_{\mf{r}}$ 
belongs to the same cohomology class, i.e. 
$[\Upphi]=[\tilde{\Upphi}]$, 
hence $\mf{N}^{\mc{N}}(\mf{c})$ will be degenerate 
as soon as $\varrhoup_{\mf{r}}\neq\varrhoup_{\tilde{\mf{r}}}$.
\par
With the characterization of $\mf{N}^{\mc{N}}(\mf{c})$ in mind, 
we will propose in the assumption at page \pageref{Gstb} 
the existence of a physical system $\mc{N}$ 
and 
for any $\mf{T}\in\pf{T}_{\mc{N}}$ and $\alpha\in\ms{P}^{\mf{T}}$
of physical systems $\mc{O}_{\alpha}^{\mf{T}}$,
such that for any $\mf{r}\in Rep^{\mc{N}}(\mf{c})$ the occurrance of 
$\mc{O}_{\alpha_{\mf{r}}}^{\mf{T}_{\mf{r}}}$ in the state $\varrhoup_{\mf{r}}$ 
implies the previous occurrance of the system $\mc{N}$ in the phase $\mf{c}$.
It is exactly in this sense that has to be understood Def. \ref{12211430}\eqref{12211430d} in which we declare that
the state $\varrhoup_{\mf{r}}$ is originated via the phase $\mf{c}$,
for any $\mf{r}\in Rep^{\mc{N}}(\mf{c})$.
\par
A natural question arises when $\mf{c}$ 
is an integer phase of Chern-Connes type, i.e.
$\mf{c}(\ms{A}_{\mc{N}})\subseteq\Z$
and there exists a representation
$\mf{r}$ of $\mf{c}$
such that $\Upphi^{\mf{r}}$ 
is the JLO cocycle associated with a $\theta-$summable $K-$cycle 
$\lr{\mc{B}_{\mf{r}}^{+},\tilde{\mf{R}}_{\mf{r}}}{\ms{D},\Upgamma}$:
is it the state $\omega_{e^{-\ms{D}^{2}}}\circ\uppi_{\mf{r}}$ 
originated via $\mf{c}$ in the sense above specified, 
namely
$\omega_{e^{-\ms{D}^{2}}}\circ\uppi_{\mf{r}}
=
\varrhoup_{\mf{r}}$?
Lemma \ref{12201040} gives the answer in the positive 
essentially under 
the hypothesis that $\Upgamma$ is represented via 
$\tilde{\mf{R}}_{\mf{r}}$ by an element with norm less or equal to $1$.
This is an important result employed, 
in the construction in Thm. \ref{01151104} of an equivariant stability,
in order to prove the equivariance \eqref{01141641b}.
\begin{definition}
\label{12161259}
Let $\mc{M}\in\mf{G}(G,F,\uprho)$ set
\begin{equation*}
\ms{A}_{\mc{M}}^{\ast}\coloneqq 
Mor_{\ms{Ab}}(\ms{A}_{\mc{M}},\R),
\end{equation*}
$\mf{c}\in\ms{A}_{\mc{M}}^{\ast}$ 
is said to be integer
if $\mf{c}(\ms{A}_{\mc{M}})\subseteq\Z$,
moreover define
\begin{equation*}
\begin{aligned}
\ov{\ms{m}}^{\mc{M}}
&\in\prod_{\mf{Q}\in\pf{T}_{\mc{M}}}
Mor_{\ms{set}}(\ms{P}_{\mc{M}}^{\mf{Q}},\ms{A}_{\mc{M}}^{\ast}),
\\
\ov{\ms{m}}^{\mc{M}}(\mf{Q},\alpha)(\ms{f})
&\coloneqq
\ms{m}^{\mc{M}}(\ms{f})(\mf{Q},\alpha),
\\
\forall
\mf{Q}\in\pf{T}_{\mc{M}},
\alpha
&\in\ms{P}_{\mc{M}}^{\mf{Q}},
\ms{f}\in\ms{A}_{\mc{M}},
\end{aligned}
\end{equation*}
and 
\begin{equation*}
\uppsi_{\ast}^{\mc{M}}:H
\to 
Aut_{\ms{set}}(\ms{A}_{\mc{M}}^{\ast}),\,
l\mapsto(\upomega\mapsto\upomega\circ\uppsi^{\mc{M}}(l^{-1})).
\end{equation*}
\end{definition}
\begin{definition}
\label{18061441}
Define
\begin{equation*}
\begin{aligned}
\ms{gr}&\in\prod_{f\in Mor_{\ms{set}}}Mor_{\ms{set}}(d(f),d(f)\times c(f)),
\\
f&\mapsto(x \mapsto(x,f(x)))
\end{aligned}
\end{equation*}
\end{definition}
\begin{definition}
\label{12241101}
Let $\mc{B}$ be a unital 
$C^{\ast}-$algebra and $\phi\in\mc{B}^{\ast}-\{\ze\}$.
We call the state associated with $\phi$ 
the 
state defined as follows
\begin{equation*}
\phi^{\natural}\coloneqq
\frac{\phi_{1}+\phi_{2}}{\|\phi_{1}+\phi_{2}\|}.
\end{equation*}
Here
$(\phi_{1},\phi_{2})$ is the unique couple such that
\begin{enumerate}
\item
$\phi_{j}$ is a positive functional on $\mc{B}$, $j\in\{1,2\}$,
\item
$\frac{1}{2}(\phi+\phi^{\ast})
=\phi_{1}-\phi_{2}$,
\item
$\frac{1}{2}\|(\phi+\phi^{\ast})\|=\|\phi_{1}\|+\|\phi_{2}\|$,
\end{enumerate}
where $\phi^{\ast}\in\mc{B}^{\ast}$ 
such that $\phi^{\ast}(a)\coloneqq\ov{\phi(a^{\ast})}$,
for all $a\in\mc{B}$.
\end{definition}
\begin{remark}
The existence and uniqueness of the couple $(\phi_{1},\phi_{2})$ in Def. \ref{12241101} follows since 
\cite[Thm. $4.3.6$]{kr} applied to the hermitian and bounded functional $\frac{1}{2}(\phi+\phi^{\ast})$,
where a functional $\psi$ is hermitian if $\psi^{\ast}=\psi$.
\end{remark}
\begin{remark}
\label{12241631}
Under the notations of Def. \ref{12241101}
we have that
$\|\phi_{1}+\phi_{2}\|
=\|\phi_{1}\|+\|\phi_{2}\|
=
\frac{1}{2}\|(\phi+\phi^{\ast})\|$,
the first equality coming since 
\cite[Cor. $2.3.12$]{br1}.
\end{remark}
We recall that
for any unital $C^{\ast}-$algebra 
$\mc{B}$, 
$\lr{\cdot}{\cdot}_{\mc{B}}$
denotes the standard duality
between the $\ms{K}_{0}-$theory of $\mc{B}$
and the even entire cyclic cohomology of $\mc{B}$.
Moreover for any
entire normalized even 
cocycle $\Upphi$ on $\mc{B}$,
$[\Upphi]$ denotes the element in 
$H_{\ep}^{ev}(\mc{B})$ corresponding to $\Upphi$.
Note that $\Upphi_{0}\in\mc{B}^{\ast}$,
where
$\Upphi_{0}$ is the $0-$dimension component of $\Upphi$,
hence if $\Upphi_{0}\neq\ze$ then
$\Upphi_{0}^{\natural}$ is the state of $\mc{B}$
associated with $\Upphi_{0}$ according to Def. \ref{12241101}.
In what follows we use 
Def. \ref{11211142} 
moreover we recall that $\ms{N}_{\uppi}$ is the set of $\uppi-$normal states,
for any representation $\uppi$ of a $C^{\ast}-$algebra.
Up to the end of section \ref{06171340} 
we let $\mc{G}$ be a fixed but arbitrary object of $\mf{G}(G,F,\uprho)$.
\begin{convention}
\label{08040834}
For any $\mf{T}\in\pf{T}_{\mc{G}}$, 
$\alpha\in\ms{P}_{\mc{G}}^{\mf{T}}$
and
$\upnu\in\mc{H}(\ms{S}_{\alpha}^{\mf{T}}(\mc{G}))$
let 
\begin{equation*}
\begin{aligned}
\mc{B}_{\alpha,\upnu}^{\mf{T}}(\mc{G})
&\coloneqq
\ms{B}_{\upnu}^{(\ps{\upvarphi}^{\mc{G}})_{\alpha}^{\mf{T}}}
((\mf{a}_{\mc{G}})_{\alpha}^{\mf{T}})
\\
\mf{j}_{\alpha,\upnu}^{\mf{T}}
&\coloneqq
\mf{j}_{\mc{A(\mc{G})}_{\alpha}^{\mf{T}}}^
{\mc{B}_{\alpha,\upnu}^{\mf{T}}(\mc{G})},
\\
\lr{\cdot}{\cdot}_{(\mc{G},\mf{T},\alpha,\upnu)}
&\coloneqq
\lr{\cdot}{\cdot}_{\mc{B}_{\alpha,\upnu}^{\mf{T}}(\mc{G})^{+}}.
\end{aligned}
\end{equation*}
\end{convention}
The above convention generalizes \eqref{08040736}, indeed
$\mc{B}_{\alpha}^{\mf{T}}(\mc{G})
=
\mc{B}_{\alpha,(\upmu^{\mc{G}})_{\alpha}^{\mf{T}}}^{\mf{T}}(\mc{G})$,\,
$\mf{j}_{\alpha}^{\mf{T}}
=
\mf{j}_{\alpha,(\upmu^{\mc{G}})_{\alpha}^{\mf{T}}}^{\mf{T}}$
and
$\lr{\cdot}{\cdot}_{(\mc{G},\mf{T},\alpha)}
=
\lr{\cdot}{\cdot}_{(\mc{G},\mf{T},\alpha,(\upmu^{\mc{G}})_{\alpha}^{\mf{T}})}$.
\begin{definition}
[$\mc{G}-$representations of a phase]
\label{12161311}
Let $\mf{c}\in\ms{A}_{\mc{G}}^{\ast}$ define $Rep^{\mc{G}}(\mf{c})$
the set of the 
$\mf{r}=\lr{\mf{T},\alpha,\upnu}{\mf{u},\pf{K},\Upphi}$
such that 
\begin{enumerate}
\item 
$\mf{T}\in\pf{T}_{\mc{G}}$ and $\alpha\in\ms{P}_{\mc{G}}^{\mf{T}}$,
\label{12161311req1}
\item
$\upnu\in\mc{H}(\ms{S}_{\alpha}^{\mf{T}}(\mc{G}))$,
\label{12161311req2}
\item
$\mf{u}\in Mor_{\ms{Ab}}(\ms{A}_{\mc{G}},
\ms{K}_{0}(\mc{B}_{\alpha,\upnu}^{\mf{T}}(\mc{G})^{+}))$,
\label{12161311req3}
\item
$\pf{K}=\lr{\mf{K},\uppi}{\Upomega}$ 
is a cyclic representation of $\mc{A}(\mc{G})_{\alpha}^{\mf{T}}$
associated with $(\ps{\upvarphi}^{\mc{G}})_{\alpha}^{\mf{T}}$,
\label{12161311req4}
\item
$\Upphi$ is an 
entire normalized even cocycle on
$\mc{B}_{\alpha,\upnu}^{\mf{T}}(\mc{G})^{+}$
such that 
$\Upphi_{0}\up\mc{B}_{\alpha,\upnu}^{\mf{T}}(\mc{G})\neq\ze$,
\label{12161311req5}
\item
$(\Upphi_{0}^{\natural}\up\mc{B}_{\alpha,\upnu}^{\mf{T}}(\mc{G}))^{-}
\circ
\mf{j}_{\alpha,\upnu}^{\mf{T}}
\in\ms{N}_{\uppi}
$,
\label{12161311req6}
\item
$\mf{c}=v_{\mf{r}}$.
\label{12161311req7}
\end{enumerate}
Here
$\Upphi_{0}^{\natural}$ is the state
associated with $\Upphi_{0}$ according to Def. \ref{12241101},
$(\cdot)^{-}$ is the canonical extension defined in Def. \ref{30051132},
while
$v_{\mf{r}}\in\ms{A}_{\mc{G}}^{\ast}$
such that for all $\ms{f}\in\ms{A}_{\mc{G}}$ 
\begin{equation}
\label{12192032}
v_{\mf{r}}(\ms{f})
\coloneqq
\Re\lr{\mf{u}(\ms{f})}
{[\Upphi]}_{(\mc{G},\mf{T},\alpha,\upnu)},
\end{equation}
where $\Re\lambda$ is the real part of $\lambda\in\C$. 
We call $\Upphi$ the representative of $\mf{c}$ relative to $\mf{r}$
and call any element of $Rep^{\mc{G}}(\mf{c})$
a $\mc{G}-$representation of $\mf{c}$.
\end{definition}
\begin{remark}
\label{27071427}
$(\Upphi_{0}^{\natural}\up\mc{B}_{\alpha,\upnu}^{\mf{T}}(\mc{G}))^{-}
\circ\mf{j}_{\alpha,\upnu}^{\mf{T}}$
is a state of $\mc{A}(\mc{G})_{\alpha}^{\mf{T}}$ since Cor. \ref{12181715},
thus in Def. \ref{12161311}\eqref{12161311req6}
we require that this state belongs to $\ms{N}_{\uppi}$.
\end{remark}
\begin{definition}
\label{17051300}
Let 
$\mf{t}=\lr{\mf{T},\alpha,\upnu}{\mf{u},\pf{K},\ms{L},\Updelta}$
such that 
\begin{enumerate}
\item
Def. \ref{12161311}(\ref{12161311req1}-\ref{12161311req4}) 
hold,
\item
$\lr{\tilde{\pf{R}}_{\pf{K}}^{\upnu}
((\mf{a}_{\mc{G}})_{\alpha}^{\mf{T}})}
{\ms{L},\Updelta}$
is an even $\theta-$summable $K-$cycle,
\end{enumerate}
then we set
$w_{\mf{t}}\in\ms{A}_{\mc{G}}^{\ast}$
such that 
for all $\ms{f}\in\ms{A}_{\mc{G}}$ 
\begin{equation}
\label{12200900}
w_{\mf{t}}(\ms{f})=
\lr{\mf{u}(\ms{f})}
{\ms{ch}
\lr{\tilde{\pf{R}}_{\pf{K}}^{\upnu}((\mf{a}_{\mc{G}})_{\alpha}^{\mf{T}})}
{\ms{L},\Updelta}
}_{(\mc{G},\mf{T},\alpha,\upnu)}.
\end{equation}
\end{definition}
\begin{definition}
[$C_{0}-$representations of an integer phase]
\label{12200859c0}
Let $\mf{c}\in\ms{A}_{\mc{G}}^{\ast}$ be an integer phase,
define 
$C_{0}^{\mc{G}}(\mf{c})$
the set of the 
$\mf{t}=\lr{\mf{T},\alpha,\upnu}{\mf{u},\pf{K},\ms{L},\Updelta}$
such that 
\begin{enumerate}
\item
Def. \ref{12161311}(\ref{12161311req1}-\ref{12161311req4}) 
hold,
\item
$\lr{\tilde{\pf{R}}_{\pf{K}}^{\upnu}
((\mf{a}_{\mc{G}})_{\alpha}^{\mf{T}})}
{\ms{L},\Updelta}$
is an even $\theta-$summable $K-$cycle
\item
there exists an element 
$b\in\mc{B}_{\alpha,\upnu}^{\mf{T}}(\mc{G})^{+}$
such that
$\|b\|\leq 1$
and
$\tilde{\pf{R}}_{\pf{K}}^{\upnu}((\mf{a}_{\mc{G}})_{\alpha}^{\mf{T}})(b)=\Updelta$,
\item
$\mf{c}=w_{\mf{t}}$.
\end{enumerate}
We call 
$C_{0}-$representation
of $\mf{c}$
any element of 
$C_{0}^{\mc{G}}(\mf{c})$.
\end{definition}
\begin{definition}
[$C-$representations of an integer phase]
\label{12200859}
Let $\mf{c}\in\ms{A}_{\mc{G}}^{\ast}$ be an integer phase,
define 
$C^{\mc{G}}(\mf{c})$
the set of the 
$\mf{t}=\lr{\mf{T},\alpha,\upnu}{\mf{u},\pf{K},\ms{L},\Updelta}$
such that 
\begin{enumerate}
\item
Def. \ref{12161311}(\ref{12161311req1}-\ref{12161311req4}) 
hold,
\item
$\lr{\tilde{\pf{R}}_{\pf{K}}^{\upnu}
((\mf{a}_{\mc{G}})_{\alpha}^{\mf{T}})}
{\ms{L},\Updelta}$
is an even $\theta-$summable $K-$cycle
\item
Def. \ref{12161311}(\ref{12161311req5}-\ref{12161311req6}) hold
where $\Upphi$ is the JLO cocycle associated with 
$\lr{\tilde{\pf{R}}_{\pf{K}}^{\upnu}
((\mf{a}_{\mc{G}})_{\alpha}^{\mf{T}})
}{\ms{L},\Updelta}$,
\item
$\mf{c}=w_{\mf{t}}$.
\end{enumerate}
We call 
$C-$representation
of $\mf{c}$
any element of 
$C^{\mc{G}}(\mf{c})$.
\end{definition}
\begin{definition}
\label{01151659}
Define 
$\pf{V}_{\bullet}(\mc{G})$
the subset of the 
$\mf{T}\in\pf{T}_{\mc{G}}$ 
such that for any $\alpha\in\ms{P}_{\mc{G}}^{\mf{T}}$
there exists an element 
$b\in\mc{B}_{\alpha}^{\mf{T}}(\mc{G})^{+}$
such that
$\|b\|\leq 1$
and
$\tilde{\mf{R}}_{\alpha}^{\mf{T}}(\mc{G})(b)=
(\Upgamma^{\mc{G}})_{\alpha}^{\mf{T}}$.
\end{definition}
\begin{definition}
\label{12291116}
For any $\mf{T}\in\pf{T}_{\mc{G}}$ and
$\alpha\in\ms{P}_{\mc{G}}^{\mf{T}}$,
define
$\mf{t}^{\mc{G}}(\mf{T},\alpha)
\coloneqq
\lr{\mf{T},\alpha}
{(\upmu^{\mc{G}})_{\alpha}^{\mf{T}},
(\mf{u}^{\mc{G}})_{\alpha}^{\mf{T}},
(\pf{H}^{\mc{G}})_{\alpha}^{\mf{T}},
(\ms{D}^{\mc{G}})_{\alpha}^{\mf{T}},
(\Upgamma^{\mc{G}})_{\alpha}^{\mf{T}}}$.
\end{definition}
\begin{remark}
\label{12291844}
If
$\pf{V}_{\bullet}(\mc{G})\neq\varnothing$
then
$\mf{t}(\mf{T},\alpha)
\in 
C_{0}^{\mc{G}}(\ov{\ms{m}}^{\mc{G}}(\mf{T},\alpha)
)$
for all $\mf{T}\in\pf{V}_{\bullet}(\mc{G})$ 
and $\alpha\in\ms{P}_{\mc{G}}^{\mf{T}}$
since \eqref{12220022}.
\end{remark}
\begin{definition}
\label{1228937}
Let 
$\mf{c}\in\ms{A}_{\mc{G}}^{\ast}$ 
and
$\mf{r}\in Rep^{\mc{G}}(\mf{c})$
let us use the following convention
\begin{equation*}
\begin{aligned}
\mf{r}
&=
\lr{\mf{T}_{\mf{r}},\alpha_{\mf{r}},\upmu_{\mf{r}}}
{\mf{u}_{\mf{r}},\pf{H}_{\mf{r}},\Upphi^{\mf{r}}},
\\
\pf{H}_{\mf{r}}
&=
\lr{\mf{H}_{\mf{r}}}{\uppi_{\mf{r}},\Upomega_{\mf{r}}};
\end{aligned}
\end{equation*}
and set
\begin{equation*}
\begin{aligned}
\mc{A}_{\mf{r}}
&
\coloneqq
\mc{A}(\mc{G})_{\alpha_{\mf{r}}}^{\mf{T}_{\mf{r}}},
\\
\upeta_{\mf{r}}
&
\coloneqq
(\upeta^{\mc{G}})_{\alpha_{\mf{r}}}^{\mf{T}_{\mf{r}}},
\\
\mf{z}_{\mf{r}}
&
\coloneqq
(\mf{z}^{\mc{G}})_{\alpha_{\mf{r}}}^{\mf{T}_{\mf{r}}},
\\
\ep_{\mf{r}}
&
\coloneqq
(\ep^{\mc{G}})_{\alpha_{\mf{r}}}^{\mf{T}_{\mf{r}}},
\\
\ps{\upvarphi}_{\mf{r}}
&
\coloneqq
(\ps{\upvarphi}^{\mc{G}})_{\alpha_{\mf{r}}}^{\mf{T}_{\mf{r}}},
\\
\mc{B}_{\mf{r}}
&\coloneqq
\mc{B}_{\alpha_{\mf{r}},\upmu_{\mf{r}}}^{\mf{T}_{\mf{r}}}(\mc{G}),
\\
\mf{j}_{\mf{r}}
&\coloneqq
\mf{j}_{\mc{A}_{\mf{r}}}^{\mc{B}_{\mf{r}}},
\\
\Uppsi_{\mf{r}}
&\coloneqq
(\Upphi^{\mf{r}})_{0}^{\natural}
\up
\mc{B}_{\mf{r}},
\\
\lr{\cdot}{\cdot}_{\mf{r}}
&\coloneqq
\lr{\cdot}{\cdot}_{(\mc{G},\mf{T}_{\mf{r}},
\alpha_{\mf{r}},\upmu_{\mf{r}})}.
\end{aligned}
\end{equation*}
If $\mf{c}\in\ms{A}_{\mc{G}}^{\ast}$ is an integer phase
and
$\mf{t}\in C^{\mc{G}}(\mf{c})\cup C_{0}^{\mc{G}}(\mf{c})$,
let us use the following convention 
$\mf{t}=\lr{\mf{T}_{\mf{t}},\alpha_{\mf{t}},\upmu_{\mf{t}}}
{\mf{u}_{\mf{t}},
\pf{H}_{\mf{t}},\ms{D}_{\mf{t}},\Gamma_{\mf{t}}}$,
moreover
$\pf{H}_{\mf{t}}
=
\lr{\mf{H}_{\mf{t}},\uppi_{\mf{t}}}{\Upomega_{\mf{t}}}$.
\end{definition}
\begin{definition}
[States originated via a phase]
\label{12192100}
Let $\mf{c}\in\ms{A}_{\mc{G}}^{\ast}$ define
\begin{equation*}
\mf{N}^{\mc{G}}(\mf{c})
\coloneqq
\left\{
\Uppsi_{\mf{r}}^{-}\circ\mf{j}_{\mf{r}}
\mid
\mf{r}\in Rep^{\mc{G}}(\mf{c})
\right\},
\end{equation*}
if in addition $\mf{c}$ is an integer phase we set
\begin{equation*}
\mf{D}^{\mc{G}}(\mf{c})
\coloneqq
\left\{\omega_{e^{-\ms{D}_{\mf{t}}^{2}}}\circ\uppi_{\mf{t}}
\mid
\mf{t}\in C_{0}^{\mc{G}}(\mf{c})
\right\}.
\end{equation*}
\end{definition}
\begin{remark}
\label{09051341}
According to Def. \ref{12161311}\eqref{12161311req6}, see also Rmk. \ref{27071427}, 
for any $\mf{c}\in\ms{A}_{\mc{G}}^{\ast}$ and $\mf{r}\in Rep^{\mc{G}}(\mf{c})$ 
we have $\Uppsi_{\mf{r}}^{-}\circ\mf{j}_{\mf{r}}\in\ms{N}_{\uppi_{\mf{r}}}$, 
which is at the ground for the physical interpretation constructed in Def. \ref{12211430}.
\end{remark}
If $\mf{c}$ is an integer phase, the next result shows that $C_{0}^{\mc{G}}(\mf{c})\subseteq C^{\mc{G}}(\mf{c})$, 
one can associate a $\mc{G}-$representation of $\mf{c}$ with any element $\mf{t}$ in $C^{\mc{G}}(\mf{c})$, 
and in the $C_{0}^{\mc{G}}(\mf{c})$ case the state in $\mf{N}(\mf{c})$ 
associated with this representation assumes the simple form $\omega_{e^{-\ms{D}_{\mf{t}}^{2}}}\circ\uppi_{\mf{t}}$.
This result is at the basis of the proof of the equivariance \eqref{01141641b} in Thm. \ref{01151104}. 
It is in the proof of the next result that we use Cor. \ref{12181715}\eqref{12181715st1}.
\begin{lemma}
[States originated via the same integer phase]
\label{12201040}
Let $\mf{c}\in\ms{A}_{\mc{G}}^{\ast}$ be integer, then the following map
\begin{equation}
\label{12201040E}
\lr{\mf{T},\alpha,\upmu}{\mf{u},\pf{H},\ms{D},\Gamma}
\overset{\ms{e}^{\mc{G}}}{\mapsto}
\lr{\mf{T},\alpha,\upmu}{\mf{u},\pf{H},\Upphi},
\end{equation}
where $\Upphi$ is the JLO cocycle asssociated to 
$\lr{\tilde{\pf{R}}_{\pf{H}}^{\upmu}
(\mf{a}(\mc{G})_{\alpha}^{\mf{T}})}{\ms{D},\Gamma}$, 
is well-defined on $C^{\mc{G}}(\mf{c})\cup C_{0}^{\mc{G}}(\mf{c})$ and mapping $C^{\mc{G}}(\mf{c})$ 
into $Rep^{\mc{G}}(\mf{c})$.
Moreover for all $\mf{t}\in C_{0}^{\mc{G}}(\mf{c})$
\begin{equation}
\label{12291851}
\Uppsi_{\ms{e}^{\mc{G}}(\mf{t})}^{-}
\circ
\mf{j}_{\ms{e}^{\mc{G}}(\mf{t})}
=
\omega_{e^{-\ms{D}_{\mf{t}}^{2}}}\circ\uppi_{\mf{t}},
\end{equation}
in particular
\begin{equation}
\label{12291851b}
C_{0}^{\mc{G}}(\mf{c})
\subseteq
C^{\mc{G}}(\mf{c})
\text{ and }
\mf{D}^{\mc{G}}(\mf{c})
\subseteq
\mf{N}^{\mc{G}}(\mf{c}).
\end{equation}
\end{lemma}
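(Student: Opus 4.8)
The plan is to route every claim through the $0$-dimensional component of the $JLO$ cocycle and to reduce the whole statement to a single identification of the state $\Upphi_0^{\natural}$. Since the $JLO$ cocycle of an even $\theta$-summable $K$-cycle is canonically a normalized entire even cocycle, the assignment \eqref{12201040E} is well-defined on $C^{\mc{G}}(\mf{c})\cup C_{0}^{\mc{G}}(\mf{c})$ once the input $K$-cycle is fixed. To show that $\ms{e}^{\mc{G}}$ maps $C^{\mc{G}}(\mf{c})$ into $Rep^{\mc{G}}(\mf{c})$, I would note that Def. \ref{12161311}(\ref{12161311req1}--\ref{12161311req4}) are inherited verbatim and Def. \ref{12161311}(\ref{12161311req5}--\ref{12161311req6}) hold by the very definition of $C^{\mc{G}}(\mf{c})$; the only remaining point is Def. \ref{12161311}\eqref{12161311req7}, namely $\mf{c}=v_{\ms{e}^{\mc{G}}(\mf{t})}$. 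Here I would use that the $JLO$ cocycle $\Upphi$ represents the Chern--Connes character $\ms{ch}(\lr{\tilde{\pf{R}}_{\pf{H}_\mf{t}}^{\upmu_\mf{t}}}{\ms{D}_\mf{t},\Gamma_\mf{t}})$, so that $\lr{\mf{u}_\mf{t}(\ms{f})}{[\Upphi]}_\mf{t}=w_\mf{t}(\ms{f})$ by \eqref{12200900}, and then that the integrality of that pairing (\cite{connes}) forces $w_\mf{t}(\ms{f})\in\Z\subseteq\R$; hence $v_{\ms{e}^{\mc{G}}(\mf{t})}(\ms{f})=\Re\,w_\mf{t}(\ms{f})=w_\mf{t}(\ms{f})=\mf{c}(\ms{f})$ by \eqref{12192032}.

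The heart of the argument is the computation of $\Upphi_0^{\natural}$ for $\mf{t}\in C_{0}^{\mc{G}}(\mf{c})$. Writing $\tilde{\mf{R}}$ for $\tilde{\mf{R}}_{\pf{H}_\mf{t}}^{\upmu_\mf{t}}$ and $\rho=e^{-\ms{D}_\mf{t}^{2}}$, the $0$-dimensional $JLO$ component is the supertrace functional $\Upphi_0(x)=\ms{Tr}(\Gamma_\mf{t}\,\tilde{\mf{R}}(x)\,\rho)$. As $\Gamma_\mf{t}$ is a self-adjoint grading commuting with $\rho$ and with $\tilde{\mf{R}}(x)$ (the $K$-cycle being even), a cyclicity manipulation of the trace shows $\Upphi_0^{\ast}=\Upphi_0$, so $\Upphi_0$ is Hermitian. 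Setting $P_{\pm}=\frac{1}{2}(\un\pm\Gamma_\mf{t})$ and $\phi_{\pm}(x)=\ms{Tr}(P_{\pm}\tilde{\mf{R}}(x)\rho)$, the even-ness of $\tilde{\mf{R}}$ yields $\phi_{\pm}\ge 0$ with $\Upphi_0=\phi_{+}-\phi_{-}$ and $\phi_{+}+\phi_{-}=\ms{Tr}(\tilde{\mf{R}}(\cdot)\rho)$, whence $\|\phi_{+}\|+\|\phi_{-}\|=\ms{Tr}(\rho)$. The $C_0$-hypothesis enters decisively now: choosing $b\in\mc{B}_\mf{r}^{+}$ with $\|b\|\le 1$ and $\tilde{\mf{R}}(b)=\Gamma_\mf{t}$, one gets $\Upphi_0(b)=\ms{Tr}(\rho)$ and therefore $\|\Upphi_0\|\ge\ms{Tr}(\rho)$, which together with the triangle inequality gives $\|\Upphi_0\|=\ms{Tr}(\rho)=\|\phi_{+}\|+\|\phi_{-}\|$. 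By the uniqueness of the Jordan decomposition of a Hermitian functional (\cite[Thm. $4.3.6$]{kr}), $(\phi_{+},\phi_{-})$ is precisely the pair of Def. \ref{12241101}, so $\Upphi_0^{\natural}=\frac{\phi_{+}+\phi_{-}}{\|\phi_{+}+\phi_{-}\|}=\omega_{\rho}\circ\tilde{\mf{R}}$.

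From this identification the remaining claims follow mechanically. Restricting to the ideal, $\Upphi_0^{\natural}\up\mc{B}_\mf{r}=\omega_{\rho}\circ\mf{R}_{\pf{H}_\mf{t}}^{\upmu_\mf{t}}$ is a state of $\mc{B}_\mf{r}$ by Cor. \ref{12202039} (the representation $\mf{R}_{\pf{H}_\mf{t}}^{\upmu_\mf{t}}=\uppi_\mf{t}\rtimes\ms{U}$ being nondegenerate and $\rho$ a nonzero positive trace class operator), while $\Upphi_0\up\mc{B}_\mf{r}\ne\ze$ because $\Upphi_0(bE_{\beta})=\ms{Tr}(\mf{R}_{\pf{H}_\mf{t}}^{\upmu_\mf{t}}(E_{\beta})\rho)\to\ms{Tr}(\rho)$ along an approximate identity $\{E_{\beta}\}$ of $\mc{B}_\mf{r}$ (using Lemma \ref{12201745}); this is Def. \ref{12161311}\eqref{12161311req5}. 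For Def. \ref{12161311}\eqref{12161311req6} and for \eqref{12291851} I would invoke Cor. \ref{12181715}\eqref{12181715st1}: since $\lr{\mf{H}_\mf{t},\mf{R}_{\pf{H}_\mf{t}}^{\upmu_\mf{t}}}{\rho}$ is a representation relative to the state $\Upphi_0^{\natural}\up\mc{B}_\mf{r}=\omega_{\rho}\circ\mf{R}_{\pf{H}_\mf{t}}^{\upmu_\mf{t}}$, the corollary gives that $\lr{\mf{H}_\mf{t},\uppi_\mf{t}}{\rho}$ is a representation relative to $(\Upphi_0^{\natural}\up\mc{B}_\mf{r})^{-}\circ\mf{j}_\mf{r}$, i.e. $\Uppsi_{\ms{e}^{\mc{G}}(\mf{t})}^{-}\circ\mf{j}_{\ms{e}^{\mc{G}}(\mf{t})}=\omega_{\rho}\circ\uppi_\mf{t}=\omega_{e^{-\ms{D}_\mf{t}^{2}}}\circ\uppi_\mf{t}$, which is \eqref{12291851} and manifestly lies in $\ms{N}_{\uppi_\mf{t}}$, giving Def. \ref{12161311}\eqref{12161311req6}. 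Together with $\mf{c}=w_\mf{t}$ this places $\mf{t}$ in $C^{\mc{G}}(\mf{c})$, proving $C_{0}^{\mc{G}}(\mf{c})\subseteq C^{\mc{G}}(\mf{c})$; and since then $\ms{e}^{\mc{G}}(\mf{t})\in Rep^{\mc{G}}(\mf{c})$, formula \eqref{12291851} exhibits every element of $\mf{D}(\mf{c})$ as an element of $\mf{N}(\mf{c})$, yielding \eqref{12291851b}. The main obstacle is exactly the identification $\Upphi_0^{\natural}=\omega_{\rho}\circ\tilde{\mf{R}}$: $\Upphi_0$ is a sign-indefinite supertrace functional whereas $\omega_{\rho}$ is built from the ordinary trace, and the reconciliation rests entirely on the norm equality $\|\Upphi_0\|=\|\phi_{+}\|+\|\phi_{-}\|$, where the $C_0$-condition $\tilde{\mf{R}}(b)=\Gamma_\mf{t}$, $\|b\|\le 1$ is indispensable and certifies that $\phi_{\pm}$ are the genuine Jordan components.
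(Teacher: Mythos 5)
Your proof is correct and follows essentially the same route as the paper's: the Jordan decomposition of $\Upphi_{0}$ via the spectral projectors of $\Gamma_{\mf{t}}$, the norm equality $\|\Upphi_{0}\|=\ms{Tr}(\rho)$ forced by the $C_{0}$-condition $\tilde{\mf{R}}(b)=\Gamma_{\mf{t}}$ with $\|b\|\leq 1$, the resulting identification $\Upphi_{0}^{\natural}=\omega_{\rho}\circ\tilde{\mf{R}}$, and the descent to $\mc{A}_{\mf{t}}$ via Cor. \ref{12181715}\eqref{12181715st1}. The only step you assert without argument is $[\Gamma_{\mf{t}},\rho]=\ze$, which the paper derives from $\Gamma_{\mf{t}}\ms{D}_{\mf{t}}\Gamma_{\mf{t}}=-\ms{D}_{\mf{t}}$ (hence $\Gamma_{\mf{t}}$ commutes with $\ms{D}_{\mf{t}}^{2}$) together with the Borel functional calculus.
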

\begin{remark}
\label{12291844bis}
In Assump. \ref{Gstb} 
and Def. \ref{12211430}
$\mc{G}$
will be interpreted as a physical system, 
$\ms{A}_{\mc{G}}^{\ast}$
as the set of the states of $\mc{G}$, called phases, 
and $\mf{N}^{\mc{G}}(\mf{c})$ as the set of the states, 
of suitable physical systems, originated via the phase $\mf{c}$ 
for any $\mf{c}\in\ms{A}_{\mc{G}}^{\ast}$.
Thus since Lemma \ref{12201040}
in case $\mf{c}$ is integer,
any  $C_{0}-$representation of $\mf{c}$
induces a state originated via $\mf{c}$
whose form is given in \eqref{12291851}.
In particular since Rmk. \ref{12291844} 
we have that
$\omega_{e^{-((\ms{D}^{\mc{G}})_{\alpha}^{\mf{T}})^{2}}}
\circ(\uppi^{\mc{G}})_{\alpha}^{\mf{T}}$
is a state originated via the integral phase 
$\ov{\ms{m}}^{\mc{G}}(\mf{T},\alpha)$
for all $\mf{T}\in\pf{V}_{\bullet}(\mc{G})$ and $\alpha\in\ms{P}_{\mc{G}}^{\mf{T}}$.
This fact is at the basis of the proof in Thm. \ref{01151104} that the map
$\mc{V}_{\bullet}$, defined in Def. \ref{01151759}, satisfies the $H$ equivariance \eqref{01141641b}.
\end{remark}
\begin{proposition}
\label{27072057}
Let $\mf{H}$ be a Hilbert space, $\Gamma$ a $\Z_{2}-$grading on $\mf{H}$, and $\ms{D}$ 
a possibly unbounded selfadjoint operator in $\mf{H}$. 
If $\Gamma\ms{D}\Gamma=-\ms{D}$ and $\ms{D}$ is positive, then $\ms{D}=\ze$.
\end{proposition}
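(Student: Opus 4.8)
The plan is to exploit that $\Gamma$, being a $\Z_{2}-$grading, is a selfadjoint unitary, so that conjugation by $\Gamma$ preserves positivity while the hypothesis forces it to send $\ms{D}$ to $-\ms{D}$; positivity of both $\ms{D}$ and $-\ms{D}$ then collapses the associated quadratic form to zero. First I would record the algebraic consequences: since $\Gamma=\Gamma^{\ast}=\Gamma^{-1}$ and $\Gamma^{2}=\un$, reading $\Gamma\ms{D}\Gamma=-\ms{D}$ as an equality of possibly unbounded operators, domains included so that $\Gamma\,Dom(\ms{D})=Dom(\ms{D})$, and multiplying on the left by $\Gamma$ gives the anticommutation relation $\ms{D}\Gamma=-\Gamma\ms{D}$ on $Dom(\ms{D})$. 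In particular $\Gamma$ maps $Dom(\ms{D})$ onto itself.

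Next comes the key computation. For any $x\in Dom(\ms{D})$ we have $\Gamma x\in Dom(\ms{D})$, and using $\ms{D}\Gamma x=-\Gamma\ms{D}x$ together with $\Gamma^{\ast}\Gamma=\un$,
\begin{equation*}
\lr{\ms{D}\Gamma x}{\Gamma x}
=
\lr{-\Gamma\ms{D}x}{\Gamma x}
=
-\lr{\ms{D}x}{\Gamma^{\ast}\Gamma x}
=
-\lr{\ms{D}x}{x}.
\end{equation*}
Positivity of $\ms{D}$ applied to both $x$ and $\Gamma x$ yields $\lr{\ms{D}x}{x}\geq 0$ and $\lr{\ms{D}\Gamma x}{\Gamma x}\geq 0$, which together with the displayed identity force $\lr{\ms{D}x}{x}=0$ for every $x\in Dom(\ms{D})$.

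Finally I would conclude by polarization. The sesquilinear form $(x,y)\mapsto\lr{\ms{D}x}{y}$ on the linear subspace $Dom(\ms{D})$ has diagonal $x\mapsto\lr{\ms{D}x}{x}$, which has just been shown to vanish identically; the polarization identity then expresses $\lr{\ms{D}x}{y}$ as a linear combination of such diagonal values, whence $\lr{\ms{D}x}{y}=0$ for all $x,y\in Dom(\ms{D})$. Since $\ms{D}$, being selfadjoint, is densely defined, each vector $\ms{D}x$ is orthogonal to the dense subspace $Dom(\ms{D})$ and so $\ms{D}x=\ze$, i.e. $\ms{D}=\ze$. The only delicate point is the bookkeeping of domains for the unbounded operator $\ms{D}$, which is handled by the remark above that $\Gamma\,Dom(\ms{D})=Dom(\ms{D})$; everything else is elementary. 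Alternatively one may argue spectrally: $\Gamma$ unitary gives $sp(\ms{D})=sp(\Gamma\ms{D}\Gamma^{-1})=sp(-\ms{D})=-sp(\ms{D})$, which together with $sp(\ms{D})\subseteq[0,\infty)$ yields $sp(\ms{D})=\{0\}$ and hence $\ms{D}=\ze$.
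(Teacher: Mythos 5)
Your argument is correct, and your main route is genuinely different from the paper's. The paper factors $\ms{D}=\ms{D}^{\frac{1}{2}}\ms{D}^{\frac{1}{2}}$ and writes $\Gamma\ms{D}\Gamma=(\ms{D}^{\frac{1}{2}}\Gamma)^{\ast}(\ms{D}^{\frac{1}{2}}\Gamma)$, concluding that $\Gamma\ms{D}\Gamma$ is positive with spectrum contained in $\R^{+}$ while $-\ms{D}$ has spectrum contained in $\R^{-}$; equality of the two operators then forces the spectrum to be $\{0\}$, and $\ms{D}=\ze$ follows from the spectral theorem together with the identification of the spectrum with the support of the resolution of the identity. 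You instead work directly with the quadratic form: the identity $\lr{\ms{D}\Gamma x}{\Gamma x}=-\lr{\ms{D}x}{x}$ combined with positivity at both $x$ and $\Gamma x$ kills the diagonal of the form, and polarization plus density of $Dom(\ms{D})$ finishes the job. Your version is more elementary, since it needs neither the square root nor any spectral machinery, only the domain identity $\Gamma\,Dom(\ms{D})=Dom(\ms{D})$, which you correctly extract by reading $\Gamma\ms{D}\Gamma=-\ms{D}$ as an equality of unbounded operators including domains. The spectral variant you sketch at the end, namely $sp(\ms{D})=sp(\Gamma\ms{D}\Gamma^{-1})=-sp(\ms{D})\subseteq[0,\infty)$, is essentially the paper's argument in different packaging, reaching the conclusion via unitary invariance of the spectrum rather than via positivity of $\Gamma\ms{D}\Gamma$. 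Both approaches are sound; the paper's has the marginal advantage of reusing the spectral-measure apparatus already set up earlier, but nothing downstream depends on which proof is used.
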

\begin{proof}
If $\ms{D}$ is positive then $\ms{D}=\ms{D}^{\frac{1}{2}}\,\ms{D}^{\frac{1}{2}}$, 
where $\ms{D}^{\frac{1}{2}}$ is a positive,
in particular selfadjoint, operator in $\mf{H}$, thus 
\begin{equation*}
\begin{aligned}
\Gamma\ms{D}\Gamma&=\Gamma\ms{D}^{\frac{1}{2}}\,\ms{D}^{\frac{1}{2}}\Gamma\\
&=(\ms{D}^{\frac{1}{2}}\Gamma)^{\ast}(\ms{D}^{\frac{1}{2}}\Gamma),
\end{aligned}
\end{equation*}
where the second equality follows since a general rule, see for example \cite[Prp. $1.2.4.(4)$]{tra},
and since $Dom(\ms{D}^{\frac{1}{2}}\Gamma)=\Gamma Dom(\ms{D}^{\frac{1}{2}})$ is dense indeed 
$Dom(\ms{D}^{\frac{1}{2}})$ is dense and $\Gamma$ is unitary.
Therefore $\Gamma\ms{D}\Gamma$ is positive, and its spectrum is a subset of $\R^{+}$, while the spectrum of $-\ms{D}$ 
is a subset
of $\R^{-}$. Hence $\Gamma\ms{D}\Gamma=-\ms{D}$ implies that the spectrum of $-\ms{D}$ equals $\{0\}$ 
and the statement follows since
\eqref{18225} 
and the spectral theorem.
\end{proof}
\begin{proof}[Proof of Lemma \ref{12201040}.]
The first sentence of the statement is trivial, 
\eqref{12291851b} follows since 
$\omega_{e^{-\ms{D}_{\mf{t}}^{2}}}\circ\uppi_{\mf{t}}
\in\ms{N}_{\uppi_{\mf{t}}}$
and \eqref{12291851}, so let us prove \eqref{12291851}.
Let 
$\mf{t}=\lr{\mf{T},\alpha,\upmu}{\mf{u},\pf{H},\ms{D},\Gamma}$,
$\pf{H}=\lr{\mf{H},\uppi}{\Upomega}$,
$\rho=e^{-\ms{D}^{2}}$,
$\Upphi_{0}=\Upphi_{0}^{\ms{e}(\mf{t})}$,
$\mc{B}=\mc{B}_{\alpha,\upmu}^{\mf{T}}(\mc{G})$,
$\mc{B}_{1}^{+}$ the closed unit ball of $\mc{B}^{+}$,
while 
$\mc{S}=\tilde{\mf{R}}_{\pf{H}}^{\upmu}
((\mf{a}_{\mc{G}})_{\alpha}^{\mf{T}})$
and
$\mc{S}_{0}=\mf{R}_{\pf{H}}^{\upmu}
((\mf{a}_{\mc{G}})_{\alpha}^{\mf{T}})$
which is nondegenerate since $\pf{H}$ it is so.
We deduce that
\begin{equation}
\label{12241441}
\Upphi_{0}
=
Tr \circ L_{\Gamma\rho}\circ\mc{S}
=
Tr \circ L_{\rho}\circ R_{\Gamma}\circ\mc{S},
\end{equation}
the first equality follows
since \cite[$IV.8.\delta$]{connes},
the second since the commutativity property of the trace.
Next $\Gamma$ commutes with $\ms{D}^{2}$ since 
$\Gamma\ms{D}\Gamma=-\ms{D}$ by construction, and since 
$\Gamma\ms{D}^{2}\Gamma=\Gamma\ms{D}\Gamma\,\Gamma\ms{D}\Gamma$,
thus $\Gamma\in\ms{E}_{\ms{D}^{2}}(\mc{B}(\R))'$ since \cite[Cor. $18.2.4$]{ds3} or \cite[$5.6.17$]{kr},
where $\ms{E}_{\ms{D}^{2}}$ is the resolution of the identity of the selfadjoint operator $\ms{D}^{2}$
and $\mc{B}(\R)$ is the set of the Borelian subsets of $\R$.
Moreover since the construction of the functional calculus and \cite[Thm. $4.10.8(f)$]{ds1}, 
or by \cite[$5.6.26$]{kr} or \cite[Thm $1.5.5(vi)$]{tra}, 
the functional calculus of any possibly unbounded normal operator $A$ takes values in the algebra of the operators
affiliated to the von Neumann algebra $\ms{E}_{A}(\mc{B}(\R))''$, in particular the operator function belongs to
this algebra if it is bouded, therefore
\begin{equation}
\label{12241600}
[\Gamma,\rho]=\ze.
\end{equation}
It is worthwhile noting that since the map composition law of the functional calculus $(\ms{D}^{2})^{1/2}=|\ms{D}|$ 
which equals $\ms{D}$ if and only if $\ms{D}$ is positive, case excluded by Prp. \ref{27072057} and since 
the trivial case $\ms{D}=\ze$
has been excluded from the beginning.
Therefore the fact that  $\Gamma$ commutes with $e^{-\ms{D}^{2}}$ does not conflict with $\Gamma\ms{D}\Gamma=-\ms{D}$.
Next $\Gamma$ and $\rho$ are selfadjoint
then $\Upphi_{0}$ is hermitian 
since \eqref{12241600}
since $Tr$ is hermitian
and the commutativity preperty of the trace,
thus
\begin{equation}
\label{12241548}
\Upphi_{0}=\frac{1}{2}(\Upphi_{0}+\Upphi_{0}^{\ast}).
\end{equation}
Next let $b\in\mc{B}_{1}^{+}$ such that $\mc{S}(b)=\Gamma$
which exists by hypothesis, then 
since $\Gamma^{2}=\un$, Rmk. \ref{12211131}
and \eqref{12241441} we have
\begin{equation}
\label{12241501}
\begin{aligned}
\|\Upphi_{0}\|
&\geq
\sup
\{(Tr \circ L_{\rho}\circ R_{\Gamma}\circ\mc{S})(ab)\,
\mid\,
a\in\mc{B}_{1}^{+}
\}
\\
&=
\sup
\{(Tr \circ L_{\rho}\circ\mc{S})(a)\,
\mid\,
a\in\mc{B}_{1}^{+}
\}
\\
&=
Tr(\rho).
\end{aligned}
\end{equation}
Next
let $P_{\pm}$ be the projector associated with 
the Hilbert subspace $\mf{H}^{\pm}=\{v\in\mf{H}\mid\Gamma v=\pm v\}$,
thus
$P_{+}+P_{-}=\un$ 
and $\Gamma P_{j}=(-1)^{j}P_{j}$,
$j\in\{-1,1\}$,
hence
for all $a\in\mc{B}^{+}$
\begin{equation*}
\begin{aligned}
Tr(\Gamma\rho\mc{S}(a))
&=
\sum_{j=-1,1}
Tr\bigl(\Gamma P_{j}\rho\mc{S}(a)\bigr)
\\
&=
\sum_{j=-1,1}(-1)^{j}
Tr(P_{j}\rho\mc{S}(a)).
\end{aligned}
\end{equation*}
Therefore since \eqref{12241441}
\begin{equation}
\label{12241532}
\Upphi_{0}=
\sum_{j=-1,1}(-1)^{j}
Tr\circ L_{P_{j}\rho}\circ\mc{S},
\end{equation}
in particular by Rmk. \ref{12211131}
\begin{equation*}
\begin{aligned}
\|\Upphi_{0}\|
&\leq
\sum_{j=-1,1}
\|Tr\circ L_{P_{j}\rho}\circ\mc{S}\|
\\
&=
\sum_{j=-1,1}
Tr(P_{j}\rho)
\\
&=
Tr(\rho),
\end{aligned}
\end{equation*}
which together \eqref{12241501} implies
\begin{equation}
\label{12241543}
\|\Upphi_{0}\|
=
\sum_{j=-1,1}
\|Tr\circ L_{P_{j}\rho}\circ\mc{S}\|
=Tr(\rho).
\end{equation}
So
$\Upphi_{0}^{\natural}
=
\omega_{\rho}\circ\mc{S}$
since 
(\ref{12241548},\ref{12241532},\ref{12241543})
and Rmk. \ref{12241631},
hence
\begin{equation}
\label{01141843}
\Upphi_{0}^{\natural}\up\mc{B}
=\omega_{\rho}\circ\mc{S}_{0},
\end{equation}
and \eqref{12291851}
follows since 
Cor. \ref{12181715}\eqref{12181715st1}.
\end{proof}
\subsection{Physical Interpretation}
\label{06171244}
In Def. \ref{12211430} we define
the physical interpretation of the data of the category $\mf{G}(G,F,\uprho)$,
expecially we physically decode the states originated via a phase 
described in Def. \ref{12192100}.
What here introduced 
will be emploied in the next section to describe the physical invariances of a nucleon-fragment doublet.
\begin{remark}
\label{11051500}
If $\upphi$ is a $\beta-$KMS state for some one-parameter dynamics $\uptau$ with $\beta>0$,
i.e. a state of thermal equilibrium at the inverse temperature $\beta$ 
of the system whose dynamics is $\uptau$,
then $\upphi-$normal states usually are interpreted as states 
obtained by performing on $\upphi$ small perturbations or operations. 
Thus, according to Def. \ref{12161311}\eqref{12161311req6}, \eqref{eqdyn} and \cite[p. $77$]{br2},
for any
$\mf{c}\in\ms{A}^{\ast}$ and $\mf{r}\in Rep^{\mc{G}}(\mf{c})$
such that
$\alpha_{\mf{r}}\in\R_{0}^{+}$, 
it follows
that
$\Uppsi_{\mf{r}}^{-}\circ\mf{j}_{\mf{r}}$
is a state 
generated by an operation
performed on
the state of thermal equilibrium
$\ps{\upvarphi}_{\mf{r}}$
at the inverse temperature $\alpha_{\mf{r}}$ 
of the system whose dynamics is 
$\ep_{\mf{r}}(-\alpha_{\mf{r}}^{-1}(\cdot))$.
\end{remark}
\begin{remark}
[Noncommutative geometric nature of the 
degeneration of $\mf{N}^{\mc{G}}(\mf{c})$]
\label{stbgc}
Let 
$\mf{c}\in\ms{A}^{\ast}$
and
$\chi\in\mf{N}(\mf{c})$,
so there exists
$\mf{r}\in Rep^{\mc{G}}(\mf{c})$
such that 
\begin{equation}
\label{12301142a}
\chi=\Uppsi_{\mf{r}}^{-}\circ\mf{j}_{\mf{r}}.
\end{equation}
Next
$\Uppsi_{\mf{r}}^{-}$ 
is the canonical extension 
of $\Uppsi_{\mf{r}}$ 
to $\ms{M}(\mc{B}_{\mf{r}})$,
where
$\Uppsi_{\mf{r}}$ 
is
the restriction 
to $\mc{B}_{\mf{r}}$ of the state associated 
to
the $0-$dimensional component of
the entire normalized cyclic even cocycle  
$\Upphi^{\mf{r}}$
on $\mc{B}_{\mf{r}}^{+}$
such that
$\mf{c}=v_{\mf{r}}$
i.e. 
\begin{equation}
\label{12301142}
\begin{aligned}
\Uppsi_{\mf{r}}^{-}
\circ
i^{\mc{B}_{\mf{r}}}
&=
\Uppsi_{\mf{r}},
\\
\Uppsi_{\mf{r}}
&=
(\Upphi_{0}^{\mf{r}})^{\natural}\up\mc{B}_{\mf{r}},
\\
\mf{c}(\ms{f})
&=
\Re
\lr{\mf{u_{\mf{r}}}(\ms{f})}
{[\Upphi^{\mf{r}}]}_{\mf{r}}
\quad
\forall \ms{f}\in\ms{A}.
\end{aligned}
\end{equation}
\eqref{12301142a} and \eqref{12301142}
justify Assumtion \ref{Gstb}\eqref{Gstb7} where we propose 
to consider any element in $\mf{N}(\mf{c})$ as a state whose occurrence signales
the occurrence of the phase $\mf{c}$. If we get an 
entire normalized even cocyle $\tilde{\Upphi}$ on $\mc{B}_{\mf{r}}$ such that 
\begin{equation*}
\begin{cases}
\tilde{\chi}
\doteq
((\tilde{\Upphi}_{0})^{\natural}\up\mc{B}_{\mf{r}})^{-}
\circ\mf{j}_{\mf{r}}
\in\ms{N}_{\uppi_{\mf{r}}},
\\
[\Upphi^{\mf{r}}]
=[\tilde{\Upphi}],
\end{cases}
\end{equation*}
then 
$\tilde{\mf{r}}
\doteq
\lr{\mf{T},\alpha,\upmu,\mf{u},\pf{H}}{\tilde{\Upphi}}
\in\mf{N}(\mf{c})$.
Therefore $\tilde{\chi}\neq\chi$ implies that $\mf{N}(\mf{c})$ is degenerate 
with degeneration of noncommutative geometric nature.
\end{remark}
Next we think about the elements of $\mf{N}^{\mc{G}}(\mf{c})$ as those states
of suitable systems $\mc{O}$'s, signaling the occurrance of the phase $\mf{c}$ of $\mc{G}$, 
namely such that whenever one of the systems $\mc{O}$'s occurs in one of these states, 
then the physical system $\mc{G}$ previously occurred in the phase $\mf{c}$. 
According to Rmk. \ref{stbgc} the degeneration of the set of the states originated via the same phase 
can be of noncommutative geometric nature. 
We can consider $G$ as the group of spatiotemporal
translations and $F$ the group containing the gauge 
group $F_{0}$ and the remaining symmetries of the system, 
in such a case $\uprho$ restricted to $F_{0}$ needs to be trivial.
\begin{assumption}
\label{Gstb}
For any $\mc{N}\in Obj(\mf{G}(G,F,\uprho))$
there exists a unique 
physical system still denoted by
$\mc{N}$ such that 
\begin{enumerate}
\label{Gstbst1}
\item
$\ms{A}_{\mc{N}}$ is the set of observables of $\mc{N}$;
\item
$\ms{A}_{\mc{N}}^{\ast}$ 
is the set of states, said phases of $\mc{N}$, 
\label{Gstbst2}
\item
for any $\mf{T}\in\pf{T}_{\mc{N}}$ and $\alpha\in\ms{P}_{\mc{N}}^{\mf{T}}$ 
there exists a physical system $\mc{O}(\mc{N})_{\alpha}^{\mf{T}}$ 
such that
\begin{enumerate}
\item
$\mc{A}(\mc{N})_{\alpha}^{\mf{T}}$ and $\ms{E}_{\mc{A}(\mc{N})_{\alpha}^{\mf{T}}}$
are the algebra of observables and the set of states
of $\mc{O}(\mc{N})_{\alpha}^{\mf{T}}$ respectively.
If $\alpha\in\R_{0}^{+}$ then 
$\mc{O}(\mc{N})_{\alpha}^{\mf{T}}$ 
evolves in time through
$(\ep^{\mc{N}})_{\alpha}^{\mf{T}}(-\alpha^{-1}(\cdot))$,
\item
$\mf{T}$ acts as a special type of operation on 
$\ms{E}_{\mc{A}_{\alpha}^{\mf{T}}}$
producing states of $\mc{O}(\mc{N})_{\alpha}^{\mf{T}}$ 
via the intermediation of phases of $\mc{N}$;
\label{07061013}
\end{enumerate}
\item
for all $\mf{c}\in\ms{A}_{\mc{N}}^{\ast}$ 
and 
$\mf{r}\in Rep^{\mc{N}}(\mf{c})$
the following properties hold
\begin{enumerate}
\label{Gstb6}
\item
if the operation $\mf{T}_{\mf{r}}$ is performed 
on the system $\mc{O}(\mc{N})_{\alpha_{\mf{r}}}^{\mf{T}_{\mf{r}}}$ 
when occurring in the state $\ps{\upvarphi}_{\mf{r}}$,
then $\mc{O}(\mc{N})_{\alpha_{\mf{r}}}^{\mf{T}_{\mf{r}}}$ 
will occur in the state $\Uppsi_{\mf{r}}^{-}\circ\mf{j}_{\mf{r}}$,
\label{Gstb8}
\item
\textbf{if $\mc{O}(\mc{N})_{\alpha_{\mf{r}}}^{\mf{T}_{\mf{r}}}$
occurs in the state 
$\Uppsi_{\mf{r}}^{-}\circ\mf{j}_{\mf{r}}$
then $\mc{N}$ occurred in the phase $\mf{c}$};
\label{Gstb7}
\end{enumerate}
\item
the information about the set of operations $\pf{T}_{\mc{N}}$
are sufficient to provide the set of observables $\ms{A}_{\mc{N}}$
with the structure of a group but not an algebra;
\item
for any $l\in H$, $\ms{f}\in\ms{A}_{\mc{N}}$,
$\mf{T}\in\pf{T}_{\mc{N}}$ and $a\in\mc{A}(\mc{N})_{\alpha}^{\mf{T}}$ 
\begin{enumerate}
\item
$\uppsi^{\mc{N}}(l)(\ms{f})$ is the observable 
obtained by transforming $\ms{f}$
through $l$, 
\label{Gstb4}
\item
$\mf{b}^{\mc{N}}(l)(\mf{T})$ is the operation 
obtained by transforming $\mf{T}$ through $l$,
\label{Gstb5}
\item
$\ms{V}(\mc{N})_{\alpha}^{\mf{T}}(l)(a)$ is the 
observable of the system 
$\mc{O}(\mc{N})_{\alpha}^{\mf{b}^{\mc{N}}(l)(\mf{T})}$,
obtained by transforming $a$ through $l$,
\label{07061031}
\end{enumerate}
\item
for all $\mc{M}\in Obj(\mf{G}(G,F,\uprho))$,
$(\mf{g},\mf{d})\in
Mor_{\mf{G}(G,F,\uprho)}(\mc{N},\mc{M})$,
$\ms{f}\in\ms{A}_{\mc{N}}$
and $\mf{Q}\in\pf{T}_{\mc{M}}$
\begin{enumerate}
\item
$\mf{g}(\ms{f})$ is the observable of the system $\mc{M}$
obtained by 
transforming $\ms{f}$ through $\mf{g}$,
\item
$\mf{d}(\mf{Q})$ is the operation obtained by tranforming
$\mf{Q}$ through $\mf{d}$,
\end{enumerate}
\end{enumerate}
\end{assumption}
Now we fix those properties of any physical interpretation satisfying
Assumption \ref{Gstb} and Rmk. \ref{11051500}.
In what follows let $\equiv$ denote equivalence of propositions.
\begin{definition}
\label{12211430}
We call interpretation any couple of maps $(\mf{s},\mf{u})$ 
such that
if
$\mc{G},\mc{M}\in Obj(\mf{G}(G,F,\uprho))$,
$(\mf{g},\mf{f})\in Mor_{\mf{G}(G,F,\uprho)}(\mc{G},\mc{M})$,
$\mf{P}\in\pf{T}_{\mc{M}}$,
$\beta\in\ms{P}_{\mc{M}}^{\mf{P}}$,
$\mf{Q}\in\pf{T}_{\mc{G}}$,
$\alpha\in\ms{P}_{\mc{G}}^{\mf{Q}}$,
$T\in Mor_{\ms{CA}^{\ast}}
(\mc{A}(\mc{G})_{\beta}^{\mf{f}(\mf{P})},\mc{A}(\mc{M})_{\beta}^{\mf{P}})$,
$b\in\mc{A}(\mc{G})_{\beta}^{\mf{f}(\mf{P})}$ such that $b=b^{\ast}$,
$l\in H$, 
$\ms{f}\in\ms{A}_{\mc{G}}$,
$\mf{c}\in\ms{A}_{\mc{G}}^{\ast}$
such that $Rep^{\mc{G}}(\mf{c})\neq\varnothing$,
\footnote
{in particular $\mf{c}=\ov{\ms{m}}^{\mc{G}}(\mf{Q},\alpha)$}
$\mf{r}\in Rep^{\mc{G}}(\mf{c})$,
$\chi\in\ms{E}_{\mc{A}(\mc{G})_{\alpha}^{\mf{Q}}}$,
$a\in\mc{A}_{\alpha}^{\mf{Q}}$ such $a=a^{\ast}$,
$\mf{C}$ is a category, 
$\ms{a}\in\mf{C}$ and 
$\mc{F}$ is a functor from $\mf{C}$ to $\mf{G}(G,F,\uprho)$, 
then
\begin{enumerate}
\item
$\mf{s}(\mc{G})\equiv$ the nucleon system $\mf{u}(\mc{G})$,
\item
$\mf{u}(\mc{F}(\ms{a}))\equiv$ generated by the fissioning system $\mf{u}(\ms{a})$,
\item
$\mf{s}(\mc{O}(\mc{G})_{\alpha}^{\mf{Q}})\equiv$
the fragment system whose observable algebra is 
$\mc{A}(\mc{G})_{\alpha}^{\mf{Q}}$;
\item
$\mf{s}(\mc{O}(\mc{G})_{\alpha}^{\mf{Q}})\equiv$
the fragment system whose observable algebra is 
$\mc{A}(\mc{G})_{\alpha}^{\mf{Q}}$
and whose dynamics is $(\ep^{\mc{G}})_{\alpha}^{\mf{Q}}(-\alpha^{-1}(\cdot))$,
if $\alpha\in\R_{0}^{+}$, 
\label{12211430a}
\item
$\mf{s}(\chi)\equiv$
the state of $\mf{s}(\mc{O}(\mc{G})_{\alpha}^{\mf{Q}})$
$\mf{u}(\chi)$, 
\item
$\mf{s}(\chi)\equiv$
the state $\mf{u}(\chi)$
of $\mf{s}(\mc{O}(\mc{G})_{\alpha}^{\mf{Q}})$,
\item
$\mf{s}(a)\equiv$
the observable $\mf{u}(a)$ 
of $\mf{s}(\mc{O}(\mc{G})_{\alpha}^{\mf{Q}})$,
\item
$\mf{s}(a)\equiv$
the observable of $\mf{s}(\mc{O}(\mc{G})_{\alpha}^{\mf{Q}})$
$\mf{u}(a)$,
\label{12211430b}
\item
$\mf{u}(\ms{V}(\mc{G})_{\alpha}^{\mf{Q}}(l))\equiv l$,
\item
$\mf{s}(\mf{Q})\equiv$ the operation $\mf{u}(\mf{Q})$,
\label{12211430c}
\item
$\mf{u}(\mf{b}^{\mc{G}}(l))\equiv l$;
\item
$\mf{u}^{\mc{M}}(Tb)\equiv$
obtained by transforming through the action of $\mf{u}(T)$ $\mf{s}(b)$, 
\item
$\mf{u}(\mf{f}(\mf{P}))\equiv$
obtained by transforming through the action of $\mf{u}(\mf{f})$ $\mf{s}(\mf{P})$, 
\label{08111209b}
\item
$\chi(a)$
equals the mean value in $\mf{s}(\chi)$ of $\mf{s}(a)$, 
\label{12211430g}
\item
$\mf{u}((\ps{\upvarphi}^{\mc{G}})_{\alpha}^{\mf{Q}})\equiv$
of thermal equilibrium $(\ps{\upvarphi}^{\mc{G}})_{\alpha}^{\mf{Q}}$
at the inverse temperature $\alpha$, 
if $\alpha\in\R_{0}^{+}$, 
\label{12211430h}
\item
$\mf{s}(\mf{c})\equiv$
the phase of 
$\mf{s}(\mc{G})$
occurring by performing $\mf{s}(\mf{T}_{\mf{r}})$
on $\mf{s}(\ps{\upvarphi}_{\mf{r}})$;
\label{12211430l}
\item
$\mf{s}(\mf{c})\equiv$
the phase of 
$\mf{s}(\mc{G})$
occurring by performing on $\mf{s}(\ps{\upvarphi}_{\mf{r}})$
$\mf{s}(\mf{T}_{\mf{r}})$,
\label{12211430lb}
\item
$\mf{u}(\Uppsi_{\mf{r}}^{-}\circ\mf{j}_{\mf{r}})\equiv$
\textbf{originated via $\mf{s}(\mf{c})$},
\label{12211430d}
\item
$\mf{s}(\ms{f})\equiv$
the observable $\mf{u}(\ms{f})$ of $\mf{s}(\mc{G})$,
\label{12211430i}
\item
$\mf{u}(\uppsi^{\mc{G}}(l))\equiv l$,
\item
$\mf{u}^{\mc{M}}(\mf{g}(\ms{f}))\equiv$
obtained by transforming through the action of $\mf{u}(\mf{g})$ $\mf{s}(\ms{f})$, 
\label{08111209a}
\item
$\mf{c}(\ms{f})$
equals
the mean value in $\mf{s}(\mf{c})$
of $\mf{s}(\ms{f})$.
\end{enumerate}
\end{definition}
In order to avoid redundancies,
often we convein to remove the expression
``of $\mf{s}(\mc{O}_{\alpha}^{\mc{T}})$``
and ``of $\mf{s}(\mc{G})$''. 
According to Assumption \ref{Gstb}\eqref{Gstb7}, 
Def. \ref{12211430}\eqref{12211430d}
has to be understood as 
``whose occurrence follows the occurrence of $\mf{s}(\mf{c})$'',
moreover Def. \ref{12211430}\eqref{12211430l} follows 
since Rmk. \ref{11051500} and Assumption \ref{Gstb}\eqref{Gstb7}. 
The introduction of the map $\mf{u}$ permits to specify 
the semantics in actual models,
we shall use it in addressing in 
part \ref{07301107} 
the nuclear binary fission phenomenon.
This is why we use here in an abstract meaning the concepts of fragment and nucleon
systems.
In the remaining of the work let $(\mf{s},\mf{u})$ be a fixed interpretation.
\begin{remark}
Since Rmk. \ref{06132143} we obtain
\begin{enumerate}
\item
$\mf{u}(\ms{V}(\mc{G})_{\alpha}^{\mf{Q}}(l)(a))\equiv$
obtained by transforming through the action of $l$ $\mf{s}(a)$, 
\item
$\mf{u}(\mf{b}^{\mc{G}}(l)\mf{Q})\equiv$
obtained by transforming through the action of $l$ $\mf{s}(\mf{Q})$, 
\item
$\mf{u}(\uppsi^{\mc{G}}(l)\ms{f})\equiv$
obtained by transforming through the action of $l$ $\mf{s}(\ms{f})$.
\end{enumerate}
\end{remark}
Let $\mc{G}$ be an object of $\mf{G}(G,F,\uprho)$ and $\mf{c}\in\ms{A}_{\mc{G}}^{\ast}$,
then the next two propositions easily follow by Def. \ref{12211430}. 
\begin{proposition}
[Thermal nature of the nucleon phases 
and their stability 
under variation of the operations.]
\label{12051738}
$\mf{s}(\mf{c})\equiv$  
the phase of the nucleon system $\mf{u}(\mc{G})$
occurring by performing the operation $\mf{u}(\mf{T}_{\mf{p}})$ 
on the state of thermal equilibrium 
$\ps{\upvarphi}_{\mf{p}}$
at the inverse temperature $\alpha_{\mf{p}}$
of the fragment system whose observable algebra is 
$\mc{A}_{\mf{p}}$
and whose dynamics is 
$\ep_{\mf{p}}(-\alpha_{\mf{p}}^{-1}(\cdot))$,
for all $\mf{p}\in Rep^{\mc{G}}(\mf{c})$ such that $\alpha_{\mf{p}}>0$.
\end{proposition}
\begin{proposition}
[Noncommutative geometric and thermal origination of fragment states 
via a nucleon phase.]
\label{06251847}
\begin{equation*}
\Uppsi_{\mf{r}}^{-}\circ\mf{j}_{\mf{r}}
\end{equation*}
is the $\ps{\upvarphi}_{\mf{r}}-$normal state originated via $\mf{s}(\mf{c})$, 
of the fragment system whose observable algebra is 
$\mc{A}_{\mf{r}}$
and whose dynamics is 
$\ep_{\mf{r}}(-\alpha_{\mf{r}}^{-1}(\cdot))$,
for all $\mf{r}\in Rep^{\mc{G}}(\mf{c})$ such that $\alpha_{\mf{r}}>0$.
\end{proposition}
\subsection{Nucleon-fragment doublets on $\mf{C}$}
\label{06241544}
We define the auxiliary concepts of equivariant stability and 
extended $\mf{C}-$equivariant stability for a category $\mf{C}$, 
in Def. \ref{11051303} and Def. \ref{06201206}. 
Nucleon-fragment doublets are essential in order to solve the universality claim described 
in introduction \ref{introI},
as we shall establish in Cor. \ref{19061936}.
In Def. \ref{06161650} we define the structure of nucleon-fragment doublet $\mc{T}$ on an arbitrary category, 
in Def. \ref{07271553} we define the $\mc{T}-$nucleon phase and $\mc{T}-$fragment state
and their expanded equivariances which represent 
the $\mc{T}-$resolution of the equivariant form of the universality claim.
We describe the properties of equivariance of $\mc{T}$ in Prp. \ref{06211545},
the consequent properties of invariance and their physical interpretation
in Prp. \ref{20051810dbt} and Prp. \ref{12051206dbt} with the help of Prp. \ref{02202114dbt}. 
In Cor. \ref{06241610} we relate a nucleon-fragment doublet on $\mf{C}$ to any 
extended $\mf{C}-$equivariant stability.
The first step is to define equivariant phases in Def. \ref{01141121}
let us start with preparatory definitions.
\begin{definition}
\label{18061142}
Let $\pf{U}$ be defined pre $(G,F,\uprho)-$map 
if it is a map 
on $Dom(\pf{U})\subseteq Obj(\mf{G}(G,F,\uprho))$
such that $\pf{U}_{\mc{N}}\subseteq\pf{T}_{\mc{N}}$,
for all $\mc{N}\in Dom(\pf{U})$.
Define $\pf{U}$ to be a $(G,F,\uprho)-$map 
if it is a pre $(G,F,\uprho)-$map such that 
\begin{equation}
\label{18061152}
(\forall\mc{M}\in Dom(\pf{U}))
(\forall(\mf{h},\mf{f})\in Mor_{\mf{G}(G,F,\uprho)}
(\mc{N},\mc{M}))
(\mf{f}(\pf{U}_{\mc{M}})
\subseteq
\pf{U}_{\mc{N}}).
\end{equation}
Let 
$\lr{\ms{H}}{\pf{U}}$ be defined a pre $(G,F,\uprho)-$couple of maps 
if $\pf{U}$ is a pre $(G,F,\uprho)-$map 
and $\ms{H}$ is a map on $Dom(\pf{U})$
such that $\ms{H}_{\mc{N}}$ is a subgroup of $H$,
for all $\mc{N}\in Dom(\pf{U})$.
$\ms{H}$ is said to be full if it is the constant map with constant value equal to $H$.
Let $\lr{\ms{H}}{\pf{U}}$ be defined a $(G,F,\uprho)-$couple of maps 
if it is a pre $(G,F,\uprho)-$couple of maps such that 
for all $\mc{N}\in Dom(\pf{U})$ and $l\in\ms{H}_{\mc{N}}$
\begin{equation}
\label{18061148}
\mf{b}^{\mc{N}}(l)(\pf{U}_{\mc{N}})\subseteq\pf{U}_{\mc{N}}.
\end{equation}
\end{definition}
If $\pf{U}$ is a $(G,F,\uprho)-$map, then 
$\lr{\ms{H}}{\pf{U}}$ is a $(G,F,\uprho)-$couple of maps
with $\ms{H}$ full, since Rmk. \ref{06132143}.
\begin{definition}
\label{19061130}
Let 
$\lr{\ms{H},\pf{D}}{\pf{U}}$ 
be defined a $(G,F,\uprho)-$triplet of maps
if $\pf{U}$ is a $(G,F,\uprho)-$map
and $\lr{\ms{H}}{\pf{U}}$ is a pre $(G,F,\uprho)-$couple of maps,
$\pf{D}$ is a map such that
$Dom(\pf{D})\subseteq Dom(\pf{U})$,
$\pf{D}_{\mc{N}}\subseteq\pf{U}_{\mc{N}}$, for all $\mc{N}\in Dom(\pf{D})$,
and $\lr{\ms{H}\up Dom(\pf{D})}{\pf{D}}$ is a $(G,F,\uprho)-$couple of maps.
\end{definition}
If $\lr{\ms{H}}{\pf{U}}$ is a pre $(G,F,\uprho)-$couple of maps
such that $\pf{U}$ is a $(G,F,\uprho)-$map,
then
$\lr{\ms{H},\pf{U}}{\pf{T}}$ is a $(G,F,\uprho)-$triplet of maps. 
\begin{definition}
\label{06191427}
Let $\pf{D}$ be a pre $(G,F,\uprho)-$map,
define $Dom(\pf{D})^{0}$ to be the 
unique subcategory of $\mf{G}(G,F,\uprho)$
such that 
$Obj(Dom(\pf{D})^{0})=Dom(\pf{D})$, 
and
\begin{equation*}
Mor_{Dom(\pf{D})^{0}}(\mc{N},\mc{M})
=
\{
(\mf{h},\mf{f})\in Mor_{\mf{G}(G,F,\uprho)}(\mc{N},\mc{M})
\mid
\mf{f}(\pf{D}_{\mc{M}})\subseteq\pf{D}_{\mc{N}}
\},
\forall
\mc{N},\mc{M}\in Dom(\pf{D}).
\end{equation*}
\end{definition}
\begin{remark}
Let $\pf{D}$ be a pre $(G,F,\uprho)-$map, then
\begin{equation*}
(\pf{D},\Pr_{2})\in\ms{Fct}((Dom(\pf{D})^{0})^{op},\ms{set}),
\end{equation*}
where $\Pr_{2}$ is the map $(\mf{g},\mf{f})\mapsto\mf{f}$ 
defined on $Mor_{\mf{G}(G,F,\uprho)}$. 
If $\lr{\ms{H}}{\pf{D}}$ is a $(G,F,\uprho)-$couple of maps,
$\mc{N}\in Dom(\pf{D})$ and $l\in\ms{H}_{\mc{N}}$ then since Rmk. \ref{06132143}
\begin{equation*}
(\uppsi^{\mc{N}}(l),\mf{b}^{\mc{N}}(l^{-1}))\in Aut_{Dom(\pf{D})^{0}}(\mc{N}).
\end{equation*}
If $\pf{U}$ is a pre $(G,F,\uprho)-$map then
$Dom(\pf{U})^{0}$ is a subcategory of 
$Dom(\pf{U})$ the full subcategory of 
$\mf{G}(G,F,\uprho)$ whose object set equals 
$Dom(\pf{U})$;
however if 
$\pf{U}$ is a $(G,F,\uprho)-$map then $Dom(\pf{U})^{0}=Dom(\pf{U})$ as categories. 
\end{remark}
\begin{definition}
Let $\mc{R}$ be an object of $\mf{G}(G,F,\uprho)$ and $\mf{Q}\in\pf{T}_{\mc{R}}$,
set
$\ms{X}(\mc{R},\mf{Q})
\coloneqq
\prod_{\beta\in\ms{P}_{\mc{R}}^{\mf{Q}}}
\left(
\mc{A}(\mc{R})_{\beta}^{\mf{Q}}
\right)^{\ast}$.
\end{definition}
\begin{definition}
\label{20051117}
Let 
$\Updelta_{o}$,
$\ms{Z}$
and
$\ms{V}_{\vardiamondsuit}$
be maps on 
$Obj(\mf{G}(G,F,\uprho))$
while
$\Updelta_{m}$
be the map on
$ Mor_{Obj(\mf{G}(G,F,\uprho))}$
such that for all
$\mc{M},\mc{N}\in 
Obj(\mf{G}(G,F,\uprho))$
and 
$(\mf{h},\mf{f})\in Mor_{\mf{G}(G,F,\uprho)}(\mc{N},\mc{M})$,
we have
\begin{equation*}
\Updelta_{o}(\mc{M})
\coloneqq
\bigcup_{\mf{Q}\in\pf{T}_{\mc{M}}}
Mor_{\ms{set}}(\ms{P}_{\mc{M}}^{\mf{Q}},\ms{A}_{\mc{M}}^{\ast}),
\end{equation*}
while
\begin{equation*}
\Updelta_{m}(\mf{h},\mf{f}):
\Updelta_{o}(\mc{M})
\to
\Updelta_{o}(\mc{N}),
\quad
Mor_{\ms{set}}(\ms{P}_{\mc{M}}^{\mf{I}},\ms{A}_{\mc{M}}^{\ast})
\ni f
\mapsto
\left(
\ms{P}_{\mc{N}}^{\mf{f}(\mf{I})}
\ni\alpha\mapsto
f(\alpha)\circ\mf{h}
\right),
\,
\forall\mf{I}\in\pf{T}_{\mc{M}},
\end{equation*}
and
\begin{equation*}
\ms{Z}(\mc{M})\coloneqq
\coprod_{\mf{Q}\in\pf{T}_{\mc{M}}}
\ms{X}(\mc{M},\mf{Q}),
\end{equation*}
and
$\ms{V}_{\vardiamondsuit}(\mc{M}):H
\to
Aut_{\ms{set}}(\ms{Z}(\mc{M}))$
such that 
\begin{equation*}
\ms{V}_{\vardiamondsuit}(\mc{M})(l):
(\mf{T},f)
\mapsto
\left(
\mf{b}^{\mc{M}}(l)(\mf{T}),
\ms{P}_{\mc{M}}^{\mf{b}^{\mc{M}}(l)(\mf{T})}
\ni\alpha\mapsto
f(\alpha)\circ
\ms{V}(\mc{M})_{\alpha}^{\mf{b}^{\mc{M}}(l)(\mf{T})}(l^{-1})
\right).
\end{equation*}
\end{definition}
\begin{definition}
\label{18061456}
Let $\lr{\ms{H}}{\pf{D}}$ be a pre $(G,F,\uprho)-$couple of maps.
Define 
$\mf{Z}_{\ms{H}}$ and $\mf{O}_{\ms{H}}$ to be maps on 
$Dom(\pf{D})$
such that for all $\mc{N}\in Dom(\pf{D})$
\begin{equation*}
\begin{aligned}
\mf{Z}_{\ms{H}}^{\mc{N}}&\coloneqq
(\un\mapsto\ms{Z}(\mc{N}),
\ms{V}_{\vardiamondsuit}(\mc{N})\up\ms{H}_{\mc{N}}),
\\
\mf{O}_{\ms{H}}^{\mc{N}}&\coloneqq
\left(\un\mapsto\Updelta_{o}(\mc{N}),
\ms{H}_{\mc{N}}\ni l\mapsto\Updelta_{m}
(\uppsi^{\mc{N}}(l^{-1}),\mf{b}^{\mc{N}}(l))
\right).
\end{aligned}
\end{equation*}
If in addition $\lr{\ms{H}}{\pf{D}}$ is a $(G,F,\uprho)-$couple of maps
let us define $\mf{P}_{\pf{D},\ms{H}}$ to be the map on $Dom(\pf{D})$ such that
for all $\mc{O}\in Dom(\pf{D})$ 
\begin{equation*}
\mf{P}_{\pf{D},\ms{H}}^{\mc{O}}
\coloneqq
(\un_{\mc{O}}
\mapsto\pf{D}_{\mc{O}},\ms{H}_{\mc{O}}\ni l\mapsto\mf{b}^{\mc{O}}(l)
\up\pf{D}_{\mc{O}}).
\end{equation*}
If $\ms{H}$ is full we let 
$\mf{Z}$, $\mf{O}$ and $\mf{P}_{\pf{D}}$
denote
$\mf{Z}_{\ms{H}}$, $\mf{O}_{\ms{H}}$ and $\mf{P}_{\pf{D},\ms{H}}$
respectively.
Let 
$\mf{Q}^{\pf{D}}\coloneqq(\pf{D},\mf{Q}_{m}^{\pf{D}})$ 
where $\mf{Q}_{m}^{\pf{D}}$ is the map on 
$Mor_{Dom(\pf{D})^{0}}$
such that 
for all $\mc{N},\mc{M}\in Dom(\pf{D})$
and
$(\mf{h},\mf{f})\in Mor_{Dom(\pf{D})^{0}}(\mc{N},\mc{M})$
we have
\begin{equation*}
\mf{Q}_{m}^{\pf{D}}((\mf{h},\mf{f}))
\coloneqq
\mf{f}\up\pf{D}_{\mc{M}}.
\end{equation*}
Finally define
\begin{equation*}
\Updelta^{\pf{D}}\coloneqq 
(\Updelta_{o}\up Dom(\pf{D}),
\Updelta_{m}\up Mor_{Dom(\pf{D})}).
\end{equation*}
\end{definition}
\begin{remark}
\label{18061500}
Let $\lr{\ms{H}}{\pf{D},\pf{U}}$ be a $(G,F,\uprho)-$triplet of maps.
Thus
$\Updelta^{\pf{U}}\in\ms{Fct}(Dom(\pf{U})^{op},\ms{set})$
and
$\mf{Q}^{\pf{D}}\in\ms{Fct}( (Dom(\pf{D})^{0})^{op},\ms{set})$,
$\mf{Q}^{\pf{U}}\in\ms{Fct}(Dom(\pf{U})^{op},\ms{set})$, 
while 
$\mf{P}_{\pf{U}}^{\mc{N}}\in\ms{Fct}(H,\ms{set})$,
in particular
$\mf{P}_{\pf{U},\ms{H}}^{\mc{N}}\in\ms{Fct}(\ms{H}_{\mc{N}},\ms{set})$,
and
$\mf{P}_{\pf{D},\ms{H}}^{\mc{O}},
\mf{Z}_{\ms{H}}^{\mc{O}},
\mf{O}_{\ms{H}}^{\mc{O}}
\in\ms{Fct}(\ms{H}_{\mc{O}},\ms{set})$,
for any 
$\mc{N}\in Dom(\pf{U})$
and
$\mc{O}\in Dom(\pf{D})$.
\end{remark}
Next we define an equivariant phase as a section of 
maps valued in nucleon phase valued maps
contravariant under action of $Dom(\pf{U})$, and as a result 
the value of this section at any $\mc{N}$ induces a covariant section under action of $\ms{H}_{\mc{N}}$.
\begin{definition}
[Equivariant phases]
\label{01141121}
Let $\lr{\pf{U}}{\mf{m}}$ be defined equivariant phase
if 
\begin{enumerate}
\item
$\pf{U}$ is a $(G,F,\uprho)-$map;
\item
$\mf{m}\in
\prod_{\mc{N}\in Dom(\pf{U})}
\prod_{\mf{Q}\in\pf{U}_{\mc{N}}}
Mor_{\ms{set}}(\ms{P}_{\mc{N}}^{\mf{Q}},\ms{A}_{\mc{N}}^{\ast})$;
\item
$\mf{m}\in Mor_{\ms{Fct}(Dom(\pf{U})^{op},\ms{set})}(\mf{Q}^{\pf{U}},\Updelta^{\pf{U}})$.
\label{06191722}
\end{enumerate}
$\mf{m}$ is called integer 
if
$\mf{m}^{\mc{N}}(\mf{T},\alpha)$
is integer
for all $\mc{N}\in Dom(\pf{U})$,
$\mf{T}\in\pf{U}_{\mc{N}}$
and $\alpha\in\ms{P}_{\mc{N}}^{\mf{T}}$.
\end{definition}
\begin{remark}
\label{06151649}
Let $\lr{\pf{U}}{\mf{m}}$ be an equivariant phase
then Def. \ref{01141121}\eqref{06191722} is equivalent to state 
that for all
$\mc{M},\mc{N}\in Dom(\pf{U})$
and
$(\mf{h},\mf{f})\in Mor_{\mf{G}(G,F,\uprho)}(\mc{N},\mc{M})$
the following diagram is commutative 
\begin{equation}
\label{genequiv}
\xymatrix{
\pf{U}_{\mc{N}}
\ar[rr]^{\mf{m}^{\mc{N}}}
&&
\Updelta_{o}(\mc{N})
\\
&&
\\
\pf{U}_{\mc{M}}
\ar[uu]^{\mf{f}\up\pf{U}_{\mc{M}}
}
\ar[rr]_{\mf{m}^{\mc{M}}}
&&
\Updelta_{o}(\mc{M})
\ar[uu]_{\Updelta_{m}(\mf{h},\mf{f})}.}
\end{equation}
Therefore for all $\mc{N}\in Dom(\pf{U})$ and $l\in H$ 
the following diagram is commutative 
since Rmk. \ref{06132143} 
\begin{equation}
\label{19051804}
\xymatrix{
\pf{U}_{\mc{N}}
\ar[rr]^{\mf{m}^{\mc{N}}}
&&
\Updelta_{o}(\mc{N})
\\
&&
\\
\pf{U}_{\mc{N}}
\ar[uu]^{\mf{b}^{\mc{N}}(l)\up\pf{U}_{\mc{N}}}
\ar[rr]_{\mf{m}^{\mc{N}}}
&&
\Updelta_{o}(\mc{N})
\ar[uu]_{\Updelta_{m}(\uppsi^{\mc{N}}(l^{-1}),\mf{b}^{\mc{N}}(l))},}
\end{equation}
i.e.
\begin{equation*}
(\un\mapsto\mf{m}^{\mc{N}})\in
Mor_{\ms{Fct}(H,\ms{set})}
(\mf{P}_{\pf{U}}^{\mc{N}},\mf{O}^{\mc{N}}),
\forall\mc{N}\in Dom(\pf{U}).
\end{equation*}
\end{remark}
\begin{definition}
[Field determined by an equivariant phase]
\label{02021557}
Let $\lr{\pf{U}}{\mf{m}}$ be an equivariant phase,
we call field determined by $\mf{m}$
the map $\mf{S}$ defined on $Dom(\pf{U})$ such that for all $\mc{N}\in Dom(\pf{U})$
\begin{equation*}
\mf{S}^{\mc{N}}
\coloneqq
\{\mf{m}^{\mc{N}}(\mf{Q},\alpha)
\vert
\mf{Q}\in\pf{U}_{\mc{N}},
\alpha\in\ms{P}_{\mc{N}}^{\mf{Q}}
\}.
\end{equation*}
\end{definition}
\begin{proposition}
[Functoriality of $\mf{S}$]
\label{02021434}
Let $\lr{\pf{U}}{\mf{m}}$ be an equivariant phase, 
and $\mf{S}$ be the field determined by $\mf{m}$,
then for all $\mc{N}\in Dom(\pf{U})$,
$l\in H$, $\mf{Q}\in\pf{U}_{\mc{N}}$ and $\alpha\in\ms{P}_{\mc{N}}^{\mf{Q}}$
we have
$\uppsi_{\ast}^{\mc{N}}(l)(\mf{m}^{\mc{N}}(\mf{Q},\alpha))
=
\mf{m}^{\mc{N}}(\mf{b}^{\mc{N}}(l)\mf{Q},\alpha)$.
In particular we obtain
\begin{equation}
\label{02021442}
\uppsi_{\ast}^{\mc{N}}(l)\mf{S}^{\mc{N}}=\mf{S}^{\mc{N}},
\forall l\in H.
\end{equation}
Moreover for all $\mc{M}\in Dom(\pf{U})$,
$(\mf{h},\mf{f})\in Mor_{\mf{G}(G,F,\uprho)}(\mc{N},\mc{M})$,
$\mf{T}\in\pf{U}_{\mc{M}}$ and $\beta\in\ms{P}_{\mc{M}}^{\mf{T}}$
we have
$\mf{h}_{\dagger}(\mf{m}^{\mc{M}}(\mf{T},\beta)) 
=\mf{m}^{\mc{N}}(\mf{f}(\mf{T}),\beta)$.
In particular we obtain 
$\mf{h}_{\dagger}\mf{S}^{\mc{M}}\subseteq\mf{S}^{\mc{N}}$
hence 
\begin{equation}
\label{02021511}
(\mf{S},(\mf{h},\mf{f})\mapsto\mf{h}_{\dagger})\in\ms{Fct}(Dom(\pf{U})^{op},\ms{set}).
\end{equation}
\end{proposition}
\begin{proof}
Since Rmk. \ref{06151649}.
\end{proof}
Next we define sections of maps valued in
maps whose values are fragment states 
originated via the phases determined by an equivariant phase.
The value of this section at any $\mc{N}$ induces a covariant section under action of $\ms{H}_{\mc{N}}$.
Notice that we do not require the contravariance with respect to the action of some subcategory of $\mf{G}(G,F,\uprho)$.
A variant of this request is postponed when introducing extended $\mf{C}-$equivariant stabilities.
\begin{definition}
[Equivariant states associated with an equivariant phase]
\label{01141641}
$\mc{W}$ is a state associated 
with $\lr{\ms{H}}{\pf{U},\mf{m}}$ 
and equivariant on $\pf{D}$
if 
$\lr{\pf{U}}{\mf{m}}$ 
is an equivariant phase,
$\lr{\ms{H},\pf{D}}{\pf{U}}$ is a 
$(G,F,\uprho)-$triplet of maps and 
there exist $\mf{o}$ such that
\begin{equation}
\begin{aligned}
\label{01141641a}
\mc{W}
&\in
\prod_{\mc{N}\in Dom(\pf{D})}
\prod_{\mf{Q}\in\pf{D}_{\mc{N}}}
\prod_{\beta\in\ms{P}_{\mc{N}}^{\mf{Q}}}
\mf{N}^{\mc{N}}(\mf{m}^{\mc{N}}(\mf{Q},\beta)),
\\
\mf{o}
&\in
\prod_{\mc{N}\in Dom(\pf{D})}
\prod_{\mf{Q}\in\pf{D}_{\mc{N}}}
\prod_{\beta\in\ms{P}_{\mc{N}}^{\mf{Q}}}
Rep^{\mc{N}}(\mf{m}^{\mc{N}}(\mf{Q},\beta)),
\\
\mc{W}^{\mc{N}}(\mf{Q},\beta)
&=
\Uppsi_{\mf{o}^{\mc{N}}(\mf{Q},\beta)}^{-}\circ\mf{j}_{\mf{o}^{\mc{N}}(\mf{Q},\beta)},
\\
\mf{T}_{\mf{o}^{\mc{N}}(\mf{Q},\beta)}
&=
\mf{Q},
\\
\alpha_{\mf{o}^{\mc{N}}(\mf{Q},\beta)}
&=
\beta,
\\
\forall
\mc{N}
&\in Dom(\pf{D}),
\mf{Q}\in\pf{D}_{\mc{N}},
\beta\in\ms{P}_{\mc{N}}^{\mf{Q}};
\end{aligned}
\end{equation}
moreover
\begin{equation}
\label{06151653}
(\un\mapsto\ms{gr}(\mc{W}^{\mc{N}}))\in
 Mor_{\ms{Fct}(\ms{H}_{\mc{N}},\ms{set})}
(\mf{P}_{\pf{D},\ms{H}}^{\mc{N}},\mf{Z}_{\ms{H}}^{\mc{N}}),
\forall\mc{N}\in Dom(\pf{D}).
\end{equation}
We call $\mc{W}$ a state
associated with 
$\lr{\pf{U}}{\mf{m}}$ 
equivariant on $\pf{D}$
if it is a state
associated with
$\lr{\ms{H}}{\pf{U},\mf{m}}$ 
equivariant on $\pf{D}$
where $\ms{H}$ is full.
\end{definition}
\begin{remark}
Notice that
\begin{equation*}
\mc{W}
\in
\prod_{\mc{N}\in Dom(\pf{D})}
\prod_{\mf{Q}\in\pf{D}_{\mc{N}}}
\prod_{\beta\in\ms{P}_{\mc{N}}^{\mf{Q}}}
\ms{N}_{\mc{A}(\mc{N})_{\beta}^{\mf{Q}}},
\end{equation*}
in particular
$\ms{gr}(\mc{W}^{\mc{N}})$
is a section of the bundle $\lr{\ms{Z}(\mc{N}),\pf{D}_{\mc{N}}}{\Pr_{1}}$.
\end{remark}
\begin{remark}
\label{06151651}
\eqref{06151653} is equivalent to say that
for all $l\in\ms{H}_{\mc{N}}$ the following diagram is commutative
\begin{equation}
\label{01141641b}
\xymatrix{
\pf{D}_{\mc{N}}
\ar[rr]^{\ms{gr}(\mc{W}^{\mc{N}})}
&&
\ms{Z}(\mc{N})
\\
&&
\\
\pf{D}_{\mc{N}}
\ar[uu]^{\mf{b}^{\mc{N}}(l)\up\pf{D}_{\mc{N}}}
\ar[rr]_{\ms{gr}(\mc{W}^{\mc{N}})}
&&
\ms{Z}(\mc{N})
\ar[uu]_{\ms{V}_{\vardiamondsuit}(\mc{N})(l)}.}
\end{equation}
\end{remark}
\begin{definition}
[Field determined by an equivariant state]
\label{06131408}
Let $\mc{W}$ be a state associated 
to $\lr{\ms{H}}{\pf{U},\mf{m}}$ 
and equivariant on $\pf{D}$,
we call field determined by $\mc{W}$ 
the map $\mf{J}$ on $Dom(\pf{D})$ such that for all $\mc{N}\in Dom(\pf{D})$
\begin{equation*}
\mf{J}^{\mc{N}}\coloneqq\ms{gr}(\mc{W}^{\mc{N}})(\pf{D}_{\mc{N}}).
\end{equation*}
\end{definition}
\begin{proposition}
[Covariance of the field determined by an equivariant state]
\label{02031835}
Let $\mc{W}$ be a state associated with $\lr{\ms{H}}{\pf{U},\mf{m}}$ 
and equivariant on $\pf{D}$, and let $\mf{J}$ be the field determined by $\mc{W}$,
then for all $\mc{N}\in\ Dom(\pf{D})$ and $l\in\ms{H}_{\mc{N}}$,
\begin{equation*}
\ms{V}_{\vardiamondsuit}
(\mc{N})(l)
\mf{J}^{\mc{N}} 
=
\mf{J}^{\mc{N}},
\end{equation*} 
i.e.
$\mf{J}^{\mc{N}}$ is the space of the representation of $\ms{H}_{\mc{N}}$
via the action $\ms{V}_{\vardiamondsuit}(\mc{N})$.
In particular 
if we assume that there exists a von Neumann algebra $\mc{X}(\mc{N})$ 
and a map $\ms{X}(\mc{N}):H\to Aut_{\ms{CA}^{\ast}}(\mc{X}(\mc{N}))$
such that 
$\mc{X}(\mc{N})=\mc{A}(\mc{N})_{\alpha}^{\mf{Q}}$, 
and 
$\ms{X}(\mc{N})=\ms{V}(\mc{N})_{\alpha}^{\mf{Q}}$, 
for all 
$\mf{Q}\in\pf{D}_{\mc{N}}$ and $\alpha\in\ms{P}_{\mc{N}}^{\mf{Q}}$,
and define
$\ms{X}^{\mc{N}}_{\ast}:\ms{H}_{\mc{N}}\to
Aut_{\ms{set}}(\mc{X}(\mc{N})^{\ast})$
such that 
$l\mapsto(\upomega\mapsto\upomega\circ\ms{X}^{\mc{N}}(l^{-1}))$
and the map $\mf{W}$ on $Dom(\pf{D})$ such that 
\begin{equation*}
\mf{W}^{\mc{N}}\coloneqq\{\mc{W}^{\mc{N}}(\mf{Q},\alpha)
\vert\mf{Q}\in\pf{D}_{\mc{N}},\alpha\in\ms{P}_{\mc{N}}^{\mf{Q}} 
\},
\end{equation*}
then
\begin{equation}
\label{02031856}
\ms{X}_{\ast}^{\mc{N}}(h)\mf{W}^{\mc{N}}=\mf{W}^{\mc{N}},\forall h\in\ms{H}_{\mc{N}}.
\end{equation}
I.e. $\mf{W}^{\mc{N}}$ is the space of a representation of $\ms{H}_{\mc{N}}$
via the dual action of $\ms{X}^{\mc{N}}$.
\end{proposition}
\begin{proof}
Since \eqref{01141641b}.
\end{proof}
\begin{definition}
[Equivariant stabilities]
\label{11051303}
$\lr{\ms{H}}{\pf{U},\mf{m},\mc{W}}$ 
is an equivariant stability on $\pf{D}$
if $\mc{W}$ is a state
associated with $\lr{\ms{H}}{\pf{U},\mf{m}}$
and equivariant on $\pf{D}$.
Moreover 
$\lr{\pf{U}}{\mf{m},\mc{W}}$ 
is a full 
equivariant stability on $\pf{D}$
if
$\lr{\ms{H}}{\pf{U},\mf{m},\mc{W}}$ 
is an 
equivariant stability on $\pf{D}$
and $\ms{H}$ is full,
while it is integer if $\mf{m}$ it is so. 
\end{definition}
In order to introduce extended $\mf{C}-$equivariant stabilities 
we need some preparatory definitions.
As remarked in section \ref{not1}, 
for any nonempty subset $S$ of the object set of a category $A$
we let $S$ denote also the full subcategory of $A$ whose object set is $S$.
The general concept of enrichment is introduced for example in \cite[Def. $1.3.2$]{lei}.
For what concerns the following definition we need a particular case, 
and of this case only to know that if $A$ is enriched over $B$, then 
$Mor_{A}(x,y)$ is an object of $B$ for all $x,y$ objects of $A$.
\begin{definition}
\label{08141107}
Let $\mf{K}$ and $\mf{D}$ be categories,
$\mc{F}\in\ms{Fct}(\mf{K},\mf{D})$, $C$ be an object of $\mf{D}$
and $\mf{D}$ be enriched over itself 
in a way that there exists the map
\begin{equation*}
(\cdot)_{\dagger}^{\mc{F},C}\in\prod_{T\in Mor_{\mf{K}}}
Mor_{\mf{D}}(Mor_{\mf{D}}(\mc{F}_{o}(c(T)),C), Mor_{\mf{D}}(\mc{F}_{o}(d(T)),C)),
\end{equation*}
such that 
for any object $X,Y$ of $\mf{K}$ and $T\in Mor_{\mf{K}}(X,Y)$ 
we have
\begin{equation}
\label{06171458}
\begin{cases}
T_{\dagger}^{\mc{F},C}:Mor_{\mf{D}}(\mc{F}_{o}(Y),C)\to Mor_{\mf{D}}(\mc{F}_{o}(X),C),
\\
T_{\dagger}^{\mc{F},C}(g)\coloneqq g\circ\mc{F}_{m}(T),
\forall
g\in Mor_{\mf{D}}(\mc{F}_{o}(Y),C).
\end{cases}
\end{equation}
\end{definition}
\begin{remark}
Since $\mc{F}$ is a functor we deduce that 
\begin{equation*}
\left(X\mapsto Mor_{\mf{D}}(\mc{F}_{o}(X),C),T\mapsto T_{\dagger}^{\mc{F},C}\right)
\in\ms{Fct}(\mf{K}^{op},\mf{D}).
\end{equation*}
We shall use this notation mainly in the following cases,
(1) $\mf{K}=\ms{CA}^{\ast}$, $\mf{D}=\ms{BS}$, 
$\mc{F}=\ms{I}_{\ms{CA}^{\ast}\to\ms{BS}}$ 
and $C=\C$;
(2) $\mf{K}=\mf{D}=\ms{Ab}$, $\mc{F}=Id_{\ms{Ab}}$ and $C=\R$.
Thus consistently with the operation $(\cdot)_{\dagger}$ used in 
part \ref{07301047},
we let $T_{\dagger}$ denote $T_{\dagger}^{\mc{F},C}$
only in these two cases and whenever it is clear by the context which category $\mf{K}$ is involved.
\end{remark}
\begin{definition}
\label{06171611}
Let $\mf{K}$ be a category, $\mf{E}$ a subcategory of $\mf{G}(G,F,\uprho)$
and  $\mc{L}\in\ms{Fct}(\mf{K},\mf{E})$.
For $i\in\{1,2\}$ let $\mc{L}_{i}=\Pr_{i}\circ\mc{L}_{m}$ 
where $\Pr_{i}$ is the function
defined on $Mor_{\mf{G}(G,F,\uprho)}$ projecting over the $i-$th component.
Thus $\mc{L}_{1}$ and $\mc{L}_{2}$ are 
maps uniquely determined by 
\begin{equation}
\label{06222005}
\begin{aligned}
(\ms{A}\circ\mc{L}_{o},\mc{L}_{1})
&\in
\ms{Fct}(\mf{K},\ms{Ab}),
\\
(\pf{T}\circ\mc{L}_{o},\mc{L}_{2})
&\in
\ms{Fct}(\mf{K}^{op},\ms{set}),
\\
\mc{L}_{m}(\mf{t})&=(\mc{L}_{1}(\mf{t}),\mc{L}_{2}(\mf{t})),
\forall\mf{t}\in Mor_{\mf{K}}.
\end{aligned}
\end{equation}
$\ms{Ab}$ is enriched over itself,
since $Mor_{\ms{Ab}}(x,y)\in\ms{Ab}$ via the pointwise composition, inversion and identity
for any $x,y\in\ms{Ab}$, moreover 
\begin{equation*}
\begin{aligned}
(\mc{L}_{1})_{\dagger}&\in
\prod_{\mf{t}\in Mor_{\mf{K}}}
Mor_{\ms{Ab}}(\ms{A}_{\mc{L}(c(\mf{t}))}^{\ast},\ms{A}_{\mc{L}(d(\mf{t}))}^{\ast}),
\\
\mf{t}&\mapsto(\mc{L}_{1}(\mf{t}))_{\dagger}.
\end{aligned}
\end{equation*}
\end{definition}
\begin{remark}
\label{06221940}
Let $\mf{K}$ be a category, $\mf{E}$ a subcategory of $\mf{G}(G,F,\uprho)$
and  $\mc{L}\in\ms{Fct}(\mf{K},\mf{E})$, then
the two inclusions in \eqref{06222005}
mean the following
\begin{equation*}
\begin{aligned}
\mc{L}_{1}&\in
\prod_{\mf{t}\in Mor_{\mf{K}}}
Mor_{\ms{Ab}}(\ms{A}_{\mc{L}(d(\mf{t}))},\ms{A}_{\mc{L}(c(\mf{t}))}),
\\
\mc{L}_{2}&\in
\prod_{\mf{t}\in Mor_{\mf{K}}}
Mor_{\ms{set}}(\pf{T}_{\mc{L}(c(\mf{t}))},\pf{T}_{\mc{L}(d(\mf{t}))}),
\end{aligned}
\end{equation*}
and for all $\mf{x},\mf{y}\in Mor_{\mf{K}}$ such that $d(\mf{x})=c(\mf{y})$
\begin{equation*}
\begin{aligned}
\mc{L}_{1}(\mf{x}&\circ\mf{y})&=\mc{L}_{1}(\mf{x})\circ\mc{L}_{1}(\mf{y}),
\\
\mc{L}_{2}(\mf{x}&\circ\mf{y})&=\mc{L}_{2}(\mf{y})\circ\mc{L}_{2}(\mf{x}).
\end{aligned}
\end{equation*}
Moreover $(\ms{A},\Pr_{1})\in\ms{Fct}(\mf{G}(G,F,\uprho),\ms{Ab})$, so 
$(\ms{A},\Pr_{1})\circ\ms{I}_{\mf{E}\to\mf{G}(G,F,\uprho)}\circ\mc{L}\in\ms{Fct}(\mf{K},\ms{Ab})$,
let us denote it by $\mc{U}$ thus
\begin{equation*}
\begin{aligned}
\mc{U}&=(\ms{A}\circ\mc{L}_{o},\mc{L}_{1}),
\\
(\mc{L}_{1})_{\dagger}&=(\cdot)_{\dagger}^{\mc{U},\R},
\\
(\ms{A}^{\ast}\circ\mc{L}_{o},(\mc{L}_{1})_{\dagger})
&\in
\ms{Fct}(\mf{K}^{op},\ms{Ab}).
\end{aligned}
\end{equation*}
\end{remark}
\begin{definition}
\label{06152102}
Let $\pf{U}$ be a map such that $Dom(\pf{U})\subseteq Obj(\mf{G}(G,F,\uprho))$,
$\mf{C}$ a category and $\mc{F}$ a functor from $\mf{C}$ to $\mf{G}(G,F,\uprho)$.
Define 
$\Uptheta(\pf{U},\mc{F})\coloneqq\mc{F}^{-1}(Dom(\pf{U}))$ 
and 
$\mc{F}_{\pf{U}}\coloneqq\mc{F}\circ\ms{I}_{\Uptheta(\pf{U},\mc{F})\to\mf{C}}$.
Set 
$\mc{F}_{\pf{U}}^{o}\coloneqq(\mc{F}_{\pf{U}})_{o}$,
and
$\mc{F}_{\pf{U}}^{m}\coloneqq(\mc{F}_{\pf{U}})_{m}$.
$Dom(\pf{U})$ is a subcategory of $\mf{G}(G,F,\uprho)$,
so $\Uptheta(\pf{U},\mc{F})$ is a well-defined subcategory of $\mf{C}$,
and $\mc{F}_{\pf{U}}$ is a functor from 
$\Uptheta(\pf{U},\mc{F})$ to $Dom(\pf{U})$
since the convention established in section \ref{not1}.
Thus according to Def. \ref{06171611}
we can define the map 
$\mc{F}_{\pf{U}}^{\dagger}
\coloneqq
((\mc{F}_{\pf{U}})_{1})_{\dagger}$.
\end{definition}
Note that by \eqref{06231849} we have also that
\begin{equation*}
\mc{F}_{\pf{U}}\in
\ms{Fct}(\Uptheta(\pf{U},\mc{F})^{op},Dom(\pf{U})^{op}).
\end{equation*}
\begin{definition}
\label{06131837}
Let $\pf{D}$ be a pre $(G,F,\uprho)$ map,
$\mf{C}$ be a category and $\mc{F}$ a functor from $\mf{C}$ to $\mf{G}(G,F,\uprho)$.
Define 
\begin{equation*}
\begin{aligned}
\Upxi(\pf{D},\mc{F})
&\coloneqq\mc{F}^{-1}(Dom(\pf{D})^{0}),
\\
\mc{F}^{\pf{D}}&\coloneqq\mc{F}\circ\ms{I}_{\Upxi(\pf{D},\mc{F})\to\mf{C}},
\end{aligned}
\end{equation*}
and set 
$\mc{F}_{o}^{\pf{D}}\coloneqq(\mc{F}^{\pf{D}})_{o}$,
and
$\mc{F}_{m}^{\pf{D}}\coloneqq(\mc{F}^{\pf{D}})_{m}$.
\end{definition}
According to the convention in section \ref{not1}
we 
consider $\mc{F}^{\pf{D}}$ as a functor from $\Upxi(\pf{D},\mc{F})$ to $Dom(\pf{D})^{0}$,
hence by \eqref{06231849} we have also that
\begin{equation*}
\mc{F}^{\pf{D}}\in
\ms{Fct}(\Upxi(\pf{D},\mc{F})^{op},(Dom(\pf{D})^{0})^{op}).
\end{equation*}
Note that if $\pf{U}$ is a pre $(G,F,\uprho)-$map,
then $\Upxi(\pf{U},\mc{F})$ is a subcategory of $\Uptheta(\pf{U},\mc{F})$,
however 
if $\pf{U}$ is a $(G,F,\uprho)-$map then 
$\Upxi(\pf{U},\mc{F})=\Uptheta(\pf{U},\mc{F})$.
In the next definition we use $\ms{Z}$ introduced in Def. \ref{20051117},
\begin{definition}
Let $\mf{K}$ be a category, $\mf{E}$ a subcategory of $\mf{G}(G,F,\uprho)$, 
$\mc{E}\in\ms{Fct}(\mf{K},\mf{E})$
and 
$\mc{Z}\in\ms{Fct}(\mf{K}^{op},\ms{set})$ such that 
$\mc{Z}_{o}=\ms{Z}\circ\mc{E}_{o}$.
Define $\mc{Z}_{1}$ and $\mc{Z}_{2}$ maps on $Mor_{\mf{K}}$
such that 
\begin{equation*}
\begin{aligned}
&
\mc{Z}_{1}\in\prod_{\mf{t}\in Mor_{\mf{K}}}
Mor_{\ms{set}}(\pf{T}_{\mc{E}(c(\mf{t}))},\pf{T}_{\mc{E}(d(\mf{t}))}),
\\
&\mc{Z}_{2}\in\prod_{\mf{t}\in Mor_{\mf{K}}}
\prod_{\mf{Q}\in\pf{T}_{\mc{E}(c(\mf{t}))}}
Mor_{\ms{set}}\bigl(\ms{X}(\mc{E}(c(\mf{t})),\mf{Q}),
\ms{X}(\mc{E}(d(\mf{t})),\mc{Z}_{1}(\mf{t})\mf{Q})\bigr);
\end{aligned}
\end{equation*}
and for all $\mf{t}\in Mor_{\mf{K}}$
and $(\mf{Q},f)\in\ms{Z}(\mc{E}(c(\mf{t})))$
\begin{equation*}
\mc{Z}(\mf{t})(\mf{Q},f)
=(\mc{Z}_{1}(\mf{t})\mf{Q},\mc{Z}_{2}(\mf{t},\mf{Q})f).
\end{equation*}
\end{definition}
The concept below introduced is fundamental 
to integrate in the definition of extended $\mf{C}-$equivariant stabilities
the missing contravariance of $\mc{W}$ under action of $Dom(\pf{D})^{0}$
in a way that this action is induced by the conjugate of an action over observables. 
\begin{definition}
\label{0620115}
Let $\mf{H}$ be a category, $\mf{E}$ a subcategory of $\mf{G}(G,F,\uprho)$ 
and $\mc{E}\in\ms{Fct}(\mf{H},\mf{E})$.
Let $\lr{\mc{Z}}{\mc{S}}$ be defined a conjugate action via $\mc{E}$
if
\begin{enumerate}
\item
$\mc{Z}\in\ms{Fct}(\mf{H}^{op},\ms{set})$;
\label{0620115st1}
\item
$\mc{Z}_{o}=\ms{Z}\circ\mc{E}_{o}$;
\label{0620115st2}
\item
$\mc{Z}_{1}=\mc{E}_{2}$;
\label{0620115st3}
\item
\begin{equation*}
\mc{S}\in
\prod_{\mf{t}\in Mor_{\mf{H}}}
\prod_{\mf{Q}\in\pf{T}_{\mc{E}(c(\mf{t}))}}
\prod_{\beta\in\ms{P}_{\mc{E}(c(\mf{t}))}^{\mf{Q}}}
Mor_{\ms{CA}^{\ast}}
\left(\mc{A}(\mc{E}(d(\mf{t})))_{\beta}^{\mc{E}_{2}(\mf{t})\mf{Q}},
\mc{A}(\mc{E}(c(\mf{t})))_{\beta}^{\mf{Q}}\right);
\end{equation*}
\label{0620115st4}
\item
let 
$\mf{t}\in Mor_{\mf{H}}$,
$\mf{Q}\in\pf{T}_{\mc{E}(c(\mf{t}))}$
and
$\alpha\in\ms{P}_{\mc{E}(c(\mf{t}))}^{\mf{Q}}$,
thus
for any 
$f\in\ms{X}(\mc{E}(c(\mf{t})),\mf{Q})$
and
$\mf{s}\in Mor_{\mf{H}}$ such that $d(\mf{t})=c(\mf{s})$
the following diagram is commutative
\begin{equation*}
\xymatrix{
& &
\mc{A}(\mc{E}(c(\mf{t})))_{\alpha}^{\mf{Q}}
\ar[rr]^{f(\alpha)}
& &
\C
\\
& &
& &
\\
\mc{A}(\mc{E}(d(\mf{s})))_{\alpha}^{\mc{E}_{2}(\mf{t}\circ\mf{s})\mf{Q}}
\ar[rr]^{\mc{S}(\mf{s},\mc{E}_{2}(\mf{t})\mf{Q},\alpha)}
\ar[uurr]^{\mc{S}(\mf{t}\circ\mf{s},\mf{Q},\alpha)
}
& &
\mc{A}(\mc{E}(d(\mf{t})))_{\alpha}^{\mc{E}_{2}(\mf{t})\mf{Q}}
\ar[uu]^{\mc{S}(\mf{t},\mf{Q},\alpha)}
\ar[uurr]_{(\mc{Z}_{2}(\mf{t},\mf{Q})f)(\alpha)}.
& &
}
\end{equation*}
\label{0620115st5}
\end{enumerate}
\end{definition}
In particular 
$(\mc{Z}_{2}(\mf{t},\mf{Q})f)(\alpha)
=
\mc{S}(\mf{t},\mf{Q},\alpha)_{\dagger}
(f(\alpha))$,
moreover 
the left triangle 
in the diagram in Def. \ref{0620115}\eqref{0620115st5}
is well-set indeed $d(\mf{s})=d(\mf{t}\circ\mf{s})$, 
$c(\mf{t})=c(\mf{t}\circ\mf{s})$
and 
$\mc{E}_{2}(\mf{t}\circ\mf{s})=\mc{E}_{2}(\mf{s})\circ\mc{E}_{2}(\mf{t})$ 
since \eqref{06222005}.
\begin{definition}
[$\mf{C}-$equivariant stabilities]
\label{22050909}
$\mc{K}=\lr{\lr{\ms{H}}{\pf{U},\mf{m},\mc{W}}}{\mc{F}}$
is a $\mf{C}-$equivariant stability on $\pf{D}$
if
$\lr{\ms{H}}{\pf{U},\mf{m},\mc{W}}$ 
is an equivariant stability on $\pf{D}$,
$\mf{C}$ is a category
and
$\mc{F}\in\ms{Fct}(\mf{C},\mf{G}(G,F,\uprho))$.
$\lr{\lr{\pf{U}}{\mf{m},\mc{W}}}{\mc{F}}$
is a full $\mf{C}-$equivariant stability on $\pf{D}$
if $\mc{K}$ is a $\mf{C}-$equivariant stability on $\pf{D}$
and $\ms{H}$ is full.
\end{definition}
Now we are able to give the following 
\begin{definition}
[Extended $\mf{C}-$equivariant stabilities]
\label{06201206}
$\lr{\lr{\ms{H}}{\pf{U},\mf{m},\mc{W}}}{\mc{F}}$
is an extended $\mf{C}-$equivariant stability on $\pf{D}$ 
via $\mc{Z}$ and $\mc{S}$
if 
\begin{enumerate}
\item
$\lr{\lr{\ms{H}}{\pf{U},\mf{m},\mc{W}}}{\mc{F}}$
is a $\mf{C}-$equivariant stability on $\pf{D}$;
\label{06201206st1}
\item
$\lr{\mc{Z}}{\mc{S}}$ is a conjugate action via $\mc{F}^{\pf{D}}$;
\label{06201206st2}
\item
$\ms{gr}\circ\mc{W}\circ\mc{F}_{o}^{\pf{D}}
\in
Mor_{\ms{Fct}(\Upxi(\pf{D},\mc{F})^{op},\ms{set})}
(\mf{Q}^{\pf{D}}\circ\mc{F}^{\pf{D}},\mc{Z})$.
\label{06201206st3}
\end{enumerate}
\end{definition}
Notice that Def. \ref{06201206}\eqref{06201206st3}
integrates the missing symmetry of $\mc{W}$ 
w.r.t. the contravariant action of the category $Dom(\pf{D})^{0}$,
by displaying instead the contravariance of the section 
$\ms{gr}\circ\mc{W}\circ\mc{F}_{o}^{\pf{D}}$
with respect to the action of 
the inverse image $\Upxi(\pf{D},\mc{F})$ of $Dom(\pf{D})^{0}$ via the functor $\mc{F}$.
We shall see its meaning in the more general context of nucleon-fragment doublets in 
Prp. \ref{06211545} and Prp. \ref{20051810dbt} 
and its physical interpretation in Prp. \ref{12051206dbt}.
\begin{remark}
\label{06151915}
Let
$\lr{\lr{\ms{H}}{\pf{U},\mf{m},\mc{W}}}{\mc{F}}$
be an extended $\mf{C}-$equivariant stability on $\pf{D}$ via $\mc{Z}$ and $\mc{S}$,
then Def. \ref{06201206}\eqref{06201206st3} is equivalent to state that
for all $\ms{d},\ms{e}\in\Upxi(\pf{D},\mc{F})$ and 
$\mf{t}\in Mor_{\Upxi(\pf{D},\mc{F})}(\ms{e},\ms{d})$ 
the following is a commutative diagram
\begin{equation*}
\xymatrix{
\pf{D}_{\mc{F}(\ms{e})}
\ar[rr]^{\ms{gr}(\mc{W}^{\mc{F}(\ms{e})})}
&&
\ms{Z}(\mc{F}(\ms{e}))
\\
&&
\\
\pf{D}_{\mc{F}(\ms{d})}
\ar[uu]^{\mc{F}_{2}(\mf{t})\up\pf{D}_{\mc{F}(\ms{d})}}
\ar[rr]_{\ms{gr}(\mc{W}^{\mc{F}(\ms{d})})}
&&
\ms{Z}(\mc{F}(\ms{d}))
\ar[uu]_{\mc{Z}(\mf{t})}.}
\end{equation*}
\end{remark}
Next we state the symmetry properties corresponding to the data 
of an extended $\mf{C}-$equivariant stability.
\begin{proposition}
[Properties of equivariance related to a $\mf{C}-$equivariant stability]
\label{18061742}
Let $\mf{C}$ be a category and
$\mc{E}=\lr{\lr{\ms{H},\pf{U},\mf{m}}{\mc{W}}}{\mc{F}}$ be a 
$\mf{C}-$equivariant stability on $\pf{D}$.
Then $\mf{m}\circ\mc{F}_{\pf{U}}^{o}=\mf{m}\ast\un_{\mc{F}_{\pf{U}}}$
and
\begin{enumerate}
\item
$\mf{m}\circ\mc{F}_{\pf{U}}^{o}\in
Mor_{\ms{Fct}(\Uptheta(\pf{U},\mc{F})^{op},\ms{set})}
(\mf{Q}^{\pf{U}}\circ\mc{F}_{\pf{U}},
\Updelta^{\pf{U}}\circ\mc{F}_{\pf{U}})$, 
\label{18061742st1}
\item
$(\un\mapsto\mf{m}^{\mc{F}(\ms{a})})\in
Mor_{\ms{Fct}(\ms{H}_{\mc{F}(\ms{a})},\ms{set})}
(\mf{P}_{\pf{U}}^{\mc{F}(\ms{a})},\mf{O}_{\ms{H}}^{\mc{F}(\ms{a})})$,
for all
$\ms{a}\in\Uptheta(\pf{U},\mc{F})$,
\label{18061742st2}
\item 
$(\un\mapsto\ms{gr}(\mc{W}^{\mc{F}(\ms{b})}))
\in
Mor_{\ms{Fct}(\ms{H}_{\mc{F}(\ms{b})},\ms{set})}
(\mf{P}_{\pf{D},\ms{H}}^{\mc{F}(\ms{b})},
\mf{Z}_{\ms{H}}^{\mc{F}(\ms{b})})$,
for all
$\ms{b}\in\Upxi(\pf{D},\mc{F})$.
\label{18061742st3}
\end{enumerate} 
If $\mf{S}$ denotes the field determined by $\mf{m}$, 
then
\begin{equation}
\label{06152005}
\begin{aligned}
(\mf{S}\circ\mc{F}_{\pf{U}}^{o},\mc{F}_{\pf{U}}^{\dagger})
&\in\ms{Fct}(\Uptheta(\pf{U},\mc{F})^{op},\ms{set}),
\\
\uppsi_{\ast}^{\mc{F}(\ms{a})}(h)\mf{S}^{\mc{F}(\ms{a})}
&=\mf{S}^{\mc{F}(\ms{a})},
\forall\ms{a}\in\Uptheta(\pf{U},\mc{F}),
\forall h\in\ms{H}_{\mc{F}(\ms{a})}.
\end{aligned}
\end{equation}
If $\mc{E}$ is an extended $\mf{C}-$equivariant stability on $\pf{D}$ via $\mc{Z}$ and $\mc{S}$
and $\mf{J}$ denotes the field determined by $\mc{W}$,
then
\begin{equation}
\label{06152000}
\begin{aligned}
(\mf{J}\circ\mc{F}_{o}^{\pf{D}},\mc{Z}_{m})
&\in
\ms{Fct}(\Upxi(\pf{D},\mc{F})^{op},\ms{set}),
\\
\ms{V}_{\vardiamondsuit}(\mc{F}(\ms{b}))(l)
\mf{J}^{\mc{F}(\ms{b})}
&=
\mf{J}^{\mc{F}(\ms{b})},
\forall\ms{b}\in\Upxi(\pf{D},\mc{F}),
\forall l\in\ms{H}_{\mc{F}(\ms{b})}.
\end{aligned}
\end{equation}
\end{proposition}
\begin{proof}
The first sentence follows since \eqref{20061403}, 
then follows st. \eqref{18061742st1};
sts. (\ref{18061742st2},\ref{18061742st3}) are trivial.
\eqref{06152005} follows since Prp. \ref{02021434},
while
\eqref{06152000} follows since Rmk. \ref{06151915} and Prp. \ref{02031835}.
\end{proof}
\begin{remark}
\label{06152049}
The inclusion in \eqref{06152000} means 
for all $\ms{a},\ms{b}\in\Upxi(\pf{D},\mc{F})$ and 
$\mf{t}\in Mor_{\Upxi(\pf{D},\mc{F})}(\ms{b},\ms{a})$ 
that
\begin{equation*}
\mc{Z}(\mf{t})\mf{J}^{\mc{F}(\ms{a})}
\subseteq
\mf{J}^{\mc{F}(\ms{b})}.
\end{equation*}
\end{remark}
Part of the requests in the definition of the category $\mf{G}(G,F,\uprho)$ 
has been introduced to develop the semantics, i.e. the physical interpretation in section \ref{06171244},
while the remaining part to ensure the existence of an equivariant stability as we shall see in 
section \ref{07101559}. 
This because, in order to fulfill our aim to resolve the universality claim,
we are interested to the symmetry properties 
related to the category $\mf{C}$ stated in Prp. \ref{18061742}
and Def. \ref{06201206}\eqref{06201206st3},
but isolated from all what does not affect the physical meaning according to the semantics.
Prp. \ref{18061742} itself and Def. \ref{06201206}
exhibit the clues to extract the physically relevant information 
of an extended $\mf{C}-$equivariant stability,
in a context where the use of the functor $\mc{F}$ reduces at minimum,
i.e. where it suffices and operates only to apply the semantics.
Henceforth we are in the position to introduce the following 
definition where each item is titled with the description of 
the corresponding content.
\begin{definition}
[Nucleon-fragment doublets]
\label{06161650}
$\lr{S,J,\mc{Z},\mc{S},\mc{L}}{m,W,R,\pf{D},\pf{U}}$ 
is a nucleon-fragment doublet on $\mf{C}$ 
if
\begin{enumerate}
\item
$\mf{C}$ is a category;
\item
$\lr{R,\pf{D}}{\pf{U}}$ is a $(G,F,\uprho)-$triplet of maps;
 \item
\begin{equation*}
\mc{L}\in\ms{Fct}(\mf{C},\mf{G}(G,F,\uprho));
\end{equation*}
\label{06161650st1}
\item 
$S$ is a field of nucleon phases contravariant under action of $\Uptheta(\pf{U},\mc{L})$
via the conjugate of $\mc{L}_{1}$.
\begin{equation*}
\begin{aligned}
S&\in\ms{Fct}(\Uptheta(\pf{U},\mc{L})^{op},\ms{set}),
\\
S^{\ms{a}}&\subseteq\ms{A}_{\mc{L}(\ms{a})}^{\ast},
\forall\ms{a}\in\Uptheta(\pf{U},\mc{L}),
\\
S(\mf{t})&=(\mc{L}_{1}(\mf{t}))_{\dagger}\up S^{c(\mf{t})},
\forall\mf{t}\in Mor_{\Uptheta(\pf{U},\mc{L})};
\end{aligned}
\end{equation*}
\label{06161650st3}
\item
Each fiber of $S$ is covariant under action of $H$ via the dual of $\uppsi\circ\mc{L}_{o}$.
\begin{equation*}
\uppsi_{\ast}^{\mc{L}(\ms{a})}(h)S^{\ms{a}}=S^{\ms{a}},
\forall\ms{a}\in\Uptheta(\pf{U},\mc{L}),
\forall h\in R_{\mc{L}(\ms{a})};
\end{equation*}
\label{06161650st4}
\item
$m$ is a section of maps valued in $S-$valued maps,
contravariant under action of $\Uptheta(\pf{U},\mc{L})$ via $S_{m}$ and $\mc{L}_{2}$,
whose values 
induce covariant sections under action of $H$ 
via the dual of $\uppsi\circ\mc{L}_{o}$ and via $\mf{b}\circ\mc{L}_{o}$.
\begin{equation*}
m\in\prod_{\ms{a}\in\Uptheta(\pf{U},\mc{L})}
\prod_{\mf{Q}\in\pf{U}_{\mc{L}(\ms{a})}}
Mor_{\ms{set}}(\ms{P}_{\mc{L}(\ms{a})}^{\mf{Q}},S^{\ms{a}}),
\end{equation*}
such that 
\begin{equation}
\label{07271618a}
m\in Mor_{\ms{Fct}(\Uptheta(\pf{U},\mc{L})^{op},\ms{set})}
(\mf{Q}^{\pf{U}}\circ\mc{L}_{\pf{U}},\Updelta^{\pf{U}}\circ\mc{L}_{\pf{U}}),
\end{equation}
and 
\begin{equation}
\label{07271618b}
(\un\mapsto m^{\ms{a}})\in 
Mor_{\ms{Fct}(R_{\mc{L}(\ms{a})},\ms{set})}
(\mf{P}_{\pf{U}}^{\mc{L}(\ms{a})},
\mf{O}_{R}^{\mc{L}(\ms{a})}),
\forall\ms{a}\in\Uptheta(\pf{U},\mc{L}).
\end{equation}
\label{06191003}
\item
$J$ is a field of disjoint union over operations of set of maps 
-
with values fragment states originated via the nucleon phases 
determined by $m$
-
contravariant under action of $\Upxi(\pf{D},\mc{L})$
via $J_{m}$ where $J_{1}=\mc{L}_{2}$ and $J_{2}$ is induced by the conjugate of 
the evaluation of $\mc{S}$. 
\begin{equation}
\label{06211606}
J\in\ms{Fct}(\Upxi(\pf{D},\mc{L})^{op},\ms{set}),
\end{equation}
and $\lr{\mc{Z}}{\mc{S}}$ is a conjugate action via $\mc{L}^{\pf{D}}$
such that 
\begin{enumerate}
\item
$J^{\ms{b}}\subset\mc{Z}(\ms{b})$, for all $\ms{b}\in\Upxi(\pf{D},\mc{L})$,
\label{06181433st5}
\item
$J(\mf{t})=\mc{Z}(\mf{t})\up J^{c(\mf{t})}$
for all $\mf{t}\in Mor_{\Upxi(\pf{D},\mc{L})}$;
\label{06181433st6}
\end{enumerate}
and
\begin{equation}
\label{06181526}
J^{\ms{b}}
\subset
\coprod_{\mf{Q}\in\pf{D}_{\mc{L}(\ms{b})}}
\prod_{\beta\in\ms{P}_{\mc{L}(\ms{b})}^{\mf{Q}}}
\mf{N}^{\mc{L}(\ms{b})}(m^{\ms{b}}(\mf{Q},\beta)),
\forall\ms{b}\in\Upxi(\pf{D},\mc{L}).
\end{equation}
\label{06161650st6}
\item
Each fiber of $J$ is covariant under action of $H$
via a map induced by the dual of $\ms{V}\circ\mc{L}_{o}$.
\begin{equation*}
\ms{V}_{\vardiamondsuit}(\mc{L}(\ms{b}))(l)
J^{\ms{b}}
=
J^{\ms{b}},
\forall\ms{b}\in\Upxi(\pf{D},\mc{L}),
\forall l\in R_{\mc{L}(\ms{b})};
\end{equation*}
\label{06161650st7}
\item
$W$ is a map such that
$\ms{gr}\circ W$ is a section of $J$,
contravariant under action of $\Upxi(\pf{D},\mc{L})$ via $J_{m}$ and $\mc{L}_{2}$,
whose values induce sections covariant under action of $H$ 
via the dual of $\ms{V}\circ\mc{L}_{o}$ and via $\mf{b}\circ\mc{L}_{o}$.
There exists $\mf{o}$ such that
\begin{equation}
\begin{aligned}
\label{06232128}
W
&\in
\prod_{\ms{b}\in\Upxi(\pf{D},\mc{L})}
\prod_{\mf{Q}\in\pf{D}_{\mc{L}(\ms{b})}}
\prod_{\beta\in\ms{P}_{\mc{L}(\ms{b})}^{\mf{Q}}}
\mf{N}^{\mc{L}(\ms{b})}(m^{\ms{b}}(\mf{Q},\beta)),
\\
\mf{o}
&\in
\prod_{\ms{b}\in\Upxi(\pf{D},\mc{L})}
\prod_{\mf{Q}\in\pf{D}_{\mc{L}(\ms{b})}}
\prod_{\beta\in\ms{P}_{\mc{L}(\ms{b})}^{\mf{Q}}}
Rep^{\mc{L}(\ms{b})}(m^{\ms{b}}(\mf{Q},\beta)),
\\
W^{\ms{b}}(\mf{Q},\beta)
&=
\Uppsi_{\mf{o}^{\ms{b}}(\mf{Q},\beta)}^{-}
\circ
\mf{j}_{\mf{o}^{\ms{b}}(\mf{Q},\beta)},
\\
\mf{T}_{\mf{o}^{\ms{b}}(\mf{Q},\beta)}
&=
\mf{Q},
\\
\alpha_{\mf{o}^{\ms{b}}(\mf{Q},\beta)}
&=
\beta,
\\
\forall
\ms{b}&\in\Upxi(\pf{D},\mc{L}),
\mf{Q}\in\pf{D}_{\mc{L}(\ms{b})},
\beta\in\ms{P}_{\mc{L}(\ms{b})}^{\mf{Q}};
\end{aligned}
\end{equation}
\begin{equation}
\label{07271620a}
\ms{gr}\circ W\in
Mor_{\ms{Fct}(\Upxi(\pf{D},\mc{L})^{op},\ms{set})}
(\mf{Q}^{\pf{D}}\circ\mc{L}^{\pf{D}},J)
\end{equation}
and
\begin{equation}
\label{07271620b}
(\un\mapsto \ms{gr}(W^{\ms{b}}))
\in
Mor_{\ms{Fct}(R_{\mc{L}(\ms{b})},\ms{set})}
(\mf{P}_{\pf{D},R}^{\mc{L}(\ms{b})},\mf{Z}_{R}^{\mc{L}(\ms{b})}),
\forall\ms{b}\in\Upxi(\pf{D},\mc{L}).
\end{equation}
\label{06191405}
\end{enumerate}
\end{definition}
\begin{remark}
Notice that the first request in \eqref{06232128} implies 
the existence of an $\mf{o}$ satisfying the second request and 
for which the first equality in \eqref{06232128} holds.
However even \eqref{07271620a} does not imply the second and third equalities in 
\eqref{06232128}.
\end{remark}
Since Def. \ref{06161650} abstracts properties 
of an extended $\mf{C}-$equivariant stability, 
it is natural to expect that
\begin{corollary}
\label{06241610}
Let $\mf{C}$ be a category and
$\mc{E}=\lr{\lr{\ms{H},\pf{U},\mf{m}}{\mc{W}}}{\mc{F}}$ be an 
extended $\mf{C}-$equivariant stability 
on $\pf{D}$ via $\mc{Z}$ and $\mc{S}$,
and let $\mf{S}$ and $\mf{J}$ denote 
the fields determined by $\mf{m}$ and $\mc{W}$ respectively.
Then
$\lr{(\mf{S}\circ\mc{F}_{\pf{U}}^{o},\mc{F}_{\pf{U}}^{\dagger}),
(\mf{J}\circ\mc{F}_{o}^{\pf{D}},\mc{Z}_{m}),
\mc{Z},\mc{S},\mc{F}}
{\mf{m}\ast\un_{\mc{F}_{\mf{U}}},\mc{W}\circ\mc{F}_{0}^{\pf{D}},
\ms{H},\pf{D},\pf{U}}$
is a nucleon-fragment doublet on $\mf{C}$. 
\end{corollary}
\begin{proof}
Since Prp. \ref{18061742} and the definitions of $\mf{S}$ and $\mf{J}$.
\end{proof}
\begin{definition}
\label{06251940}
We call related to $\mc{E}$
the nucleon-fragment doublet constructed in Cor. \ref{06241610}.
\end{definition}
Up to the end of section \ref{06241544} we 
let $\mc{T}=\lr{S,J,\mc{Z},\mc{S},\mc{F}}{m,W,R,\pf{D},\pf{U}}$ 
be a fixed but arbitrary nucleon-fragment doublet on $\mf{C}$. 
Clearly the following results apply for the nucleon-fragment doublet 
related to any extended $\mf{C}-$equivariant stability 
on $\pf{D}$ via $\mc{Z}$ and $\mc{S}$. 
In Prp. \ref{06211545} we make explicit the symmetries of $S$, $m$, $J$ and $W$,
in this way clarifying the meaning of the titles in the items of 
Def. \ref{06161650}.
We emphatize 
that all the actions over functionals involved are conjugate of 
actions over observables. 
As a result Prp. \ref{20051810dbt} 
describes the properties of invariance of a nucleon-fragment doublet, 
physically interpeted as invariance of mean values in 
Prp. \ref{12051206dbt} with the help of Prp. \ref{02202114dbt} and Prp. \ref{07031844}. 
\begin{proposition}
[Properties of equivariance of a nucleon-fragment doublet]
\label{06211545}
Let 
$\mf{t}\in Mor_{\Uptheta(\pf{U},\mc{F})}$
and
$\mf{h}\in Mor_{\Upxi(\pf{D},\mc{F})}$. 
Thus
\begin{enumerate}
\item
$S(\mf{t}\circ\mf{l})=S(\mf{l})\circ S(\mf{t})$
for all 
$\mf{l}\in Mor_{\Uptheta(\pf{U},\mc{F})}$
such that 
$d(\mf{t})=c(\mf{l})$
and
\begin{equation*}
\begin{aligned}
S^{c(\mf{t})}&\subseteq\ms{A}_{\mc{F}(c(\mf{t}))}^{\ast},
\\
S(\mf{t})S^{c(\mf{t})}&\subseteq S^{d(\mf{t})},
\\
S(\mf{t})\mf{u}&=\mf{u}\circ\mc{F}_{1}(\mf{t}),
\forall\mf{u}\in S^{c(\mf{t})}.
\end{aligned}
\end{equation*}
\label{06211545st1}
\item
for all 
$\mf{Q}\in\pf{U}_{\mc{F}(c(\mf{t}))}$
and
$\alpha\in\ms{P}_{\mc{F}(c(\mf{t}))}^{\mf{Q}}$
\begin{equation*}
\begin{aligned}
m^{c(\mf{t})}(\mf{Q},\alpha)&\in S^{c(\mf{t})},
\\
m^{c(\mf{t})}(\mf{Q},\alpha)
\circ
\mc{F}_{1}(\mf{t})
&=
m^{d(\mf{t})}(\mc{F}_{2}(\mf{t})\mf{Q},\alpha),
\\
m^{c(\mf{t})}(\mf{Q},\alpha)
\circ
\uppsi^{\mc{F}(c(\mf{t}))}(l^{-1})
&=
m^{c(\mf{t})}(\mf{b}^{\mc{F}(c(\mf{t}))}(l)\mf{Q},\alpha),
\forall l\in R_{\mc{F}(c(\mf{t}))}.
\end{aligned}
\end{equation*}
\label{06211545st3}
\item
$J(\mf{h}\circ\mf{i})=J(\mf{i})\circ J(\mf{h})$,
for all 
$\mf{i}\in Mor_{\Upxi(\pf{D},\mc{F})}$
such that 
$d(\mf{h})=c(\mf{i})$,
\begin{equation*}
J(\mf{h})J^{c(\mf{h})}\subseteq J^{d(\mf{h})},
\end{equation*}
and for all $(\mf{I},f)\in J^{c(\mf{h})}$
\begin{equation}
\label{06231333}
\begin{aligned}
\mf{I}\in\pf{D}_{\mc{F}(c(\mf{h}))},
f&\in
\prod_{\beta\in\ms{P}_{\mc{F}(c(\mf{h}))}^{\mf{I}}}
\mf{N}^{\mc{F}(c(\mf{h}))}(m^{c(\mf{h})}(\mf{I},\beta))
\cap
\ms{N}_{\mc{A}(\mc{F}(c(\mf{h})))_{\beta}^{\mf{I}}},
\\
J(\mf{h})(\mf{I},f)&=(\mc{F}_{2}(\mf{h})\mf{I},f_{\mf{h}}),
\\
f_{\mf{h}}:
\ms{P}_{\mc{F}(d(\mf{h}))}^{\mc{F}_{2}(\mf{h})\mf{I}}
&\ni
\gamma
\mapsto
f(\gamma)\circ\mc{S}(\mf{h},\mf{I},\gamma).
\end{aligned}
\end{equation}
\label{06211545st6}
\item
For all 
$\mf{I}\in\pf{D}_{\mc{F}(c(\mf{h}))}$,
$\beta\in\ms{P}_{\mc{F}(c(\mf{h}))}^{\mf{I}}$
and 
$h\in R_{\mc{F}(c(\mf{h}))}$
\begin{equation*}
\begin{aligned}
(\mf{I},W^{c(\mf{h})}(\mf{I}))
&\in
J^{c(\mf{h})},
\\
W^{c(\mf{h})}(\mf{I},\beta)
\circ\mc{S}(\mf{h},\mf{I},\beta)
&=
W^{d(\mf{h})}(\mc{F}_{2}(\mf{h})\mf{I},\beta),
\\
W^{c(\mf{h})}(\mf{I},\beta)
\circ
\ms{V}(\mc{F}(c(\mf{h})))_{\beta}^{\mf{b}^{\mc{F}(c(\mf{h}))}(h)\mf{I}}(h^{-1})
&=
W^{c(\mf{h})}(\mf{b}^{\mc{F}(c(\mf{h}))}(h)\mf{I},\beta).
\end{aligned}
\end{equation*}
\label{06211545st8}
\end{enumerate}
\end{proposition}
\begin{proof}
St.\eqref{06211545st1} follows since Def. \ref{06161650}\eqref{06161650st3},
st.\eqref{06211545st3} by Def. \ref{06161650}\eqref{06191003},
the first item in \eqref{06231333} follows
since Def. \ref{06161650}\eqref{06181433st5} and \eqref{06181526},
the remaining of st.\eqref{06211545st6} by Def. \ref{06161650}\eqref{06161650st6}.
St.\eqref{06211545st8} follows since \eqref{07271620a} and \eqref{07271620b}.
\end{proof}
As often remarked we introduce the structure of nucleon-fragment doublet on a category
in order to resolve the universality claim.
The next definition specifies the characteristics of the doublet,
equivalently stated in Prp. \ref{06211545}, 
which primarly serve to this end, as we will show in Cor. \ref{19061936}.
\begin{definition}
[$\mc{T}-$nucleon phase and $\mc{T}-$fragment state]
\label{07271553}
We call $m$ and $W$ the $\mc{T}-$nucleon phase and $\mc{T}-$fragment state. 
Moreover 
let \eqref{07271618a}
be called 
the equivariance of the $\mc{T}-$nucleon phase under contravariant action of 
$\Uptheta(\pf{U},\mc{L})$,
and let 
\eqref{07271618b}
be called 
the equivariance of the $\mc{T}-$nucleon phase under action of $H$.
Let 
\eqref{07271620a}
be called 
the equivariance of the $\mc{T}-$fragment state under contravariant action of 
$\Upxi(\pf{D},\mc{L})$,
and let 
\eqref{07271620b}
be called 
the equivariance of the $\mc{T}-$fragment state under action of $H$.
Complexively we call all the previous properties the 
equivariances of the $\mc{T}-$nucleon phase and $\mc{T}-$fragment state,
expanded 
if we add also the property \eqref{06232128}.
Finally we call $\mc{T}-$resolution of the equivariant form of the universality claim,
the set of the expanded  
equivariances of the $\mc{T}-$nucleon phase and $\mc{T}-$fragment state.
\end{definition}
Since all the above introduced state-evaluated maps
are equivariant under actions implemented by conjugate of actions over observables,
easily Prp. \ref{06211545} gives rise to invariance of mean values as stated in the following
\begin{proposition}
[Properties of invariance of nucleon-fragment doublets]
\label{20051810dbt}
Let
\begin{enumerate}
\item
$\ms{a},\ms{b}\in\Uptheta(\pf{U},\mc{F})$,
$\mf{T}\in\pf{U}_{\mc{F}(\ms{b})}$,
$\beta\in\ms{P}_{\mc{F}(\ms{b})}^{\mf{T}}$,
\item
$\mf{t}\in Mor_{\Uptheta(\pf{U},\mc{F})}(\ms{a},\ms{b})$,
\item
$\mf{I}\in\pf{U}_{\mc{F}(\ms{a})}$,
$\alpha\in\ms{P}_{\mc{F}(\ms{a})}^{\mf{I}}$,
$\ms{f}\in\ms{A}_{\mc{F}(\ms{a})}$,
$l\in R_{\mc{F}(\ms{a})}$;
\end{enumerate}
then
\begin{equation*}
\begin{aligned}
m^{\ms{b}}(\mf{T},\beta)(\mc{F}_{1}(\mf{t})\ms{f})
&=
m^{\ms{a}}(\mc{F}_{2}(\mf{t})\mf{T},\beta)(\ms{f}),
\\
m^{\ms{a}}(\mf{I},\alpha)(\ms{f})
&=
m^{\ms{a}}(\mf{b}^{\mc{F}(\ms{a})}(l)\mf{I},\alpha)
(\uppsi^{\mc{F}(\ms{a})}(l)\ms{f}),
\end{aligned}
\end{equation*}
Moreover let
\begin{enumerate}
\item
$\ms{d},\ms{e}\in\Upxi(\pf{D},\mc{F})$,
$\mf{R}\in\pf{D}_{\mc{F}(\ms{e})}$,
$\delta\in\ms{P}_{\mc{F}(\ms{e})}^{\mf{R}}$,
$b\in\mc{A}(\mc{F}(\ms{d}))_{\delta}^{\mc{F}_{2}(\mf{p})\mf{R}}$,
\item
$\mf{p}\in Mor_{\Upxi(\pf{D},\mc{F})}(\ms{d},\ms{e})$,
\item
$\mf{O}\in\pf{D}_{\mc{F}(\ms{d})}$,
$\gamma\in\ms{P}_{\mc{F}(\ms{d})}^{\mf{O}}$,
$a\in\mc{A}(\mc{F}(\ms{d}))_{\gamma}^{\mf{O}}$,
$h\in R_{\mc{F}(\ms{d})}$;
\end{enumerate}
then 
\begin{equation*}
\begin{aligned}
W^{\ms{e}}(\mf{R},\delta)(\mc{S}(\mf{p},\mf{R},\delta)b)
&=
W^{\ms{d}}(\mc{F}_{2}(\mf{p})\mf{R},\delta)(b),
\\
W^{\ms{d}}(\mf{O},\gamma)(a)
&=
W^{\ms{d}}(\mf{b}^{\mc{F}(\ms{d})}(h)\mf{O},\gamma)
(\ms{V}(\mc{F}(\ms{d}))_{\gamma}^{\mf{O}}(h)a).
\end{aligned}
\end{equation*}
\end{proposition}
\begin{proof}
Since Prp. \ref{06211545} and \eqref{09051254b}.
\end{proof}
\begin{proposition}
\label{02202114dbt}
Under the hypothesis of Prp. \ref{20051810dbt} 
we have
\begin{enumerate}
\item
$\mf{s}(\mc{F}(\ms{a}))\equiv$
the nucleon system generated by the fissioning system $\mf{u}(\ms{a})$;
\item
$\mf{s}(\mc{O}(\mc{F}(\ms{a}))_{\alpha}^{\mf{I}})\equiv$
the fragment system whose observable algebra is $\mc{A}(\mc{F}(\ms{a}))_{\alpha}^{\mf{I}}$
and whose dynamics is $(\ep^{\mc{F}(\ms{a})})_{\alpha}^{\mf{I}}(-\alpha^{-1}(\cdot))$, if $\alpha>0$; 
\item
$\mf{s}((\ps{\upvarphi}^{\mc{F}(\ms{a})})_{\alpha}^{\mf{I}})\equiv$
the state of thermal equilibrium 
$(\ps{\upvarphi}^{\mc{F}(\ms{a})})_{\alpha}^{\mf{I}}$
at the inverse temperature $\alpha$
of 
$\mf{s}(\mc{O}(\mc{F}(\ms{a}))_{\alpha}^{\mf{I}})$, if $\alpha>0$; 
\item
$\mf{s}(m^{\ms{a}}(\mf{I},\alpha))\equiv$
the phase of $\mf{s}(\mc{F}(\ms{a}))$
occurring by performing the operation $\mf{u}(\mf{I})$ 
on 
$\mf{s}((\ps{\upvarphi}^{\mc{F}(\ms{a})})_{\alpha}^{\mf{I}})$;
\label{07031656}
\item
$\mf{s}(W^{\ms{e}}(\mf{R},\delta))\equiv$
the state of
$\mf{s}(\mc{O}\mc{F}(\ms{e})_{\delta}^{\mf{R}})$
originated via 
$\mf{s}(m^{\ms{e}}(\mf{R},\delta))$;
\label{07031655}
\item
$\mf{s}(\mc{O}(\mc{F}(\ms{a}))_{\alpha}^{\mf{b}^{\mc{F}(\ms{a})}(l)\mf{I}})\equiv$
the fragment system whose observable algebra is 
$\mc{A}(\mc{F}(\ms{a}))_{\alpha}^{\mf{b}^{\mc{F}(\ms{a})}(l)\mf{I}}$ 
and whose dynamics is 
$\ms{ad}((\eta^{\mc{F}(\ms{a})})_{\alpha}^{\mf{I}}(l))
\circ(\ep^{\mc{F}(\ms{a})})_{\alpha}^{\mf{I}}(-\alpha^{-1}(\cdot))$, if $\alpha>0$;
\item
$\mf{s}((\ps{\upvarphi}^{\mc{F}(\ms{a})})_{\alpha}^{\mf{b}^{\mc{F}(\ms{a})}(l)\mf{I}})\equiv$
the state of thermal equilibrium 
$(\ps{\upvarphi}^{\mc{F}(\ms{a})})_{\alpha}^{\mf{b}^{\mc{F}(\ms{a})}(l)\mf{I}}$
at the inverse temperature $\alpha$
of 
$\mf{s}(\mc{O}(\mc{F}(\ms{a}))_{\alpha}^{\mf{b}^{\mc{F}(\ms{a})}(l)\mf{I}})$;
\item
$\mf{s}(m^{\ms{a}}(\mf{b}^{\mc{F}(\ms{a})}(l)\mf{I},\alpha))\equiv$
the phase of $\mf{s}(\mc{F}(\ms{a}))$ occurring by performing 
on
$\mf{s}((\ps{\upvarphi}^{\mc{F}(\ms{a})})_{\alpha}^{\mf{b}^{\mc{F}(\ms{a})}(l)\mf{I}})$
the operation obtained by transforming through the action of $l$ the operation $\mf{u}(\mf{I})$;
\label{07031808}
\item
$\mf{s}(W^{\ms{d}}(\mf{b}^{\mc{F}(\ms{d})}(h)\mf{O},\gamma))\equiv$
the state of $\mf{s}(\mc{O}(\mc{F}(\ms{d}))_{\gamma}^{\mf{b}^{\mc{F}(\ms{d})}(h)\mf{O}})$
originated via 
$\mf{s}(m^{\ms{d}}(\mf{b}^{\mc{F}(\ms{d})}(h)\mf{O},\gamma))$;
\label{07031809}
\item
$\mf{s}(m^{\ms{b}}(\mf{T},\beta))\equiv$ the phase of $\mf{s}(\mc{F}(\ms{b}))$
occurring by performing the operation $\mf{u}(\mf{T})$
on $\mf{s}((\ps{\upvarphi}^{\mc{F}(\ms{b})})_{\beta}^{\mf{T}})$;
\label{07031810}
\item
$\mf{s}(m^{\ms{a}}(\mc{F}_{2}(\mf{t})(\mf{T}),\beta))\equiv$
the phase of $\mf{s}(\mc{F}(\ms{a}))$ occurring by performing 
on
$\mf{s}((\ps{\upvarphi}^{\mc{F}(\ms{a})})_{\beta}^{\mc{F}_{2}(\mf{t})(\mf{T})})$
the operation obtained by transforming through the action of $\mf{u}(\mc{F}_{2}(\mf{t}))$ 
the operation $\mf{u}(\mf{T})$;
\label{07031811}
\item
$\mf{s}(W^{\ms{d}}(\mc{F}_{2}(\mf{p})\mf{R},\delta))\equiv$
the state of
$\mf{s}(\mc{O}(\mc{F}(\ms{d}))_{\delta}^{\mc{F}_{2}(\mf{p})\mf{R}})$
originated via 
$\mf{s}(m^{\ms{d}}(\mc{F}_{2}(\mf{p})\mf{R},\delta))$.
\label{07031812}
\end{enumerate}
\end{proposition}
\begin{proof}
Sts. (\ref{07031656},\ref{07031655}) 
follow by \eqref{06232128},
sts. (\ref{07031808}-\ref{07031812}) follow by 
sts. (\ref{07031656},\ref{07031655}),
the remaining statements are trivial. 
\end{proof}
\begin{convention}
Let
$\ms{a}\in\Upxi(\pf{D},\mc{F})$,
$\mf{I}\in\pf{D}_{\mc{F}(\ms{a})}$,
$\delta\in\ms{P}_{\mc{F}(\ms{a})}^{\mf{I}}$,
set
$\ms{F}(\ms{a})_{\delta}^{\mf{I}}
\doteq
\ms{F}_{(\ps{\upvarphi}^{\mc{F}(\ms{a})})_{\delta}^{\mf{I}}}
((\mf{a}_{\mc{F}(\ms{a})})_{\delta}^{\mf{I}})$,
i.e.
\begin{equation*}
\ms{F}(\ms{a})_{\delta}^{\mf{I}}
=
\left\{h\in F\mid
\left((\ps{\upvarphi}^{\mc{F}(\ms{a})})_{\delta}^{\mf{I}}
\circ(\upeta^{\mc{F}(\ms{a})})_{\delta}^{\mf{I}}\circ 
j_{2}\right)(h)
=(\ps{\upvarphi}^{\mc{F}(\ms{a})})_{\delta}^{\mf{I}}\right\}.
\end{equation*}
\end{convention}
\begin{proposition}
[Thermal nature-fragment state NCG origination-Phase transition
of the nucleon phase $m^{\ms{a}}$]
\label{07031844}
Under the hypothesis of Prp. \ref{20051810dbt} we obtain
\begin{enumerate}
\item
\emph
{Thermal nature of the nucleon phase $m^{\ms{a}}(\mf{I},\alpha)$ 
and stability under variation of the operations $\mf{T}_{\mf{p}}$.}
$\mf{s}(m^{\ms{a}}(\mf{I},\alpha))\equiv$  
the phase of the nucleon system generated by the fissioning system $\mf{u}(\ms{a})$,
occurring by performing the operation $\mf{u}(\mf{T}_{\mf{p}})$ 
on the state of thermal equilibrium 
$\ps{\upvarphi}_{\mf{p}}$
at the inverse temperature $\alpha_{\mf{p}}$
of the fragment system whose observable algebra is 
$\mc{A}_{\mf{p}}$
and whose dynamics is 
$\ep_{\mf{p}}(-\alpha_{\mf{p}}^{-1}(\cdot))$,
for all $\mf{p}\in Rep^{\mc{F}(\ms{a})}(\mf{c})$ such that $\alpha_{\mf{p}}>0$.
\item
\emph
{Noncommutative geometric and thermal origination 
of fragment states via the nucleon phase $m^{\ms{a}}(\mf{I},\alpha)$.}
\begin{equation*}
\Uppsi_{\mf{r}}^{-}\circ\mf{j}_{\mf{r}},
\end{equation*}
is the $\ps{\upvarphi}_{\mf{r}}-$normal state 
originated via $\mf{s}(m^{\ms{a}}(\mf{I},\alpha))$,
of the fragment system whose observable algebra is 
$\mc{A}_{\mf{r}}$
and whose dynamics is 
$\ep_{\mf{r}}(-\alpha_{\mf{r}}^{-1}(\cdot))$,
for all $\mf{r}\in Rep^{\mc{F}(\ms{a})}(m^{\ms{a}}(\mf{I},\alpha))$
such that $\alpha_{\mf{r}}>0$.
\item
\emph
{Phase transition of $m^{\ms{a}}$ via symmetry breaking.}
$\mf{s}(m^{\ms{a}}(\mf{I},\xi))$
exists only 
for those
$\xi\in\ms{I}_{\mc{F}(\ms{a})}^{\mf{I}}$
such that 
$\ms{F}(\ms{a})_{\xi}^{\mf{I}}
\supseteq
\ms{F}(\ms{a})_{(\beta_{c}^{\mc{F}(\ms{a})})^{\mf{I}}}^{\mf{I}}$.
\end{enumerate}
\end{proposition}
\begin{proof}
Since the definition of $\ms{P}_{\mc{N}}$ for any $\mc{N}$ object of $\mf{G}(G,F,\uprho)$,
Prp. \ref{12051738} and Prp. \ref{06251847}.
\end{proof}
Jointly Prp. \ref{02202114dbt} the next result physically interpret 
Prp. \ref{20051810dbt} 
\begin{proposition}
[Mean values invariance related to a nucleon-fragment doublet]
\label{12051206dbt}
Under the hypothesis of Prp. \ref{20051810dbt} we obtain
\begin{enumerate}
\item
\emph
{$\Upxi(\pf{D},\mc{F})-$invariance of the mean value in $m$.}
The following values are equal
\begin{itemize}
\item
the mean value in $\mf{s}(m^{\ms{b}}(\mf{T},\beta))$
of the observable
obtained by transforming through the action of $\mf{u}(\mc{F}_{1}(\mf{t}))$
the observable $\mf{u}(\ms{f})$, 
\item
the mean value in 
$\mf{s}(m^{\ms{a}}(\mc{F}_{2}(\mf{t})(\mf{T}),\beta))$
of the observable $\mf{u}(\ms{f})$.
\end{itemize}
\label{21050538dbt}
\item
\emph
{$H-$invariance of the mean value in $m^{\ms{a}}$.}
The following values are equal
\begin{itemize}
\item
the mean value in $\mf{s}(m^{\ms{a}}(\mf{I},\alpha))$
of the observable $\mf{u}(\ms{f})$, 
\item
the mean value 
in $\mf{s}(m^{\ms{a}}(\mf{b}^{\mc{F}(\ms{a})}(l)(\mf{I}),\alpha))$
of the observable obtained by transforming 
through the action of $l$ the observable $\mf{u}(\ms{f})$,
\end{itemize}
\label{12051206st5dbt}
\item
\emph
{$\Upxi(\pf{D},\mc{F})-$invariance of the mean value in $\mc{W}$.}
The following values are equal
\begin{itemize}
\item
the mean value in
$\mf{s}(W^{\ms{e}}(\mf{R},\delta))$
of the observable obtained by transforming through the action of
$\mf{u}(\mc{S}(\mf{p},\mf{R},\delta))$
the observable $\mf{u}(b)$,
\item
the mean value in
$\mf{s}(W^{\ms{d}}(\mc{F}_{2}(\mf{p})\mf{R},\delta))$
of the observable $\mf{u}(b)$;
\end{itemize}
\label{06201815dbt} 
\item
\emph
{$H-$invariance of the mean value in $W^{\ms{d}}$.}
The following values are equal
\begin{itemize}
\item
the mean value in $\mf{s}(W^{\ms{d}}(\mf{O},\gamma))$
of the observable $\mf{u}(a)$,
\item
the mean value 
in $\mf{s}(W^{\ms{d}}(\mf{b}^{\mc{F}(\ms{d})}(h)(\mf{O}),\gamma))$
of the observable obtained by transforming 
through the action of $h$ the observable $\mf{u}(a)$.
\end{itemize}
\label{12051206st4dbt}
\end{enumerate}
\end{proposition}
\begin{proof}
Since Prp. \ref{20051810dbt}. 
\end{proof}
\subsection{Construction of an equivariant stability}
\label{07101559}
In this section we construct an equivariant stability in Thm. \ref{01151104}.
\begin{definition}
\label{01151759}
Define
\begin{equation*}
\begin{aligned}
\pf{T}_{\bullet}
&:Obj(\mf{G}(G,F,\uprho))\ni\mc{M}
\mapsto
\pf{T}_{\mc{M}},
\\
\ov{\ms{m}}_{\bullet}
&:Obj(\mf{G}(G,F,\uprho))\ni\mc{M}
\mapsto
\ov{\ms{m}}^{\mc{M}},
\\
Dom(\pf{V}_{\bullet})
&\coloneqq
\{\mc{M}\in Obj(\mf{G}(G,F,\uprho))
\mid
\pf{V}_{\bullet}(\mc{M})\neq\varnothing\},
\end{aligned}
\end{equation*}
moreover set the map 
$\mc{V}_{\bullet}$ defined on $Dom(\pf{V}_{\bullet})$ such that 
for all $\mc{M}\in Dom(\pf{V}_{\bullet})$,
$\mf{T}\in\pf{V}_{\bullet}(\mc{M})$ 
and 
$\alpha\in\ms{P}_{\mc{M}}^{\mf{T}}$
\begin{equation*}
\begin{aligned}
\mc{V}_{\bullet}(\mc{M})
&\in\prod_{\mf{Q}\in\pf{V}_{\bullet}(\mc{M})}
\prod_{\beta\in\ms{P}_{\mc{M}}^{\mf{Q}}}
\mf{N}^{\mc{M}}(\ov{\ms{m}}^{\mc{M}}(\mf{Q},\beta)),
\\
\mc{V}_{\bullet}(\mc{M})(\mf{\mf{T},\alpha})
&\coloneqq
\Uppsi_{(\ms{e}^{\mc{M}}\circ\mf{t}^{\mc{M}})(\mf{T},\alpha)}^{-}
\circ
\mf{j}_{(\ms{e}^{\mc{M}}\circ\mf{t}^{\mc{M}})(\mf{T},\alpha)}.
\end{aligned}
\end{equation*}
\end{definition}
\begin{remark}
\label{01151800}
$(\ms{e}^{\mc{M}}\circ\mf{t}^{\mc{M}})
(\mf{T},\alpha)\in 
Rep^{\mc{M}}(\ov{\ms{m}}^{\mc{M}}(\mf{T},\alpha))$
for all
$\mc{M}\in Dom(\pf{V}_{\bullet})$,
$\mf{T}\in\pf{V}_{\bullet}(\mc{M})$ and 
$\alpha\in\ms{P}_{\mc{M}}^{\mf{T}}$
since Rmk. \ref{12291844} and Lemma \ref{12201040},
so $\mc{V}_{\bullet}$ is well-defined. 
\end{remark}
\begin{lemma}
\label{10311019}
Let $\mc{B}$, $\mc{C}$ and $\mc{D}$ be three $\ast-$algebras,
where $\mc{D}$ is unital with unit $\un$.
$U\in\mc{U}(\mc{D})$, $\mf{R}\in Mor_{\ms{CA}^{\ast}}(\mc{C},\mc{D})$
and $\upeta\in Mor_{\ms{CA}^{\ast}}(\mc{B},\mc{C})$.
Then 
$(\mf{R}\circ\upeta)^{\tilde{}}
=
\tilde{\mf{R}}\circ\upeta^{+}$,
and
$(ad(U)\circ\mf{R})^{\tilde{}}
=
ad(U)\circ\tilde{\mf{R}}$.
\end{lemma}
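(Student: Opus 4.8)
The plan is to prove the two claimed identities by unwinding the definitions of the unitization constructions $(\cdot)^{\tilde{}}$, $(\cdot)^{+}$ from \eqref{10291116} and \eqref{10281544}, together with the definition of $\ms{ad}(U)$, and checking equality on an arbitrary element of $\mc{B}^{+}$. Recall that $\mc{B}^{+}$ has underlying set $\mc{B}\times\C$, so a generic element is $(b,\lambda)$ with $b\in\mc{B}$ and $\lambda\in\C$. Since every map appearing in both identities is a $\ast$-morphism of $\mc{B}^{+}$ into a unital $\ast$-algebra (namely $\mc{D}$), it suffices to verify that the two sides agree as maps on the set $\mc{B}\times\C$; linearity and multiplicativity then come for free because both sides are already known to be well-defined $\ast$-morphisms by \eqref{10291116} and \eqref{10281544}.

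For the first identity, I would compute both sides on $(b,\lambda)\in\mc{B}^{+}$. By \eqref{10281544} applied to the morphism $\mf{R}\circ\upeta\in Hom^{\ast}(\mc{B},\mc{D})$, the left-hand side gives
\begin{equation*}
(\mf{R}\circ\upeta)^{\tilde{}}(b,\lambda)
=
(\mf{R}\circ\upeta)(b)+\un_{\mc{D}}\lambda.
\end{equation*}
For the right-hand side, $\upeta^{+}(b,\lambda)=(\upeta(b),\lambda)\in\mc{C}^{+}$ by \eqref{10291116}, and then applying $\tilde{\mf{R}}$ via \eqref{10281544} yields
\begin{equation*}
(\tilde{\mf{R}}\circ\upeta^{+})(b,\lambda)
=
\tilde{\mf{R}}(\upeta(b),\lambda)
=
\mf{R}(\upeta(b))+\un_{\mc{D}}\lambda,
\end{equation*}
which matches the left-hand side since $\mf{R}(\upeta(b))=(\mf{R}\circ\upeta)(b)$. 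For the second identity, I would again evaluate on $(b,\lambda)$. The map $\ms{ad}(U)\circ\mf{R}$ is a $\ast$-morphism from $\mc{C}$ to $\mc{D}$ because $\ms{ad}(U)\in Aut^{\ast}(\mc{D})$ when $U\in\mc{U}(\mc{D})$, so its induced morphism is well-defined by \eqref{10281544}; it sends $(b,\lambda)$ to $(\ms{ad}(U)\circ\mf{R})(b)+\un_{\mc{D}}\lambda$. On the other side, $(\ms{ad}(U)\circ\tilde{\mf{R}})(b,\lambda)=\ms{ad}(U)(\mf{R}(b)+\un_{\mc{D}}\lambda)=\ms{ad}(U)(\mf{R}(b))+\ms{ad}(U)(\un_{\mc{D}}\lambda)$. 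The two agree once we observe that $\ms{ad}(U)(\un_{\mc{D}}\lambda)=U(\un_{\mc{D}}\lambda)U^{-1}=\un_{\mc{D}}\lambda$, using $UU^{-1}=\un_{\mc{D}}$ and the centrality of scalar multiples of the unit.

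The only genuinely delicate point—and hence the step I expect to require the most care rather than the most difficulty—is keeping straight in which algebra each unit lives and confirming that $\ms{ad}(U)$ fixes the scalar part $\un_{\mc{D}}\lambda$; here one must use precisely that $\mc{D}$ is unital with unit $\un$, which is exactly the hypothesis of the lemma and what makes \eqref{10281544} applicable. I would note explicitly that $\tilde{\mf{R}}$, $\tilde{\mf{R}}\circ\upeta^{+}$, $(\ms{ad}(U)\circ\mf{R})^{\tilde{}}$, and $\ms{ad}(U)\circ\tilde{\mf{R}}$ are all unital $\ast$-morphisms into $\mc{D}$, so that the pointwise check on the generating set $\mc{B}\times\C$ suffices. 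No deeper machinery is needed; the argument is a direct and short computation from the definitions, and I would present it as such.
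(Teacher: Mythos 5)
Your proof is correct and follows essentially the same route as the paper's: evaluate both sides on a generic element $(b,\lambda)$ of the unitization, unwind \eqref{10291116} and \eqref{10281544}, and use that $\ms{ad}(U)$ fixes $\lambda\un$. The only cosmetic difference is that your appeal to checking on a "generating set" is unnecessary, since $\mc{B}\times\C$ is the entire underlying set of $\mc{B}^{+}$ and the identities are simply verified pointwise.
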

\begin{proof}
Let $b\in\mc{B}$, $c\in\mc{C}$ and $\lambda\in\C$, then
\begin{equation*}
\begin{aligned}
(\mf{R}\circ\upeta)^{\tilde{}}(b,\lambda)
&
=
(\mf{R}\circ\upeta)(b)+\lambda\un
\\
&
=
\tilde{\mf{R}}(\upeta(b),\lambda)
=
(\tilde{\mf{R}}\circ\upeta^{+})(b,\lambda),
\end{aligned}
\end{equation*}
and
\begin{equation*}
\begin{aligned}
(ad(U)\circ\mf{R})^{\tilde{}}(c,\lambda)
&
=
(ad(U)\circ\mf{R})(c)+\lambda\un
\\
&
=
ad(U)(\mf{R}(c)+\lambda\un)
=
(ad(U)\circ\tilde{\mf{R}})(c,\lambda).
\end{aligned}
\end{equation*}
\end{proof}
\begin{lemma}
\label{12051430}
Let $\mc{N}\in\mf{G}(G,F,\uprho)$,
$\mf{T}\in\pf{T}_{\mc{N}}$ 
and
$\alpha\in\ms{P}_{\mc{N}}^{\mf{T}}$
such that 
there exists an element 
$b\in(\mc{B}_{\alpha}^{\mf{T}}(\mc{N}))^{+}$
for which 
$\|b\|\leq 1$
and
$\tilde{\mf{R}}_{\alpha}^{\mf{T}}(\mc{N})(b)=
(\Upgamma^{\mc{N}})_{\alpha}^{\mf{T}}$.
If $l\in H$ set
$b^{l}=((\mf{w}^{\mc{N}})_{\alpha}^{\mf{T}})^{+}(l)(b)$
then
$b^{l}\in(\mc{B}_{\alpha}^{\mf{T}^{l}}(\mc{N}))^{+}$
such that 
$\|b^{l}\|\leq 1$
and
$\tilde{\mf{R}}_{\alpha}^{\mf{T}^{l}}(\mc{N})(b^{l})=
\Upgamma_{\alpha}^{\mf{T}^{l}}$.
In particular
if $\pf{V}_{\bullet}(\mc{N})\neq\varnothing$ 
then 
$\mf{b}(l)(\pf{V}_{\bullet}(\mc{N}))\subseteq
\pf{V}_{\bullet}(\mc{N})$.
\end{lemma}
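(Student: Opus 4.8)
The plan is to verify that the displayed element $b^{l}$ inherits both properties required of $b$ in Def. \ref{01151659}, and then to read off the stability of $\pf{V}_{\bullet}(\mc{G})$ under $\mf{b}(l)$. First I would record that $(\mf{w}_{\alpha}^{\mf{T}}(l))^{+}$ (the map written $(\mf{w}_{\alpha}^{\mf{T}})^{+}(l)$ in the statement) is a \emph{unital} $\ast$-homomorphism of $C^{\ast}$-algebras: indeed $\mf{w}_{\alpha}^{\mf{T}}(l)\in Hom^{\ast}(\mc{B}_{\alpha}^{\mf{T}}(\mc{G}),\mc{B}_{\alpha}^{\mf{T}^{l}}(\mc{G}))$ by the defining data of $\mf{G}(G,F,\uprho)$, both crossed products are $C^{\ast}$-algebras, their unitizations are again $C^{\ast}$-algebras, and $(\cdot)^{+}$ sends $\ast$-homomorphisms to unital ones by \eqref{10291116}. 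Consequently $b^{l}=(\mf{w}_{\alpha}^{\mf{T}}(l))^{+}(b)$ lies in $(\mc{B}_{\alpha}^{\mf{T}^{l}}(\mc{G}))^{+}$, and since every $\ast$-homomorphism between $C^{\ast}$-algebras is norm decreasing, $\|b^{l}\|\leq\|b\|\leq 1$.

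The heart of the argument is the identity $\tilde{\mf{R}}_{\alpha}^{\mf{T}^{l}}(\mc{G})(b^{l})=\Upgamma_{\alpha}^{\mf{T}^{l}}$, which I would obtain by passing the commutative square \eqref{01031145} through the unitization construction. Applying the first part of Lemma \ref{10311019} with $\mf{R}=\mf{R}_{\alpha}^{\mf{T}^{l}}(\mc{G})$, $\upeta=\mf{w}_{\alpha}^{\mf{T}}(l)$ and unital codomain $\mc{L}(\mf{H}_{\alpha}^{\mf{T}^{l}})$ gives $\tilde{\mf{R}}_{\alpha}^{\mf{T}^{l}}(\mc{G})\circ(\mf{w}_{\alpha}^{\mf{T}}(l))^{+}=(\mf{R}_{\alpha}^{\mf{T}^{l}}(\mc{G})\circ\mf{w}_{\alpha}^{\mf{T}}(l))^{\tilde{}}$. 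The diagram \eqref{01031145} rewrites the inner composition as $\ms{ad}(\mf{v}_{\alpha}^{\mf{T}}(l))\circ\mf{R}_{\alpha}^{\mf{T}}(\mc{G})$, and because $\mf{v}_{\alpha}^{\mf{T}}(l)$ is unitary the map $\ms{ad}(\mf{v}_{\alpha}^{\mf{T}}(l))$ preserves the identity operator; the computation in Lemma \ref{10311019} then yields $(\ms{ad}(\mf{v}_{\alpha}^{\mf{T}}(l))\circ\mf{R}_{\alpha}^{\mf{T}}(\mc{G}))^{\tilde{}}=\ms{ad}(\mf{v}_{\alpha}^{\mf{T}}(l))\circ\tilde{\mf{R}}_{\alpha}^{\mf{T}}(\mc{G})$, now for a unitary intertwiner between the distinct Hilbert spaces $\mf{H}_{\alpha}^{\mf{T}}$ and $\mf{H}_{\alpha}^{\mf{T}^{l}}$. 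Evaluating at $b$, using the hypothesis $\tilde{\mf{R}}_{\alpha}^{\mf{T}}(\mc{G})(b)=\Upgamma_{\alpha}^{\mf{T}}$ and then \eqref{01031145d}, I obtain $\tilde{\mf{R}}_{\alpha}^{\mf{T}^{l}}(\mc{G})(b^{l})=\mf{v}_{\alpha}^{\mf{T}}(l)\,\Upgamma_{\alpha}^{\mf{T}}\,\mf{v}_{\alpha}^{\mf{T}}(l)^{-1}=\Upgamma_{\alpha}^{\mf{T}^{l}}$, as claimed.

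For the final assertion I would fix $\mf{T}\in\pf{V}_{\bullet}(\mc{G})$ and $l\in H$ and check the defining condition of Def. \ref{01151659} for $\mf{T}^{l}=\mf{b}(l)(\mf{T})$. Since $\ms{P}^{\mf{T}^{l}}=\ms{P}^{\mf{T}}$ by Rmk. \ref{01081803}, every index $\alpha\in\ms{P}^{\mf{T}^{l}}$ is also in $\ms{P}^{\mf{T}}$, so membership of $\mf{T}$ in $\pf{V}_{\bullet}(\mc{G})$ provides a witness $b\in(\mc{B}_{\alpha}^{\mf{T}}(\mc{G}))^{+}$ with $\|b\|\leq 1$ and $\tilde{\mf{R}}_{\alpha}^{\mf{T}}(\mc{G})(b)=\Upgamma_{\alpha}^{\mf{T}}$; the first part of the lemma converts it into the witness $b^{l}$ for $(\mf{T}^{l},\alpha)$. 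Hence $\mf{T}^{l}\in\pf{V}_{\bullet}(\mc{G})$, which is the inclusion $\mf{b}(l)(\pf{V}_{\bullet}(\mc{G}))\subseteq\pf{V}_{\bullet}(\mc{G})$.

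I expect the only delicate point to be the bookkeeping with the unitizations: one must apply Lemma \ref{10311019} with the codomain taken to be the unital algebra $\mc{L}(\mf{H}_{\alpha}^{\mf{T}^{l}})$, and observe that its second part persists verbatim when the implementing unitary maps one Hilbert space to another, since $\ms{ad}(\mf{v}_{\alpha}^{\mf{T}}(l))$ is then a unital $\ast$-isomorphism onto its range. Everything else is a direct substitution of the structural axioms \eqref{01031145} and \eqref{01031145d} of $\mf{G}(G,F,\uprho)$ together with the automatic norm contraction of $\ast$-homomorphisms.
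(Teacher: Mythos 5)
Your proposal is correct and follows essentially the same route as the paper: the norm bound comes from the contractivity (the paper uses the slightly stronger fact that $\mf{w}_{\alpha}^{\mf{T}}(l)$, being bijective, is an isometry, so $(\mf{w}_{\alpha}^{\mf{T}})^{+}(l)$ is too), and the identity $\tilde{\mf{R}}_{\alpha}^{\mf{T}^{l}}(\mc{G})(b^{l})=\Upgamma_{\alpha}^{\mf{T}^{l}}$ is obtained exactly as in the paper by combining \eqref{01031145}, \eqref{01031145d} and Lemma \ref{10311019}. Your extra care about applying Lemma \ref{10311019} with the unitary intertwining two different Hilbert spaces is a harmless and reasonable elaboration of a point the paper leaves implicit.
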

\begin{proof}
The inequality 
follows since
$((\mf{w}^{\mc{N}})_{\alpha}^{\mf{T}})^{+}(l)$
is an isometry being $(\mf{w}^{\mc{N}})_{\alpha}^{\mf{T}}(l)$ so,
while the equality follows since (\ref{01031145},\ref{01031145d})
and Lemma \ref{10311019}.
\end{proof}
The following important result ensures the existence of an equivariant stability.
Here the requests (\ref{01031206}, \ref{01031145d}, \ref{01031145}, \ref{01031145c}),
in the definition of the category $\mf{G}(G,F,\uprho)$,
find their justification since are used in its proof directly and via Lemma \ref{12051430}.
\begin{theorem}
[Existence of a full integer equivariant stability
on $\pf{V}_{\bullet}$]
\label{01151104}
$\lr{\pf{T}_{\bullet},\ov{\ms{m}}_{\bullet}}{\mc{V}_{\bullet}}$
is a full integer equivariant stability on $\pf{V}_{\bullet}$.
Moreover 
for all $\mc{N}\in Dom(\pf{V}_{\bullet})$,
$\mf{T}\in\pf{V}_{\bullet}(\mc{N})$ and 
$\alpha\in\ms{P}_{\mc{N}}^{\mf{T}}$
\begin{equation}
\label{14051532}
\mc{V}_{\bullet}(\mc{N})(\mf{T},\alpha)
=
\omega_{\exp\left(
-((\ms{D}^{\mc{N}})_{\alpha}^{\mf{T}})^{2}\right)}
\circ
(\uppi^{\mc{N}})_{\alpha}^{\mf{T}}.
\end{equation}
\end{theorem}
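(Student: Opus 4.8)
The plan is to check, one clause at a time, that $\lr{\pf{T}_{\bullet},\ov{\ms{m}}_{\bullet}}{\mc{V}_{\bullet}}$ meets Def. \ref{11051303} with $\ms{H}$ the constant map $H$ and $\pf{D}=\pf{V}_{\bullet}$, deriving \eqref{14051532} along the way. First I would verify that $\lr{\pf{T}_{\bullet}}{\ov{\ms{m}}_{\bullet}}$ is a full equivariant phase (Def. \ref{01141121}). That $\lr{\pf{V}_{\bullet}}{\pf{T}_{\bullet}}$ is a full $(G,F,\uprho)-$couple of maps follows from the note after Def. \ref{19061130} once Lemma \ref{12051430} supplies the invariance $\mf{b}^{\mc{N}}(l)(\pf{V}_{\bullet}(\mc{N}))\subseteq\pf{V}_{\bullet}(\mc{N})$ needed in Def. \ref{19061130}\eqref{18061148}. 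The membership of $\ov{\ms{m}}^{\mc{N}}$ in $\prod_{\mf{Q}}(\ms{A}_{\mc{N}}^{\ast})^{\ms{P}_{\mc{N}}^{\mf{Q}}}$ is built into Def. \ref{12161259}, integrality of $\ov{\ms{m}}_{\bullet}$ is the Integrality axiom of Def. \ref{05301823}, and the equivariance squares \eqref{19051804} and \eqref{genequiv} reduce, after expanding $\Updelta_{m}$ from Def. \ref{20051117}, exactly to the two identities of Rmk. \ref{12162113}; here one uses $\ms{P}_{\mc{N}}^{\mf{Q}}=\ms{P}_{\mc{N}}^{\mf{Q}^{l}}$ (Rmk. \ref{01081803}) for the first square and the morphism constraint $\ms{P}_{\mc{N}}^{\mf{f}(\mf{T})}=\ms{P}_{\mc{M}}^{\mf{T}}$ (Def. \ref{11251056}) for the second.

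Next I would establish that $\mc{V}_{\bullet}$ is a state associated to $\lr{\pf{T}_{\bullet}}{\ov{\ms{m}}_{\bullet}}$ and equivariant on $\pf{V}_{\bullet}$ (Def. \ref{01141641}). Clauses \eqref{01141641a} and \eqref{01141641c} are immediate from Rmk. \ref{01151800}: for $\mf{T}\in\pf{V}_{\bullet}(\mc{N})$ and $\alpha\in\ms{P}_{\mc{N}}^{\mf{T}}$ the tuple $\mf{t}\coloneqq(\ms{e}^{\mc{N}}\circ\mf{t}^{\mc{N}})(\mf{T},\alpha)$ lies in $Rep^{\mc{N}}(\ov{\ms{m}}^{\mc{N}}(\mf{T},\alpha))$ (by Rmk. \ref{12291844} and Lemma \ref{12201040}), has $\mf{T}_{\mf{t}}=\mf{T}$, $\alpha_{\mf{t}}=\alpha$, and realizes $\mc{V}_{\bullet}(\mc{N})(\mf{T},\alpha)=\Uppsi_{\mf{t}}^{-}\circ\mf{j}_{\mf{t}}\in\mf{N}^{\mc{N}}(\ov{\ms{m}}^{\mc{N}}(\mf{T},\alpha))$ by Def. \ref{12192100} and Def. \ref{01151759}. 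The auxiliary formula \eqref{14051532} then follows at once: since $\mf{t}^{\mc{N}}(\mf{T},\alpha)\in C_{0}^{\mc{N}}(\ov{\ms{m}}^{\mc{N}}(\mf{T},\alpha))$ (Rmk. \ref{12291844}), the equality \eqref{12291851} of Lemma \ref{12201040} gives $\mc{V}_{\bullet}(\mc{N})(\mf{T},\alpha)=\omega_{\exp(-(\ms{D}_{\alpha}^{\mf{T}})^{2})}\circ\uppi_{\alpha}^{\mf{T}}$.

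It remains to prove the $H-$equivariance square \eqref{01141641b}, the substantial clause. Expanding $\ms{gr}$ (Def. \ref{18061441}) and $\ms{V}_{\vardiamondsuit}$ (Def. \ref{20051117}) and invoking $\ms{V}(\mc{N})_{\alpha}^{\mf{T}^{l}}(l^{-1})=(\ms{V}(\mc{N})_{\alpha}^{\mf{T}}(l))^{-1}$ from \eqref{09051254b}, its commutativity is equivalent, through \eqref{14051532}, to the single identity $\omega_{\exp(-(\ms{D}_{\alpha}^{\mf{T}^{l}})^{2})}\circ\uppi_{\alpha}^{\mf{T}^{l}}\circ\ms{V}(\mc{N})_{\alpha}^{\mf{T}}(l)=\omega_{\exp(-(\ms{D}_{\alpha}^{\mf{T}})^{2})}\circ\uppi_{\alpha}^{\mf{T}}$ for all $l\in H$ (this is the content behind the first equality of \eqref{25052245}). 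I would separate the two factors. For the density operators, \eqref{01031145d} gives $\ms{D}_{\alpha}^{\mf{T}^{l}}=\ms{ad}(\mf{v}_{\alpha}^{\mf{T}}(l))(\ms{D}_{\alpha}^{\mf{T}})$ with $\mf{v}_{\alpha}^{\mf{T}}(l)$ unitary; Cor. \ref{10301830}\eqref{10301830st3} propagates this through the Borel functional calculus to $\exp(-(\ms{D}_{\alpha}^{\mf{T}^{l}})^{2})=\ms{ad}(\mf{v}_{\alpha}^{\mf{T}}(l))(\exp(-(\ms{D}_{\alpha}^{\mf{T}})^{2}))$, and the trace identity for $\omega_{(\cdot)}$ then yields $\omega_{\exp(-(\ms{D}_{\alpha}^{\mf{T}^{l}})^{2})}=\omega_{\exp(-(\ms{D}_{\alpha}^{\mf{T}})^{2})}\circ\ms{ad}(\mf{v}_{\alpha}^{\mf{T}}(l)^{-1})$.

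The heart of the matter, and the step I expect to be the main obstacle, is the representation intertwiner $\uppi_{\alpha}^{\mf{T}^{l}}\circ\ms{V}(\mc{N})_{\alpha}^{\mf{T}}(l)=\ms{ad}(\mf{v}_{\alpha}^{\mf{T}}(l))\circ\uppi_{\alpha}^{\mf{T}}$, which feeds the above into the desired identity (the two $\ms{ad}(\mf{v}_{\alpha}^{\mf{T}}(l))$ then cancelling). I would read \eqref{01031145c} together with the first line of \eqref{01031206} (so that $\upeta_{\alpha}^{\mf{T}^{l}}(l)\circ\mf{z}_{\alpha}^{\mf{T}}(l)=\mf{z}_{\alpha}^{\mf{T}}(l)\circ\upeta_{\alpha}^{\mf{T}}(l)=\ms{V}(\mc{N})_{\alpha}^{\mf{T}}(l)$, Def. \ref{09051150}) as $\mf{j}_{\alpha}^{\mf{T}^{l}}\circ\ms{V}(\mc{N})_{\alpha}^{\mf{T}}(l)=(\mf{i}_{\alpha}^{\mf{T}^{l}}\circ\mf{w}_{\alpha}^{\mf{T}}(l))^{-}\circ\mf{j}_{\alpha}^{\mf{T}}$. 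Composing on the left with $(\mf{R}_{\alpha}^{\mf{T}^{l}})^{-}$ and using \eqref{12191452} to write $\uppi_{\alpha}^{\mf{T}^{l}}=(\mf{R}_{\alpha}^{\mf{T}^{l}})^{-}\circ\mf{j}_{\alpha}^{\mf{T}^{l}}$ turns the left side into $\uppi_{\alpha}^{\mf{T}^{l}}\circ\ms{V}(\mc{N})_{\alpha}^{\mf{T}}(l)$. The delicate part is then the identification $(\mf{R}_{\alpha}^{\mf{T}^{l}})^{-}\circ(\mf{i}_{\alpha}^{\mf{T}^{l}}\circ\mf{w}_{\alpha}^{\mf{T}}(l))^{-}=\ms{ad}(\mf{v}_{\alpha}^{\mf{T}}(l))\circ(\mf{R}_{\alpha}^{\mf{T}})^{-}$, pure multiplier-algebra bookkeeping: since $\mf{i}_{\alpha}^{\mf{T}^{l}}\circ\mf{w}_{\alpha}^{\mf{T}}(l)$ is nondegenerate (Rmk. \ref{01081803}), Prp. \ref{01101756} gives $(\mf{R}_{\alpha}^{\mf{T}^{l}})^{-}\circ(\mf{i}_{\alpha}^{\mf{T}^{l}}\circ\mf{w}_{\alpha}^{\mf{T}}(l))^{-}=((\mf{R}_{\alpha}^{\mf{T}^{l}})^{-}\circ\mf{i}_{\alpha}^{\mf{T}^{l}}\circ\mf{w}_{\alpha}^{\mf{T}}(l))^{-}$; by \eqref{01081712} the inner composite is $\mf{R}_{\alpha}^{\mf{T}^{l}}\circ\mf{w}_{\alpha}^{\mf{T}}(l)$, which \eqref{01031145} identifies with $\ms{ad}(\mf{v}_{\alpha}^{\mf{T}}(l))\circ\mf{R}_{\alpha}^{\mf{T}}$, and the uniqueness in \eqref{01081715} gives $(\ms{ad}(\mf{v}_{\alpha}^{\mf{T}}(l))\circ\mf{R}_{\alpha}^{\mf{T}})^{-}=\ms{ad}(\mf{v}_{\alpha}^{\mf{T}}(l))\circ(\mf{R}_{\alpha}^{\mf{T}})^{-}$. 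A last application of \eqref{12191452} at $\mf{T}$ yields the intertwiner, and combining it with the density-operator computation closes \eqref{01141641b}, completing the verification that $\lr{\pf{T}_{\bullet},\ov{\ms{m}}_{\bullet}}{\mc{V}_{\bullet}}$ is a full integer equivariant stability on $\pf{V}_{\bullet}$.
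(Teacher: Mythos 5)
Your proposal is correct and follows essentially the same route as the paper's proof: the routine clauses are dispatched by Rmk.~\ref{12162113}, Lemma~\ref{12051430} and Lemma~\ref{12201040}, and the substantial clause \eqref{01141641b} is reduced via \eqref{12291851} to an intertwining identity obtained from \eqref{01031145}, \eqref{01031145c}, \eqref{01031145d}, the multiplier-extension lemmas and the trace invariance of $\omega_{(\cdot)}$. The only (cosmetic) difference is that you derive the intertwiner $\uppi_{\alpha}^{\mf{T}^{l}}\circ\ms{V}(\mc{N})_{\alpha}^{\mf{T}}(l)=\ms{ad}(\mf{v}_{\alpha}^{\mf{T}}(l))\circ\uppi_{\alpha}^{\mf{T}}$ directly at $l$ using Prp.~\ref{01101756}, whereas the paper computes $\Uppsi_{l}^{-}\circ\mf{j}_{l}$ through \eqref{01231919} at $l^{-1}$ using Cor.~\ref{01101930} and Rmk.~\ref{01101755}.
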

\begin{proof}
In this proof
let $\mc{G}$ be an arbitrary but fixed object of $\mf{G}(G,F,\uprho)$
and let us use \eqref{08040736} and convention \ref{05061835} 
by removing the index $\mc{G}$.
\eqref{14051532} follows since \eqref{12291851}. 
$\lr{\pf{T}_{\bullet}}{\ov{\ms{m}_{\bullet}}}$ is a full equivariant phase according 
to \eqref{12051444} Def. \ref{11251056},
while $\mc{V}_{\bullet}$ 
satisfies \eqref{01141641a} since construction,
$\pf{V}_{\bullet}$ satisfies \eqref{18061148} since Lemma \ref{12051430}.
Let us show that $\mc{V}_{\bullet}$ satisfies \eqref{01141641b}.
Let $\mf{T}\in\pf{V}_{\bullet}(\mc{G})$, $\alpha\in\ms{P}^{\mf{T}}$, $l\in H$, $h\in\{\un,l\}$ and set temporarily 
\begin{equation*}
\begin{aligned}
\rho_{h}
&=
\exp(-(\ms{D}_{\alpha}^{\mf{T}^{h}})^{2})
& 
\qquad
\upeta_{l^{-1}}
&
=
\upeta_{\alpha}^{\mf{T}}(l^{-1})
\\
\Uppsi_{h}
&=
\Uppsi_{(\ms{e}\circ\mf{t})(\mf{T}^{h},\alpha)}
&
\qquad
\mf{z}_{l^{-1}}
&
=
\mf{z}_{\alpha}^{\mf{T}^{l}}(l^{-1})
\\
\mf{R}_{h}
&=
\mf{R}_{\alpha}^{\mf{T}^{h}}(\mc{G})
&
\qquad
\mf{w}_{l^{-1}}
&
=
\mf{w}_{\alpha}^{\mf{T}^{l}}(l^{-1})
\\
\mf{j}_{h}
&=
\mf{j}_{\alpha}^{\mf{T}^{h}}
&
\qquad
\mf{v}_{l}
&=
\mf{v}_{\alpha}^{\mf{T}}(l)
\\
\uppi
&=
\uppi_{\alpha}^{\mf{T}}
&
\qquad
\mf{i}
&=
\mf{i}_{\alpha}^{\mf{T}}
\\
\mf{H}_{h}
&=
\mf{H}_{\alpha}^{\mf{T}^{h}}.
\end{aligned}
\end{equation*}
We remove the index $h$ whenever it equals $\un$.
Since \eqref{01141843} and Lemma \ref{12171928}\eqref{12171928st3}
we deduce 
\begin{equation}
\label{01231813}
\Uppsi_{l}^{-}=\omega_{\rho_{l}}\circ\mf{R}_{l}^{-}.
\end{equation}
Next 
$\mf{R}_{l}=
\ms{ad}(\mf{v}_{l})\circ\mf{R}\circ\mf{w}_{l^{-1}}$
by \eqref{09051254} and \eqref{01031145},
hence
\begin{equation}
\label{01231906}
\begin{aligned}
\mf{R}_{l}^{-}
&=
(\ms{ad}(\mf{v}_{l})\circ\mf{R})^{-}
\circ
(\mf{i}\circ\mf{w}_{l^{-1}})^{-}
\\
&=
\ms{ad}(\mf{v}_{l})\circ\mf{R}^{-}
\circ
(\mf{i}\circ\mf{w}_{l^{-1}})^{-},
\end{aligned}
\end{equation}
where the first equality follows since $\mf{w}_{l^{-1}}$
is surjective, $\mf{R}_{l}$ is nondegenerate and 
Cor. \ref{01101930}, while the second one follows since $\mf{R}$
and $\ms{ad}(\mf{v}_{l})\circ\mf{R}$ are nondegenerate and
Rmk. \ref{01101755}.
Next 
$\mf{R}_{l}^{-}\circ\mf{j}_{l}
=
\ms{ad}(\mf{v}_{l})\circ\mf{R}^{-}\circ\mf{j}\circ\upeta_{l^{-1}}
\circ\mf{z}_{l^{-1}}$
since \eqref{01231919} and \eqref{01231906},
thus 
$\mf{R}_{l}^{-}\circ\mf{j}_{l}
=
\ms{ad}(\mf{v}_{l})
\circ\uppi
\circ\upeta_{l^{-1}}
\circ\mf{z}_{l^{-1}}$
since \eqref{12191452}, 
which together \eqref{01231813} yields
\begin{equation}
\label{01231910}
\Uppsi_{l}^{-}\circ\mf{j}_{l}
=
\omega_{\rho_{l}}\circ\ms{ad}(\mf{v}_{l})
\circ\uppi\circ\upeta_{l^{-1}}
\circ\mf{z}_{l^{-1}}.
\end{equation}
Next $\rho_{l}=\ms{ad}(\mf{v}_{l})(\rho)$ since
\eqref{01031145d} and \eqref{11251436}, thus for all 
$a\in\mc{L}(\mf{H})$
\begin{equation*}
Tr_{\mf{H}_{l}}(\rho_{l}\ms{ad}(\mf{v}_{l})(a))
=
Tr_{\mf{H}_{l}}(\ms{ad}(\mf{v}_{l})(\rho a))
=
Tr_{\mf{H}}(\rho a),
\end{equation*}
in particular 
$Tr_{\mf{H}_{l}}(\rho_{l})
=
Tr_{\mf{H}}(\rho)$
so
$\omega_{\rho_{l}}\circ\ms{ad}(\mf{v}_{l})=\omega_{\rho}$,
therefore since \eqref{01231910} and \eqref{09051254}
\begin{equation*}
\begin{aligned}
\Uppsi_{l}^{-}\circ\mf{j}_{l}
&=
\omega_{\rho}\circ\uppi\circ\upeta_{l^{-1}}\circ\mf{z}_{l^{-1}}
\\
&=
\omega_{\rho}\circ\uppi\circ
(\mf{z}_{\alpha}^{\mf{T}}(l)
\circ
\upeta_{\alpha}^{\mf{T}}(l))^{-1},
\end{aligned}
\end{equation*}
and \eqref{01141641b} follows by \eqref{09051254b} and \eqref{12291851}.
\end{proof}
\section{The canonical nucleon-fragment doublet $\mc{T}_{\bullet}$}
\label{func}
In section \ref{func1} we construct the object part of a functor $\mf{G}_{\vartriangle}^{H}$
from $\ms{C}_{u}(H)$ to $\mf{G}(G,F,\uprho)$.
In section \ref{func2} we complete the construction of $\mf{G}_{\vartriangle}^{H}$,
and employ this result to establish in our main Thm. \ref{06242121}
the existence of a 
extended full integer $\ms{C}_{u}(H)-$equivariant stability on $\pf{V}_{\bullet}$,
and consequently of a nucleon-fragment doublet on $\ms{C}_{u}(H)$.
Finally as a consequence of the main theorem we exhibit
the resolution of the equivariant form of the universality claim.
In the present section \ref{func} we assume fixed two locally compact topological groups $G$ and $F$,
a group homomorphism $\uprho:F\to Aut_{\ms{Gr}}(G)$  such that the map $(g,f)\mapsto\uprho_{f}(g)$
on $G\times F$ at values in $G$, is continuous. Let $H$ denote $G\rtimes_{\uprho}F$.
\subsection{The object part $\mf{G}^{H}$}
\label{func1}
The main result in this section is 
Cor. \ref{11271221}
were we construct the object part of a functor
from $\ms{C}_{u}(H)$ to $\mf{G}(G,F,\uprho)$.
Auxiliary important results in this directions are 
Thm. \ref{09191747}  
and \eqref{10121958}, 
Thm. \ref{11260828}, Thm. \ref{main} and Thm. \ref{01141808}.
In the present section \ref{func1}
we fix a dynamical system $\mf{A}=\lr{\mc{A}}{H,\upsigma}$, 
while starting from Def. \ref{10291125} 
and except Thm. \ref{01141808},
$\mf{A}$ is inner and we fix a group morphism $\ms{v}:H\to\mc{U}(\mc{A})$ such that $\mf{A}$ is implemented by $\ms{v}$.
We let $\K^{X}$ denote $Mor_{\ms{set}}(X,\K)$ for any set $X$ and $\K\in\{\R,\C\}$.
By taking into account \eqref{fc}, Def. \ref{01262145} 
and notations after \eqref{prod} 
we can give the following
\begin{definition}
\label{08192224}
Let $\upomega\in\ms{E}_{\mc{A}}^{G}(\uptau)$.
We say that
$\lr{\pf{H}}{\mu,\upzeta,f}$
is a 
$\lr{\mf{A}}{\upomega}-$selfadjoint system
if
\begin{enumerate}
\item
$\pf{H}
\coloneqq
\lr{\mf{H},\uppi}{\Upomega}$ 
is a cyclic representation 
of $\mc{A}$ associated with $\upomega$;
\item
$\upmu\in\mc{H}(\ms{S}_{\ms{F}_{\upomega}}^{G})$;
\item
$\upzeta:\R^{X}\to\ms{S}_{\ms{F}_{\upomega}}^{G}$ 
is a continuous group morphism, where
$X$ is a nonempty set;
\item
$f$ is a $\mc{C}_{\sigma}(\C^{X})-$measurable map
such that there exists an
$A\in\mathscr{P}_{\omega}(X)$
satisfying 
\begin{equation*}
\ov{f(\mc{C}(A,\prod_{x\in A}supp(\ms{E}_{T_{x}})))}\subseteq\R.
\end{equation*}
\end{enumerate}
Here
$\ms{T}\doteq\{T_{x}\}_{x\in X}$ 
is such that 
$iT_{x}$ is the infinitesimal generator of the
strongly continuous one-parameter semigroup 
of unitarities 
$\ms{U}_{\pf{H}}\circ\upzeta\circ\mf{i}_{x}\up\R^{+}$
on $\mf{H}$,
where
$\mf{i}_{x}:\R\to\R^{X}$ is such that
$\Pr_{y}\circ\mf{i}_{x}=\delta_{x,y} Id_{\R}$, for all $x,y\in X$.
Set 
\begin{equation}
\label{01281854}
\ms{D}_{\pf{H}}^{\upzeta,f}(\mf{A})
\coloneqq
f(\ms{E}_{\mc{E}_{\ms{T}}}).
\end{equation}
We convein to remove $(\mf{A})$, whenever it is clear by the context which dynamical system is involved.
\end{definition}
\begin{remark}
\label{02010723}
Since $\R^{X}$ is an abelian group, 
we deduce by Prp. \ref{11270642}
that $\mc{E}_{\ms{T}}$
is a family of commuting Borel RI's in $\mf{H}$.
Then we can apply 
Thm. \ref{11241732}\eqref{32st3}
to $\mc{E}_{\ms{T}}$ and state that 
$\ms{D}_{\pf{H}}^{\upzeta,f}(\mf{A})$
is a well-defined selfadjoint operator in $\mf{H}$.
\end{remark}
\begin{definition}
[Pre thermal phases]
\label{30081041}
We say that 
$\mc{T}=\lr{h,\upxi,\beta_{c},I}{\ps{\upomega}}$ 
is a pre thermal phase associated with $\mf{A}$,
if 
$h\in H$,
$\upxi:\R\to G$ is a continuous group morphism,
$\beta_{c}\in\widetilde{\R}$, 
$I$ is a
neighbourhood of $\beta_{c}$,
and
\begin{equation}
\label{08291407}
\ps{\upomega}\in\prod_{\beta\in I}
\ms{E}_{\mc{A}}^{G}(\uptau)
\cap
\ms{K}_{\beta}^{\uptau^{(h,\upxi)}},
\end{equation}
such that for all $\alpha\in I$
\begin{equation}
\label{08301407}
\begin{cases}
\alpha\leq\beta_{c}
\Rightarrow
\ms{F}_{\ps{\upomega}_{\alpha}}
\supseteq
\ms{F}_{\ps{\upomega}_{\beta_{c}}},
\\
\beta_{c}<\alpha
\Rightarrow
\ms{F}_{\ps{\upomega}_{\beta_{c}}}
\cap
\complement
\ms{F}_{\ps{\upomega}_{\alpha}}
\neq\varnothing.
\end{cases}
\end{equation}
\end{definition}
\begin{definition}
\label{08311553}
Let
\begin{itemize}
\item
$\mc{T}=\lr{h,\upxi,\beta_{c},I}{\ps{\upomega}}$ be a 
pre thermal phase associated with $\mf{A}$;
\item
$\ps{\upmu}$ be a Haar system associated with $\ps{\upomega}$
and $\mf{A}$;
\item
$\pf{H}:I\to Rep_{c}(\mc{A})$ 
be such that 
$\pf{H}_{\beta}=
\lr{\mf{H}_{\beta},\uppi_{\beta}}{\Upomega_{\beta}}$ 
is a cyclic representation of $\mc{A}$
associated with $\ps{\upomega}_{\beta}$, for all $\beta\in I$;
\item
$\upzeta:\R^{X}\to\ms{S}_{\ms{F}_{\ps{\upomega}_{\beta_{c}}}}^{G}$ 
is a continuous group morphism, where
$X$ is a nonempty set;
\item
$\Upgamma\in
\prod_{\alpha\in I\cap]-\infty,\beta_{c}]}
\mc{L}(\mf{H}_{\alpha})$;
\item
$l\in H$,
$\beta\in I$
and
$\alpha\in I\cap ]-\infty,\beta_{c}]$.
\end{itemize}
Define
\begin{itemize}
\item
$\upzeta^{l}\coloneqq\ms{ad}(l)\circ\upzeta$
\item
$
\ps{\upmu}^{l}:I\times H
\ni
(\gamma,h)
\mapsto
\ps{\upmu}_{(\gamma,h)}^{l}
\coloneqq
\ps{\upmu}_{(\gamma,h\cdot_{\uprho} l)}$.
\end{itemize}
If there exist  
$A\in\mathscr{P}_{\omega}(X)$
and a
$\mc{C}_{\sigma}(\C^{X})-$measurable map $f$ 
satisfying 
\begin{equation}
\label{01291801}
\ov{f(\mc{C}(A,\prod_{x\in A}
supp(\ms{E}_{T_{x}^{\alpha}})))}
\subseteq\R,
\end{equation}
where
$iT_{y}^{\alpha}$ is the infinitesimal generator of 
the semigroup 
$\ms{U}_{\pf{H}_{\alpha}}\circ\upzeta\circ\mf{i}_{y}\up\R^{+}$
on $\mf{H}_{\alpha}$, for all $y\in X$,
we can set
\begin{itemize}
\item 
$\ms{D}_{\pf{H},\alpha}^{\upzeta,f}(\mf{A})
\coloneqq
\ms{D}_{\pf{H}_{\alpha}}
^{\upzeta,f}(\mf{A})$,
\item
$\ps{R}_{\pf{H},\Upgamma,\alpha}^{\ps{\upmu},\upzeta,f}(\mf{A})
\coloneqq
(\tilde{\pf{R}}_{\pf{H},\alpha}^{\ps{\upmu}}(\mf{A}),
\ms{D}_{\pf{H},\alpha}^{\upzeta,f}(\mf{A}),
\Upgamma_{\alpha})$.
\end{itemize}
If $\mf{A}$ is inner implemented by $\ms{v}$ we can define
\begin{equation*}
\Upgamma_{\pf{H},\ms{v}}^{l}:
I\cap]-\infty,\beta_{c}]
\ni\delta\mapsto
\Upgamma_{\pf{H},\ms{v},\delta}^{l}
\coloneqq
ad(\uppi_{\delta}(\ms{v}(l)))(\Upgamma_{\delta}),
\end{equation*}
and if in addition 
\eqref{01291801} holds 
we can set
\begin{enumerate}
\item 
$\ms{D}_{\pf{H},\ms{v},\alpha,l}^{\upzeta,f}(\mf{A})
\coloneqq
\ms{D}_{\pf{H}_{\alpha}^{(\ms{v},l)}}
^{\upzeta^{l},f}(\mf{A})$,
\label{10091152d}
\item
$\ps{R}_{\pf{H},\ms{v},\Upgamma,\alpha,l}^{\ps{\upmu},\upzeta,f}
(\mf{A})
\coloneqq
(\tilde{\pf{R}}_{\pf{H},\ms{v},\alpha,l}^{\ps{\upmu}}(\mf{A}),
\ms{D}_{\pf{H},\ms{v},\alpha,l}^{\upzeta,f}(\mf{A}),
\Upgamma_{\pf{H},\ms{v},\alpha}^{l})$.
\label{10091152e}
\end{enumerate}
We convein to remove $\mf{A}$ whenever it is clear 
the dynamical system involved,
in addition
to remove both the indices $\ms{v}$ and $l$ 
whenever $l$ equals the unit.
\end{definition}
The next result shows that Def. \ref{08311553} is well-set and the equivariance of the 
operator $\ms{D}_{\pf{H},\ms{v},\alpha,l}^{\upzeta,f}$ under action of $H$,
a step toward the construction in Cor. \ref{11271221}
of the object part of a functor
from $\ms{C}_{u}(H)$ to $\mf{G}(G,F,\uprho)$.
Here we use the covariance of the functional
calculus relative to a commuting set of resolutions of the identity
stated in 
Thm. \ref{11252000}. 
\begin{proposition}
\label{10111026}
Def. \ref{08311553}\eqref{10091152d}\,\&\,\eqref{10091152e} are well-defined and we have
\begin{equation}
\label{10121958} 
\ms{D}_{\pf{H},\ms{v},\alpha,l}^{\upzeta,f}
=
\uppi_{\alpha}(\ms{v}(l))\,
\ms{D}_{\pf{H},\alpha}^{\upzeta,f}\,
\uppi_{\alpha}(\ms{v}(l^{-1})).
\end{equation}
\end{proposition}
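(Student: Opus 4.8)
The plan is to unwind the definitions in Def. \ref{08311553} and reduce the claim to a direct application of Thm. \ref{11252000}. First I would address well-definedness: Def. \ref{08311553}\eqref{10091152d}\,\&\,\eqref{10091152e} require that $\lr{\pf{H}_{\alpha}^{(\ms{v},l)}}{\ps{\upmu}_{(\alpha,l)},\upzeta^{l},f}$ is a genuine $\lr{\mf{A}}{\upsigma^{\ast}(l)(\ps{\upomega}_{\alpha})}-$selfadjoint system in the sense of Def. \ref{08192224}, so that $\ms{D}_{\pf{H}_{\alpha}^{(\ms{v},l)}}^{\upzeta^{l},f}$ exists by Rmk. \ref{02010723}. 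By Cor. \ref{09131225}, $\pf{H}_{\alpha}^{(\ms{v},l)}$ is a cyclic representation of $\mc{A}$ associated with $\upsigma^{\ast}(l)(\ps{\upomega}_{\alpha})$, and $\upzeta^{l}=\ms{ad}(l)\circ\upzeta$ maps $\R^{X}$ continuously into $\ms{S}_{\ms{F}_{\upsigma^{\ast}(l)(\ps{\upomega}_{\alpha})}}^{G}$ since Lemma \ref{09102032}\eqref{09101945pre} gives $\ms{ad}(l)(\ms{S}_{\ms{F}_{\ps{\upomega}_{\alpha}}}^{G})=\ms{S}_{\ms{F}_{\upsigma^{\ast}(l)(\ps{\upomega}_{\alpha})}}^{G}$; the measurability and reality conditions on $f$ are inherited from the hypothesis \eqref{01291801}, provided the relevant spectral supports are preserved, which I will get as a byproduct of the computation below.

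The crucial identification is between the family of scalar type spectral operators generating the two operators. Let $\ms{T}=\{T_{x}^{\alpha}\}_{x\in X}$ be the family attached to $\lr{\pf{H}_{\alpha}}{\upzeta,f}$, where $iT_{x}^{\alpha}$ generates $\ms{U}_{\pf{H}_{\alpha}}\circ\upzeta\circ\mf{i}_{x}\up\R^{+}$, and let $\ms{T}'=\{S_{x}\}_{x\in X}$ be the family attached to $\lr{\pf{H}_{\alpha}^{(\ms{v},l)}}{\upzeta^{l},f}$, where $iS_{x}$ generates $\ms{U}_{\pf{H}_{\alpha}^{(\ms{v},l)}}\circ\upzeta^{l}\circ\mf{i}_{x}\up\R^{+}$. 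Setting $U=\uppi_{\alpha}(\ms{v}(l))$, the key step is to show
\begin{equation*}
S_{x}=U\,T_{x}^{\alpha}\,U^{-1},\qquad\forall x\in X,
\end{equation*}
i.e. $\ms{T}'=\ms{T}(U)$ in the notation of Thm. \ref{11252000}. This I would obtain from the one-parameter semigroup level: by Rmk. \ref{09151937}, for $s\in\ms{S}_{\ms{F}_{\upsigma^{\ast}(l)(\ps{\upomega}_{\alpha})}}^{G}$ one has $\ms{U}_{\pf{H}_{\alpha}^{(\ms{v},l)}}(s)=\uppi_{\alpha}(\ms{v}(l))\,\ms{U}_{\pf{H}_{\alpha}}(\ms{ad}(l^{-1})s)\,\uppi_{\alpha}(\ms{v}(l^{-1}))$. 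Evaluating at $s=\upzeta^{l}(\mf{i}_{x}(t))=\ms{ad}(l)(\upzeta(\mf{i}_{x}(t)))$ gives $\ms{ad}(l^{-1})s=\upzeta(\mf{i}_{x}(t))$, hence
\begin{equation*}
\ms{U}_{\pf{H}_{\alpha}^{(\ms{v},l)}}(\upzeta^{l}(\mf{i}_{x}(t)))
=
U\,\bigl(\ms{U}_{\pf{H}_{\alpha}}\circ\upzeta\circ\mf{i}_{x}\bigr)(t)\,U^{-1},
\end{equation*}
so the two unitary semigroups are conjugate by the fixed unitary $U$. Differentiating, their generators satisfy $iS_{x}=U(iT_{x}^{\alpha})U^{-1}$, which is exactly $S_{x}=UT_{x}^{\alpha}U^{-1}$; since conjugation by a unitary preserves scalar type spectral operators and their supports by Cor. \ref{10301830}, the condition \eqref{01291801} transfers and the reality hypothesis on $f$ needed for selfadjointness holds for $\ms{T}'$ as well, completing well-definedness.

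With $\ms{T}'=\ms{T}(U)$ established, the final formula \eqref{10121958} is immediate: by \eqref{01281854} and Def. \ref{08311553}\eqref{10091152d},
\begin{equation*}
\ms{D}_{\pf{H},\ms{v},\alpha,l}^{\upzeta,f}
=
f(\ms{E}_{\mc{E}_{\ms{T}'}})
=
f(\ms{E}_{\mc{E}_{\ms{T}(U)}}),
\end{equation*}
and Thm. \ref{11252000}\eqref{11252000st2} with the unitary $U=\uppi_{\alpha}(\ms{v}(l))$ gives $f(\ms{E}_{\mc{E}_{\ms{T}(U)}})=Uf(\ms{E}_{\mc{E}_{\ms{T}}})U^{-1}=\uppi_{\alpha}(\ms{v}(l))\,\ms{D}_{\pf{H},\alpha}^{\upzeta,f}\,\uppi_{\alpha}(\ms{v}(l^{-1}))$, which is \eqref{10121958}. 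I expect the main obstacle to be the semigroup-to-generator passage: one must verify that $\ms{U}_{\pf{H}_{\alpha}^{(\ms{v},l)}}\circ\upzeta^{l}\circ\mf{i}_{x}$ is genuinely strongly continuous with generator the conjugate $U(iT_{x}^{\alpha})U^{-1}$ — this uses that $\ms{ad}(U)$ is strongly continuous and that conjugation of a strongly continuous unitary semigroup by a fixed unitary conjugates its infinitesimal generator — together with the bookkeeping that the Haar-system relation \eqref{10041104} makes $\ps{\upmu}_{(\alpha,l)}=\ms{ad}(l)(\ps{\upmu}_{(\alpha,\un)})$ compatible with the covariant representation $\pf{H}_{\alpha}^{(\ms{v},l)}$, so that the crossed-product factor $\tilde{\pf{R}}_{\pf{H},\ms{v},\alpha,l}^{\ps{\upmu}}$ appearing in \eqref{10091152e} is the correct partner for $\ms{D}_{\pf{H},\ms{v},\alpha,l}^{\upzeta,f}$.
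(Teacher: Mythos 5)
Your proposal is correct and follows essentially the same route as the paper: establish the conjugation identity $\ms{U}_{\pf{H},\ms{v},\alpha,l}\circ\upzeta^{l}=\ms{ad}(\uppi_{\alpha}(\ms{v}(l)))\circ\ms{U}_{\pf{H},\alpha}\circ\upzeta$ via \eqref{09151818} (equivalently Rmk. \ref{09151937}), pass to the infinitesimal generators to get $T_{x}^{\alpha,l}=\uppi_{\alpha}(\ms{v}(l))\,T_{x}^{\alpha}\,\uppi_{\alpha}(\ms{v}(l^{-1}))$, transfer the reality condition \eqref{01291801} through Cor. \ref{10301830}\eqref{10301830st2} and \eqref{18225}, and conclude with Thm. \ref{11252000}\eqref{11252000st2}. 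The only detail worth making explicit is that $\upzeta$ lands in $\ms{S}_{\ms{F}_{\ps{\upomega}_{\beta_{c}}}}^{G}$, so one needs the inclusion $\ms{S}_{\ms{F}_{\ps{\upomega}_{\beta_{c}}^{l}}}^{G}\subseteq\ms{S}_{\ms{F}_{\ps{\upomega}_{\alpha}^{l}}}^{G}$ coming from \eqref{08301407} and Lemma \ref{09102032}\eqref{09101945pre} before $\ms{U}_{\pf{H},\ms{v},\alpha,l}\circ\upzeta^{l}$ is even well-set, which the paper states and your argument uses only implicitly.
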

\begin{proof}
Since \eqref{08301407}, 
Lemma \ref{09102032}\eqref{09101945pre} 
and the fact that any bijective map on a set $Y$ induces an isomorphism of 
the order by inclusion on the power set of $Y$, we have
$\ms{S}_{\ms{F}_{\ps{\upomega}_{\beta_{c}}^{l}}}^{G}\subseteq\ms{S}_{\ms{F}_{\ps{\upomega}_{\alpha}^{l}}}^{G}$,
so 
\begin{equation*}
\upzeta^{l}:\R^{X}\to
\ms{S}_{\ms{F}_{\ps{\upomega}_{\alpha}^{l}}}^{G}.
\end{equation*}
Therefore
$\ms{U}_{\pf{H},\ms{v},\alpha,l}\circ\upzeta^{l}$ is well-set
and
Cor. \ref{09131225}
yields
\begin{equation}
\label{01291726}
\ms{U}_{\pf{H},\ms{v},\alpha,l}\circ\upzeta^{l}
=
\ms{ad}(\uppi_{\alpha}(\ms{v}(l)))
\circ
\ms{U}_{\pf{H},\alpha}\circ\upzeta.
\end{equation}
Hence letting $T_{x}^{\alpha,l}$ be such that $iT_{x}^{\alpha,l}$ is the infinitesimal generator of the semigroup 
$\ms{U}_{\pf{H},\ms{v},\alpha,l}\circ\upzeta^{l}\circ\mf{i}_{x}\up\R^{+}$
we obtain for all $x\in X$
\begin{equation}
\label{10121958pre} 
T_{x}^{\alpha,l}
=
\uppi_{\alpha}(\ms{v}(l))\,
T_{x}^{\alpha}\,
\uppi_{\alpha}(\ms{v}(l^{-1})),
\end{equation}
thus by 
Cor. \ref{10301830}\eqref{10301830st2}, 
\eqref{18225} 
and \eqref{01291801}, 
\begin{equation}
\label{01310756}
\ov{f(\mc{C}(A,\prod_{x\in A}
supp(\ms{E}_{T_{x}^{\alpha,l}})))}
\subseteq\R.
\end{equation}
Therefore
according to Def. \ref{08192224},
the objects in Def. \ref{08311553}\eqref{10091152d}\,\&\,\eqref{10091152e} are well-defined.
Finally \eqref{10121958} follows since \eqref{10121958pre} and 
Thm. \ref{11252000}\eqref{11252000st2}.
\end{proof}
\begin{corollary}
\label{09101654}
Let
\begin{enumerate}
\item
$\mc{T}=\lr{h,\upxi,\beta_{c},I}{\ps{\upomega}}$ be a 
pre thermal phase associated with $\mf{A}$;
\item
$\ps{\upmu}$ be a Haar system associated with $\ps{\upomega}$
and $\mf{A}$;
\item
$\pf{H}:I\to Rep_{c}(\mc{A})$ be such that 
$\pf{H}_{\beta}=
\lr{\mf{H}_{\beta},\uppi_{\beta}}{\Upomega_{\beta}}$ 
is a cyclic representation of $\mc{A}$
associated with $\ps{\upomega}_{\beta}$, for all $\beta\in I$;
\item
$\upzeta:\R^{X}\to\ms{S}_{\ms{F}_{\ps{\upomega}_{\beta_{c}}}}^{G}$ 
is a continuous group morphism, where
$X$ is a nonempty set;
\item
$\Upgamma\in
\prod_{\alpha\in I\cap]-\infty,\beta_{c}]}
\mc{L}(\mf{H}_{\alpha})$;
\item
$l,u\in H$,
$\beta\in I$
and
$\alpha\in]-\infty,\beta_{c}]\cap I$.
\end{enumerate}
Then
\begin{enumerate}
\item
$\ms{F}_{\upsigma^{\ast}(l)(\ps{\upomega}_{\beta})}
=
\ms{ad}(\Pr_{2}(l))(\ms{F}_{\ps{\upomega}_{\beta}})$;
\label{09101724}
\item
$\ms{S}_{\ms{F}_{\upsigma^{\ast}(l)(\ps{\upomega}_{\beta})}}^{G}
=
\ms{ad}(l)(\ms{S}_{\ms{F}_{\ps{\upomega}_{\beta}}}^{G})$;
\label{09101945}
\item
$\upzeta^{l}:\R^{X}\to
\ms{S}_{\ms{F}_{\ps{\upomega}_{\beta_{c}}^{l}}}^{G}$
is a continuous group morphism;
\label{10131910}
\item
$\ps{\upmu}^{l}$ is a Haar system associated with 
$\ps{\upomega}^{l}$;
\label{10291000}
\item
$\lr{l\cdot_{\uprho}h,\upxi,\beta_{c},I}
{\ps{\upomega}^{l}}$ 
is a pre thermal phase asociated to $\mf{A}$;
\label{09101941}
\item
if
$\mf{A}$ is inner implemented by $\ms{v}$
then
$\pf{R}_{\pf{H}^{(\ms{v},l)},\ms{v},\alpha,u}
^{\ps{\upmu}^{l}}
=
\pf{R}_{\pf{H},\ms{v},\alpha,u\cdot_{\uprho}l}^{\ps{\upmu}}$
moreover if 
there exists an
$A\in\mathscr{P}_{\omega}(X)$
and a $\mc{C}_{\sigma}(\C^{X})-$measurable map $f$
satisfying \eqref{01291801},
then
\begin{equation}
\label{10131207p}
\ms{D}_{\pf{H}^{(\ms{v},l)},\ms{v},\alpha,u}^{\upzeta^{l},f} 
=
\ms{D}_{\pf{H},\ms{v},\alpha,(u\cdot_{\uprho}l)}^{\upzeta,f}
\end{equation}
in particular
\begin{equation}
\label{10131207}
\ps{R}_{\pf{H}^{(\ms{v},l)},\Upgamma_{\pf{H},\ms{v}}^{l},\alpha}
^{\ps{\upmu}^{l},\upzeta^{l},f}
=
\ps{R}_{\pf{H},\ms{v},\Upgamma,\alpha,l}^{\ps{\upmu},\upzeta,f}.
\end{equation}
\label{10131207st}
\end{enumerate}
\end{corollary}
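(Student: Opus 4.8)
The plan is to prove the seven statements essentially in the order listed, treating them as a sequence where earlier items feed the later ones. Statements \eqref{09101724} and \eqref{09101945} are immediate: they are nothing but Lemma \ref{09102032}\eqref{09101724pre}\,\&\,\eqref{09101945pre} applied to the state $\ps{\upomega}_{\beta}$, which lies in $\ms{E}_{\mc{A}}^{G}(\uptau)$ by the defining property \eqref{08291407} of a pre thermal phase. For st.\eqref{10131910}, I would combine st.\eqref{09101945} at $\beta=\beta_{c}$ with the observation that $\upzeta^{l}=\ms{ad}(l)\circ\upzeta$ is a composition of the continuous group morphism $\upzeta$ with the topological-group isomorphism $\ms{ad}(l)$, so it maps $\R^{X}$ continuously into $\ms{ad}(l)(\ms{S}_{\ms{F}_{\ps{\upomega}_{\beta_{c}}}}^{G})=\ms{S}_{\ms{F}_{\ps{\upomega}_{\beta_{c}}^{l}}}^{G}$.

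Next, st.\eqref{10291000} that $\ps{\upmu}^{l}$ is a Haar system associated to $\ps{\upomega}^{l}$ is a direct verification against Def. \ref{09181000}: one checks each component $\ps{\upmu}^{l}_{(\gamma,u)}=\ps{\upmu}_{(\gamma,u\cdot_{\uprho}l)}$ lies in the correct $\mc{H}(\ms{S}_{\ms{F}_{\upsigma^{\ast}(u)(\ps{\upomega}^{l}_{\gamma})}}^{G})$ using $\upsigma^{\ast}(u)\circ\ps{\upomega}^{l}=\upsigma^{\ast}(u\cdot_{\uprho}l)\circ\ps{\upomega}$, and that the covariance relation \eqref{10041104} is preserved, which reduces to the group-morphism property of $\ms{ad}$. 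This is exactly the content already extracted in Lemma \ref{09171541}\eqref{10041833c}, which I would cite. For st.\eqref{09101941}, I would show $\lr{l\cdot_{\uprho}h,\upxi,\beta_{c},I}{\ps{\upomega}^{l}}$ satisfies Def. \ref{30081041}: membership in $\ms{E}_{\mc{A}}^{G}(\uptau)$ is preserved since $j_{1}(G)$ is normal (as in the first sentence of Cor. \ref{09131225}), the KMS condition \eqref{08291407} transforms correctly by Cor. \ref{09081449}, which gives $\upsigma^{\ast}(l)\bigl(\ms{K}_{\beta}^{\uptau^{(h,\upxi)}}\bigr)=\ms{K}_{\beta}^{\uptau^{(l\cdot_{\uprho}h,\upxi)}}$, and the symmetry condition \eqref{08301407} is invariant under the isomorphism $\ms{ad}(\Pr_{2}(l))$ applied via st.\eqref{09101724}.

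The substantive item is st.\eqref{10131207st}. The equality of covariant representations $\pf{R}_{\pf{H}^{(\ms{v},l)},\ms{v},\alpha,u}^{\ps{\upmu}^{l}}=\pf{R}_{\pf{H},\ms{v},\alpha,u\cdot_{\uprho}l}^{\ps{\upmu}}$ should follow by unwinding Def. \ref{11211142}: the key points are $\pf{H}_{\alpha}^{(\ms{v},l)(\ms{v},u)}=\pf{H}_{\alpha}^{(\ms{v},u\cdot_{\uprho}l)}$, obtained from Def. \ref{09141932} together with the fact that $\ms{v}$ is a group morphism so $\ms{v}(u)\ms{v}(l)=\ms{v}(u\cdot_{\uprho}l)$, and the matching of Haar-system indices $\ps{\upmu}^{l}_{(\alpha,u)}=\ps{\upmu}_{(\alpha,u\cdot_{\uprho}l)}$. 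For \eqref{10131207p} I would first establish $\ms{D}_{\pf{H}^{(\ms{v},l)},\ms{v},\alpha,u}^{\upzeta^{l},f}=\ms{D}_{\pf{H},\ms{v},\alpha,(u\cdot_{\uprho}l)}^{\upzeta,f}$ by comparing the two operators through \eqref{10121958} of Prp. \ref{10111026}. Applying \eqref{10121958} to the left-hand side (built from $\pf{H}^{(\ms{v},l)}$ and shift $u$) yields conjugation of $\ms{D}_{\pf{H}^{(\ms{v},l)},\alpha}^{\upzeta^{l},f}$ by $\uppi_{\alpha}(\ms{v}(u))$; but $\ms{D}_{\pf{H}^{(\ms{v},l)},\alpha}^{\upzeta^{l},f}=\ms{D}_{\pf{H}_{\alpha}^{(\ms{v},l)}}^{\upzeta^{l},f}=\ms{D}_{\pf{H},\ms{v},\alpha,l}^{\upzeta,f}$ by Def. \ref{08311553}\eqref{10091152d}, which by \eqref{10121958} again equals conjugation of $\ms{D}_{\pf{H},\alpha}^{\upzeta,f}$ by $\uppi_{\alpha}(\ms{v}(l))$. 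Composing the two conjugations and using the group-morphism property of $\ms{v}$ gives conjugation by $\uppi_{\alpha}(\ms{v}(u\cdot_{\uprho}l))$, which is precisely $\ms{D}_{\pf{H},\ms{v},\alpha,(u\cdot_{\uprho}l)}^{\upzeta,f}$. I expect the main obstacle to be the careful bookkeeping in this composition, in particular checking that the well-definedness hypothesis \eqref{01291801} transfers correctly (the spectra of the relevant $T_{x}$ operators are preserved under unitary conjugation by Cor. \ref{10301830}\eqref{10301830st2}, as already used in the proof of Prp. \ref{10111026}), and then assembling \eqref{10131207} from \eqref{10131207p} by matching the grading components $\Upgamma_{\pf{H}^{(\ms{v},l)},\ms{v},\alpha}^{u}$ against $\Upgamma_{\pf{H},\ms{v},\alpha}^{u\cdot_{\uprho}l}$ through their defining formula in Def. \ref{08311553}, again via the morphism property of $\ms{v}$.
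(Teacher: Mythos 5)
Your proposal is correct and for items \eqref{09101724}--\eqref{09101941} and the first equality of st.\eqref{10131207st} it coincides with the paper's own argument (Lemma \ref{09102032}, Lemma \ref{09171541}\eqref{10041833c}, Cor. \ref{09081449}, and the identities $(\ps{\upmu}^{l})^{u}=\ps{\upmu}^{u\cdot_{\uprho}l}$, $(\pf{H}^{(\ms{v},l)})^{(\ms{v},u)}=\pf{H}^{(\ms{v},u\cdot_{\uprho}l)}$). The one place where you diverge is \eqref{10131207p}: you derive it by applying the conjugation formula \eqref{10121958} twice and composing the two unitary conjugations via the morphism property of $\ms{v}$, whereas the paper never conjugates at all — it records the cocycle identities \eqref{05201352}, including $(\upzeta^{l})^{u}=\upzeta^{u\cdot_{\uprho}l}$, and then both sides of \eqref{10131207p} are \emph{by definition} the operator $\ms{D}_{\pf{H}_{\alpha}^{(\ms{v},u\cdot_{\uprho}l)}}^{\upzeta^{u\cdot_{\uprho}l},f}$ built from literally the same data, so the equality is definitional once well-definedness (sts. \eqref{10131910}, \eqref{10291000}, \eqref{09101941}, Rmk. \ref{09151131} and \eqref{01310756}) is in hand. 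Your route is valid but invokes the heavier machinery of Prp. \ref{10111026} (hence the equivariance of the functional calculus) where none is needed; the paper's route buys a purely bookkeeping proof. One small correction on your last step: in \eqref{10131207} the grading of the left-hand side is already specified to be $\Upgamma_{\pf{H},\ms{v}}^{l}$, which is identical to the grading $\Upgamma_{\pf{H},\ms{v},\alpha}^{l}$ of the right-hand side, so no matching of $\Upgamma_{\pf{H}^{(\ms{v},l)},\ms{v},\alpha}^{u}$ against $\Upgamma_{\pf{H},\ms{v},\alpha}^{u\cdot_{\uprho}l}$ is required there; \eqref{10131207} is just the case $u=\un$ of the two preceding equalities together with the convention of dropping the indices $\ms{v}$ and $u$ when $u$ is the unit.
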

\begin{proof}
St. \eqref{09101724}\,\&\,\eqref{09101945} follow by  $\ps{\upomega}_{\beta}\in\ms{E}_{\mc{A}}^{G}(\uptau)$, 
and by Lemma \ref{09102032}.
St.\eqref{10131910} follows by st.\eqref{09101945}, while st.\eqref{10291000} since 
Lemma \ref{09171541}\eqref{10041833c}.
St.\eqref{09101941} follows since 
Cor. \ref{09081449},
st.\eqref{09101724} and since any bijective map on a set induces an isomorphism 
of the order by inclusion on its power set.
$\pf{R}_{\pf{H}^{(\ms{v},l)},\ms{v},\alpha,u}^{\ps{\upmu}^{l}}$ is well-set since st.\eqref{10291000},
$\ms{D}_{\pf{H}^{(\ms{v},l)},\ms{v},\alpha,u}^{\upzeta^{l},f}$
is well-set since st. (\ref{10131910},\ref{10291000},\ref{09101941}), 
Rmk. \ref{09151131} 
and \eqref{01310756},
thus \eqref{10131207p} and the first equality in st.\eqref{10131207st} follow since
\begin{equation}
\label{05201352}
(\upzeta^{l})^{u}=\upzeta^{u\cdot_{\uprho}l}
\quad
(\ps{\upmu}^{l})^{u}=\ps{\upmu}^{u\cdot_{\uprho}l}
\quad
(\pf{H}^{(\ms{v},l)})^{(\ms{v},u)}
=
\pf{H}^{(\ms{v},u\cdot_{\uprho}l)}.
\end{equation}
\end{proof}
\begin{definition}
\label{10291045}
If 
$\mc{T}=\lr{h,\upxi,\beta_{c},I}{\ps{\upomega}}$ 
is a pre thermal phase associated with $\mf{A}$
and $l\in H$,
then we define
$\mc{T}^{l}\coloneqq
\lr{l\cdot_{\uprho}h,\upxi,\beta_{c},I}
{\ps{\upomega}^{l}}$ 
which is a pre thermal phase associated with $\mf{A}$ 
according to Cor. \ref{09101654}.
\end{definition}
\begin{definition}
\label{08261134}
Let
$\pf{T}_{\mf{A}}$ 
be the set of the
$\lr{\mc{T},\ps{\upmu},\pf{H}}{\upzeta,f,\Upgamma}$
such that
\begin{enumerate}
\item
$\mc{T}=\lr{h,\upxi,\beta_{c},I}{\ps{\upomega}}$ 
is a 
pre thermal phase associated with $\mf{A}$;
\label{08261134I}
\item
$\ps{\upmu}$ is a Haar system associated with $\ps{\upomega}$
and $\mf{A}$;
\label{08261134III}
\item
$\pf{H}:I\to Rep_{c}(\mc{A})$ is such that 
$\pf{H}_{\beta}=
\lr{\mf{H}_{\beta},\uppi_{\beta}}{\Upomega_{\beta}}$ 
is a cyclic representation of $\mc{A}$
associated with $\ps{\upomega}_{\beta}$, for all $\beta\in I$;
\label{08261134IV}
\item
$\upzeta:
\R^{X}\to\ms{S}_{\ms{F}_{\ps{\upomega}_{\beta_{c}}}}^{G}$ 
is a continuous group morphism, where
$X$ is a nonempty set;
\label{08261134V}
\item
$\Upgamma\in
\prod_{\alpha\in I\cap]-\infty,\beta_{c}]}
\mc{L}(\mf{H}_{\alpha})$;
\label{08261134VII}
\item
$f$ is a $\mc{C}_{\sigma}(\C^{X})-$measurable map
such that there exists an
$A\in\mathscr{P}_{\omega}(X)$ satisfying
\eqref{01291801}
for all $\alpha\in I\cap]-\infty,\beta_{c}]$, 
\label{08261134VI}
\item
$\ps{R}(\mf{T},\alpha)
\coloneqq
\ps{R}_{\pf{H},\Upgamma,\alpha}^{\ps{\upmu},\upzeta,f}
$ 
is an even $\theta-$summable $K-$cycle
for all $\alpha\in I\cap]-\infty,\beta_{c}]$. 
\label{08261134VIII}
\end{enumerate}
\end{definition}
\begin{remark}
\label{02041855}
We deduce since 
\cite[$IV.2.\gamma$  Def. $11$ and $IV.8$ Def. $1$]{connes}
that Def. \ref{08261134}\eqref{08261134VIII}
is equivalent to the following two requests,
\begin{enumerate}
\item
\eqref{10311100} for $h=Id$;
\item
for all $\alpha\in I\cap]-\infty,\beta_{c}]$,
$\Upgamma_{\alpha}$
is a unitary, selfadjoint operator on $\mf{H}_{\alpha}$
(a $\Z_{2}-$grading on $\mf{H}_{\alpha}$),
such 
that for all
$a\in\ms{B}_{\ps{\upmu}}^{\ps{\upomega},\alpha,+}$
\begin{enumerate}
\item
$
[\Upgamma_{\alpha},
\tilde{\mf{R}}_{\pf{H},\alpha}^{\ps{\upmu}}(a)
]=\ze$,
\item
$
\Upgamma_{\alpha}
\ms{D}_{\pf{H},\alpha}^{\upzeta,f}
\Upgamma_{\alpha}
=
-
\ms{D}_{\pf{H},\alpha}^{\upzeta,f}$.
\end{enumerate}
\end{enumerate}
\end{remark}
\begin{definition}
\label{05121538}
Let 
$\mf{T}=\lr{\mc{T},\ps{\upmu},\pf{H}}{\upzeta,f,\Upgamma}
\in\pf{T}_{\mf{A}}$,
where
$\mc{T}=\lr{h,\upxi,\beta_{c},I}{\ps{\upomega}}$, 
define
$\ms{P}_{\mf{A}}^{\mf{T}}\coloneqq I\cap]-\infty,\beta_{c}]$
and
for all $\alpha\in\ms{P}_{\mf{A}}^{\mf{T}}$
\begin{equation*}
\ms{K}_{\alpha}^{\mf{T}}(\mf{A})
\coloneqq
\ms{K}_{0}(\ms{B}_{\ps{\upmu}}^{\ps{\upomega},\alpha,+}).
\end{equation*}
Moreover set
\begin{equation*}
\mc{K}^{\mf{A}}
\coloneqq
\bigcup_{\mf{Q}\in\pf{T}_{\mf{A}}}
\bigcup_{\alpha\in\ms{P}_{\mf{A}}^{\mf{Q}}}
\ms{K}_{\alpha}^{\mf{Q}}(\mf{A}),
\end{equation*}
and
\begin{equation*}
\ov{\mc{K}}^{\mf{A}}
\coloneqq
\bigcup_{\mf{Q}\in\pf{T}_{\mf{A}}}
\prod_{\alpha\in\ms{P}_{\mf{A}}^{\mf{Q}}}
\ms{K}_{\alpha}^{\mf{Q}}(\mf{A}).
\end{equation*}
Finally set $\mf{c}^{\mf{A}}(l)$ as the map defined on 
$\mc{K}^{\mf{A}}$ 
such that
for any 
$\mf{T}\in\pf{T}_{\mf{A}}$
and
$\alpha\in\ms{P}_{\mf{A}}^{\mf{T}}$
\begin{equation*}
\mf{c}^{\mf{A}}(l)
\up
\ms{K}_{\alpha}^{\mf{T}}(\mf{A})
\coloneqq
\bigr((\ps{\upsigma}^{(\ps{\upomega}_{\alpha},l)})^{+}\bigl)_{\ast}.
\end{equation*}
\end{definition}
\begin{convention}
If 
$\ps{\upomega}:A\to\ms{E}_{\mc{A}}^{G}(\uptau)$ 
with $A$ a nonempty set
and
$\ps{\upmu}$ is a Haar system associated with $\ps{\upomega}$ and $\mf{A}$,
then for all $l\in H$ and $\alpha\in A$
we convein to denote 
the pairing 
$\lr{\cdot}{\cdot}_{\ms{B}_{\ps{\upmu}}^{\ps{\upomega},\alpha,l,+}}$
by
$\lr{\cdot}{\cdot}_{\ps{\upmu},\ps{\upomega},\alpha,l}$.
Moreover we remove the index $l$ if it equals the identity.
\end{convention}
\begin{definition}
\label{05131816}
Let $\ms{A}_{\mf{A}}$ be the group whose underlying
set is 
\begin{equation*}
\ms{A}_{\mf{A}}
\coloneqq
\prod_{\mf{Q}\in\pf{T}_{\mf{A}}}
\prod_{\beta\in\ms{P}_{\mf{A}}^{\mf{Q}}}
\ms{K}_{\beta}^{\mf{Q}}(\mf{A}),
\end{equation*}
and whose composition, inversion and identity $\ze$
are the pointwise composition, inversion and identity,
namely
$\ms{f}\cdot\ms{g}\in\ms{A}_{\mf{A}}$ such that
$(\ms{f}\cdot\ms{g})
(\mf{T})(\alpha)
\coloneqq
\ms{f}(\mf{T})(\alpha)\cdot\ms{g}(\mf{T})(\alpha)$,
$\ms{f}^{-1}(\mf{T})(\alpha)
\coloneqq\ms{f}(\mf{T})(\alpha)^{-1}$
and
$\ze(\mf{T})(\alpha)$ is the identity of 
$\ms{K}_{\alpha}^{\mf{T}}(\mf{A})$,
for all 
$\ms{f},\ms{g}\in\ms{A}_{\mf{A}}$,
$\mf{T}\in\pf{T}_{\mf{A}}$ and $\alpha\in\ms{P}_{\mf{A}}^{\mf{T}}$.
Moreover set
\begin{equation*}
\ms{m}^{\mf{A}}:
\ms{A}_{\mf{A}}\to
\prod_{\mf{Q}\in\pf{T}_{\mf{A}}}
Mor_{\ms{set}}(\ms{P}_{\mf{A}}^{\mf{Q}},\R),
\end{equation*}
such that for any 
$\ms{f}\in\ms{A}_{\mf{A}}$
and 
$\mf{T}\in\pf{T}_{\mf{A}}$
\begin{equation*}
\ms{m}^{\mf{A}}(\ms{f})(\mf{T}):
\ms{P}_{\mf{A}}^{\mf{T}}\to\R,
\quad
\alpha
\mapsto
\lr{\ms{f}(\mf{T})(\alpha)}
{\ms{ch}(\ps{R}(\mf{T},\alpha))}_{\ps{\upmu},\ps{\upomega},\alpha},
\end{equation*}
where
$\mf{T}=\lr{\mc{T},\ps{\upmu},\pf{H}}{\upzeta,f,\Upgamma}$
and
$\mc{T}=\lr{h,\upxi,\beta_{c},I}{\ps{\upomega}}$.
$\ms{m}^{\mf{A}}$ is called 
mean value map associated with $\mf{A}$.
\end{definition}
\begin{definition}
\label{10291125}
Let $l\in H$, define
\begin{equation*}
\mf{b}^{\mf{A},\ms{v}}(l):
\pf{T}_{\mf{A}}\ni
\lr{\mc{T},\ps{\upmu},\pf{H}}{\upzeta,f,\Upgamma}
\mapsto
\lr{\mc{T}^{l},\ps{\upmu}^{l},\pf{H}^{(\ms{v},l)}}
{\upzeta^{l},f,\Upgamma_{\pf{H},\ms{v}}^{l}}.
\end{equation*}
\end{definition}
\begin{convention}
Often in the proof of the statements and only when it is not cause of confusion
we convein to remove $\mf{A}$ from 
$\ms{A}_{\mf{A}}$, $\ms{m}^{\mf{A}}$, 
$\mf{c}^{\mf{A}}$,
$\mc{K}^{\mf{A}}$,
$\ov{\mc{K}}^{\mf{A}}$
and
$\ms{K}_{\alpha}^{\mf{T}}(\mf{A})$, 
for any $\mf{T}\in\pf{T}_{\mf{A}}$ and $\alpha\in\ms{P}_{\mf{A}}^{\mf{T}}$,
moreover we remove 
$\mf{A}$ and $\ms{v}$
from $\mf{b}^{\mf{A},\ms{v}}$.
\end{convention}
\begin{proposition}
\label{01010828}
Let $l\in H$,
thus
$\mf{c}^{\mf{A}}(l)$ is well-defined 
and
$\mf{c}^{\mf{A}}(l)(\ms{K}_{\alpha}^{\mf{T}}(\mf{A}))
\subseteq
\ms{K}_{0}(\ms{B}_{\ps{\upmu}}^{\ps{\upomega},\alpha,l,+})$,
for any 
$\mf{T}\in\pf{T}_{\mf{A}}$
and
$\alpha\in\ms{P}_{\mf{A}}^{\mf{T}}$.
\end{proposition}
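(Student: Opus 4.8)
The plan is to reduce the assertion to the $K$-theoretic functoriality applied to the $\ast$-isomorphisms furnished by Cor. \ref{09191329}. Fix $\mf{T}=\lr{\mc{T},\ps{\upmu},\pf{H}}{\upzeta,f,\Upgamma}\in\pf{T}$ with $\mc{T}=\lr{h,\upxi,\beta_{c},I}{\ps{\upomega}}$, and $\alpha\in\ms{P}^{\mf{T}}=I\cap\,]-\infty,\beta_{c}]$. First I would record that the hypotheses needed to invoke Cor. \ref{09191329} are in force: since $\alpha\in I$, Def. \ref{30081041}\eqref{08291407} shows that $\ps{\upomega}$ maps $I$ into $\ms{E}_{\mc{A}}^{G}(\uptau)$ (so in particular $\ps{\upomega}_{\alpha}\in\ms{E}_{\mc{A}}^{G}(\uptau)$), while Def. \ref{08261134}\eqref{08261134III} guarantees that $\ps{\upmu}$ is a Haar system associated to $\ps{\upomega}$ and $\mf{A}$.

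Then I would apply Cor. \ref{09191329} at $(\alpha,l)$ to obtain $\ps{\upsigma}^{(\ps{\upomega}_{\alpha},l)}\in Isom^{\ast}(\ms{B}_{\ps{\upmu}}^{\ps{\upomega},\alpha},\ms{B}_{\ps{\upmu}}^{\ps{\upomega},\alpha,l})$; in particular it is a $\ast$-homomorphism between the two crossed products of Def. \ref{11211142}. By \eqref{10291116} its unitization $(\ps{\upsigma}^{(\ps{\upomega}_{\alpha},l)})^{+}$ is a unit-preserving $\ast$-homomorphism from $\ms{B}_{\ps{\upmu}}^{\ps{\upomega},\alpha,+}$ to $\ms{B}_{\ps{\upmu}}^{\ps{\upomega},\alpha,l,+}$, the codomain being exactly the algebra named in the statement. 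Hence the $K$-functoriality \eqref{10291120} produces the group morphism $((\ps{\upsigma}^{(\ps{\upomega}_{\alpha},l)})^{+})_{\ast}\in Mor(\ms{K}_{00}(\ms{B}_{\ps{\upmu}}^{\ps{\upomega},\alpha,+}),\ms{K}_{00}(\ms{B}_{\ps{\upmu}}^{\ps{\upomega},\alpha,l,+}))$, which is by definition the restriction $\mf{c}(l)\up\ms{K}_{\alpha}^{\mf{T}}$; this already shows that each such restriction is a genuine group homomorphism, so $\mf{c}(l)$ is well-defined on $\mc{K}^{\mf{A}}=\bigcup_{\mf{Q}\in\pf{T}}\bigcup_{\alpha\in\ms{P}^{\mf{Q}}}\ms{K}_{\alpha}^{\mf{Q}}$.

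It then remains to match domains and codomains to the $\ms{K}_{0}$ picture. Since $\ms{B}_{\ps{\upmu}}^{\ps{\upomega},\alpha,+}$ and $\ms{B}_{\ps{\upmu}}^{\ps{\upomega},\alpha,l,+}$ are unital, I would invoke the standard-picture identification $\ms{K}_{0}\cong\ms{K}_{00}$ recorded after \eqref{10291120} to read $((\ps{\upsigma}^{(\ps{\upomega}_{\alpha},l)})^{+})_{\ast}$ as a morphism with domain $\ms{K}_{\alpha}^{\mf{T}}(\mf{A})=\ms{K}_{0}(\ms{B}_{\ps{\upmu}}^{\ps{\upomega},\alpha,+})$ and codomain $\ms{K}_{0}(\ms{B}_{\ps{\upmu}}^{\ps{\upomega},\alpha,l,+})$, which yields the claimed inclusion $\mf{c}(l)(\ms{K}_{\alpha}^{\mf{T}})\subseteq\ms{K}_{0}(\ms{B}_{\ps{\upmu}}^{\ps{\upomega},\alpha,l,+})$. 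The only delicate bookkeeping, and the point I would be most careful about, is the consistent use of the unitization $(\cdot)^{+}$ together with the $\ms{K}_{0}/\ms{K}_{00}$ identification, since $\mf{c}(l)$ is written through $(\cdot)_{\ast}$, i.e. $\ms{K}_{00}$-functoriality, whereas the fibers $\ms{K}_{\alpha}^{\mf{T}}(\mf{A})$ are by Def. \ref{05121538} defined as $\ms{K}_{0}$-groups; everything else is a direct transcription of the already established isometric $\ast$-isomorphism property of $\ps{\upsigma}^{(\ps{\upomega}_{\alpha},l)}$.
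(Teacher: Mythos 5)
Your treatment of the second assertion --- the inclusion $\mf{c}(l)(\ms{K}_{\alpha}^{\mf{T}})\subseteq\ms{K}_{0}(\ms{B}_{\ps{\upmu}}^{\ps{\upomega},\alpha,l,+})$ --- is correct and coincides with the paper's concluding step: Cor. \ref{09191329} supplies the $\ast$-isomorphism, \eqref{10291116} its unit-preserving unitization, \eqref{10291120} the induced morphism of $\ms{K}_{00}$-groups, and the standard picture converts this into a morphism between the $\ms{K}_{0}$-groups of the unitized crossed products. However, you have misread what ``well-defined'' is being claimed, and the part of the argument you skip is the entire body of the paper's proof.

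The map $\mf{c}(l)$ is prescribed piecewise on the union $\mc{K}^{\mf{A}}=\bigcup_{\mf{Q}\in\pf{T}}\bigcup_{\beta\in\ms{P}^{\mf{Q}}}\ms{K}_{\beta}^{\mf{Q}}(\mf{A})$ by declaring its restriction to each fiber separately. Distinct pairs $(\mf{T},\alpha)$ and $(\mf{Q},\beta)$ can produce fibers that intersect as sets, and on such an intersection the two prescriptions $\bigl((\ps{\upsigma}^{(\ps{\upomega}_{\alpha},l)})^{+}\bigr)_{\ast}$ and $\bigl((\ps{\upsigma}^{(\ps{\upomega}_{\beta},l)})^{+}\bigr)_{\ast}$ must be shown to agree; the fact that each restriction is a genuine group homomorphism on its own domain does not address this. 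The paper's proof is devoted precisely to this consistency check: if $\ms{K}_{\alpha}^{\mf{T}}(\mf{A})\cap\ms{K}_{\beta}^{\mf{Q}}(\mf{A})\neq\varnothing$ then the underlying crossed products intersect; since each is realized as the set of minimal Cauchy filters of the corresponding normed $\ast$-algebra $\pc{C}_{c}^{\ps{\upmu}_{(\cdot,\un)}}(\cdot,\mc{A})$, a nonempty intersection forces $\mc{C}_{c}(\s{\upomega}{\alpha}{},\mc{A})\cap\mc{C}_{c}(\s{\upomega}{\beta}{},\mc{A})\neq\varnothing$, hence $\s{\upomega}{\alpha}{}=\s{\upomega}{\beta}{}$, the universal norms are proportional, the unitized crossed products coincide, and by Cor. \ref{09191329} the two extensions $\ps{\upsigma}^{(\ps{\upomega}_{\alpha},l)}$ and $\ps{\upsigma}^{(\ps{\upomega}_{\beta},l)}$ coincide, so the two prescriptions for $\mf{c}(l)$ agree. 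Without an argument of this kind your proposal does not establish well-definedness.
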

\begin{proof}
Let $\mf{Q},\mf{T}\in\pf{T}_{\mf{A}}$ and 
$\alpha\in\ms{P}_{\mf{A}}^{\mf{T}}$, $\beta\in\ms{P}_{\mf{A}}^{\mf{Q}}$
such that 
$
\ms{K}_{\alpha}^{\mf{T}}
\cap
\ms{K}_{\beta}^{\mf{Q}}
\neq\varnothing$
thus
$
\mc{B}\doteq
\ms{B}_{\ps{\upmu}}^{\ps{\upomega},\alpha}
\cap
\ms{B}_{\ps{\upmu}}^{\ps{\upomega},\beta}
\neq\varnothing$.
Now 
$\ms{B}_{\ps{\upmu}}^{\ps{\upomega},\alpha}$
is the completion
of 
$\pc{C}_{c}^{\ps{\upmu}_{(\alpha,\un)}}
(\s{\upomega}{\alpha}{},\mc{A})$,
thus its underlying set is the set of all
minimal Cauchy filters of
$\pc{C}_{c}^{\ps{\upmu}_{(\alpha,\un)}}
(\s{\upomega}{\alpha}{},\mc{A})$,
see for example \cite[$II.21$]{top1},
therefore 
$\mc{B}\neq\varnothing$
implies
\footnote{This irrespectively by the 
fact that 
there could be 
$\delta\in\ms{P}_{\mf{A}}^{\mf{T}}$,
$\ep\in\ms{P}_{\mf{A}}^{\mf{Q}}$,
a $C^{\ast}-$algebra $\mc{D}$
and $\ast-$embeddings
$\uplambda_{\delta}:
\ms{B}_{\ps{\upmu}}^{\ps{\upomega},\delta}
\to\mc{D}$
and
$\uplambda_{\ep}:
\ms{B}_{\ps{\upmu}}^{\ps{\upomega},\ep}
\to\mc{D}$
such that 
$\uplambda_{\delta}(\mc{C}_{c}(\s{\upomega}{\delta}{},\mc{A}))
\cap
\uplambda_{\ep}(\mc{C}_{c}(\s{\upomega}{\ep}{},\mc{A}))
=
\varnothing$
although
$\uplambda_{\delta}(\ms{B}_{\ps{\upmu}}^{\ps{\upomega},\delta})
\cap
\uplambda_{\ep}(\ms{B}_{\ps{\upmu}}^{\ps{\upomega},\ep})
\neq
\varnothing$.}
$\mc{C}_{c}(\s{\upomega}{\alpha}{},\mc{A})
\cap
\mc{C}_{c}(\s{\upomega}{\beta}{},\mc{A})
\neq\varnothing$.
Hence a fortiori 
$\s{\upomega}{\alpha}{}=\s{\upomega}{\beta}{}$
so
there exists a constant $C$
such that 
$\|\cdot\|^{\ps{\upmu}_{(\alpha,\un)}}
=
C\|\cdot\|^{\ps{\upmu}_{(\beta,\un)}}$
then
$\ms{B}_{\ps{\upmu}}^{\ps{\upomega},\alpha,+}
=
\ms{B}_{\ps{\upmu}}^{\ps{\upomega},\beta,+}$
and
$\ps{\upsigma}^{\ps{\upomega}_{\alpha},l}
=
\ps{\upsigma}^{\ps{\upomega}_{\beta},l}$
since 
Cor. \ref{09191329}.
Thus
$\ms{K}_{\alpha}^{\mf{T}}
=
\ms{K}_{\beta}^{\mf{Q}}$
and
$\bigr((\ps{\upsigma}^{(\ps{\upomega}_{\alpha},l)})^{+}\bigl)_{\ast}
=
\bigr((\ps{\upsigma}^{(\ps{\upomega}_{\beta},l)})^{+}\bigl)_{\ast}$
therefore the statement follows
since 
\eqref{10291116},
\eqref{10291120},
Cor. \ref{09191329}
and the standard picture of the functor $\ms{K}_{0}$.
\end{proof}
\begin{remark}
[Integrality]
\label{02061932}
Since the general result
\cite[$IV.8.\delta$, Prp. $18$, Thm. $19$, and $IV.8.\epsilon$ Thm. $22$]{connes}
we deduce that
$\ms{m}^{\mf{A}}(\ms{f})(\mf{T})$
is a $\Z-$valued map
for any 
$\ms{f}\in\ms{A}_{\mf{A}}$
and
$\mf{T}\in\pf{T}_{\mf{A}}$.
\end{remark}
The following Thm. \ref{11260828}, Thm. \ref{main} and Thm. \ref{01141808}
are important steps towards the proof of Cor. \ref{11271221}
were we construct the object part of a functor from $\ms{C}_{u}(H)$ to $\mf{G}(G,F,\uprho)$.
\begin{theorem}
\label{11260828}
$\mf{b}^{\mf{A},\ms{v}}\in Mor_{\ms{Gr}}(H, Aut_{\ms{set}}(\pf{T}_{\mf{A}}))$
and
$\mf{c}^{\mf{A}}\in Mor_{\ms{Gr}}(H, Aut_{\ms{set}}(\mc{K}^{\mf{A}}))$.
Moreover for any $\mf{T}\in\pf{T}_{\mf{A}}$, $\alpha\in\ms{P}_{\mf{A}}^{\mf{T}}$ 
and $l\in H$
\begin{equation*}
\mf{c}^{\mf{A}}(l)(\ms{K}_{\alpha}^{\mf{T}}(\mf{A}))
=
\ms{K}_{\alpha}^{\mf{b}^{\mf{A},\ms{v}}(l)(\mf{T})}(\mf{A}).
\end{equation*}
\end{theorem}
\begin{proof}
Let 
$\mf{T}=\lr{\mc{T},\ps{\upmu},\pf{H}}
{\upzeta,f,\Upgamma}\in\pf{T}_{\mf{A}}$,
$l\in H$
and
$\alpha\in\ms{P}_{\mf{A}}^{\mf{T}}$.
In this proof for any $h\in\{Id,l\}$
we use the following notations
\begin{equation*}
\begin{cases}
\mf{H}\doteq\mf{H}_{\alpha},\,\un\doteq\un_{\mf{H}},
\text{ and }Tr\doteq Tr_{\mf{H}},
\\
\ms{D}_{h}\doteq\ms{D}_{\pf{H},\ms{v},\alpha,h}^{\upzeta,f},
\\
R(\lambda,\ms{D}_{h})\doteq(\lambda\un-\ms{D}_{h})^{-1},
\forall\lambda\in\rho(\ms{D}_{h}),
\\
U\doteq\uppi_{\alpha}(\ms{v}(l)),
\\
\ms{B}_{h}\doteq\ms{B}_{\ps{\upmu}}^{\ps{\upomega},\alpha,h},
\\
\mf{R}_{h}\doteq\mf{R}_{\pf{H},\ms{v},\alpha,h}^{\ps{\upmu}},
\\
\ps{\upsigma}_{l}\doteq\ps{\upsigma}^{(\ps{\upomega}_{\alpha},l)},
\\
\mf{T}^{l}
\doteq
\mf{b}(l)(\mf{T}),
\\
\ms{x}^{l}
\doteq
\mf{c}(l)(\ms{x}),
\forall\ms{x}\in
\ms{K}_{\alpha}^{\mf{T}},
\\
\ps{R}_{h}
\doteq
\ps{R}(\mf{T}^{h},\alpha).
\end{cases}
\end{equation*}
Here 
$\un\doteq\un_{\mf{H}_{\alpha}}$
and
$\rho(T)$
is the resolvent set of any selfadjoint operator $T$ in $\mf{H}$.
We convein to remove the index $h$ 
whenever it equals the unit.
\eqref{10131207} yields
\begin{equation}
\label{02042235}
\ps{R}_{l}
=
\ps{R}_{\pf{H},\ms{v},\Upgamma,\alpha,l}^{\ps{\upmu},\upzeta,f}.
\end{equation}
Since Cor. \ref{09101654} and \eqref{01310756} 
to prove $\mf{T}^{l}\in\pf{T}_{\mf{A}}$ it is sufficient to show that
$\ps{R}_{l}$ is an even $\theta-$summable $K-$cycle.
Since \eqref{10121958} and 
Cor. \ref{10301830}\eqref{10301830st2}
\begin{equation}
\label{02010753}
\rho(\ms{D}_{l})=\rho(\ms{D}),
\end{equation}
moreover 
since Lemma \ref{10311019} and 
Thm. \ref{09191747}
we obtain
\begin{equation}
\label{10311020}
\tilde{\mf{R}}_{l}\circ\ps{\upsigma}_{l}^{+}
= ad(U)\circ\tilde{\mf{R}}.
\end{equation}
Let us consider the following set of statements
for $h\in\{Id,l\}$ 
\begin{equation}
\label{10311100}
\begin{cases}
R(\lambda,\ms{D}_{h})
\text{ is a compact operator on $\mf{H}$},
\forall\lambda\in\rho(\ms{D}_{h}),
\\
Dom([\ms{D}_{h},\tilde{\mf{R}_{h}}(a)])=Dom(\ms{D}_{h}),
\forall a\in\ms{B}_{h}^{+}
\\
[\ms{D}_{h},\tilde{\mf{R}_{h}}(a)]
\in\mc{L}(Dom(\ms{D}_{h}),\mf{H}),
\forall a\in\ms{B}_{h}^{+}
\\
Tr(\exp(-\ms{D}_{h}^{2}))<\infty
\end{cases}
\end{equation}
then it holds by hypothesis for $h=Id$, we claim to show it 
for $h=l$. Let $\lambda\in\rho(\ms{D})$, 
thus since 
\eqref{10121958}, 
we have 
$\lambda\un-\ms{D}_{l}
=
U(\lambda\un-\ms{D})U^{-1}$,
and
$Dom(\ms{D}_{l})=U Dom(\ms{D})$,
moreover $Dom(R(\lambda,\ms{D}))=\mf{H}$,
hence 
$(\lambda\un-\ms{D}_{l})
ad(U)(R(\lambda,\ms{D}))
=\un$
and
$ad(U)(R(\lambda,\ms{D}))(\lambda\un-\ms{D}_{l})
=
Id_{Dom(\ms{D}_{l})}$.
Therefore
\begin{equation}
\label{10311055}
R(\lambda,\ms{D}_{l})
=
ad(U)(R(\lambda,\ms{D})).
\end{equation}
Since \eqref{10311055}, \eqref{10311100} for $h=Id$ and since the set of compact operators on $\mf{H}$ 
is a two-sided ideal of $\mc{L}(\mf{H})$, we obtain
\begin{equation}
\label{10311100a}
R(\lambda,\ms{D}_{l})
\text{ is a compact operator.}
\end{equation}
Let $a\in\ms{B}^{+}$, 
thus
$
Dom([\ms{D}_{l},\tilde{\mf{R}}_{l}(\ps{\upsigma}_{l}^{+}(a))])
=
UDom(\ms{D})$
since \eqref{10311100}, \eqref{10311020} and \eqref{10121958},
moreover
\begin{equation}
\label{02010706}
\ms{D}_{l}
\tilde{\mf{R}}_{l}(\ps{\upsigma}_{l}^{+}(a))
-
\tilde{\mf{R}}_{l}(\ps{\upsigma}_{l}^{+}(a))
\ms{D}_{l}
=
U
[\ms{D},\tilde{\mf{R}}(a)]
U^{-1},
\end{equation}
hence for all $v\in Dom(\ms{D})$
\begin{equation*}
\begin{aligned}
\|[\ms{D}_{l},\tilde{\mf{R}}_{l}(\ps{\upsigma}_{l}^{+}(a))]Uv\|
&
=
\|[\ms{D},\tilde{\mf{R}}(a)]v\|
\leq
\|[\ms{D},\tilde{\mf{R}}(a)]\|_{\mc{L}(Dom(\ms{D}),\mf{H})}
\|Uv\|.
\end{aligned}
\end{equation*}
Next
$\ps{\upsigma}_{l}^{+}(\ms{B}^{+})=\ms{B}_{l}^{+}$
since Cor. \ref{09191329}, 
therefore we can state for
all $b\in\ms{B}_{l}^{+}$
\begin{equation}
\label{10311100b}
\begin{cases}
Dom([\ms{D}_{l},\tilde{\mf{R}}_{l}(b)])
=
Dom(\ms{D}_{l}),
\\
[\ms{D}_{l},\tilde{\mf{R}}_{l}(b)]
\in
\mc{L}(Dom(\ms{D}_{l}),\mf{H}),
\\
\|
[\ms{D}_{l},\tilde{\mf{R}}_{l}(b)]
\|_{\mc{L}(Dom(\ms{D}_{l}),\mf{H})}
=
\|
[\ms{D},\tilde{\mf{R}}((\ps{\upsigma}_{l}^{+})^{-1}(b))]
\|_{\mc{L}(Dom(\ms{D}),\mf{H})}.
\end{cases}
\end{equation}
For any $h\in\{Id,l\}$, 
$sp(\ms{D}_{h})\subseteq\R$, since $\ms{D}_{h}$
is a selfadjoint operator by Rmk. \ref{02010723}, 
therefore 
$\exp(-\ms{D}_{h}^{2})\in\mc{L}(\mf{H})$,
since the spectral theorem, see for example
\cite[Thm. $18.2.11(c)$]{ds3},
and the fact that 
$sp(\ms{D}_{l})\ni\lambda\mapsto\exp(-\lambda^{2})$, is bounded.
Therefore $Tr(\exp(-\ms{D}_{h}^{2}))$
is a well-set element of $\tilde{\R}$.
Since 
Cor. \ref{10301830}\eqref{10301830st3} 
and \eqref{10121958} 
\begin{equation}
\label{02042212}
\exp(-\ms{D}_{l}^{2})
=
ad(U)(\exp(-\ms{D}^{2})),
\end{equation}
hence 
$Tr(\exp(-\ms{D}_{l}^{2}))
=
Tr(\exp(-\ms{D}^{2}))$, then by 
\eqref{10311100} for $h=Id$ we obtain
\begin{equation}
\label{10311100c}
Tr(\exp(-\ms{D}_{l}^{2}))<\infty.
\end{equation}
\eqref{10311100} for $h=l$ 
follows by
\eqref{02010753}, \eqref{10311100a}, \eqref{10311100b} 
and \eqref{10311100c}.
Next $\Upgamma_{\alpha}^{l}\doteq
\Upgamma_{\pf{H},\ms{v},\alpha}^{l}
=ad(U)(\Upgamma_{\alpha})$
so is a $\Z_{2}-$grading on $\mf{H}$,
moreover since \eqref{10311020}, \eqref{10121958},
the bijectivity of $\ps{\upsigma}_{l}^{+}$
and Rmk. \ref{02041855} 
we obtain for all
$b\in\ms{B}_{l}$
\begin{equation}
\label{02042032}
\begin{cases}
[\Upgamma_{\alpha}^{l},\tilde{\mf{R}}_{l}(b)]=\ze,
\\
\Upgamma_{\alpha}^{l}
\ms{D}_{l}
\Upgamma_{\alpha}^{l}
= -\ms{D}_{l}.
\end{cases}
\end{equation}
Thus
\eqref{10311100} for $h=l$ 
and
\eqref{02042032}
yields that
$\ps{R}_{l}$ is an even $\theta-$summable $K-$cycle,
thus
$\mf{T}^{l}\in\pf{T}_{\mf{A}}$.
So $\mf{b}(l)$ maps $\pf{T}_{\mf{A}}$ into $\pf{T}_{\mf{A}}$
and
$\mf{b}$ is a $H-$action on $\pf{T}_{\mf{A}}$ 
since
\eqref{05201352}. 
Let 
$\ms{x}\in\ms{K}_{\alpha}^{\mf{T}}$
thus
$\ms{x}^{l}\in\ms{K}_{\alpha}^{\mf{T}^{l}}$
since $\mf{T}^{l}\in\pf{T}_{\mf{A}}$ and Prp. \ref{01010828}.
Thus $\mf{c}(l)$ maps $\mc{K}^{\mf{A}}$ into itself since 
Cor. \ref{09191329}
and $\mf{c}$ is a $H-$ action on $\mc{K}^{\mf{A}}$ since 
Cor. \ref{01031207}
\end{proof}
Thm. \ref{11260828} permits the following
\begin{definition}
\label{01011514}
Define $\ov{\mf{c}}^{\mf{A}}:H\to Mor_{\ms{set}}(\ov{\mc{K}}^{\mf{A}},\ov{\mc{K}}^{\mf{A}})$
such that 
$\ov{\mf{c}}^{\mf{A}}(l)(g)\coloneqq\mf{c}^{\mf{A}}(l)\circ g$
for all $l\in H$
and $g\in\ov{\mc{K}}^{\mf{A}}$.
\end{definition}
\begin{proposition}
\label{01011526}
$\ov{\mf{c}}^{\mf{A}}\in Mor_{\ms{Gr}}(H, Aut_{\ms{set}}(\ov{\mc{K}}^{\mf{A}}))$.
\end{proposition}
\begin{proof}
Since Thm. \ref{11260828}.
\end{proof}
Since $\ms{A}_{\mf{A}}\subset Mor_{\ms{set}}(\pf{T}_{\mf{A}},\ov{\mc{K}}^{\mf{A}})$,
Thm. \ref{11260828} and Prp. \ref{01011526} permit the following
\begin{definition}
\label{11261006}
For any $l\in H$ define the map
$\uppsi^{\mf{A},\ms{v}}(l)$ on $\ms{A}_{\mf{A}}$
such that for all $\ms{f}\in\ms{A}_{\mf{A}}$
\begin{equation*}
\uppsi^{\mf{A},\ms{v}}(l)(\ms{f})
\coloneqq
\ov{\mf{c}}^{\mf{A}}(l)
\circ\ms{f}\circ\mf{b}^{\mf{A},\ms{v}}(l^{-1}).
\end{equation*}
\end{definition}
\begin{theorem}
\label{main}
We have
\begin{enumerate}
\item
$\uppsi^{\mf{A},\ms{v}}\in Mor_{\ms{Gr}}(H, Aut_{\ms{Ab}}(\ms{A}_{\mf{A}}))$,
\label{mainst1}
\item
for all $l\in H$ and $\ms{f}\in\ms{A}_{\mf{A}}$ we have
\begin{equation*}
\ms{ev}_{\ms{f}}(\ms{m}^{\mf{A}}\circ\uppsi^{\mf{A},\ms{v}}(l))\circ\mf{b}^{\mf{A},\ms{v}}(l)
=\ms{ev}_{\ms{f}}(\ms{m}^{\mf{A}}).
\end{equation*}
\label{mainst2}
\end{enumerate}
\end{theorem}
\begin{proof}
In this proof we convein to denote $\uppsi^{\mf{A},\ms{v}}$ by $\uppsi$
and $\ov{\mf{c}}^{\mf{A}}$ by $\ov{\mf{c}}$.
$\uppsi(l)$ maps $\ms{A}_{\mf{A}}$ into itself, 
in addition $\uppsi$ is a $H-$action since $\mf{b}$ and $\ov{\mf{c}}$
are $H-$actions by Thm. \ref{11260828} and Prp. \ref{01011526}, 
finally $\uppsi(l)$ is a group morphism since the second inclusion in \eqref{10291120} and since the 
standard picture used for the $\ms{K}_{0}-$groups,
hence st.\eqref{mainst1} follows.
Next let us adopt the notations in proof of Thm. \ref{11260828}, 
let $l\in H$,
$\ms{f}\in\ms{A}_{\mf{A}}$, $\mf{T}\in\pf{T}_{\mf{A}}$ and $\alpha\in\ms{P}_{\mf{A}}^{\mf{T}}$
then
\begin{equation*}
\begin{aligned}
\bigl(\ms{ev}_{\ms{f}}(\ms{m}\circ\uppsi(l))\circ\mf{b}(l)\bigr)
(\mf{T})(\alpha)
&=
\ms{m}(\uppsi(l)(\ms{f}))(\mf{T}^{l})(\alpha)
\\
&=
\lr{\uppsi(l)(\ms{f})(\mf{T}^{l})(\alpha)}{\ms{ch}(
\ps{R}(\mf{T}^{l},\alpha))}_{\ps{\upmu}^{l},
\ps{\upomega}^{l},\alpha}
\\
&=
\lr{\mf{c}(l)\bigl(\ms{f}(\mf{T})(\alpha)\bigr)}
{\ms{ch}(\ps{R}_{l})
}_{\ps{\upmu}^{l},\ps{\upomega}^{l},\alpha}
\\
&=
\lr{\bigl(\ms{f}(\mf{T})(\alpha)\bigr)^{l}}
{\ms{ch}(\ps{R}_{l})}_{\ps{\upmu}^{l},\ps{\upomega}^{l},\alpha}.
\end{aligned}
\end{equation*}
Hence st.\eqref{mainst2} follows if we show that
for all $\ms{x}\in\ms{K}_{\alpha}^{\mf{T}}$
\begin{equation}
\label{01310850}
\lr{\ms{x}^{l}}{\ms{ch}
(\ps{R}_{l})}_{\ps{\upmu}^{l},\ps{\upomega}^{l},\alpha}
=
\lr{\ms{x}}{\ms{ch}
(\ps{R})}_{\ps{\upmu},\ps{\upomega},\alpha}.
\end{equation}
Let us prove \eqref{01310850}.
By \eqref{10311020}, \eqref{10121958} and \eqref{02042212}
we obtain for all $s_{0},\dots,s_{2n}\in\R$
and
$a_{0},\dots,a_{2n}\in\ms{B}^{+}$
\begin{multline}
\label{02051920}
Tr\bigl(
\Upgamma_{\alpha}^{l}\,
\tilde{\mf{R}}_{l}(\ps{\upsigma}_{l}^{+}(a_{0}))
\exp(-s_{0}\ms{D}_{l}^{2})
[\ms{D}_{l},\tilde{\mf{R}}_{l}(\ps{\upsigma}_{l}^{+}(a_{1}))]
\exp(-s_{1}\ms{D}_{l}^{2})
\dotsc
\\
[\ms{D}_{l},\tilde{\mf{R}}_{l}(\ps{\upsigma}_{l}^{+}(a_{2n-1}))]
\exp(-s_{2n-1}\ms{D}_{l}^{2})
[\ms{D}_{l},\tilde{\mf{R}}_{l}(\ps{\upsigma}_{l}^{+}(a_{2n}))]
\exp(-s_{2n}\ms{D}_{l}^{2})
\bigr)
=
\\
(Tr\circ\ms{ad}(U))
\bigl(
\Upgamma_{\alpha}\,
\tilde{\mf{R}}(a_{0})
\exp(-s_{0}\ms{D}^{2})
[\ms{D},\tilde{\mf{R}}(a_{1})]
\exp(-s_{1}\ms{D}^{2})
\dots
\\
[\ms{D},\tilde{\mf{R}}(a_{2n-1})]
\exp(-s_{2n-1}\ms{D}^{2})
[\ms{D},\tilde{\mf{R}}(a_{2n})]
\exp(-s_{2n}\ms{D}^{2})\bigr)
=
\\
Tr\bigl(
\Upgamma_{\alpha}\,
\tilde{\mf{R}}(a_{0})
\exp(-s_{0}\ms{D}^{2})
[\ms{D},\tilde{\mf{R}}(a_{1})]
\exp(-s_{1}\ms{D}^{2})
\dots
\\
[\ms{D},\tilde{\mf{R}}(a_{2n-1})]
\exp(-s_{2n-1}\ms{D}^{2})
[\ms{D},\tilde{\mf{R}}(a_{2n})]
\exp(-s_{2n}\ms{D}^{2})\bigr).
\end{multline}
\eqref{01310850} 
and then 
st.\eqref{mainst2}
follows since
\eqref{02051920}, \eqref{02042235} and 
\cite[$IV.8.\epsilon$ Thm $22$, Thm. $21$, and $IV.7.\delta$ Thm $21$]{connes}.
\end{proof}
\begin{remark}
[Odd case]
Let 
$\pf{T}_{\mf{A}}^{1}$ 
be defined as 
$\pf{T}_{\mf{A}}$ 
by replacing $\ms{K}_{0}$ by $\ms{K}_{1}$
and
setting
$\Upgamma_{\alpha}=\un_{\alpha}$
for all $\alpha\in I\cap]-\infty,\beta_{c}]$.
Thus it is easy to show that Thm. \ref{main} 
still holds
with
$\pf{T}_{\mf{A}}^{1}$ in place of $\pf{T}_{\mf{A}}$
and $\ms{K}_{1}$ 
in place of  $\ms{K}_{0}$. 
\end{remark}
According to the definition of $\ms{v}^{\mf{A}}$ in Def. \ref{11211655} 
and the construction of $\ps{\upsigma}^{(\ps{\upomega}_{\alpha},l)}$ in
Cor. \ref{09191329} we set the following
\begin{definition}
\label{06200822}
Let $\mf{A}=\lr{\mc{A},H}{\upsigma}\in Obj(\ms{C}_{u}(H))$ define
\begin{enumerate}
\item
$\ms{I}_{\mf{A}}:\pf{T}_{\mf{A}}\to\ms{set}$
\item
$(\upbeta_{c}^{\mf{A}})\in\prod_{\mf{Q}\in\pf{T}_{\mf{A}}}
\ms{I}_{\mf{A}}^{\mf{Q}}$;
\item
$\mf{a}_{\mf{A}}\in
\prod_{\mf{Q}\in\pf{T}_{\mf{A}}}
\prod_{\beta\in\ms{I}_{\mf{A}}^{\mf{Q}}}
\ms{C}(H)$;
\item
$\mf{e}_{\mf{A}}\in\prod_{\mf{Q}\in\pf{T}_{\mf{A}}}
\prod_{\beta\in\ms{I}_{\mf{A}}^{\mf{Q}}}
\ms{C}(\R)$;
\item
$\ps{\upvarphi}^{\mf{A}}\in
\prod_{\mf{Q}\in\pf{T}_{\mf{A}}}
\prod_{\beta\in\ms{I}_{\mf{A}}^{\mf{Q}}}
\ms{E}_{\mc{A}}$;
\item
$\uppsi^{\mf{A}}\coloneqq\uppsi^{\mf{A},\ms{v}^{\mf{A}}}$;
\item
$\mf{b}^{\mf{A}}\coloneqq\mf{b}^{\mf{A},\ms{v}^{\mf{A}}}$;
\item
$\mf{H}^{\mf{A}}
\in
\prod_{\mf{Q}\in\pf{T}_{\mf{A}}}
\prod_{\beta\in\ms{I}_{\mf{A}}^{\mf{Q}}}
HS$;
\item
$\uppi^{\mf{A}}
\in
\prod_{\mf{Q}\in\pf{T}_{\mf{A}}}
\prod_{\beta\in\ms{I}_{\mf{A}}^{\mf{Q}}}
Mor_{\ms{CA}^{\ast}}(\mc{A},\mc{L}((\mf{H}^{\mf{A}})_{\beta}^{\mf{Q}}))$;
\item
$\Upomega^{\mf{A}}
\in
\prod_{\mf{Q}\in\pf{T}_{\mf{A}}}
\prod_{\beta\in\ms{I}_{\mf{A}}^{\mf{Q}}}
(\mf{H}^{\mf{A}})_{\beta}^{\mf{Q}}$;
\item
$\mf{E}^{\mf{A}}
=\lr{\upmu^{\mf{A}}}{\mf{u}^{\mf{A}},\pf{H}^{\mf{A}},
\ms{D}^{\mf{A}},\Upgamma^{\mf{A}},\mf{v}^{\mf{A}},\mf{w}^{\mf{A}},\mf{z}^{\mf{A}}}$
an $8-$tuple of elements of 
$\prod_{\mf{Q}\in\pf{T}_{\mf{A}}}\prod_{\beta\in\ms{P}_{\mf{A}}^{\mf{Q}}}\ms{set}$;
\end{enumerate}
where if 
\begin{equation*}
\begin{aligned}
\mf{T}&\in\pf{T}_{\mf{A}},
\\
\mf{T}&=\lr{\mc{T},\ps{\upmu},\pf{H}}{\upzeta,f,\Upgamma},
\\
\mc{T}&=\lr{h,\upxi,\beta_{c},I}{\ps{\upomega}},
\\
\pf{H}_{\gamma}&=\lr{\mf{H}_{\gamma},\uppi_{\gamma}}{\Upomega_{\gamma}},
\forall\gamma\in I;
\end{aligned}
\end{equation*}
then
\begin{enumerate}
\item
$\ms{I}_{\mf{A}}^{\mf{T}}=I$;
\item
$(\upbeta_{c}^{\mf{A}})^{\mf{T}}=\beta_{c}$;
\item
for all $\alpha\in I$
\begin{enumerate}
\item 
$(\mf{a}_{\mf{A}})_{\alpha}^{\mf{T}}=\mf{A}$,
\item
$(\mf{e}_{\mf{A}})_{\alpha}^{\mf{T}}=
\lr{\mc{A},\R}{\uptau_{\upsigma}^{(h,\upxi)}(-\alpha(\cdot))}$,
\item
$(\ps{\upvarphi}^{\mf{A}})_{\alpha}^{\mf{T}}=\ps{\upomega}_{\alpha}$,
\item
$\lr{(\mf{H}^{\mf{A}})_{\alpha}^{\mf{T}}}
{(\uppi^{\mf{A}})_{\alpha}^{\mf{T}},
(\Upomega^{\mf{A}})_{\alpha}^{\mf{T}}}
=
\lr{\mf{H}_{\alpha},\uppi_{\alpha}}{\Upomega_{\alpha}}$,
\item 
$(\upmu^{\mf{A}})_{\alpha}^{\mf{T}}=\ps{\upmu}_{(\alpha,\un)}$,
\item
$(\mf{u}^{\mf{A}})_{\alpha}^{\mf{T}}=
\ms{ev}_{\alpha}\circ\ms{ev}_{\mf{T}}\up\ms{A}_{\mf{A}}$,
\item
$(\pf{H}^{\mf{A}})_{\alpha}^{\mf{T}}
=
\pf{H}_{\alpha}$,
\item
$(\ms{D}^{\mf{A}})_{\alpha}^{\mf{T}}
=
\ms{D}_{\pf{H},\alpha}^{\upzeta,f}(\mf{A})$,
\item
$(\Upgamma^{\mf{A}})_{\alpha}^{\mf{T}}=\Upgamma_{\alpha}$,
\item
for all $l\in H$
\begin{enumerate}
\item
$(\mf{v}^{\mf{A}})_{\alpha}^{\mf{T}}(l)
=
\uppi_{\alpha}(\ms{v}^{\mf{A}}(l))$,
\item
$(\mf{w}^{\mf{A}})_{\alpha}^{\mf{T}}(l)=
\ps{\upsigma}^{(\ps{\upomega}_{\alpha},l)}$,
\item
$(\mf{z}^{\mf{A}})_{\alpha}^{\mf{T}}(l)=Id_{\mc{A}}$;
\end{enumerate}
\end{enumerate}
\end{enumerate}
in addition for all $\alpha\in I$ we define
\begin{equation*}
\begin{aligned}
\ms{S}_{\alpha}^{\mf{T}}(\mf{A})
&\coloneqq
\s{\ps{\upomega}}{\alpha}{}(\mf{A}),
\\
\mc{B}_{\alpha}^{\mf{T}}(\mf{A})
&\coloneqq
\ms{B}_{\ps{\upmu}}^{\ps{\upomega},\alpha}
(\mf{A}),
\\
\mf{i}_{\alpha}^{\mf{T}}
&\coloneqq
\mf{i}^{\mc{B}_{\alpha}^{\mf{T}}(\mf{A})},
\\
\mf{j}_{\alpha}^{\mf{T}}
&\coloneqq
\mf{j}_{\mc{A}}^{\mc{B}_{\alpha}^{\mf{T}}(\mf{A})},
\\
\mf{R}_{\alpha}^{\mf{T}}(\mf{A})
&\coloneqq
\mf{R}_{\pf{H},\alpha}^{\ps{\upmu}}(\mf{A}),
\\
\pf{R}_{\alpha}^{\mf{T}}(\mf{A})
&\coloneqq
\pf{R}_{\pf{H},\alpha}^{\ps{\upmu}}(\mf{A}),
\\
\lr{\cdot}{\cdot}_{(\mf{A},\mf{T},\alpha)}
&\coloneqq
\lr{\cdot}
{\cdot}_{\mc{B}_{\alpha}^{\mf{T}}(\mf{A})^{+}}.
\end{aligned}
\end{equation*}
\end{definition}
Often and only if it will not cause confusion, we convein to remove $(\mf{A})$ 
from $\ms{S}_{\alpha}^{\mf{T}}(\mf{A})$ and $\mc{B}_{\alpha}^{\mf{T}}(\mf{A})$.
According to 
Def. \ref{08261134},
Def. \ref{05121538},
Def. \ref{05131816}
and
Def. \ref{06200822}
we can set the following
\begin{definition}
\label{11261911}
Define $\mf{G}^{H}$ to be the map on $Obj(\ms{C}_{u}(H))$ such that if 
$\mf{A}\in Obj(\ms{C}_{u}(H))$ then
\begin{equation}
\label{11262132}
\mf{G}^{H}(\mf{A})
\coloneqq
\lr{\pf{T}_{\mf{A}},\ms{I}_{\mf{A}},\upbeta_{c}^{\mf{A}},\ms{P}_{\mf{A}},
\mf{a}_{\mf{A}}}
{\mf{e}_{\mf{A}},\ps{\upvarphi}^{\mf{A}},\ms{A}_{\mf{A}},\uppsi^{\mf{A}},
\mf{b}^{\mf{A}},\ms{m}^{\mf{A}},\mf{E}^{\mf{A}}}.
\end{equation}
\end{definition}
\begin{theorem}
\label{01141808}
Let $\mf{T}\in\pf{T}_{\mf{A}}$, 
$\alpha\in\ms{P}_{\mf{A}}^{\mf{T}}$ and $l\in H$,
thus
\begin{equation*}
(\mf{i}_{\alpha}^
{\mf{b}^{\mf{A}}(l)\mf{T}}
\circ
\ps{\upsigma}^{(\ps{\upomega}_{\alpha},l)})^{-}
\circ
\mf{j}_{\mc{A}}^{\mf{T}}
=
\mf{j}_{\mc{A}}^{\mf{b}^{\mf{A}}(l)\mf{T}}\circ\upsigma(l).
\end{equation*}
\end{theorem}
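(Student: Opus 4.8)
The plan is to identify all the objects concretely and then verify an operator identity in the multiplier algebra $\ms{M}(\ms{B}_{\ps{\upmu}}^{\ps{\upomega},\alpha,l})$. Writing $\mc{B}=\ms{B}_{\ps{\upmu}}^{\ps{\upomega},\alpha}$ and $\mc{C}=\ms{B}_{\ps{\upmu}}^{\ps{\upomega},\alpha,l}$, recall that by Def. \ref{11261911} one has $\mf{a}_{\alpha}^{\mf{T}}=\mf{A}$, so that $\mc{A}_{\alpha}^{\mf{T}}=\mc{A}$, $\mc{B}_{\alpha}^{\mf{T}^{l}}(\mf{G}^{H}(\mf{A}))=\mc{C}$, and the maps in the statement are the canonical embeddings $\mf{i}_{\alpha}^{\mf{T}^{l}}=\mf{i}^{\mc{C}}$, $\mf{j}_{\mc{A}}^{\mf{T}}=\mf{j}_{\mc{A}}^{\mc{B}}$, $\mf{j}_{\mc{A}}^{\mf{T}^{l}}=\mf{j}_{\mc{A}}^{\mc{C}}$, while $\ps{\upsigma}^{(\ps{\upomega}_{\alpha},l)}\colon\mc{B}\to\mc{C}$ is the $\ast$-isomorphism of Cor. \ref{09191329}, given on $\mc{C}_{c}(\ms{S}_{\ms{F}_{\ps{\upomega}_{\alpha}}}^{G},\mc{A})$ by Def. \ref{10041228}. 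Since $\ps{\upsigma}^{(\ps{\upomega}_{\alpha},l)}$ is surjective and $\mf{i}^{\mc{C}}$ is nondegenerate, $\upphi\coloneqq\mf{i}^{\mc{C}}\circ\ps{\upsigma}^{(\ps{\upomega}_{\alpha},l)}$ is nondegenerate (Rmk. \ref{01081803}), so it admits the extension $\upphi^{-}$ to $\ms{M}(\mc{B})$ with $\upphi^{-}\circ\mf{i}^{\mc{B}}=\upphi$ by \eqref{01081715}. Both sides of the asserted equality are $\ast$-morphisms $\mc{A}\to\ms{M}(\mc{C})$, and I would prove they coincide by showing they agree after right multiplication by every $\upphi(f)$, with $f$ ranging over the dense $\ast$-subalgebra $\mc{C}_{c}(\ms{S}_{\ms{F}_{\ps{\upomega}_{\alpha}}}^{G},\mc{A})$ of $\mc{B}$.

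The core is the following chain, carried out for fixed $a\in\mc{A}$ and $f\in\mc{C}_{c}(\ms{S}_{\ms{F}_{\ps{\upomega}_{\alpha}}}^{G},\mc{A})$. Using $\upphi^{-}\circ\mf{i}^{\mc{B}}=\upphi$ and the multiplicativity of $\upphi^{-}$,
\begin{equation*}
\upphi^{-}\bigl(\mf{j}_{\mc{A}}^{\mc{B}}(a)\bigr)\,\upphi(f)
=\upphi^{-}\bigl(\mf{j}_{\mc{A}}^{\mc{B}}(a)\cdot\mf{i}^{\mc{B}}(f)\bigr)
=\upphi^{-}\bigl(\mf{i}^{\mc{B}}(\mf{j}_{\mc{A}}^{\mc{B}}(a)(f))\bigr)
=\upphi\bigl(\mf{j}_{\mc{A}}^{\mc{B}}(a)(f)\bigr),
\end{equation*}
where the middle equality is diagram \eqref{01081112I} of Lemma \ref{01081112} on $\mc{B}$, which reads $\mf{j}_{\mc{A}}^{\mc{B}}(a)\cdot\mf{i}^{\mc{B}}(f)=\mf{i}^{\mc{B}}(\mf{j}_{\mc{A}}^{\mc{B}}(a)(f))$, and $\mf{j}_{\mc{A}}^{\mc{B}}(a)(f)$ is the function $s\mapsto a\,f(s)$. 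Next, by Def. \ref{10041228}, for $t\in\ms{S}_{\ms{F}_{\upsigma^{\ast}(l)(\ps{\upomega}_{\alpha})}}^{G}$ one has $\ps{\upsigma}^{(\ps{\upomega}_{\alpha},l)}(\mf{j}_{\mc{A}}^{\mc{B}}(a)(f))(t)=\upsigma(l)\bigl(a\,f(\ms{ad}(l^{-1})t)\bigr)=\upsigma(l)(a)\,\ps{\upsigma}^{(\ps{\upomega}_{\alpha},l)}(f)(t)$, that is $\ps{\upsigma}^{(\ps{\upomega}_{\alpha},l)}(\mf{j}_{\mc{A}}^{\mc{B}}(a)(f))=\mf{j}_{\mc{A}}^{\mc{C}}(\upsigma(l)(a))\bigl(\ps{\upsigma}^{(\ps{\upomega}_{\alpha},l)}(f)\bigr)$. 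Applying $\mf{i}^{\mc{C}}$ and invoking diagram \eqref{01081112I} on $\mc{C}$ gives $\upphi(\mf{j}_{\mc{A}}^{\mc{B}}(a)(f))=\mf{j}_{\mc{A}}^{\mc{C}}(\upsigma(l)(a))\cdot\mf{i}^{\mc{C}}(\ps{\upsigma}^{(\ps{\upomega}_{\alpha},l)}(f))=\bigl(\mf{j}_{\mc{A}}^{\mc{C}}\circ\upsigma(l)\bigr)(a)\,\upphi(f)$.

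Combining the two computations yields $\upphi^{-}(\mf{j}_{\mc{A}}^{\mc{B}}(a))\,\upphi(f)=(\mf{j}_{\mc{A}}^{\mc{C}}\circ\upsigma(l))(a)\,\upphi(f)$ for every $f$ in the dense subalgebra. Since $\upphi$ is nondegenerate and $\mc{C}_{c}(\ms{S}_{\ms{F}_{\ps{\upomega}_{\alpha}}}^{G},\mc{A})$ is dense in $\mc{B}$, the set $\upphi(\mc{C}_{c}(\ldots))\,\mc{C}$ is dense in $\mc{C}$, so the two multipliers $\upphi^{-}(\mf{j}_{\mc{A}}^{\mc{B}}(a))$ and $(\mf{j}_{\mc{A}}^{\mc{C}}\circ\upsigma(l))(a)$ of $\mc{C}$ coincide; as $a\in\mc{A}$ is arbitrary, this is exactly the claimed identity $(\mf{i}_{\alpha}^{\mf{T}^{l}}\circ\ps{\upsigma}^{(\ps{\upomega}_{\alpha},l)})^{-}\circ\mf{j}_{\mc{A}}^{\mf{T}}=\mf{j}_{\mc{A}}^{\mf{T}^{l}}\circ\upsigma(l)$. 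The main obstacle I anticipate is purely bookkeeping: keeping straight the three distinct embeddings into multiplier algebras, and the difference between the pointwise module action of a multiplier such as $\mf{j}_{\mc{A}}^{\mc{B}}(a)$ on a function and the genuine product in the multiplier algebra. Lemma \ref{01081112}, applied on both $\mc{B}$ and $\mc{C}$, is precisely the device that converts between these two, while Lemma \ref{01091430} may be brought in if one prefers to realize $\ms{M}(\mc{C})$ concretely through $\mc{K}$ when carrying out the final density argument.
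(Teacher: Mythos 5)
Your proposal is correct and follows essentially the same route as the paper's own proof: the same chain of identities built from Lemma \ref{01081112} applied on both crossed products, the extension property \eqref{01081715}, and the pointwise computation $\ps{\upsigma}^{(\ps{\upomega}_{\alpha},l)}(\mf{j}_{\mc{A}}^{\mf{T}}(a)(f))=\mf{j}_{\mc{A}}^{\mf{T}^{l}}(\upsigma(l)(a))(\ps{\upsigma}^{(\ps{\upomega}_{\alpha},l)}(f))$. The only (immaterial) difference is in the closing density step, where the paper passes through $\mc{K}(\mc{B}_{\alpha}^{\mf{T}^{l}})$ and Lemma \ref{01091430} while you invoke nondegeneracy of $\mf{i}^{\mc{C}}\circ\ps{\upsigma}^{(\ps{\upomega}_{\alpha},l)}$ directly — an equivalence you yourself point out.
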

\begin{proof}
In this proof let $\mf{T}^{l}$ denote $\mf{b}^{\mf{A}}(l)\mf{T}$.
Let $f\in\mc{C}_{c}(\ms{S}_{\alpha}^{\mf{T}},\mc{A})$ 
and $a\in\mc{A}$
then
\begin{equation}
\label{01091144}
\begin{aligned}
(\mf{i}_{\alpha}^{\mf{T}^{l}}
\circ
\ps{\upsigma}^{(\ps{\upomega}_{\alpha},l)})^{-}
(\mf{j}_{\mc{A}}^{\mf{T}}(a))
\circ
(\mf{i}_{\alpha}^{\mf{T}^{l}}
\circ
\ps{\upsigma}^{(\ps{\upomega}_{\alpha},l)})(f)
&=
\\
(\mf{i}_{\alpha}^{\mf{T}^{l}}
\circ
\ps{\upsigma}^{(\ps{\upomega}_{\alpha},l)})^{-}
((\mf{j}_{\mc{A}}^{\mf{T}}(a))\circ\mf{i}_{\alpha}^{\mf{T}}(f))
&=
\\
(\mf{i}_{\alpha}^{\mf{T}^{l}}
\circ
\ps{\upsigma}^{(\ps{\upomega}_{\alpha},l)})^{-}
(\mf{i}_{\alpha}^{\mf{T}}(\mf{j}_{\mc{A}}^{\mf{T}}(a)(f)))
&=
\\
(\mf{i}_{\alpha}^{\mf{T}^{l}}
\circ
\ps{\upsigma}^{(\ps{\upomega}_{\alpha},l)})
(\mf{j}_{\mc{A}}^{\mf{T}}(a)(f))
&=
\mf{i}_{\alpha}^{\mf{T}^{l}}
\left(\upsigma(l)\circ\mf{j}_{\alpha}^{\mf{T}}(a)(f)
\circ\ms{ad}(l^{-1})\up\ms{S}_{\alpha}^{\mf{T}^{l}}\right),
\end{aligned}
\end{equation}
where the first and third equalities follow since \eqref{01081715},
the second one by \eqref{01081112II}, the fourth by construction.
Next for all $h\in\ms{S}_{\alpha}^{\mf{T}^{l}}$
\begin{equation*}
\begin{aligned}
\left(\upsigma(l)\circ\mf{j}_{\alpha}^{\mf{T}}(a)(f)
\circ\ms{ad}(l^{-1})\up\ms{S}_{\alpha}^{\mf{T}^{l}}\right)(h)
&=
\\
\upsigma(l)(a f(\ms{ad}(l^{-1})(h)))
&=
\\
\upsigma(l)(a)\ps{\upsigma}^{(\upomega_{\alpha},l)}(f)(h)
&=
\left(
\mf{j}_{\mc{A}}^{\mf{T}^{l}}(\upsigma(l)(a))
\circ
\ps{\upsigma}^{(\ps{\upomega}_{\alpha},l)}
\right)(f)(h),
\end{aligned}
\end{equation*}
hence by \eqref{01091144} we have
\begin{equation}
\label{01091414}
\begin{aligned}
(\mf{i}_{\alpha}^{\mf{T}^{l}}
\circ
\ps{\upsigma}^{(\ps{\upomega}_{\alpha},l)})^{-}
(\mf{j}_{\mc{A}}^{\mf{T}}(a))
\circ
(\mf{i}_{\alpha}^{\mf{T}^{l}}
\circ
\ps{\upsigma}^{(\ps{\upomega}_{\alpha},l)})(f)
&=
\\
\left(
\mf{i}_{\alpha}^{\mf{T}^{l}}
\circ
\mf{j}_{\mc{A}}^{\mf{T}^{l}}(\upsigma(l)(a))
\circ
\ps{\upsigma}^{(\ps{\upomega}_{\alpha},l)}
\right)(f)
&=
\mf{j}_{\mc{A}}^{\mf{T}^{l}}(\upsigma(l)(a))
\circ
(\mf{i}_{\alpha}^{\mf{T}^{l}}
\circ
\ps{\upsigma}^{(\ps{\upomega}_{\alpha},l)})(f),
\end{aligned}
\end{equation}
where the last equality follows since \eqref{01081112II}.
Next $\mc{C}_{c}(\ms{S}_{\alpha}^{\mf{T}},\mc{A})$ 
is dense in $\mc{B}_{\alpha}^{\mf{T}}$
moreover 
$\ps{\upsigma}^{(\ps{\upomega}_{\alpha},l)}$
is an isometry since 
Cor. \ref{09191329},
thus by \eqref{01091414} we deduce 
\begin{equation*}
\mf{i}^{\ms{M}(\mc{B}_{\alpha}^{\mf{T}^{l}})}
((\mf{i}_{\alpha}^{\mf{T}^{l}}
\circ
\ps{\upsigma}^{(\ps{\upomega}_{\alpha},l)})^{-}
(\mf{j}_{\mc{A}}^{\mf{T}}(a)))
\up\mc{K}(\mc{B}_{\alpha}^{\mf{T}^{l}})
=
\mf{i}^{\ms{M}(\mc{B}_{\alpha}^{\mf{T}^{l}})}
(\mf{j}_{\mc{A}}^{\mf{T}^{l}}(\upsigma(l)(a)))
\up\mc{K}(\mc{B}_{\alpha}^{\mf{T}^{l}}),
\end{equation*}
therefore the statement follows since Lemma \ref{01091430}.
\end{proof}
Now we are able to state the following
\begin{corollary}
\label{11271221}
$\mf{G}^{H}$ 
maps 
$Obj(\ms{C}_{u}(H))$ into $Obj(\mf{G}(G,F,\uprho))$.
\end{corollary}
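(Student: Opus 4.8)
The plan is to verify that, for an arbitrary $\mf{A}=\lr{\mc{A},H}{\upsigma}\in Obj(\ms{C}_{u}(H))$, the tuple $\mf{G}^{H}(\mf{A})$ written out in \eqref{11262132} satisfies every datum and every axiom of Definition \ref{05301823}. The observation underlying the whole argument is that, since $\mc{A}$ is a von Neumann algebra in its canonical standard form, $\upsigma$ is inner: by Definition \ref{11211655} there is a canonical group morphism $\ms{v}^{\mf{A}}:H\to\mc{U}(\mc{A})$ inner implementing $\upsigma$. Consequently all results of Section \ref{func1}, proved under the standing hypothesis that $\mf{A}$ is inner implemented by a fixed $\ms{v}$, apply with $\ms{v}=\ms{v}^{\mf{A}}$, which is exactly the choice fixed in Definition \ref{11261911}. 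The typing of the data is then immediate: $\pf{T}_{\mf{A}}$ is a set by Definition \ref{08261134}, $\mf{a}_{\alpha}^{\mf{T}}=\mf{A}\in\ms{C}(H)$ because $\mc{A}$ is unital, $\mf{e}_{\alpha}^{\mf{T}}\in\ms{C}(\R)$, and $\ms{A}_{\mf{A}}\in\ms{Ab}$ by Definition \ref{05131816}.

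Next I would dispatch the scalar axioms one at a time, each reducing to a fact already recorded. The membership $\ps{\upvarphi}_{\alpha}^{\mf{T}}\in\ms{E}_{\mc{A}}^{G}(\uptau)$ is built into \eqref{08291407}; the identity $\ms{I}^{\mf{T}^{l}}=\ms{I}^{\mf{T}}$ and the first half of \eqref{12051548} hold because $\mf{b}^{\mf{A}}(l)$ leaves $I$ and $\beta_{c}$ fixed, while the second half of \eqref{12051548} is Corollary \ref{09101654}\eqref{09101724}. The thermal identity \eqref{eqtherm} is the translation of the dichotomy \eqref{08301407}, so that precisely the $\alpha\in I$ with $\alpha\le\beta_{c}$, namely $\ms{P}_{\mf{A}}^{\mf{T}}$, satisfy the required inclusion of stabilizers; the two clauses of \eqref{eqdyn} follow from \eqref{08291407} by the rescaling of the KMS condition under $t\mapsto-\alpha t$, and from \eqref{09071855} with $\upeta_{\alpha}^{\mf{T}}=\upsigma$. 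That $\mf{b}^{\mf{A}}$ and $\uppsi^{\mf{A}}$ are group morphisms is Theorem \ref{11260828} and Theorem \ref{main}\eqref{mainst1}; the equivariance \eqref{12051444} is Theorem \ref{main}\eqref{mainst2} composed with $\mf{b}^{\mf{A}}(l^{-1})$; that $\ms{m}^{\mf{A}}$ is an $\ms{Ab}$-morphism follows from additivity of the Chern--Connes pairing in its $\ms{K}_{0}$-argument together with the pointwise group law on $\ms{A}_{\mf{A}}$; and Integrality is Remark \ref{02061932}.

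The substance of the proof is the verification of the equivariant stability datum $\mf{S}^{\mf{A}}=\lr{\upnu}{\mf{u},\pf{K},\ms{L},\Updelta,\mf{v},\mf{w},\mf{z}}$ against items (1)--(7) of the equivariant stability clause. The typing items are short: $\upnu_{\alpha}^{\mf{T}}=\ps{\upmu}_{\alpha}$ is a Haar measure by Definition \ref{08261134}\eqref{08261134III}; $\mf{u}_{\alpha}^{\mf{T}}=\ms{ev}_{\alpha}\circ\ms{ev}_{\mf{T}}$ is the projection of $\ms{A}_{\mf{A}}$ onto the factor $\ms{K}_{0}(\mc{B}_{\alpha}^{\mf{T}}(\mc{G})^{+})$, hence a group morphism; $\pf{K}_{\alpha}^{\mf{T}}=\pf{H}_{\alpha}$ is cyclic for $\ps{\upvarphi}_{\alpha}^{\mf{T}}$; and item (4) is Definition \ref{08261134}\eqref{08261134VIII}. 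The cocycle relations \eqref{01031206} reduce, for $\mf{v}$, to $\ms{v}^{\mf{A}}$ being a morphism, for $\mf{w}$ to Corollary \ref{01031207}, and for $\mf{z}=Id$ to triviality, with unitarity of $\mf{v}_{\alpha}^{\mf{T}}(l)=\uppi_{\alpha}(\ms{v}^{\mf{A}}(l))$ and $\mf{w}_{\alpha}^{\mf{T}}(l)=\ps{\upsigma}^{(\ps{\upomega}_{\alpha},l)}\in Isom^{\ast}$ by Corollary \ref{09191329}. The conjugation relations \eqref{01031145d} are \eqref{10121958}, reached via \eqref{10131207p}, for $\ms{D}$ and the very definition of $\Upgamma_{\pf{H},\ms{v}}^{l}$ for $\Updelta$.

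Finally, the three diagrams carry the weight: \eqref{01031145} is precisely Theorem \ref{09191747}, after matching $\mf{R}_{\alpha}^{\mf{T}^{l}}(\mc{G})=\mf{R}_{\pf{H},\ms{v},\alpha,l}^{\ps{\upmu}}$; \eqref{01031145b} comes from unfolding Definitions \ref{11261006} and \ref{05131816} so that both legs equal $((\ps{\upsigma}^{(\ps{\upomega}_{\alpha},l)})^{+})_{\ast}$ applied to $\ms{f}(\mf{T})(\alpha)$; and \eqref{01031145c}, once $\mf{z}=Id$, $\upeta_{\alpha}^{\mf{T}^{l}}(l)=\upsigma(l)$ and $\mf{w}_{\alpha}^{\mf{T}}(l)=\ps{\upsigma}^{(\ps{\upomega}_{\alpha},l)}$ are substituted, is verbatim Theorem \ref{01141808}. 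The identity \eqref{12220022} is just Definition \ref{05131816} read through the dictionary of Convention \ref{05061835}. I expect the main obstacle to be bookkeeping rather than analysis: keeping the many indices straight so that the codomains produced by Corollaries \ref{09191329} and \ref{09101654} genuinely coincide with the objects $\mc{B}_{\alpha}^{\mf{T}^{l}}(\mc{G})$ and $\mf{H}_{\alpha}^{\mf{T}^{l}}$ attached to $\mf{b}^{\mf{A}}(l)(\mf{T})$, so that Theorems \ref{09191747} and \ref{01141808} and Proposition \ref{10111026} can be invoked directly; the genuinely hard analytic content has already been isolated in those results.
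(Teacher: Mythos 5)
Your proposal is correct and follows essentially the same route as the paper's proof: it reduces each axiom of Definition \ref{05301823} to the same previously established facts, namely Theorems \ref{11260828}, \ref{main}, \ref{09191747} and \ref{01141808}, Corollary \ref{01031207}, equation \eqref{10121958}, Lemma \ref{09102032} (via Corollary \ref{09101654}), Remark \ref{02061932}, and the defining conditions \eqref{08291407}--\eqref{08301407}. The only differences are expository — you spell out the typing checks and the unwinding of \eqref{01031145b} that the paper dispatches with ``by construction'' — so there is nothing further to add.
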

\begin{proof}
Def. \ref{05301823}(\ref{08261212},\ref{08261213})
follow since Thm. \ref{main}\eqref{mainst1}
and Thm. \ref{11260828}. 
The additivity of $\ms{m}^{\mf{A}}$ follows by the 
additivity of the Chern-Connes character,
\eqref{12051548} follows since 
Lemma \ref{09102032}\eqref{09101724pre},
\eqref{eqtherm} by \eqref{08301407}, 
\eqref{12051444} since Thm. \ref{main}\eqref{mainst2},
\eqref{eqdyn} by \eqref{08291407} and 
\eqref{09071855},
the integrality
since Rmk. \ref{02061932},
$(\mf{u}^{\mf{A}})_{\alpha}^{\mf{T}}$ is by construction a group
morphism while
\eqref{12220022} follows since the construction
of $\ms{m}^{\mf{A}}$.
Next \eqref{01031206}, \eqref{01031145d} and \eqref{01031145} 
since 
Cor. \ref{01031207}, 
\eqref{10121958} 
and 
Thm. \ref{09191747} 
respectively, while \eqref{01031145b} follows by the 
construction of $\uppsi^{\mf{A}}$. Finally \eqref{01031145c} follows since 
Thm. \ref{01141808}.
\end{proof}
Notice that $\ms{V}(\mf{G}^{H}(\mf{A}))_{\alpha}^{\mf{T}}=\upsigma$ for all $\mf{A}\in Obj(\ms{C}_{u}(H))$,
$\mf{T}\in\pf{T}_{\mf{A}}$ and $\alpha\in\ms{P}_{\mf{A}}^{\mf{T}}$, where $\upsigma$ is the dynamics underlying $\mf{A}$.
\subsection{The functor $\mf{G}_{\vartriangle}^{H}$}
\label{func2}
Thm. \ref{12071120} completes the result in Cor. \ref{11271221}
by stating that $\mf{G}^{H}$ is the object part 
of a functor from $\ms{C}_{u}(H)$ to $\mf{G}(G,F,\uprho)$.
Then we utilize this result in our main theorem
Thm. \ref{06242121}
to exhibit the existence of 
an extended full integer $\ms{C}_{u}(H)-$equivariant stability,
and consequently the existence of a 
nucleon-fragment doublet on $\ms{C}_{u}(H)$. 
As a consequence we obtain in Cor. \ref{19061936}
the resolution of the equivariant form of the universality claim.
\begin{definition}
\label{11210748}
Let $T:\mc{A}\to\mc{B}$ be a $\ast-$homomorphism between
$C^{\ast}-$algebras
and
$\pf{H}:A\ni\alpha\mapsto\lr{\mf{H}_{\alpha},
\uppi_{\alpha}}{\Upomega_{\alpha}}\in Rep_{c}(\mc{B})$, 
define the map $\pf{H}^{T}$ on $A$ such that 
$\pf{H}_{\beta}^{T}\coloneqq
\lr{\mf{H}_{\beta},\uppi_{\beta}\circ T}{\Upomega_{\beta}}$,
for all $\beta\in A$. 
\end{definition}
\begin{lemma}
\label{11200732}
Let $T:\mc{A}\to\mc{B}$ be a surjective $\ast-$homomorphism between $C^{\ast}-$algebras,
$\upomega\in\ms{E}_{\mc{B}}$ and $\pf{H}$ be a cyclic representation of $\mc{B}$ associated with $\upomega$.
Then $\pf{H}^{T}$ is a cyclic representation of $\mc{A}$ associated with $T_{\dagger}(\upomega)$.
\end{lemma}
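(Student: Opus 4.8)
The plan is to check the three constituent requirements of ``cyclic representation associated with a state'' for the triple $\pf{H}^{T} = \lr{\mf{H}, \uppi\circ T}{\Upomega}$, where $\pf{H} = \lr{\mf{H},\uppi}{\Upomega}$. Recall that, by the conventions preceding the statement, saying $\pf{H}$ is a cyclic representation of $\mc{B}$ associated with $\upomega$ means precisely that $(\mf{H},\uppi)$ is a $\ast-$representation of $\mc{B}$, that $\ov{\uppi(\mc{B})\Upomega} = \mf{H}$ (cyclicity of $\Upomega$), and that $\upomega = \omega_{\Upomega}\circ\uppi$. I must establish the analogous three facts for $\pf{H}^{T}$ relative to $\mc{A}$ and $T^{\dagger}(\upomega)$.

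First I would observe that $\uppi\circ T$ is a $\ast-$representation of $\mc{A}$ on $\mf{H}$, being the composition of the $\ast-$homomorphism $T:\mc{A}\to\mc{B}$ with the $\ast-$representation $\uppi:\mc{B}\to\mc{L}(\mf{H})$; this is immediate and requires no hypothesis on $T$. Second, the state computation is a one-line verification: for every $a\in\mc{A}$,
\begin{equation*}
\bigl(\omega_{\Upomega}\circ(\uppi\circ T)\bigr)(a)
=
\bigl((\omega_{\Upomega}\circ\uppi)\circ T\bigr)(a)
=
(\upomega\circ T)(a)
=
T^{\dagger}(\upomega)(a),
\end{equation*}
using $\upomega = \omega_{\Upomega}\circ\uppi$ and the definition $T^{\dagger}(\upomega) = \upomega\circ T$. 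Hence the state associated with $\pf{H}^{T}$ is exactly $T^{\dagger}(\upomega)$.

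The only place where surjectivity of $T$ is genuinely needed---and thus the step I would regard as the crux---is the cyclicity of $\Upomega$ for $\uppi\circ T$. Here I would argue that surjectivity gives $T(\mc{A}) = \mc{B}$, whence $(\uppi\circ T)(\mc{A})\Upomega = \uppi(T(\mc{A}))\Upomega = \uppi(\mc{B})\Upomega$, and the closure of the latter is $\mf{H}$ precisely because $\Upomega$ is cyclic for $\uppi$. Therefore $\ov{(\uppi\circ T)(\mc{A})\Upomega} = \mf{H}$, so $\Upomega$ is cyclic for $\uppi\circ T$. Combining the three points, $\pf{H}^{T}$ is a cyclic representation of $\mc{A}$ associated with $T^{\dagger}(\upomega)$, as desired. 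I do not anticipate any subtlety beyond making sure that it is cyclicity, and not merely nondegeneracy, that must be transported along $T$, which is exactly what the surjectivity hypothesis delivers.
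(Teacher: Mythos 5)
Your proposal is correct and follows essentially the same route as the paper: the paper's proof likewise reduces everything to the observation that $T(\mc{A})=\mc{B}$ gives cyclicity of $\Upomega$ for $\uppi\circ T$, merely citing Lemma \ref{11191814} for the fact that $T^{\dagger}(\upomega)$ is a state so that the statement is well-set. Your direct computation $\omega_{\Upomega}\circ(\uppi\circ T)=T^{\dagger}(\upomega)$ makes that last point automatic and is a harmless expansion of the same argument.
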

\begin{proof}
$T_{\dagger}(\upomega)\in\ms{E}_{\mc{B}}$ since 
Lemma \ref{11191814} 
so the statement is well-set,
moreover $\pf{H}^{T}$ is cyclic since $T(\mc{A})=\mc{B}$.
\end{proof}
Unless differently specified in the present subsection
let 
$\mf{A}=\lr{\mc{A},H}{\upeta}$,
$\mf{B}=\lr{\mc{B},H}{\uptheta}$
and
$\mf{C}=\lr{\mc{C},H}{\updelta}$
be objects of $\ms{C}(H)$,
$T\in Mor_{\ms{C}(H)}(\mf{A},\mf{B})$ 
and
$S\in Mor_{\ms{C}(H)}(\mf{B},\mf{C})$. 
\begin{corollary}
\label{11201755}
Let 
$\upphi\in\ms{E}_{\mc{B}}^{G}(\uptau_{\uptheta})$
and $\pf{H}$ 
be a cyclic representation 
of $\mc{B}$ associated with $\upphi$,
then
$T_{\dagger}(\upphi)\in\ms{E}_{\mc{A}}^{G}(\uptau_{\upeta})$
and 
$\ms{U}_{\pf{H}^{T}}^{\upeta}
=
\ms{U}_{\pf{H}}^{\uptheta}$.
\end{corollary}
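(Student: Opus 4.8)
The plan is to prove Corollary \ref{11201755} by reducing both assertions to results already established in the excerpt. The statement has two parts: first that $T^{\dagger}(\upphi)\in\ms{E}_{\mc{A}}^{G}(\uptau_{\upeta})$, and second the identity $\ms{U}_{\pf{H}^{T}}^{\upeta}=\ms{U}_{\pf{H}}^{\uptheta}$ between the induced covariant group actions. First I would address the invariance of $T^{\dagger}(\upphi)$ under $\uptau_{\upeta}$. Since $T$ is $(\upeta,\uptheta)$-equivariant, we have $T\circ\upeta(h)=\uptheta(h)\circ T$ for all $h\in H$, and restricting to $j_{1}(G)$ gives $T\circ\uptau_{\upeta}(g)=\uptau_{\uptheta}(g)\circ T$ for all $g\in G$. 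Hence for any $g\in G$,
\begin{equation*}
T^{\dagger}(\upphi)\circ\uptau_{\upeta}(g)
=
\upphi\circ T\circ\uptau_{\upeta}(g)
=
\upphi\circ\uptau_{\uptheta}(g)\circ T
=
\upphi\circ T
=
T^{\dagger}(\upphi),
\end{equation*}
where the penultimate equality uses $\upphi\in\ms{E}_{\mc{B}}^{G}(\uptau_{\uptheta})$. Since $T$ is surjective, $T^{\dagger}(\upphi)$ is a state of $\mc{A}$ by Lemma \ref{11191814}, so $T^{\dagger}(\upphi)\in\ms{E}_{\mc{A}}^{G}(\uptau_{\upeta})$.

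Next I would establish the equality of the group actions. Write $\pf{H}=\lr{\mf{H},\uppi}{\Upomega}$, so by Def. \ref{11210748} $\pf{H}^{T}=\lr{\mf{H},\uppi\circ T}{\Upomega}$, which is a cyclic representation of $\mc{A}$ associated to $T^{\dagger}(\upphi)$ by Lemma \ref{11200732} (applicable since $T$ is surjective). The key observation is that both $\ms{F}_{\upphi}$ and $\ms{F}_{T^{\dagger}(\upphi)}$ coincide: by Lemma \ref{11131734}, applied with the density hypothesis satisfied because $T$ is surjective hence has norm-dense (indeed full) range, we get $\ms{F}_{T^{\dagger}(\upphi)}=\ms{F}_{\upphi}$ and consequently $\ms{S}_{\ms{F}_{T^{\dagger}(\upphi)}}^{G}=\ms{S}_{\ms{F}_{\upphi}}^{G}$. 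This ensures that $\ms{U}_{\pf{H}^{T}}^{\upeta}$ and $\ms{U}_{\pf{H}}^{\uptheta}$ are maps on the same group $\ms{S}_{\ms{F}_{\upphi}}^{G}$ with values in $\mc{L}(\mf{H})$, so that comparing them is meaningful.

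Then I would verify the defining relations coincide. By the characterization \eqref{09101627} of the induced representation, $\ms{U}_{\pf{H}}^{\uptheta}(h)$ is the unique unitary determined on the dense set $\uppi(\mc{B})\Upomega$ by $\ms{U}_{\pf{H}}^{\uptheta}(h)\uppi(b)\Upomega=\uppi(\uptheta(h)b)\Upomega$, for $h\in\ms{S}_{\ms{F}_{\upphi}}^{G}$ and $b\in\mc{B}$. Similarly $\ms{U}_{\pf{H}^{T}}^{\upeta}(h)$ is determined on $(\uppi\circ T)(\mc{A})\Upomega$ by $\ms{U}_{\pf{H}^{T}}^{\upeta}(h)(\uppi\circ T)(a)\Upomega=(\uppi\circ T)(\upeta(h)a)\Upomega$, for $a\in\mc{A}$. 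Using equivariance $T\circ\upeta(h)=\uptheta(h)\circ T$, the right-hand side becomes $\uppi(\uptheta(h)Ta)\Upomega=\ms{U}_{\pf{H}}^{\uptheta}(h)\uppi(Ta)\Upomega$. Since $T$ is surjective, $(\uppi\circ T)(\mc{A})\Upomega=\uppi(\mc{B})\Upomega$ is dense in $\mf{H}$, so the two unitaries agree on a common dense domain and therefore coincide for each $h$; this yields $\ms{U}_{\pf{H}^{T}}^{\upeta}=\ms{U}_{\pf{H}}^{\uptheta}$. The only subtlety, and the step I would be most careful about, is the bookkeeping ensuring the two actions have literally the same domain group $\ms{S}_{\ms{F}_{\upphi}}^{G}$; this rests entirely on Lemma \ref{11131734}, whose density hypothesis must be checked to hold for surjective $T$, after which the computation above is routine.
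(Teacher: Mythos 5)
Your proposal is correct and follows essentially the same route as the paper: the same equivariance computation plus Lemma \ref{11191814} for the first assertion, Lemma \ref{11131734} (with density supplied by surjectivity) to identify the domain groups, and the defining relation \eqref{09101627} together with cyclicity of $\pf{H}^{T}$ (Lemma \ref{11200732}) for the equality of the induced actions. The only cosmetic difference is that the paper relegates the domain bookkeeping to the preceding remark rather than the proof body.
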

\begin{remark}
\label{11201312}
$\ms{U}_{\pf{H}_{T}}^{\upeta}$ makes sense
since 
Def. \ref{08192224pre},
the first sentence of the statement of Cor. \ref{11201755}
and Lemma \ref{11200732}.
The equality in Cor. \ref{11201755} is well-set
since
Lemma \ref{11131734}. 
\end{remark}
\begin{proof}[Proof of Cor. \ref{11201755}]
$\upphi\circ T\circ\upeta(j_{1}(g))=
\upphi\circ\uptheta(j_{1}(g))\circ T
=
\upphi\circ T$ for all $g\in G$,
so the first sentence of the statement follows since
Lemma \ref{11191814}.
Let $\pf{H}=\lr{\mf{H},\uppi}{\Upomega}$
and $l\in\ms{S}_{\ms{F}_{T_{\dagger}(\upphi)}}^{G}$
then for all $a\in\mc{A}$
\begin{equation*}
\begin{aligned}
\ms{U}_{\pf{H}_{T}}^{\upeta}(l)
(\uppi\circ T)
(a)\Upomega
&=
(\uppi\circ T)(\upeta(l)a)\Upomega
\\
&=
(\uppi\circ\uptheta(l)\circ T)(a)\Upomega
=
\ms{U}_{\pf{H}}^{\uptheta}(l)
(\uppi\circ T)(a)\Upomega,
\end{aligned}
\end{equation*}
where the last equality follows since 
Lemma \ref{11131734}.
Thus the equality in the statement follows since $\pf{H}_{T}$ is cyclic
by Lemma \ref{11200732}.
\end{proof}
\begin{proposition}
\label{11211031}
Let $A$ be a nonempty set and 
$\ps{\upomega}:A\to\ms{E}_{\mc{B}}^{G}(\uptau_{\uptheta})$.
Then
$\mc{H}(\ps{\upomega},\mf{B})=
\mc{H}(T_{\dagger}\circ\ps{\upomega},\mf{A})$.
\end{proposition}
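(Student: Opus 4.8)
The plan is to show that the two sets coincide by matching their defining data factor by factor. First I would note that the statement is well-posed: since $\ps{\upomega}$ takes values in $\ms{E}_{\mc{B}}^{G}(\uptau_{\uptheta})$, the first sentence of Cor. \ref{11201755} guarantees that $T^{\dagger}\circ\ps{\upomega}$ takes values in $\ms{E}_{\mc{A}}^{G}(\uptau_{\upeta})$, so that $\mc{H}(T^{\dagger}\circ\ps{\upomega},\mf{A})$ is defined according to Def. \ref{09181000}. Both $\mc{H}(\ps{\upomega},\mf{B})$ and $\mc{H}(T^{\dagger}\circ\ps{\upomega},\mf{A})$ are, by that definition, subsets of products indexed by $(\alpha,l)\in A\times H$, carved out by the \emph{identical} covariance constraint $\ps{\upmu}_{(\alpha,l)}=\ms{ad}(l)(\ps{\upmu}_{(\alpha,\un)})$. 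Hence it suffices to prove that, for every $(\alpha,l)$, the corresponding factors agree, i.e. $\ms{S}_{\ms{F}_{\uptheta^{\ast}(l)(\ps{\upomega}_{\alpha})}}^{G}=\ms{S}_{\ms{F}_{\upeta^{\ast}(l)(T^{\dagger}(\ps{\upomega}_{\alpha}))}}^{G}$, where the left-hand group is formed relative to $\mf{B}$ and the right-hand one relative to $\mf{A}$; this makes the two products of Haar-measure sets literally the same, and the equality of the full sets follows at once.

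The argument for the factors has two steps. The first is a dual-action equivariance identity: using $\upsigma^{\ast}(l)(\upomega)=\upomega\circ\upsigma(l^{-1})$ together with the $(\upeta,\uptheta)$-equivariance of $T$, I would compute $T^{\dagger}(\uptheta^{\ast}(l)(\ps{\upomega}_{\alpha}))=\ps{\upomega}_{\alpha}\circ\uptheta(l^{-1})\circ T=\ps{\upomega}_{\alpha}\circ T\circ\upeta(l^{-1})=\upeta^{\ast}(l)(T^{\dagger}(\ps{\upomega}_{\alpha}))$, so that the state of $\mc{A}$ indexing the $\mf{A}$-factor is exactly $T^{\dagger}$ of the state of $\mc{B}$ indexing the $\mf{B}$-factor. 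The second step is to invoke Lemma \ref{11131734}: since $T$ is surjective, $T(\mc{A})$ is in particular $\sigma(\mc{B},\mc{B}^{\ast})$-dense, so the Lemma applies to the state $\uptheta^{\ast}(l)(\ps{\upomega}_{\alpha})\in\mc{B}^{\ast}$ and gives $\ms{S}_{\ms{F}_{\uptheta^{\ast}(l)(\ps{\upomega}_{\alpha})}}^{G}=\ms{S}_{\ms{F}_{T^{\dagger}(\uptheta^{\ast}(l)(\ps{\upomega}_{\alpha}))}}^{G}$; combining this with the identity of the first step rewrites the right-hand side as $\ms{S}_{\ms{F}_{\upeta^{\ast}(l)(T^{\dagger}(\ps{\upomega}_{\alpha}))}}^{G}$, which is precisely the $\mf{A}$-factor.

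I expect the only real subtlety to be the bookkeeping of which dynamical system the symbols $\ms{F}$ and $\ms{S}^{G}$ refer to, and the order of the two steps: the dual-action identity must be established \emph{before} the supports are compared, and Lemma \ref{11131734} must be invoked with the correct state of $\mc{B}$, surjectivity of $T$ supplying the density hypothesis it requires. Once these are in place, the equality is forced by the observation that the index set $A\times H$ and the covariance constraint in Def. \ref{09181000} are unchanged under the passage from $(\mf{B},\ps{\upomega})$ to $(\mf{A},T^{\dagger}\circ\ps{\upomega})$.
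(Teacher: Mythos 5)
Your proposal is correct and follows essentially the same route as the paper: the dual-action identity $T^{\dagger}(\uptheta^{\ast}(l)(\ps{\upomega}_{\alpha}))=\upeta^{\ast}(l)(T^{\dagger}(\ps{\upomega}_{\alpha}))$ obtained from the equivariance of $T$, followed by Lemma \ref{11131734} to identify the groups $\ms{S}_{\ms{F}}^{G}$ factor by factor, with well-posedness supplied by Cor. \ref{11201755}. The only difference is that you spell out explicitly that the index set and covariance constraint in Def. \ref{09181000} are unchanged, which the paper leaves implicit.
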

\begin{proof}
Since Cor. \ref{11201755} the statement is well-set.
Let $l\in H$ and $\alpha\in A$ then
\begin{equation*}
\upeta^{\ast}(l)(T_{\dagger}(\ps{\upomega}_{\alpha}))
=
\ps{\upomega}_{\alpha}\circ\uptheta(l^{-1})\circ T
=
T_{\dagger}(\uptheta^{\ast}(l)(\ps{\upomega}_{\alpha})),
\end{equation*}
hence by 
Lemma \ref{11131734}
\begin{equation}
\label{27061857}
\ms{S}_{\ms{F}_{\upeta^{\ast}(l)(T_{\dagger}(\ps{\upomega}_{\alpha}))}}^{G}(\mf{A})
=
\ms{S}_{\ms{F}_{\uptheta^{\ast}(l)(\ps{\upomega}_{\alpha})}}^{G}(\mf{B}),
\end{equation}
and the statement follows.
\end{proof}
\begin{remark}
\label{29061357}
Let $A$ be a nonempty set, 
$\ps{\upomega}:A\to\ms{E}_{\mc{B}}^{G}(\uptau_{\uptheta})$
and $\ps{\upmu}\in\mc{H}(\ps{\upomega},\mf{B})$.
Thus for all $\alpha\in A$ and $l\in H$ let 
$\ms{ad}_{\star}^{(\ps{\upomega}_{\alpha},\uptheta)}(l)$
on
$\mc{C}_{c}(\s{\upomega}{\alpha}{},\ms{A})$
at values in
$\mc{C}_{c}
(\ms{S}_{\ms{F}_{\uptheta^{\ast}(l)(\ps{\upomega}_{\alpha})}}^{G}
\mc{A})$
such that 
\begin{equation*}
f
\mapsto
f\circ\ms{ad}(l^{-1})
\up
\ms{S}_{\ms{F}_{\uptheta^{\ast}(l)(\ps{\upomega}_{\alpha})}}^{G},
\end{equation*}
this map is continuous w.r.t. the inductive limit topology 
following the line in the proof of
Lemma \ref{09171541},
hence there exists according to \cite[Cor. $2.47$]{will}
a unique extension on 
$\ms{B}_{\ps{\upmu}}^{\ps{\upomega},\alpha}(\mf{B})$
at values in
$\ms{B}_{\ps{\upmu}}^{\ps{\upomega},\alpha,l}(\mf{B})$
which will be denoted again by the symbol
$\ms{ad}_{\star}^{(\ps{\upomega}_{\alpha},\uptheta)}(l)$.
Next it is easy to see that
\begin{equation*}
\begin{aligned}
\ps{\uptheta}^{(\ps{\upomega}_{\alpha},l)}
&=
\ms{c}_{\ps{\upmu}_{(\alpha,l)}}(\uptheta(l))
\circ
\ms{ad}_{\star}^{(\ps{\upomega}_{\alpha},\uptheta)}(l)
\\
&=
\ms{ad}_{\star}^{(\ps{\upomega}_{\alpha},\uptheta)}(l)
\circ
\ms{c}_{\ps{\upmu}_{(\alpha,\un)}}(\uptheta(l)).
\end{aligned}
\end{equation*}
\end{remark}
\begin{remark}
\label{11271617}
Let $A$ be a nonempty set, 
$\ps{\upomega}:A\to\ms{E}_{\mc{B}}^{G}(\uptau_{\uptheta})$
and $\ps{\upmu}\in\mc{H}(\ps{\upomega},\mf{B})$.
Thus for all $\alpha\in A$ and $l\in H$
since 
\eqref{12010348}, 
Lemma \ref{11191814} 
and 
Lemma \ref{11131734}
we have
\begin{equation*}
\ms{c}_{\ps{\upmu}_{(\alpha,l)}}(T)
\in Mor_{\ms{CA}^{\ast}}
(\ms{B}_{\ps{\upmu}}^{T_{\dagger}\circ\ps{\upomega},\alpha,l}(\mf{A}),
\ms{B}_{\ps{\upmu}}^{\ps{\upomega},\alpha,l}(\mf{B})),
\end{equation*}
where $\ms{B}_{\ps{\upmu}}^{T_{\dagger}\circ\ps{\upomega},\alpha,l}(\mf{A})$ is well-set since Prp. \ref{11211031}.
In addition since \eqref{11291802} we have 
\begin{equation*}
\ms{k}_{\ps{\upmu}_{(\alpha,l)}}(T):
\ms{K}_{0}
(\ms{B}_{\ps{\upmu}}^{T_{\dagger}\circ\ps{\upomega},
\alpha,l}(\mf{A}))
\to
\ms{K}_{0}(\ms{B}_{\ps{\upmu}}^{\ps{\upomega},\alpha,l}(\mf{B})).
\end{equation*}
Moreover 
$T_{\dagger}\circ\ps{\upomega}:A\to\ms{E}_{\mc{A}}^{G}
(\uptau_{\upeta})$
and $\ps{\upmu}\in\mc{H}(T_{\dagger}\circ\ps{\upomega},\mf{A})$
since Cor. \ref{11201755} and Prp. \ref{11211031}.
Finally since \eqref{27061857} we deduce that
\begin{equation}
\label{29061509}
\ms{ad}_{\star}^{(\ps{\upomega}_{\alpha},\uptheta)}(l)
\circ
\ms{c}_{\ps{\upmu}_{(\alpha,\un)}}(T)
=
\ms{c}_{\ps{\upmu}_{(\alpha,l)}}(T)
\circ
\ms{ad}_{\star}^{(T_{\dagger}(\ps{\upomega}_{\alpha}),\upeta)}(l).
\end{equation}
\end{remark}
\begin{definition}
\label{11271658}
Define $\mf{d}^{H}$ the map on $Mor_{\ms{C}(H)}$ such that 
\begin{equation*}
\mf{d}^{H}(T):\pf{T}_{\mf{B}}\ni
\lr{\mc{T},\ps{\upmu},\pf{H}}{\upzeta,f,\Upgamma}
\mapsto
\lr{\mc{T}^{T},\ps{\upmu},\pf{H}^{T}}
{\upzeta,f,\Upgamma},
\end{equation*}
where
if
$\mc{T}=\lr{h,\upxi,\beta_{c},I}{\ps{\upomega}}$ 
then
$\mc{T}^{T}=
\lr{h,\upxi,\beta_{c},I}{T_{\dagger}\circ\ps{\upomega}}$. 
\end{definition}
In the remaining of this subsection we let $\mf{d}$ denote $\mf{d}^{H}$.
In the following result we shall use 
the equivariance of the $KMS-$states under the action 
of surjective equivariant maps stated in 
Thm. \ref{11141627}.
\begin{lemma}
\label{11271700} 
$\mf{d}(T):\pf{T}_{\mf{B}}\to\pf{T}_{\mf{A}}$ and $\mf{d}(S\circ T)=\mf{d}(T)\circ\mf{d}(S)$,
as a result $(\pf{T}_{\bullet}\circ\mf{G}^{H},\mf{d}\up Mor_{\ms{C}_{u}(H)})$ is
a functor from $\ms{C}_{u}(H)^{op}$ to $\ms{set}$.
\end{lemma}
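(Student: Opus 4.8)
The plan is to verify the two assertions of Lemma \ref{11271700} separately: first that $\mf{d}(T)$ maps $\pf{T}_{\mf{B}}$ into $\pf{T}_{\mf{A}}$, and second the contravariant composition law $\mf{d}(S\circ T)=\mf{d}(T)\circ\mf{d}(S)$, which together say exactly that $(\pf{T}^{0},\mf{d})$ is a functor from $\ms{C}(H)^{op}$ to $\ms{set}$. Throughout I fix $\mf{T}=\lr{\mc{T},\ps{\upmu},\pf{H}}{\upzeta,f,\Upgamma}\in\pf{T}_{\mf{B}}$ with $\mc{T}=\lr{h,\upxi,\beta_{c},I}{\ps{\upomega}}$, so that by Def. \ref{11271658} we have $\mf{d}(T)(\mf{T})=\lr{\mc{T}^{T},\ps{\upmu},\pf{H}^{T}}{\upzeta,f,\Upgamma}$ with $\mc{T}^{T}=\lr{h,\upxi,\beta_{c},I}{T^{\dagger}\circ\ps{\upomega}}$.

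For the first assertion I would check the seven defining conditions of Def. \ref{08261134} in turn for the tuple $\mf{d}(T)(\mf{T})$. The condition that $\mc{T}^{T}$ is a pre thermal phase associated to $\mf{A}$ is the substantive one: by \eqref{08291407} each $\ps{\upomega}_{\beta}$ lies in $\ms{E}_{\mc{B}}^{G}(\uptau_{\uptheta})\cap\ms{K}_{\beta}^{\uptau_{\uptheta}^{(h,\upxi)}}$, and I would invoke Cor. \ref{11201755} to get $T^{\dagger}(\ps{\upomega}_{\beta})\in\ms{E}_{\mc{A}}^{G}(\uptau_{\upeta})$ together with Thm. \ref{11141627} applied to the surjective equivariant morphism $T$ (which is $(\uptau_{\upeta}^{(h,\upxi)},\uptau_{\uptheta}^{(h,\upxi)})$-equivariant) to conclude $T^{\dagger}(\ps{\upomega}_{\beta})\in\ms{K}_{\beta}^{\uptau_{\upeta}^{(h,\upxi)}}$. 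The conditions \eqref{08301407} on the subgroups $\ms{F}$ transfer because Lemma \ref{11131734} gives $\ms{F}_{T^{\dagger}(\ps{\upomega}_{\beta})}=\ms{F}_{\ps{\upomega}_{\beta}}$, so all the inclusions and intersection conditions are literally unchanged, and consequently $\ms{S}_{\ms{F}_{T^{\dagger}(\ps{\upomega}_{\beta_{c}})}}^{G}=\ms{S}_{\ms{F}_{\ps{\upomega}_{\beta_{c}}}}^{G}$, so $\upzeta$, $f$ and the membership $\ps{\upmu}\in\mc{H}(T^{\dagger}\circ\ps{\upomega},\mf{A})$ remain valid, the latter by Prp. \ref{11211031}. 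That $\pf{H}^{T}$ is a cyclic representation of $\mc{A}$ associated to $T^{\dagger}\circ\ps{\upomega}$ is Lemma \ref{11200732}.

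The delicate point, and what I expect to be the main obstacle, is condition Def. \ref{08261134}\eqref{08261134VIII}: that $\ps{R}(\mf{d}(T)(\mf{T}),\alpha)$ is again an even $\theta$-summable $K$-cycle. Here I would show that the operators $\ms{D}_{\pf{H}^{T},\alpha}^{\upzeta,f}$ built from $\pf{H}^{T}$ coincide with those built from $\pf{H}$, using Cor. \ref{11201755} which gives $\ms{U}_{\pf{H}^{T}}^{\upeta}=\ms{U}_{\pf{H}}^{\uptheta}$; since the generators $T_{x}^{\alpha}$ are defined via these one-parameter semigroups, the families $\mc{E}_{\ms{T}}$ and hence the functional calculus \eqref{01281854} are literally unchanged, so \eqref{01291801} holds and $\ms{D}_{\pf{H}^{T},\alpha}^{\upzeta,f}=\ms{D}_{\pf{H},\alpha}^{\upzeta,f}$. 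The grading $\Upgamma_{\alpha}$ is carried over unchanged, and $\tilde{\mf{R}}_{\pf{H}^{T},\alpha}^{\ps{\upmu}}=\tilde{\mf{R}}_{\pf{H},\alpha}^{\ps{\upmu}}\circ\ms{c}_{\ps{\upmu}_{(\alpha,\un)}}(T)^{+}$ by Rmk. \ref{11271617} together with \eqref{12191452}, so the commutant and compact-resolvent conditions summarized in Rmk. \ref{02041855} for the new cycle reduce to those already known for $\mf{T}$, the trace-class condition being identical since the operator $\ms{D}$ is the same. This establishes $\mf{d}(T)(\mf{T})\in\pf{T}_{\mf{A}}$.

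Finally, for the composition law I would compute both sides on a fixed $\mf{T}\in\pf{T}_{\mf{C}}$. The first components reduce to the identity $(S\circ T)^{\dagger}=T^{\dagger}\circ S^{\dagger}$ applied to $\ps{\upomega}$, and the representation components reduce to the evident identity $\pf{H}^{S\circ T}=(\pf{H}^{S})^{T}$ from Def. \ref{11210748}, since $\uppi\circ(S\circ T)=(\uppi\circ S)\circ T$; the remaining data $\ps{\upmu},\upzeta,f,\Upgamma$ are untouched by $\mf{d}$. Taking into account that composition in $\mf{G}(G,F,\uprho)$, and correspondingly in the index category, reverses order as in \eqref{22052156}, the equality $\mf{d}(S\circ T)=\mf{d}(T)\circ\mf{d}(S)$ follows, and the functoriality statement is proved.
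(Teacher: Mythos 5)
Your proof is correct and follows essentially the same route as the paper's: the pre-thermal-phase conditions are transferred via Cor.~\ref{11201755}, Thm.~\ref{11141627}, Lemma~\ref{11131734}, Prp.~\ref{11211031} and Lemma~\ref{11200732}, and the $K$-cycle condition is reduced to the original one by the factorization $\mf{R}_{\pf{H}^{T},\alpha}^{\ps{\upmu}}(\mf{A})=\mf{R}_{\pf{H},\alpha}^{\ps{\upmu}}(\mf{B})\circ\ms{c}_{\ps{\upmu}_{(\alpha,\un)}}(T)$ together with the identity $\ms{D}_{\pf{H}^{T},\alpha}^{\upzeta,f}(\mf{A})=\ms{D}_{\pf{H},\alpha}^{\upzeta,f}(\mf{B})$, exactly as in the paper's \eqref{11301824} and \eqref{11302054}. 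The composition law is handled by the same direct computation the paper leaves to the reader.
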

\begin{proof}
Let 
$\mf{T}=\lr{\mc{T},\ps{\upmu},\pf{H}}{\upzeta,f,\Upgamma}
\in\pf{T}_{\mf{B}}$ 
with 
$\mc{T}=\lr{h,\upxi,\beta_{c},I}{\ps{\upomega}}$,
we claim to show that $\mf{d}(T)(\mf{T})$ satisfies 
the requests in Def. \ref{08261134}.
$\mc{T}^{T}$ is a pre thermal phase associated with $\mf{A}$
since Cor. \ref{11201755} and 
Thm. \ref{11141627}.
Therefore we obtain
Def. \ref{08261134}
(\ref{08261134III},\ref{08261134IV},\ref{08261134V},\ref{08261134VI})
since Prp. \ref{11211031}, Lemma \ref{11200732}, 
Lemma \ref{11131734} 
and Cor. \ref{11201755}
respectively.
Next let 
$\alpha\in\ms{P}_{\mf{B}}^{\mf{T}}$ 
and $\pf{H}_{\alpha}=
\lr{\uppi,\mf{H}_{\alpha}}{\Upomega_{\alpha}}$,
then
$\pf{R}_{\pf{H}^{T},\alpha}^{\ps{\upmu}}(\mf{A})
=
(\ms{B}_{\ps{\upmu}}^{T_{\dagger}\circ\ps{\upomega},\alpha}(\mf{A}),
\mf{R}_{\pf{H}^{T},\alpha}^{\ps{\upmu}}(\mf{A}))$,
where
\begin{equation}
\label{11301824}
\begin{aligned}
\mf{R}_{\pf{H}^{T},\alpha}^{\ps{\upmu}}(\mf{A})
&=
(\uppi_{\alpha}\circ T)\rtimes^{\ps{\upmu}_{(\alpha,\un)}}
\ms{U}_{\pf{H}_{\alpha}^{T}}^{\upeta}
\\
&=
(\uppi_{\alpha}\circ T)\rtimes^{\ps{\upmu}_{(\alpha,\un)}}
\ms{U}_{\pf{H}_{\alpha}}^{\uptheta}
\\
&=
(\uppi_{\alpha}\rtimes^{\ps{\upmu}_{(\alpha,\un)}}
\ms{U}_{\pf{H}_{\alpha}}^{\uptheta})
\circ\ms{c}_{\ps{\upmu}_{(\alpha,\un)}}(T)
\\
&=
\mf{R}_{\pf{H},\alpha}^{\ps{\upmu}}(\mf{B})
\circ\ms{c}_{\ps{\upmu}_{(\alpha,\un)}}(T),
\end{aligned}
\end{equation}
where the second equality follows by 
Cor. \ref{11201755}.
Next
let $X$ be the nonempty set such that 
$\R^{X}$ is the domain of $\upzeta$,
thus
$\ms{D}_{\pf{H}^{T},\alpha}^{\upzeta,f}(\mf{A})
=
\ms{D}_{\pf{H}_{\alpha}^{T}}^{\upzeta,f}(\mf{A})
=
f(\ms{E}_{\mc{E}_{\ms{L}}})$,
where
$\ms{L}=\{L_{x}\}_{x\in X}$ such that $iL_{x}$ is 
the infinitesimal generator
of the strongly continuous one-parameter semigroup 
$\ms{U}_{\pf{H}_{\alpha}^{T}}^{\upeta}
\circ\upzeta\circ\mf{i}_{x}\up\R^{+}$ on $\pf{H}_{\alpha}$,
for all $x\in X$.
Thus since Cor. \ref{11201755} we deduce that
\begin{equation}
\label{11302054}
\ms{D}_{\pf{H}^{T},\alpha}^{\upzeta,f}(\mf{A})
=
\ms{D}_{\pf{H},\alpha}^{\upzeta,f}(\mf{B}).
\end{equation}
Def. \eqref{08261134}\eqref{08261134VIII}
follows since
\eqref{11301824}, \eqref{11302054} and Rmk. \ref{02041855}
and our claim is proved so 
$\mf{d}(T)(\mf{T})\in\pf{T}_{\mf{A}}$.
The remaining part of the statement is easy to show.
\end{proof}
\begin{definition}
\label{11271820}
Define
\begin{equation*}
\mc{K}^{\mf{A}}(T)
\coloneqq
\bigcup_{\mf{T}\in\pf{T}_{\mf{B}}}
\bigcup_{\alpha\in\ms{P}_{\mf{B}}^{\mf{T}}}
\ms{K}_{\alpha}^{\mf{d}(T)(\mf{T})}(\mf{A}),
\end{equation*}
and
\begin{equation*}
\ov{\mc{K}}^{\mf{A}}(T)
\coloneqq
\bigcup_{\mf{T}\in\pf{T}_{\mf{B}}}
\prod_{\alpha\in\ms{P}_{\mf{B}}^{\mf{T}}}
\ms{K}_{\alpha}^{\mf{d}(T)(\mf{T})}(\mf{A}).
\end{equation*}
\end{definition}
$\mc{K}^{\mf{A}}(T)$ is a well-defined subset of $\mc{K}^{\mf{A}}$
since Lemma \ref{11271700}.
\begin{definition}
\label{11271828}
Define $\mf{h}^{H}(T):\mc{K}^{\mf{A}}(T)\to\mc{K}^{\mf{B}}$
such that for any 
$\mf{T}
=
\lr{\mc{T},\ps{\upmu},\pf{H}}{\upzeta,f,\Upgamma}
\in\pf{T}_{\mf{B}}$
and
$\alpha\in\ms{P}_{\mf{B}}^{\mf{T}}$ 
\begin{equation*}
\mf{h}^{H}(T)
\up
\ms{K}_{\alpha}^{\mf{d}(T)(\mf{T})}(\mf{A})
\coloneqq
\ms{k}_{\ps{\upmu}_{(\alpha,\un)}}(T).
\end{equation*}
\end{definition}
In the remaining of this subsection
we let $\mf{h}$ denote $\mf{h}^{H}$. 
\begin{lemma}
\label{11272138}
We have
\begin{enumerate}
\item
$\mf{h}(T)$ is well-defined and
$\mf{h}(T)(\ms{K}_{\alpha}^{\mf{d}(T)(\mf{T})}(\mf{A}))
\subseteq
\ms{K}_{\alpha}^{\mf{T}}(\mf{B})$,
for any $\mf{T}\in\pf{T}_{\mf{B}}$
and $\alpha\in\ms{P}_{\mf{B}}^{\mf{T}}$;
\label{11272138st1}
\item
$\mc{K}^{\mf{A}}(S\circ T)\subseteq\mc{K}^{\mf{A}}(T)$;
\label{11272138st2}
\item
Let $\imath$ be the identity
map from $\mc{K}^{\mf{A}}(S\circ T)$ to $\mc{K}^{\mf{A}}(T)$,
then the following diagram is commutative 
\begin{equation}
\label{12010253}
\xymatrix{
\mc{K}^{\mf{B}}(S) \ar[r]^{\mf{h}(S)} & \mc{K}^{\mf{C}}
\\
\ar[u]^{\mf{h}(T)\circ\imath}
\mc{K}^{\mf{A}}(S\circ T) 
\ar[ru]_{\mf{h}(S\circ T)} 
&
}
\end{equation}
\end{enumerate}
\end{lemma}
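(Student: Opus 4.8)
The plan is to prove the three statements in Lemma \ref{11272138} in order, relying on the functorial properties of $\ms{k}_{\upmu}$ and $\ms{c}_{\upmu}$ established in Section \ref{not1} together with the description of $\mf{d}$ in Lemma \ref{11271700}. For statement \eqref{11272138st1}, the key is that $\mf{h}(T)$ is defined piecewise on the sets $\ms{K}_{\alpha}^{\mf{d}(T)(\mf{T})}(\mf{A})=\ms{K}_{0}(\ms{B}_{\ps{\upmu}}^{T^{\dagger}\circ\ps{\upomega},\alpha,+})$ via $\ms{k}_{\ps{\upmu}_{(\alpha,\un)}}(T)$. I would first invoke Rmk. \ref{11271617}, which already records that $\ms{k}_{\ps{\upmu}_{(\alpha,\un)}}(T)$ maps $\ms{K}_{0}(\ms{B}_{\ps{\upmu}}^{T^{\dagger}\circ\ps{\upomega},\alpha}(\mf{A}))$ into $\ms{K}_{0}(\ms{B}_{\ps{\upmu}}^{\ps{\upomega},\alpha}(\mf{B}))$; this gives both the containment $\mf{h}(T)(\ms{K}_{\alpha}^{\mf{d}(T)(\mf{T})}(\mf{A}))\subseteq\ms{K}_{\alpha}^{\mf{T}}(\mf{B})$ and the fact that the value is independent of the representative pair $(\mf{T},\alpha)$. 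The well-definedness — that the piecewise assignment is consistent on overlaps — should follow exactly as in the proof of Prp. \ref{01010828}: two overlapping $K$-groups force the underlying crossed products and hence the associated norms to coincide, so the two candidate definitions of $\ms{k}$ agree.

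For statement \eqref{11272138st2} I would argue that $\mc{K}^{\mf{A}}(S\circ T)$ and $\mc{K}^{\mf{A}}(T)$ are both unions indexed over $\pf{T}_{\mf{B}}$ and $\pf{T}_{\mf{C}}$ respectively, and that $\mf{d}(S\circ T)=\mf{d}(T)\circ\mf{d}(S)$ by Lemma \ref{11271700}. Since $\mf{d}(S):\pf{T}_{\mf{C}}\to\pf{T}_{\mf{B}}$, each $\mf{T}\in\pf{T}_{\mf{C}}$ yields $\mf{d}(S)(\mf{T})\in\pf{T}_{\mf{B}}$, and $\ms{P}^{\mf{d}(S)(\mf{T})}=\ms{P}^{\mf{T}}$ because $\mf{d}$ fixes the data $\lr{\mc{T}}{\beta_{c},I}$ determining $\ms{P}$; hence $\ms{K}_{\alpha}^{\mf{d}(S\circ T)(\mf{T})}(\mf{A})=\ms{K}_{\alpha}^{\mf{d}(T)(\mf{d}(S)(\mf{T}))}(\mf{A})$ already appears as one of the sets in the union defining $\mc{K}^{\mf{A}}(T)$. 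Taking the union over $\mf{T}\in\pf{T}_{\mf{C}}$ then exhibits $\mc{K}^{\mf{A}}(S\circ T)$ as a subcollection of that defining $\mc{K}^{\mf{A}}(T)$, giving the inclusion.

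For the commutativity of diagram \eqref{12010253} I would reduce to checking the identity on each piece $\ms{K}_{\alpha}^{\mf{d}(S\circ T)(\mf{T})}(\mf{A})$ for $\mf{T}\in\pf{T}_{\mf{C}}$ and $\alpha\in\ms{P}^{\mf{T}}$. On such a piece the two composites evaluate to $\ms{k}_{\ps{\upmu}_{(\alpha,\un)}}(S)\circ\ms{k}_{\ps{\upmu}_{(\alpha,\un)}}(T)$ and $\ms{k}_{\ps{\upmu}_{(\alpha,\un)}}(S\circ T)$ respectively, where the Haar system $\ps{\upmu}$ is the one carried by $\mf{T}$ and is common to all three $K$-cycles because $\mf{d}$ preserves $\ps{\upmu}$. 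The desired equality is then precisely the functoriality of $\ms{k}_{\upnu}$, i.e. the statement preceding \eqref{11291802} that $\lr{\mc{A},H}{\upeta}\mapsto\ms{K}_{0}((\mc{A}\rtimes^{\upnu}H_{0})^{+})$ together with $T\mapsto\ms{k}_{\upnu}(T)$ defines a functor, applied to the locally compact subgroup $\ms{S}_{\ms{F}_{T^{\dagger}\circ\ps{\upomega}_{\alpha}}}^{G}$ and $\upnu=\ps{\upmu}_{(\alpha,\un)}$. The main obstacle I anticipate is bookkeeping: one must verify that the Haar systems and the crossed-product subgroups attached to the three dynamical systems genuinely coincide under the identifications coming from Prp. \ref{11211031} and \eqref{27061857}, so that a single $\upnu$ can be used uniformly and the functoriality of $\ms{k}_{\upnu}$ applies verbatim; once this coherence is in place, each of the three parts is a short formal deduction.
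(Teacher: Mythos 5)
Your proposal is correct and follows essentially the same route as the paper: well-definedness of $\mf{h}(T)$ via the overlap argument of Prp. \ref{01010828} and the containment via Rmk. \ref{11271617}, statement \eqref{11272138st2} from $\mf{d}(S\circ T)=\mf{d}(T)\circ\mf{d}(S)$ in Lemma \ref{11271700}, and the commutativity of \eqref{12010253} from the functoriality of $\ms{k}_{\ps{\upmu}_{(\alpha,\un)}}$ from $\ms{C}_{0}(H)$ to $\ms{Ab}$. The coherence of Haar systems and crossed-product subgroups that you flag as the main bookkeeping obstacle is exactly what Prp. \ref{11211031} and Rmk. \ref{11271617} are set up to guarantee, so your argument goes through verbatim.
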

\begin{proof}
$\mf{h}(T)$ is well-defined
since the same argument used in Lemma \ref{01010828},
while the inclusion in
st.\eqref{11272138st1} follows by Rmk \ref{11271617}.
St.\eqref{11272138st2} follows by Lemma \ref{11271700}.
$\mf{h}(T)\circ\imath$ is a well-set map since 
st.\eqref{11272138st2},
with values in $\mc{K}^{\mf{B}}(S)$ 
since Lemma \ref{11271700} 
and st.\eqref{11272138st1}. 
The diagram is commutative since 
$\ms{k}_{\ps{\upmu}_{(\alpha,\un)}}$
is the morphism map of
a functor from $\ms{C}_{0}(H)$ to $\ms{Ab}$.
\end{proof}
Since Lemma \ref{11272138}\eqref{11272138st1} we can give the following
\begin{definition}
\label{01020953}
Define 
$\ov{\mf{h}}^{H}(T):
\ov{\mc{K}}^{\mf{A}}(T)
\to\ov{\mc{K}}^{\mf{B}}$
such that 
$\ov{\mf{h}}^{H}(T)(g)\coloneqq
\mf{h}(T)\circ g$
for all $g\in \ov{\mc{K}}^{\mf{A}}(T)$.
Moreover
set
$\ov{\imath}:
\ov{\mc{K}}^{\mf{A}}(S\circ T)
\to
\ov{\mc{K}}^{\mf{A}}(T)$
such that 
$\ov{\imath}(g)=\imath\circ g$
for all
$g\in\ov{\mc{K}}^{\mf{A}}(S\circ T)$.
\end{definition}
\begin{proposition}
The following diagram is commutative 
\begin{equation}
\label{12010253bis}
\xymatrix{
\ov{\mc{K}}^{\mf{B}}(S) \ar[r]^{\ov{\mf{h}}(S)} 
& \ov{\mc{K}}^{\mf{C}}
\\
\ar[u]^{\ov{\mf{h}}(T)\circ\ov{\imath}}
\ov{\mc{K}}^{\mf{A}}(S\circ T) 
\ar[ru]_{\ov{\mf{h}}(S\circ T)}
&
}
\end{equation}
\end{proposition}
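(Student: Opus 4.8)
The plan is to recognize that \eqref{12010253bis} is nothing but the post-composition image of the diagram \eqref{12010253} already established in Lemma \ref{11272138}, and hence to deduce its commutativity formally from the latter together with the associativity of map composition. Indeed, all the arrows in \eqref{12010253bis} are defined by left composition: by Def. \ref{01020953} the maps $\ov{\mf{h}}(T)$ and $\ov{\mf{h}}(S)$ act as $g\mapsto\mf{h}(T)\circ g$ and $k\mapsto\mf{h}(S)\circ k$, the map $\ov{\imath}$ acts as $g\mapsto\imath\circ g$, while $\ov{\mf{h}}(S\circ T)$ acts as $g\mapsto\mf{h}(S\circ T)\circ g$. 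Thus evaluating the two legs of \eqref{12010253bis} on an arbitrary element $g$ will reduce, after reassociating the post-compositions, to comparing $\mf{h}(S)\circ\mf{h}(T)\circ\imath$ with $\mf{h}(S\circ T)$, which is exactly the content of the commutative triangle \eqref{12010253}.

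Before the element chase I would settle the well-definedness of the intermediate composites, so that each arrow may legitimately be applied to the output of the previous one. First, $\ov{\imath}$ takes values in $\ov{\mc{K}}^{\mf{A}}(T)$ because $\imath$ is the inclusion of $\mc{K}^{\mf{A}}(S\circ T)$ into $\mc{K}^{\mf{A}}(T)$, which is legitimate by Lemma \ref{11272138}\eqref{11272138st2}. Second --- and this is the only point requiring attention --- I must check that $\ov{\mf{h}}(T)\circ\ov{\imath}$ lands in $\ov{\mc{K}}^{\mf{B}}(S)$, the domain of $\ov{\mf{h}}(S)$. This rests on the fact, extracted in the proof of Lemma \ref{11272138}, that the composite $\mf{h}(T)\circ\imath$ takes values in $\mc{K}^{\mf{B}}(S)$; componentwise post-composition with any $g\in\ov{\mc{K}}^{\mf{A}}(S\circ T)$ then produces a function valued in $\mc{K}^{\mf{B}}(S)$, i.e. an element of $\ov{\mc{K}}^{\mf{B}}(S)$.

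With these matchings in place the proof is a one-line computation: for $g\in\ov{\mc{K}}^{\mf{A}}(S\circ T)$ one has
\begin{align*}
\bigl(\ov{\mf{h}}(S)\circ\ov{\mf{h}}(T)\circ\ov{\imath}\bigr)(g)
&=
\mf{h}(S)\circ\mf{h}(T)\circ\imath\circ g
\\
&=
\bigl(\mf{h}(S)\circ\mf{h}(T)\circ\imath\bigr)\circ g
\\
&=
\mf{h}(S\circ T)\circ g
=
\ov{\mf{h}}(S\circ T)(g),
\end{align*}
where the first equality unwinds the three post-compositions defining the left leg, and the third is the commutativity of \eqref{12010253}. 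The main obstacle is therefore not analytic but purely a matter of bookkeeping the domains and codomains of the three spaces $\ov{\mc{K}}^{\mf{A}}(S\circ T)$, $\ov{\mc{K}}^{\mf{A}}(T)$ and $\ov{\mc{K}}^{\mf{B}}(S)$ and of confirming that the intermediate output lands in the domain of $\ov{\mf{h}}(S)$; once Lemma \ref{11272138} is invoked this is immediate, and no input beyond the functoriality already used there is required.
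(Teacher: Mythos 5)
Your proof is correct and follows the same route as the paper, which simply cites the commutativity of \eqref{12010253}: since all four arrows act by post-composition, the claim reduces pointwise to the already-established triangle $\mf{h}(S)\circ\mf{h}(T)\circ\imath=\mf{h}(S\circ T)$. Your additional bookkeeping of domains and codomains is sound and merely makes explicit what the paper leaves implicit.
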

\begin{proof}
Since \eqref{12010253}.
\end{proof}
We recall that according to 
our notation $(f\times g):X\to A\times B$ such that 
$(f\times g)(x)=(f(x),g(x))$ for any $x\in X$, where
$X,A,B$ are sets while $f:X\to A$ and $g:X\to B$.
For any $\ms{f}\in\ms{A}_{\mf{A}}$
the map
$\ms{f}\circ\mf{d}(T)$
is well-set
since Lemma \ref{11271700},
while
$\ms{f}\circ\mf{d}(T):\pf{T}_{\mf{B}}\to\ov{\mc{K}}^{\mf{A}}(T)$,
thus since Def. \ref{01020953}
and Lemma \ref{11272138}\eqref{11272138st1},
we can give the following
\begin{definition}
\label{11272234}
Define the map $\mf{g}^{H}$ on $Mor_{\ms{C}(H)}$ 
such that  
\begin{equation*}
\mf{g}^{H}(T):
\ms{A}_{\mf{A}}\to\ms{A}_{\mf{B}},
\end{equation*}
and for any $\ms{f}\in\ms{A}_{\mf{A}}$
\begin{equation*}
\mf{g}^{H}(T)(\ms{f})
\coloneqq
\ov{\mf{h}}^{H}(T)\circ\ms{f}\circ\mf{d}^{H}(T).
\end{equation*}
Finally define
\begin{equation*}
\mf{G}_{\vartriangle}^{H}
\coloneqq
(\mf{G}^{H},(\mf{g}^{H}\times\mf{d}^{H})\up Mor_{\ms{C}_{u}(H)}).
\end{equation*}
\end{definition}
Now we are in the position of stating that $\mf{G}_{\vartriangle}^{H}$ 
is a functor namely
\begin{theorem}
\label{12071120}
$\mf{G}_{\vartriangle}^{H}\in\ms{Fct}(\ms{C}_{u}(H),\mf{G}(G,F,\uprho))$.
\end{theorem}
\begin{proof}
The part of the statement concerning $\mf{G}^{H}$ 
follows since Cor. \ref{11271221}. 
Let $\mf{A}$, $\mf{B}$ and $\mf{C}$
be objects of $\ms{C}_{u}(H)$, 
$T\in Mor_{\ms{C}_{u}(H)}(\mf{A},\mf{B})$ 
and
$S\in Mor_{\ms{C}_{u}(H)}(\mf{B},\mf{C})$. 
Let $\mf{g}$ denote $\mf{g}^{H}$, thus
$\mf{g}(T)$ is a group morphism since 
the second line in \eqref{10291120} and since the 
standard picture used for the $\ms{K}_{0}-$groups.
Next we claim to show that
\begin{equation}
\label{12011707}
\begin{aligned}
\mf{g}(S\circ T)
&=\mf{g}(S)\circ\mf{g}(T),
\\
\ms{ev}_{\ms{f}}(\ms{m}^{\mf{B}}\circ\mf{g}(T))
&=
\ms{ev}_{\ms{f}}(\ms{m}^{\mf{A}})\circ\mf{d}(T),
\forall\ms{f}\in\ms{A}_{\mf{A}}.
\end{aligned}
\end{equation}
The first equality follows since \eqref{12010253bis} 
and 
Lemma \ref{11271700},
let us prove the second equality of \eqref{12011707}.
Let $\ms{f}\in\ms{A}_{\mf{A}}$, 
$\mf{T}\in\pf{T}_{\mf{B}}$,
and $\alpha\in\ms{P}_{\mf{B}}^{\mf{T}}$,
moreover let
$\mf{T}=
\lr{\mc{T},\ps{\upmu},\pf{H}}{\upzeta,f,\Upgamma}$
with
$\mc{T}=\lr{h,\upxi,\beta_{c},I}{\ps{\upomega}}$,
and let $\mf{T}^{T}$ denote $\mf{d}(T)(\mf{T})$, then
\begin{equation}
\label{12011644a}
\begin{aligned}
\ms{ev}_{\ms{f}}(\ms{m}^{\mf{B}}\circ\mf{g}(T))(\mf{T})(\alpha)
&=
\ms{m}^{\mf{B}}(\mf{g}(T)(\ms{f}))(\mf{T})(\alpha)
\\
&=
\lr{\mf{g}(T)(\ms{f})(\mf{T})(\alpha)}
{\ms{ch}(\ps{R}(\mf{T},\alpha))}_{\ps{\upmu},\ps{\upomega},\alpha}
\\
&=
\lr{(\ms{c}_{\ps{\upmu}_{(\alpha,\un)}}^{+}(T))_{\ast}\,
\bigl(\ms{f}(\mf{T}^{T})(\alpha)\bigr)}
{\ms{ch}(\ps{R}(\mf{T},\alpha))}_{\ps{\upmu},\ps{\upomega},\alpha}
\\
&=
\lr{\ms{f}(\mf{T}^{T})(\alpha)}
{(\ms{c}_{\ps{\upmu}_{(\alpha,\un)}}^{+}(T))_{\dagger}
\bigl(\ms{ch}(\ps{R}(\mf{T},\alpha))\bigr)}_{\ps{\upmu},T_{\dagger}\circ\ps{\upomega},\alpha},
\end{aligned}
\end{equation}
where the last equality follows since Rmk. \ref{11271617}
and
\eqref{12011443}.
Next 
\begin{equation}
\label{12011644b}
\begin{aligned}
\bigl(\ms{ev}_{\ms{f}}(\ms{m}^{\mf{A}})\circ\mf{d}(T)\bigr)
(\mf{T})(\alpha)
&=
\ms{m}^{\mf{A}}(\ms{f})(\mf{T}^{T})(\alpha)
\\
&=
\lr{\ms{f}(\mf{T}^{T})(\alpha)}
{\ms{ch}(\ps{R}(\mf{T}^{T},\alpha))}_{\ps{\upmu},T_{\dagger}\circ\ps{\upomega},\alpha}.
\end{aligned}
\end{equation}
Moreover by construction
\begin{equation*}
\ps{R}(\mf{T}^{T},\alpha)
=\bigl(
\ms{B}_{\ps{\upmu}}^{T_{\dagger}\circ\ps{\upomega},\alpha,+}(\mf{A}),\,
\tilde{\mf{R}}_{\pf{H}^{T},\alpha}^{\ps{\upmu}}(\mf{A}),\,
\ms{D}_{\pf{H}^{T},\alpha}^{\upzeta,f}(\mf{A}),\,
\Upgamma_{\alpha}\bigr),
\end{equation*}
thus we obtain since 
\eqref{11301824}\,\&\,\eqref{11302054} and Lemma \ref{10311019}
\begin{equation*}
\ps{R}(\mf{T}^{T},\alpha)
=\bigl(\ms{B}_{\ps{\upmu}}^{T_{\dagger}\circ\ps{\upomega},\alpha,+}(\mf{A}),\,
\tilde{\mf{R}}_{\pf{H},\alpha}^{\ps{\upmu}}(\mf{B})\circ
\ms{c}_{\ps{\upmu}_{(\alpha,\un)}}^{+}(T),\,
\ms{D}_{\pf{H},\alpha}^{\upzeta,f}(\mf{B}),\,
\Upgamma_{\alpha}\bigr).
\end{equation*}
Hence we deduce 
since 
\cite[Ch $IV.8.\epsilon$, Thm. $22$ and Thm. $21$]{connes}
and
\cite[Ch $IV.7.\delta$, Thm. $21$ and Lemma $20$]{connes}
that
\begin{equation}
\label{12011644c}
\ms{ch}(\ps{R}(\mf{T}^{T},\alpha))
=
(\ms{c}_{\ps{\upmu}_{(\alpha,\un)}}^{+}(T))_{\dagger}\,
(\ms{ch}(\ps{R}(\mf{T},\alpha))).
\end{equation}
\eqref{12011644a}\,\&\,\eqref{12011644b}\,\&\,\eqref{12011644c} imply the claimed second equality in \eqref{12011707}.
Next since \eqref{12011707} and Lemma \ref{11271700}
\begin{equation*}
(\mf{g}\times\mf{d})(T)
\in Mor_{\mf{G}(G,F,\uprho)}(\mf{G}^{H}(\mf{A}),\mf{G}^{H}(\mf{B})),
\end{equation*}
while since the first equality in \eqref{12011707} 
and Lemma \ref{11271700}
\begin{equation*}
(\mf{g}\times\mf{d})(S\circ T)
=
(\mf{g}\times\mf{d})(S)
\circ(\mf{g}\times\mf{d})(T),
\end{equation*}
where $\circ$ in the right side of the equality 
is the law of composition in the set of morphisms
of $\mf{G}(G,F,\uprho)$, and the statement follows.
\end{proof}
\begin{remark}
\label{06260916}
According to Cor. \ref{11271221}, 
Def. \ref{05301823}, Cnv. \ref{05061835}, 
Def. \ref{06200822} and Def. \ref{11261911},
we have for any $\mf{D}\in\ms{C}_{u}(H)$ 
the following connection between notations
\begin{equation*}
\begin{aligned}
(\pf{T}_{\mf{D}},
\ms{I}_{\mf{D}},\upbeta_{c}^{\mf{D}},
\ms{P}_{\mf{D}},\mf{a}_{\mf{D}},
\mf{e}_{\mf{D}})
&=
(\pf{T}_{\mf{G}^{H}(\mf{D})},
\ms{I}_{\mf{G}^{H}(\mf{D})},\upbeta_{c}^{\mf{G}^{H}(\mf{D})},
\ms{P}_{\mf{G}^{H}(\mf{D})},\mf{a}_{\mf{G}^{H}(\mf{D})},
\mf{e}_{\mf{G}^{H}(\mf{D})})
\\
(\ps{\upvarphi}^{\mf{D}},\ms{A}_{\mf{D}},
\uppsi^{\mf{D}},\mf{b}^{\mf{D}},\ms{m}^{\mf{D}},
\mf{E}^{\mf{D}})
&=
(\ps{\upvarphi}^{\mf{G}^{H}(\mf{D})},\ms{A}_{\mf{G}^{H}(\mf{D})},
\uppsi^{\mf{G}^{H}(\mf{D})},\mf{b}^{\mf{G}^{H}(\mf{D})},\ms{m}^{\mf{G}^{H}(\mf{D})},
\mf{E}^{\mf{G}^{H}(\mf{D})}),
\\
(\upmu^{\mf{D}},\mf{u}^{\mf{D}},
\pf{H}^{\mf{D}},\ms{D}^{\mf{D}})
&=
(\upmu^{\mf{G}^{H}(\mf{D})},\mf{u}^{\mf{G}^{H}(\mf{D})},
\pf{H}^{\mf{G}^{H}(\mf{D})},
\ms{D}^{\mf{G}^{H}(\mf{D})})
\\
(\Upgamma^{\mf{D}},\mf{v}^{\mf{D}},\mf{w}^{\mf{D}},\mf{z}^{\mf{D}})
&=
(\Upgamma^{\mf{G}^{H}(\mf{D})},\mf{v}^{\mf{G}^{H}(\mf{D})},
\mf{w}^{\mf{G}^{H}(\mf{D})},\mf{z}^{\mf{G}^{H}(\mf{D})}),
\\
(\mf{H}^{\mf{D}},\uppi^{\mf{D}},\Upomega^{\mf{D}})
&=
(\mf{H}^{\mf{G}^{H}(\mf{D})},\uppi^{\mf{G}^{H}(\mf{D})},\Upomega^{\mf{G}^{H}(\mf{D})}).
\end{aligned}
\end{equation*}
Moreover
for all
$\mf{T}\in\pf{T}_{\mf{D}}$
and
$\alpha\in\ms{P}_{\mf{D}}^{\mf{T}}$
\begin{equation*}
\begin{aligned}
\ms{S}_{\alpha}^{\mf{T}}(\mf{D})
&=
\ms{S}_{\alpha}^{\mf{T}}(\mf{G}^{H}(\mf{D})),
\\
\mc{B}_{\alpha}^{\mf{T}}(\mf{D})
&=
\mc{B}_{\alpha}^{\mf{T}}(\mf{G}^{H}(\mf{D})),
\\
\mf{R}_{\alpha}^{\mf{T}}(\mf{D})
&=
\mf{R}_{\alpha}^{\mf{T}}(\mf{G}^{H}(\mf{D})),
\\
\pf{R}_{\alpha}^{\mf{T}}(\mf{D})
&=
\pf{R}_{\alpha}^{\mf{T}}(\mf{G}^{H}(\mf{D})),
\\
\lr{\cdot}{\cdot}_{(\mf{D},\mf{T},\alpha)}
&=
\lr{\cdot}{\cdot}_{(\mf{G}^{H}(\mf{D}),\mf{T},\alpha)}.
\end{aligned}
\end{equation*}
Finally 
$\mf{i}_{\alpha}^{\mf{T}}$ and $\mf{j}_{\alpha}^{\mf{T}}$
in 
Def. \ref{05301823} 
applied to $\mc{G}=\mf{G}^{H}(\mf{D})$
equal
$\mf{i}_{\alpha}^{\mf{T}}$ and $\mf{j}_{\alpha}^{\mf{T}}$
in 
Def. \ref{06200822}
applied to $\mf{A}=\mf{D}$
respectively.
\end{remark}
According to 
Def. \ref{12161311}, Def. \ref{01151659}, Def. \ref{12291116},
Def. \ref{12192100}, Lemma \ref{12201040} 
and Def. \ref{01151759}
we set
\begin{convention}
\label{06301240}
Let $\mf{D}\in\ms{C}_{u}(H)$ and $\mf{B}\in\ms{C}_{u}^{0}(H)$
set 
\begin{equation*}
\begin{aligned}
\mc{A}(\mf{D})_{\alpha}^{\mf{T}}
&\coloneqq
\mc{A}(\mf{G}^{H}(\mf{D}))_{\alpha}^{\mf{T}},
\forall
\mf{T}\in\pf{T}_{\mf{D}},
\alpha\in\ms{P}_{\mf{D}}^{\mf{T}},
\\
\mf{P}_{\pf{T}_{\bullet}}^{\mf{D}}
&\coloneqq 
\mf{P}_{\pf{T}_{\bullet}}^{\mf{G}^{H}(\mf{D})},
\\
\mf{O}^{\mf{D}}
&\coloneqq 
\mf{O}^{\mf{G}^{H}(\mf{D})},
\\
Rep^{\mf{D}}
&\coloneqq 
Rep^{\mf{G}^{H}(\mf{D})},
\\
\pf{V}(\mf{D})
&\coloneqq
\pf{V}_{\bullet}(\mf{G}^{H}(\mf{D})),
\\
\mf{t}^{\mf{D}}
&\coloneqq 
\mf{t}^{\mf{G}^{H}(\mf{D})},
\\
\mf{N}^{\mf{D}}
&\coloneqq
\mf{N}^{\mf{G}^{H}(\mf{D})},
\\
\ms{e}^{\mf{D}}
&\coloneqq 
\ms{e}^{\mf{G}^{H}(\mf{D})},
\\
\mf{P}_{\pf{V}_{\bullet}}^{\mf{B}}
&\coloneqq 
\mf{P}_{\pf{V}_{\bullet}}^{\mf{G}^{H}(\mf{B})},
\\
\mf{Z}^{\mf{B}}
&\coloneqq 
\mf{Z}^{\mf{G}^{H}(\mf{B})},
\\
\mc{V}^{\mf{B}}
&\coloneqq
\mc{V}_{\bullet}(\mf{G}^{H}(\mf{B})).
\end{aligned}
\end{equation*}
\end{convention}
Clearly $\mc{A}(\mf{D})_{\alpha}^{\mf{T}}$ equals the $C^{\ast}-$algebra underlying $\mf{D}$.
\begin{definition}
\label{06242034}
\begin{equation*}
\begin{aligned}
\ms{C}_{u}^{0}(H)
&\coloneqq
\Upxi(\pf{V}_{\bullet},\mf{G}_{\vartriangle}^{H}),
\\
\tilde{\mf{G}}_{\vartriangle}^{H}
&\coloneqq
(\mf{G}_{\vartriangle}^{H})^{\pf{V}_{\bullet}}.
\end{aligned}
\end{equation*}
\end{definition}
\begin{proposition}
\label{19061753}
\begin{equation*}
\begin{aligned}
\tilde{\mf{G}}^{H}
&=
\mf{G}^{H}
\up Obj(\ms{C}_{u}^{0}(H))
\\
\tilde{\mf{G}}_{\vartriangle}^{H}
&=
\left(\tilde{\mf{G}}^{H},(\mf{g}^{H}\times\mf{d}^{H})
\up Mor_{\ms{C}_{u}^{0}(H)}\right),
\end{aligned}
\end{equation*}
moreover 
\begin{equation*}
Obj(\ms{C}_{u}^{0}(H))
=\{\mf{A}\in\ms{C}_{u}(H)
\mid
\pf{V}(\mf{A})\neq\varnothing\},
\end{equation*}
and for any $\mf{A},\mf{B}\in\ms{C}_{u}^{0}(H)$
we have 
\begin{equation*}
Mor_{\ms{C}_{u}^{0}(H)}(\mf{B},\mf{A})
=
\{T\in Mor_{\ms{C}_{u}(H)}(\mf{B},\mf{A})
\mid
(\forall\mf{T}\in\pf{V}(\mf{A}))
(\mf{d}^{H}(T)(\mf{T})\in\pf{V}(\mf{B}))
\}.
\end{equation*}
\end{proposition}
\begin{definition}
\label{21051632}
Set
$\mc{E}_{\bullet}^{H}
\coloneqq
\lr{\lr{\pf{T}_{\bullet},\ov{\ms{m}}_{\bullet}}{\mc{V}_{\bullet}}}{\mf{G}_{\vartriangle}^{H}}$.
\end{definition}
Whenever it is clear by the context 
which group $H$ is involved we use the convention
to denote $\mc{E}_{\bullet}^{H}$ by $\mc{E}_{\bullet}$.
Now we can state
\begin{theorem}
\label{12051936}
\begin{enumerate}
\item
$\mc{E}_{\bullet}$
is a full integer $\ms{C}_{u}(H)-$equivariant stability on $\pf{V}_{\bullet}$;
\label{12051936st1}
\item
let $\mf{A}\in\ms{C}_{u}(H)$, $\mf{T}\in\pf{T}_{\mf{A}}$ and $\alpha\in\ms{P}_{\mf{A}}^{\mf{T}}$,
thus
\begin{enumerate}
\item
for all $\ms{f}\in\ms{A}_{\mf{A}}$
\begin{equation*}
\ov{\ms{m}}^{\mf{A}}(\mf{T},\alpha)(\ms{f})
=
\lr{\ms{f}(\mf{T})(\alpha)}
{\ms{ch}(\ps{R}(\mf{T},\alpha))}_{(\mf{A},\mf{T},\alpha)};
\end{equation*}
\label{05061911}
\item
if $\mf{A}\in\ms{C}_{u}^{0}(H)$ and $\mf{T}\in\pf{V}(\mf{A})$ then
\begin{equation*}
\mc{V}^{\mf{A}}(\mf{T},\alpha)
=
\omega_{\exp(-((\ms{D}^{\mf{A}})_{\alpha}^{\mf{T}})^{2})}
\circ
(\uppi^{\mf{A}})_{\alpha}^{\mf{T}},
\end{equation*}
\label{12051936st3}
\item
let $\mf{B}\in\ms{C}_{u}(H)$ and $T\in Mor_{\ms{C}_{u}(H)}(\mf{B},\mf{A})$. 
If 
$\mf{A}\in\ms{C}_{u}^{0}(H)$, $\mf{T}\in\pf{V}(\mf{A})$ and 
\begin{enumerate}
\item
if 
$\mf{B}\in\ms{C}_{u}^{0}(H)$ and $\mf{d}^{H}(T)(\mf{T})\in\pf{V}(\mf{B})$, 
then 
\begin{equation*}
\mc{V}^{\mf{B}}\left(\mf{d}^{H}(T)(\mf{T}),\alpha\right)
=
\mc{V}^{\mf{A}}(\mf{T},\alpha)\circ T,
\end{equation*}
\label{11061441}
moreover 
for all $l\in H$ and $a\in\mc{B}$ 
\begin{equation*}
\mc{V}^{\mf{B}}\left(\mf{b}^{\mf{B}}(l)(\mf{d}^{H}(T)(\mf{T})),\alpha\right)(\uptheta(l)(a))
=
\mc{V}^{\mf{A}}(\mf{T},\alpha)(T(a));
\end{equation*}
\item
if for all $\beta\in\ms{P}_{\mf{A}}^{\mf{T}}$
there exist
$b\in\mc{B}_{\beta}^{\mf{T}}(\mf{A})^{+}$ 
and
$\tilde{b}\in\mc{B}_{\beta}^{\mf{d}^{H}(T)(\mf{T})}(\mf{B})^{+}$ 
such that 
$\tilde{\mf{R}}_{\beta}^{\mf{T}}(\mf{A})(b)=(\Upgamma^{\mf{A}})_{\beta}^{\mf{T}}$,
$\|\tilde{b}\|\leq 1$ and 
$b=\ms{c}_{(\upmu^{\mf{A}})_{\beta}^{\mf{T}}}^{+}(T)(\tilde{b})$,
then 
$\mf{d}^{H}(T)(\mf{T})\in\pf{V}(\mf{B})$.
\label{12051936st4}
\end{enumerate}
\end{enumerate}
\end{enumerate}
\end{theorem}
\begin{proof}
St.\eqref{12051936st1} follows since Thm. \ref{01151104} and Thm. \ref{12071120}, 
st.\eqref{05061911} follows since the construction of $\mf{G}^{H}$, while st.\eqref{12051936st3} since \eqref{14051532}.
The first equality in
st.\eqref{11061441} follows since st.\eqref{12051936st3} applied to $\mf{B}$ and $\mf{A}$, and by \eqref{11302054}
switching $\mf{A}$ with $\mf{B}$. 
The second equality follows since the first one,
st.\eqref{12051936st1}
and Prp. \ref{18061742}\eqref{18061742st3}. 
St.\eqref{12051936st4} follows since \eqref{11301824} 
switching $\mf{A}$ with $\mf{B}$ and by Lemma  \ref{10311019}.
\end{proof}
\begin{remark}
\label{31072045}
Under the conditions in Thm. \ref{12051936}\eqref{12051936st4} 
we deduce that $\mf{B}\in\ms{C}_{u}^{0}(H)$,
moreover if these conditions are true for all $\mf{T}\in\pf{V}(\mf{A})$
then we deduce that 
$T\in Mor_{\ms{C}_{u}^{0}(H)}(\mf{B},\mf{A})$.
\end{remark}
According to the definition of $\ms{Z}$ 
in Def. \ref{20051117} we can set
\begin{definition}
\label{19061841}
Define $\nabla\coloneqq(\nabla_{o},\nabla_{m})$ and $\mc{X}_{H}$
such that 
\begin{equation*}
\begin{aligned}
\nabla_{o}&\coloneqq\ms{Z}\circ\tilde{\mf{G}}^{H},
\\
\nabla_{m}
&\in\prod_{T\in Mor_{\ms{C}_{u}^{0}(H)}}
Mor_{\ms{set}}(\nabla_{o}(c(T)),\nabla_{o}(d(T)))
\\
\nabla_{m}(T)(\mf{T},f)
&\coloneqq
(\mf{d}^{H}(T)\mf{T},\alpha\mapsto f(\alpha)\circ T),
\\
\forall T&\in Mor_{\ms{C}_{u}^{0}(H)},
\\
\mc{X}_{H}
&\in
\prod_{T\in Mor_{\ms{C}_{u}^{0}(H)}}
\prod_{\mf{Q}\in\pf{T}_{c(T)}}
\prod_{\beta\in\ms{P}_{c(T)}^{\mf{Q}}}
Mor_{\ms{CA}^{\ast}}(\mc{A}(d(T))_{\beta}^{\mf{d}^{H}(T)\mf{Q}},\mc{A}(c(T))_{\beta}^{\mf{Q}}),
\\
\mc{X}_{H}(T,\mf{Q},\beta)
&
\coloneqq T,
\\
\forall
T&\in Mor_{\ms{C}_{u}^{0}(H)},
\mf{Q}\in\pf{T}_{c(T)},
\beta\in\ms{P}_{c(T)}^{\mf{Q}}.
\end{aligned}
\end{equation*}
\end{definition}
\begin{proposition}
\label{19061754}
We have that
\begin{enumerate}
\item
$\nabla\in\ms{Fct}(\ms{C}_{u}^{0}(H)^{op},\ms{set})$,
\label{19061754st2}
\item
$\lr{\nabla}{\mc{X}_{H}}$ 
is a conjugate action via $\tilde{\mf{G}}_{\vartriangle}^{H}$.
\label{06252015}
\end{enumerate}
\end{proposition}
\begin{proof}
St.\eqref{19061754st2} follows since Lemma \ref{11271700}, 
st.\eqref{06252015} follows since st.\eqref{19061754st2}
and the construction of $\tilde{\mf{G}}_{\vartriangle}^{H}$.
\end{proof}
Now we are in the position of stating our
\begin{main}
[canonical nucleon-fragment doublet on $\ms{C}_{u}(H)$]
\label{06242121}
$\mc{E}_{\bullet}$
is an extended full integer $\ms{C}_{u}(H)-$equivariant stability on $\pf{V}_{\bullet}$
via $\nabla$ and $\mc{X}_{H}$.
In particular there exists a nucleon-fragment doublet on $\ms{C}_{u}(H)$
namely the nucleon-fragment doublet $\mc{T}_{\bullet}$ related to $\mc{E}_{\bullet}$. 
\end{main}
\begin{proof}
Since
Thm. \ref{12051936}\eqref{12051936st1},
Prp. \ref{19061754}\eqref{06252015},
Rmk. \ref{06151915}
and
Thm. \ref{12051936}\eqref{11061441}.
\end{proof}
\begin{definition}
Let $\mc{E}_{\bullet}$ be called the canonical extended $\ms{C}_{u}(H)-$equivariant stability,
and let $\mc{T}_{\bullet}$ be called the canonical nucleon-fragment doublet on $\ms{C}_{u}(H)$.
\end{definition}
\begin{definition}
\label{07281627}
Let $\mf{I}_{\bullet}$ denote the field determined by $\mc{V}_{\bullet}$,
set
$J_{\bullet}\coloneqq(\mf{I}_{\bullet}\circ\tilde{\mf{G}}^{H},\nabla_{m})$.
\end{definition}
As a consequence of the main theorem we obtain
the properties of equivariance and invariance of
$\mc{T}_{\bullet}$
\begin{corollary}
\label{06242107}
Prp. \ref{06211545}, Prp. \ref{20051810dbt} and Prp. \ref{12051206dbt}
hold true for $\mc{T}=\mc{T}_{\bullet}$.
\end{corollary}
In particular the next result
resolves the equivariant form of the universality claim 
described in introduction \ref{introI}.
\begin{corollary}
[Equivariant form of the universality claim]
\label{19061936}
We have
\begin{enumerate}
\item
$\ov{\ms{m}}\circ\mf{G}^{H}
\in
 Mor_{\ms{Fct}(\ms{C}_{u}(H)^{op},\ms{set})}
(\mf{Q}^{\pf{T}_{\bullet}}
\circ
\mf{G}_{\vartriangle}^{H},
\Updelta^{\pf{T}_{\bullet}}
\circ
\mf{G}_{\vartriangle}^{H})$;
\label{19061936st1}
\item
$(\un\mapsto\ov{\ms{m}}^{\mf{A}})
\in
 Mor_{\ms{Fct}(H,\ms{set})}
(\mf{P}_{\pf{T}_{\bullet}}^{\mf{A}},
\mf{O}^{\mf{A}})$,
for all
$\mf{A}\in\ms{C}_{u}(H)$;
\label{19061936st2}
\item
$\ms{gr}\circ\mc{V}_{\bullet}\circ\tilde{\mf{G}}^{H}
\in
 Mor_{\ms{Fct}(\ms{C}_{u}^{0}(H)^{op},\ms{set})}
(\mf{Q}^{\pf{V}_{\bullet}}
\circ
\tilde{\mf{G}}_{\vartriangle}^{H},J_{\bullet})$;
\label{19061936st4}
\item
$(\un\mapsto\ms{gr}(\mc{V}^{\mf{B}}))
\in
 Mor_{\ms{Fct}(H,\ms{set})}
(\mf{P}_{\pf{V}_{\bullet}}^{\mf{B}},\mf{Z}^{\mf{B}})$,
for all
$\mf{B}\in\ms{C}_{u}^{0}(H)$;
\label{19061936st3}
\item
\begin{equation*}
\mc{V}_{\bullet}\circ\tilde{\mf{G}}^{H}
\in
\prod_{\mf{D}\in\ms{C}_{u}^{0}(H)}
\prod_{\mf{Q}\in\pf{V}(\mf{D})}
\prod_{\beta\in\ms{P}_{\pf{D}}^{\mf{Q}}}
\mf{N}^{\mf{D}}(\ov{\ms{m}}^{\mf{D}}(\mf{Q},\beta))
\cap
\ms{N}_{\mc{A}(\mf{D})_{\beta}^{\mf{Q}}},
\end{equation*}
and by setting 
$\mf{o}^{\mf{D}}=\ms{e}^{\mf{D}}\circ\mf{t}^{\mf{D}}$
for all $\mf{D}\in\ms{C}_{u}^{0}(H)$,
we obtain
\begin{equation*}
\begin{aligned}
\mf{o}
&\in
\prod_{\mf{D}\in\ms{C}_{u}^{0}(H)}
\prod_{\mf{Q}\in\pf{V}(\mf{D})}
\prod_{\beta\in\ms{P}_{\pf{D}}^{\mf{Q}}}
Rep^{\mf{D}}(\ov{\ms{m}}^{\mf{D}}(\mf{Q},\beta))
\\
\mc{V}^{\mf{D}}(\mf{Q},\beta)
&=
\Uppsi_{\mf{o}^{\mf{D}}(\mf{Q},\beta)}^{-}
\circ
\mf{j}_{\mf{o}^{\mf{D}}(\mf{Q},\beta)},
\\
\mf{T}_{\mf{o}^{\mf{D}}(\mf{Q},\beta)}
&=
\mf{Q},
\\
\alpha_{\mf{o}^{\mf{D}}(\mf{Q},\beta)}
&=
\beta,
\\
\forall
\mf{D}&\in\ms{C}_{u}^{0}(H),
\mf{Q}\in\pf{V}(\mf{D}),
\beta\in\ms{P}_{\pf{D}}^{\mf{Q}}.
\end{aligned}
\end{equation*}
\label{19061936st5}
\end{enumerate}
\end{corollary}
\begin{proof}
Since Thm. \ref{06242121}, Cor. \ref{06241610} and Def. \ref{06161650}.
\end{proof}
In part \ref{07301107} 
we shall establish the compact equivariant and invariant forms of the universality claim.
\part{Universality}
\label{07301107}
\section{Introduction}
\label{introIII}
In this part we complete our task started in part \ref{07301106}
of resolving in diverse formulations the universality claim 
described in introduction \ref{introI},
here we provide the compact equivariant and invariant forms.
\par
The part is organized as follows.
We start section \ref{21071150} by displaying
the solution of the equivariant form of the universality claim,
i.e. the expanded equivariances of the $\mc{T}_{\bullet}-$nucleon phase and $\mc{T}_{\bullet}-$fragment state
established at the end of 
part \ref{07301106}.
Then in the second main result of the entire work,
we resolve the compact equivariant form of the claim
by encoding the above properties into two natural transformations.
Namely $\mf{m}_{\star}$ between functors from $\ms{C}_{u}(H)^{op}$ to $\ms{Fct}(H,\ms{set})$,
and $\mf{v}_{\natural}$ between functors from $\ms{C}_{u}^{0}(H)^{op}$ to $\ms{Fct}(H,\ms{set})$,
provided that the hypothesis $\ms{E}$
ensuring the functoriality of the source and target of $\mf{v}_{\natural}$
holds true.
In section \ref{07011744} we associate nucleon and fragment masses maps $\upzeta_{j}^{\mc{T}}$
and $\upkappa_{j}^{\mc{T}}$,
and a Terrell-like law $\upnu^{\mc{T}}$ with any 
nucleon-fragment doublet $\mc{T}$ on a category $\mf{C}$, and show how 
the invariance of these maps follows by the
equivariances of the $\mc{T}-$nucleon phase and $\mc{T}-$fragment state.
In section \ref{07170749} we establish
the $\mc{T}-$resolution of the invariant form of the universality claim,
representing an equivariant formulation of the result of the previous section.
Indeed we introduce the $\mc{T}-$nucleon and fragment masses $\muup_{j}^{\mc{T}}$ 
and $\uplambda_{j}^{\mc{T}}$,
and the $\mc{T}-$Terrell law $\uptheta^{\mc{T}}$ as set-valued extensions of 
$\upzeta_{j}^{\mc{T}}$, $\upkappa_{j}^{\mc{T}}$ and $\upnu^{\mc{T}}$ respectively.
Then we establish their universality by showing that 
$\muup_{j}^{\mc{T}}$, $\uplambda_{j}^{\mc{T}}$ and $\uptheta^{\mc{T}}$ 
are invariant under contravariant action of $\Upxi(\pf{D},\mc{F})$ and under action of $H$.
In section \ref{25061121} we apply the results of section \ref{07170749}
when $\mc{T}$ is the canonical nucleon-fragment doublet on $\ms{C}_{u}^{0}(H)$.
As a result we obtain, in the third main result of the entire work,
the univarsality of the global nucleon and fragment masses and the universality of the global Terrell law,
thus resolving the invariant form of the universality claim.
\section{Natural transformations related to $\mc{T}_{\bullet}$}
\label{21071150}
In 
Cor. \ref{19061936} 
we stated 
the expanded equivariances of the $\mc{T}_{\bullet}-$nucleon phase and $\mc{T}_{\bullet}-$fragment state,
resolving the equivariant form of the universality claim described in introduction \ref{introI}.
In Thm. \ref{10071930} the main result of this section and the second main result of the entire work,
we shall encode these properties in a unique fashion.
Shortly we show, modulo a suitable equivalence relation, 
that the maps $\ov{\ms{m}}$ and $\mc{V}_{\bullet}$ realize natural transformations between 
functors from the category $\ms{C}_{u}(H)^{op}$ to the category $\ms{Fct}(H,\ms{set})$
and from $\ms{C}_{u}^{0}(H)^{op}$ to $\ms{Fct}(H,\ms{set})$ respectively.
In the present section 
we assume fixed two locally compact topological groups $G$ and $F$,
a group homomorphism $\uprho:F\to Aut_{\ms{Gr}}(G)$  such that the map $(g,f)\mapsto\uprho_{f}(g)$
on $G\times F$ at values in $G$, is continuous, let $H$ denote $G\rtimes_{\uprho}F$.
For the sake of completeness let us restate 
Cor. \ref{19061936} 
\begin{enumerate}
\item
$\ov{\ms{m}}\circ\mf{G}^{H}
\in
 Mor_{\ms{Fct}(\ms{C}_{u}(H)^{op},\ms{set})}
(\mf{Q}^{\pf{T}_{\bullet}}
\circ
\mf{G}_{\vartriangle}^{H},
\Updelta^{\pf{T}_{\bullet}}
\circ
\mf{G}_{\vartriangle}^{H})$;
\item
$(\un\mapsto\ov{\ms{m}}^{\mf{A}})
\in
 Mor_{\ms{Fct}(H,\ms{set})}
(\mf{P}_{\pf{T}_{\bullet}}^{\mf{A}},
\mf{O}^{\mf{A}})$,
for all
$\mf{A}\in\ms{C}_{u}(H)$;
\item
$\ms{gr}\circ\mc{V}_{\bullet}\circ\tilde{\mf{G}}^{H}
\in
 Mor_{\ms{Fct}(\ms{C}_{u}^{0}(H)^{op},\ms{set})}
(\mf{Q}^{\pf{V}_{\bullet}}
\circ
\tilde{\mf{G}}_{\vartriangle}^{H},J_{\bullet})$;
\item
$(\un\mapsto\ms{gr}(\mc{V}^{\mf{B}}))
\in
 Mor_{\ms{Fct}(H,\ms{set})}
(\mf{P}_{\pf{V}_{\bullet}}^{\mf{B}},\mf{Z}^{\mf{B}})$,
for all
$\mf{B}\in\ms{C}_{u}^{0}(H)$;
\item
\begin{equation*}
\mc{V}_{\bullet}\circ\tilde{\mf{G}}^{H}
\in
\prod_{\mf{D}\in\ms{C}_{u}^{0}(H)}
\prod_{\mf{Q}\in\pf{V}(\mf{D})}
\prod_{\beta\in\ms{P}_{\pf{D}}^{\mf{Q}}}
\mf{N}^{\mf{D}}(\ov{\ms{m}}^{\mf{D}}(\mf{Q},\beta))
\cap
\ms{N}_{\mc{A}(\mf{D})_{\beta}^{\mf{Q}}},
\end{equation*}
and by setting 
$\mf{o}^{\mf{D}}=\ms{e}^{\mf{D}}\circ\mf{t}^{\mf{D}}$
for all $\mf{D}\in\ms{C}_{u}^{0}(H)$,
we obtain
\begin{equation*}
\begin{aligned}
\mf{o}
&\in
\prod_{\mf{D}\in\ms{C}_{u}^{0}(H)}
\prod_{\mf{Q}\in\pf{V}(\mf{D})}
\prod_{\beta\in\ms{P}_{\pf{D}}^{\mf{Q}}}
Rep^{\mf{D}}(\ov{\ms{m}}^{\mf{D}}(\mf{Q},\beta))
\\
\mc{V}^{\mf{D}}(\mf{Q},\beta)
&=
\Uppsi_{\mf{o}^{\mf{D}}(\mf{Q},\beta)}^{-}
\circ
\mf{j}_{\mf{o}^{\mf{D}}(\mf{Q},\beta)},
\\
\mf{T}_{\mf{o}^{\mf{D}}(\mf{Q},\beta)}
&=
\mf{Q},
\\
\alpha_{\mf{o}^{\mf{D}}(\mf{Q},\beta)}
&=
\beta,
\\
\forall
\mf{D}&\in\ms{C}_{u}^{0}(H),
\mf{Q}\in\pf{V}(\mf{D}),
\beta\in\ms{P}_{\pf{D}}^{\mf{Q}}.
\end{aligned}
\end{equation*}
\end{enumerate}
Thm. \ref{10071930} organizes in a more concise and elegant form the above properties,
but at cost of rearranging the functors modulo a suitable equivalence relation on a subset of $\pf{T}_{\mf{A}}$.
More exactly, since 
$\mf{A}\mapsto\mf{P}_{\pf{T}_{\bullet}}^{\mf{A}}$ 
and 
$\mf{A}\mapsto\mf{O}^{\mf{A}}$ 
are maps from $Obj(\ms{C}_{u}(H))$ 
to $Obj(\ms{Fct}(H,\ms{set}))$ 
it is natural to ask if we can arrange them to form the object part of two functors, 
say $\ms{M}$ and $\ms{N}$, 
from the category
$\ms{C}_{u}(H)^{op}$ 
to the category $\ms{Fct}(H,\ms{set})$, 
and then to verify if the following claim holds true:
$\mf{A}\mapsto(\un\mapsto\ov{\ms{m}}^{\mf{A}})$
realizes a natural transformation between $\ms{M}$ and $\ms{N}$.
Similarly since 
$\mf{B}\mapsto\mf{P}_{\pf{V}_{\bullet}}^{\mf{B}}$ and 
$\mf{B}\mapsto\mf{Z}^{\mf{B}}$
are maps from $Obj(\ms{C}_{u}^{0}(H))$ to $Obj(\ms{Fct}(H,\ms{set}))$) 
it is natural to ask if we can arrange them to form the object part of two functors, 
say $\ms{M}'$ and $\ms{N}'$
from the category $\ms{C}_{u}^{0}(H)^{op}$
to the category $\ms{Fct}(H,\ms{set})$, 
and then to verify if the following claim holds true:
$\mf{B}\mapsto(\un\mapsto\ms{gr}(\mc{V}^{\mf{B}}))$
realizes a natural transformation between the functors $\ms{M}'$ and $\ms{N}'$.
Now the request of the existence of the aforementioned functor $\ms{M}$ is equivalent to require that
for all $\mf{A},\mf{B}\in\ms{C}_{u}(H)$, $T\in Mor_{\ms{C}_{u}(H)}(\mf{B},\mf{A})$, 
$l\in H$ and $\mf{T}\in\pf{T}_{\mf{A}}$
\begin{equation*}
(\mf{b}^{\mf{B}}(l) 
\circ
\mf{d}^{H}(T))(\mf{T})
=
(\mf{d}^{H}(T)
\circ
\mf{b}^{\mf{A}}(l))(\mf{T}),
\end{equation*}
while we shall prove in
Lemma \ref{07071822}\eqref{07071822st1}
that 
for all $\mf{T}\in\pf{T}_{\mf{A}}^{\diamond}$
\begin{equation*}
(\mf{b}^{\mf{B}}(l) 
\circ
\mf{d}^{H}(T))(\mf{T})
\af{B}
(\mf{d}^{H}(T)
\circ
\mf{b}^{\mf{A}}(l))(\mf{T}),
\end{equation*}
where $\af{B}$ is a suitable equivalence relation on a subset $\pf{T}_{\mf{B}}^{\diamond}$ of $\pf{T}_{\mf{B}}$.
Hence it is clear that in order to prove our 
claim we need to pass in a convenient sense to the quotient all the involved functors w.r.t.
the relations $\af{A}$'s. The construction culminate in Def. \ref{10071700}, while in Cor. \ref{10071703} 
we prove that the constructed structures realize functors from the categories $\ms{C}_{u}(H)^{op}$ and 
$\ms{C}_{u}^{0}(H)^{op}$ to the category $\ms{Fct}(H,\ms{set})$.
Finally we succeed in proving our claim by stating in Thm. \ref{10071930} that
$\mf{A}\mapsto(\un\mapsto\ov{\ms{m}}_{\star}^{\mf{A}})$ 
and $\mf{B}\mapsto(\un\mapsto\ms{gr}(\mc{V}_{\natural}^{\mf{B}}))$
are natural transformations between the 
constructed functors where $\ov{\ms{m}}_{\star}$ and $\mc{V}_{\natural}$ are $\ov{\ms{m}}$ and 
$\mc{V}$ after passing in a convenient sense to the quotient w.r.t. the respective equivalence relations.
\begin{convention}
In the remaining of this section let
$\mf{A}=\lr{\mc{A},H}{\upeta}$ 
such that $\mf{A}\in\ms{C}(H)$,
while by starting from Def. \ref{05071126}, 
let
$\mf{B}=\lr{\mc{B},H}{\uptheta}$ 
and assume 
that $\mf{A},\mf{B}\in\ms{C}_{u}(H)$.
If
$\mf{T},\mf{Q}\in\pf{T}_{\mf{A}}$
we convein to use the following notation
whenever it does not cause confusion
$\mf{T}=\lr{\mc{T},\ps{\upmu},\pf{H}}{\upzeta,f,\Upgamma}$
and
$\mf{Q}=\lr{\mc{T}',\ps{\upmu}',\pf{K}}{\upzeta',f',\Updelta}$,
where
$\mc{T}=\lr{h,\upxi,\beta_{c},I}{\ps{\upomega}}$
and
$\mc{T}'=\lr{h',\upxi',\beta_{c}',I'}{\ps{\upomega}'}$,
$X$ and $X'$
are the sets such that $\R^{X}$ and $\R^{X'}$ 
are the domains of the maps $\upzeta$ and $\upzeta'$
respectively,
while
$\pf{H}_{\alpha}=
\lr{\mf{H}_{\alpha},\uppi_{\alpha}}{\Upomega_{\alpha}}$ 
and
$\pf{K}_{\beta}=
\lr{\mf{K}_{\beta},\upupsilon_{\beta}}{\Uppsi_{\beta}}$ 
for any $\alpha\in\ms{P}_{\mf{A}}^{\mf{T}}$ 
and $\beta\in\ms{P}_{\mf{A}}^{\mf{Q}}$.
\end{convention}
\begin{definition}
\label{12071151}
Let $\mathscr{A}$ and $\mathbb{A}$ be the maps on $Obj(\ms{C}(H))$ such that 
$\mathscr{A}(\mf{A}),\mathbb{A}(\mf{A})\in\prod_{\mf{I}\in\pf{T}_{\mf{A}}}
\prod_{\alpha\in\ms{P}_{\mf{A}}^{\mf{I}}}\mathscr{P}(\mc{L}(\mf{H}_{\alpha}))$ 
and for all 
$\mf{T}\in\pf{T}_{\mf{A}}$ and $\alpha\in\ms{P}_{\mf{A}}^{\mf{T}}$ we have
\begin{equation*}
\mathscr{A}(\mf{A})_{\alpha}^{\mf{T}}
\coloneqq
\uppi_{\alpha}(\mc{A})\cup\ms{U}_{\pf{H}_{\alpha}}(H),
\qquad
\mathbb{A}(\mf{A})_{\alpha}^{\mf{T}}
\coloneqq(\mathscr{A}(\mf{A})_{\alpha}^{\mf{T}})''.
\end{equation*}
\end{definition}
\begin{definition}
\label{05071015}
Let $\sm{A}$ be the relation 
such that 
\begin{multline*}
\sm{A}
\coloneqq
\bigl\{
(\mf{T},\mf{Q})\in\pf{T}_{\mf{A}}\times\pf{T}_{\mf{A}}
\mid
(\mc{T},\ps{\upmu},\upzeta,f)
=
(\mc{T}',\ps{\upmu}',\upzeta',f')\wedge
(\forall\alpha\in\ms{P}_{\mf{A}}^{\mf{T}})\\
(\exists\,V_{\alpha}:\mf{H}_{\alpha}\to\mf{K}_{\alpha}\text{ unitary})
\left(
\upupsilon_{\alpha}=\ms{ad}(V_{\alpha})\circ\uppi_{\alpha},\,
\Uppsi_{\alpha}=V_{\alpha}\Upomega_{\alpha},\,
\Updelta_{\alpha}=\ms{ad}(V_{\alpha})(\Upgamma_{\alpha})
\right)
\bigr\}.
\end{multline*}
Set $[\mf{T}]_{\sm{A}}
\coloneqq\{\mf{Q}\mid\mf{T}\sm{A}\mf{Q}\}$
and 
$\pf{T}_{\mf{A}}^{\dv}\coloneqq
\{[\mf{T}]_{\sm{A}}\mid\mf{T}\in\pf{T}_{\mf{A}}\}$.
\end{definition}
\begin{remark}
\label{13071106}
Let $\mf{T}\in\pf{T}_{\mf{A}}$ and $\alpha\in\ms{P}_{\mf{A}}^{\mf{T}}$ then since the bicommutant theorem 
$\mathbb{A}(\mf{A})_{\alpha}^{\mf{T}}$ 
is the von Neumann algebra generated by the set $\mathscr{A}(\mf{A})_{\alpha}^{\mf{T}}$.
Thus $\mf{T}\in\pf{V}(\mf{A})$ implies $\Upgamma_{\alpha}\in\mathbb{A}(\mf{A})_{\alpha}^{\mf{T}}$,
since the integration in 
\eqref{05171307} 
is w.r.t. the strong operator topology. Moreover the position $f=f'$ is well-set since 
agrees with \eqref{06071411}.
\end{remark}
\begin{definition}
\label{09070857}
Define 
\begin{equation*}
\begin{aligned}
\pf{T}_{\mf{A}}^{\diamond}
&\coloneqq
\{\mf{T}\in\pf{T}_{\mf{A}}
\mid
(\forall\alpha\in\ms{P}_{\mf{A}}^{\mf{T}})
(\Upgamma_{\alpha}\in\uppi_{\alpha}(\mc{A})'')
\},
\\
\af{A}
&\coloneqq
(\sm{A})
\cap
(\pf{T}_{\mf{A}}^{\diamond}\times\pf{T}_{\mf{A}}^{\diamond}),
\\
[\mf{T}]_{\af{A}}
&\coloneqq
\{\mf{Q}\mid\mf{T}\af{A}\mf{Q}\},\quad
\mf{T}\in\pf{T}_{\mf{A}}^{\diamond},
\\
\pf{T}_{\mf{A}}^{\star}
&\coloneqq
\{
[\mf{T}]_{\af{A}}
\mid
\mf{T}\in\pf{T}_{\mf{A}}^{\diamond}
\},
\end{aligned}
\end{equation*} 
moreover
\begin{equation*}
\begin{aligned}
\pf{T}_{\mf{A}}^{\vc}
&\coloneqq
\{\mf{T}\in\pf{T}_{\mf{A}}
\mid
(\forall\alpha\in\ms{P}_{\mf{A}}^{\mf{T}})
(\Upgamma_{\alpha}\in\mathbb{A}(\mf{A})_{\alpha}^{\mf{T}})
\},
\\
\dm{A}
&\coloneqq
(\sm{A})
\cap
(\pf{T}_{\mf{A}}^{\vc}\times\pf{T}_{\mf{A}}^{\vc}),
\\
[\mf{T}]_{\dm{A}}
&\coloneqq
\{\mf{Q}\mid\mf{T}\dm{A}\mf{Q}\},\quad
\mf{T}\in\pf{T}_{\mf{A}}^{\vc},
\\
\pf{T}_{\mf{A}}^{\vm}
&\coloneqq
\{
[\mf{T}]_{\dm{A}}
\mid
\mf{T}\in\pf{T}_{\mf{A}}^{\vc}
\},
\end{aligned}
\end{equation*} 
and if $\mf{A}\in\ms{C}_{u}^{0}(H)$ set
\begin{equation*}
\begin{aligned}
\tm{A}
&\coloneqq
(\sm{A})
\cap
(\pf{V}(\mf{A})\times\pf{V}(\mf{A})),
\\
[\mf{T}]_{\tm{A}}
&\coloneqq
\{\mf{Q}\mid\mf{T}\tm{A}\mf{Q}\},\quad
\mf{T}\in\pf{V}(\mf{A})
\\
\pf{V}_{\mf{A}}^{\natural}
&\coloneqq
\{
[\mf{T}]_{\tm{A}}\mid\mf{T}\in
\pf{V}(\mf{A})
\}.
\end{aligned}
\end{equation*} 
\end{definition}
\begin{lemma}
\label{12071157}
Let $\mf{T}\sm{A}\mf{Q}$ and $\alpha\in\ms{P}_{\mf{A}}^{\mf{T}}$
then
$\mathscr{A}(\mf{A})_{\alpha}^{\mf{Q}}
=\ms{ad}(V_{\alpha})(\mathscr{A}(\mf{A})_{\alpha}^{\mf{T}})$
and
$\mathbb{A}(\mf{A})_{\alpha}^{\mf{Q}}
=
\ms{ad}(V_{\alpha})(\mathbb{A}(\mf{A})_{\alpha}^{\mf{T}})$.
\end{lemma}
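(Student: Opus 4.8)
The plan is to prove both equalities at the level of the two generating pieces of $\mathscr{A}(\mf{A})_\alpha^{\mf{Q}}$ and then lift the first equality to the bicommutant. First I would fix $\alpha\in\ms{P}^{\mf{T}}$ together with the unitary $V_\alpha:\mf{H}_\alpha\to\mf{K}_\alpha$ supplied by $\mf{T}\sm{A}\mf{Q}$ according to Def. \ref{05071015}, recording the three identities $\upupsilon_\alpha=\ms{ad}(V_\alpha)\circ\uppi_\alpha$, $\Uppsi_\alpha=V_\alpha\Upomega_\alpha$ and $\Updelta_\alpha=\ms{ad}(V_\alpha)(\Upgamma_\alpha)$. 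Since $\mf{T}\sm{A}\mf{Q}$ also forces $(\mc{T},\ps{\upmu},\upzeta,f)=(\mc{T}',\ps{\upmu}',\upzeta',f')$, in particular $\ps{\upomega}=\ps{\upomega}'$, so $\ms{F}_{\ps{\upomega}_\alpha}=\ms{F}_{\ps{\upomega}'_\alpha}$ and the common domain $\ms{S}_{\ms{F}_{\ps{\upomega}_\alpha}}^{G}$ of $\ms{U}_{\pf{H}_\alpha}$ and $\ms{U}_{\pf{K}_\alpha}$ coincide, so that the two families of covariance unitaries are defined on the same set of parameters.

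The part concerning $\uppi_\alpha(\mc{A})$ is immediate: $\upupsilon_\alpha(\mc{A})=\ms{ad}(V_\alpha)(\uppi_\alpha(\mc{A}))$ is just the first identity applied to all of $\mc{A}$. The heart of the first equality is the intertwining of the covariance unitaries, namely $\ms{U}_{\pf{K}_\alpha}(l)=\ms{ad}(V_\alpha)(\ms{U}_{\pf{H}_\alpha}(l))$ for every $l$ in the common domain. For this I would test the right-hand operator against the total family of vectors $\upupsilon_\alpha(a)\Uppsi_\alpha$: using $V_\alpha^{-1}\upupsilon_\alpha(a)\Uppsi_\alpha=\uppi_\alpha(a)\Upomega_\alpha$ and the defining relation of $\ms{U}_{\pf{H}_\alpha}$ recorded in Rmk. \ref{11241735}, one computes $\ms{ad}(V_\alpha)(\ms{U}_{\pf{H}_\alpha}(l))\,\upupsilon_\alpha(a)\Uppsi_\alpha=V_\alpha\uppi_\alpha(\upeta(l)a)\Upomega_\alpha=\upupsilon_\alpha(\upeta(l)a)\Uppsi_\alpha$, which is precisely the defining relation of $\ms{U}_{\pf{K}_\alpha}(l)$. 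Cyclicity of $\pf{K}_\alpha$ together with the uniqueness built into that defining relation then yields the claimed equality of unitaries, hence $\ms{U}_{\pf{K}_\alpha}(H)=\ms{ad}(V_\alpha)(\ms{U}_{\pf{H}_\alpha}(H))$, the images being taken over the common domain $\ms{S}_{\ms{F}_{\ps{\upomega}_\alpha}}^{G}$.

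Since $\ms{ad}(V_\alpha)$ is a bijection and therefore commutes with unions, combining the two pieces gives $\mathscr{A}(\mf{A})_\alpha^{\mf{Q}}=\upupsilon_\alpha(\mc{A})\cup\ms{U}_{\pf{K}_\alpha}(H)=\ms{ad}(V_\alpha)(\mathscr{A}(\mf{A})_\alpha^{\mf{T}})$. For the second equality I would invoke that $V_\alpha$, being a unitary between Hilbert spaces, makes $\ms{ad}(V_\alpha):\mc{L}(\mf{H}_\alpha)\to\mc{L}(\mf{K}_\alpha)$ a spatial $\ast$-isomorphism, so it preserves commutants, i.e. $(\ms{ad}(V_\alpha)(S))'=\ms{ad}(V_\alpha)(S')$ for every $S\subseteq\mc{L}(\mf{H}_\alpha)$. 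Applying this twice to $S=\mathscr{A}(\mf{A})_\alpha^{\mf{T}}$ and using the first equality gives $\mathbb{A}(\mf{A})_\alpha^{\mf{Q}}=(\ms{ad}(V_\alpha)(\mathscr{A}(\mf{A})_\alpha^{\mf{T}}))''=\ms{ad}(V_\alpha)(\mathbb{A}(\mf{A})_\alpha^{\mf{T}})$, which completes the argument. The only mildly delicate step is the intertwining of the covariance unitaries, but it reduces to the elementary \emph{GNS}-type uniqueness computation sketched above; the remaining steps are formal manipulations with the spatial isomorphism $\ms{ad}(V_\alpha)$.
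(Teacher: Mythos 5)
Your proof is correct and follows essentially the same route as the paper's: the key step in both is the cyclicity computation showing $\ms{U}_{\pf{K}_{\alpha}}=\ms{ad}(V_{\alpha})\circ\ms{U}_{\pf{H}_{\alpha}}$ on the common domain, which is recorded in the paper as equation \eqref{05072220}. The only cosmetic difference is in the passage to $\mathbb{A}$: you note that the spatial isomorphism $\ms{ad}(V_{\alpha})$ preserves commutants and apply this twice, while the paper invokes the bicommutant theorem together with weak-operator continuity of $\ms{ad}(V_{\alpha})$ — two formulations of the same standard fact.
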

\begin{proof}
Let $\mf{T}\sm{A}\mf{Q}$ and $\alpha\in\ms{P}_{\mf{A}}^{\mf{T}}$.
Thus for any $h\in\s{\upomega}{\alpha}{}$
and $a\in\mc{A}$
\begin{equation*}
\begin{aligned}
\ms{U}_{\pf{K}_{\alpha}}(h)
\upupsilon_{\alpha}(a)\Uppsi_{\alpha}
&=\upupsilon_{\alpha}(\upeta(h)a)\Uppsi_{\alpha}
=V_{\alpha}\uppi_{\alpha}(\upeta(h)a)\Upomega_{\alpha}
\\
&=V_{\alpha}\ms{U}_{\pf{H}_{\alpha}}(h)
\uppi_{\alpha}(a)\Upomega_{\alpha}
=V_{\alpha}
\ms{U}_{\pf{H}_{\alpha}}(h)
V_{\alpha}^{\ast}
\upupsilon_{\alpha}(a)\Uppsi_{\alpha}
\\
&=
(\ms{ad}(V_{\alpha})\circ
\ms{U}_{\pf{H}_{\alpha}})(h)
\upupsilon_{\alpha}(a)\Uppsi_{\alpha},
\end{aligned}
\end{equation*}
so we obtain since the cyclicity of $\pf{K}_{\alpha}$ 
\begin{equation}
\label{05072220}
\ms{U}_{\pf{K}_{\alpha}}
=
\ms{ad}(V_{\alpha})
\circ
\ms{U}_{\pf{H}_{\alpha}},
\end{equation}
and the first equality of the statement follows. The second equality follows since the first one, since 
the bicommutant theorem and since the continuity of $\ms{ad}(W)$ w.r.t. 
the weak operator topology on $\mc{L}(\mf{H}_{\alpha})$
for any unitary operator $W$ on $\mf{H}_{\alpha}$.
\end{proof}
\begin{proposition}
\label{09070854}
$\sm{A}$, $\af{A}$, $\dm{A}$ and $\tm{A}$
are equivalence relations, 
moreover 
$[\mf{T}]_{\sm{A}}
=
[\mf{T}]_{\af{A}}$
and
$[\mf{I}]_{\sm{A}}
=
[\mf{I}]_{\dm{A}}$,
for any 
$\mf{T}\in\pf{T}_{\mf{A}}^{\diamond}$
and
$\mf{I}\in\pf{T}_{\mf{A}}^{\vc}$,
in particular
$\pf{T}_{\mf{A}}^{\star}
\subseteq
\pf{T}_{\mf{A}}^{\dv}$
and
$\pf{T}_{\mf{A}}^{\vm}
\subseteq
\pf{T}_{\mf{A}}^{\dv}$.
Finally if $\mf{A}\in\ms{C}_{u}^{0}(H)$ then
$\pf{V}(\mf{A})\subseteq\pf{T}_{\mf{A}}^{\vc}$ 
and $[\mf{Y}]_{\tm{A}}=[\mf{Y}]_{\sm{A}}$ for any $\mf{Y}\in\pf{V}(\mf{A})$,
in particular $\pf{V}_{\mf{A}}^{\natural}\subseteq\pf{T}_{\mf{A}}^{\vm}$.
\end{proposition}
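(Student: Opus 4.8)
The plan is to reduce the whole statement to two facts: that $\sm{A}$ is an equivalence relation on $\pf{T}_{\mf{A}}$, and that each of the sets $\pf{T}_{\mf{A}}^{\diamond}$, $\pf{T}_{\mf{A}}^{\vc}$ and (when $\mf{A}\in Obj(\ms{C}_{u}^{0}(H))$) $\pf{V}(\mf{A})$ is saturated under $\sm{A}$, meaning that $\mf{T}\sm{A}\mf{Q}$ together with membership of $\mf{T}$ forces membership of $\mf{Q}$. First I would check that $\sm{A}$ is an equivalence relation directly from Def. \ref{05071015}: it is the conjunction of the equality of the tuples $(\mc{T},\ps{\upmu},\upzeta,f)$, which is plainly an equivalence, with the requirement that for each $\alpha$ there exist a unitary $V_{\alpha}$ intertwining the representations, cyclic vectors and gradings. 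Reflexivity follows by taking $V_{\alpha}=\un$; symmetry by replacing $V_{\alpha}$ with $V_{\alpha}^{-1}=V_{\alpha}^{\ast}$, which intertwines in the reverse direction; and transitivity by composing implementers, using $\ms{ad}(W_{\alpha}V_{\alpha})=\ms{ad}(W_{\alpha})\circ\ms{ad}(V_{\alpha})$. Since by Def. \ref{09070857} the relations $\af{A}$, $\dm{A}$ and $\tm{A}$ are exactly $(\sm{A})\cap(S\times S)$ for $S=\pf{T}_{\mf{A}}^{\diamond},\pf{T}_{\mf{A}}^{\vc},\pf{V}(\mf{A})$, each inherits reflexivity, symmetry and transitivity on its $S$.

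Next I would observe that for $\mf{T}\in S$ one has $[\mf{T}]_{(\sm{A})\cap(S\times S)}=[\mf{T}]_{\sm{A}}\cap S$, so the three class-equalities $[\mf{T}]_{\af{A}}=[\mf{T}]_{\sm{A}}$, $[\mf{I}]_{\dm{A}}=[\mf{I}]_{\sm{A}}$ and $[\mf{Y}]_{\tm{A}}=[\mf{Y}]_{\sm{A}}$ are equivalent to the saturation of $\pf{T}_{\mf{A}}^{\diamond}$, $\pf{T}_{\mf{A}}^{\vc}$ and $\pf{V}(\mf{A})$ respectively. For $\pf{T}_{\mf{A}}^{\diamond}$ I would argue: if $\mf{T}\sm{A}\mf{Q}$ then, since $\upupsilon_{\alpha}=\ms{ad}(V_{\alpha})\circ\uppi_{\alpha}$ and $\ms{ad}(V_{\alpha})$ is a spatial $\ast$-isomorphism preserving bicommutants, $\upupsilon_{\alpha}(\mc{A})''=\ms{ad}(V_{\alpha})(\uppi_{\alpha}(\mc{A})'')$; combining with $\Updelta_{\alpha}=\ms{ad}(V_{\alpha})(\Upgamma_{\alpha})$ and $\Upgamma_{\alpha}\in\uppi_{\alpha}(\mc{A})''$ yields $\Updelta_{\alpha}\in\upupsilon_{\alpha}(\mc{A})''$, so $\mf{Q}\in\pf{T}_{\mf{A}}^{\diamond}$. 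The case of $\pf{T}_{\mf{A}}^{\vc}$ is identical after replacing the bicommutant identity by Lemma \ref{12071157}, which gives $\mathbb{A}(\mf{A})_{\alpha}^{\mf{Q}}=\ms{ad}(V_{\alpha})(\mathbb{A}(\mf{A})_{\alpha}^{\mf{T}})$.

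The hard part will be the saturation of $\pf{V}(\mf{A})$. The inclusion $\pf{V}(\mf{A})\subseteq\pf{T}_{\mf{A}}^{\vc}$ is Rmk. \ref{13071106}. For the saturation itself, let $\mf{Y}\in\pf{V}(\mf{A})$ with $\mf{Y}\sm{A}\mf{Q}$ and fix $\alpha$. Here the equality of the tuples forces the crossed products $\mc{B}_{\alpha}^{\mf{Y}}$ and $\mc{B}_{\alpha}^{\mf{Q}}$, and hence their unitizations, to coincide, so a single element $b$ can be compared through both representations. Using \eqref{05072220} and $\upupsilon_{\alpha}=\ms{ad}(V_{\alpha})\circ\uppi_{\alpha}$, together with the fact that the integral in \eqref{05171307} is taken in the strong operator topology on which $\ms{ad}(V_{\alpha})$ is continuous, I would obtain $\mf{R}_{\alpha}^{\mf{Q}}=\ms{ad}(V_{\alpha})\circ\mf{R}_{\alpha}^{\mf{Y}}$, and then $\tilde{\mf{R}}_{\alpha}^{\mf{Q}}=\ms{ad}(V_{\alpha})\circ\tilde{\mf{R}}_{\alpha}^{\mf{Y}}$ by Lemma \ref{10311019}. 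Choosing $b$ witnessing $\mf{Y}\in\pf{V}(\mf{A})$ (so $\|b\|\le 1$ and $\tilde{\mf{R}}_{\alpha}^{\mf{Y}}(b)=\Upgamma_{\alpha}$), the same $b$ gives $\tilde{\mf{R}}_{\alpha}^{\mf{Q}}(b)=\ms{ad}(V_{\alpha})(\Upgamma_{\alpha})=\Updelta_{\alpha}$, whence $\mf{Q}\in\pf{V}(\mf{A})$. The subtle point throughout is precisely this identification of the two crossed products together with the covariance of $\tilde{\mf{R}}$, which is what lets one transport the implementer $b$.

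Finally I would assemble the consequences. From the class-equalities, every element of $\pf{T}_{\mf{A}}^{\star}$, resp. $\pf{T}_{\mf{A}}^{\vm}$, is of the form $[\mf{T}]_{\af{A}}=[\mf{T}]_{\sm{A}}$, resp. $[\mf{I}]_{\dm{A}}=[\mf{I}]_{\sm{A}}$, so both sit inside $\pf{T}_{\mf{A}}^{\dv}$. For the last inclusion, given $\mf{Y}\in\pf{V}(\mf{A})\subseteq\pf{T}_{\mf{A}}^{\vc}$ the two saturation results combine to give $[\mf{Y}]_{\tm{A}}=[\mf{Y}]_{\sm{A}}=[\mf{Y}]_{\dm{A}}\in\pf{T}_{\mf{A}}^{\vm}$, yielding $\pf{V}_{\mf{A}}^{\natural}\subseteq\pf{T}_{\mf{A}}^{\vm}$.
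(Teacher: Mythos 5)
Your proof is correct and follows essentially the same route as the paper's: the equivalence-relation claim is checked directly, the $\pf{T}_{\mf{A}}^{\diamond}$ case via the bicommutant theorem, the $\pf{T}_{\mf{A}}^{\vc}$ case via Lemma \ref{12071157}, and the $\pf{V}(\mf{A})$ case via Rmk. \ref{13071106} together with the covariance $\tilde{\mf{R}}_{\pf{K},\alpha}^{\ps{\upmu}}=\ms{ad}(V_{\alpha})\circ\tilde{\mf{R}}_{\pf{H},\alpha}^{\ps{\upmu}}$, which is exactly the equation \eqref{05072302} the paper cites. The only difference is that you spell out in full the steps the paper leaves as "easy to show" or delegates to citations.
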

\begin{proof}
The first sentence is easy to show, while the first equality follows since the bicommutant theorem.
Next for any 
$\mf{T}\in\pf{T}_{\mf{A}}^{\vc}$ 
clearly 
$[\mf{T}]_{\sm{A}}
\supseteq
[\mf{T}]_{\dm{A}}$,
while
$[\mf{T}]_{\sm{A}}
\subseteq
[\mf{T}]_{\dm{A}}$
follows since Lemma \ref{12071157}.
$\pf{V}(\mf{A})\subseteq\pf{T}_{\mf{A}}^{\vc}$ follows since Rmk. \ref{13071106},
while
$[\mf{Y}]_{\sm{A}}\subseteq[\mf{Y}]_{\tm{A}}$ for any $\mf{Y}\in\pf{V}(\mf{A})$ follows since \eqref{05072302}.
\end{proof}
\begin{convention}
For any $\mf{T}\in\pf{T}_{\mf{A}}$
we 
let $[\mf{T}]$ denote $[\mf{T}]_{\sm{A}}$ and often 
when it does not cause confusion we let
$\mf{T}\simeq\mf{Q}$ denote $\mf{T}\sm{A}\mf{Q}$.
\end{convention}
\begin{remark}
\label{06070900}
By using a line similar to the one in the proof of 
Thm. \ref{11260828}, 
by \eqref{06071411}
and taking into account that two unitary
equivalent cyclic representations of a $C^{\ast}-$algebra are
associated with the same state, 
we deduce that $[\mf{T}]$ holds more than one element
for any $\mf{T}\in\pf{T}_{\mf{A}}$.
\end{remark}
\begin{lemma}
\label{18071115}
If $\mf{A},\mf{B}\in\ms{C}_{u}(H)$, $T\in Mor_{\ms{C}_{u}(H)}(\mf{B},\mf{A})$, $\mf{T}\in\pf{T}_{\mf{A}}$ and 
$\alpha\in\ms{P}_{\mf{A}}^{\mf{T}}$, 
then $\mathbb{A}(\mf{B})_{\alpha}^{\mf{d}^{H}(T)(\mf{T})}=\mathbb{A}(\mf{A})_{\alpha}^{\mf{T}}$.
\end{lemma}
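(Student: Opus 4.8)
The plan is to reduce the asserted equality of von Neumann algebras to an equality of their generating sets, and then to identify these generating sets summand by summand using the surjectivity of $T$ together with Corollary \ref{11201755}. Since by Def. \ref{12071151} one has $\mathbb{A}=\mathscr{A}''$, it suffices to prove $\mathscr{A}(\mf{B})_{\alpha}^{\mf{d}^{H}(T)(\mf{T})}=\mathscr{A}(\mf{A})_{\alpha}^{\mf{T}}$ and then pass to double commutants.

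First I would unfold the definitions. Writing $\mf{T}=\lr{\mc{T},\ps{\upmu},\pf{H}}{\upzeta,f,\Upgamma}$ with $\mc{T}=\lr{h,\upxi,\beta_{c},I}{\ps{\upomega}}$ and $\pf{H}_{\alpha}=\lr{\mf{H}_{\alpha},\uppi_{\alpha}}{\Upomega_{\alpha}}$, by Def. \ref{11271658} and Def. \ref{11210748} the element $\mf{d}^{H}(T)(\mf{T})=\lr{\mc{T}^{T},\ps{\upmu},\pf{H}^{T}}{\upzeta,f,\Upgamma}$ has $\mc{T}^{T}=\lr{h,\upxi,\beta_{c},I}{T^{\dagger}\circ\ps{\upomega}}$ and components $\pf{H}^{T}_{\alpha}=\lr{\mf{H}_{\alpha},\uppi_{\alpha}\circ T}{\Upomega_{\alpha}}$, which by Lemma \ref{11200732} is a cyclic representation of $\mc{B}$ associated with $T^{\dagger}(\ps{\upomega}_{\alpha})$. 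Hence by Def. \ref{12071151}
\[
\mathscr{A}(\mf{B})_{\alpha}^{\mf{d}^{H}(T)(\mf{T})}=(\uppi_{\alpha}\circ T)(\mc{B})\cup\ms{U}_{\pf{H}^{T}_{\alpha}}(H),\qquad\mathscr{A}(\mf{A})_{\alpha}^{\mf{T}}=\uppi_{\alpha}(\mc{A})\cup\ms{U}_{\pf{H}_{\alpha}}(H).
\]

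For the first summands I would use that $T\in Mor_{\ms{C}_{u}(H)}(\mf{B},\mf{A})$ is surjective by Def. \ref{11211655}, whence $(\uppi_{\alpha}\circ T)(\mc{B})=\uppi_{\alpha}(T(\mc{B}))=\uppi_{\alpha}(\mc{A})$. For the second summands I would apply Corollary \ref{11201755}, with the roles of $\mf{A}$ and $\mf{B}$ interchanged relative to its statement (there the morphism runs from the first system to the second, whereas here $T\colon\mc{B}\to\mc{A}$), obtaining $\ms{U}_{\pf{H}^{T}_{\alpha}}=\ms{U}_{\pf{H}_{\alpha}}$ as maps; Lemma \ref{11131734}, applicable because $T$ surjective is in particular $\sigma(\mc{A},\mc{A}^{\ast})$-dense, guarantees $\ms{S}_{\ms{F}_{T^{\dagger}(\ps{\upomega}_{\alpha})}}^{G}=\ms{S}_{\ms{F}_{\ps{\upomega}_{\alpha}}}^{G}$, so the two maps share the common domain $\ms{S}_{\ms{F}_{\ps{\upomega}_{\alpha}}}^{G}$ and therefore have the same image. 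Combining the two identifications yields equality of the generating sets, and taking double commutants gives $\mathbb{A}(\mf{B})_{\alpha}^{\mf{d}^{H}(T)(\mf{T})}=\mathbb{A}(\mf{A})_{\alpha}^{\mf{T}}$.

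The argument is essentially bookkeeping and requires no operator-algebraic input beyond the bicommutant identity $\mathbb{A}=\mathscr{A}''$. The only point needing care will be the matching of the direction of $T$: both Corollary \ref{11201755} and Lemma \ref{11131734} are phrased for a morphism from $\mf{A}$ to $\mf{B}$, whereas here $T$ goes from $\mf{B}$ to $\mf{A}$, so I must check the relabelling is consistent and, in particular, that the domain of $\ms{U}_{\pf{H}^{T}_{\alpha}}$ genuinely coincides with that of $\ms{U}_{\pf{H}_{\alpha}}$ — which is exactly what Lemma \ref{11131734} supplies.
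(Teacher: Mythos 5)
Your proposal is correct and takes essentially the same route as the paper's proof: both reduce the claim to identifying the generating sets, using the surjectivity of $T$ to get $(\uppi_{\alpha}\circ T)(\mc{B})=\uppi_{\alpha}(\mc{A})$ and Cor. \ref{11201755} (with Lemma \ref{11131734} for the domains) to get $\ms{U}_{\pf{H}_{\alpha}^{T}}=\ms{U}_{\pf{H}_{\alpha}}$. The only cosmetic difference is that the paper passes to the generated von Neumann algebra via weak closures, whereas you take double commutants directly, which matches Def. \ref{12071151} verbatim.
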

\begin{proof}
Since the bicommutant theorem we deduce that
$\mathbb{A}(\mf{B})_{\alpha}^{\mf{d}^{H}(T)(\mf{T})}=\ov{\uppi_{\alpha}(T(\mc{B}))\cup\ms{U}_{\pf{H}_{\alpha}^{T}}(H)}^{w}$
and
$\mathbb{A}(\mf{A})_{\alpha}^{\mf{T}}=\ov{\uppi_{\alpha}(\mc{A})\cup\ms{U}_{\pf{H}_{\alpha}}(H)}^{w}$,
where $\ov{S}^{w}$ is the closure w.r.t. the weak operator topology of any subset $S$ of $\mc{L}(\mf{H}_{\alpha})$.
Thus the statement follows since $T$ is surjective and 
Cor. \ref{11201755}. 
\end{proof}
\begin{proposition}
\label{05071049}
If $\mf{A}$ is unitarily implemented by $\ms{v}$
and $l\in H$, 
then
$\mf{b}^{\mf{A},\ms{v}}(l)$ is $(\sm{A},\sm{A})-$compatible,
moreover 
$\mf{b}^{\mf{A},\ms{v}}(l)(\pf{T}_{\mf{A}}^{\diamond})
\subseteq\pf{T}_{\mf{A}}^{\diamond}$
and
$\mf{b}^{\mf{A},\ms{v}}(l)(\pf{T}_{\mf{A}}^{\vc})
\subseteq\pf{T}_{\mf{A}}^{\vc}$
hence
$\mf{b}^{\mf{A},\ms{v}}(l)\up\pf{T}_{\mf{A}}^{\diamond}$
is $(\af{A},\af{A})-$compatible
and
$\mf{b}^{\mf{A},\ms{v}}(l)\up\pf{T}_{\mf{A}}^{\vc}$
is $(\dm{A},\dm{A})-$compatible.
If in addition $\mf{A},\mf{B}\in\ms{C}_{u}(H)$
and 
$T\in Mor_{\ms{C}_{u}(H)}(\mf{B},\mf{A})$,
then 
$\mf{d}^{H}(T)$ 
is $(\sm{A},\sm{B})-$compatible,
moreover
$\mf{d}^{H}(T)(\pf{T}_{\mf{A}}^{\diamond})
\subseteq
\pf{T}_{\mf{B}}^{\diamond}$
and
$\mf{d}^{H}(T)(\pf{T}_{\mf{A}}^{\vc})
\subseteq
\pf{T}_{\mf{B}}^{\vc}$
hence
$\mf{d}^{H}(T)\up\pf{T}_{\mf{A}}^{\diamond}$ 
is $(\af{A},\af{B})-$compatible
and
$\mf{d}^{H}(T)\up\pf{T}_{\mf{A}}^{\vc}$ 
is $(\dm{A},\dm{B})-$compatible.
If $\mf{A}\in\ms{C}_{u}^{0}(H)$ then $\mf{b}^{\mf{A},\ms{v}}(l)\up\pf{V}(\mf{A})$ is $(\tm{A},\tm{A})-$compatible,
while whenever $\mf{B}\in\ms{C}_{u}^{0}(H)$ and $T\in Mor_{\ms{C}_{u}^{0}(H)}(\mf{B},\mf{A})$ then 
$\mf{d}^{H}(T)\up\pf{V}_{\mf{A}}$ is $(\tm{A},\tm{B})-$compatible.
\end{proposition}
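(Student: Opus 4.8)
The plan is to treat the four relations uniformly by exhibiting, for each transformed datum, a witnessing family of unitaries and by tracking separately how the $\Z_2$-grading moves; the set-inclusions then combine with the $\sm{}$-compatibilities to deliver the $\af{}$-, $\dm{}$- and $\tm{}$-statements, since by Def. \ref{09070857} these are just the restrictions of $\sm{}$ to the invariant subsets $\pf{T}^{\diamond}$, $\pf{T}^{\vc}$ and $\pf{V}$. Throughout I would fix $\mf{T}\sm{A}\mf{Q}$ with witnessing unitaries $V_{\alpha}\colon\mf{H}_{\alpha}\to\mf{K}_{\alpha}$ as in Def. \ref{05071015}, so that $\upupsilon_{\alpha}=\ms{ad}(V_{\alpha})\circ\uppi_{\alpha}$, $\Uppsi_{\alpha}=V_{\alpha}\Upomega_{\alpha}$ and $\Updelta_{\alpha}=\ms{ad}(V_{\alpha})(\Upgamma_{\alpha})$ for every $\alpha\in\ms{P}^{\mf{T}}$, recalling from Thm. \ref{11260828} that $\mf{b}^{\mf{A},\ms{v}}(l)$ already preserves $\pf{T}_{\mf{A}}$.

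For the $\sm{}$-compatibility of $\mf{b}^{\mf{A},\ms{v}}(l)$ I would note that by Def. \ref{10291125} the two images share the common data $(\mc{T}^{l},\ps{\upmu}^{l},\upzeta^{l},f)$, keep $\uppi_{\alpha}$ and $\upupsilon_{\alpha}$ as underlying $\ast$-representations (Def. \ref{09141932}), and carry the gradings $\ms{ad}(\uppi_{\alpha}(\ms{v}(l)))(\Upgamma_{\alpha})$ and $\ms{ad}(\upupsilon_{\alpha}(\ms{v}(l)))(\Updelta_{\alpha})$. The claim is that the same $V_{\alpha}$ witnesses the relation between the images: the intertwining identity is untouched; the cyclic vectors satisfy $\upupsilon_{\alpha}(\ms{v}(l))\Uppsi_{\alpha}=V_{\alpha}\uppi_{\alpha}(\ms{v}(l))V_{\alpha}^{\ast}V_{\alpha}\Upomega_{\alpha}=V_{\alpha}\,\uppi_{\alpha}(\ms{v}(l))\Upomega_{\alpha}$; and the gradings satisfy $\ms{ad}(\upupsilon_{\alpha}(\ms{v}(l)))(\Updelta_{\alpha})=\ms{ad}(V_{\alpha}\uppi_{\alpha}(\ms{v}(l)))(\Upgamma_{\alpha})=\ms{ad}(V_{\alpha})\bigl(\ms{ad}(\uppi_{\alpha}(\ms{v}(l)))(\Upgamma_{\alpha})\bigr)$, using $\upupsilon_{\alpha}(\ms{v}(l))=V_{\alpha}\uppi_{\alpha}(\ms{v}(l))V_{\alpha}^{\ast}$ and $\Updelta_{\alpha}=\ms{ad}(V_{\alpha})(\Upgamma_{\alpha})$.

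Next I would verify invariance of the subsets under $\mf{b}^{\mf{A},\ms{v}}(l)$. Because $\ms{v}(l)\in\mc{U}(\mc{A})$, the operator $\uppi_{\alpha}(\ms{v}(l))$ lies in $\uppi_{\alpha}(\mc{A})$, so conjugation by it preserves $\uppi_{\alpha}(\mc{A})''$, and since the representation of $\mf{b}^{\mf{A},\ms{v}}(l)(\mf{T})$ is still $\uppi_{\alpha}$ the transformed grading remains in $\uppi_{\alpha}(\mc{A})''$; thus $\pf{T}_{\mf{A}}^{\diamond}$ is preserved. For $\pf{T}_{\mf{A}}^{\vc}$ the delicate point, and the one step that is more than bookkeeping, is the identity $\mathbb{A}(\mf{A})_{\alpha}^{\mf{b}^{\mf{A},\ms{v}}(l)(\mf{T})}=\mathbb{A}(\mf{A})_{\alpha}^{\mf{T}}$: by Rmk. \ref{09151937} together with \eqref{09151941} the Weyl unitaries of $\pf{H}_{\alpha}^{(\ms{v},l)}$ are the inner $\uppi_{\alpha}(\ms{v}(l))$-conjugates of those of $\pf{H}_{\alpha}$, reindexed through $\ms{ad}(l^{-1})$, and as $\uppi_{\alpha}(\ms{v}(l^{\pm 1}))\in\uppi_{\alpha}(\mc{A})$ they generate, together with $\uppi_{\alpha}(\mc{A})$, the same von Neumann algebra (Def. \ref{12071151}). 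The transformed grading is then an $\ms{ad}(\uppi_{\alpha}(\ms{v}(l)))$-conjugate of $\Upgamma_{\alpha}\in\mathbb{A}(\mf{A})_{\alpha}^{\mf{T}}$ by a unitary of that very algebra, hence lies in it, giving invariance of $\pf{T}_{\mf{A}}^{\vc}$.

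For $\mf{d}^{H}(T)$ (with $T$ surjective, $T\colon\mc{B}\to\mc{A}$) I would use that by Def. \ref{11271658} and Def. \ref{11210748} the map only pulls the representation back along $T$, replacing $\uppi_{\alpha}$ by $\uppi_{\alpha}\circ T$ while leaving the Hilbert spaces, cyclic vectors and gradings fixed and the states passing to $T^{\dagger}\circ\ps{\upomega}$; since $\mf{T}\sm{A}\mf{Q}$ forces $\ps{\upomega}=\ps{\upomega}'$, the two images again share their $\mc{T}^{T}$-data and the same $V_{\alpha}$ works, because $\upupsilon_{\alpha}\circ T=\ms{ad}(V_{\alpha})\circ(\uppi_{\alpha}\circ T)$ while vectors and gradings are unchanged; this gives $(\sm{A},\sm{B})$-compatibility. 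Surjectivity of $T$ yields $(\uppi_{\alpha}\circ T)(\mc{B})''=\uppi_{\alpha}(\mc{A})''$, so $\pf{T}_{\mf{A}}^{\diamond}$ maps into $\pf{T}_{\mf{B}}^{\diamond}$, and Lemma \ref{18071115} gives $\mathbb{A}(\mf{B})_{\alpha}^{\mf{d}^{H}(T)(\mf{T})}=\mathbb{A}(\mf{A})_{\alpha}^{\mf{T}}$, so $\pf{T}_{\mf{A}}^{\vc}$ maps into $\pf{T}_{\mf{B}}^{\vc}$. Finally, for the $\tm{}$-statements it remains only to recall that $\mf{b}^{\mf{A},\ms{v}}(l)\bigl(\pf{V}(\mf{A})\bigr)\subseteq\pf{V}(\mf{A})$ by Lemma \ref{12051430} and that $\mf{d}^{H}(T)\bigl(\pf{V}(\mf{A})\bigr)\subseteq\pf{V}(\mf{B})$ is the defining condition of $Mor_{\ms{C}_{u}^{0}(H)}(\mf{B},\mf{A})$ in Prp. \ref{19061753}; combined with the $\sm{}$-compatibilities these give the two $\tm{}$-compatibilities, and all the $\af{}$- and $\dm{}$-compatibilities follow from the $\sm{}$-compatibilities and the subset-invariances as noted at the outset.
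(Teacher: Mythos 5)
Your proposal is correct and follows essentially the same route as the paper's proof: the same witnessing unitaries $V_{\alpha}$ verify the $\sm{}$-compatibilities, the invariance of $\pf{T}^{\vc}$ rests on the fact that the transformed Weyl unitaries and grading are $\uppi_{\alpha}(\ms{v}(l))$-conjugates with $\uppi_{\alpha}(\ms{v}(l))\in\uppi_{\alpha}(\mc{A})$ (the paper phrases this as $\ms{ad}(\uppi_{\alpha}(\ms{v}(l)))(\mathbb{A}(\mf{A})_{\alpha}^{\mf{T}})=\mathbb{A}(\mf{A})_{\alpha}^{\mf{b}(l)(\mf{T})}$, which coincides with your equality since the conjugating unitary is inner), and the $\mf{d}^{H}(T)$ and $\tm{}$ cases are handled by Lemma \ref{18071115}, Lemma \ref{12051430} and the definition of $Mor_{\ms{C}_{u}^{0}(H)}$ exactly as in the paper.
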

\begin{proof}
Let $\mf{T}\sm{A}\mf{Q}$
and $l\in H$
we claim to show that
$\mf{b}^{\mf{A},\ms{v}}(l)(\mf{T})
\sm{A}
\mf{b}^{\mf{A},\ms{v}}(l)(\mf{Q})$
and 
if in addition $\mf{A},\mf{B}\in\ms{C}_{u}(H)$
that
$\mf{d}^{H}(T)(\mf{T})
\sm{B}
\mf{d}^{H}(T)(\mf{Q})$.
Let
$\alpha\in\ms{P}_{\mf{A}}^{\mf{T}}$
then
$V_{\alpha}\uppi_{\alpha}(\ms{v}(l))\Upomega_{\alpha}
=\upupsilon_{\alpha}(\ms{v}(l))\Uppsi_{\alpha}$
and
$(\ms{ad}(V_{\alpha})
\circ
\ms{ad}(\uppi_{\alpha}(\ms{v}(l))))
(\Upgamma_{\alpha})
=
(\ms{ad}(\upupsilon_{\alpha}(\ms{v}(l)))
\circ
\ms{ad}(V))
(\Upgamma_{\alpha})
=
\ms{ad}(\upupsilon_{\alpha}(\ms{v}(l)))
(\Updelta_{\alpha})
$,
while clearly
$\ms{ad}(V_{\alpha})\circ\uppi_{\alpha}\circ T
=
\upupsilon_{\alpha}\circ T$, and our claim is proved.
Next assume
$\mf{T}\in\pf{T}_{\mf{A}}^{\vc}$, the case $\mf{T}\in\pf{T}_{\mf{A}}^{\diamond}$ follows similarly.
Clearly $\ms{ad}(\uppi_{\alpha}(\ms{v}(l)))\circ\uppi_{\alpha}=\uppi_{\alpha}\circ\ms{ad}(\ms{v}(l))$, hence 
$\ms{ad}(\uppi_{\alpha}(\ms{v}(l)))(\mathscr{A}(\mf{A})_{\alpha}^{\mf{T}})=
\mathscr{A}(\mf{A})_{\alpha}^{\mf{b}^{\mf{A},\ms{v}}(l)
(\mf{T})}$
since 
Cor. \ref{09131225}
thus 
$\ms{ad}(\uppi_{\alpha}(\ms{v}(l)))(\mathbb{A}(\mf{A})_{\alpha}^{\mf{T}})
=\mathbb{A}(\mf{A})_{\alpha}^{\mf{b}^{\mf{A},\ms{v}}(l)(\mf{T})}$
since the bicommutant theorem and since $\ms{ad}(\uppi_{\alpha}(\ms{v}(l)))$ is weakly continuous.
Therefore 
$\mf{b}^{\mf{A},\ms{v}}(l)(\mf{T})
\in\pf{T}_{\mf{A}}^{\vc}$.
Next $\mf{d}^{H}(T)(\mf{T})\in\pf{T}_{\mf{B}}^{\vc}$ 
since Lemma \ref{18071115}. The last sentence of the statement follows since 
the first one and 
Lemma \ref{12051430}, 
and since the second one and the definition of $ Mor_{\ms{C}_{u}^{0}(H)}$.
\end{proof}
The following result is fundamental in order to define in 
Def. \ref{06071514} the maps $\mf{m}_{\#}$ and $\mf{v}_{\natural}$,
where $\#\in\{\dv,\star,\vm\}$.
\begin{lemma}
\label{06071046}
We have 
\begin{enumerate}
\item
$\ov{\ms{m}}^{\mf{A}}$ is $(\sm{A},=)-$compatible,
i.e.
$\mf{T}\sm{A}\mf{Q}\Rightarrow
\ov{\ms{m}}^{\mf{A}}(\mf{T})=\ov{\ms{m}}^{\mf{A}}(\mf{Q})$,
\label{06071046st1}
\item
if $\mf{A}\in\ms{C}_{u}^{0}(H)$ then
$\mc{V}^{\mf{A}}$ is $(\tm{A},=)-$compatible.
\label{06071046st2}
\end{enumerate}
\end{lemma}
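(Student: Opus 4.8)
The plan is to treat both statements uniformly, exploiting that $\mf{T}\sm{A}\mf{Q}$ supplies, by Def.~\ref{05071015}, for each $\alpha\in\ms{P}^{\mf{T}}=\ms{P}^{\mf{Q}}$ a unitary $V_{\alpha}:\mf{H}_{\alpha}\to\mf{K}_{\alpha}$ with $\upupsilon_{\alpha}=\ms{ad}(V_{\alpha})\circ\uppi_{\alpha}$, $\Uppsi_{\alpha}=V_{\alpha}\Upomega_{\alpha}$ and $\Updelta_{\alpha}=\ms{ad}(V_{\alpha})(\Upgamma_{\alpha})$, while the remaining data obey $(\mc{T},\ps{\upmu},\upzeta,f)=(\mc{T}',\ps{\upmu}',\upzeta',f')$. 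In particular $\ps{\upomega}=\ps{\upomega}'$ and $\ps{\upmu}=\ps{\upmu}'$, so the crossed products $\ms{B}_{\ps{\upmu}}^{\ps{\upomega},\alpha,+}$, their groups $\ms{K}_{\alpha}^{\mf{T}}=\ms{K}_{\alpha}^{\mf{Q}}$ and the pairings $\lr{\cdot}{\cdot}_{\ps{\upmu},\ps{\upomega},\alpha}$ coincide; everything then reduces to showing that the analytic objects attached to $\mf{T}$ and to $\mf{Q}$ are carried into one another by $\ms{ad}(V_{\alpha})$.

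For st.\eqref{06071046st1} I would first upgrade the intertwining of representations to an intertwining of the entire $K$-cycle $\ps{R}(\mf{T},\alpha)$. From Lemma~\ref{12071157}, eq.~\eqref{05072220}, one has $\ms{U}_{\pf{K}_{\alpha}}=\ms{ad}(V_{\alpha})\circ\ms{U}_{\pf{H}_{\alpha}}$, whence $\mf{R}_{\pf{K},\alpha}^{\ps{\upmu}}=\ms{ad}(V_{\alpha})\circ\mf{R}_{\pf{H},\alpha}^{\ps{\upmu}}$ from the integrated form \eqref{05171307}, and $\tilde{\mf{R}}_{\pf{K},\alpha}^{\ps{\upmu}}=\ms{ad}(V_{\alpha})\circ\tilde{\mf{R}}_{\pf{H},\alpha}^{\ps{\upmu}}$ by Lemma~\ref{10311019}. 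Feeding $\ms{U}_{\pf{K}_{\alpha}}=\ms{ad}(V_{\alpha})\circ\ms{U}_{\pf{H}_{\alpha}}$ into the generators of the semigroups defining the $T_{x}$'s gives $T_{x}^{\pf{K}}=V_{\alpha}T_{x}^{\pf{H}}V_{\alpha}^{-1}$ exactly as in \eqref{10121958pre}, so Thm.~\ref{11252000}\eqref{11252000st2} yields $\ms{D}_{\pf{K}_{\alpha}}^{\upzeta,f}=V_{\alpha}\,\ms{D}_{\pf{H}_{\alpha}}^{\upzeta,f}\,V_{\alpha}^{-1}$; together with $\Updelta_{\alpha}=\ms{ad}(V_{\alpha})(\Upgamma_{\alpha})$ this exhibits $\ps{R}(\mf{Q},\alpha)$ as the $V_{\alpha}$-transport of $\ps{R}(\mf{T},\alpha)$ over the common algebra. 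The equality $\ms{ch}(\ps{R}(\mf{T},\alpha))=\ms{ch}(\ps{R}(\mf{Q},\alpha))$ then follows from the invariance of the $JLO$ cocycle under unitary conjugation: each factor in the trace defining its components is conjugated by $V_{\alpha}$ and $\ms{Tr}$ is invariant under this conjugation, which is precisely the computation \eqref{02051920} with $V_{\alpha}$ in place of $U$. Since the pairing and the $K$-groups are those of the single algebra $\ms{B}_{\ps{\upmu}}^{\ps{\upomega},\alpha,+}$, this gives $\ov{\ms{m}}^{\mf{A}}(\mf{T},\alpha)=\ov{\ms{m}}^{\mf{A}}(\mf{Q},\alpha)$ for every $\alpha$, i.e. the claimed $(\sm{A},=)$-compatibility.

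For st.\eqref{06071046st2}, as $\mf{A}\in Obj(\ms{C}_{u}^{0}(H))$ and $\mf{T}\tm{A}\mf{Q}$ both lie in $\pf{V}(\mf{A})$, I would invoke the closed form \eqref{14051532} (equivalently Main Thm.~\ref{12051936}\eqref{12051936st3}): $\mc{V}^{\mf{A}}(\mf{T},\alpha)=\omega_{\rho}\circ\uppi_{\alpha}$ with $\rho=\exp(-(\ms{D}_{\pf{H}_{\alpha}}^{\upzeta,f})^{2})$, and likewise $\mc{V}^{\mf{A}}(\mf{Q},\alpha)=\omega_{\rho'}\circ\upupsilon_{\alpha}$ with $\rho'=\exp(-(\ms{D}_{\pf{K}_{\alpha}}^{\upzeta,f})^{2})$. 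From $\ms{D}_{\pf{K}_{\alpha}}^{\upzeta,f}=V_{\alpha}\,\ms{D}_{\pf{H}_{\alpha}}^{\upzeta,f}\,V_{\alpha}^{-1}$ and Cor.~\ref{10301830}\eqref{10301830st3} applied to $\lambda\mapsto\exp(-\lambda^{2})$ one gets $\rho'=\ms{ad}(V_{\alpha})(\rho)$, and then $\omega_{\rho'}\circ\ms{ad}(V_{\alpha})=\omega_{\rho}$ by the trace identity $\ms{Tr}(V_{\alpha}\rho V_{\alpha}^{-1}\,\ms{ad}(V_{\alpha})(a))=\ms{Tr}(\rho a)$ already used in the proof of Thm.~\ref{01151104}. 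Hence $\mc{V}^{\mf{A}}(\mf{Q},\alpha)=\omega_{\rho'}\circ\ms{ad}(V_{\alpha})\circ\uppi_{\alpha}=\omega_{\rho}\circ\uppi_{\alpha}=\mc{V}^{\mf{A}}(\mf{T},\alpha)$; here both states live literally on the fixed algebra underlying $\mf{A}$ (recall $\mf{a}_{\alpha}^{\mf{T}}=\mf{A}$), so the equality is unambiguous.

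The main obstacle is the first statement: transporting the whole $\theta$-summable $K$-cycle by $V_{\alpha}$ and, above all, verifying that the Dirac operator $\ms{D}_{\pf{H}_{\alpha}}^{\upzeta,f}$ conjugates correctly, which is exactly where the equivariance of the Borelian functional calculus of a commuting family of resolutions of identity (Thm.~\ref{11252000}) is indispensable, and then deducing the invariance of the Chern--Connes character from the conjugation invariance of the trace (the analogue of \eqref{02051920}). Once this conjugation of the Dirac operator is established, st.\eqref{06071046st2} is a short specialization.
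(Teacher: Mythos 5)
Your proposal is correct and follows essentially the same route as the paper's proof: upgrade $\upupsilon_{\alpha}=\ms{ad}(V_{\alpha})\circ\uppi_{\alpha}$ to $\ms{U}_{\pf{K}_{\alpha}}=\ms{ad}(V_{\alpha})\circ\ms{U}_{\pf{H}_{\alpha}}$ and hence to $\tilde{\mf{R}}_{\pf{K},\alpha}^{\ps{\upmu}}=\ms{ad}(V_{\alpha})\circ\tilde{\mf{R}}_{\pf{H},\alpha}^{\ps{\upmu}}$, conjugate the generators and then the Dirac operator via Thm.~\ref{11252000}\eqref{11252000st2}, conclude invariance of the Chern--Connes pairing from the conjugation invariance of the trace in the JLO formula, and settle st.\eqref{06071046st2} by combining $\exp(-\ms{D}^{2})\mapsto\ms{ad}(V_{\alpha})(\exp(-\ms{D}^{2}))$ with the closed form \eqref{14051532}. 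This matches the paper's argument step for step, including the auxiliary identities \eqref{05072302}, \eqref{05072253} and \eqref{05072305}.
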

\begin{proof}
Let $\mf{T}\sm{A}\mf{Q}$ and $\alpha\in\ms{P}_{\mf{A}}^{\mf{T}}$.
Since \eqref{05072220} and following the arguments used 
in the proof of 
Thm. \ref{09191747} 
we deduce that
the next is a commutative diagram
\begin{equation*}
\xymatrix{
\mc{L}(\mf{H}_{\alpha})
\ar[r]^{\ms{ad}(V_{\alpha})}
&
\mc{L}(\mf{K}_{\alpha})
\\
\ms{B}_{\ps{\upmu}}^{\ps{\upomega},\alpha}
\ar[u]^{\mf{R}_{\pf{H},\alpha}^{\ps{\upmu}}}
\ar[ur]_{\mf{R}_{\pf{K},\alpha}^{\ps{\upmu}}},}
\end{equation*}
hence since 
Lemma \ref{10311019} 
we obtain 
\begin{equation}
\label{05072302}
\tilde{\mf{R}}_{\pf{K},\alpha}^{\ps{\upmu}}
=
\ms{ad}(V_{\alpha})
\circ
\tilde{\mf{R}}_{\pf{H},\alpha}^{\ps{\upmu}}.
\end{equation}
Next for any $x\in X$
if $S_{x}^{\alpha}$ and $T_{x}^{\alpha}$ 
is such that $iS_{x}^{\alpha}$ is the 
infinitesimal generator of the semigroup
$\ms{U}_{\pf{K}_{\alpha}}\circ\upzeta\circ\mf{i}_{x}\up\R^{+}$
and
$\ms{U}_{\pf{H}_{\alpha}}\circ\upzeta\circ\mf{i}_{x}\up\R^{+}$
respectively
then since \eqref{05072220} we obtain
\begin{equation}
\label{06071413}
S_{x}^{\alpha}=V_{\alpha}T_{x}^{\alpha}V_{\alpha}^{\ast},
\end{equation}
thus by 
Cor \ref{10301830}\eqref{10301830st2},
\eqref{18225} 
and 
\eqref{01291801}, 
\begin{equation}
\label{06071411}
\ov{f(\mc{C}(A,\prod_{x\in A}
supp(\ms{E}_{S_{x}^{\alpha}})))}
\subseteq\R,
\end{equation}
where 
$A\in\mathscr{P}_{\omega}(X)$ satisfying
Def. \ref{08261134}\eqref{08261134VI}
applied to $\mf{T}$.
Since \eqref{06071413} and 
Thm. \ref{11252000}\eqref{11252000st2}
we deduce that
\begin{equation}
\label{05072253}
\ms{D}_{\pf{K},\alpha}^{\upzeta,f}
=
V_{\alpha}
\ms{D}_{\pf{H},\alpha}^{\upzeta,f}
V_{\alpha}^{\ast}.
\end{equation}
By construction we have
\begin{equation}
\label{05072310}
\begin{aligned}
\ps{R}(\mf{T},\alpha)
&=
\left(
(\ms{B}_{\ps{\upmu}}^{\ps{\upomega},\alpha,+},
\tilde{\mf{R}}_{\pf{H},\alpha}^{\ps{\upmu}}),
\ms{D}_{\pf{H},\alpha}^{\upzeta,f},
\Upgamma_{\alpha}\right),
\\
\ps{R}(\mf{Q},\alpha)
&=
\left(
(\ms{B}_{\ps{\upmu}}^{\ps{\upomega},\alpha,+},
\tilde{\mf{R}}_{\pf{K},\alpha}^{\ps{\upmu}}),
\ms{D}_{\pf{K},\alpha}^{\upzeta,f},
\Updelta_{\alpha}\right).
\end{aligned}
\end{equation}
By \eqref{05072253} and 
Cor. \ref{10301830}\eqref{10301830st3}
we have 
\begin{equation}
\label{05072305}
\exp(-(\ms{D}_{\pf{K},\alpha}^{\upzeta,f})^{2})
=
\ms{ad}(V_{\alpha})
(\exp(-(\ms{D}_{\pf{H},\alpha}^{\upzeta,f})^{2})).
\end{equation}
Next by abuse of language let 
$V$,
$\Updelta$, $\Upgamma$, 
$\tilde{\mf{R}}_{\pf{K}}$, $\tilde{\mf{R}}_{\pf{H}}$,
$\ms{D}_{\pf{K}}$ and $\ms{D}_{\pf{H}}$
denote
$V_{\alpha}$,
$\Updelta_{\alpha}$, $\Upgamma_{\alpha}$, 
$\tilde{\mf{R}}_{\pf{K},\alpha}^{\ps{\upmu}}$,
$\tilde{\mf{R}}_{\pf{H},\alpha}^{\ps{\upmu}}$,
$\ms{D}_{\pf{K},\alpha}^{\upzeta,f}$
and 
$\ms{D}_{\pf{H},\alpha}^{\upzeta,f}$
respectively, 
then since \eqref{05072302}\,\&\,\eqref{05072253}\,\&\,\eqref{05072305} we obtain for all $s_{0},\dots,s_{2n}\in\R$ and
$a_{0},\dots,a_{2n}\in\ms{B}_{\ps{\upmu}}^{\ps{\upomega},\alpha,+}$
\begin{multline*}
Tr\bigl(
\Updelta\,
\tilde{\mf{R}}_{\pf{K}}(a_{0})
\exp(-s_{0}\ms{D}_{\pf{K}}^{2})
[\ms{D}_{\pf{K}},
\tilde{\mf{R}}_{\pf{K}}(a_{1})]
\exp(-s_{1}\ms{D}_{\pf{K}}^{2})
\dotsc
\\
[\ms{D}_{\pf{K}},
\tilde{\mf{R}}_{\pf{K}}(a_{2n-1})]
\exp(-s_{2n-1}\ms{D}_{\pf{K}}^{2})
[\ms{D}_{\pf{K}},
\tilde{\mf{R}}_{\pf{K}}(a_{2n})]
\exp(-s_{2n}\ms{D}_{\pf{K}}^{2})
\bigr)
=
\\
(Tr\circ\ms{ad}(V))
\bigl(
\Upgamma\,
\tilde{\mf{R}}_{\pf{H}}(a_{0})
\exp(-s_{0}\ms{D}_{\pf{H}}^{2})
[\ms{D}_{\pf{H}},\tilde{\mf{R}}_{\pf{H}}(a_{1})]
\exp(-s_{1}\ms{D}_{\pf{H}}^{2})
\dots
\\
[\ms{D}_{\pf{H}},\tilde{\mf{R}}_{\pf{H}}(a_{2n-1})]
\exp(-s_{2n-1}\ms{D}_{\pf{H}}^{2})
[\ms{D}_{\pf{H}},\tilde{\mf{R}}_{\pf{H}}(a_{2n})]
\exp(-s_{2n}\ms{D}_{\pf{H}}^{2})\bigr)
=
\\
Tr\bigl(
\Upgamma\,
\tilde{\mf{R}}_{\pf{H}}(a_{0})
\exp(-s_{0}\ms{D}_{\pf{H}}^{2})
[\ms{D}_{\pf{H}},\tilde{\mf{R}}_{\pf{H}}(a_{1})]
\exp(-s_{1}\ms{D}_{\pf{H}}^{2})
\dots
\\
[\ms{D}_{\pf{H}},\tilde{\mf{R}}_{\pf{H}}(a_{2n-1})]
\exp(-s_{2n-1}\ms{D}_{\pf{H}}^{2})
[\ms{D}_{\pf{H}},\tilde{\mf{R}}_{\pf{H}}(a_{2n})]
\exp(-s_{2n}\ms{D}_{\pf{H}}^{2})\bigr).
\end{multline*}
Therefore since \eqref{05072310} and \cite[$IV.8.\epsilon$ Thm $22$, Thm. $21$, and $IV.7.\delta$ Thm $21$]{connes}
we have 
\begin{equation*}
\lr{\cdot}
{\ms{ch}(\ps{R}(\mf{Q},\alpha))}_{\ps{\upmu},\ps{\upomega},\alpha}
=
\lr{\cdot}
{\ms{ch}(\ps{R}(\mf{T},\alpha))}_{\ps{\upmu},\ps{\upomega},\alpha},
\end{equation*}
and st.\eqref{06071046st1} follows. St.\eqref{06071046st2} follows since \eqref{05072305}, the hypothesis 
$\upupsilon_{\alpha}=\ms{ad}(V_{\alpha})\circ\uppi_{\alpha}$ and 
Thm. \ref{12051936}\eqref{12051936st3}\,\&\,\eqref{05061911}.
\end{proof}
Prp. \ref{05071049} permits to set the following
\begin{definition}
\label{05071126}
Let $\#\in\{\dv,\star,\vm\}$, define $\mf{b}_{\#}^{\mf{A}}$ to be the map on $H$ and $\mf{d}_{\#}^{H}$ to be the map on 
$ Mor_{\ms{C}_{u}(H)}$ such that for all $l\in H$ and $T\in Mor_{\ms{C}_{u}(H)}(\mf{B},\mf{A})$
\begin{equation*}
\begin{aligned}
\mf{b}_{\dv}^{\mf{A}}(l):
\pf{T}_{\mf{A}}^{\dv}
&\to\pf{T}_{\mf{A}}^{\dv},\,
[\mf{T}]_{\sm{A}}
\mapsto[\mf{b}^{\mf{A}}(l)(\mf{T})]_{\sm{A}},
\\
\mf{b}_{\star}^{\mf{A}}(l):
\pf{T}_{\mf{A}}^{\star}
&\to\pf{T}_{\mf{A}}^{\star},\,
[\mf{T}]_{\af{A}}
\mapsto[\mf{b}^{\mf{A}}(l)(\mf{T})]_{\af{A}},
\\
\mf{b}_{\vm}^{\mf{A}}(l):
\pf{T}_{\mf{A}}^{\vm}
&\to\pf{T}_{\mf{A}}^{\vm},\,
[\mf{T}]_{\dm{A}}
\mapsto[\mf{b}^{\mf{A}}(l)(\mf{T})]_{\dm{A}},
\end{aligned}
\hspace{15pt}
\begin{aligned}
\mf{d}_{\dv}^{H}(T):
\pf{T}_{\mf{A}}^{\dv}
&\to
\pf{T}_{\mf{B}}^{\dv},\,
[\mf{T}]_{\sm{A}}\mapsto[\mf{d}^{H}(T)(\mf{T})]_{\sm{B}},
\\
\mf{d}_{\star}^{H}(T):
\pf{T}_{\mf{A}}^{\star}
&\to
\pf{T}_{\mf{B}}^{\star},\,
[\mf{T}]_{\af{A}}\mapsto[\mf{d}^{H}(T)(\mf{T})]_{\af{B}},
\\
\mf{d}_{\vm}^{H}(T):
\pf{T}_{\mf{A}}^{\vm}
&\to
\pf{T}_{\mf{B}}^{\vm},\,
[\mf{T}]_{\dm{A}}\mapsto[\mf{d}^{H}(T)(\mf{T})]_{\dm{B}},
\end{aligned}
\end{equation*}
if in addition $\mf{A},\mf{B}\in\ms{C}_{u}^{0}(H)$ we can set
for all $l\in H$ and $T\in Mor_{\ms{C}_{u}^{0}(H)}(\mf{B},\mf{A})$
\begin{equation*}
\mf{b}_{\natural}^{\mf{A}}(l):
\pf{V}_{\mf{A}}^{\natural}
\to\pf{V}_{\mf{A}}^{\natural},\,
[\mf{T}]_{\tm{A}}
\mapsto[\mf{b}^{\mf{A}}(l)(\mf{T})]_{\tm{A}},
\hspace{15pt}
\mf{d}_{\natural}^{H}(T):
\pf{V}_{\mf{A}}^{\natural}
\to
\pf{V}_{\mf{B}}^{\natural},\,
[\mf{T}]_{\tm{A}}\mapsto[\mf{d}^{H}(T)(\mf{T})]_{\tm{B}}.
\end{equation*}
\end{definition}
\begin{remark}
\label{09071514}
Since Prp. \ref{09070854} we have that
$\mf{b}_{\star}^{\mf{A}}(l)
=
\mf{b}_{\dv}^{\mf{A}}(l)
\up
\pf{T}_{\mf{A}}^{\star}$
and
$\mf{d}_{\star}^{H}(T)
=
\mf{d}_{\dv}^{H}(T)
\up
\pf{T}_{\mf{A}}^{\star}$,
while
$\mf{b}_{\vm}^{\mf{A}}(l)
=
\mf{b}_{\dv}^{\mf{A}}(l)
\up
\pf{T}_{\mf{A}}^{\vm}$
and
$\mf{d}_{\vm}^{H}(T)
=
\mf{d}_{\dv}^{H}(T)
\up
\pf{T}_{\mf{A}}^{\vm}$,
finally if $\mf{A}\in\ms{C}_{u}^{0}(H)$ then
$\mf{b}_{\natural}^{\mf{A}}(l)
=
\mf{b}_{\vm}^{\mf{A}}(l)
\up
\pf{V}_{\mf{A}}^{\natural}$
and
$\mf{d}_{\natural}^{H}(T)
=
\mf{d}_{\vm}^{H}(T)
\up
\pf{V}_{\mf{A}}^{\natural}$.
\end{remark}
\begin{definition}
\label{07070947}
For any $\mf{q}\in\pf{T}_{\mf{A}}^{\dv}$ set 
$\ms{P}_{\mf{A}}^{\mf{q}}$ 
to be the set $\ms{P}_{\mf{A}}^{\mf{Q}}$,
where $\mf{Q}\in\mf{q}$.
Since  
$\mf{T}\sm{A}\mf{Q}\Rightarrow(\forall\alpha\in\ms{P}_{\mf{A}}^{\mf{T}})
(\ms{K}_{\alpha}^{\mf{T}}(\mf{A})=\ms{K}_{\alpha}^{\mf{Q}}(\mf{A}))$
we can set for all $\mf{q}\in\pf{T}_{\mf{A}}^{\dv}$ and 
$\alpha\in\ms{P}_{\mf{A}}^{\mf{q}}$
\begin{equation*}
\ms{K}_{\alpha}^{\mf{q}}(\mf{A})
\coloneqq
\ms{K}_{\alpha}^{\mf{Q}}(\mf{A}),\,
\mf{Q}\in\mf{q}.
\end{equation*}
Set $s(\dv)=\varnothing$, $s(\star)=\diamond$ and $s(\vm)=\vc$.
Let $\#\in\{\dv,\star,\vm\}$, 
define $\ms{A}_{\#}:Obj(\ms{C}_{u}(H))\to Obj(\ms{Ab})$
such that 
\begin{equation*}
\ms{A}_{\#,\mf{A}}
\coloneqq
\prod_{\mf{q}\in\mf{T}_{\mf{A}}^{\#}}
\prod_{\alpha\in\ms{P}_{\mf{A}}^{\mf{q}}}
\ms{K}_{\alpha}^{\mf{q}}(\mf{A}),
\end{equation*}
provided by the pointwise composition, inversion and identity.
By abuse of language we shall use the same symbol $\ms{A}_{\#,\mf{A}}$ 
to denote the set underlying the group $\ms{A}_{\#,\mf{A}}$.
Let $\ms{r}_{\#}$ be the map on $Obj(\ms{C}_{u}(H))$ such that 
$\ms{r}_{\#}^{\mf{A}}:\ms{A}_{\#,\mf{A}}\to\ms{A}_{\mf{A}}$
and for all $\ms{f}\in\ms{A}_{\#,\mf{A}}$, $\mf{Q}\in\pf{T}_{\mf{A}}^{s(\#)}$ and $\alpha\in\ms{P}_{\mf{A}}^{\mf{Q}}$ 
\begin{equation*}
\ms{r}_{\#}^{\mf{A}}(\ms{f})(\mf{Q})(\alpha)
\coloneqq
\ms{f}([\mf{Q}]_{\sm{A}})(\alpha).
\end{equation*}
Define 
\begin{equation*}
\uppsi_{\#}\in\prod_{\mf{D}\in\ms{C}_{u}(H)}Mor_{\ms{set}}(H, Aut_{\ms{Ab}}(\ms{A}_{\#,\mf{D}})),
\end{equation*}
such that for all $l\in H$ and $\ms{f}\in\ms{A}_{\#,\mf{A}}$ 
\begin{equation*}
\uppsi_{\#}^{\mf{A}}(l)(\ms{f})
\coloneqq
\ov{\mf{c}}^{\mf{A}}(l)\circ\ms{f}\circ\mf{b}_{\#}^{\mf{A}}(l^{-1}),
\end{equation*}
Finally 
\begin{equation*}
\mf{g}_{\#}^{H}\in\prod_{L\in Mor_{\ms{C}_{u}(H)}} Mor_{\ms{Ab}}(\ms{A}_{\#,\,d(L)},\ms{A}_{\#,\,c(L)}),
\end{equation*}
such that for all $W\in Mor_{\ms{C}_{u}(H)}(\mf{A},\mf{B})$ and $\ms{f}\in\ms{A}_{\#,\mf{A}}$
\begin{equation*}
\mf{g}_{\#}^{H}(W)(\ms{f})\coloneqq\ov{\mf{h}}^{H}(W)\circ\ms{f}\circ\mf{d}_{\#}^{H}(W).
\end{equation*}
\end{definition}
Note that $\ms{r}_{\#}$ is well-defined since Prp. \ref{09070854},
while $\uppsi_{\#}$ and $\mf{g}_{\#}^{H}$ are well-defined since 
the second inclusion in \eqref{10291120}
and the 
standard picture used for the $\ms{K}_{0}-$groups.
\begin{definition}
\label{14071100}
Define $\ms{Z}_{\natural}:Obj(\ms{C}_{u}^{0}(H))\to Obj(\ms{set})$ 
and
$\ms{Z}_{\natural}^{m}\in\prod_{T\in Mor_{\ms{C}_{u}^{0}(H)}}
Mor_{\ms{set}}(\ms{Z}_{\natural}(c(T)),\ms{Z}_{\natural}(d(T)))$,
such that 
\begin{equation*}
\ms{Z}_{\natural}:\mf{D}\mapsto\coprod_{\mf{p}\in
\pf{V}_{\mf{D}}^{\natural}}
Mor_{\ms{set}}(\ms{P}_{\mf{D}}^{\mf{p}},\mc{A}_{\mf{D}}^{\ast}),
\end{equation*}
where $\mc{A}_{\mf{D}}$ denotes the $C^{\ast}-$algebra underlying $\mf{D}$,
and if $\mf{A},\mf{B}\in\ms{C}_{u}^{0}(H)$ and $T\in Mor_{\ms{C}_{u}^{0}(H)}(\mf{B},\mf{A})$ then
\begin{equation*}
\begin{aligned}
\ms{Z}_{\natural}^{m}(T):\ms{Z}_{\natural}(\mf{A})&\to\ms{Z}_{\natural}(\mf{B}),
\\
(\mf{p},g)&\mapsto\left(
\mf{d}_{\natural}^{H}(T)(\mf{p}),\ms{P}_{\mf{B}}^{\mf{d}_{\natural}^{H}(T)(\mf{p})}\ni\alpha\mapsto g(\alpha)\circ T\right).
\end{aligned}
\end{equation*}
Moreover define 
\begin{equation*}
\ms{V}_{\natural}\in\prod_{\mf{D}\in\ms{C}_{u}^{0}(H)}
Mor_{\ms{Gr}}(H,Aut_{\ms{set}}(\ms{Z}_{\natural}(\mf{D}))),
\end{equation*}
such that if $\mf{A}\in\ms{C}_{u}^{0}(H)$ and $l\in H$ then
\begin{equation*}
\ms{V}_{\natural}(\mf{A})(l):
(\mf{p},g)\mapsto\left(\mf{b}_{\natural}^{\mf{A}}(l)(\mf{p}),
\ms{P}_{\mf{A}}^{\mf{b}_{\natural}^{\mf{A}}(l)(\mf{p})}\ni\alpha\mapsto g(\alpha)\circ\upeta(l^{-1})\right).
\end{equation*}
\end{definition}
For the remaining of this section let $\un$ denote the unit 
element of $H$.
\begin{definition}
\label{07071022}
Let $\#\in\{\dv,\star,\vm\}$.
Define 
$\mf{P}_{\#}$ and $\mf{Q}_{o}^{\#}$ be the maps on $Obj(\ms{C}_{u}(H))$, 
and $\mf{Q}_{m}^{\#}$ be the map on $ Mor_{\ms{C}_{u}(H)}$
such that 
\begin{equation*}
\begin{aligned}
\mf{P}_{\#}^{\mf{A}}
&\coloneqq
(\un\mapsto\pf{T}_{\mf{A}}^{\#},\mf{b}_{\#}^{\mf{A}}),
\\
\mf{Q}_{o}^{\#}(\mf{A})
&\coloneqq
\pf{T}_{\mf{A}}^{\#},
\\
\mf{Q}_{m}^{\#}
&\coloneqq
\mf{d}_{\#}^{H}.
\end{aligned}
\end{equation*}
Moreover define 
$\mf{P}_{\natural}$, $\mf{Z}_{\natural}$, and 
$\mf{Q}_{o}^{\natural}$ be the maps on $Obj(\ms{C}_{u}^{0}(H))$, 
and $\mf{Q}_{m}^{\natural}$ be the map on $ Mor_{\ms{C}_{u}^{0}(H)}$
such that if $\mf{A}\in\ms{C}_{u}^{0}(H)$ then 
\begin{equation*}
\begin{aligned}
\mf{P}_{\natural}^{\mf{A}}
&\coloneqq
(\un\mapsto\pf{V}_{\mf{A}}^{\natural},\mf{b}_{\natural}^{\mf{A}}),
\\
\mf{Z}_{\natural}^{\mf{A}}
&\coloneqq
\left(\un\mapsto\ms{Z}_{\natural}(\mf{A}),\ms{V}_{\natural}(\mf{A})\right),
\\
\mf{Q}_{o}^{\natural}(\mf{A})
&\coloneqq
\pf{V}_{\mf{A}}^{\natural},
\\
\mf{Q}_{m}^{\natural}
&\coloneqq
\mf{d}_{\natural}^{H}.
\end{aligned}
\end{equation*}
\end{definition}
\begin{lemma}
\label{07071040}
We have that
\begin{enumerate}
\item
$(\mf{Q}_{o}^{\dv},\mf{Q}_{m}^{\dv})\in
\ms{Fct}(\ms{C}_{u}(H)^{op},\ms{set})$,
\label{07071040st1}
\item
$(\ms{Z}_{\natural},\ms{Z}_{\natural}^{m})\in\ms{Fct}(\ms{C}_{u}^{0}(H)^{op},\ms{set})$,
\label{07071040st1z}
\item
$\mf{P}_{\dv}^{\mf{A}}\in\ms{Fct}(H,\ms{set})$,
\label{07071040st2}
\item
$\mf{Z}_{\natural}^{\mf{A}}\in\ms{Fct}(H,\ms{set})$, if $\mf{A}\in\ms{C}_{u}^{0}(H)$,
\label{07071040st2z}
\item
$\uppsi_{\dv}^{\mf{A}}\in Mor_{\ms{Gr}}(H,Aut_{\ms{Ab}}(\ms{A}_{\dv,\mf{A}}))$, 
\label{07071040st3}
\item
$(\ms{A}_{\dv},\mf{g}_{\dv}^{H})\in\ms{Fct}(\ms{C}_{u}(H),\ms{Ab})$,
\label{07071040st4}
\item
$\ms{r}_{\dv}^{\mf{A}}$ is an injective group morphism,
\label{07071040st5}
\item
$\ms{r}_{\dv}^{\mf{A}}
\circ
\uppsi_{\dv}^{\mf{A}}(l)
=
\uppsi^{\mf{A}}(l)
\circ
\ms{r}_{\dv}^{\mf{A}}$, for all $l\in H$,
\label{07071040st6}
\item
$\mf{g}^{H}(T)\circ\ms{r}_{\dv}^{\mf{B}} 
=
\ms{r}_{\dv}^{\mf{A}}\circ\mf{g}_{\dv}^{H}(T)$,
for all $T\in Mor_{\ms{C}_{u}(H)}(\mf{B},\mf{A})$.
\label{07071040st7}
\end{enumerate}
\end{lemma}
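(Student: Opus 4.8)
The plan is to obtain each item by transporting to the quotients the functoriality and equivariance already established for the unquotiented data; the legitimacy of this passage rests on the compatibility statements of Prp.~\ref{05071049} and on Prp.~\ref{09070854}, together with the observation recorded in Def.~\ref{07070947} that $\ms{P}^{\mf{q}}$ and $\ms{K}_{\alpha}^{\mf{q}}(\mf{A})$ depend only on the class $\mf{q}$. The restriction map $\ms{r}_{\dv}^{\mf{A}}$, defined fibrewise by $\ms{r}_{\dv}^{\mf{A}}(f)(\mf{Q})(\alpha)=f([\mf{Q}]_{\sm{A}})(\alpha)$, serves throughout as the bridge between the quotient structures and their lifts.

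For st.~(1) I would note that $\mf{Q}_{m}^{\dv}(T)=\mf{d}_{\dv}^{H}(T)$ is well-defined by Prp.~\ref{05071049}, and that the defining formula $\mf{d}_{\dv}^{H}(T)([\mf{T}]_{\sm{A}})=[\mf{d}^{H}(T)(\mf{T})]_{\sm{B}}$ makes the functor axioms descend verbatim from those of $\mf{d}^{H}$ proved in Lemma~\ref{11271700}: composition and preservation of identities are inherited by evaluating on a representative and projecting. St.~(2) is entirely analogous with $\mf{d}_{\natural}^{H}$ in place of $\mf{d}_{\dv}^{H}$, the only addition being the fibre assignment $f\mapsto f\circ T$ on $(\ms{E}_{\mc{A}})^{\ms{P}}$, whose contravariant functoriality reduces to associativity of precomposition. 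For st.~(3) and st.~(4) it suffices to observe that $\mf{b}_{\dv}^{\mf{A}}$ is an $H-$action because $\mf{b}^{\mf{A}}$ is one by Thm.~\ref{11260828} and the quotient map intertwines them, while $\ms{V}_{\natural}(\mf{A})$ is assembled from the $H-$action $\mf{b}_{\natural}^{\mf{A}}$ on the base and the $H-$action $\upeta$ on the fibres.

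St.~(5) follows the pattern of Thm.~\ref{main}\eqref{mainst1}: one has $\uppsi_{\dv}^{\mf{A}}(h\cdot l)=\uppsi_{\dv}^{\mf{A}}(h)\circ\uppsi_{\dv}^{\mf{A}}(l)$ because $\mf{b}_{\dv}^{\mf{A}}$ and $\ov{\mf{c}}^{\mf{A}}$ are $H-$actions (Thm.~\ref{11260828} and Prp.~\ref{01011526}), and each $\uppsi_{\dv}^{\mf{A}}(l)$ is a group morphism because the $\ms{K}_{0}-$functoriality \eqref{10291120} and the standard picture apply fibrewise. St.~(6) descends the functoriality of $\mf{g}^{H}$ internal to Thm.~\ref{12071120}: the group-morphism property of $\mf{g}_{\dv}^{H}(W)$ is again \eqref{10291120}, and $\mf{g}_{\dv}^{H}(S\circ T)=\mf{g}_{\dv}^{H}(S)\circ\mf{g}_{\dv}^{H}(T)$ follows from the commutativity of \eqref{12010253bis} together with the composition rule for $\mf{d}_{\dv}^{H}$.

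Finally, st.~(7) is immediate since $\ms{r}_{\dv}^{\mf{A}}$ is defined fibrewise and thus inherits the pointwise group structure. For st.~(8) and st.~(9) I would evaluate both sides at $\mf{Q}\in\pf{T}_{\mf{A}}$ and $\alpha\in\ms{P}^{\mf{Q}}$ and unwind the definitions: st.~(8) reduces to the identity $\mf{b}_{\dv}^{\mf{A}}(l^{-1})([\mf{Q}]_{\sm{A}})=[\mf{b}^{\mf{A}}(l^{-1})(\mf{Q})]_{\sm{A}}$ and the definition of $\ov{\mf{c}}^{\mf{A}}(l)$, while st.~(9) reduces to $\mf{d}_{\dv}^{H}(T)([\mf{Q}]_{\sm{A}})=[\mf{d}^{H}(T)(\mf{Q})]_{\sm{B}}$ and the definition of $\ov{\mf{h}}^{H}(T)$. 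The one genuine point of care, and hence the main obstacle, is the well-definedness underlying all of the above, namely that the relations $\sm{A}$ and $\tm{A}$ are respected by $\mf{b}^{\mf{A}}(l)$, $\mf{d}^{H}(T)$ and $\ov{\mf{h}}^{H}(T)$ and that the fibres $\ms{K}_{\alpha}^{\mf{q}}(\mf{A})$ are constant on classes; this is precisely the content of Prp.~\ref{05071049} and Prp.~\ref{09070854}, so that once these are invoked every remaining verification is routine bookkeeping.
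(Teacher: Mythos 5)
Your proposal is correct and follows essentially the same route as the paper: items (1)--(2) descend from Lemma \ref{11271700}, items (3)--(4) from Thm.\ \ref{11260828}, item (5) from Thm.\ \ref{11260828}, Prp.\ \ref{01011526}, \eqref{10291120} and the standard picture, item (6) from \eqref{12010253bis} and \eqref{10291120}, and items (7)--(9) by direct fibrewise computation, with the well-definedness on quotients supplied by Prp.\ \ref{05071049} and Prp.\ \ref{09070854} exactly as you indicate. The paper's proof is merely a terser citation of the same ingredients.
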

\begin{proof}
st.\eqref{07071040st1}\,\&\,\eqref{07071040st1z} follow since 
Lemma \ref{11271700}, 
st.\eqref{07071040st2}\,\&\,\eqref{07071040st2z}
by 
Thm. \ref{11260828}.
For any $l\in H$ the
$\uppsi_{\dv}^{\mf{A}}(l)$ 
maps 
$\ms{A}_{\dv,\mf{A}}$ into itself
since the construction of $\mf{b}_{\dv}^{\mf{A}}$
and by 
Thm. \ref{11260828}.
$\uppsi_{\dv}^{\mf{A}}$ 
is a $H-$action since $\mf{b}_{\dv}^{\mf{A}}$
and $\ov{\mf{c}}^{\mf{A}}$
are $H-$actions by st.\eqref{07071040st2}
and
Prp. \ref{01011526} 
respectively,
so st.\eqref{07071040st3} follows.
St.\eqref{07071040st4} follows since 
\eqref{12010253bis} and st.\eqref{07071040st1}. 
St.\eqref{07071040st5} is immediate, while
st.\eqref{07071040st6}\,\&\,\eqref{07071040st7}  
follow by direct computation.
\end{proof}
Since Rmk. \ref{09071514} and then following the same 
argument used in the proof of Lemma \ref{07071040}
we obtain the following
\begin{lemma}
\label{07071040b}
For all $\#\in\{\star,\vm\}$ we obtain
\begin{enumerate}
\item
$(\mf{Q}_{o}^{\#},\mf{Q}_{m}^{\#})\in
\ms{Fct}(\ms{C}_{u}(H)^{op},\ms{set})$,
\label{07071040bst1}
\item
$\mf{P}_{\#}^{\mf{A}}\in\ms{Fct}(H,\ms{set})$,
\label{07071040bst2}
\item
$\uppsi_{\#}^{\mf{A}}\in Mor_{\ms{Gr}}(H,Aut_{\ms{Ab}}(\ms{A}_{\#,\mf{A}}))$, 
\label{07071040bst3}
\item
$(\ms{A}_{\#},\mf{g}_{\#}^{H})
\in\ms{Fct}(\ms{C}_{u}(H),\ms{Ab})$,
\label{07071040bst4}
\item
$\ms{r}_{\#}^{\mf{A}}$ is an injective group morphism,
\label{07071040bst5}
\item
$\ms{r}_{\#}^{\mf{A}}
\circ
\uppsi_{\#}^{\mf{A}}(l)
=
\uppsi^{\mf{A}}(l)
\circ
\ms{r}_{\#}^{\mf{A}}$, for all $l\in H$,
\label{07071040bst6}
\item
$\mf{g}^{H}(T)\circ\ms{r}_{\#}^{\mf{B}} 
=
\ms{r}_{\#}^{\mf{A}}\circ\mf{g}_{\#}^{H}(T)$,
for all $T\in Mor_{\ms{C}_{u}(H)}(\mf{B},\mf{A})$,
\label{07071040bst7}
\item
$(\mf{Q}_{o}^{\natural},\mf{Q}_{m}^{\natural})\in
\ms{Fct}(\ms{C}_{u}^{0}(H)^{op},\ms{set})$,
\label{13071854a}
\item
$\mf{P}_{\natural}^{\mf{A}}\in\ms{Fct}(H,\ms{set})$, if $\mf{A}\in\ms{C}_{u}^{0}(H)$.
\label{13071854b}
\end{enumerate}
\end{lemma}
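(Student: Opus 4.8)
The plan is to obtain every assertion by restricting the corresponding statement of Lemma \ref{07071040}, which treats the coarsest quotient $\pf{T}_{\mf{A}}^{\dv}$, down to the finer sets $\pf{T}_{\mf{A}}^{\star}$, $\pf{T}_{\mf{A}}^{\vm}$ and $\pf{V}_{\mf{A}}^{\natural}$. The structural input is Rmk. \ref{09071514}, which identifies each transport map $\mf{b}_{\#}^{\mf{A}}(l)$ and $\mf{d}_{\#}^{H}(T)$ ($\#\in\{\star,\vm\}$), as well as $\mf{b}_{\natural}^{\mf{A}}(l)$ and $\mf{d}_{\natural}^{H}(T)$, with the restriction of $\mf{b}_{\dv}^{\mf{A}}(l)$, $\mf{d}_{\dv}^{H}(T)$ (resp. $\mf{b}_{\vm}^{\mf{A}}(l)$, $\mf{d}_{\vm}^{H}(T)$) to the relevant subset. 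By Prp. \ref{09070854} the inclusions $\pf{T}_{\mf{A}}^{\star}\subseteq\pf{T}_{\mf{A}}^{\dv}$, $\pf{T}_{\mf{A}}^{\vm}\subseteq\pf{T}_{\mf{A}}^{\dv}$ and $\pf{V}_{\mf{A}}^{\natural}\subseteq\pf{T}_{\mf{A}}^{\vm}$ hold, and the equivalence classes agree on these subsets (e.g. $[\mf{T}]_{\af{A}}=[\mf{T}]_{\sm{A}}$ for $\mf{T}\in\pf{T}_{\mf{A}}^{\diamond}$), so the finer quotients sit honestly inside the $\dv$-quotient. The stability of the three subsets under the transport maps, together with the $(\af{A},\af{A})$-, $(\dm{A},\dm{A})$- and $(\tm{A},\tm{A})$-compatibilities (and their morphism analogues), is exactly Prp. \ref{05071049}; this is what guarantees that every restricted map is well-defined and lands in the correct subset.

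Granting this reduction, the functoriality statements \eqref{07071040bst1} and \eqref{13071854a} (that $(\mf{Q}_{o}^{\#},\mf{Q}_{m}^{\#})$ and $(\mf{Q}_{o}^{\natural},\mf{Q}_{m}^{\natural})$ are contravariant functors) and the $H$-action statements \eqref{07071040bst2} and \eqref{13071854b} (that $\mf{P}_{\#}^{\mf{A}}$ and $\mf{P}_{\natural}^{\mf{A}}$ are functors on $H$) follow at once by restricting Lemma \ref{07071040}\eqref{07071040st1}\,\&\,\eqref{07071040st2}: the identity and composition laws, respectively the group-action laws, descend to any stable subset, so $\mf{d}_{\#}^{H}(S\circ T)=\mf{d}_{\#}^{H}(T)\circ\mf{d}_{\#}^{H}(S)$ and $\mf{b}_{\#}^{\mf{A}}(l_{1}l_{2})=\mf{b}_{\#}^{\mf{A}}(l_{1})\circ\mf{b}_{\#}^{\mf{A}}(l_{2})$ hold by restriction. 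Statement \eqref{07071040bst3}, that $\uppsi_{\#}^{\mf{A}}$ is an $H$-action by group morphisms, is then proved verbatim as in Lemma \ref{07071040}\eqref{07071040st3}, combining \eqref{07071040bst2} with Prp. \ref{01011526} and the additivity of $\ms{K}_{0}$ in its standard picture; and statement \eqref{07071040bst4}, functoriality of $(\ms{A}_{\#},\mf{g}_{\#}^{H})$, follows as in Lemma \ref{07071040}\eqref{07071040st4} from the commutative diagram \eqref{12010253bis}, from \eqref{07071040bst1}, and from the second line of \eqref{10291120}.

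The remaining statements \eqref{07071040bst5}, \eqref{07071040bst6} and \eqref{07071040bst7}, which concern the realization map $\ms{r}_{\#}^{\mf{A}}:\ms{A}_{\#,\mf{A}}\to\ms{A}_{\mf{A}}$ (well-defined by Prp. \ref{09070854}), are checked by the same direct computation as in Lemma \ref{07071040}\eqref{07071040st5}--\eqref{07071040st7}, simply unwinding the definitions of $\uppsi_{\#}^{\mf{A}}$ and $\mf{g}_{\#}^{H}$ in Def. \ref{07070947} and using $\ov{\mf{c}}^{\mf{A}}$ and $\ov{\mf{h}}^{H}(T)$ on representatives. I expect the only point requiring genuine attention to be the $\natural$ case of \eqref{13071854a}--\eqref{13071854b}: there $\mf{d}_{\natural}^{H}(T)$ is available only for $T\in Mor_{\ms{C}_{u}^{0}(H)}$, and the stability $\mf{d}^{H}(T)(\pf{V}(\mf{A}))\subseteq\pf{V}(\mf{B})$ needed for the functor is precisely the defining condition of $Mor_{\ms{C}_{u}^{0}(H)}$ recorded in Prp. \ref{19061753}, while the $(\tm{A},\tm{B})$-compatibility is supplied by the final sentence of Prp. \ref{05071049}; once one restricts both the base category to $\ms{C}_{u}^{0}(H)$ and the fibre to $\pf{V}_{\mf{A}}^{\natural}$, the contravariant functor of \eqref{13071854a} and the $H$-action of \eqref{13071854b} emerge exactly as in the $\dv$ case, completing the argument.
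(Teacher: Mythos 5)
Your proposal is correct and follows exactly the paper's own route: the paper proves Lemma \ref{07071040b} by the single observation that, thanks to Rmk. \ref{09071514}, all the $\star$, $\vm$ and $\natural$ maps are restrictions of the $\dv$ ones, so the argument of Lemma \ref{07071040} carries over verbatim. Your additional care about stability of the subsets (Prp. \ref{05071049}) and the $\natural$ case via $Mor_{\ms{C}_{u}^{0}(H)}$ (Prp. \ref{19061753}) only makes explicit what the paper leaves implicit.
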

According to Lemma \ref{06071046}, Lemmas \ref{07071040}\eqref{07071040st5} and \ref{07071040b}\eqref{07071040bst5},
we can set the following 
\begin{definition}
\label{06071514}
Let $\#\in\{\dv,\star,\vm\}$, define $\ov{\ms{m}}_{\#}$ and $\mf{m}_{\#}$ to be the maps on $Obj(\ms{C}_{u}(H))$
such that 
\begin{equation*}
\begin{aligned}
\ov{\ms{m}}_{\#}^{\mf{A}}
&\in\prod_{\mf{p}\in\pf{T}_{\mf{A}}^{\#}}
Mor_{\ms{set}}(\ms{P}_{\mf{A}}^{\mf{p}},\ms{A}_{\#,\mf{A}}^{\ast}),
\\
\ov{\ms{m}}_{\#}^{\mf{A}}(\mf{p})(\beta)
&\coloneqq
\ov{\ms{m}}^{\mf{A}}(\mf{I})(\beta)\circ\ms{r}_{\#}^{\mf{A}},
\qquad
\mf{p}\in\pf{T}_{\mf{A}}^{\#},
\mf{I}\in\mf{p},
\beta\in\ms{P}_{\mf{A}}^{\mf{p}};
\\
\mf{m}_{\#}^{\mf{A}}
&\coloneqq(\un\mapsto\ov{\ms{m}}_{\#}^{\mf{A}}).
\end{aligned}
\end{equation*}
Moreover define $\mc{V}_{\natural}$ and $\mf{v}_{\natural}$ to be the maps on $Obj(\ms{C}_{u}^{0}(H))$ 
such that 
if $\mf{A}\in\ms{C}_{u}^{0}(H)$ then for any section $\mf{s}$ 
of the fibered
space $\lr{\pf{V}_{\mf{A}}}{[\cdot]_{\tm{A}},\pf{V}_{\mf{A}}^{\natural}}$
\begin{equation*}
\begin{aligned}
\mc{V}_{\natural}^{\mf{A}}&\in\prod_{\mf{p}\in\pf{V}_{\mf{A}}^{\natural}}
\prod_{\beta\in\ms{P}_{\mf{A}}^{\mf{p}}}
\mf{N}^{\mf{A}}(\ov{\ms{m}}^{\mf{A}}(\mf{s}_{\mf{p}})(\beta)),
\\
\mc{V}_{\natural}^{\mf{A}}(\mf{p})(\beta)
&\coloneqq
\mc{V}^{\mf{A}}(\mf{s}_{\mf{p}})(\beta),
\quad
\mf{p}\in\pf{V}_{\mf{A}}^{\natural},
\beta\in\ms{P}_{\mf{A}}^{\mf{p}};
\\
\mf{v}_{\natural}^{\mf{A}}
&\coloneqq(\un\mapsto\ms{gr}(\mc{V}_{\natural}^{\mf{A}})).
\end{aligned}
\end{equation*}
\end{definition}
According to Lemma \ref{07071040}(\ref{07071040st3}\,\&\,\ref{07071040st4}) and 
Lemma \ref{07071040b}(\ref{07071040bst3}\,\&\,\ref{07071040bst4}) we can set the following
\begin{definition}
\label{06071546}
Let $\#\in\{\dv,\star,\vm\}$, define $\Updelta_{o}^{\#}$
and 
$\mf{O}_{\#}$ 
be the maps on $Obj(\ms{C}_{u}(H))$, 
and
$\Updelta_{m}^{\#}$ 
be the map on
$ Mor_{\ms{C}_{u}(H)}$
such that
\begin{equation*}
\Updelta_{o}^{\#}(\mf{A})
\coloneqq
\bigcup_{\mf{p}\in\pf{T}_{\mf{A}}^{\#}}
Mor_{\ms{set}}(\ms{P}_{\mf{A}}^{\mf{p}},\ms{A}_{\#,\mf{A}}^{\ast}),
\end{equation*}
while for any $T\in Mor_{\ms{C}_{u}(H)}(\mf{B},\mf{A})$ 
\begin{equation*}
\begin{aligned}
\Updelta_{m}^{\#}(T)&:
\Updelta_{o}^{\#}(\mf{A})
\to
\Updelta_{o}^{\#}(\mf{B}),
\\
Mor_{\ms{set}}(\ms{P}_{\mf{A}}^{\mf{p}},\ms{A}_{\#,\mf{A}}^{\ast})
&\ni g
\mapsto
\left(
\ms{P}_{\mf{B}}^{\mf{d}_{\#}^{H}(T)(\mf{p})}
\ni\beta\mapsto
g(\beta)\circ\mf{g}_{\#}^{H}(T)
\right),
\,
\forall\mf{p}\in\pf{T}_{\mf{A}}^{\#}.
\end{aligned}
\end{equation*}
Moreover define 
\begin{equation*}
\begin{aligned}
\uppsi_{\#,\rhd}&\in\prod_{\mf{D}\in\ms{C}_{u}(H)}
Mor_{\ms{Gr}}(H, Aut_{\ms{set}}(\Updelta_{o}^{\#}(\mf{D})))
\\
\uppsi_{\#,\rhd}^{\mf{A}}(l)&: 
\Updelta_{o}^{\#}(\mf{A})
\to
\Updelta_{o}^{\#}(\mf{A}),
\\
Mor_{\ms{set}}(\ms{P}_{\mf{A}}^{\mf{p}},\ms{A}_{\#,\mf{A}}^{\ast})
&\ni g
\mapsto
\left(
\ms{P}_{\mf{A}}^{\mf{p}}
\ni\beta\mapsto
g(\beta)\circ\uppsi_{\#}^{\mf{A}}(l^{-1})
\right),
\,
\forall\mf{p}\in\pf{T}_{\mf{A}}^{\#},
l\in H;
\end{aligned}
\end{equation*}
finally
\begin{equation*}
\mf{O}_{\#}^{\mf{A}}
\coloneqq
(\un\mapsto\Updelta_{o}^{\#}(\mf{A}),
\uppsi_{\#,\rhd}^{\mf{A}}).
\end{equation*}
\end{definition}
\begin{lemma}
\label{06071817}
For all $\#\in\{\dv,\star,\vm\}$ we obtain that $\uppsi_{\#,\rhd}$ is well-defined and 
\begin{enumerate}
\item
$\mf{O}_{\#}^{\mf{A}}\in\ms{Fct}(H,\ms{set})$,
\label{06071817st1b}
\item
$(\Updelta_{o}^{\#},\Updelta_{m}^{\#})\in
\ms{Fct}(\ms{C}_{u}(H)^{op},\ms{set})$.
\label{06071817st2b}
\end{enumerate}
\end{lemma}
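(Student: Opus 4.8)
The plan is to verify directly the two functor axioms—preservation of identities and of composition—for each of the two assignments, reducing everything to the functorial and action properties of the building blocks $\mf{d}_{\#}^{H}$, $\mf{g}_{\#}^{H}$ and $\uppsi_{\#}$ already isolated in Lemma \ref{07071040} (for $\#=\dv$) and Lemma \ref{07071040b} (for $\#\in\{\star,\vm\}$). Since $H$ is the one-object groupoid attached to the group $H$, a functor $\mf{O}_{\#}^{\mf{A}}\in\ms{Fct}(H,\ms{set})$ amounts to a left $H$-action on $\Updelta_{o}^{\#}(\mf{A})$, while $(\Updelta_{o}^{\#},\Updelta_{m}^{\#})\in\ms{Fct}(\ms{C}_{u}(H)^{op},\ms{set})$ asks for a contravariant assignment on $\ms{C}_{u}(H)$; both are pointwise computations, so the verifications for the three values of $\#$ run word for word in parallel, invoking the corresponding item of Lemma \ref{07071040} or Lemma \ref{07071040b}.

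For statement \eqref{06071817st1b} I would first note that $\uppsi_{\#,\rhd}^{\mf{A}}(l)$ carries each summand $(\ms{A}_{\#,\mf{A}}^{\ast})^{\ms{P}_{\mf{A}}^{\mf{p}}}$ into itself, fixing $\mf{p}$ and only post-composing with $\uppsi_{\#}^{\mf{A}}(l^{-1})$, whence $g(\beta)\circ\uppsi_{\#}^{\mf{A}}(l^{-1})\in\ms{A}_{\#,\mf{A}}^{\ast}$ because $\uppsi_{\#}^{\mf{A}}(l^{-1})$ is a group automorphism of $\ms{A}_{\#,\mf{A}}$ by Lemma \ref{07071040}\eqref{07071040st3} (resp. Lemma \ref{07071040b}\eqref{07071040bst3}); thus $\uppsi_{\#,\rhd}^{\mf{A}}(l)$ is a well-defined endomap of $\Updelta_{o}^{\#}(\mf{A})$. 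Since $\uppsi_{\#}^{\mf{A}}$ is an $H$-action, $\uppsi_{\#}^{\mf{A}}(\un)=Id$ gives $\uppsi_{\#,\rhd}^{\mf{A}}(\un)=Id$, and for $l,m\in H$ the computation
\begin{equation*}
\uppsi_{\#,\rhd}^{\mf{A}}(l\cdot m)(g)(\beta)
=
g(\beta)\circ\uppsi_{\#}^{\mf{A}}(m^{-1})\circ\uppsi_{\#}^{\mf{A}}(l^{-1})
=
\bigl(\uppsi_{\#,\rhd}^{\mf{A}}(l)\circ\uppsi_{\#,\rhd}^{\mf{A}}(m)\bigr)(g)(\beta)
\end{equation*}
shows $\uppsi_{\#,\rhd}^{\mf{A}}(l\cdot m)=\uppsi_{\#,\rhd}^{\mf{A}}(l)\circ\uppsi_{\#,\rhd}^{\mf{A}}(m)$: the two order reversals, one from post-composition and one from passing to $l^{-1}$, cancel to yield a genuine left action, settling \eqref{06071817st1b}.

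For statement \eqref{06071817st2b} I would check that $\Updelta_{m}^{\#}(T)$ is well-typed and functorial. For $T\in Mor_{\ms{C}_{u}(H)}(\mf{B},\mf{A})$, the map $\mf{d}_{\#}^{H}(T)$ sends $\pf{T}_{\mf{A}}^{\#}$ to $\pf{T}_{\mf{B}}^{\#}$ and $\mf{g}_{\#}^{H}(T)\colon\ms{A}_{\#,\mf{B}}\to\ms{A}_{\#,\mf{A}}$ is a group morphism, so $g(\beta)\circ\mf{g}_{\#}^{H}(T)\in\ms{A}_{\#,\mf{B}}^{\ast}$ and indeed $\Updelta_{m}^{\#}(T)\colon\Updelta_{o}^{\#}(\mf{A})\to\Updelta_{o}^{\#}(\mf{B})$. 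Preservation of identities follows from $\mf{d}_{\#}^{H}(\un_{\mf{A}})=Id$ and $\mf{g}_{\#}^{H}(\un_{\mf{A}})=Id$, instances of the functoriality in Lemma \ref{07071040}\eqref{07071040st1}\,\&\,\eqref{07071040st4} (resp. Lemma \ref{07071040b}\eqref{07071040bst1}\,\&\,\eqref{07071040bst4}). For composition, given $S\in Mor_{\ms{C}_{u}(H)}(\mf{C},\mf{B})$ and $T$ as above, the contravariance $\mf{d}_{\#}^{H}(T\circ S)=\mf{d}_{\#}^{H}(S)\circ\mf{d}_{\#}^{H}(T)$ (matching the index sets $\ms{P}_{\mf{C}}^{\,\cdot}$) and the covariance $\mf{g}_{\#}^{H}(T\circ S)=\mf{g}_{\#}^{H}(T)\circ\mf{g}_{\#}^{H}(S)$ combine, through the order-reversing post-composition $g\mapsto g\circ(\cdot)$, into
\begin{equation*}
\Updelta_{m}^{\#}(T\circ S)(g)(\beta)
=
g(\beta)\circ\mf{g}_{\#}^{H}(T)\circ\mf{g}_{\#}^{H}(S)
=
\bigl(\Updelta_{m}^{\#}(S)\circ\Updelta_{m}^{\#}(T)\bigr)(g)(\beta),
\end{equation*}
which is the contravariant composition law defining $\ms{Fct}(\ms{C}_{u}(H)^{op},\ms{set})$. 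The computations are routine; the only points requiring care are the bookkeeping of variances—pairing the contravariant $\mf{d}_{\#}^{H}$ with the covariant $\mf{g}_{\#}^{H}$, and the inverse in $\uppsi_{\#,\rhd}$—and the well-definedness on the unions $\Updelta_{o}^{\#}(\mf{A})=\bigcup_{\mf{p}}(\ms{A}_{\#,\mf{A}}^{\ast})^{\ms{P}_{\mf{A}}^{\mf{p}}}$, handled summand by summand exactly as for the unquotiented functor $\Updelta^{\pf{U}}$ in Rmk. \ref{18061500}. I expect no genuine obstacle beyond this bookkeeping, since every structural fact needed is supplied by Lemmas \ref{07071040} and \ref{07071040b}.
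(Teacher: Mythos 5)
Your proposal is correct and follows essentially the same route as the paper, whose proof is simply the citation of Lemma \ref{07071040}\eqref{07071040st3}\,\&\,\eqref{07071040st4} for $\#=\dv$ and of Lemma \ref{07071040b}\eqref{07071040bst3}\,\&\,\eqref{07071040bst4} for $\#\in\{\star,\vm\}$; you have merely written out the routine verifications that those citations are meant to cover. The variance bookkeeping and the action-law computation you display are exactly the intended content, so nothing is missing.
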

\begin{proof}
The case $\#=\dv$ follows since Lemma \ref{07071040}(\ref{07071040st3}\,\&\,\ref{07071040st4}),
while the cases $\#\in\{\star,\vm\}$ follow since Lemma \ref{07071040b}(\ref{07071040bst3}\,\&\,\ref{07071040bst4}).
\end{proof}
\begin{definition}
Let $T\in Mor_{\ms{C}_{u}(H)}(\mf{B},\mf{A})$, $l\in H$, $\mf{T}\in\pf{T}_{\mf{A}}$ and 
$\alpha\in\ms{P}_{\mf{A}}^{\mf{T}}$.
We say that 
the hypothesis $\ms{E}(T,l,\mf{T},\alpha)$ holds true 
if
$(\ms{ad}\circ\uppi_{\alpha}\circ T\circ\ms{v}^{\mf{B}})(l)\up\mc{U}(\mf{H}_{\alpha})
=
(\ms{ad}\circ\uppi_{\alpha}\circ\ms{v}^{\mf{A}})(l)
\up\mc{U}(\mf{H}_{\alpha})$,
while 
the hypothesis $\ms{E}(T,l)$ holds true if the hypothesis 
$\ms{E}(T,l,\mf{I},\beta)$ holds true for all $\mf{I}\in\pf{T}_{\mf{A}}$ 
and $\beta\in\ms{P}_{\mf{A}}^{\mf{I}}$,
finally we say that the hypothesis $\ms{E}$ holds true if the hypothesis $\ms{E}(T,l)$ holds true for all $l\in H$ and
$T\in Mor_{\ms{C}_{u}^{0}(H)}$.
\end{definition}
\begin{lemma}
\label{07071822}
For all $l\in H$ and $T\in Mor_{\ms{C}_{u}(H)}(\mf{B},\mf{A})$
we have
\begin{enumerate}
\item
$\mf{b}_{\star}^{\mf{B}}(l) 
\circ
\mf{d}_{\star}^{H}(T)
=
\mf{d}_{\star}^{H}(T)
\circ
\mf{b}_{\star}^{\mf{A}}(l)$,
\label{07071822st1}
\item
$\ov{\mf{h}}^{H}(T)
\circ
\ov{\mf{c}}^{\mf{B}}(l)
=
\ov{\mf{c}}^{\mf{A}}(l)
\circ
\ov{\mf{h}}^{H}(T)$,
\label{07071822st2}
\item
$\mf{g}_{\star}^{H}(T)
\circ
\uppsi_{\star}^{\mf{B}}(l) 
=
\uppsi_{\star}^{\mf{A}}(l) 
\circ
\mf{g}_{\star}^{H}(T)$,
\label{07071822st3}
\item
$\uppsi_{\star,\rhd}^{\mf{B}}(l)
\circ
\Updelta_{m}^{\star}(T)
=
\Updelta_{m}^{\star}(T)
\circ
\uppsi_{\star,\rhd}^{\mf{A}}(l)$,
\label{07071822st4}
\item
if in addition the hypothesis $\ms{E}(T,l)$ holds true then we obtain
\begin{enumerate}
\item
$\mf{b}_{\vm}^{\mf{B}}(l) 
\circ
\mf{d}_{\vm}^{H}(T)
=
\mf{d}_{\vm}^{H}(T)
\circ
\mf{b}_{\vm}^{\mf{A}}(l)$,
\label{07071822st51}
\item
$\mf{g}_{\vm}^{H}(T)
\circ
\uppsi_{\vm}^{\mf{B}}(l) 
=
\uppsi_{\vm}^{\mf{A}}(l) 
\circ
\mf{g}_{\vm}^{H}(T)$,
\label{07071822st53}
\item
$\uppsi_{\vm,\rhd}^{\mf{B}}(l)
\circ
\Updelta_{m}^{\vm}(T)
=
\Updelta_{m}^{\vm}(T)
\circ
\uppsi_{\vm,\rhd}^{\mf{A}}(l)$.
\label{07071822st54}
\end{enumerate}
\label{07071822st5}
\end{enumerate}
\end{lemma}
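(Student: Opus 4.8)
The plan is to reduce the eight assertions to the two genuinely representation-theoretic ones, namely \eqref{07071822st1} and \eqref{07071822st51}, together with the purely homological \eqref{07071822st2}; all the others are formal consequences. Indeed, unwinding $\uppsi_\star^\mf{A}(l)(f)=\ov{\mf{c}}^\mf{A}(l)\circ f\circ\mf{b}_\star^\mf{A}(l^{-1})$ and $\mf{g}_\star^H(T)(f)=\ov{\mf{h}}^H(T)\circ f\circ\mf{d}_\star^H(T)$ from Def. \ref{07070947}, assertion \eqref{07071822st3} reduces to the conjunction of \eqref{07071822st1} (at $l^{-1}$) and \eqref{07071822st2}, while \eqref{07071822st4} reduces to \eqref{07071822st3} (at $l^{-1}$) together with \eqref{07071822st1} for bookkeeping of the index sets; the identical bracket--chase yields \eqref{07071822st53} and \eqref{07071822st54} from \eqref{07071822st51} and \eqref{07071822st2}. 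So I would first record these reductions and then concentrate on \eqref{07071822st1}, \eqref{07071822st2} and \eqref{07071822st51}.

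For \eqref{07071822st1} I would fix $\mf{T}\in\pf{T}_\mf{A}^\diamond$ and prove $\mf{b}^\mf{B}(l)(\mf{d}^H(T)(\mf{T}))\sm{B}\mf{d}^H(T)(\mf{b}^\mf{A}(l)(\mf{T}))$; since both images lie in $\pf{T}_\mf{B}^\diamond$ by Prp. \ref{05071049}, passing to classes then gives the asserted equality via Prp. \ref{09070854} and Def. \ref{05071126}. Both tuples carry the same scalar data $(\mc{T},\ps{\upmu},\upzeta,f)$ after transport: the measure $\ps{\upmu}^l$, the morphism $\upzeta^l$ and the map $f$ are untouched by $\mf{d}^H(T)$, and the two pre thermal phases agree because $T^\dagger\circ\upeta^\ast(l)=\uptheta^\ast(l)\circ T^\dagger$ by equivariance of $T$, exactly as in the proof of Prp. \ref{11211031} via Lemma \ref{11131734}. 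The data then differ only in the cyclic vector and the grading: at each $\alpha$ both representations of $\mc{B}$ equal $\uppi_\alpha\circ T$, with cyclic vectors $\uppi_\alpha(\ms{v}^\mf{A}(l))\Upomega_\alpha$ and $\uppi_\alpha(T(\ms{v}^\mf{B}(l)))\Upomega_\alpha$ and gradings $\ms{ad}(\uppi_\alpha(\ms{v}^\mf{A}(l)))(\Upgamma_\alpha)$ and $\ms{ad}(\uppi_\alpha(T(\ms{v}^\mf{B}(l))))(\Upgamma_\alpha)$. Using $\ms{ad}(\ms{v}^\mf{A}(l))=\upeta(l)$, $\ms{ad}(\ms{v}^\mf{B}(l))=\uptheta(l)$ and equivariance, a short computation shows the two cyclic vectors induce the \emph{same} state $T^\dagger(\upeta^\ast(l)(\ps{\upomega}_\alpha))$ of $\mc{B}$; since $T$ is surjective, $(\uppi_\alpha\circ T)(\mc{B})=\uppi_\alpha(\mc{A})$, so uniqueness of the GNS construction supplies a unitary $V_\alpha\in\uppi_\alpha(\mc{A})'$ taking one cyclic vector to the other, which is the first part of Def. \ref{05071015}. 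For the grading I would use that $\ms{ad}(\uppi_\alpha(\ms{v}^\mf{A}(l)))$ and $\ms{ad}(\uppi_\alpha(T(\ms{v}^\mf{B}(l))))$ restrict to one and the same automorphism $\uppi_\alpha\circ\upeta(l)\circ\uppi_\alpha^{-1}$ of $\uppi_\alpha(\mc{A})$, hence, being weakly continuous, to the same automorphism of $\uppi_\alpha(\mc{A})''$; as $\Upgamma_\alpha\in\uppi_\alpha(\mc{A})''$ by the $\diamond$ condition and $V_\alpha\in\uppi_\alpha(\mc{A})'$ fixes $\uppi_\alpha(\mc{A})''$ pointwise, the two transported gradings coincide and $V_\alpha$ witnesses $\sm{B}$.

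Assertion \eqref{07071822st2} is purely algebraic: unwinding $\ov{\mf{c}}$ and $\ov{\mf{h}}$ through Def.'s \ref{01011514} and \ref{01020953} it reduces to the commutation of $\ms{k}_{\ps{\upmu}}(T)$ with the connecting maps $((\ps{\uptheta}^{(\ps{\upomega}_\alpha,l)})^+)_\ast$ and $((\ps{\upeta}^{(T^\dagger\ps{\upomega}_\alpha,l)})^+)_\ast$. I would obtain the commutation of the underlying square of $C^\ast$-algebra homomorphisms from the factorisation of $\ps{\upsigma}^{(\cdot,l)}$ in Rmk. \ref{29061357}, the intertwining \eqref{29061509}, and equivariance $T\circ\uptheta(l)=\upeta(l)\circ T$, and then apply the $\ms{K}_0$-functoriality of \eqref{10291120} and \eqref{11291802}.

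Finally, for \eqref{07071822st51} I would run the argument of \eqref{07071822st1} verbatim, the only change being that the gradings now lie in $\mathbb{A}(\mf{A})_\alpha^\mf{T}=(\uppi_\alpha(\mc{A})\cup\ms{U}_{\pf{H}_\alpha}(H))''$ rather than in $\uppi_\alpha(\mc{A})''$. This is exactly where the two adjoint actions need no longer agree, since they may differ on $\ms{U}_{\pf{H}_\alpha}(H)$: the hypothesis $\ms{E}(T,l)$ supplies precisely the missing coincidence $\ms{ad}(\uppi_\alpha(T(\ms{v}^\mf{B}(l))))=\ms{ad}(\uppi_\alpha(\ms{v}^\mf{A}(l)))$ on all of $\mc{L}(\mf{H}_\alpha)$, whence on $\mathbb{A}(\mf{A})_\alpha^\mf{T}$, and the grading condition follows again. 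The main obstacle, and the reason the naive commutativity fails on $\pf{T}_\mf{A}$ itself and must be quotiented, is exactly this grading-matching step: it is harmless over $\uppi_\alpha(\mc{A})''$, which is why no extra hypothesis enters in \eqref{07071822st1}, but over the larger algebra $\mathbb{A}(\mf{A})_\alpha^\mf{T}$ it forces the hypothesis $\ms{E}$ in \eqref{07071822st5}.
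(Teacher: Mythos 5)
Your proposal is correct and follows essentially the same route as the paper: explicit comparison of the two composite tuples, equivariance of $T$ to identify the pre thermal phases, GNS uniqueness to produce the intertwining unitary $V_{\alpha}$ lying in $\uppi_{\alpha}(\mc{A})'$ by surjectivity of $T$, the coincidence of the two adjoint actions on $\uppi_{\alpha}(\mc{A})''$ (resp.\ on $\mathbb{A}(\mf{A})_{\alpha}^{\mf{T}}$ under hypothesis $\ms{E}(T,l)$) to match the gradings, and the formal reduction of the remaining assertions to \eqref{07071822st1}, \eqref{07071822st2} and \eqref{07071822st51}. Your closing observation that the quotient is forced precisely by the grading-matching step, harmless over $\uppi_{\alpha}(\mc{A})''$ but requiring $\ms{E}$ over the larger algebra $\mathbb{A}(\mf{A})_{\alpha}^{\mf{T}}$, accurately identifies the role the hypothesis plays in the paper's argument.
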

\begin{proof}
Let $l\in H$, $T\in Mor_{\ms{C}_{u}(H)}(\mf{B},\mf{A})$
and $\mf{T}\in\pf{T}_{\mf{A}}^{\diamond}$
then
\begin{equation}
\label{08071534a}
\begin{aligned}
(\mf{d}^{H}(T)\circ\mf{b}^{\mf{A}}(l))(\mf{T})
&=
\lr{(\mc{T}^{l})^{T},\ps{\upmu}^{l},
(\pf{H}^{(\ms{v}^{\mf{A}},l)})^{T},\upzeta^{l}}
{f,\Upgamma_{\pf{H},\ms{v}^{\mf{A}}}^{l}}   
\\
(\mc{T}^{l})^{T}
&=
\lr{l\cdot h,\upxi,\beta_{c}}
{I,T_{\dagger}\circ\upeta^{\ast}(l)\circ\ps{\upomega}}
\\
(\pf{H}^{(\ms{v}^{\mf{A}},l)})^{T}&:
\ms{P}_{\mf{A}}^{\mf{T}}\ni\alpha\mapsto
\lr{\mf{H}_{\alpha},\uppi_{\alpha}\circ T}
{\uppi_{\alpha}(\ms{v}^{\mf{A}}(l))\Upomega_{\alpha}},
\\
\Upgamma_{\pf{H},\ms{v}^{\mf{A}}}^{l}&:
\ms{P}_{\mf{A}}^{\mf{T}}\ni\alpha\mapsto
\ms{ad}\left(\uppi_{\alpha}(\ms{v}^{\mf{A}}(l))\right)
(\Upgamma_{\alpha}),
\end{aligned}
\end{equation}
while
\begin{equation}
\label{08071534b}
\begin{aligned}
(\mf{b}^{\mf{B}}(l)\circ\mf{d}^{H}(T)
)(\mf{T})
&=
\lr{(\mc{T}^{T})^{l},\ps{\upmu}^{l},
(\pf{H}^{T})^{(\ms{v}^{\mf{B}},l)},\upzeta^{l}}
{f,\Upgamma_{\pf{H}^{T},\ms{v}^{\mf{B}}}^{l}}   
\\
(\mc{T}^{T})^{l}
&=
\lr{l\cdot h,\upxi,\beta_{c}}
{I,\uptheta^{\ast}(l)\circ T_{\dagger}\circ\ps{\upomega}}
\\
(\pf{H}^{T})^{(\ms{v}^{\mf{B}},l)}&:
\ms{P}_{\mf{A}}^{\mf{T}}\ni\alpha\mapsto
\lr{\mf{H}_{\alpha},\uppi_{\alpha}\circ T}
{(\uppi_{\alpha}\circ T)(\ms{v}^{\mf{B}}(l))\Upomega_{\alpha}},
\\
\Upgamma_{\pf{H}^{T},\ms{v}^{\mf{B}}}^{l}
&:
\ms{P}_{\mf{A}}^{\mf{T}}\ni\alpha\mapsto
\ms{ad}\left((\uppi_{\alpha}\circ T)
(\ms{v}^{\mf{B}}(l))\right)
(\Upgamma_{\alpha}),
\end{aligned}
\end{equation}
moreover $T$ is equivariant by hypothesis so 
$\uptheta^{\ast}(l)\circ T_{\dagger}=
T_{\dagger}\circ\upeta^{\ast}(l)$, thus
\begin{equation}
\label{08070838}
(\mc{T}^{T})^{l}=(\mc{T}^{l})^{T}.
\end{equation}
Let $\alpha\in\ms{P}_{\mf{A}}^{\mf{T}}$, thus
$(\pf{H}^{(\ms{v}^{\mf{A}},l)})_{\alpha}^{T}$
is a cyclic representation associated with 
$(T_{\dagger}\circ\upeta^{\ast}(l))(\ps{\upomega}_{\alpha})$
and
$(\pf{H}^{T})^{(\ms{v}^{\mf{B}},l)}_{\alpha}$
is a cyclic representation associated with 
the state
$(\uptheta^{\ast}(l)\circ T_{\dagger})
(\ps{\upomega}_{\alpha})
=
(T_{\dagger}\circ\upeta^{\ast}(l))
(\ps{\upomega}_{\alpha})$,
so there exists a unitary operator $V_{\alpha}$
on $\mf{H}_{\alpha}$
such that 
\begin{equation}
\label{08070852}
\begin{aligned}
\uppi_{\alpha}\circ T&=
\ms{ad}(V_{\alpha})
\circ(\uppi_{\alpha}\circ T),
\\
V_{\alpha}\uppi_{\alpha}(\ms{v}^{\mf{A}}(l))\Upomega_{\alpha}
&=
(\uppi_{\alpha}\circ T)(\ms{v}^{\mf{B}}(l))\Upomega_{\alpha}.
\end{aligned}
\end{equation}
Next for all $b\in\mc{B}$ we have
\begin{equation*}
\begin{aligned}
\ms{ad}\left(T(\ms{v}^{\mf{B}}(l))\right)
(T(b))
&=
(T\circ\uptheta(l))(b)
\\
&=
(\upeta(l)\circ T)(b)
=
(\ms{ad}(\ms{v}^{\mf{A}}(l))(T(b)),
\end{aligned}
\end{equation*}
but $T$ is surjective so
\begin{equation}
\label{16071848}
\ms{ad}\left(T(\ms{v}^{\mf{B}}(l))\right)
=
\ms{ad}(\ms{v}^{\mf{A}}(l)),
\end{equation}
in particular
\begin{equation}
\label{13071611}
\ms{ad}\left((\uppi_{\alpha}\circ T)(\ms{v}^{\mf{B}}(l))\right)
\up\uppi_{\alpha}(\mc{A})
=
\ms{ad}\left(\uppi_{\alpha}(\ms{v}^{\mf{A}}(l))
\right)\up\uppi_{\alpha}(\mc{A}),
\end{equation}
so since $\ms{ad}(W)$ is weakly continuous for any unitary 
operator $W$ on $\mf{H}_{\alpha}$ and 
$\uppi(\mc{A})''=\ov{\uppi_{\alpha}(\mc{A})}^{w}$
since the bicommutant theorem , we obtain
\begin{equation*}
\ms{ad}\left((\uppi_{\alpha}\circ T)(\ms{v}^{\mf{B}}(l))\right)
\up\uppi_{\alpha}(\mc{A})''
=
\ms{ad}\left(\uppi_{\alpha}(\ms{v}^{\mf{A}}(l))
\right)\up\uppi_{\alpha}(\mc{A})''.
\end{equation*}
Moreover $\Upgamma_{\alpha}\in\uppi_{\alpha}(\mc{A})''$
by hypothesis so since \eqref{08071534a}\,\&\,\eqref{08071534b}
\begin{equation}
\label{08071519}
\Upgamma_{\pf{H}^{T},\ms{v}^{\mf{B}},\alpha}^{l}
=
\Upgamma_{\pf{H},\ms{v}^{\mf{A}},\alpha}^{l}.
\end{equation}
Next $V_{\alpha}\in\uppi_{\alpha}(\mc{A})'$ since $T$ is surjective
and the first equality of \eqref{08070852}, therefore
\begin{equation}
\label{08071527}
\Upgamma_{\pf{H},\ms{v}^{\mf{A}},\alpha}^{l}
=
\ms{ad}(V_{\alpha})(\Upgamma_{\pf{H},\ms{v}^{\mf{A}},\alpha}^{l}).
\end{equation}
Finally since 
\eqref{08071534a}\,\&\,\eqref{08071534b}\,\&\,\eqref{08070838}\,\&\,
\eqref{08070852}\,\&\,\eqref{08071519}\,\&\,\eqref{08071527}
we obtain
\begin{equation*}
(\mf{d}^{H}(T)
\circ
\mf{b}^{\mf{A}}(l))(\mf{T})
\sm{B}
(\mf{b}^{\mf{B}}(l) 
\circ
\mf{d}^{H}(T))
(\mf{T}),
\end{equation*}
then st.\eqref{07071822st1} follows since 
Prp. \ref{09070854}\,\&\,\ref{05071049}.
St.\eqref{07071822st2} follows since 
Rmk. \ref{29061357},
\eqref{29061509} 
and 
Prp. \ref{11211031},
and the fact that $(\ms{K}_{0}(\cdot),(\cdot)_{\ast})\circ((\cdot)^{+},(\cdot)^{+})$ 
is a functor from $\ms{CA}^{\ast}$ to $\ms{Ab}$. 
In conclusion st.\eqref{07071822st3}
follows since st.\eqref{07071822st1}\,\&\,\eqref{07071822st2},
and st.\eqref{07071822st4} since st.\eqref{07071822st3}.
If the hypothesis $\ms{E}(T,l)$ holds true then since \eqref{13071611} and the bicommutant theorem we obtain
\begin{equation}
\label{13071611b}
\ms{ad}\left((\uppi_{\alpha}\circ T)(\ms{v}^{\mf{B}}(l))\right)
\up\mathbb{A}(\mf{A})_{\alpha}^{\mf{T}}
=
\ms{ad}\left(\uppi_{\alpha}(\ms{v}^{\mf{A}}(l))\right)
\up\mathbb{A}(\mf{A})_{\alpha}^{\mf{T}},
\end{equation}
then st.\eqref{07071822st51} follows under the same argument used to prove st.\eqref{07071822st1}.
St.\eqref{07071822st53}\,\&\,\eqref{07071822st54} follow since st.\eqref{07071822st51}\,\&\,\eqref{07071822st2}.
\end{proof}
\begin{definition}
\label{10071700}
Define
\begin{equation*}
\begin{aligned}
\ps{P}^{H}
&\coloneqq
\left(\mf{P}_{\star},\,
 Mor_{\ms{C}_{u}(H)^{op}}\ni T\mapsto(\un\mapsto\mf{Q}_{m}^{\star}(T))\right),
\\
\ps{O}^{H}
&\coloneqq
\left(\mf{O}_{\star},\,
 Mor_{\ms{C}_{u}(H)^{op}}\ni T\mapsto(\un\mapsto\Updelta_{m}^{\star}(T))\right),
\\
\ps{P}_{\natural}^{H}
&\coloneqq
\left(\mf{P}_{\natural},\,
 Mor_{\ms{C}_{u}^{0}(H)^{op}}\ni T\mapsto(\un\mapsto\mf{Q}_{m}^{\natural}(T))\right),
\\
\ps{Z}_{\natural}^{H}
&\coloneqq
\left(\mf{Z}_{\natural},\,
 Mor_{\ms{C}_{u}^{0}(H)^{op}}\ni T\mapsto(\un\mapsto\ms{Z}_{\natural}^{m}(T))\right).
\end{aligned}
\end{equation*}
\end{definition}
\begin{corollary}
\label{10071703}
We have 
\begin{enumerate}
\item
$\ps{P}^{H}\in\ms{Fct}(\ms{C}_{u}(H)^{op},\ms{Fct}(H,\ms{set}))$;
\label{10071703st1}
\item
$\ps{O}^{H}\in\ms{Fct}(\ms{C}_{u}(H)^{op},\ms{Fct}(H,\ms{set}))$;
\label{10071703st2}
\item
If the hypothesis $\ms{E}$ holds true then
\begin{enumerate}
\item
$\ps{P}_{\natural}^{H}\in\ms{Fct}(\ms{C}_{u}^{0}(H)^{op},\ms{Fct}(H,\ms{set}))$,
\label{10071703st3}
\item
$\ps{Z}_{\natural}^{H}\in\ms{Fct}(\ms{C}_{u}^{0}(H)^{op},\ms{Fct}(H,\ms{set}))$.
\label{10071703st4}
\end{enumerate}
\end{enumerate}
\end{corollary}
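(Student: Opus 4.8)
The plan is to assemble the four assertions from the lemmas already established, exploiting that $H$, regarded as a groupoid with a single object $\un$, makes a functor $H\to\ms{set}$ the same datum as a set carrying an $H$-action, a natural transformation between two such the same datum as an $H$-equivariant map, and vertical composition in $\ms{Fct}(H,\ms{set})$ pointwise. Consequently, to show that a pair of the form $\left(\mf{P},\,T\mapsto(\un\mapsto\theta(T))\right)$ is a functor into $\ms{Fct}(H,\ms{set})$ it suffices to check three things: that each value of the object part lies in $\ms{Fct}(H,\ms{set})$; that for every morphism $T$ the single component $\theta(T)$ intertwines the two $H$-actions, so that $(\un\mapsto\theta(T))$ is a natural transformation; and that the assignment $T\mapsto\theta(T)$ is functorial at the level of $\ms{set}$, which by pointwise vertical composition and \eqref{20061403} promotes to functoriality in $\ms{Fct}(H,\ms{set})$.

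For \eqref{10071703st1} the object part $\mf{P}_{\star}$ lands in $\ms{Fct}(H,\ms{set})$ by Lemma \ref{07071040b}\eqref{07071040bst2}; the intertwining $\mf{b}_{\star}^{\mf{B}}(l)\circ\mf{d}_{\star}^{H}(T)=\mf{d}_{\star}^{H}(T)\circ\mf{b}_{\star}^{\mf{A}}(l)$ needed for $(\un\mapsto\mf{Q}_{m}^{\star}(T))$ to be natural is precisely Lemma \ref{07071822}\eqref{07071822st1}; and functoriality follows from Lemma \ref{07071040b}\eqref{07071040bst1}. Likewise for \eqref{10071703st2}: the object part $\mf{O}_{\star}$ is objectwise a functor $H\to\ms{set}$ by Lemma \ref{06071817}\eqref{06071817st1b}, naturality of $(\un\mapsto\Updelta_{m}^{\star}(T))$ is Lemma \ref{07071822}\eqref{07071822st4}, and functoriality is Lemma \ref{06071817}\eqref{06071817st2b}.

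The two statements in \eqref{10071703st3} require the hypothesis $\ms{E}$, which by definition supplies $\ms{E}(T,l)$ for every $l\in H$ and every $T\in Mor_{\ms{C}_{u}^{0}(H)}$. In \eqref{10071703st3} the object part $\mf{P}_{\natural}$ is objectwise a functor by Lemma \ref{07071040b}\eqref{13071854b}; naturality of $(\un\mapsto\mf{Q}_{m}^{\natural}(T))$ amounts to $\mf{b}_{\natural}^{\mf{B}}(l)\circ\mf{d}_{\natural}^{H}(T)=\mf{d}_{\natural}^{H}(T)\circ\mf{b}_{\natural}^{\mf{A}}(l)$, which I would obtain by restricting the identity of Lemma \ref{07071822}\eqref{07071822st51} (valid under $\ms{E}(T,l)$) to the $\natural$-collections $\pf{V}_{\mf{A}}^{\natural}$ via Rmk. \ref{09071514} and Prp. \ref{09070854}; functoriality is Lemma \ref{07071040b}\eqref{13071854a}. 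For \eqref{10071703st4} the object part $\mf{Z}_{\natural}$ lands in $\ms{Fct}(H,\ms{set})$ by Lemma \ref{07071040}\eqref{07071040st2z}; the naturality square $\ms{V}_{\natural}(\mf{B})(l)\circ\ms{Z}_{\natural}^{m}(T)=\ms{Z}_{\natural}^{m}(T)\circ\ms{V}_{\natural}(\mf{A})(l)$ splits into its two components of Def. \ref{14071100}: the $\pf{V}_{\mf{A}}^{\natural}$-component is again the $\natural$-restriction of Lemma \ref{07071822}\eqref{07071822st51}, while the state-component reduces to the equality $f(\beta)\circ T\circ\uptheta(l^{-1})=f(\beta)\circ\upeta(l^{-1})\circ T$, which is exactly the $(\mf{B},\mf{A})$-equivariance $T\circ\uptheta(l^{-1})=\upeta(l^{-1})\circ T$ of $T$; functoriality is then Lemma \ref{07071040}\eqref{07071040st1z}.

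The only substantive point, and hence the main obstacle, is the naturality of the morphism parts in the $\vm$- and $\natural$-cases, where the hypothesis $\ms{E}$ enters through Lemma \ref{07071822}\eqref{07071822st51}: one must verify that the universally quantified $\ms{E}$ indeed furnishes the instances $\ms{E}(T,l)$ demanded there, and that passing from the $\vm$-compatibility to the $\natural$-compatibility by restriction to $\pf{V}_{\mf{A}}^{\natural}$ is legitimate, both of which are guaranteed by Rmk. \ref{09071514} and Prp. \ref{09070854}. Everything else is routine categorical bookkeeping: the object-level claims are read off the cited lemmas, and functoriality in the base categories reduces, via the pointwise nature of vertical composition in $\ms{Fct}(H,\ms{set})$, to the set-valued functoriality already proved.
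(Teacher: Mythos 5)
Your proposal is correct and follows essentially the same route as the paper's proof: each statement is reduced, via the cited object-level and functoriality lemmas (Lemma \ref{07071040b}, Lemma \ref{06071817}, Lemma \ref{07071040}), to the single naturality check that the component at $\un$ intertwines the $H$-actions, which is then supplied by Lemma \ref{07071822} (parts \eqref{07071822st1}, \eqref{07071822st4}, \eqref{07071822st51} with Rmk. \ref{09071514}) and, for \eqref{10071703st4}, the equivariance $T\circ\uptheta(l)=\upeta(l)\circ T$. The only difference is that you make explicit the groupoid bookkeeping (functors $H\to\ms{set}$ as $H$-sets, natural transformations as equivariant maps) that the paper leaves implicit.
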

\begin{proof}
Let 
$T\in Mor_{\ms{C}_{u}(H)}(\mf{B},\mf{A})$.
Since Lemma \ref{07071040b}\eqref{07071040bst1}\,\&\,\eqref{07071040bst2}
to prove st.\eqref{10071703st1} 
it is sufficient that
\begin{equation*}
(\un\mapsto\mf{Q}_{m}^{\star}(T))
\in Mor_{\ms{Fct}(H,\ms{set})}
(\mf{P}_{\star}^{\mf{A}},\mf{P}_{\star}^{\mf{B}}),
\end{equation*}
i.e. $\un\mapsto\mf{Q}_{m}^{\star}(T)$ is 
a natural transformation from the functor 
$\mf{P}_{\star}^{\mf{A}}$ to $\mf{P}_{\star}^{\mf{B}}$,
which is true since 
Lemma \ref{07071822}\eqref{07071822st1}.
Since Lemma \ref{06071817}\eqref{06071817st1b}\,\&\,\eqref{06071817st2b}
to prove st.\eqref{10071703st2} 
it is sufficient that
\begin{equation*}
(\un\mapsto\Updelta_{m}^{\star}(T))
\in Mor_{\ms{Fct}(H,\ms{set})}
(\mf{O}_{\star}^{\mf{A}},\mf{O}_{\star}^{\mf{B}}),
\end{equation*}
which is true since Lemma \ref{07071822}\eqref{07071822st4}.
Assume $\mf{A},\mf{B}\in\ms{C}_{u}^{0}(H)$ and $T\in Mor_{\ms{C}_{u}^{0}(H)}(\mf{B},\mf{A})$.
Since Lemma \ref{07071040b}\eqref{13071854a}\,\&\,\eqref{13071854b} 
to prove st.\eqref{10071703st3} it is sufficient that
\begin{equation*}
(\un\mapsto\mf{Q}_{m}^{\natural}(T))
\in Mor_{\ms{Fct}(H,\ms{set})}
(\mf{P}_{\natural}^{\mf{A}},\mf{P}_{\natural}^{\mf{B}}),
\end{equation*}
which is true since Lemma \ref{07071822}\eqref{07071822st51} and Rmk. \ref{09071514}.
Since Lemma \ref{07071040}\eqref{07071040st1z}\,\&\,\eqref{07071040st2z} 
to prove st.\eqref{10071703st4} it is sufficient that
\begin{equation*}
(\un\mapsto\ms{Z}_{\natural}^{m}(T))
\in Mor_{\ms{Fct}(H,\ms{set})}
(\mf{Z}_{\natural}^{\mf{A}},\mf{Z}_{\natural}^{\mf{B}}),
\end{equation*}
which is true since Lemma \ref{07071822}\eqref{07071822st51} and Rmk. \ref{09071514},
and since $T$ is equivariant i.e. $T\circ\uptheta(l)=\upeta(l)\circ T$, for all $l\in H$.
\end{proof}
\begin{remark}
\label{07281923}
Let $\Gamma_{\mf{D}}$ denote the space of sections of the fibered space 
$\lr{\pf{V}_{\mf{D}}}{[\cdot]_{\tm{D}},\pf{V}_{\mf{D}}^{\natural}}$, for any 
$\mf{D}\in\ms{C}_{u}^{0}(H)$, 
then for any $\mf{s}\in\prod_{\mf{D}\in\ms{C}_{u}^{0}(H)}\Gamma_{\mf{D}}$
since Def. \ref{06071514} we have that
\begin{equation*}
\begin{aligned}
\mc{V}_{\natural}
&\in
\prod_{\mf{D}\in\ms{C}_{u}^{0}(H)}
\prod_{\mf{p}\in\pf{V}_{\mf{D}}^{\natural}}
\prod_{\beta\in\ms{P}_{\mf{D}}^{\mf{p}}}
\mf{N}^{\mf{D}}(\ov{\ms{m}}^{\mf{D}}(\mf{s}_{\mf{p}}^{\mf{D}})(\beta)),
\\
\mf{v}_{\natural}^{\mf{D}}
&=
(\un\mapsto\ms{gr}(\mc{V}_{\natural}^{\mf{D}})),\forall\mf{D}\in\ms{C}_{u}^{0}(H).
\end{aligned}
\end{equation*}
\end{remark}
Now we are able to state in the second main result of this work
that $\mf{m}_{\star}$ and, under the hypothesis $\ms{E}$, also $\mf{v}_{\natural}$ 
are natural transformations.
Since Rmk. \ref{07281923} 
this result represents the compact equivariant form of the universality claim 
described in introduction \ref{introI},
whose invariant form will be established in Thm. \ref{10061606ante}.
\begin{theorem}
[Compact equivariant form of the universality claim]
\label{10071930}
We have
\begin{enumerate}
\item
\begin{equation*}
\mf{m}_{\star}\in 
Mor_{\ms{Fct}(\ms{C}_{u}(H)^{op},\ms{Fct}(H,\ms{set}))}
(\ps{P}^{H},\ps{O}^{H}),
\end{equation*}
\label{10071930st1}
\item
if hypothesis $\ms{E}$ holds true then
\begin{equation*}
\mf{v}_{\natural}\in
 Mor_{\ms{Fct}(\ms{C}_{u}^{0}(H)^{op},\ms{Fct}(H,\ms{set}))}
(\ps{P}_{\natural}^{H},\ps{Z}_{\natural}^{H})
\end{equation*}
\label{10071930st2}
\end{enumerate}
\end{theorem}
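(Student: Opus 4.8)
The plan is to realize both $\mf{m}_{\star}$ and $\mf{v}_{\natural}$ as natural transformations by transporting the equivariance already proved at the level of $\pf{T}_{\bullet}$ in Thm. \ref{19061936} down to the quotients $\pf{T}_{\mf{A}}^{\star}$ and $\pf{V}_{\mf{A}}^{\natural}$. Since the target $\ms{Fct}(H,\ms{set})$ is itself a functor category, a morphism in $\ms{Fct}(\ms{C}_{u}(H)^{op},\ms{Fct}(H,\ms{set}))$ from $\ps{P}^{H}$ to $\ps{O}^{H}$ amounts to two conditions: first, for each object $\mf{A}$ the component $\mf{m}_{\star}^{\mf{A}}=(\un\mapsto\ov{\ms{m}}_{\star}^{\mf{A}})$ is a morphism in $\ms{Fct}(H,\ms{set})$ from $\mf{P}_{\star}^{\mf{A}}$ to $\mf{O}_{\star}^{\mf{A}}$ (the $H$-level naturality), and second, for each $T\in Mor_{\ms{C}_{u}(H)}(\mf{B},\mf{A})$ the square relating $\mf{m}_{\star}^{\mf{A}}$, $\mf{m}_{\star}^{\mf{B}}$, $\mf{Q}_{m}^{\star}(T)$ and $\Updelta_{m}^{\star}(T)$ commutes in $\ms{Fct}(H,\ms{set})$ (the $\ms{C}_{u}(H)^{op}$-level naturality). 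That the source and target are genuine functors is precisely Cor. \ref{10071703}, so only these two naturality conditions remain to be verified; the analogue for $\mf{v}_{\natural}$ is functorial in source and target only under hypothesis $\ms{E}$, again via Cor. \ref{10071703}.

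For st. \eqref{10071930st1} I would fix $\mf{A}$ and check the $H$-level naturality of $\mf{m}_{\star}^{\mf{A}}$ by unwinding Def. \ref{06071514}: for $\mf{p}\in\pf{T}_{\mf{A}}^{\star}$ with representative $\mf{I}\in\mf{p}$ and $l\in H$, both $\ov{\ms{m}}_{\star}^{\mf{A}}(\mf{b}_{\star}^{\mf{A}}(l)(\mf{p}))$ and $\uppsi_{\star,\rhd}^{\mf{A}}(l)(\ov{\ms{m}}_{\star}^{\mf{A}}(\mf{p}))$ reduce, after precomposition with $\ms{r}_{\star}^{\mf{A}}$, to the unmodded quantities $\ov{\ms{m}}^{\mf{A}}(\mf{b}^{\mf{A}}(l)(\mf{I}))$ and $\ov{\ms{m}}^{\mf{A}}(\mf{I})\circ\uppsi^{\mf{A}}(l^{-1})$; these coincide by the $H$-equivariance Thm. \ref{19061936}\eqref{19061936st2}, the intertwining relation $\ms{r}_{\star}^{\mf{A}}\circ\uppsi_{\star}^{\mf{A}}(l^{-1})=\uppsi^{\mf{A}}(l^{-1})\circ\ms{r}_{\star}^{\mf{A}}$ of Lemma \ref{07071040b}\eqref{07071040bst6}, and the $(\sm{A},=)$-compatibility of $\ov{\ms{m}}^{\mf{A}}$ (Lemma \ref{06071046}\eqref{06071046st1}), which renders the choice of $\mf{I}$ immaterial. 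For the $\ms{C}_{u}(H)^{op}$-level square I would likewise reduce $\Updelta_{m}^{\star}(T)(\ov{\ms{m}}_{\star}^{\mf{A}}(\mf{p}))$ and $\ov{\ms{m}}_{\star}^{\mf{B}}(\mf{d}_{\star}^{H}(T)(\mf{p}))$, using $\mf{g}^{H}(T)\circ\ms{r}_{\star}^{\mf{B}}=\ms{r}_{\star}^{\mf{A}}\circ\mf{g}_{\star}^{H}(T)$ of Lemma \ref{07071040b}\eqref{07071040bst7}, to the single unmodded identity $\ov{\ms{m}}^{\mf{A}}(\mf{I})\circ\mf{g}^{H}(T)=\ov{\ms{m}}^{\mf{B}}(\mf{d}^{H}(T)(\mf{I}))$, which is exactly Thm. \ref{19061936}\eqref{19061936st1}.

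For st. \eqref{10071930st2} I would run the same descent for $\mc{V}$. The $H$-level naturality of $\mf{v}_{\natural}^{\mf{A}}=(\un\mapsto\ms{gr}(\mc{V}_{\natural}^{\mf{A}}))$ follows from the unmodded $H$-equivariance Thm. \ref{19061936}\eqref{19061936st3}, the $(\tm{A},=)$-compatibility of $\mc{V}^{\mf{A}}$ (Lemma \ref{06071046}\eqref{06071046st2}), and the identification $\ms{V}(\mf{G}^{H}(\mf{A}))_{\alpha}^{\mf{T}}=\upeta$ noted after Cor. \ref{11271221}, which matches the action $\ms{V}_{\vardiamondsuit}$ of Def. \ref{20051117} with the action $\ms{V}_{\natural}(\mf{A})$ of Def. \ref{14071100}; the $\ms{C}_{u}^{0}(H)^{op}$-level square follows from Thm. \ref{19061936}\eqref{19061936st4}. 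Hypothesis $\ms{E}$ enters twice: through Cor. \ref{10071703}\eqref{10071703st3}\,\&\,\eqref{10071703st4} it guarantees that $\ps{P}_{\natural}^{H}$ and $\ps{Z}_{\natural}^{H}$ are functors at all, and it supplies the commutation $\mf{d}_{\vm}^{H}(T)\circ\mf{b}_{\vm}^{\mf{A}}(l)=\mf{b}_{\vm}^{\mf{B}}(l)\circ\mf{d}_{\vm}^{H}(T)$ of Lemma \ref{07071822}\eqref{07071822st51}, restricted along $\pf{V}_{\mf{A}}^{\natural}$ via Rmk. \ref{09071514}, without which the vertical arrows of the naturality square would fail to be morphisms of $H$-functors.

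The main obstacle is bookkeeping rather than analysis: each equivariance identity lives naturally on $\pf{T}_{\bullet}$, and the labour is to transport it across the quotients $\pf{T}_{\mf{A}}^{\star}$, $\pf{V}_{\mf{A}}^{\natural}$ and across the group completions $\ms{A}_{\#,\mf{A}}$, verifying at each step that the representative $\mf{I}\in\mf{p}$ drops out (Lemma \ref{06071046}) and that the structure maps $\ms{r}_{\#}$, $\mf{b}_{\#}$, $\mf{d}_{\#}$ intertwine the modded and unmodded operations (Lemmas \ref{07071040}, \ref{07071040b}, \ref{07071822}). The genuinely delicate point is st. \eqref{10071930st2}, where the passage to $\pf{T}_{\mf{A}}^{\vc}$ forces the grading into $\mathbb{A}(\mf{A})_{\alpha}^{\mf{T}}$ and thereby makes hypothesis $\ms{E}$ indispensable for the commutation of $\mf{b}_{\vm}$ with $\mf{d}_{\vm}$.
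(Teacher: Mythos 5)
Your proposal is correct and follows essentially the same route as the paper's proof: decompose membership in $Mor_{\ms{Fct}(\ms{C}_{u}(H)^{op},\ms{Fct}(H,\ms{set}))}$ into the $H$-level and $\ms{C}_{u}(H)^{op}$-level naturality conditions (well-set by Cor. \ref{10071703}), then descend the unmodded equivariance through $\ms{r}_{\star}$, $\mf{b}_{\star}$, $\mf{d}_{\star}$ via Lemmas \ref{06071046}, \ref{07071040b} and \ref{07071822}. The only cosmetic difference is that you cite the packaged equivariance statements of Thm. \ref{19061936} where the paper computes directly from Thm. \ref{main}\eqref{mainst2} and the second equality in \eqref{12011707}, which are the same facts one layer down.
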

\begin{proof}
St.\eqref{10071930st1} is well-set since Cor. \ref{10071703},
moreover it ammounts to be equivalent to the claimed statements \eqref{10071931a}\,\&\,\eqref{11070655a}, where
\begin{equation}
\label{10071931a}
(\un\mapsto\ov{\ms{m}}_{\star}^{\mf{A}})
\in Mor_{\ms{Fct}(H,\ms{set})}(\mf{P}_{\star}^{\mf{A}},\mf{O}_{\star}^{\mf{A}}),
\end{equation}
and for all $T\in Mor_{\ms{C}_{u}(H)}(\mf{B},\mf{A})$ 
the following is a commutative diagram in the category $\ms{Fct}(H,\ms{set})$
\begin{equation}
\label{11070655a}
\xymatrix{
\mf{P}_{\star}^{\mf{A}}
\ar[rr]^{\un\mapsto\ov{\ms{m}}_{\star}^{\mf{A}}}
\ar[dd]_{\un\mapsto\mf{Q}_{m}^{\star}(T)}
&
&
\mf{O}_{\star}^{\mf{A}}
\ar[dd]^{\un\mapsto\Updelta_{m}^{\star}(T)}
\\
&&
\\
\mf{P}_{\star}^{\mf{B}}
\ar[rr]_{\un\mapsto\ov{\ms{m}}_{\star}^{\mf{B}}}
&&
\mf{O}_{\star}^{\mf{B}}.}
\end{equation}
Next \eqref{10071931a} equivales to the commutativity in $\ms{set}$ of the following diagram for all $l\in H$
\begin{equation}
\label{10071931b}
\xymatrix{
\pf{T}_{\mf{A}}^{\star}
\ar[rr]^{\ov{\ms{m}}_{\star}^{\mf{A}}}
\ar[dd]_{\mf{b}_{\star}^{\mf{A}}(l)}
&&
\Updelta_{o}^{\star}(\mf{A})
\ar[dd]^{\uppsi_{\star,\rhd}^{\mf{A}}(l)}
\\
&&
\\
\pf{T}_{\mf{A}}^{\star}
\ar[rr]_{\ov{\ms{m}}_{\star}^{\mf{A}}}
&&
\Updelta_{o}^{\star}(\mf{A}).}
\end{equation}
Let $\mf{p}\in\pf{T}_{\mf{A}}^{\star}$, $\mf{I}\in\mf{p}$ and $\beta\in\ms{P}_{\mf{A}}^{\mf{p}}$.
Then
\begin{equation*}
\begin{aligned}
(\uppsi_{\star,\rhd}^{\mf{A}}(l)\circ\ov{\ms{m}}_{\star}^{\mf{A}})(\mf{p})(\beta)
&=\ov{\ms{m}}_{\star}^{\mf{A}}(\mf{p})(\beta)\circ\uppsi_{\star}^{\mf{A}}(l^{-1})\\
&=\ov{\ms{m}}^{\mf{A}}(\mf{I})(\beta)\circ\ms{r}_{\star}^{\mf{A}}\circ\uppsi_{\star}^{\mf{A}}(l^{-1})\\
&=\ov{\ms{m}}^{\mf{A}}(\mf{I})(\beta)\circ\uppsi^{\mf{A}}(l^{-1})\circ\ms{r}_{\star}^{\mf{A}}\\ 
&=\ov{\ms{m}}^{\mf{A}}(\mf{b}^{\mf{A}}(l)\mf{I})(\beta)\circ\ms{r}_{\star}^{\mf{A}}\\
&=(\ov{\ms{m}}_{\star}^{\mf{A}}\circ\mf{b}_{\star}^{\mf{A}}(l))(\mf{p})(\beta),
\end{aligned}
\end{equation*}
where the third equality follows since Lemma \ref{07071040b}\eqref{07071040bst6} 
and the fourth one since 
Thm. \ref{main}\eqref{mainst2}.
Hence \eqref{10071931b} and so also \eqref{10071931a} follow.
Next \eqref{11070655a} equivales to the commutativity in $\ms{set}$ of the following diagram
\begin{equation}
\label{11070655b}
\xymatrix{
\pf{T}_{\mf{A}}^{\star}
\ar[rr]^{\ov{\ms{m}}_{\star}^{\mf{A}}}
\ar[dd]_{\mf{d}_{\star}^{H}(T)}
&&
\Updelta_{o}^{\star}(\mf{A})
\ar[dd]^{\Updelta_{m}^{\star}(T)}
\\
&&
\\
\pf{T}_{\mf{B}}^{\star}
\ar[rr]^{\ov{\ms{m}}_{\star}^{\mf{B}}}
&&
\Updelta_{o}^{\star}(\mf{B}).}
\end{equation}
Next 
\begin{equation*}
\begin{aligned}
(\ov{\ms{m}}_{\star}^{\mf{B}}\circ\mf{d}_{\star}^{H}(T))(\mf{p})(\beta)&=\ov{\ms{m}}_{\star}^{\mf{B}}([\mf{d}^{H}(T)\mf{I}]_{\af{B}})(\beta)
\\
&=\ov{\ms{m}}^{\mf{B}}(\mf{d}^{H}(T)\mf{I})(\beta)\circ\ms{r}_{\star}^{\mf{B}}\\
&=\ov{\ms{m}}^{\mf{A}}(\mf{I})(\beta)\circ\mf{g}^{H}(T)\circ\ms{r}_{\star}^{\mf{B}}\\
&=\ov{\ms{m}}^{\mf{A}}(\mf{I})(\beta)\circ\ms{r}_{\star}^{\mf{A}}\circ\mf{g}_{\star}^{H}(T)\\
&=\ov{\ms{m}}_{\star}^{\mf{A}}(\mf{p})(\beta)\circ\mf{g}_{\star}^{H}(T)\\
&=(\Updelta_{m}^{\star}(T)\circ\ov{\ms{m}}_{\star}^{\mf{A}})(\mf{p})(\beta),
\end{aligned}
\end{equation*}
where the third equality follows since the second equality in 
\eqref{12011707} 
(in switching $\mf{A}$ with $\mf{B}$),
while the fourth one since Lemma \ref{07071040b}\eqref{07071040bst7}. 
Therefore \eqref{11070655b} and \eqref{11070655a} and then st.\eqref{10071930st1} follow. 
Up to the end of the present proof we assume that 
$\mf{A},\mf{B}\in\ms{C}_{u}^{0}(H)$.
St.\eqref{10071930st2} is well-set since Cor. \ref{10071703}, moreover it ammounts to be equivalent to the 
claimed statements \eqref{10071931az}\,\&\,\eqref{11070655az}, where
\begin{equation}
\label{10071931az}
(\un\mapsto\ms{gr}(\mc{V}_{\natural}^{\mf{A}}))
\in Mor_{\ms{Fct}(H,\ms{set})}(\mf{P}_{\natural}^{\mf{A}},\mf{Z}_{\natural}^{\mf{A}}),
\end{equation}
and for all 
$T\in Mor_{\ms{C}_{u}^{0}(H)}(\mf{B},\mf{A})$ the following is a commutative diagram in the category $\ms{Fct}(H,\ms{set})$
\begin{equation}
\label{11070655az}
\xymatrix{
\mf{P}_{\natural}^{\mf{A}}
\ar[rr]^{\un\mapsto\ms{gr}(\mc{V}_{\natural}^{\mf{A}})}
\ar[dd]_{\un\mapsto\mf{Q}_{m}^{\natural}(T)}
&
&
\mf{Z}_{\natural}^{\mf{A}}
\ar[dd]^{\un\mapsto\ms{Z}_{\natural}^{m}(T)}
\\
&&
\\
\mf{P}_{\natural}^{\mf{B}}
\ar[rr]_{\un\mapsto\ms{gr}(\mc{V}_{\natural}^{\mf{B}})}
&&
\mf{Z}_{\natural}^{\mf{B}}.}
\end{equation}
Next \eqref{10071931az} equivales to the commutativity in $\ms{set}$ of the following diagram for all $l\in H$
\begin{equation}
\label{10071931bz}
\xymatrix{
\pf{V}_{\mf{A}}^{\natural}
\ar[rr]^{\ms{gr}(\mc{V}_{\natural}^{\mf{A}})}
\ar[dd]_{\mf{b}_{\natural}^{\mf{A}}(l)}
&&
\ms{Z}_{\natural}(\mf{A})
\ar[dd]^{\ms{V}_{\natural}(\mf{A})(l)}
\\
&&
\\
\pf{V}_{\mf{A}}^{\natural}
\ar[rr]_{\ms{gr}(\mc{V}_{\natural}^{\mf{A}})}
&&
\ms{Z}_{\natural}(\mf{A}),}
\end{equation}
i.e. 
$\upeta^{\ast}(l)\circ\mc{V}_{\natural}^{\mf{A}}(\mf{p})=(\mc{V}_{\natural}^{\mf{A}}\circ\mf{b}_{\natural}^{\mf{A}}(l))(\mf{p})$,
for all $\mf{p}\in\pf{V}_{\mf{A}}^{\natural}$,
which follows since 
Cor. \ref{19061936}\eqref{19061936st3}, 
hence \eqref{10071931az} is proved.
Next \eqref{11070655az} equivales to the commutativity in $\ms{set}$ of the following diagram
\begin{equation}
\label{11070655bz}
\xymatrix{
\pf{V}_{\mf{A}}^{\natural}
\ar[rr]^{\ms{gr}(\mc{V}_{\natural}^{\mf{A}})}
\ar[dd]_{\mf{d}_{\natural}^{H}(T)}
&&
\ms{Z}_{\natural}(\mf{A})
\ar[dd]^{\ms{Z}_{\natural}^{m}(T)}
\\
&&
\\
\pf{V}_{\mf{B}}^{\natural}
\ar[rr]_{\ms{gr}(\mc{V}_{\natural}^{\mf{B}})}
&&
\ms{Z}_{\natural}(\mf{B}),}
\end{equation}
which follows since 
Thm. \ref{12051936}\eqref{11061441}, 
therefore \eqref{11070655az} as well st.\eqref{10071930st2} follow.
\end{proof}
\begin{remark}
\label{11071826}
Notice that Thm. \ref{10071930}\eqref{10071930st1}
is equivalent to 
Cor. \ref{19061936}(\ref{19061936st1},\ref{19061936st2}),
while
under the hypothesis $\ms{E}$ 
Thm. \ref{10071930}\eqref{10071930st2}
is equivalent to 
Cor. \ref{19061936}(\ref{19061936st4},\ref{19061936st3}).
Indeed Thm. \ref{10071930}\eqref{10071930st1} 
is equivalent to \eqref{10071931b} and \eqref{11070655b},
for all $\mf{A},\mf{B}\in\ms{C}_{u}(H)$, $l\in H$ and $T\in Mor_{\ms{C}_{u}(H)}(\mf{B},\mf{A})$,
while, in case the hypothesis $\ms{E}$ holds true,
Thm. \ref{10071930}\eqref{10071930st2} 
is equivalent to \eqref{10071931bz} and \eqref{11070655bz}
for all $\mf{A},\mf{B}\in\ms{C}_{u}^{0}(H)$, 
$l\in H$ and 
$T\in Mor_{\ms{C}_{u}^{0}(H)}(\mf{B},\mf{A})$.
\end{remark}
\begin{remark}
\label{16071741}
Let $\mc{X}=\lr{\mc{D},\mf{H}}{J,\mf{P}}$ be the canonical standard form of $\mc{A}$, 
$\upphi$ a faithful semi-finite normal weight on $\mc{A}$, 
$\lr{\uppi_{\upphi},\mf{H}_{\upphi}}{\eta_{\upphi}}$ 
the semi-cyclic representation of $\mc{A}$ associated with $\upphi$, 
\cite[Def. $7.1.5$]{tak2}, and 
$\mc{Y}=\lr{\uppi_{\upphi}(\mc{A}),\mf{H}_{\upphi}}{J_{\upphi},\mf{P}_{\upphi}}$ the standard form associated 
to $\upphi$ realized on $\mf{H}$, 
thus $\mc{X}=\mc{Y}$ \cite[p. $153$]{tak2}; 
finally let $\Delta$ denote the modular operator associated with $\upphi$.
Since the construction of $\ms{v}^{\mf{A}}$ and since 
\cite[Lemma $6.1.5(vi)$, Thm. $7.2.6$ and Thm. $9.1.15$]{tak2} we deduce that the 
following holds
\begin{itemize}
\item
$\ms{v}^{\mf{A}}(l)\mf{P}=\mf{P}$;
\item
$\ms{v}^{\mf{A}}(l)\,J\,\ms{v}^{\mf{A}}(l^{-1})\Delta^{\frac{1}{2}}\eta_{\upphi}(x)=\eta_{\upphi}(x^{\ast})$,
for all $x\in\mf{n}_{\upphi}\cap\mf{n}_{\upphi}^{\ast}$.
\end{itemize}
Next let $\mf{A},\mf{B}\in\ms{C}_{u}(H)$ and $T\in Mor_{\ms{C}_{u}(H)}(\mf{B},\mf{A})$
and let $\mc{A}$ and $\mc{B}$ denote the underlying von Neumann algebras of 
$\mf{A}$ and $\mf{B}$ respectively, 
thus the hypothesis $\ms{E}(T,l)$ holds true if 
\begin{enumerate}
\item
$T(\ms{v}^{\mf{B}}(l))\mf{P}=\mf{P}$;
\label{16071741st1b}
\item
$T(\ms{v}^{\mf{B}}(l))\,J\,T(\ms{v}^{\mf{B}}(l^{-1}))\Delta^{\frac{1}{2}}\eta_{\upphi}(x)=\eta_{\upphi}(x^{\ast})$,
for all $x\in\mf{n}_{\upphi}\cap\mf{n}_{\upphi}^{\ast}$.
\label{16071741st2b}
\end{enumerate}
Note that the requests 
$\ms{ad}(\ms{v}^{\mf{A}}(l))\mc{A}=\mc{A}$ and $\ms{ad}(T(\ms{v}^{\mf{B}}(l)))\mc{A}=\mc{A}$ 
are automatically satisfied 
since $\ms{v}^{\mf{A}}(l)\in\mc{U}(\mc{A})$ and $\ms{v}^{\mf{B}}(l)\in\mc{U}(\mc{B})$ by construction.
Now \eqref{16071741st1b} should be not difficult to show, while with the help of \eqref{16071848} 
it should be possible to prove
\eqref{16071741st2b}, but up to now we did not succeed. 
\end{remark}
\section{Universality of the global Terrell law}
\label{binfis0}
Any fissioning system 
$U^{233}+n_{th}$, $U^{235}+n_{th}$, $Pu^{239}+n_{th}$ and $Cf^{252}$, 
below referred as $as-$type, 
exhibits an asymmetric binary fission consisting in obtaining two asymmetric final fragments. 
Here the asymmetry of the fragments is 
with respect to their mass numbers in unified atomic mass units.
Let $A_{H}$ and $A_{L}$ be the mass numbers of the heavy and light fragments respectively.
In addition any fissioning system up to $Cf^{252}$ for example $Fm^{258}$ and $Hs^{266}$, below referred as $s-$type, 
exhibits a symmetric binary fission consisting in obtaining two symmetric final fragments.
The nucleon phase hypothesis advanced by Mouze and Ythier 
states the following, see \cite{mhy1,mhy2,ric} 
for details. 
The reaction-time of any binary fission process is $1.77\,10^{-25}s$ 
thus occurring at temperatures of the order of 
$10^{13}K$, according to the energy-time uncertainty relation.
Hence the distinction between the proton and neutron phase disappears and a new nucleon phase occurs.
More exactly whenever a fission process involves an $as-$type fissioning system $\varsigma$, 
two nucleon cores come into existence, 
one of mass number $82$ and the other of mass number $126$.
The two final asymmetric fragments consist by these two cores surrounded by their valence nucleons, 
and the closure of the shells 
at $82$ and $126$ explains the following Terrell law \cite[eq. $(1)$]{ric}
\begin{equation}
\label{26061108}
\ov{\nu}=0.08\,(A_{L}-82)+0.1\,(A_{H}-126),
\end{equation}
where $\ov{\nu}$ is the 
mean value of the prompt-neutron yield in the state describing the fragments
obtained next the fission process occurs to $\varsigma$,
in what follows simply called prompt-neutron yield of the fission process occurring to $\varsigma$.
Similarly the reaction involving a $s-$type fissioning system generates two nucleon cores each one of mass number $126$, 
and the symmetric final fragments consist of such a cores surrounded by their valence nucleons, \cite[II.a]{ric}.
\par
From the information extracted by the experiment we deduce the following stability,
the values $82$ and $126$ 
remain unchanged under variation of the generating fissioning system.
Moreover the hypothesis of existence of nucleon cores implies the
thermal nature and phase transition of the nucleon phase: 
the fission process activates and the cores are generated 
at temperatures higher than $10^{13}K$.
\par
The universality claim described in introduction \ref{introI}
has beeen resolved in its equivariant form in
Cor. \ref{19061936}.
Here we shall resolve the invariant form of the claim which establishes
the \emph{invariance} of  
the global light and heavy nucleon and fragment masses, 
and the global Terrell law 
under contravariant action over the field of fission processes
of the subcategory $\ms{C}_{u}^{0}(H)$ of fissioning systems and their 
transformations, and under covariant action over the field of fission processes
of the symmetry group $H$.
As a result the (restricted global) 
light and heavy nucleon masses as well the prompt-neutron yield 
are invariant
under essential contravariant action of $\ms{C}_{u}^{0}(H)$ 
and under covariant action of $H$.
Finally under the revised nucleon phase hypothesis the stability
of the nucleon masses at the values $82$ and $126$ and the invariance
of \eqref{26061108} under action of the above transformations 
follow.
It ammounts to apply the below
$\mc{T}-$resolution of the invariant form of the universality claim,
to the case where $\mc{T}$ equals $\mc{T}_{\bullet}$.
Let $\#_{m}=$light and $\#_{w}=$heavy.
\par
\emph
{For any nucleon-fragment doublet $\mc{T}$ on $\mf{C}$,
there exist 
sections 
$\muup_{j}^{\mc{T}}$, $\uplambda_{j}^{\mc{T}}$, $j\in\{m,w\}$ and $\uptheta^{\mc{T}}$
all invariant under contravariant action of $\Upxi(\pf{D},\mc{F})$
and such that each of their values induces 
an invariant section under covariant action of $H$.
There exists a field 
$\mc{P}^{\mc{T}}=(\mc{P}_{o}^{\mc{T}},\mc{P}_{m}^{\mc{T}})$ 
contravariant under action of 
the subcategory $\Upxi(\pf{D},\mc{F})$ of fissioning systems and their transformations,
where
$\mc{P}_{o}^{\mc{T}}(\ms{a})$
is the set of subsets 
of fission processes occurring to the fissioning system $\ms{a}$,
for any object $\ms{a}$ of $\Upxi(\pf{D},\mc{F})$.
The maps
$(\muup_{j}^{\mc{T}})_{\ms{a}}$,
$(\uplambda_{j}^{\mc{T}})_{\ms{a}}$
and
$\uptheta_{\ms{a}}^{\mc{T}}$ 
associate with each 
$Y$ in $\mc{P}_{o}^{\mc{T}}(\ms{a})$
the set of the 
$\mc{T}-\#_{j}$ nucleon masses,  
$\mc{T}-\#_{j}$ fragment masses,  
and
the prompt-neutron yields of the fission precesses belonging to $Y$
respectively.
As a result of 
Prp. \ref{20051810dbt}
the universality of 
the $\mc{T}-\#_{j}$ nucleon and fragment masses,  
and the universality of the $\mc{T}-$Terrell law 
establishes what follows.
$\muup_{j}^{\mc{T}}$,
$\uplambda_{j}^{\mc{T}}$
and
$\uptheta^{\mc{T}}$ 
are natural transformations whose source functor is 
$\mc{P}^{\mc{T}}$ and the morphism map of their common target functor is the constant map with constant value 
equal to the identity map on the power set of $\R$,
moreover
$\un\mapsto(\muup_{j}^{\mc{T}})_{\ms{a}}$,
$\un\mapsto(\uplambda_{j}^{\mc{T}})_{\ms{a}}$
and 
$\un\mapsto\uptheta_{\ms{a}}^{\mc{T}}$ 
are morphisms in the category of functors from $H$
to $\ms{set}$. 
In other words 
\begin{equation}
\label{07191046}
\begin{aligned}
\uptheta_{d(T)}^{\mc{T}}
\circ
\mc{P}_{m}^{\mc{T}}(T)
&=
\uptheta_{c(T)}^{\mc{T}},
\forall T\in Mor_{\Upxi(\pf{D},\mc{F})},
\\
\uptheta_{\ms{a}}^{\mc{T}}
\circ
\uptau_{\ms{a}}^{\mc{T}}(l)
&=
\uptheta_{\ms{a}}^{\mc{T}},
\forall\ms{a}\in\Upxi(\pf{D},\mc{F}),\, l\in H,
\end{aligned}
\end{equation}
similarly for 
$\muup_{j}^{\mc{T}}$
and
$\uplambda_{j}^{\mc{T}}$.
The contravariance of $\mc{P}^{\mc{T}}$ under action of $\Upxi(\pf{D},\mc{F})$ means that
for all $T,S\in Mor_{\Upxi(\pf{D},\mc{F})}$ such that $d(T)=c(S)$ 
\begin{equation*}
\begin{aligned}
\mc{P}_{m}^{\mc{T}}(T)
\mc{P}_{o}^{\mc{T}}(c(T))
&\subseteq
\mc{P}_{o}^{\mc{T}}(d(T)),
\\
\mc{P}_{m}^{\mc{T}}(T\circ S)
&=
\mc{P}_{m}^{\mc{T}}(S)
\circ
\mc{P}_{m}^{\mc{T}}(T).
\end{aligned}
\end{equation*}}
\par
Next we briefly sketch
for the canonical case what above outlined. 
Let $\ms{N}_{as}^{\mc{T}_{\bullet}}$ be the set of the
\begin{equation*}
\mf{n}=\lr{\mf{A}}{\mf{T},\alpha,\{\ms{f}_{j},N_{j}\}_{j\in\{m,w\}}},
\end{equation*}
such that 
$\mf{A}=\lr{\mc{A},H}{\upeta}$ is an object of the category $\ms{C}_{u}^{0}(H)$,
$\mf{T}\in\pf{V}(\mf{A})$, $\alpha\in\ms{P}_{\mc{A}}^{\mf{T}}\cap\R_{0}^{+}$, 
$N_{j}\in\mc{A}_{+}$ and $\ms{f}_{j}\in\ms{A}_{\mf{A}}$, satisfying consistent physical requests.
$\ms{N}_{as}^{\mc{T}_{\bullet}}$ models the set of all the possible asymmetric fission 
processes, later called simply fission processes, in the following sense.
For any $\mf{n}\in\ms{N}_{as}^{\mc{T}_{\bullet}}$ as above we have that
\begin{enumerate}
\item
$\mf{A}$ is the fissioning system to which the asymmetric fission process $\mf{n}$ occurs,
\item
$\mf{G}^{H}(\mf{A})$ 
is the nucleon system generated by the fissioning system $\mf{A}$,
\item
$\mc{O}(\mf{A})_{\alpha}^{\mf{T}}$ 
is the fragment system 
whose observable algebra 
is $\mc{A}$ and whose dynamics is $(\ep^{\mf{A}})_{\alpha}^{\mf{T}}(-\alpha^{-1}(\cdot))$,
\item
$\ps{\upvarphi}_{\alpha}^{\mf{T}}$ is the state 
of thermal equilibrium at inverse temperature $\alpha$
of $\mc{O}(\mf{A})_{\alpha}^{\mf{T}}$,
\item
$\mf{T}$ is the operation realizing the fission process $\mf{n}$
whenever performed on $\ps{\upvarphi}_{\alpha}^{\mf{T}}$,
\item 
$\ov{\ms{m}}^{\mf{A}}(\mf{T},\alpha)$ is the phase of $\mf{G}^{H}(\mf{A})$,
occurring by performing $\mf{T}$ on $\ps{\upvarphi}_{\alpha}^{\mf{T}}$,
\item $\mc{V}^{\mf{A}}(\mf{T},\alpha)$ is the state of $\mc{O}(\mf{A})_{\alpha}^{\mf{T}}$ 
\emph{originated} via $\ov{\ms{m}}^{\mf{A}}(\mf{T},\alpha)$, 
\item 
$N_{j}$ is the observable of $\mc{O}(\mf{A})_{\alpha}^{\mf{T}}$ relative to
the mass of the $\#_{j}$ fragment,
\item
$\ms{f}_{j}$ is the observable of $\mf{G}^{H}(\mf{A})$ relative to
the mass of the $\#_{j}$ nucleon core,
\item
$\upkappa_{j}^{\mc{T}_{\bullet}}(\mf{n})
\coloneqq
\mc{V}^{\mf{A}}(\mf{T},\alpha)(N_{j})$
is the mean value in $\mc{V}^{\mf{A}}(\mf{T},\alpha)$
of the mass of the $\#_{j}$ fragment,
\item 
$\upzeta_{j}^{\mc{T}_{\bullet}}(\mf{n})
\coloneqq
\ov{\ms{m}}^{\mf{A}}(\mf{T},\alpha)(\ms{f}_{j})$
is the mean value in $\ov{\ms{m}}^{\mf{A}}(\mf{T},\alpha)$
of the mass of the $\#_{j}$ nucleon core.
\end{enumerate}
It is worthy remarking that for each fissioning system  
the nucleon phase as well the fragment state is one, 
while the light and heavy cases are ascribable to the observables 
$\ms{f}_{m}$ and $\ms{f}_{w}$ for the nucleon cores 
and $N_{m}$ and $N_{w}$ for the fragments.
Now the restricted global Terrell law 
generalizing \eqref{26061108} can be defined in the following way.
For any  $\mf{n}\in\ms{N}_{as}^{\mc{T}_{\bullet}}$ 
the 
\emph
{mean value of the prompt-neutron yield in $\mc{V}^{\mf{A}}(\mf{T},\alpha)$}, 
said also prompt-neutron yield of the fission process $\mf{n}$,
equals $\nuup^{\mc{T}_{\bullet}}(\mf{n})$ where
\begin{equation}
\label{08142057}
\nuup^{\mc{T}_{\bullet}}
\coloneqq
0.08(\upkappa_{m}^{\mc{T}_{\bullet}}-\upzeta_{m}^{\mc{T}_{\bullet}})
+
0.1(\upkappa_{w}^{\mc{T}_{\bullet}}-\upzeta_{w}^{\mc{T}_{\bullet}}).
\end{equation}
Since by construction fission processes depend by fissioning systems and operations,
it is natural to expect that the category $\ms{C}_{u}^{0}(H)^{op}$ and the group $H$
act on the set $\ms{N}_{as}^{\mc{T}_{\bullet}}$.
Indeed for any $l\in H$ let
$\mf{T}^{l}=\mf{b}^{\mf{A}}(l)(\mf{T})$, 
$\ms{f}_{j}^{l}=\uppsi^{\mf{A}}(l)(\ms{f}_{j})$ and $N_{j}^{l}=\upeta(l)(N_{j})$.
Moreover let $\mf{B}$ be a fissioning system in $\ms{C}_{u}^{0}(H)$,
$T\in Mor_{\ms{C}_{u}^{0}(H)}(\mf{B},\mf{A})$ such that there exists 
$\mf{x}=\{\ms{f}_{j}',N_{j}'\}_{j\in\{m,w\}}$,
where $\ms{f}_{j}'\in\ms{A}_{\mf{B}}$ and $N_{j}'\in\mc{B}_{+}$ 
with $\mc{B}$ the $C^{\ast}-$algebra underlying $\mf{B}$,
satisfying 
$\ms{f}_{j}=\mf{g}^{H}(T)(\ms{f}_{j}')$ and $N_{j}=T(N_{j}')$.
Let $\mf{T}^{T}=\mf{d}^{H}(T)\mf{T}$, define
\begin{equation}
\label{07191139}
\begin{aligned}
\mf{n}^{(T,\mf{x})}
&\coloneqq
\lr{\mf{B}}{\mf{T}^{T},\alpha,\{\ms{f}_{j}',N_{j}'\}_{j\in\{m,w\}}},
\\
\mf{n}^{l}
&\coloneqq
\lr{\mf{A}}{\mf{T}^{l},\alpha,\{\ms{f}_{j}^{l},N_{j}^{l}\}_{j\in\{m,w\}}}.
\end{aligned}
\end{equation}
Thus 
$\ms{C}_{u}^{0}(H)$ acts ``essentially'' contravariantly
and $H$ acts covariantly on the set of fission processes since 
\begin{equation*}
\mf{n}^{(T,\mf{x})}\in\ms{N}_{as}^{\mc{T}_{\bullet}},
\quad 
\mf{n}^{l}\in\ms{N}_{as}^{\mc{T}_{\bullet}}.
\end{equation*}
As we shall see later, 
in order to have a ``true'' contravariant action of $\ms{C}_{u}^{0}(H)$ we need to 
implement a power-set extension of the above transformations. 
Notice that
\begin{enumerate}
\item
$\mf{T}^{l}$ is the operation obtained by transforming $\mf{T}$ through the action of $l$, 
\item
$\ms{f}_{j}^{l}$ is the observable of $\mf{G}^{H}(\mf{A})$,
obtained by transforming through the action of $l$ the observable $\ms{f}_{j}$,
\item
$N_{j}^{l}$ is the observable of $\mc{O}(\mf{A})_{\alpha}^{\mf{T}}$ 
obtained by transforming through the action of $l$ the observable $N_{j}$,
\item
$\mf{T}^{T}$ is the operation obtained by transforming $\mf{T}$ through the action of $T$, 
\item
$\ms{f}_{j}$ is the observable of $\mf{G}^{H}(\mf{A})$,
obtained by transforming through the action of $T$ the observable $\ms{f}_{j}'$
of $\mf{G}^{H}(\mf{B})$,
\item
$N_{j}$ is the observable of $\mc{O}(\mf{A})_{\alpha}^{\mf{T}}$ 
obtained by transforming through the action of $T$ the observable $N_{j}'$
of $\mc{O}(\mf{B})_{\alpha}^{\mf{T}^{T}}$.
\end{enumerate}
Now by applying to $\mc{T}_{\bullet}$
the properties of invariance of doublets stated in 
Prp. \ref{20051810dbt}
we establish for any $j\in\{m,w\}$
\emph
{the invariance of the restricted global nucleon mass 
$\upzeta_{j}^{\mc{T}_{\bullet}}$,
the invariance of the restricted global fragment mass 
$\upkappa_{j}^{\mc{T}_{\bullet}}$
and the invariance of the restricted global Terrell law 
$\nuup^{\mc{T}_{\bullet}}$
under essential contravariant action of the category $\ms{C}_{u}^{0}(H)$
and under action of the group $H$}. 
More exactly 
\begin{equation}
\label{07251125}
\begin{aligned}
\upkappa_{j}^{\mc{T}_{\bullet}}(\mf{n}^{(T,\mf{x})})
&=
\upkappa_{j}^{\mc{T}_{\bullet}}(\mf{n}),
\\
\upkappa_{j}^{\mc{T}_{\bullet}}(\mf{n}^{l})
&=
\upkappa_{j}^{\mc{T}_{\bullet}}(\mf{n});
\\
\upzeta_{j}^{\mc{T}_{\bullet}}(\mf{n}^{(T,\mf{x})})
&=
\upzeta_{j}^{\mc{T}_{\bullet}}(\mf{n}),
\\
\upzeta_{j}^{\mc{T}_{\bullet}}(\mf{n}^{l})
&=
\upzeta_{j}^{\mc{T}_{\bullet}}(\mf{n}),
\end{aligned}
\end{equation}
hence
\begin{equation}
\label{07251126}
\begin{aligned}
\nuup^{\mc{T}_{\bullet}}(\mf{n}^{(T,\mf{x})})
&=
\nuup^{\mc{T}_{\bullet}}(\mf{n}),
\\
\nuup^{\mc{T}_{\bullet}}(\mf{n}^{l})
&=
\nuup^{\mc{T}_{\bullet}}(\mf{n}).
\end{aligned}
\end{equation}
The equalities in 
\eqref{07251125}
and
\eqref{07251126}
are equivalent to the universality of the
$\mc{T}-$nucleon and fragment masses,
and the $\mc{T}-$Terrell law 
respectively
stated in \eqref{07191046}
for the case $\mc{T}=\mc{T}_{\bullet}$,
if $\mc{P}_{o}^{\mc{T}_{\bullet}}(\mf{A})$ 
is the power set of 
$\ms{N}_{as}^{\mc{T}_{\bullet}}(\mf{A})$
the set of the fission processes whose underlying 
fissioning system is $\mf{A}$, 
$\mc{P}_{m}^{\mc{T}_{\bullet}}(T)$ 
extends and modifies the first map defined in \eqref{07191139}
by transforming the dependence by $\mf{x}$ into a set-valed map,
$(\muup_{j}^{\mc{T}_{\bullet}})_{\mf{A}}$,
$(\uplambda_{j}^{\mc{T}_{\bullet}})_{\mf{A}}$
and
$\uptheta_{\mf{A}}^{\mc{T}_{\bullet}}$
are the   $\mathscr{P}(\R)-$valued
extensions to 
$\mc{P}_{o}^{\mc{T}_{\bullet}}(\mf{A})$ of the 
restrictions of 
$\upzeta_{j}^{\mc{T}_{\bullet}}$,
$\upkappa_{j}^{\mc{T}_{\bullet}}$
and
$\upnu^{\mc{T}_{\bullet}}$
to 
$\ms{N}_{as}^{\mc{T}_{\bullet}}(\mf{A})$
respectively;
finally $\uptau_{\mf{A}}^{\mc{T}_{\bullet}}(l)$ 
is the extension
of the restriction of the second map in \eqref{07191139}
to 
$\ms{N}_{as}^{\mc{T}_{\bullet}}(\mf{A})$.
We remark that provided the hypothesis $\ms{E}$, 
the universality of the global nucleon phase 
and global fragment state
(namely the $\mc{T}_{\bullet}-$nucleon phase and $\mc{T}_{\bullet}-$fragment state)
and hence the universality of the 
global Terrell law follow
by the naturality of the transformations $\mf{m}_{\star}$ and $\mf{v}_{\natural}$
stated in Thm. \ref{10071930}.
\par
Now we call $\#_{j}$ nucleon and fragment masses, and Terrell law 
the restriction of 
$\upzeta_{j}^{\mc{T}_{\bullet}}$,
$\upkappa_{j}^{\mc{T}_{\bullet}}$
and
$\nuup^{\mc{T}_{\bullet}}$
respectively
to the set of all the fission processes $\mf{n}$ satisfying the 
revised nucleon phase hypothesis
requiring 
$\upzeta_{m}^{\mc{T}_{\bullet}}(\mf{n})=82$,
$\upzeta_{w}^{\mc{T}_{\bullet}}(\mf{n})=126$
and that $H$ would contain as a subgroup 
the direct product of the universal covering group of the Poincar\'{e} 
group with the gauge group of the standard model.
Thus under the revised nucleon phase hypothesis 
as a result of \eqref{07251125} and \eqref{07251126} we can state what follows.
The light and heavy nucleon masses are invariant with constant values $82$ and $126$
and the prompt-neutron yield \eqref{26061108} is invariant 
under essential contravariant action of 
$\ms{C}_{u}^{0}(H)$
and under action of relativistic transformations of reference frames
over the field of fission processes.
\par
In section \ref{binfis0} we assume fixed two locally compact topological groups $G$ and $F$,
a group homomorphism $\uprho:F\to Aut_{\ms{Gr}}(G)$  such that the map $(g,f)\mapsto\uprho_{f}(g)$
on $G\times F$ at values in $G$, is continuous, let $H$ denote $G\rtimes_{\uprho}F$.
\subsection{Invariance of the restricted $\mc{T}-$Terrell law}
\label{07011744}
In Def. \ref{q08062037} we define the restricted $\mc{T}-$nucleon and fragment masses
and the restricted $\mc{T}-$Terrell law 
for any nucleon-fragment doublet $\mc{T}$ on a category $\mf{C}$, 
and provide their invariance in Prp. \ref{11061659}, physically interpreted in Prp. \ref{11061851}.
In the present section \ref{07011744} let $\mf{C}$ be a category, 
and $\mc{T}$ be $\lr{S,J,\mc{Z},\mc{S},\mc{F}}{\mf{m},\mc{W},R,\pf{D},\pf{U}}$ 
a fixed but arbitrary nucleon-fragment doublet on $\mf{C}$ 
where for simplicity we assume $R$ to be the constant map with constant value equal to $H$.
\begin{convention}
\label{07161437}
For any $x\in\Upxi(\pf{D},\mc{F})$, $\mf{T}\in\pf{D}_{\mc{F}(x)}$, 
$\alpha\in\ms{P}_{\mc{F}(x)}^{\mf{T}}$ 
set $\mc{A}(x,\mf{T},\alpha)\coloneqq\mc{A}(\mc{F}(x))_{\alpha}^{\mf{T}}$,
and for any 
$T\in Mor_{\Upxi(\pf{D},\mc{F})}$ and $\mf{Q}\in\pf{D}_{\mc{F}(c(T))}$, 
set
$\mf{Q}^{T}\coloneqq\mc{F}_{2}(T)\mf{Q}$.
\end{convention}
\begin{definition}
[Fission processes relative to $\mc{T}$]
\label{qnuclph1}
Let $\ms{N}_{as}^{\mc{T}}$ be 
the set of the asymmetric 
fission processes
relative to 
$\mc{T}$ defined as the set of $4-$tuples 
\begin{equation*}
\lr{\ms{a}}{\mf{T},\alpha,
\{\ms{f}_{j},N_{j}\}_{j\in\{m,w\}}},
\end{equation*}
such that for all $j\in\{m,w\}$
\begin{enumerate}
\item
$\ms{a}\in\Upxi(\pf{D},\mc{F})$,
\label{qnuclphrq2as}
\item
$\mf{T}\in\pf{D}_{\mc{F}(\ms{a})}$ 
and 
$\alpha\in\ms{P}_{\mc{F}(\ms{a})}^{\mf{T}}\cap\R_{0}^{+}$,
\label{qnuclphrq3as}
\item
$\ms{f}_{j}\in\ms{A}_{\mc{F}(\ms{a})}$,
\item
$N_{j}\in\mc{A}(\ms{a},\mf{T},\alpha)_{+}$,
\item
\begin{equation*}
\begin{aligned}
\mc{W}^{\ms{a}}(\mf{T},\alpha)(N_{j})
&\geq
\mf{m}^{\ms{a}}(\mf{T},\alpha)(\ms{f}_{j}),
\\
\mf{m}^{\ms{a}}(\mf{T},\alpha)(\ms{f}_{w})
&\geq
\mf{m}^{\ms{a}}(\mf{T},\alpha)(\ms{f}_{m})\geq 0,
\end{aligned}
\end{equation*}
\label{q08061831}
\item
for any $T\in Mor_{\Upxi(\pf{D},\mc{F})}$ such that $c(T)=\ms{a}$
\begin{equation*}
\begin{aligned}
\mc{F}_{1}(T)^{-1}(\ms{f}_{j})
&\neq
\varnothing,
\\
\mc{S}(T,\mf{T},\alpha)^{-1}(N_{j})
\cap
\mc{A}(d(T),\mf{T}^{T},\alpha)_{+}
&\neq
\varnothing.
\end{aligned}
\end{equation*}
\label{07201029}
\end{enumerate}
Let $\ms{N}^{\mc{T}}$ be the set of the 
fission processes 
relative to $\mc{T}$ defined as the set 
of the $3-$tuples 
\begin{equation*}
\lr{\ms{a}}{
\{\mf{T}_{i},\alpha_{i}\}_{i\in\{s,as\}},
\{\ms{f}_{j},N_{j}\}_{j\in\{m,w\}}},
\end{equation*}
such that
$\lr{\ms{a}}{\mf{T}_{as},\alpha_{as},
\{\ms{f}_{j},N_{j}\}_{j\in\{m,w\}}}\in\ms{N}_{as}^{\mc{T}}$,
$\mf{T}_{s}\in\pf{D}_{\mc{F}(\ms{a})}$, 
$\alpha_{s}\in\ms{P}_{\mc{F}(\ms{a})}^{\mf{T}_{s}}$
satisfying 
$\mc{A}(\ms{a},\mf{T}_{s},\alpha_{s})=\mc{A}(\ms{a},\mf{T}_{as},\alpha_{as})$
and for all $j\in\{m,w\}$
\begin{equation*}
\begin{aligned}
\mc{W}^{\ms{a}}(\mf{T}_{s},\alpha_{s})(N_{j})
&\geq
\mf{m}^{\ms{a}}(\mf{T}_{s},\alpha_{s})(\ms{f}_{j}),
\\
\mf{m}^{\ms{a}}(\mf{T}_{s},\alpha_{s})(\ms{f}_{m})
&=
\mf{m}^{\ms{a}}(\mf{T}_{s},\alpha_{s})(\ms{f}_{w})
\\
&=
\mf{m}^{\ms{a}}(\mf{T}_{as},\alpha_{as})(\ms{f}_{w}).
\end{aligned}
\end{equation*}
\end{definition}
Note that by construction with any fission process
we can associate the asymmetric fission process
extracted from it.
\begin{definition}
[Restricted $\mc{T}-$nucleon and fragment masses and $\mc{T}-$Terrell law]
\label{q08062037}
Let $\#_{m}=$light and $\#_{w}=$heavy.
For any $j\in\{m,w\}$ 
let $\upzeta_{j}^{\mc{T}}:\ms{N}_{as}^{\mc{T}}\to\R$
and 
$\upkappa_{j}^{\mc{T}}:\ms{N}_{as}^{\mc{T}}\to\R$
be the 
restricted $\mc{T}-\#_{j}$ nucleon mass
and the
restricted $\mc{T}-\#_{j}$ fragment mass
respectively,
defined as the maps such that for all
$\mf{n}=\lr{\ms{a}}{\mf{T},\alpha,
\{\ms{f}_{j},N_{j}\}_{j\in\{m,w\}}}
\in\ms{N}_{as}^{\mc{T}}$
we have
\begin{equation*}
\begin{aligned}
\upzeta_{j}^{\mc{T}}(\mf{n})
&\coloneqq 
\mf{m}^{\ms{a}}(\mf{T},\alpha)(\ms{f}_{j}),
\\
\upkappa_{j}^{\mc{T}}(\mf{n})
&\coloneqq 
\mc{W}^{\ms{a}}(\mf{T},\alpha)(N_{j}).
\end{aligned}
\end{equation*}
Let
$\nuup^{\mc{T}}:\ms{N}_{as}^{\mc{T}}\to\R$
be 
the restricted $\mc{T}-$Terrell law
such that 
\begin{equation*}
\nuup^{\mc{T}}
\coloneqq
0.08(\upkappa_{m}^{\mc{T}}-\upzeta_{m}^{\mc{T}})
+
0.1(\upkappa_{w}^{\mc{T}}-\upzeta_{w}^{\mc{T}}).
\end{equation*}
\end{definition}
\begin{remark}
For any
$\mf{n}=\lr{\ms{a}}{\mf{T},\alpha,
\{\ms{f}_{j},N_{j}\}_{j\in\{m,w\}}}
\in\ms{N}_{as}^{\mc{T}}$
we have
\begin{equation*}
\begin{aligned}
\nuup^{\mc{T}}(\mf{n})
&=
0.08\left(   
\mc{W}^{\ms{a}}(\mf{T},\alpha)(N_{m})
-
\mf{m}^{\ms{a}}(\mf{T},\alpha)(\ms{f}_{m})\right)+
\\
&
0.1
\left(\mc{W}^{\ms{a}}(\mf{T},\alpha)(N_{w})
-
\mf{m}^{\ms{a}}(\mf{T},\alpha)(\ms{f}_{w})
\right).
\end{aligned}
\end{equation*}
\end{remark}
\begin{definition}
\label{11061646}
Define $\mf{k}^{\mc{T}}$ 
as the map on $H$ such that 
$\mf{k}^{\mc{T}}(l)$ is a map on $\ms{N}_{as}^{\mc{T}}$ 
such that for all 
$\mf{n}=\lr{\ms{a}}{\mf{T},\alpha,
\{\ms{f}_{j},N_{j}\}_{j\in\{m,w\}}}
\in\ms{N}_{as}^{\mc{T}}$
we have 
\begin{equation*}
\mf{k}^{\mc{T}}(l)(\mf{n})
\coloneqq
\lr{\ms{a}}{\mf{b}^{\mc{F}(\ms{a})}(l)(\mf{T}),\,\alpha,\,
\{\,\uppsi^{\mc{F}(\ms{a})}(l)(\ms{f}_{j}),\,
\ms{V}(\mc{F}(\ms{a}))_{\alpha}^{\mf{T}}(l)(N_{j})\,
\}_{j\in\{m,w\}}}.
\end{equation*}
\end{definition}
\begin{convention}
If
$\mf{n}=\lr{\ms{a}}{\mf{T},\alpha,
\{\ms{f}_{j},N_{j}\}_{j\in\{m,w\}}}
\in\ms{N}_{as}^{\mc{T}}$,
then for any $j\in\{m,w\}$ and $l\in H$ 
whenever it is not cause of confusion,
we let 
$\mf{T}^{l}$,
$\ms{f}_{j}^{l}$,
$N_{j}^{l}$
and
$\mf{n}^{l}$
denote
$\mf{b}^{\mc{F}(\ms{a})}(l)(\mf{T})$,
$\uppsi^{\mc{F}(\ms{a})}(l)(\ms{f}_{j})$,
$\ms{V}(\mc{F}(\ms{a}))_{\alpha}^{\mf{T}}(l)(N_{j})$
and
$\mf{k}^{\mc{T}}(l)(\mf{n})$
respectively.
\end{convention}
Up to the end of section \ref{07011744} let
$\mf{n}=\lr{\ms{a}}{\mf{T},\alpha,\{\ms{f}_{j},N_{j}\}_{j\in\{m,w\}}}\in\ms{N}_{as}^{\mc{T}}$.
\begin{remark}
For all $l\in H$ we have
\begin{equation*}
\begin{aligned}
(\nuup^{\mc{T}}\circ\mf{k}^{\mc{T}}(l))
(\mf{n})
&=
0.08\left(   
\mc{W}^{\ms{a}}(\mf{T}^{l},\alpha)(N_{m}^{l})
-
\mf{m}^{\ms{a}}(\mf{T}^{l},\alpha)(\ms{f}_{m}^{l})
\right)+
\\
&
0.1
\left(
\mc{W}^{\ms{a}}
(\mf{T}^{l},\alpha)(N_{w}^{l})
-
\mf{m}^{\ms{a}}(\mf{T}^{l},\alpha)(\ms{f}_{w}^{l})
\right).
\end{aligned}
\end{equation*}
\end{remark}
\begin{definition}
\label{07011614}
We say that $(\mf{n},\ms{b},T,\mf{x})$ satisfies the hypothesis $\ms{S}$ w.r.t. $\mc{T}$, 
if
$\ms{b}\in\Upxi(\pf{D},\mc{F})$,
$T\in Mor_{\Upxi(\pf{D},\mc{F})}(\ms{b},\ms{a})$
and
$\mf{x}=\{\ms{f}_{j}',N_{j}'\}_{j\in\{m,w\}}$
such that for any $j\in\{m,w\}$
\begin{enumerate}
\item
$\ms{f}_{j}'\in\mc{F}_{1}(T)^{-1}(\ms{f}_{j})$,
\item
$N_{j}'\in\mc{S}(T,\mf{T},\alpha)^{-1}(N_{j})\cap\mc{A}(\ms{b},\mf{T}^{T},\alpha)_{+}$.
\end{enumerate}
Let $(\mf{n},\ms{b},T,\mf{x})$ satisfy the hypothesis $\ms{S}$ w.r.t. $\mc{T}$, 
define
\begin{equation}
\label{07171703}
\mf{n}^{(T,\mf{x})}
\coloneqq
\lr{\ms{b}}{\mf{T}^{T},\alpha,\mf{x}}.
\end{equation}
\end{definition}
\begin{proposition}
\label{07011654}
We have that 
\begin{enumerate}
\item
$\nuup^{\mc{T}}$ is a nonnegative map,
\label{07011654st1}
\item
$\mf{k}^{\mc{T}}\in Mor_{\ms{Gr}}(H,Aut_{\ms{set}}(\ms{N}_{as}^{\mc{T}}))$;
\label{07011654st2}
\item
for all $j\in\{m,w\}$ we have
\begin{equation*}
\begin{aligned}
\mf{m}^{\ms{a}}(\mf{T}^{l},\alpha)(\ms{f}_{j}^{l})
&=
\mf{m}^{\ms{a}}(\mf{T},\alpha)(\ms{f}_{j})
\\
\mc{W}^{\ms{a}}(\mf{T}^{l},\alpha)(N_{j}^{l})
&=
\mc{W}^{\ms{a}}(\mf{T},\alpha)(N_{j});
\end{aligned}
\end{equation*}
\label{07011847}
\item
let $(\mf{n},\ms{b},T,\mf{x})$ satisfy the hypothesis $\ms{S}$ w.r.t. $\mc{T}$, 
where $\mf{x}=\{\ms{f}_{j}',N_{j}'\}_{j\in\{m,w\}}$, 
then for all $j\in\{m,w\}$
\begin{equation*}
\begin{aligned}
\mf{m}^{\ms{b}}\left(\mf{T}^{T},\alpha\right)(\ms{f}_{j}')
&=
\mf{m}^{\ms{a}}(\mf{T},\alpha)(\ms{f}_{j}),
\\
\mc{W}^{\ms{b}}\left(\mf{T}^{T},\alpha\right)(N_{j}')
&=
\mc{W}^{\ms{a}}(\mf{T},\alpha)(N_{j}),
\end{aligned}
\end{equation*}
moreover 
$\mf{n}^{(T,\mf{x})}\in\ms{N}_{as}^{\mc{T}}$,
and
\begin{equation*}
\begin{aligned}
\nuup^{\mc{T}}
(\mf{n}^{(T,\mf{x})})
&=
0.08
\left(
\mc{W}^{\ms{b}}
\left(\mf{T}^{T},\alpha\right)
(N_{m}')
-
\mf{m}^{\ms{b}}
\left(\mf{T}^{T},\alpha\right)(\ms{f}_{m}')
\right)
+
\\
&
0.1
\left(
\mc{W}^{\ms{b}}
\left(\mf{T}^{T},\alpha\right)
(N_{w}')
-
\mf{m}^{\ms{b}}
\left(\mf{T}^{T},\alpha\right)(\ms{f}_{w}')
\right).
\end{aligned}
\end{equation*}
\label{24051704pre}
\end{enumerate}
\end{proposition}
\begin{proof}
St.\eqref{07011654st1} follows by Def. \ref{qnuclph1}\eqref{q08061831}, 
the equalities in st.\eqref{07011847} and the first two equalities in st.\eqref{24051704pre} follow by 
Prp. \ref{20051810dbt};
st.\eqref{07011654st2} follows by st.\eqref{07011847} and \eqref{09051209}.
$\mf{n}^{(T,\mf{x})}\in\ms{N}_{as}^{\mc{T}}$ 
follows since the definition of the morphism set of $\Upxi(\pf{D},\mc{F})$
and since the equalities in st.\eqref{24051704pre}.
\end{proof}
\begin{proposition}
[Invariance of the restricted $\mc{T}-$nucleon masses,
restricted $\mc{T}-$fragment masses and restricted $\mc{T}-$Terrell law]
\label{11061659}
Let $(\mf{n},\ms{b},T,\mf{x})$ satisfy the hypothesis $\ms{S}$ w.r.t. $\mc{T}$,
$l\in H$ and $j\in\{m,w\}$ then
\begin{enumerate}
\item
Invariance of the restricted $\mc{T}-$nucleon masses
and restricted $\mc{T}-$fragment masses 
under essential contravariant action of 
$\Upxi(\pf{D},\mc{F})$
\begin{equation*}
\begin{aligned}
\upzeta_{j}^{\mc{T}}(\mf{n}^{(T,\mf{x})})
&=
\upzeta_{j}^{\mc{T}}(\mf{n}),
\\
\upkappa_{j}^{\mc{T}}(\mf{n}^{(T,\mf{x})})
&=
\upkappa_{j}^{\mc{T}}(\mf{n}).
\end{aligned}
\end{equation*}
\label{11061659st2}
\item
Invariance of the restricted $\mc{T}-$nucleon masses
and restricted $\mc{T}-$fragment masses 
under action of $H$
\begin{equation*}
\begin{aligned}
\upzeta_{j}^{\mc{T}}\circ\mf{k}^{\mc{T}}(l)
&=
\upzeta_{j}^{\mc{T}},
\\
\upkappa_{j}^{\mc{T}}\circ\mf{k}^{\mc{T}}(l)
&=
\upkappa_{j}^{\mc{T}}.
\end{aligned}
\end{equation*}
\label{11061659st1}
\item
Invariance of the restricted $\mc{T}-$Terrell law under essential contravariant action of 
$\Upxi(\pf{D},\mc{F})$ and under action of $H$
\begin{equation*}
\begin{aligned}
\nuup^{\mc{T}}(\mf{n}^{(T,\mf{x})})
&=
\nuup^{\mc{T}}(\mf{n}),
\\
\nuup^{\mc{T}}\circ\mf{k}^{\mc{T}}(l)
&=
\nuup^{\mc{T}}.
\end{aligned}
\end{equation*}
\label{11061659st3}
\end{enumerate}
\end{proposition}
\begin{proof}
Sts.(\ref{11061659st2},\ref{11061659st1}) follow since Prp. \ref{07011654}(\ref{24051704pre},\ref{07011847})
respectively,
st.\eqref{11061659st3} since sts.(\ref{11061659st2},\ref{11061659st1}). 
\end{proof}
\begin{definition}
\label{q12290857}
We say that $(\mf{s},\mf{u})$ is a semantics for $\mc{T}$
if $(\mf{s},\mf{u})$ is an interpretation such that for any 
$\lr{\ms{a}}{\{\mf{T}_{\#},\alpha_{\#}\}_{\#\in\{s,as\}},
\{\ms{f}_{j},N_{j}\}_{j\in\{m,w\}}}\in\ms{N}^{\mc{T}}$,
$\#\in\{s,as\}$,
$\ms{a},\ms{b}\in\Upxi(\pf{D},\mc{F})$,
$\mf{T}\in\pf{D}_{\mc{F}(\ms{a})}$, 
$\alpha\in\ms{P}_{\mc{F}(\ms{a})}^{\mf{T}}$
and 
$T\in Mor_{\Upxi(\pf{D},\mc{F})}(\ms{b},\ms{a})$
we have
\begin{enumerate}
\item 
$\mf{u}(\mf{T}_{\#})\equiv$
realizing the $\#$ymmetric fission process of the fissioning system $\mf{u}(\ms{a})$, 
\label{q12290857st1}
\item
$\mf{u}(N_{m})\equiv$
relative to the light fragment mass,
\label{q12290857st2}
\item
$\mf{u}(N_{w})\equiv$
relative to the heavy fragment mass,
\label{q12290857st3}
\item
$\mf{u}(\ms{f}_{m})\equiv$ 
relative to the light nucleon core mass, 
\label{q12290857st4}
\item
$\mf{u}(\ms{f}_{w})\equiv$ 
relative to the heavy nucleon core mass,
\label{q12290857st5}
\item
$\mf{u}(\mc{S}(T,\mf{T},\alpha))\equiv T$,
\label{q12290857st6}
\item
$\mf{u}(\mc{F}_{1}(T))\equiv T$,
\label{q12290857st7a}
\item
$\mf{u}(\mc{F}_{2}(T))\equiv T$.
\label{q12290857st7}
\end{enumerate}
\end{definition}
In the present section, 
let $(\mf{s},\mf{u})$ be a semantics for $\mc{T}$. 
Prp. \ref{02202114dbt} 
and Def. \ref{q12290857} justify the following 
\begin{definition}
\label{12060815}
$\nuup^{\mc{T}}(\mf{n})$ equals the mean value of the prompt 
neutron-yield in $\mf{s}(\mc{W}^{\ms{a}}(\mf{T},\alpha))$.
\end{definition}
\begin{proposition}
\label{11061851}
Let $(\mf{n},\ms{b},T,\mf{x})$ satisfy the hypothesis $\ms{S}$ w.r.t. $\mc{T}$
and $l\in H$ thus for any $j\in\{m,w\}$
\begin{enumerate}
\item
The mean value of the observable relative to the $\#_{j}$ nucleon core mass
of the nucleon system generated by the fissioning system $\mf{u}(\ms{b})$, 
in 
the phase occurring by performing 
on 
$\mf{s}((\ps{\upvarphi}^{\mc{F}(\ms{b})})_{\alpha}^{\mc{F}_{2}(T)\mf{T}})$
the operation obtained by transforming through the action of $T$
the operation realizing the asymmetric fission process of the fissioning system $\mf{u}(\ms{a})$, 
equals
the mean value of the observable, 
obtained by transforming through the action of $T$
the observable relative to the $\#_{j}$ nucleon core mass
of the nucleon system generated by the fissioning system $\mf{u}(\ms{b})$,
in
the phase of the nucleon system generated by the fissioning system $\mf{u}(\ms{a})$, 
occurring by performing 
on 
$\mf{s}((\ps{\upvarphi}^{\mc{F}(\ms{a})})_{\alpha}^{\mf{T}})$
the operation realizing the 
asymmetric fission process of the fissioning system $\mf{u}(\ms{a})$.
\label{11061851st1}
\item
The mean value of the observable obtained by transforming through the action of $l$
the observable relative to the $\#_{j}$ nucleon core mass,
in 
the phase of 
the nucleon system generated by
the fissioning system $\mf{u}(\ms{a})$, 
occurring by performing 
on 
$\mf{s}((\ps{\upvarphi}^{\mc{F}(\ms{a})})_{\alpha}^{\mf{T}^{l}})$
the operation obtained by transforming through the action of $l$
the operation realizing the asymmetric fission process 
of the fissioning system $\mf{u}(\ms{a})$, 
equals
the mean value of the observable relative to the $\#_{j}$ nucleon core mass
in
the phase of the nucleon system 
generated by the fissioning system $\mf{u}(\ms{a})$, 
occurring by performing 
on 
$\mf{s}((\ps{\upvarphi}^{\mc{F}(\ms{a})})_{\alpha}^{\mf{T}})$
the operation realizing the asymmetric fission process of 
the fissioning system $\mf{u}(\ms{a})$.
\label{11061851st2}
\item
The mean value $\nuup^{\mc{T}}(\mf{n}^{(T,\mf{x})})$ of the prompt neutron-yield in 
the state 
originated via 
the phase of the nucleon system generated by the fissioning system $\mf{u}(\ms{b})$, 
occurring by performing 
on 
$\mf{s}((\ps{\upvarphi}^{\mc{F}(\ms{b})})_{\alpha}^{\mc{F}_{2}(T)\mf{T}})$
the operation obtained by transforming through the action of $T$
the operation realizing the asymmetric fission process of the fissioning system $\mf{u}(\ms{a})$, 
equals
the mean value $\nuup^{\mc{T}}(\mf{n})$ of the prompt neutron-yield in 
the state 
originated via 
the phase of the nucleon system 
generated by the fissioning system $\mf{u}(\ms{a})$, 
occurring by performing 
on 
$\mf{s}((\ps{\upvarphi}^{\mc{F}(\ms{a})})_{\alpha}^{\mf{T}})$
the operation realizing the 
asymmetric fission process of the fissioning system $\mf{u}(\ms{a})$.
\item
The mean value $\nuup^{\mc{T}}(\mf{n}^{l})$ of the prompt neutron-yield in 
the state 
originated via 
the phase of 
the nucleon system generated by
the fissioning system $\mf{u}(\ms{a})$, 
occurring by performing 
on 
$\mf{s}((\ps{\upvarphi}^{\mc{F}(\ms{a})})_{\alpha}^{\mf{T}^{l}})$
the operation obtained by transforming through the action of $l$
the operation realizing the asymmetric fission process 
of the fissioning system $\mf{u}(\ms{a})$, 
equals
the mean value $\nuup^{\mc{T}}(\mf{n})$ of the prompt neutron-yield in 
the state originated via 
the phase of the nucleon system 
generated by the fissioning system $\mf{u}(\ms{a})$, 
occurring by performing 
on 
$\mf{s}((\ps{\upvarphi}^{\mc{F}(\ms{a})})_{\alpha}^{\mf{T}})$
the operation realizing the asymmetric fission process of 
the fissioning system $\mf{u}(\ms{a})$.
\end{enumerate}
Here 
$\mf{s}((\ps{\upvarphi}^{\mc{F}(\ms{a})})_{\alpha}^{\mf{T}})\equiv$
the state of thermal equilibrium 
$(\ps{\upvarphi}^{\mc{F}(\ms{a})})_{\alpha}^{\mf{T}}$
at the inverse temperature $\alpha$
of 
the fragment system whose observable algebra is $\mc{A}(\ms{a},\mf{T},\alpha)$
and whose dynamics is $(\ep^{\mc{F}(\ms{a})})_{\alpha}^{\mf{T}}(-\alpha^{-1}(\cdot))$.
\end{proposition}
\begin{proof}
Since Prp. \ref{11061659},
Prp. \ref{02202114dbt} 
and rearrangements to avoid redundancies,
alternatively sts. (\ref{11061851st1},\ref{11061851st2})
follow since 
Prp. \ref{12051206dbt}.
\end{proof}
The next definition will be used in formulating the nucleon phase hypothesis in section \ref{01231337}.
\begin{definition}
\label{13061939}
$\mf{l}$ is said to be nucleon equivalent with $\mf{n}$ 
if $\mf{l}\in\ms{N}_{as}^{\mc{T}}$ and 
$\upzeta_{j}^{\mc{T}}(\mf{l})=\upzeta_{j}^{\mc{T}}(\mf{n})$
for all $j\in\{m,w\}$.
\end{definition}
\begin{remark}
\label{26061121}
For any $\mf{l}=\lr{\ms{d},\mf{Q},\beta}{\{\ms{g}_{j},M_{j}\}_{j\in\{m,w\}}}$ 
nucleon equivalent with $\mf{n}$ 
we have 
\begin{equation*}
\begin{aligned}
\nuup^{\mc{T}}(\mf{l})
&=
0.08\left(\mc{W}^{\ms{d}}(\mf{Q},\beta)(M_{m})
-
\mf{m}^{\ms{a}}(\mf{T},\alpha)(\ms{f}_{m})
\right)+
\\
&
0.1
\left(\mc{W}^{\ms{d}}(\mf{Q},\beta)(M_{w})
-
\mf{m}^{\ms{a}}(\mf{T},\alpha)(\ms{f}_{w})
\right).
\end{aligned}
\end{equation*}
\end{remark}
\subsection{Universality of the $\mc{T}-$Terrell law}
\label{07170749}
In order to maintain the present section reasonably independent by the previous one,
when feasible we shall not use the results in section \ref{07011744}.
In Def. \ref{07252801} and Def. \ref{07141845} we define 
the $\mc{T}-$nucleon and fragment masses,
and the $\mc{T}-$Terrell law,
and prove in Thm. \ref{07141857} their invariance
under contravariant action of $\Upxi(\pf{D},\mc{F})$
and under action of $H$,
by providing the equivariant form of Prp. \ref{11061659}.
This is an essential result representing the
$\mc{T}-$resolution of the invariant form of the universality claim,
since when applied to $\mc{T}_{\bullet}$ furnishes
the invariant form of the claim.
Thm. \ref{07141857} establishes the existence
of invariant sections $\muup_{j}^{\mc{T}}$, $\uplambda_{j}^{\mc{T}}$ 
and $\uptheta^{\mc{T}}$ 
under contravariant action of $\Upxi(\pf{D},\mc{F})$
via the extension of the map defined in \eqref{07171703} to the power set of $\ms{N}_{as}^{\mc{T}}$.
Moreover each value of these sections 
induces an invariant section under covariant action of $H$.
More exactly we show in Cor. \ref{07141823} that there exists 
a field $\mc{P}^{\mc{T}}$ contravariant under action of 
$\Upxi(\pf{D},\mc{F})$ such that its fiber in each $\ms{a}$
is the power set of the subset $\ms{N}_{as}^{\mc{T}}(\ms{a})$ 
of the asymmetric fission processes relative to $\mc{T}$
whose underlying fissioning system equals $\ms{a}$;
while its morphism map extends and modifies the map defined in \eqref{07171703} 
by transforming the dependence by $\mf{x}$ into a set-valued map. 
Thus $\muup_{j}^{\mc{T}}$, $\uplambda_{j}^{\mc{T}}$ and $\uptheta^{\mc{T}}$
result to be morphisms of the category 
$\ms{Fct}(\Upxi(\pf{D},\mc{F})^{op},\ms{set})$
whose source functor is $\mc{P}^{\mc{T}}$ and such that 
$(\muup_{j}^{\mc{T}})_{\ms{a}}$, $(\uplambda_{j}^{\mc{T}})_{\ms{a}}$ 
and $\uptheta_{\ms{a}}^{\mc{T}}$ 
are the power set extensions 
of the restrictions of $\upzeta_{j}^{\mc{T}}$, $\upkappa_{j}^{\mc{T}}$,    
and $\upnu^{\mc{T}}$ to $\ms{N}_{as}^{\mc{T}}(\ms{a})$
respectively, and inducing morphisms of $\ms{Fct}(H,\ms{set})$.
In the present section \ref{07170749} let $\mf{C}$ be a category, 
and $\mc{T}=\lr{S,J,\mc{Z},\mc{S},\mc{F}}{\mf{m},\mc{W},R,\pf{D},\pf{U}}$ be 
a nucleon-fragment doublet 
on $\mf{C}$ 
where for simplicity we assume that $R$ is the constant map with constant value equal to $H$.
Finally up to the end of this part
since we will deal only with asymmetric processes, whenever we mention 
fission processes without any specification we always refer to the asymmetric ones.
\begin{definition}
\label{07141809}
Define 
$\ms{N}_{as}^{\mc{T}}(\cdot):
Obj(\Upxi(\pf{D},\mc{F}))\to\mathscr{P}(\ms{N}_{as}^{\mc{T}})$
such that for all $\ms{a}\in\Upxi(\pf{D},\mc{F})$
\begin{equation*}
\ms{N}_{as}^{\mc{T}}(\ms{a})
\coloneqq
\{
\mf{n}
\in
\ms{N}_{as}^{\mc{T}}
\,\vert\,
\Pr_{1}(\mf{n})=\ms{a}
\},
\end{equation*}
where $\Pr_{1}$ is the map on $\ms{N}_{as}^{\mc{T}}$ mapping any 
$4-$tupla into its first element.
Moreover define 
\begin{equation*}
\mc{P}_{o}^{\mc{T}}:Obj(\Upxi(\pf{D},\mc{F}))\ni\ms{a}
\mapsto
\mathscr{P}(\ms{N}_{as}^{\mc{T}}(\ms{a})),
\end{equation*}
and
\begin{equation*}
\uprho^{\mc{T}}\in
\prod_{T\in Mor_{\Upxi(\pf{D},\mc{F})}}
Mor_{\ms{set}}(\ms{N}_{as}^{\mc{T}}(c(T)),\mc{P}_{o}^{\mc{T}}(d(T))),
\end{equation*}
such that 
for any $T\in Mor_{\Upxi(\pf{D},\mc{F})}$ and 
$\fp{c(T)}{\mf{T}}{\alpha}{\ms{f}}{N}\in\ms{N}_{as}^{\mc{T}}(c(T))$
\begin{multline*}
\uprho^{\mc{T}}(T)(\fp{c(T)}{\mf{T}}{\alpha}{\ms{f}}{N})
\coloneqq
\\
\big\{\fp{d(T)}{\mf{T}^{T}}{\alpha}{x}{y}
\,\vert\,
x\in
\prod_{j\in\{m,w\}}\mc{F}_{1}(T)^{-1}(\ms{f}_{j}),
\\
y\in
\prod_{j\in\{m,w\}}
\mc{S}(T,\mf{T},\alpha)^{-1}(N_{j})
\cap
\mc{A}(d(T),\mf{T}^{T},\alpha)_{+}
\big\}.
\end{multline*}
Finally define
\begin{equation*}
\mc{P}_{m}^{\mc{T}}
\in
\prod_{T\in Mor_{\Upxi(\pf{D},\mc{F})}}
Mor_{\ms{set}}(\mc{P}_{o}^{\mc{T}}(c(T)),\mc{P}_{o}^{\mc{T}}(d(T))),
\end{equation*}
such that for all $T\in Mor_{\Upxi(\pf{D},\mc{F})}$ and $Y\in\mc{P}_{o}^{\mc{T}}(c(T))$
\begin{equation*}
\mc{P}_{m}^{\mc{T}}(T)Y
\coloneqq
\bigcup
\left\{\uprho^{\mc{T}}(T)y\,\vert\, y\in Y
\right\},
\end{equation*}
set $\mc{P}^{T}\coloneqq(\mc{P}_{o}^{\mc{T}},\mc{P}_{m}^{\mc{T}})$.
\end{definition}
In order to prove that $\mc{P}^{T}$ is a functor we need the following
\begin{lemma}
\label{07161534}
\begin{enumerate}
\item
$\uprho^{\mc{T}}$ and $\mc{P}^{\mc{T}}$ are well-defined and nonempty set-valued maps;
\label{07161534st1}
\item
$\uprho^{\mc{T}}(T\circ S)=\mc{P}_{m}^{\mc{T}}(S)\circ\uprho^{\mc{T}}(T)$,
for all $S,T\in Mor_{\Upxi(\pf{D},\mc{F})}$ such that $d(T)=c(S)$.
\label{07161534st2}
\end{enumerate}
\end{lemma}
\begin{proof}
St. \eqref{07161534st1}
follows since 
Prp. \ref{20051810dbt}, 
and Def. \ref{qnuclph1}\eqref{07201029}.
Let $S,T\in Mor_{\Upxi(\pf{D},\mc{F})}$ such that $d(T)=c(S)$ and let 
$\fp{c(T)}{\mf{T}}{\alpha}{\ms{f}}{N}
\in\ms{N}_{as}^{\mc{T}}(c(T))$
then 
since
Def. \ref{0620115}\eqref{0620115st5}, \eqref{06222005} and $(f\circ g)^{-1}=g^{-1}\circ f^{-1}$
\begin{multline}
\label{07181102}
\uprho^{\mc{T}}(T\circ S)(\fp{c(T)}{\mf{T}}{\alpha}{\ms{f}}{N})
=
\\
\big\{\fp{d(S)}{(\mf{T}^{T})^{S}}{\alpha}{z}{q}
\,\vert\,
z\in\prod_{j\in\{m,w\}}
\left(\mc{F}_{1}(S)^{-1}\circ\mc{F}_{1}(T)^{-1}\right)(\ms{f}_{j}),
\\
q\in\prod_{j\in\{m,w\}}
\left(\mc{S}(S,\mf{T}^{T},\alpha)^{-1}\circ\mc{S}(T,\mf{T},\alpha)^{-1}\right)
(N_{j})
\cap
\mc{A}(d(S),(\mf{T}^{T})^{S},\alpha)_{+}
\big\}.
\end{multline}
Next
\begin{multline}
\label{07161528}
(\mc{P}_{m}^{\mc{T}}(S)\circ\uprho^{\mc{T}}(T))(\fp{c(T)}{\mf{T}}{\alpha}{\ms{f}}{N})
=
\\
\bigcup
\big\{
\uprho^{\mc{T}}(S)(\fp{d(T)}{\mf{T}^{T}}{\alpha}{x}{y})
\,\vert\,
x\in\prod_{j\in\{m,w\}}
\mc{F}_{1}(T)^{-1}(\ms{f}_{j}),
\\
y\in
\prod_{j\in\{m,w\}}
\mc{S}(T,\mf{T},\alpha)^{-1}(N_{j})
\cap
\mc{A}(d(T),\mf{T}^{T},\alpha)_{+}
\big\}=
\\
\bigcup
\bigg\{
\big\{
\fp{d(S)}{(\mf{T}^{T})^{S}}{\alpha}{z}{q}
\,\vert\,
z\in
\prod_{j\in\{m,w\}}
\mc{F}_{1}(S)^{-1}(x_{j}),
\\
q\in\prod_{j\in\{m,w\}}
\mc{S}(S,\mf{T}^{T},\alpha)^{-1}(y_{j})
\cap
\mc{A}(d(S),(\mf{T}^{T})^{S},\alpha)_{+}\big\}
\\
\,\big\vert\,
x\in
\prod_{j\in\{m,w\}}
\mc{F}_{1}(T)^{-1}(\ms{f}_{j}),\,
\\
y\in\prod_{j\in\{m,w\}}
\mc{S}(T,\mf{T},\alpha)^{-1}(N_{j})
\cap
\mc{A}(d(T),\mf{T}^{T},\alpha)_{+}
\bigg\}.
\end{multline}
Next for any $j\in\{m,w\}$
\begin{equation}
\label{07181145}
\bigcup_{x_{j}\in\mc{F}_{1}(T)^{-1}(\ms{f}_{j})}
\mc{F}_{1}(S)^{-1}(x_{j})
=
(\mc{F}_{1}(S)^{-1}\circ\mc{F}_{1}(T)^{-1})(\ms{f}_{j}),
\end{equation}
while 
\begin{multline}
\label{07181146}
\bigcup_{y_{j}\in\mc{S}(T,\mf{T},\alpha)^{-1}(N_{j})
\cap
\mc{A}(d(T),\mf{T}^{T},\alpha)_{+}}
\mc{S}(S,\mf{T}^{T},\alpha)^{-1}(y_{j})
\cap
\mc{A}(d(S),(\mf{T}^{T})^{S},\alpha)_{+}
=\\
\mc{S}(S,\mf{T}^{T},\alpha)^{-1}
\left(\mc{S}(T,\mf{T},\alpha)^{-1}(N_{j})
\cap 
\mc{A}(d(T),\mf{T}^{T},\alpha)_{+}
\right)
\cap
\mc{A}(d(S),(\mf{T}^{T})^{S},\alpha)_{+}
=
\\
(\mc{S}(S,\mf{T}^{T},\alpha)^{-1}
\circ\mc{S}(T,\mf{T},\alpha)^{-1})(N_{j})
\cap
\mc{S}(S,\mf{T}^{T},\alpha)^{-1}
(\mc{A}(d(T),\mf{T}^{T},\alpha)_{+})
\cap
\mc{A}(d(S),(\mf{T}^{T})^{S},\alpha)_{+}
=
\\
(\mc{S}(S,\mf{T}^{T},\alpha)^{-1}
\circ\mc{S}(T,\mf{T},\alpha)^{-1})(N_{j})
\cap
\mc{A}(d(S),(\mf{T}^{T})^{S},\alpha)_{+},
\end{multline}
where we considered that $L^{-1}(\mc{B}_{+})\supseteq\mc{A}_{+}$ for any 
$L\in Mor_{\ms{CA}^{\ast}}(\mc{A},\mc{B})$, 
that
$f^{-1}(\cup_{x\in X}a_{x})=\cup_{x\in X}f^{-1}(a_{x})$,
$f^{-1}(\cap_{x\in X}a_{x})=\cap_{x\in X}f^{-1}(a_{x})$, 
and
$\cup_{x\in X}(a_{x}\cap b)=(\cup_{x\in X}a_{x})\cap b$ 
for any map $f:A\to B$,
subset $\{a_{x}\,\vert\, x\in X\}$ of $\mathscr{P}(B)$
and $b\in\mathscr{P}(B)$.
St. \eqref{07161534st2} follows since
\eqref{07181102}, \eqref{07161528}
\cite[(35) Fascicule de resultats; Pr. 9, II.36]{sett}, 
and
\eqref{07181145}, \eqref{07181146}.
\end{proof}
\begin{corollary}
\label{07141823}
$\mc{P}^{\mc{T}}\in\ms{Fct}(\Upxi(\pf{D},\mc{F})^{op},\ms{set})$
whose object map takes nonempty set values.
\end{corollary}
\begin{proof}
Let $S,T\in Mor_{\Upxi(\pf{D},\mc{F})}$ such that $d(T)=c(S)$ and 
$Y\in\mc{P}_{o}^{\mc{T}}(c(T))$, then since 
Lemma \ref{07161534}\eqref{07161534st2} 
we have 
\begin{equation*}
\begin{aligned}
(\mc{P}_{m}^{\mc{T}}(S)\circ\mc{P}_{m}^{\mc{T}}(T))Y
&=
\mc{P}_{m}^{\mc{T}}(S)\left(\bigcup\{\uprho^{\mc{T}}(T)(y)\,\vert\,y\in Y\}\right)
\\
&=
\bigcup\{(\mc{P}_{m}^{\mc{T}}(S)\circ\uprho^{\mc{T}}(T))(y)\,\vert\,y\in Y\}
\\
&=\bigcup\{\uprho^{\mc{T}}(T\circ S)(y)\,\vert\,y\in Y\}
=\mc{P}_{m}^{\mc{T}}(T\circ S)Y.
\end{aligned}
\end{equation*}
hence the statement follows since the second assertion of
Lemma \ref{07161534}\eqref{07161534st1}. 
\end{proof}
Prp. \ref{07011654} permits to provide the following
\begin{definition}
\label{07141844}
Define 
\begin{equation*}
\begin{aligned}
\uptau^{\mc{T}} 
&\in
\prod_{\ms{a}
\in\Upxi(\pf{D},\mc{F})}
Mor_{\ms{Gr}}(H,Aut_{\ms{set}}(\mc{P}^{\mc{T}}(\ms{a}))),
\\
\uptau^{\mc{T}}_{\ms{a}}(l)Y
&\coloneqq
\{\mf{k}^{\mc{T}}(l)(y)\,\vert\,y\in Y\},
\forall 
\ms{a}\in\Upxi(\pf{D},\mc{F}),
l\in H,
Y\in\mc{P}^{\mc{T}}(\ms{a}).
\end{aligned}
\end{equation*}
Set for all $\ms{a}\in\Upxi(\pf{D},\mc{F})$
\begin{equation*}
\ms{Q}_{\ms{a}}^{\mc{T}}\coloneqq(\un\mapsto\mc{P}^{\mc{T}}(\ms{a}),\uptau_{\ms{a}}^{\mc{T}}).
\end{equation*}
\end{definition}
\begin{definition}
\label{07141849}
Let $\ms{R}^{\mc{T}}$ be the unique element of $\ms{Fct}(\Upxi(\pf{D},\mc{F})^{op},\ms{set})$
such that $\ms{R}_{o}^{\mc{T}}$ is the constant map with constant value equal to $\mathscr{P}(\R)$, and 
$\ms{R}_{m}^{\mc{T}}$ is the constant map with constant value equal to $Id_{\mathscr{P}(\R)}$.
Let $\ms{R}^{H}$ be the unique element in $\ms{Fct}(H,\ms{set})$ such that 
$\ms{R}_{o}^{H}=\un\mapsto\mathscr{P}(\R)$ and $\ms{R}_{m}^{H}$ is the constant map 
with constant value equal to $Id_{\mathscr{P}(\R)}$.
\end{definition}
\begin{definition}
[$\mc{T}-$nucleon masses and $\mc{T}-$fragment masses]
\label{07252801}
For any $j\in\{m,w\}$ let 
\begin{equation*}
\begin{aligned}
\ms{M}_{j}^{\mc{T}}:
\mathscr{P}(\ms{N}_{as}^{\mc{T}})
&\to
\mathscr{P}(\R),
\\
Y
&\mapsto
\{\upzeta_{j}^{\mc{T}}(y)\,\vert\,y\in Y\};
\\
\ms{L}_{j}^{\mc{T}}:
\mathscr{P}(\ms{N}_{as}^{\mc{T}})
&\to
\mathscr{P}(\R),
\\
Y
&\mapsto
\{\upkappa_{j}^{\mc{T}}(y)\,\vert\,y\in Y\}.
\end{aligned}
\end{equation*}
Define the $\mc{T}-\#_{j}$ nucleon mass to be the map
\begin{equation*}
\begin{aligned}
\muup_{j}^{\mc{T}}
&\in
\prod_{\ms{a}\in\Upxi(\pf{D},\mc{F})}
Mor_{\ms{set}}(\mc{P}^{\mc{T}}(\ms{a}),\ms{R}^{\mc{T}}(\ms{a})),
\\
(\muup_{j}^{\mc{T}})_{\ms{a}}
&\coloneqq
\ms{M}_{j}^{\mc{T}}\up\mc{P}^{\mc{T}}(\ms{a}),
\forall\ms{a}\in\Upxi(\pf{D},\mc{F});
\end{aligned}
\end{equation*}
and define the $\mc{T}-\#_{j}$ fragment mass to be the map
\begin{equation*}
\begin{aligned}
\uplambda_{j}^{\mc{T}}
&\in
\prod_{\ms{a}\in\Upxi(\pf{D},\mc{F})}
Mor_{\ms{set}}(\mc{P}^{\mc{T}}(\ms{a}),\ms{R}^{\mc{T}}(\ms{a})),
\\
(\uplambda_{j}^{\mc{T}})_{\ms{a}}
&\coloneqq
\ms{L}_{j}^{\mc{T}}\up\mc{P}^{\mc{T}}(\ms{a}),
\forall\ms{a}\in\Upxi(\pf{D},\mc{F}).
\end{aligned}
\end{equation*}
\end{definition}
\begin{definition}
[$\mc{T}-$Terrell law]
\label{07141845}
Let
\begin{equation*}
\begin{aligned}
\Uptheta^{\mc{T}}:
\mathscr{P}(\ms{N}_{as}^{\mc{T}})
&\to
\mathscr{P}(\R),
\\
Y
&\mapsto
\{\nuup^{\mc{T}}(y)\,\vert\,y\in Y\}.
\end{aligned}
\end{equation*}
Define the $\mc{T}-$Terrell law to be the map
\begin{equation*}
\begin{aligned}
\uptheta^{\mc{T}}
&\in
\prod_{\ms{a}\in\Upxi(\pf{D},\mc{F})}
Mor_{\ms{set}}(\mc{P}^{\mc{T}}(\ms{a}),\ms{R}^{\mc{T}}(\ms{a})),
\\
\uptheta_{\ms{a}}^{\mc{T}}
&\coloneqq
\Uptheta^{\mc{T}}\up\mc{P}^{\mc{T}}(\ms{a}),
\forall\ms{a}\in\Upxi(\pf{D},\mc{F}).
\end{aligned}
\end{equation*}
\end{definition}
Now we are in the position of stating
the $\mc{T}-$resolution of the invariant form of the universality claim.
\begin{theorem}
[Universality of the $\mc{T}-$nucleon masses,
$\mc{T}-$fragment masses and $\mc{T}-$Terrell law]
\label{07141857}
For any $j\in\{m,w\}$ we have 
\begin{enumerate}
\item
Invariance of the $\mc{T}-$nucleon masses and $\mc{T}-$fragment masses
under contravariant action of $\Upxi(\pf{D},\mc{F})$
\begin{equation*}
\muup_{j}^{\mc{T}},\,
\uplambda_{j}^{\mc{T}}
\in
Mor_{\ms{Fct}(\Upxi(\pf{D},\mc{F})^{op},\ms{set})}(\mc{P}^{\mc{T}},\ms{R}^{\mc{T}}).
\end{equation*}
\item
Invariance of the $\mc{T}-$nucleon masses and $\mc{T}-$fragment masses
under action of $H$.
For all $\ms{a}\in\Upxi(\pf{D},\mc{F})$ 
\begin{equation*}
(\un\mapsto(\muup_{j}^{\mc{T}})_{\ms{a}}),\,
(\un\mapsto(\uplambda_{j}^{\mc{T}})_{\ms{a}})
\in
Mor_{\ms{Fct}(H,\ms{set})}(\ms{Q}_{\ms{a}}^{\mc{T}},\ms{R}^{H}).
\end{equation*}
\item
Invariance of the $\mc{T}-$Terrell law under contravariant action of 
$\Upxi(\pf{D},\mc{F})$
\begin{equation*}
\uptheta^{\mc{T}}
\in
Mor_{\ms{Fct}(\Upxi(\pf{D},\mc{F})^{op},\ms{set})}(\mc{P}^{\mc{T}},\ms{R}^{\mc{T}}),
\end{equation*}
and under action of $H$.
For all $\ms{a}\in\Upxi(\pf{D},\mc{F})$ 
\begin{equation*}
(\un\mapsto\uptheta_{\ms{a}}^{\mc{T}})
\in
Mor_{\ms{Fct}(H,\ms{set})}(\ms{Q}_{\ms{a}}^{\mc{T}},\ms{R}^{H}).
\end{equation*}
\end{enumerate}
\end{theorem}
\begin{remark}
\label{07161606}
Since Cor. \ref{07141823} the statements of Thm. \ref{07141857} are well-set.
Moreover they are equivalent to what follows,
for all 
$T\in Mor_{\Upxi(\pf{D},\mc{F})}$, 
$\ms{a}\in\Upxi(\pf{D},\mc{F})$, 
$l\in H$ 
and
$j\in\{m,w\}$
we have 
\begin{equation*}
\begin{aligned}
(\muup_{j}^{\mc{T}})_{d(T)}
\circ
\mc{P}_{m}^{\mc{T}}(T)
&=
(\muup_{j}^{\mc{T}})_{c(T)},
\\
(\muup_{j}^{\mc{T}})_{\ms{a}}
\circ
\uptau_{\ms{a}}^{\mc{T}}(l)
&=
(\muup_{j}^{\mc{T}})_{\ms{a}},
\end{aligned}
\end{equation*}
\begin{equation*}
\begin{aligned}
(\uplambda_{j}^{\mc{T}})_{d(T)}
\circ
\mc{P}_{m}^{\mc{T}}(T)
&=
(\uplambda_{j}^{\mc{T}})_{c(T)},
\\
(\uplambda_{j}^{\mc{T}})_{\ms{a}}
\circ
\uptau_{\ms{a}}^{\mc{T}}(l)
&=
(\uplambda_{j}^{\mc{T}})_{\ms{a}},
\end{aligned}
\end{equation*}
and 
\begin{equation*}
\begin{aligned}
\uptheta_{d(T)}^{\mc{T}}
\circ
\mc{P}_{m}^{\mc{T}}(T)
&=
\uptheta_{c(T)}^{\mc{T}},
\\
\uptheta_{\ms{a}}^{\mc{T}}
\circ
\uptau_{\ms{a}}^{\mc{T}}(l)
&=
\uptheta_{\ms{a}}^{\mc{T}}.
\end{aligned}
\end{equation*}
\end{remark}
\begin{proof}[Proof of Thm. \ref{07141857}]
Since Prp. \ref{20051810dbt} 
and Lemma \ref{07161534}\eqref{07161534st1}
we obtain
for all $T\in Mor_{\Upxi(\pf{D},\mc{F})}$,
$l\in H$ 
and $j\in\{m,w\}$
\begin{equation*}
\begin{aligned}
(\muup_{j}^{\mc{T}})_{d(T)}
\circ
\uprho^{\mc{T}}(T)
&=
\{\cdot\}
\circ
\upzeta_{j}^{\mc{T}}\up\ms{N}_{as}^{\mc{T}}(c(T)),
\\
\upzeta_{j}^{\mc{T}}
\circ
\mf{k}^{\mc{T}}(l)
&=
\upzeta_{j}^{\mc{T}},
\end{aligned}
\end{equation*}
\begin{equation*}
\begin{aligned}
(\uplambda_{j}^{\mc{T}})_{d(T)}
\circ
\uprho^{\mc{T}}(T)
&=
\{\cdot\}
\circ
\upkappa_{j}^{\mc{T}}\up\ms{N}_{as}^{\mc{T}}(c(T)),
\\
\upkappa_{j}^{\mc{T}}
\circ
\mf{k}^{\mc{T}}(l)
&=
\upkappa_{j}^{\mc{T}},
\\
\uptheta_{d(T)}^{\mc{T}}
\circ
\uprho^{\mc{T}}(T)
&=
\{\cdot\}
\circ
\upnu^{\mc{T}}\up\ms{N}_{as}^{\mc{T}}(c(T)),
\\
\upnu^{\mc{T}}\circ\mf{k}^{\mc{T}}(l)
&=
\upnu^{\mc{T}}.
\end{aligned}
\end{equation*}
Then for any $Y\in\mc{P}_{o}^{\mc{T}}(c(T))$
we have
\begin{equation*}
\begin{aligned}
(\uptheta_{d(T)}^{\mc{T}}\circ\mc{P}_{m}^{\mc{T}}(T))(Y)
&=
\bigcup\{
(\uptheta_{d(T)}^{\mc{T}}\circ\uprho^{\mc{T}}(T))(y)
\,\vert\,y\in Y\}
\\
&=
\bigcup\{\{\upnu^{\mc{T}}(y)\}\,\vert\,y\in Y\}
\\
&=
\{\upnu^{\mc{T}}(y)\,\vert\,y\in Y\}
=
\uptheta_{c(T)}^{\mc{T}}(Y);
\end{aligned}
\end{equation*}
moreover
for all $\ms{a}\in\Upxi(\pf{D},\mc{F})$ and $Z\in\mc{P}^{\mc{T}}(\ms{a})$
\begin{equation*}
\begin{aligned}
(\uptheta_{\ms{a}}^{\mc{T}}\circ\uptau_{\ms{a}}^{\mc{T}}(l))(Z)
&=
\{
(\upnu^{\mc{T}}\circ\mf{k}^{\mc{T}}(l))(z)
\,\vert\,z\in Z\}
\\
&=
\{\upnu^{\mc{T}}(z)\,\vert\,z\in Z\}
=
\uptheta_{\ms{a}}^{\mc{T}}(Z),
\end{aligned}
\end{equation*}
similarly for $\muup$ and $\uplambda$.
Hence our claim follows since Rmk. \ref{07161606}.
\end{proof}
\subsection{Universality of the global Terrell law}
\label{25061121}
The resolution of the equivariant form of the universality 
claim has been established in 
Cor. \ref{19061936} 
whose compact form has been stated in Thm. \ref{10071930}.
Here we apply the results of section \ref{07170749}
to the canonical nucleon-fragment doublet $\mc{T}_{\bullet}$ 
in order to establish in Thm. \ref{10061606ante}
the invariant form of the universality claim, 
the main result of this section and the final one of the entire project.
It states the invariance of the global nucleon and fragment masses
and the invariance of the global Terrell law
under contravariant action of $\ms{C}_{u}^{0}(H)$ and under covariant 
action of $H$, over the field of fission processes.
As a result we obtain the invariance of the restricted global Terrell law stated in Cor. \ref{10061606},
whose physical interpretation 
as invariance of the prompt-neutron yield is established in Cor. \ref{14061529};
result that can be obtained in a more direct way 
as consequence of the corresponding ones in section \ref{07011744}. 
Let $(\mf{s},\mf{u})$ be a semantics for $\mc{T}_{\bullet}$.
\begin{definition}
[Global nucleon masses, global fragment masses and global Terrell law]
Let the global nucleon phase and the global fragment state
be the $\mc{T}_{\bullet}-$nucleon phase 
and $\mc{T}_{\bullet}-$fragment state respectively.
Moreover
let 
the (restricted) global $\#_{j}$ nucleon mass
be 
the (restricted) $\mc{T}_{\bullet}-\#_{j}$ nucleon mass,
let 
the (restricted) global $\#_{j}$ fragment mass
be 
the (restricted) $\mc{T}_{\bullet}-\#_{j}$ fragment mass
for any $j\in\{m,w\}$.
Let the (restricted) global Terrell law
be
the (restricted) $\mc{T}_{\bullet}-$Terrell law.  
\end{definition}
Up to the end of section \ref{25061121}
let
$\mf{n}=\lr{\mf{A}}{\mf{T},\alpha,\{\ms{f}_{j},N_{j}\}_{j\in\{m,w\}}}\in\ms{N}_{as}^{\mc{T}_{\bullet}}$.
Since section \ref{07011744} for all $l\in H$ we have
\begin{equation*}
\begin{aligned}
(\nuup^{\mc{T}_{\bullet}}\circ\mf{k}^{\mc{T}_{\bullet}}(l))
(\mf{n})
&=
0.08\left(   
\mc{V}^{\mf{A}}(\mf{T}^{l},\alpha)(N_{m}^{l})
-
\ov{\ms{m}}^{\mf{A}}(\mf{T}^{l},\alpha)(\ms{f}_{m}^{l})
\right)+
\\
&
0.1
\left(\mc{V}^{\mf{A}}(\mf{T}^{l},\alpha)(N_{w}^{l})
-
\ov{\ms{m}}^{\mf{A}}
(\mf{T}^{l},\alpha)(\ms{f}_{w}^{l})
\right).
\end{aligned}
\end{equation*}
Moreover 
if $(\mf{n},\mf{B},T,\mf{x})$ satisfies the hypothesis $\ms{S}$ 
w.r.t. $\mc{T}_{\bullet}$, 
then
$\mf{B}\in\ms{C}_{u}^{0}(H)$,
$T\in Mor_{\ms{C}_{u}^{0}(H)}(\mf{B},\mf{A})$
and
$\mf{x}=\{\ms{f}_{j}',N_{j}'\}_{j\in\{m,w\}}$
such that 
$\ms{f}_{j}'\in\ms{A}_{\mf{B}}$, 
$N_{j}'\in\mc{B}_{+}$, 
$\ms{f}_{j}=\mf{g}^{H}(T)(\ms{f}_{j}')$,
and
$N_{j}=T(N_{j}')$,
for any $j\in\{m,w\}$, 
where $\mc{B}$ is the $C^{\ast}-$algebra underlying $\mf{B}$.
Let $(\mf{n},\mf{B},T,\mf{x})$ satisfy the hypothesis $\ms{S}$ 
w.r.t. $\mc{T}_{\bullet}$,
where $\mf{x}=\{\ms{f}_{j}',N_{j}'\}_{j\in\{m,w\}}$
then
\begin{equation*}
\mf{n}^{(T,\mf{x})}
=
\lr{\mf{B}}{\mf{T}^{T},\alpha,\mf{x}},
\end{equation*}
where $\mf{T}^{T}=\mf{d}^{H}(T)(\mf{T})$,
moreover
$\mf{n}^{(T,\mf{x})}\in\ms{N}_{as}^{\mc{T}_{\bullet}}$
and
\begin{equation*}
\begin{aligned}
\nuup^{\mc{T}_{\bullet}}
(\mf{n}^{(T,\mf{x})})
&=
0.08\left(
\mc{V}^{\mf{B}}\left(\mf{T}^{T},\alpha\right)
(N_{m}')
-
\ov{\ms{m}}^{\mf{B}}
\left(\mf{T}^{T},\alpha\right)(\ms{f}_{m}')
\right)+
\\
&
0.1
\left(
\mc{V}^{\mf{B}}
\left(\mf{T}^{T},\alpha\right)(N_{w}')
-
\ov{\ms{m}}^{\mf{B}}
\left(\mf{T}^{T},\alpha\right)(\ms{f}_{w}')
\right).
\end{aligned}
\end{equation*}
The next is the third main result of this work,
jointly Cor. \ref{19061936}
resolves the invariant form of the universality claim,
in particular it presents the stability of the global nucleon and fragment masses
and the invariance of the global Terrell law.
As an application in the next section we will obtain the invariance of the Terrell law \eqref{26061108}
and the stability of the nucleon masses at the values $82$ and $126$.
\begin{theorem}
[Universality of the global nucleon and fragment masses and the global Terrell law]
\label{10061606ante}
For any $j\in\{m,w\}$ we have 
\begin{enumerate}
\item
Invariance of the global nucleon masses and global fragment masses 
under contravariant action of $\ms{C}_{u}^{0}(H)$
\begin{equation*}
\muup_{j}^{\mc{T}_{\bullet}},\,
\uplambda_{j}^{\mc{T}_{\bullet}}
\in
Mor_{\ms{Fct}(\ms{C}_{u}^{0}(H)^{op},\ms{set})}(\mc{P}^{\mc{T}_{\bullet}},\ms{R}^{\mc{T}_{\bullet}}).
\end{equation*}
\item
Invariance of the global nucleon masses and global fragment masses
under action of $H$.
For all $\mf{D}\in\ms{C}_{u}^{0}(H)$ 
\begin{equation*}
(\un\mapsto(\muup_{j}^{\mc{T}_{\bullet}})_{\mf{D}}),\,
(\un\mapsto(\uplambda_{j}^{\mc{T}_{\bullet}})_{\mf{D}})
\in
Mor_{\ms{Fct}(H,\ms{set})}(\ms{Q}_{\mf{D}}^{\mc{T}_{\bullet}},\ms{R}^{H})
\end{equation*}
\item
Invariance of the global Terrell law under contravariant action of 
$\ms{C}_{u}^{0}(H)$
\begin{equation*}
\uptheta^{\mc{T}_{\bullet}}
\in
Mor_{\ms{Fct}(\ms{C}_{u}^{0}(H)^{op},\ms{set})}(\mc{P}^{\mc{T}_{\bullet}},\ms{R}^{\mc{T}_{\bullet}}),
\end{equation*}
and under action of $H$.
For all $\mf{D}\in\ms{C}_{u}^{0}(H)$ 
\begin{equation*}
(\un\mapsto\uptheta_{\mf{D}}^{\mc{T}_{\bullet}})
\in
Mor_{\ms{Fct}(H,\ms{set})}(\ms{Q}_{\mf{D}}^{\mc{T}_{\bullet}},\ms{R}^{H}).
\end{equation*}
\end{enumerate}
\end{theorem}
\begin{proof}
Since 
Thm. \ref{06242121}
and Thm. \ref{07141857} applied for $\mc{T}=\mc{T}_{\bullet}$.
\end{proof}
\begin{proof}[Alternative proof of Thm. \ref{10061606ante}]
If hypothesis $\ms{E}$ holds true then it 
follows since Thm. \ref{10071930} and Rmk. \ref{11071826}.
\end{proof}
As a result we can state the following invariance
whose physical interpretation is explained in Cor. \ref{14061529}.
\begin{corollary}
[Invariance of the restricted global nucleon masses, global fragment masses and global Terrell law]
\label{10061606}
Let $(\mf{n},\mf{B},T,\mf{x})$ satisfy the hypothesis $\ms{S}$ w.r.t. $\mc{T}_{\bullet}$, 
$l\in H$ and $j\in\{m,w\}$ then 
\begin{enumerate}
\item
Invariance of the restricted global nucleon masses and 
restricted global fragment masses 
under essential contravariant action of $\ms{C}_{u}^{0}(H)$
\begin{equation*}
\begin{aligned}
\upzeta_{j}^{\mc{T}_{\bullet}}
(\mf{n}^{(T,\mf{x})})
&=
\upzeta_{j}^{\mc{T}_{\bullet}}(\mf{n}),
\\
\upkappa_{j}^{\mc{T}_{\bullet}}
(\mf{n}^{(T,\mf{x})})
&=
\upkappa_{j}^{\mc{T}_{\bullet}}(\mf{n}).
\end{aligned}
\end{equation*}
\item
Invariance of the restricted global nucleon masses
and restricted global fragment masses under action of $H$
\begin{equation*}
\begin{aligned}
\upzeta_{j}^{\mc{T}_{\bullet}}
\circ\mf{k}^{\mc{T}_{\bullet}}(l)
&=
\upzeta_{j}^{\mc{T}_{\bullet}},
\\
\upkappa_{j}^{\mc{T}_{\bullet}}
\circ\mf{k}^{\mc{T}_{\bullet}}(l)
&=
\upkappa_{j}^{\mc{T}_{\bullet}}.
\end{aligned}
\end{equation*}
\item
Invariance of the restricted global Terrell law under essential contravariant action of 
$\ms{C}_{u}^{0}(H)$ and action of $H$
\begin{equation*}
\begin{aligned}
\nuup^{\mc{T}_{\bullet}}
(\mf{n}^{(T,\mf{x})})
&=
\nuup^{\mc{T}_{\bullet}}(\mf{n}),
\\
\nuup^{\mc{T}_{\bullet}}\circ\mf{k}^{\mc{T}_{\bullet}}(l)
&=
\nuup^{\mc{T}_{\bullet}}.
\end{aligned}
\end{equation*}
\end{enumerate}
\end{corollary}
\begin{proof}
Since Thm. \ref{10061606ante},
alternatively by Thm. \ref{06242121}
and Prp. \ref{11061659}.
\end{proof}
Next we clarify the physical meaning of Cor. \ref{10061606} concerning the gobal Terrell law,
the part concerning the restricted nucleon and fragment masses follows
similarly by applying Prp. \ref{11061851}.
\begin{corollary}
\label{14061529}
Let $(\mf{n},\mf{B},T,\mf{x})$ satisfy the hypothesis $\ms{S}$ w.r.t. $\mc{T}_{\bullet}$
and $l\in H$,
then
\begin{enumerate}
\item
The mean value of the prompt neutron-yield in
the state originated via the phase of the nucleon system 
generated by the fissioning system $\mf{u}(\mf{B})$, 
occurring by performing 
on 
$\mf{s}((\ps{\upvarphi}^{\mf{B}})_{\alpha}^{\mf{T}^{T}})$
the operation obtained by transforming through the action of $T$
the operation realizing the asymmetric fission process of 
the fissioning system $\mf{u}(\mf{A})$, 
equals
the mean value of the prompt neutron-yield in
the state originated via the phase of the nucleon system 
generated by the fissioning system $\mf{u}(\mf{A})$, 
occurring by performing 
on $\mf{s}((\ps{\upvarphi}^{\mf{A}})_{\alpha}^{\mf{T}})$
the operation realizing the asymmetric fission process of 
the fissioning system $\mf{u}(\mf{A})$.
\label{14061529b}
\item
\emph
The mean value of the prompt neutron-yield in 
the state originated via 
the phase of the nucleon system 
generated by the fissioning system $\mf{u}(\mf{A})$, 
occurring by performing 
on 
$\mf{s}((\ps{\upvarphi}^{\mf{A}})_{\alpha}^{\mf{T}^{l}})$
the operation obtained by transforming through the action of $l$
the operation realizing the asymmetric fission process of 
the fissioning system $\mf{u}(\mf{A})$, 
equals
the mean value of the prompt neutron-yield in
the state 
originated via the phase of the nucleon system 
generated by the fissioning system $\mf{u}(\mf{A})$, 
occurring by performing 
on
$\mf{s}((\ps{\upvarphi}^{\mf{A}})_{\alpha}^{\mf{T}})$
the operation realizing the asymmetric fission process of 
the fissioning system $\mf{u}(\mf{A})$. 
\label{14061529a}
\end{enumerate}
Here 
$\mf{s}((\ps{\upvarphi}^{\mf{A}})_{\alpha}^{\mf{T}})\equiv$
the state of thermal equilibrium 
$(\ps{\upvarphi}^{\mf{A}})_{\alpha}^{\mf{T}}$
at the inverse temperature $\alpha$
of 
the fragment system whose observable algebra is $\mc{A}$
and whose dynamics is $(\ep^{\mf{A}})_{\alpha}^{\mf{T}}(-\alpha^{-1}(\cdot))$,
where $\mc{A}$ is the underlying $C^{\ast}-$algebra of $\mf{A}$.
\end{corollary}
\begin{proof}
Since 
Cor. \ref{10061606} 
and Prp. \ref{02202114dbt}; 
alternatively since 
Prp. \ref{11061851} 
applied for $\mc{T}=\mc{T}_{\bullet}$.
\end{proof}
\subsection{Nucleon phase hypothesis revised}
\label{01231337}
In this section we provide the revised version in our setting of the nucleon phase hypothesis advanced 
by G. Mouze and C. Ythier see \cite{mhy1,mhy2,ric}. 
It ammounts to specify firstly the group $H$ as the semidirect product of $\R^{4}$ with a group containing
the direct product of the universal covering group of the proper Lorentz group on $\R^{4}$ 
with the gauge group of the standard model.
Secondly to hypothize the existence 
of a suitable element $\mf{n}\in\ms{N}_{as}^{\mc{T}_{\bullet}}$ displaying the values $82$ and $126$.
We apply Cor. \ref{10061606} to the nucleon phase hypothesis
to obtain in Cor. \ref{08161246} the invariance of the Terrell law and the stability of the nucleon masses
at the above values.
\par
To any asymmetric fission process
$\mf{n}=\lr{\mf{A}}{\mf{T},\alpha,\{\ms{f}_{j},N_{j}\}_{j\in\{m,w\}}}$ relative to $\mc{T}_{\bullet}$
two asymmetric fragments correspond with mass numbers
$\mc{V}^{\mf{A}}(\mf{T},\alpha)(N_{m})$
and
$\mc{V}^{\mf{A}}(\mf{T},\alpha)(N_{w})$.
However the Terrell law \eqref{26061108}
describes the behaviour of a family
of couples of fragments each of them arising 
by a fissioning system belonging to the set
$\mathbb{B}\coloneqq\{U^{233}+n_{th}, U^{235}+n_{th}, Pu^{239}+n_{th}, Cf^{252}\}$,
moreover the values $82$ and $126$ appears in it.
The hypothesis below formulated fixes the group $H$,
assumes the existence of a fission process $\mf{n}$ relative to $\mc{T}_{\bullet}$
such that 
$\ov{\ms{m}}^{\mf{A}}(\mf{T},\alpha)(\ms{f}_{m})=82$
and
$\ov{\ms{m}}^{\mf{A}}(\mf{T},\alpha)(\ms{f}_{w})=126$,
and for any $\varsigma\in\mathbb{B}$ supposes 
the existence of a fission process relative to $\mc{T}_{\bullet}$
nucleon equivalent with $\mf{n}$.
\par
$SU(2,\C)$ 
is the 
universal covering group
of the proper Lorentz group $\mf{L}_{+}^{\uparrow}$ on $\R^{4}$,
see for example \cite[p.121]{blot},
while $F^{0}=U(1)\times SU(2)\times SU(3)$
is the gauge group of the standard model.
Moreover let 
the standard action of $SU(2,\C)$ on $\R^{4}$ 
denote
the action defined in \cite[eqs. $(3.39)-(3.33a)$]{blot},
and let 
$\ms{g}$ denote the Lorentz metric tensor on $\R^{4}$.
Finally 
let $\imath_{1}$ and $\imath_{2}$ 
be the canonical injections of $SL(2,\C)$ and 
$F^{0}$
in $SL(2,\C)\times F^{0}$ respectively,
and
$\Pr_{\mu}(\lambda)=\lambda_{\mu}$ for any $\lambda\in\C^{4}$
and $\mu\in\{1,\dots, 4\}$.
\begin{definition}
\label{nuclsys}
We call $\lr{R,\upeta}{K}$ a Poincar\'{e} triplet if
\begin{enumerate}
\item
$R$ is a locally compact group;
\label{binfis1a}
\item
$SL(2,\C)\times F^{0}$ is a topological subgroup of $R$;
\label{binfis1b}
\item
$\upeta:R\to Aut_{\ms{Gr}}(\R^{4})$ is a group homomorphism;
\label{binfis2a}
\item
the map $(g,f)\mapsto\upeta_{f}(g)$
on $\R^{4}\times R$ at values in $\R^{4}$, is continuous;
\label{binfis2}
\item
$\upeta\circ\imath_{1}$
and
$\upeta\circ\imath_{2}$
are 
the standard action of $SL(2,\C)$ 
and the trivial action of $F^{0}$
on $\R^{4}$
respectively;
\label{binfis1}
\item
$K=\R^{4}\rtimes_{\upeta}R$.
\label{binfis3}
\end{enumerate}
\end{definition}
\begin{definition}
$\mathbb{B}\coloneqq\{U^{233}+n_{th}, U^{235}+n_{th}, Pu^{239}+n_{th},Cf^{252}\}$.
\end{definition}
\begin{hypothesis}
[Revised nucleon phase hypothesis]
\label{25061633}
Let $\lr{F,\uprho}{K}$ be a Poincar\'{e} triplet,
we say that $\mf{n}$ is associated with $\lr{F,\uprho}{K}$ 
if 
\begin{enumerate}
\item
$\mf{n}\in\ms{N}_{as}^{\mc{T}_{\bullet}^{K}}$; 
\label{25061633st1}
\item
$(\beta_{c}^{\mf{A}})^{\mf{T}}\leq 10^{-13}$; 
\item
$\upzeta_{m}^{\mc{T}_{\bullet}^{K}}(\mf{n})=82$
and
$\upzeta_{w}^{\mc{T}_{\bullet}^{K}}(\mf{n})=126$;
\item
for any $\varsigma\in\mathbb{B}$ there exists a fission process relative to $\mc{T}_{\bullet}^{K}$
nucleon equivalent with $\mf{n}$.
\end{enumerate}
Here $\mf{A}$ and $\mf{T}$ are the dynamical system and the operation underlying $\mf{n}$
respectively.
We say that the revised nucleon phase hypothesis holds true if 
there exists a Poincar\'{e} triplet $\lr{F,\uprho}{K}$ 
such that the set of the fission processes associated with it is not empty.
In such a case we say that $\lr{F,\uprho}{K}$ satisfies the revised nucleon phase hypothesis.
\end{hypothesis}
If $\lr{F,\uprho}{K}$ satisfies the revised nucleon phase hypothesis
we call 
$\#_{j}$ nucleon mass, $\#_{j}$ fragment mass and Terrell law 
(associated with the triplet)  
the restriction
of 
$\upzeta_{j}^{\mc{T}_{\bullet}^{K}}$,
$\upkappa_{j}^{\mc{T}_{\bullet}^{K}}$
and
$\nuup^{\mc{T}_{\bullet}^{K}}$
to the set of the fission processes associated with $\lr{F,\uprho}{K}$
respectively.
Thus we have
\begin{corollary}
[Invariance of the nucleon masses and Terrell law]
\label{08161246}
Let $\lr{F,\uprho}{H}$ satisfy the revised nucleon phase hypothesis,
$\mf{n}$ be associated with $\lr{F,\uprho}{H}$ and let 
$\mc{T}_{\bullet}$ denote $\mc{T}_{\bullet}^{H}$.
Thus for all $(\mf{B},T,\mf{x})$ such that $(\mf{n},\mf{B},T,\mf{x})$ 
satisfies the hypothesis $\ms{S}$ w.r.t. $\mc{T}_{\bullet}$,
$j\in\{m,w\}$ and all $l\in H$ we obtain 
\begin{equation}
\label{07271358}
\begin{aligned}
\upzeta_{j}^{\mc{T}_{\bullet}}(\mf{k}^{\mc{T}_{\bullet}}(l)\mf{n})
&=
\upzeta_{j}^{\mc{T}_{\bullet}}(\mf{n})
=
\upzeta_{j}^{\mc{T}_{\bullet}}(\mf{n}^{(T,\mf{x})})
\\
&=
Z_{j},
\end{aligned}
\end{equation}
where $Z_{m}=82$ and $Z_{w}=126$, 
and 
\begin{equation}
\label{07271359}
\begin{aligned}
\nuup^{\mc{T}_{\bullet}}(\mf{k}^{\mc{T}_{\bullet}}(l)\mf{n})
&=
\nuup^{\mc{T}_{\bullet}}(\mf{n})
=
\nuup^{\mc{T}_{\bullet}}(\mf{n}^{(T,\mf{x})})
\\
&=
0.08\left(   
\mc{V}^{\mf{A}}(\mf{T},\alpha)(N_{m})
-
82
\right)+
0.1
\left(\mc{V}^{\mf{A}}(\mf{T},\alpha)(N_{w})
- 126
\right).
\end{aligned}
\end{equation}
\end{corollary}
\begin{proof}
Since Cor. \ref{10061606}. 
\end{proof}
Clearly similar result follows for any fission process nucleon equivalent with $\mf{n}$.
\begin{remark}
\eqref{07271358} in particular 
establishes the stability of the light and heavy nucleon masses 
at the values $82$ and $126$ under action of the universal covering group of the 
Poincar\'{e} group
over the field of fission processes, 
implemented as an action over the operations realizing these processes.
As a result these values remain unchanged under relativistic transformations of the reference frame.
Moreover the values $82$ and $126$ result to be stable under 
essential contravariant action over the field of fission processes
of the fissioning system transformations belonging to
$\ms{C}_{u}^{0}(H)$. 
\eqref{07271359} establishes the invariance of the prompt-neutron yield under the 
above described actions.
\end{remark}
\begin{remark}
Although the global nucleon phase is integer, 
we emphasize that our goal is \emph{not} to provide an explanation 
of why the light and heavy nucleon masses hold exactly the values $82$ and $126$
respectively.
Rather we focus our attention, on how and in this way also why,
the universality of the global nucleon phase and global fragment state
-
namely their reciprocal relashionship and their 
equivariance under contravariant action of $\ms{C}_{u}^{0}(H)$ and under covariant action of $H$
precisely established in the resolution of the
equivariant form of the universality claim in
Cor. \ref{19061936} 
-
renders the values $82$ and $126$ in the Terrell law, and the Terrell law itself
\emph{invariant} under essential contravariant action 
of suitable perturbations of fissioning systems
and under action of relativistic transformations of reference frames,
over the field of fission processes.
What is fundamental in our context is the existence of 
what we called the nucleon system generated by the fissioning system, 
whose nucleon phases are, as they should be, relativistically \emph{covariant}, 
contravariant under action of suitable perturbations,
and \emph{originating} states of fragment systems,
\emph{regardless} of the exact values suitable observables assume when measured in them.
\end{remark}
\begin{remark}
[Experimental testability of the universality of the global nucleon phase]
The universality of the global nucleon masses and global Terrell law
-
in the special form of the stability of the values $82$ and $126$ occurring in the Terrell law
and the invariance of the Terrell law itself mainly under the relativistic transformations
described in Cor. \ref{08161246}
-
is experimentally testable and can provide
an empirical evidence of the existence and universality of the 
global nucleon phase $\ov{\ms{m}}$.
\end{remark}
\begin{remark}
Let $r\in\{s,as\}$
then provisionally
we can set
$\mf{T}_{r}=\lr{\mc{T},\ps{\upmu},\pf{H}}{\upzeta_{r},f,\Upgamma}
\in\pf{T}_{\mf{A}}$ 
with
$\mc{T}=\lr{h,\upxi,\beta_{c},I}{\ps{\upomega}}$,
satisfying
\begin{equation}
\label{fragb}
\begin{cases}
Dom(\upzeta_{r})=\R^{4},
\\
f=
\sum_{\mu,\nu=1}^{4}
\Pr_{\mu}\,
\ms{g}_{\mu,\nu}\,
\Pr_{\nu}.
\end{cases}
\end{equation}
$f$ satisfies 
\eqref{01291801} 
for the position
$A=X=\{1,\dots, 4\}$ for any $\alpha\in\ms{P}_{\mf{A}}^{\mf{T}}$,
since $f$ maps $\R^{4}$ into $\R$
and the support of the resolution of the identity
of any selfadjoint operator in a Hilbert space
is a subset of $\R$. 
Hence the request \eqref{fragb} is well-set.
Next
$\upzeta_{r}:
\R^{4}\to
\ms{S}_{\ms{F}_{\ps{\upomega}_{\beta_{c}}}}^{G}$
since 
Def. \ref{08261134}.
Set
$\ms{T}_{r}^{\alpha}=\{T_{\nu,r}^{\alpha}\mid\nu\in X\}$,
where
$i T_{\mu,r}^{\alpha}$ is the infinitesimal generator of the
one-parameter unitary group  
$\ms{U}_{\pf{H}_{\alpha}}\circ\upzeta_{r}\circ\mf{i}_{\upmu}$
on $\pf{H}_{\alpha}$
and 
$\mf{i}_{\mu}:\R\to\R^{4}$ such that 
$\Pr_{\nu}\circ\mf{i}_{\mu}=\delta_{\nu,\mu} Id_{\R}$, for all 
$\mu,\nu\in X$.
Since
$\Pr_{\mu}(\ms{E}_{\mc{E}_{\ms{T}_{r}^{\alpha}}})
=T_{\mu,r}^{\alpha}$,
\eqref{fragb}
and an application of the functional calculus,
see for example \cite[Thm. $5.6.19$]{kr}
we deduce that
\begin{equation}
\label{12091819}
\ms{D}_{\pf{H},\alpha}^{\upzeta_{r},f}(\mf{A})
=
\ov{
\sum_{\mu,\nu=1}^{4}
T_{\mu,r}^{\alpha}
\,\ms{g}_{\mu,\nu}\,
T_{\nu,r}^{\alpha}},
\end{equation}
where $\ov{S}$ is the closure of any closable operator $S$ in a Hilbert space.
Thus in this case the only difference between the phases
$\ov{\ms{m}}^{\mf{A}}(\mf{T}_{s},\alpha)$
and
$\ov{\ms{m}}^{\mf{A}}(\mf{T}_{as},\alpha)$
results by the fact that we select, 
via $\upzeta_{s}$ and $\upzeta_{as}$,
two different sets of infinitesimal generators of
commutative subgroups of 
$\ms{S}_{\ms{F}_{\ps{\upomega}_{\beta_{c}}}}^{G}$.
Finally it is worthwhile noting that the selfadjoint operator $\ms{D}_{\pf{H},\alpha}^{\upzeta_{r},f}(\mf{A})$ 
in general cannot be considered
the mass operator even when $G=\R^{4}$ and
$\upzeta_{r}(x)=(x,\un_{F})$ for any $x\in\R^{4}$, indeed it needs not to be positive.
\end{remark}
\begin{remark}
Since 
\eqref{12220022} 
it is clear that 
$\ov{\ms{m}}^{\mc{G}}(\mf{T},\alpha)$ 
is represented by the Chern-Connes character of an entire cocycle depending by the element $(\mf{T},\alpha)$,
while it is preferable to have a unique character associated with the map $\ov{\ms{m}}^{\mc{G}}$. 
We shall analyze in a future work the possibility of 
constructing a functor $\mc{L}$ on $\mf{G}(G,F,\uprho)$, 
encoding for any object $\mc{G}$ the data
$\{(\mf{T},\alpha)\mid\mf{T}\in\pf{T}_{\mc{G}},
\alpha\in\ms{P}_{\mc{G}}^{\mf{T}}\}$
into three $C^{\ast}-$algebras 
and relating the map $\ov{\ms{m}}^{\mc{G}}$ 
to a bivariant Chern-Connes
character as defined in \cite{nis}, whose notation
we use in what follows.
We require $\mc{L}$ assigning to any object 
$\mc{G}$ of $\mf{G}(G,F,\uprho)$ at least the following data:
\begin{itemize}
\item
$C^{\ast}-$algebras $\mc{D}$ and $\mc{R}$ 
and a smooth 
subalgebra $\mc{R}^{\infty}$ of $\mc{R}$, 
\item
a group morphism $\mf{v}:\ms{A}_{\mc{G}}\to\ms{K}_{0}(\mc{D})$, 
\item
$i\in\{0,1\}$, $p,n\in\N$ such that $(2-i)n\geq p-1$,
\item
$\ms{z}\in\prod_{\mf{I}\in\pf{T}_{\mc{G}}}\prod_{\beta\in\ms{P}_{\mc{G}}^{\mf{I}}}
\mc{E}_{p}^{i}(\mc{D};\mc{R}^{\infty},\mc{R}^{\infty\,+})$,
\item
$\phiup\in Ext_{\Lambda}^{2n+i}(\mc{R}^{\infty\,\natural},
\C^{\natural})$,
\end{itemize}
such that for all $\mf{T}\in\pf{T}_{\mc{G}}$, 
$\alpha\in\ms{P}_{\mc{G}}^{\mf{T}}$ and 
$\ms{f}\in\ms{A}_{\mc{G}}$
we obtain
\begin{equation*}
\ov{\ms{m}}^{\mc{G}}(\mf{T},\alpha)(\ms{f})
=
\phiup_{\star}(\mf{v}(\ms{f})\otimes_{\mc{D}}\ms{z}(\mf{T},\alpha)),
\end{equation*}
well-set since the left side is integer by construction, 
the right side is integer since the index theorem \cite[Thm. $6.4.$]{nis}.
\end{remark}
\section*{Acknowledgments}
I wish to thank Mauro Cappelli who signaled me the paper \cite{ric} of Renato Angelo Ricci on the nucleon phase 
hypothesis of binary fission advanced by G. Mouze and C. Ythier. 

\end{document}